\documentclass[psamsfonts]{amsart}
\usepackage[utf8]{inputenc}

\usepackage{amssymb,amsfonts}
\usepackage{enumerate}
\usepackage{mathrsfs}
\usepackage{url}
\usepackage{mathtools}
\usepackage[font=small,labelfont=bf]{caption}
\usepackage[all,arc]{xy}
\usepackage{xcolor}
\usepackage{scalerel}
\usepackage{stackengine,wasysym}
\usepackage{todonotes}

\newcommand\reallywidetilde[1]{\ThisStyle{%
  \setbox0=\hbox{$\SavedStyle#1$}%
  \stackengine{-.1\LMpt}{$\SavedStyle#1$}{%
    \stretchto{\scaleto{\SavedStyle\mkern.2mu\AC}{.5150\wd0}}{.6\ht0}%
  }{O}{c}{F}{T}{S}%
}}

\newtheorem{thm}{Theorem}[section]
\newtheorem{cor}[thm]{Corollary}
\newtheorem{prop}[thm]{Proposition}

\newtheorem{lem}[thm]{Lemma}

\theoremstyle{definition}
\newtheorem{defn}[thm]{Definition}

\newtheorem{fact}[thm]{Fact}

\theoremstyle{remark}
\newtheorem{rem}[thm]{Remark}

\makeatletter
\let\c@equation\c@thm
\makeatother
\numberwithin{equation}{section}

\def\Ind{\setbox0=\hbox{$x$}\kern\wd0\hbox to 0pt{\hss$\mid$\hss} \lower.9\ht0\hbox to 0pt{\hss$\smile$\hss}\kern\wd0} 

\def\Notind{\setbox0=\hbox{$x$}\kern\wd0\hbox to 0pt{\mathchardef \nn=12854\hss$\nn$\kern1.4\wd0\hss}\hbox to 0pt{\hss$\mid$\hss}\lower.9\ht0 \hbox to 0pt{\hss$\smile$\hss}\kern\wd0} 

\def\ind{\mathop{\mathpalette\Ind{}}}

\title[Construction of PAC fields]{Classification Theory and the Construction of PAC Fields}

\thanks{This work was supported by NSF grant DMS-2246992 and NSF CAREER Award DMS-2442011.}

\author[N. Ramsey]{Nicholas Ramsey}
\address{Department of Mathematics \\
University of Notre Dame\\
 USA}
\email{sramsey5@nd.edu}

\date{\today}

\begin{document}

\maketitle

\begin{abstract}
We analyze a construction of Cherlin, van den Dries, and Macintyre for coding graphs in PAC fields. We show that, in many cases, model-theoretic properties of the graph are preserved in the passage from the graph to the field. As a corollary, we show that the SOP$_{n}$ hierarchy is strict in the class of fields. The main ingredient is a detailed treatment of the model theory of inverse systems of certain profinite groups that code graphs and can be realized as the absolute Galois groups of PAC fields. 
\end{abstract}

\setcounter{tocdepth}{1}
\tableofcontents

One important thread within model theory focuses on whether the various regions of the model-theoretic map of the universe have (theories of) fields in them and, if so, what those fields might look like. Some old and central conjectures, like the stable fields conjecture (predicting that all stable fields are separably closed) or the supersimple fields conjecture (predicting that all supersimple fields are pseudo-algebraically closed) concern this issue, as well as new ones, regarding dp-finite, NIP, or NIP$_{k}$ fields. The general picture presented by these conjectures is that model-theoretic tameness notions tend to place very strong constraints on what a field can look like; the fact that many of them remain open suggests that finding new fields with prescribed model-theoretic properties is quite challenging. With groups, the situation is rather different. Although model-theoretic tameness notions also place strong constraints on what groups can look like, the problem of constructing a group with a specific degree of tameness and complexity is often tractable. Coding constructions of Ershov and of Mekler allow one to encode arbitrary graphs into nilpotent groups of class $2$ and exponent $p$ \cite{ervsov1974theories, mekler1981stability}. Mekler's analysis of this construction showed that the type-counting function of the group is equal to that of the graph, and so properties like stability and NIP are preserved. Subsequent research has elaborated this idea, showing that many additional dividing lines are preserved by this graph-coding construction \cite{boissonneau2024mekler, ahn2019mekler, chernikov2019mekler, baudisch2002mekler}.  

This paper is concerned with analyzing a graph-coding construction of Cherlin, van den Dries, and Macintyre \cite{cherlin1980elementary} with the aim of demonstrating that it can be regarded as a kind of `Mekler construction' for fields. Recall that a field $K$ is called \emph{pseudo-algebraically closed} (PAC) if every absolutely irreducible variety defined over $K$ has a $K$-rational point. These fields entered model theory through Ax's celebrated characterization of pseudo-finite fields as exactly the perfect PAC fields with absolute Galois group $\hat{\mathbb{Z}}$ \cite{ax1968elementary}. In \cite{cherlin1980elementary}, the authors give a systematic account of the model theory of PAC fields, showing that many model-theoretic questions about such fields reduce to questions about the absolute Galois group.  In connection with issues around decidability, they show that an arbitrary graph can be coded into a profinite group that can be realized as the absolute Galois group of a PAC field in such a way that the graph can be interpreted in the field (and this coding was recently revisited, again in connection with decidability, in \cite{tyrrell2024finite}). This construction yields a method for producing complicated PAC fields.  To see it as a method for producing PAC fields with a prescribed degree of complexity and tameness, one would need additionally to know that the complexity of the field produced by this construction does not overshoot that of the graph.

Unfortunately, the complexity of the field produced by CDM graph coding often \emph{will} exceed that of the graph.  When the input is an infinite graph, the resulting PAC field has unbounded Galois group and therefore has both TP$_{2}$ and IP$_{k}$ for all $k$ \cite{ZoePAC2, hempel2016n}.  This might have once seemed to model theorists to be the end of the story, as structures with these properties were formerly thought to inhabit an impossibly complicated region of the model-theoretic map. But current work in neostability has found a wide array of tameness properties that are compatible with these properties.  And, indeed, work of Chatzidakis \cite{chatzidakis2002properties} has shown that for certain unbounded PAC fields\textemdash namely the $\omega$-free PAC fields and, more broadly, the Frobenius fields\textemdash there is a nice theory of independence and amalgamation that can be leveraged to treat core model-theoretic problems concerning them. It was this family of examples, in fact, that motivated the development of Kim-independence for NSOP$_{1}$ theories in its early stages. Thus, provided we are willing to take the leap of regarding theories with TP$_{2}$ and higher arity analogues of the independence property as potentially tame, it makes sense to ask whether the complexity of the fields produced by the CDM procedure can be determined as a function of the input graph. 

The construction proceeds in two steps. In the first step, one takes a graph $\Gamma$ and produces a projective profinite group $\widetilde{G_{\Gamma}}$ whose inverse system $S(\widetilde{G_{\Gamma}})$ interprets $\Gamma$. In the second step, one realizes $\widetilde{G_{\Gamma}}$ as the absolute Galois group of a PAC field. This latter step is possible thanks to a theorem of van den Dries and Lubotzky, showing that the class of projective profinite groups is exactly the class of absolute Galois groups of PAC fields \cite{lubotzky1981subgroups}. The question of whether some model-theoretic property of the graph is preserved in the resulting field, then, breaks up into two questions: one about preservation in the passage from the graph to the group, the second about preservation from the group to the field. A specific test question in this vein comes from another theorem of Chatzidakis. She showed that the theory of a PAC field has Shelah's property SOP$_{n}$ if and only if the theory of the inverse system of its absolute Galois group has SOP$_{n}$, for $n \geq 3$. It was left open whether there are examples showing that the SOP$_{n}$ hierarchy is strict in fields, that is, showing that, for all $n \geq 3$, a field can have SOP$_{n}$ and NSOP$_{n+1}$ theory. We address this by showing that the inverse system $S(\widetilde{G_{\Gamma}})$ of the profinite group produced by CDM graph coding has SOP$_{n}$ theory if and only if the input graph $\Gamma$ has SOP$_{n}$ theory. Realizing these groups as absolute Galois groups of PAC fields, then, provides the desired family of examples. In a sequel, we use this construction again to produce PAC fields with prescribed amounts of independent amalgamation. These examples give a proof of concept, but it is our hope that this construction will later serve as an engine for producing new algebraic examples across the model-theoretic map. 

The paper is structured as follows. In Section \ref{sec: profinite}, we describe the basic framework for viewing the inverse system of profinite groups as model-theoretic structures. We also survey the mostly elementary facts about profinite groups we will need, especially concerning projective profinite groups and Frattini covers. In Section \ref{sec: CDM}, we explain the CDM graph coding construction. Several subsections deal with specific algebraic issues involving the groups produced by this construction. The real model theory begins in Section \ref{sec: identification}, which describes all structures elementarily equivalent to the inverse system of a group produced by the CDM construction applied to a graph $\Gamma$. The simplest imaginable situation would have been for $S \equiv S(\widetilde{G_{\Gamma}})$ to imply that $S \cong S(\widetilde{G_{\Gamma'}})$ for some graph $\Gamma' \equiv \Gamma$; in other words, $S$ being elementarily equivalent to an inverse system produced by the CDM construction entails that $S$ is also produced by the CDM construction applied to an elementarily equivalent graph. This, it turns out, is not the case, but it is almost correct. We show that, for an infinite graph $\Gamma$, if $S \equiv S(\widetilde{G_{\Gamma}})$, then $S \cong S(\widetilde{G_{\Gamma'} \times C_{2}^{I}})$, where $C_{2}$ denotes the group $\{\pm 1\}$, $\Gamma'$ is a graph elementarily equivalent to $\Gamma$, $I$ is some index set, and the tilde denotes the universal Frattini cover. This means that an arbitrary $S \equiv S(\widetilde{G_{\Gamma}})$ is determined by a choice of $\Gamma' \equiv \Gamma$ and the dimension of an $\mathbb{F}_{2}$-vector space (namely, the continuous dual of $\mathbb{F}_{2}^{I}$). Section \ref{sec: model theory} leverages this representation to give a detailed model-theoretic analysis of algebraic closure and types in $\mathrm{Th}(S(\widetilde{G_{\Gamma}}))$ and proves several preservation results, showing that properties like NSOP$_{n}$ are preserved in the move from $\Gamma$ to $S(\widetilde{G_{\Gamma}})$. We show that the model-theoretic properties of $S(\widetilde{G_{\Gamma}})$ are determined by the structure induced on the subsystem corresponding to the maximal pro-$2$ quotient of $\widetilde{G_{\Gamma}}$, which is a free pro-$2$ group. At this stage in the analysis, we were able to leverage Chatzidakis's work on inverse systems of free pro-$p$ groups \cite{chatzidakis1998model} and results on independence and amalgamation in $H$-structures, due to Berenstein, Carmona, and Vassiliev \cite{berenstein2017supersimple}. The arguments in this section draw inspiration from related work on preservation theorems for generic expansions \cite{kruckman2022interpolative,conant2025enriching}. Section \ref{sec: fields} finally applies these preservation results to obtain PAC fields with prescribed model-theoretic properties. 

We have tried to make this paper as user-friendly as possible. A reader who is primarily interested in applying these results to produce examples of fields with certain properties should probably skip straight to Section \ref{sec: model theory}, which gives the model-theoretic analysis, and then refer back to the earlier sections as needed.

This paper is long overdue. The core ideas were worked out during a week-long visit to Zo\'e Chatzidakis at ENS in 2017 and we talked about it, and related problems, for a long while after. Virtually every aspect of this paper owes a great debt both to her mathematical contributions to the study of PAC fields and to her personal mentorship and guidance. 

\section{Profinite groups and their logic} \label{sec: profinite}

\subsection{The inverse system language}

Let $G$ be a profinite group.  To $G$, we will associate a first-order structure $S(G)$ that encodes the inverse system of $G$.  The language $L_{\mathrm{IS}}$ of the inverse system consists of sorts $X_{n}$ for all natural numbers $n \geq 1$, binary relations $\leq$ and $C$, and a ternary relation $P$.

The underlying set of $S(G)$ consists of all cosets of open normal subgroups of $G$. That is, taking $\mathcal{N}(G)$ to be the set of open normal subgroups of $G$, we have 
$$
S(G) = \{gN : N \in \mathcal{N}(G)\}.
$$
A coset $gN$ belongs to sort $X_{i}$ if and only if $[G:N] \leq i$. We will often write $S(G)_{i}$ for the elements of $S(G)$ of sort $i$.  

We interpret $C$ so that $C(gN,hM)$ if and only if $N \subseteq M$ and $gM = hM$; in other words, $C(gN,hM)$ holds when $N \subseteq M$ and $\pi(gN) = hM$ where $\pi : G/N \to G/M$ is the canonical quotient map. Finally, $P$ is interpreted so that $P(g_{1}N_{1},g_{2}N_{2},g_{3}N_{3})$ holds if and only if $N_{1} = N_{2} = N_{3}$ and $g_{1}g_{2}N_{1} = g_{3}N_{1}$.

We define $gN \leq hM$ if and only if $N \subseteq M$. We write $\sim$ for the equivalence relation defined by $gN \sim hM$ if and only if $gN \leq hM$ and $hM \leq gN$, or equivalently, if and only if $M = N$. We write $[gN]$ for the equivalence class of $gN$ under $\sim$.  The equivalence class $[gN] = [N]$ consists of the elements of the quotient $G/N$ and has a group law given by $P$. For $N,M \in \mathcal{N}(G)$, we define $[N]\vee[M] = [NM]$ and $[N] \wedge [M] = [N \cap M]$.  Then $[N] \vee [M]$ is the least upper bound and $[N] \wedge [M]$ is the greatest lower bound of $[N]$ and $[M]$. This turns $S(G)/\sim$ into a modular lattice. 

An unusual feature here is that the sorts are not disjoint and the relations $\leq$, $P$, and $C$ are not described with specified sorts. This follows the presentation of \cite[Appendix 1]{chatzidakis2002properties} and is a convention adopted for convenience. But, technically, one should have different relation symbols for each of the relevant tuples of sorts and, rather than allowing the sorts to intersect, one should have disjoint sorts and maps between the sorts that determine how elements of one sort get identified with elements of another. This more formally standard approach tends to obscure more than it illuminates, so we will stick with Chatzidakis's conventions. 

There is an $L_{\mathrm{IS}}$-theory $T_{\mathrm{IS}}$ which axiomatizes structures of the form $S(G)$ for a profinite group $G$.  We refer to $L_{\mathrm{IS}}$-structures satisfying this theory as \emph{complete systems}. Conversely, if $S \models T_{\mathrm{IS}}$, then one may form a profinite group $G$ by setting 
$$
G(S) = \varprojlim_{\alpha \in S} [\alpha]
$$
where the inverse limit is taken with respect to the maps $\pi_{\alpha,\beta} : [\alpha] \to [\beta]$ for $\alpha \leq \beta$ and where the graph of the $\pi_{\alpha,\beta}$ is given by $C\cap
([\alpha] \times[\beta])$.  These operations are dual in the sense that $G$ and $G(S(G))$ can be naturally identified for any profinite group $G$ and, likewise, $S$ and $S(G(S))$ may be naturally identified for any $S \models T_{\mathrm{IS}}$.  Epimorphisms of profinite groups $G \to H$ give rise to embeddings of $L_{\mathrm{IS}}$-structures $S(H) \to S(G)$. 

For elements $\alpha,\beta \in S$, we will write $\alpha \vee \beta$ for the identity element of $[\alpha] \vee [\beta]$ and likewise $\alpha \wedge \beta$ for the identity element of $[\alpha] \wedge [\beta]$. With this convention, $\vee$ and $\wedge$ pick out canonical (identity-coset) representatives of the join and meet on $S/\sim$. Note that $\vee$ and $\wedge$ are definable operations on $S$. We will often make use of the fact that, if $\alpha \vee \beta = \delta$, then 
$$
[\alpha \wedge \beta] \cong [\alpha] \times_{[\delta]} [\beta] = \{(a,b) \in [\alpha] \times [\beta] : \pi_{\alpha,\delta}(a) = \pi_{\beta,\delta}(b)\}.
$$
Another important fact is the \emph{modular law} \cite[Lemma 1.9]{chatzidakis1998model}, which says that for $\alpha \leq \beta$:
$$
\alpha \vee (\beta \wedge \epsilon) = \beta \wedge (\alpha \vee \epsilon).
$$
It is also worth noting that for any finite group $A$ with $|A| = k$, the set $\{\alpha \in X_{k}(S(G)) : [\alpha] \cong A\}$ is a definable set (defined by asserting that the $k$ elements in $[\alpha]$ have the same multiplication table as $A$). If $A$ is a finite group and $[\alpha] \cong A$, we refer to $\alpha$ as an $A$\emph{-element}. 

\begin{defn}
    Suppose $S \models T_{\mathrm{IS}}$.  We say $S_{0} \subseteq S$ is a \emph{subsystem} of $S$ if it satisfies the following:
    \begin{itemize}
        \item (Upwards closed) If $\alpha \leq \beta$ and $\alpha \in S_{0}$, then $\beta \in S_{0}$. 
        \item (Downwards directed) If $\alpha,\beta \in S_{0}$, then there is some $\gamma \in S_{0}$ with $\gamma \leq \alpha$ and $\gamma \leq \beta$. 
    \end{itemize}
    If $A$ is an arbitrary subset of $S$, we denote by $\langle A \rangle$ the smallest subsystem of $S$ containing $A$. It is obtained by closing under $\wedge$ and closing upwards. 
\end{defn}

If $S_{0}, S_{1} \subseteq S$ are subsystems, we can write $S_{0} \vee S_{1}$ and $S_{0} \wedge S_{1}$ for the subsystems generated by $\{\alpha \vee \beta : \alpha \in S_{0}, \beta \in S_{1}\}$ and by $\{\alpha \wedge \beta : \alpha \in S_{0}, \beta \in S_{1}\}$, respectively.  By passing to the inverse limit, the observation before the above definition entails 
$$
G(S_{0} \wedge S_{1}) \cong G(S_{0}) \times_{G(S_{0} \vee S_{1})} G(S_{1}).
$$
The modular law likewise holds for subsystems, for $S_{0} \subseteq S_{1}$: $S_{0} \vee (S_{1} \wedge S_{2}) = S_{1} \wedge (S_{0} \vee S_{2})$.  If $\alpha \in S$ and $S_{0} \subseteq S$ is a subsystem, we will write $S_{0} \wedge \alpha$ for $S_{0} \wedge \langle \alpha \rangle$ and likewise for $S_{0} \vee \alpha$. 
\subsection{Projective profinite groups}

\begin{defn}
We say $G$ is \emph{projective} if, whenever $\varphi: G \to A$ and $\psi : B \to A$ are continuous epimorphisms with $A$ and $B$ finite groups, there is a continuous homomorphism $\gamma :G \to B$ so that $\psi \circ \gamma = \varphi$ as in the following diagram:
$$
\xymatrix{ & G\ar@{-->}[dl]  \ar@{->>}[d] \\
B \ar@{->>}[r] & A }
$$
We say that $G$ has the \emph{embedding property} if, whenever $A$ and $B$ are finite groups, $B$ is a quotient of $G$, and $\varphi: G \to A$, $\psi : B \to A$ are continuous epimorphisms, there is a continuous epimorphism $\gamma: G \to B$ making the above diagram commute. A group $G$ is called \emph{superprojective} if it is projective and has the embedding property. 
\end{defn}

Examples of projective profinite groups include the free profinite groups $\hat{F}_{n}$ and free pro-$p$ groups $\hat{F}_{n}(p)$, though there are many others. 

\begin{defn} \label{Frattini} Let $G$ be a profinite group.
  \begin{enumerate}
\item
   The \emph{Frattini subgroup} $\Phi(G) \leq G$ is the intersection of
   all maximal open subgroups of $G$. 
\item  (\cite[Definition 22.5.1]{fried2008field})
A homomorphism $\varphi: H \to G$ is called a \emph{Frattini cover} if one of the following equivalent conditions holds:
\begin{itemize}
\item[(a)] $\varphi$ is surjective and $\ker(\varphi) \leq \Phi(H)$.
\item [(b)] A closed subgroup $H_{0} \leq H$ is equal to $H$ if and only if $\varphi(H_{0}) = G$.
\item [(c)] A subset $S \subseteq H$ generates $H$ if and only if $\varphi(S)$
  generates $G$.
\end{itemize}
\item A Frattini cover $\alpha: H\to G$ is called \emph{universal} if whenever
  $\beta: H'\to G$ is another Frattini cover of $G$, there exists an
  epimorphism $\gamma:H\to H'$ such that $\beta\circ\gamma =\alpha$, as
  in the following diagram:
  $$
\xymatrix{ & H\ar@{-->>}[dl]  \ar@{->>}[d] \\
H' \ar@{->>}[r] & G }
$$
\end{enumerate}
\end{defn}

\begin{fact} \label{fact2} Let $G$ be a profinite group.
  \begin{enumerate}
    \item A universal Frattini cover of $G$ exists.  It is unique up to isomorphism over $G$ \cite[Proposition 22.6.1]{fried2008field}, so will be referred to as \emph{the} universal Frattini cover of $G$ and will be denoted $\tilde
  G$. We will usually suppress mention of the map $\tilde{\varphi}: \tilde{G} \to G$.
  \item The universal Frattini cover $\tilde{G}$ of $G$ is projective and is, moreover, the smallest projective cover of $G$, in the sense that if $H$ is a projective group, $\alpha : H \to G$ is an epimorphism and $\tilde{\varphi}: \tilde{G} \to G$ is the universal Frattini cover of $G$, then  there is an epimorphism $\beta : H \to \tilde{G}$ making the following diagram commute \cite[Proposition 22.6.1]{fried2008field}:
    $$
\xymatrix{ & H\ar@{-->>}[dl]^{\beta}  \ar@{->>}[d]^{\alpha} \\
\tilde{G} \ar@{->>}[r]^{\tilde{\varphi}} & G }
$$
  \item Let $\tilde{\varphi}: \tilde{G} \to G$ be the universal Frattini cover of a profinite group $G$. Then a profinite group $H$ is a quotient of $\tilde{G}$ if and only if $H$ is a Frattini cover of a quotient of $G$ \cite[Lemma 22.6.3]{fried2008field}.
  \item Suppose $G$ is a profinite group, $F$ is a free profinite group, and $\varphi : F \to G$ is an epimorphism. If $H \leq F$ is a minimal closed subgroup such that $\varphi|_{H}$ is an epimorphism onto $G$, then $\varphi|_{H}$ is a universal Frattini cover. In particular, the universal Frattini cover of $G$ has the same rank as $G$. \cite[Corollary 22.5.3]{fried2008field}
  \end{enumerate}
\end{fact}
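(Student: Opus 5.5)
Since this Fact is quoted from \cite{fried2008field}, the plan is to reconstruct the standard arguments in an order that makes each item feed the next. The backbone is item (4), which I would prove first. Two inputs are needed. The first is that closed subgroups of free profinite groups are projective; I would cite this, as it is a Gasch\"utz/cohomological-dimension fact. The second is that projectivity with respect to finite embedding problems upgrades to lifting against arbitrary epimorphisms of profinite groups $B \to A$; this is a Zorn's lemma or inverse-limit argument over the finite quotients of $B$.

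For item (4), let $\varphi : F \to G$ be an epimorphism from a free profinite group. First I get a minimal closed $H \leq F$ with $\varphi(H) = G$. The set of such subgroups is closed under intersections of chains by compactness: if $g \in G$, the sets $\varphi^{-1}(g) \cap H_i$ are nonempty closed and nested, so their intersection is nonempty. Zorn's lemma then applies. Minimality is exactly criterion (b) of Definition \ref{Frattini}, so $\varphi|_H$ is a Frattini cover, and $H$ is projective.

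For universality of $\varphi|_H$, take another Frattini cover $\beta : H' \to G$. Projectivity of $H$ gives a lift $\gamma : H \to H'$ with $\beta \circ \gamma = \varphi|_H$. Then $\beta(\gamma(H)) = G$, so $\gamma(H) = H'$ by (b) applied to $\beta$; hence $\gamma$ is an epimorphism. The rank claim follows from criterion (c): a generating set of $G$ pulls back to a generating set of $H$ of the same cardinality, and the reverse inequality is trivial. This also gives existence in item (1).

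For uniqueness in (1), suppose $H_1, H_2$ are both universal. Then there are epimorphisms $H_1 \to H_2$ and $H_2 \to H_1$ over $G$. The step I expect to need the most care is showing such an epimorphism is an isomorphism without a Hopfian hypothesis, since ranks may be infinite. I would argue as follows. Since $H_1$ is projective, the Frattini cover $\gamma : H_2 \to H_1$ admits a section $s : H_1 \to H_2$ with $\gamma \circ s = \mathrm{id}$, obtained by lifting $\mathrm{id}_{H_1}$ along $\gamma$. Then $\gamma(s(H_1)) = H_1$, so $s(H_1) = H_2$ by (b). Hence $s$ is surjective, and $\gamma$ is an isomorphism.

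The minimality statement (2) is the same lifting argument. Given a projective $P$ and an epimorphism $\alpha : P \to G$, lift $\alpha$ along $\tilde\varphi$ to $\beta : P \to \tilde G$. Its image maps onto $G$, so $\beta$ is onto by (b).

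For (3), one direction is as follows. If $N \trianglelefteq \tilde G$ is closed, the induced map $\tilde G/N \to G/\tilde\varphi(N)$ has kernel contained in the image of $\Phi(\tilde G)$. This image lies in $\Phi(\tilde G/N)$, because images of maximal open subgroups containing $N$ are maximal. Hence the induced map is a Frattini cover.

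For the converse, let $\pi : E \to Q$ be a Frattini cover of a quotient $q : G \to Q$. Form the fiber product $G \times_Q E$ and choose, by the same Zorn argument, a minimal closed subgroup $D$ projecting onto $G$. Then $D \to G$ is a Frattini cover, so universality of $\tilde G$ gives an epimorphism $\tilde G \to D$. Composing with the projection $D \to E$ gives a map whose image $E_0$ satisfies $\pi(E_0) = q(G) = Q$. Therefore $E_0 = E$ by (b), and $E$ is a quotient of $\tilde G$.
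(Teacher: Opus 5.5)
The paper proves nothing here beyond citing Fried--Jarden, and your reconstruction is correct. It is essentially the standard argument from that source: a minimal closed subgroup of a free (hence projective) group mapping onto $G$, projectivity of closed subgroups, lifting against profinite embedding problems, and the fact that a Frattini cover of a projective group splits and is therefore an isomorphism.

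Two steps need a line of care when you write them out.

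\textbf{Uniqueness.} You cannot yet assume that an arbitrary universal cover $H_1$ is projective. Take $H_1$ to be the cover built from $F$, and write $\varphi_i\colon H_i\to G$ for the covering maps. The epimorphism $\gamma\colon H_2\to H_1$ over $G$ is Frattini, because $\ker\gamma\le\ker(\varphi_1\circ\gamma)=\ker\varphi_2\le\Phi(H_2)$. Your section argument then goes through. It also shows that every universal Frattini cover is projective, as claimed in (2).

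\textbf{Rank.} Fried--Jarden measure rank by generating sets converging to $1$. Arbitrary preimages of such a set need not converge to $1$ once the kernel is nontrivial. Choose the lifts along a continuous section $s$ of $\varphi|_H$ normalized so that $s(1)=1$. Criterion (c) then gives the inequality.

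\textbf{Item (3).} Your fiber-product argument for the converse uses only the universal property, which is fine. Once (2) is in hand, there is a quicker route. Lift $q\circ\tilde\varphi$ along $\pi\colon E\to Q$ using projectivity of $\tilde G$; the lift is onto by criterion (b).
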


Fact \ref{fact2}(3) has the following consequence, which is worth spelling out explicitly:

\begin{fact} \label{fact:quotientbyfrattini}
    Let $\tilde{\varphi}: \tilde{G} \to G$ be the universal Frattini cover of a profinite group $G$. Then a finite group $B$ is a quotient of $\tilde{G}$ if and only if $B/\Phi(B)$ is a quotient of $G$. 
\end{fact}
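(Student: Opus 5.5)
We derive Fact \ref{fact:quotientbyfrattini} from Fact \ref{fact2}(3), which says that a profinite group $H$ is a quotient of $\tilde{G}$ if and only if $H$ is a Frattini cover of a quotient of $G$. For a finite group $B$ the task is to compare two conditions: ``$B$ is a Frattini cover of a quotient of $G$'' and ``$B/\Phi(B)$ is a quotient of $G$.'' The link is that the natural map $B \to B/\Phi(B)$ is always a Frattini cover, and that Frattini covers of $B$'s images are controlled by $\Phi(B)$.

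\emph{($\Leftarrow$)} Suppose $B/\Phi(B)$ is a quotient of $G$. The projection $B \to B/\Phi(B)$ is surjective with kernel $\Phi(B)$, so it is a Frattini cover by Definition \ref{Frattini}(2)(a). Hence $B$ is a Frattini cover of a quotient of $G$, and Fact \ref{fact2}(3) shows $B$ is a quotient of $\tilde{G}$.

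\emph{($\Rightarrow$)} Suppose $B$ is a quotient of $\tilde{G}$. By Fact \ref{fact2}(3) there is a Frattini cover $\psi : B \to A$, where $A$ is a quotient of $G$, and then $\ker\psi \leq \Phi(B)$. I would next show that $\psi(\Phi(B)) = \Phi(A)$.
\begin{itemize}
\item Maximal subgroups of $A$ pull back to maximal subgroups of $B$ containing $\ker\psi$. This gives $\Phi(B) \leq \psi^{-1}(\Phi(A))$, hence $\psi(\Phi(B)) \leq \Phi(A)$.
\item Conversely, every maximal subgroup $M$ of $B$ contains $\Phi(B) \supseteq \ker\psi$. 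So $\psi(M)$ is a proper subgroup of $A$ (by Definition \ref{Frattini}(2)(b)), and it is maximal by the correspondence theorem.
\item Since $M \mapsto \psi(M)$ is thus a bijection between maximal subgroups of $B$ and of $A$, we get $\psi^{-1}(\Phi(A)) = \Phi(B)$.
\end{itemize}
Therefore $\psi$ induces an isomorphism $B/\Phi(B) \cong A/\Phi(A)$. Since $A/\Phi(A)$ is a quotient of $A$, and $A$ is a quotient of $G$, the group $B/\Phi(B)$ is a quotient of $G$.

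The only step needing care is the identity $\psi^{-1}(\Phi(A)) = \Phi(B)$. In the finite setting it reduces to the lattice correspondence for subgroups containing $\ker\psi$, together with $\ker\psi \leq \Phi(B)$, so I expect no real obstacle.
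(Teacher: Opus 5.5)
Your proof is correct and follows the paper's route: both directions go through Fact~\ref{fact2}(3), and your ($\Leftarrow$) argument is identical to the paper's. In ($\Rightarrow$), the bijection of maximal subgroups and the identity $\psi^{-1}(\Phi(A)) = \Phi(B)$ are more than you need. Since $\ker\psi \leq \Phi(B)$, the group $B/\Phi(B)$ is already a quotient of $B/\ker\psi \cong A$, which is how the paper finishes in one line; your detour instead gives the true but unused stronger fact $B/\Phi(B) \cong A/\Phi(A)$.
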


\begin{proof}
    Clearly $B$ is a Frattini cover of $B/\Phi(B)$, so if $B/\Phi(B)$ is a quotient of $G$, then $B$ is a quotient of $\tilde{G}$ by Fact \ref{fact2}(3).  For the other direction, suppose $\psi : B \to A$ is a Frattini cover of $A$, for some quotient $A$ of $G$.  Then $A \cong B/\ker(\psi)$ and $\ker(\psi) \subseteq \Phi(B)$ so $B/\Phi(B)$ is a quotient of $A$ and thus of $G$. 
\end{proof}

Note that, by Fact \ref{fact2}(2), if $\rho : G \to H$ is an epimorphism of profinite groups and $\varphi_{G} : \tilde{G} \to G$ and $\varphi_{H} : \tilde{H} \to H$ are the universal Frattini covers of $G$ and $H$ respectively, then $\rho \circ \varphi_{G} : \tilde{G} \to H$ is an epimorphism from a projective group to $H$ and, since $\tilde{H}$ is the smallest projective group with an epimorphism to $H$, there must be some epimorphism $\tilde{\rho}: \tilde{G} \to \tilde{H}$ such that $\rho \circ \varphi_{G} = \varphi_{H} \circ \tilde{\rho}$.  We will refer to $\tilde{\rho}$ as a \emph{lift} of the epimorphism $\rho: G\to H$.  

\begin{fact} \label{fact: Frattini char}
    Suppose $G$ is a profinite group and $N \unlhd G$ is a closed normal subgroup. Assume that, for any open normal subgroup $M \unlhd G$, the quotient map $G/M \to G/NM$ is a Frattini cover. Then the quotient map $G \to G/N$ is a Frattini cover. 
\end{fact}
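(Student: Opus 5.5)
The plan is to use characterization (b) of Frattini covers from Definition \ref{Frattini}. Let $\pi : G \to G/N$ be the quotient map. Suppose $H_{0} \leq G$ is a closed subgroup with $\pi(H_{0}) = G/N$, that is, $H_{0}N = G$. We must show $H_{0} = G$. Since $H_{0}$ is closed in a profinite group, $H_{0} = \bigcap_{M \in \mathcal{N}(G)} H_{0}M$. So it suffices to show that $H_{0}M = G$ for every open normal subgroup $M$.

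Fix $M \in \mathcal{N}(G)$ and let $\rho_{M} : G/M \to G/NM$ be the quotient map, which is a Frattini cover by hypothesis. Consider the closed (indeed finite) subgroup $H_{0}M/M \leq G/M$. Its image under $\rho_{M}$ is $H_{0}NM/NM$. This equals $G/NM$, because $H_{0}N = G$. By condition (b) applied to the Frattini cover $\rho_{M}$, we get $H_{0}M/M = G/M$, i.e.\ $H_{0}M = G$. Intersecting over all $M$ then gives $H_{0} = G$. The converse direction of (b) is trivial: $\pi(G) = G/N$, and surjectivity of $\pi$ is clear. Hence $\pi$ is a Frattini cover.

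The only point needing care is the identity $H_{0} = \bigcap_{M} H_{0}M$ for a closed subgroup. This is standard. If $g \notin H_{0}$, then since $H_{0}$ is closed there is an open normal $M$ with $gM \cap H_{0} = \emptyset$. This uses that the open normal subgroups form a neighborhood basis of the identity together with compactness of $H_{0}$. Then $g \notin H_{0}M$. I expect no real obstacle beyond this.

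Alternatively, one could use characterization (a). Show $N \leq \Phi(G)$ by taking a maximal open subgroup $U$ and an open normal $M \leq U$. The image $U/M$ is maximal in $G/M$, so it contains $\ker \rho_{M} = NM/M$, hence $N \leq U$. This is an equally short route. I would present whichever reads more cleanly, likely the (a) version.
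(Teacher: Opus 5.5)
Your argument is correct, and your main route differs from the paper's. The paper uses characterization (a). The hypothesis gives $NM/M \leq \Phi(G/M)$ for every open normal $M$, and the paper passes to the limit: $N = \varprojlim NM/M \leq \varprojlim \Phi(G/M) = \Phi(G)$. For the last equality it cites Ribes--Zalesskii, identifying $\Phi(G)$ with the inverse limit of the Frattini subgroups of the finite quotients.

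Your primary argument instead uses characterization (b) and never mentions Frattini subgroups. The Frattini property of each finite map $G/M \to G/NM$ forces $H_{0}M = G$, and closedness of $H_{0}$ then gives $H_{0} = \bigcap_{M} H_{0}M = G$. What this buys is self-containedness: the only profinite input is that a closed subgroup is the intersection of its open-normal thickenings. Closedness alone suffices for that, via the neighborhood basis $\{gM\}$ at $g$; compactness of $H_{0}$ is not needed. The paper's version is shorter given the citation, and it lands directly on the kernel formulation $N \leq \Phi(G)$.

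Your alternative (a)-route is essentially the paper's proof with the cited inverse-limit fact unpacked. The paper needs the inclusion $\varprojlim \Phi(G/M) \subseteq \Phi(G)$, and that is exactly your observation: every maximal open $U$ contains an open normal $M$ with $U/M$ maximal in $G/M$. Hence $U/M$ contains $\Phi(G/M) \supseteq NM/M$, so $N \leq U$.
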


\begin{proof}
By hypothesis, $G/M \to G/NM$ is a Frattini cover for every open normal $M \unlhd G$, so $NM/M \leq \Phi(G/M)$. Taking the inverse limit over $M$, we have
$$
N = \varprojlim_{M \unlhd G} NM/M \leq \varprojlim_{M \unlhd G} \Phi(G/M) = \Phi(G)
$$
by \cite[Proposition 2.8.2]{ribes2010profinite}. This shows that $G \to G/N$ is a Frattini cover.
\end{proof}

Finally, we will need the following fact concerning the model theory of projective profinite groups:

\begin{fact} \label{fact:axiomatizable} \cite[Proposition 2.18]{frohn2011model}
The class of complete inverse systems of projective profinite groups is $L_{\mathrm{IS}}$-axiomatizable. 
\end{fact}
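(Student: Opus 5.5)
The plan is to write projectivity, as defined above, as a countable set of first-order $L_{\mathrm{IS}}$-sentences, one for each finite embedding problem up to isomorphism, and to add these sentences to $T_{\mathrm{IS}}$. Since the definition only involves finite $A$ and $B$, no compactness or limit argument is needed. The duality $S \leftrightarrow G(S)$ then does the rest: if $S \models T_{\mathrm{IS}}$, then $S \cong S(G(S))$. So it is enough to show that, for a profinite group $G$, the structure $S(G)$ satisfies the new sentences if and only if $G$ is projective.

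First I would translate the data of an embedding problem. A continuous epimorphism $\varphi : G \to A$ with $A$ finite is the same thing as an open normal subgroup $N = \ker \varphi$ together with an isomorphism $G/N \cong A$. In $S(G)$ this becomes an element $\alpha \in X_{|A|}$ together with an enumeration $(a_x)_{x \in A}$ of the class $[\alpha]$ that respects the multiplication table of $A$, which is expressed by $P$. Next, a continuous homomorphism $\gamma : G \to B$ need not be onto; its image is some subgroup $B' \le B$. The requirement $\psi \circ \gamma = \varphi$ then says two things: $\psi(B') = A$, and $\psi|_{B'} \circ \gamma$ is the quotient map determined by $\alpha$. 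Writing $M = \ker\gamma$, so that $G/M \cong B'$, this is equivalent to the following. There is some $\beta \in X_{|B'|}$ with $\beta \le \alpha$ (which encodes $M \subseteq N$), and an enumeration $(b_y)_{y \in B'}$ of $[\beta]$ respecting the multiplication table of $B'$, such that $C(b_y, a_{\psi(y)})$ holds for all $y \in B'$.

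So for fixed finite $A$, $B$ and epimorphism $\psi : B \to A$, the sentence $\sigma_{A,B,\psi}$ reads as follows. For every $\alpha$ of sort $X_{|A|}$ and every $|A|$-tuple from $[\alpha]$ that lists $[\alpha]$ without repetition with multiplication table $A$, the finite disjunction over subgroups $B' \le B$ with $\psi(B') = A$ holds, where each disjunct asserts the existence of $\beta \le \alpha$ together with an enumeration of $[\beta]$ as above satisfying the $C$-compatibilities. All quantifiers range over finite classes of bounded size or over a single sort, so $\sigma_{A,B,\psi}$ is first-order. Taking one sentence per isomorphism type of triple $(A,B,\psi)$ gives a countable set of axioms, and the union of $T_{\mathrm{IS}}$ with this set is the intended axiomatization.

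The one step needing care is the equivalence between $S(G) \models \sigma_{A,B,\psi}$ and the solvability of every embedding problem of that shape. In particular, the disjunction over subgroups $B'$ must capture exactly the non-surjective solutions $\gamma$. Given a solution $\gamma$, set $\beta = \ker\gamma$ (the identity coset) and enumerate $G/\ker\gamma \cong \gamma(G) = B'$; the compatibilities hold by construction. Conversely, a witness $\beta$ with its enumeration defines $G \to G/M \cong B' \hookrightarrow B$, and the $C$-conditions say precisely that composing with $\psi$ gives $\varphi$. I expect this bookkeeping, rather than any conceptual point, to be the main (mild) obstacle, together with the routine check that the enumerations really pin down the isomorphisms. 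Once it is done, the axiomatizability stated in Fact \ref{fact:axiomatizable} follows.
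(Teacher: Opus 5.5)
The paper gives no proof of this fact; it only quotes \cite[Proposition 2.18]{frohn2011model}, so there is nothing to compare against line by line. Your argument is correct and self-contained, and it is the standard proof. It has four working parts:
\begin{itemize}
\item Each $\sigma_{A,B,\psi}$ expresses weak solvability of every finite embedding problem of that shape.
\item The $P$-respecting enumerations of $[\alpha]$ and $[\beta]$ correctly encode the epimorphism $\varphi$ and the isomorphism $G/\ker\gamma \cong \gamma(G)$.
\item The finite disjunction over subgroups $B' \le B$ with $\psi(B') = A$ correctly handles solutions $\gamma$ that are not surjective.
\item The duality $S \cong S(G(S))$ reduces everything to structures of the form $S(G)$.
\end{itemize}

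The only thing your route relies on is that the paper defines projectivity through finite $A$ and $B$. If you started instead from the lifting property against arbitrary epimorphisms of profinite groups, you would first need the standard reduction to finite embedding problems (as in \cite{fried2008field}, Chapter 22). After that, your sentences apply unchanged.
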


\section{CDM graph coding} \label{sec: CDM} 

\subsection{The basic set-up} 

In this subsection, we will explain a construction introduced by Cherlin, van den Dries, and Macintyre for coding graphs into profinite groups \cite{cherlin1980elementary}. We will follow the presentation from \cite[Chapter 28]{fried2008field}. 

For a natural number $n$, we write $C_{n}$ for the cyclic group of order $n$. We fix, once and for all, distinct odd primes $p$ and $q$. The dihedral group $D_{p}$ is defined by 
$$
D_{p} = \langle \beta, \gamma : \beta^{2} = 1, \gamma^{p} = 1, \beta \gamma \beta^{-1} = \gamma^{-1} \rangle.
$$
Let $\tau : D_{p} \to C_{2} = \{\pm 1\}$ denote the epimorphism obtained by quotienting $D_{p}$ by the normal subgroup generated by $\gamma$. Then we may define an action of $D_{p} \times D_{p}$ on $C_{q}$ by setting, for $(x,y) \in D_{p}\times D_{p}$ and $g \in C_{q}$, 
$$
(x,y) \cdot g = g^{\tau(x)\tau(y)}.
$$
In other words, if $\tau(x)$ and $\tau(y)$ agree, then $(x,y)$ acts trivially on $g$ and if $\tau(x) \neq \tau(y)$, then $(x,y)$ acts by inverting $g$. We let $W = C_{q} \rtimes (D_{p}\times D_{p})$ be the associated semi-direct product and let $\lambda : W \to D_{p} \times D_{p}$ be the quotient map. When doing calculations involving $W$, it is often useful to represent elements as triples $(a,b,c)$ where $a \in C_{q}$, and $(b,c) \in D_{p} \times D_{p}$, so then $\lambda$ becomes the projection to the second and third coordinates. 

\begin{defn}
 \begin{enumerate}  
 \item Given a graph $\Gamma = (V,R)$ with vertex set $V$ and edge relation $R$, we define the group $G_{\Gamma}$ by
$$
G_{\Gamma} = \{((a_{v})_{v \in V}, (b_{r})_{r \in R}) \in D_{p}^{V} \times W^{R} : r = (v,v') \in R \implies \lambda(b_{r}) = (a_{v},a_{v'}) \}.
$$
The group $G_{\Gamma}$ is a closed subgroup of the profinite group $D_{p}^{V} \times W^{R}$ and is therefore profinite. When $\Gamma$ has no edges, we set $G_{\Gamma} = D_{p}^{V}$.
\item Given a profinite group $G$, we define a graph $\Gamma(G) = (V_{G},R_{G})$ where the vertex set $V_{G}$ is given by
$$V_{G} = \{N \in \mathcal{N}(G) : G/N \cong D_{p}\}$$
and edge set $R$ given by  
$$
R_{G} = \{(N,N') \in V_{G}^{2} : N \neq N' \wedge (\exists M \in \mathcal{N}(G))(M \subseteq N \cap N' \text{ and } G/M \cong W)\}.
$$
\end{enumerate}
\end{defn}
%

\begin{rem}
    The construction of the graph $\Gamma(G)$ from the group $G$ can be viewed as an interpretation of the graph, viewed as a first-order structure, in the structure $S(G)$. The vertex set is defined in $S(G)$ by 
    $$
    V_{G} = \{ \alpha \in X_{2p}(S(G)) : P(\alpha,\alpha,\alpha) \text{ and } [\alpha] \cong D_{p}\}
    $$
    and the edge set is defined by 
    $$
    R_{G} = \{(\alpha,\alpha') \in V_{G}^{2} : \alpha \neq \alpha' \wedge (\exists \beta \in X_{4p^{2}q}(S(G)) )(\beta \leq \alpha \wedge \alpha' \text{ and } [\beta] \cong W)\}.
    $$
    When $S$ is an inverse system, we will abuse notation by writing $\Gamma(S)$ instead of $\Gamma(G(S))$, and will refer to $\Gamma(S)$ as the graph interpreted in $S$. 
\end{rem}

The key fact about these constructions is that the second construction undoes the first:

\begin{fact} \label{fact:graphrecovery} \cite[Proposition 28.8.3]{fried2008field}
    Suppose $\Gamma = (V,R)$ is a graph. Then the graph $\Gamma$ is isomorphic to $\Gamma(G_{\Gamma})$ via the map $v \mapsto \ker\pi_{v}$. 
\end{fact}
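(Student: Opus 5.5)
We need to show that $v \mapsto \ker \pi_v$ is a graph isomorphism $\Gamma \to \Gamma(G_\Gamma)$, where $\pi_v : G_\Gamma \to D_p$ is the projection to the $v$-coordinate. The plan has five steps.

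\textbf{Step 1: $\pi_v$ is surjective.} Given $d \in D_p$, set $a_v = d$ and $a_u = 1$ for $u \neq v$. For each edge $r=(u,u')$ choose $b_r = (1,a_u,a_{u'}) \in W$. This element lies in $G_\Gamma$ and maps to $d$. So $\ker\pi_v$ is a vertex of $\Gamma(G_\Gamma)$.

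\textbf{Step 2: injectivity.} For $v\neq v'$, take $d \neq 1$ at $v$ and $1$ elsewhere, completed as in Step 1. This element lies in $\ker \pi_{v'}$ but not in $\ker\pi_v$.

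\textbf{Step 3: edges are preserved.} For an edge $r=(v,v')$, let $\rho_r : G_\Gamma \to W$ be the projection. Its image contains all $(1,x,y)$ by the construction of Step 1. It also contains $(c,1,1)$, by taking $b_r=(c,1,1)$ and all other coordinates trivial. Hence $\rho_r$ is onto $W$, and $\ker\rho_r \subseteq \ker\pi_v\cap\ker\pi_{v'}$ because $\lambda\circ\rho_r = (\pi_v,\pi_{v'})$.

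\textbf{Step 4: surjectivity onto vertices (main obstacle).} Suppose $N$ is open normal with $G_\Gamma/N \cong D_p$. I would use the structure of $G_\Gamma$ as an extension
$$1 \to C_q^R \to G_\Gamma \to D_p^V \to 1.$$
Since $q\nmid |D_p|$, the image of $C_q^R$ in $D_p$ is trivial, so $N \supseteq C_q^R$ and the map factors through $D_p^V$. Now I need to classify the open normal subgroups $N$ of $D_p^V$ with quotient $D_p$. The quotient factors through a finite product $D_p^F$, and the image of each factor is a normal subgroup of $D_p$, hence $1$, $\langle\gamma\rangle$, or $D_p$.

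The elements of order $p$ in the image are generated by the images of the factors $C_p$. Consider two factors $i\ne j$ whose images both contain $\langle \gamma\rangle$. The images of the two factors commute, but the centralizer of $\gamma$ in $D_p$ is $\langle\gamma\rangle$. This forces the image of each factor to be centralized by $\gamma$, which is a contradiction unless at most one factor maps onto $D_p$.

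So I argue as follows. Exactly one factor $v$ has $C_p$-part not killed. That factor's image is normal and contains $\gamma$. The other factors commute with it, so their images centralize $\langle\gamma\rangle$; being normal subgroups contained in $\langle\gamma\rangle$, they must be trivial. Then factor $v$ alone is onto $D_p$, so $N=\ker\pi_v$.

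\textbf{Step 5: edges reflect.} Suppose $M \subseteq \ker\pi_v\cap\ker\pi_{v'}$ with $G_\Gamma/M\cong W$ and $v\neq v'$. The $C_q$-part of $W$ must come from $C_q^R$, since $q \nmid |D_p^V|$. So some edge coordinate $r$ survives in the quotient. I would then show $r$ must join $v$ and $v'$. The key point is that the induced quotient $G_\Gamma/C_q^RM \cong D_p\times D_p$ corresponds to $\ker\pi_v\cap\ker\pi_{v'}$, by the Step 4 analysis applied to $D_p\times D_p$-quotients. The action on $C_q$ is then nontrivial exactly through $\tau\circ\pi_v$ and $\tau\circ\pi_{v'}$.

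On the other hand, the conjugation action of $D_p^V$ on the factor $C_q^{(r)}$ for $r=(u,u')$ is via $\tau(a_u)\tau(a_{u'})$. Comparing these two actions on the surviving $C_q$ forces $\{u,u'\}=\{v,v'\}$. Hence $(v,v')\in R$, using symmetry of the graph.
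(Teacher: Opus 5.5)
Your proof is correct and follows the route the paper indicates. The paper only cites Fried--Jarden and names Fact~\ref{fact:projfact} as the key input; your Step~4 reproves that fact after killing $C_q^R$, and your Step~5 correctly handles edges by comparing the conjugation character $\tau\pi_u\cdot\tau\pi_{u'}$ of a surviving $(C_q)_r$ with the action of $W$ on its $C_q$.

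Two small tightenings. In Step~4, each factor's image is both a normal subgroup and a quotient of $D_p$, hence $1$ or $D_p$; so exactly one factor has nontrivial image, since two distinct factors commute elementwise and surjecting images would make $D_p$ abelian. In Step~5 you only need that the character factors through $(\pi_v,\pi_{v'})$. This follows from $C_q^RM\subseteq\ker\pi_v\cap\ker\pi_{v'}$ together with equality of indices ($4p^2$), after which $\mathbb{F}_2$-independence of the characters $\tau\circ\pi_w$ forces $\{u,u'\}=\{v,v'\}$.
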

Underlying Fact \ref{fact:graphrecovery} above is the following:
\begin{fact} \label{fact:projfact} \cite[Example 28.7.4]{fried2008field}
If $\pi: D_{p}^{I} \to D_{p}$ is an epimorphism, then for some $i \in I$, $\ker(\pi_{i}) = \ker(\pi)$ where $\pi_{i}$ is the projection onto the $i$th coordinate.
\end{fact}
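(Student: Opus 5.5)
The plan is to reduce the statement to a fact about epimorphisms onto $C_{2}$ and then lift to $D_{p}$ using its structure. Write $D_{p} = C_{p} \rtimes C_{2}$ with $C_{p} = \langle \gamma \rangle$, and set $K = \ker(\pi)$, a closed normal subgroup of $D_{p}^{I}$ of index $2p$. It suffices to find $i$ with $\ker(\pi_{i}) \subseteq K$. Both kernels have index $2p$, so this containment forces equality.

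First I handle the abelianization. The abelianization of $D_{p}$ is $C_{2}$, and that of $D_{p}^{I}$ is $C_{2}^{I}$ via $\tau^{I}$. The map $\tau\circ\pi$ factors through $C_{2}^{I}$, and any continuous homomorphism $C_{2}^{I} \to C_{2}$ is a finite product of coordinate projections; continuity gives dependence on only finitely many coordinates.

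Next I use the $C_{p}$-part. The image of $C_{p}^{(i)}$, the $i$-th coordinate copy, lies in $C_{p} = \pi(D_{p}^{I})'$. By continuity, $\pi$ kills all but finitely many $C_{p}^{(i)}$; I also note that the closed subgroup generated by all the $C_{p}^{(i)}$ is $C_{p}^{I}$, which maps onto $C_{p}$ because $\pi$ is surjective and $C_{p}^{I}$ is the commutator subgroup, mapping onto the commutator subgroup of $D_{p}$. So some $i$ has $\pi(C_{p}^{(i)}) = C_{p}$. Fix such an $i$.

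The key step is to show that $\pi$ kills $D_{p}^{(j)}$ for $j \ne i$. For $j \neq i$, the subgroup $D_{p}^{(j)}$ commutes with $C_{p}^{(i)}$, so $\pi(D_{p}^{(j)})$ centralizes $C_{p}$ inside $D_{p}$. The centralizer of $C_{p}$ in $D_{p}$ is $C_{p}$ itself, so $\pi(D_{p}^{(j)}) \subseteq C_{p}$ and therefore $\pi$ kills the involutions of $D_{p}^{(j)}$. The involutions of $D_{p}$ generate $D_{p}$, so $\pi(D_{p}^{(j)}) = 1$. The product over $j\neq i$ of the $D_{p}^{(j)}$ is topologically generated by these factors and equals $\ker(\pi_{i})$. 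Since $K$ is closed, $\ker(\pi_{i}) \subseteq K$, and comparing indices gives equality.

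I expect the main obstacle to be the care needed with closure and continuity, namely passing from individual factors to the infinite product. This is handled by $K$ being closed, together with the restricted product being dense in $\ker(\pi_i)$. With this argument the abelianization step is not actually needed.
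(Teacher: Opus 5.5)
Your proof is correct. The paper gives no argument for this statement; it simply imports it from \cite[Example 28.7.4]{fried2008field}. So you are supplying a short self-contained proof rather than following the paper.

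The heart of your argument is the centralizer step. Once some coordinate $i$ has $\pi(C_{p}^{(i)}) = C_{p}$, every other factor commutes with $C_{p}^{(i)}$, so it is sent into $C_{D_{p}}(C_{p}) = C_{p}$. It is then killed, because it is generated by involutions and $p$ is odd.

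Compared with the citation, this buys two things.

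\emph{First}, it isolates exactly where continuity enters, namely closedness of $K$. You use it once to find a coordinate with $\pi(C_{p}^{(i)}) \neq 1$, and once to pass from the restricted product to $\ker(\pi_{i})$. That hypothesis is indispensable. For a nonprincipal ultrafilter $\mathcal{U}$ on an infinite $I$, the abstract ``$\mathcal{U}$-limit'' epimorphism $D_{p}^{I} \to D_{p}$ has kernel different from every $\ker(\pi_{i})$.

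\emph{Second}, the same argument proves the paper's upgrade, Lemma~\ref{lem: no unexpected quotients}(1), in one stroke. In the setting $D_{p}^{V} \times C_{2}^{I}$, first choose $v$ with $\pi((C_{p})_{v}) = C_{p}$. Each extra factor $(C_{2})_{i}$ also centralizes $(C_{p})_{v}$, so its image is a subgroup of order at most $2$ inside $C_{p}$, hence trivial. This subsumes the paper's separate step about $C_{2} \times C_{2}$ not embedding in $D_{p}$, and removes any need to invoke the Fact there.

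Your abelianization paragraph and the remark that only finitely many $C_{p}^{(i)}$ survive are, as you say, unnecessary and can be deleted.
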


We will need to upgrade this slightly: 

\begin{lem} \label{lem: no unexpected quotients}
Suppose $V$, $V'$, and $I$ are disjoint finite sets.
\begin{enumerate}
    \item If $\pi : D_{p}^{V} \times C_{2}^{I} \to D_{p}$ is an epimorphism with $N = \ker(\pi)$, then there is some $v \in V$ such that $N = \ker(\pi_{v})$. 
    \item Suppose $|V| \geq |V'|$ and $\pi : D_{p}^{V} \times C_{2}^{I} \to D_{p}^{V'}$ is an epimorphism.  Then there is $V_{0} \subseteq V$ with $|V_{0}| = |V'|$ such that $\ker(\pi) = \bigcap_{v \in V_{0}} \ker(\pi_{v})$. 
\end{enumerate}
\end{lem}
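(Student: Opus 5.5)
For (1), the plan is to restrict $\pi$ to the two factors and show the $C_{2}^{I}$ factor contributes nothing. Write $G = D_{p}^{V} \times C_{2}^{I}$ and let $E = C_{2}^{I}$, viewed as a normal subgroup of $G$.

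First show $\pi(E) = 1$:
\begin{itemize}
\item $\pi(E)$ is a normal subgroup of $D_{p}$ of exponent dividing $2$.
\item The normal subgroups of $D_{p}$ ($p$ odd) are $1$, $\langle \gamma \rangle$ and $D_{p}$.
\item Only the trivial one has exponent dividing $2$.
\end{itemize}
Hence $E \subseteq N$, and $\pi$ factors through the projection $G \to D_{p}^{V}$, giving an epimorphism $\bar{\pi} : D_{p}^{V} \to D_{p}$.

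Fact \ref{fact:projfact} now yields $v \in V$ with $\ker(\bar{\pi}) = \ker(\bar{\pi}_{v})$. Taking preimages in $G$ gives $N = \ker(\pi_{v})$. Here $\pi_{v}$ denotes the $v$th coordinate projection on $G$, whose kernel contains $E$.

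For (2), use the same reduction first. The image $\pi(E)$ is a normal subgroup of $D_{p}^{V'}$ of exponent $2$. Its projection to each coordinate is a normal exponent-$2$ subgroup of $D_{p}$, hence trivial. So $\pi(E)$ is trivial, and we may assume $I = \emptyset$.

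Now compose $\pi$ with each coordinate projection $\pi'_{w} : D_{p}^{V'} \to D_{p}$ for $w \in V'$. Each composite is an epimorphism $D_{p}^{V} \times C_{2}^{I} \to D_{p}$. By part (1) there is $f(w) \in V$ with
$$
\ker(\pi'_{w} \circ \pi) = \ker(\pi_{f(w)}).
$$
Since $\ker(\pi) = \bigcap_{w \in V'} \ker(\pi'_{w} \circ \pi)$, we get $\ker(\pi) = \bigcap_{w} \ker(\pi_{f(w)})$. Set $V_{0} = f(V')$.

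It remains to show $|V_{0}| = |V'|$, that is, that $f$ is injective. Counting orders gives this:
\begin{itemize}
\item The quotient by $\bigcap_{v \in V_{0}} \ker(\pi_{v})$ is $D_{p}^{V_{0}}$, of order $(2p)^{|V_{0}|}$.
\item This quotient must be isomorphic to the image $D_{p}^{V'}$, of order $(2p)^{|V'|}$.
\end{itemize}
So $|V_{0}| = |V'|$. The hypothesis $|V| \geq |V'|$ is then automatic, since $V_{0} \subseteq V$.

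The only delicate point is the exponent argument that kills the $C_{2}^{I}$ factor. It rests on $D_{p}$ having no nontrivial normal subgroup of exponent $2$, which holds because $p$ is odd: the involutions of $D_{p}$ are not central and generate all of $D_{p}$.
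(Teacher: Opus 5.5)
Your proof is correct and follows the paper's route: kill the $C_{2}^{I}$ factor, apply Fact~\ref{fact:projfact}, and then work coordinatewise for (2). The differences are minor and both valid. You kill $C_{2}^{I}$ with the exponent argument via the normal subgroups of $D_{p}$, where the paper argues that $D_{p}$ cannot contain $C_{2}\times C_{2}$. You get injectivity of $w\mapsto f(w)$ by comparing the orders $(2p)^{|V_{0}|}=(2p)^{|V'|}$ rather than by the paper's surjectivity argument; this is slightly cleaner and also shows that the hypothesis $|V|\geq|V'|$ is redundant.
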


\begin{proof}
    (1) Note that there must be some $v \in V$ such that $(C_{p})_{v} \cap N =1$, since otherwise, the map $\pi$ factors through $C_{2}^{V} \times C_{2}^{I}$, which is impossible. If there is some $i \in I$ such that $N \cap (C_{2})_{i}$ is trivial, then it follows that $(D_{p}^{V} \times (C_{2})^{I})/N$ must contain $D_{p}\times C_{2}$ and hence $C_{2} \times C_{2}$, which $D_{p}$ does not.  Hence for all $i$, $(C_{2})_{i} \subseteq N$ and hence the result follows by Fact \ref{fact:projfact}.

    (2) By (1), we know that, for each $w \in V'$, there is some $j(w) \in V$ such that $\ker(\pi_{w} \circ \pi) = \ker(\pi_{j(w)})$.  Let $V_{0} = \{j(w) : w \in V'\}$.  Since an element $g$ of $D_{p}^{V} \times C_{2}^{I}$ is in the kernel of $\pi$ if and only if $\pi(g)$ is the identity in each coordinate of $D_{p}^{V'}$, it follows that
    $$
    \ker(\pi) = \bigcap_{v \in V_{0}} \ker(\pi_{v}).  
    $$
    Moreover, the map $w \mapsto j(w)$ is injective, since if $w \neq w'$ and $j(w) = j(w')$, then it follows that all elements in the image of $\pi$ agree in both the $w$ and $w'$ coordinate, which is impossible since $\pi$ is surjective. This implies $|V_{0}| = |V'|$. 
\end{proof}

For reasons that will soon become clear, we will consider groups of the form $G_{\Gamma} \times C_{2}^{I}$ for a set $I$. For a vertex $v \in V$, we will write $\pi_{v} : G_{\Gamma} \times C_{2}^{I} \to D_{p}$ for the projection to the $v$ coordinate and likewise for $\pi_{r} : G_{\Gamma} \times C_{2}^{I} \to W$.  

Lemma \ref{lem: no unexpected quotients} has the following consequence:
\begin{cor} \label{cor: no unexpected frattini quotients}
    Suppose $\Gamma = (V,R)$ is a graph and $I$ is a set. Then the graph $\Gamma$ is isomorphic to $\Gamma(\widetilde{G_{\Gamma}\times C_{2}^{I}})$. 
\end{cor}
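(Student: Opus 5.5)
The plan is to reduce in two steps: first from $\widetilde{H}$ to $H := G_{\Gamma}\times C_{2}^{I}$, using that $D_{p}$ and $W$ are Frattini-free, and then from $H$ to $G_{\Gamma}$, using that $C_{2}^{I}$ cannot contribute to $D_{p}$- or $W$-quotients. Once both reductions are in place, Fact \ref{fact:graphrecovery} finishes the proof.

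\emph{Step 1: passing from $\widetilde{H}$ to $H$.} Let $\varphi : \widetilde{H} \to H$ be the universal Frattini cover, so $\ker\varphi \leq \Phi(\widetilde{H})$. If $\psi : \widetilde{H} \to B$ is an epimorphism onto a finite group, then $\psi(\Phi(\widetilde{H})) \leq \Phi(B)$, because preimages of maximal open subgroups are maximal open. Hence $\ker\varphi \leq \ker\psi$ whenever $\Phi(B)=1$.

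I will check that $\Phi(D_{p}) = 1$ and $\Phi(W)=1$.
\begin{itemize}
\item In $D_{p}$, the intersection of $C_{p}$ with any subgroup of order $2$ is trivial.
\item Since $\Phi(D_{p}\times D_{p})=1$, intersecting preimages of maximal subgroups of $D_{p}\times D_{p}$ shows $\Phi(W) \leq C_{q}$.
\item The complement $D_{p}\times D_{p}$ is a maximal subgroup of $W$ meeting $C_{q}$ trivially, so $\Phi(W)=1$.
\end{itemize}
Therefore every open normal $N \unlhd \widetilde{H}$ with quotient $D_{p}$ or $W$ contains $\ker\varphi$. It follows that $N \mapsto \varphi(N)$ is a bijection between such subgroups of $\widetilde{H}$ and those of $H$. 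This bijection preserves inclusions and quotient isomorphism types, so it preserves both the vertex set and the edge relation. Consequently $\Gamma(\widetilde{H}) \cong \Gamma(H)$.

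\emph{Step 2: removing the factor $C_{2}^{I}$.} Let $\pi : G_{\Gamma}\times C_{2}^{I} \to B$ be an epimorphism with $B \in \{D_{p}, W\}$. The image $\pi(C_{2}^{I})$ is a normal elementary abelian $2$-subgroup of $B$, since $C_{2}^{I}$ is a normal direct factor. I claim this image is trivial.
\begin{itemize}
\item For $B = D_{p}$, the normal subgroups are $1$, $C_{p}$ and $D_{p}$, so the only normal $2$-subgroup is trivial.
\item For $B = W$, let $K$ be a normal $2$-subgroup. Its image in $D_{p}\times D_{p}$ is a normal $2$-subgroup whose coordinate projections are normal $2$-subgroups of $D_{p}$. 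Hence that image is trivial, so $K \leq C_{q}$, which forces $K=1$.
\end{itemize}
Thus every such $\pi$ factors through the projection $H \to G_{\Gamma}$. This gives a bijection between $D_{p}$- and $W$-kernels of $H$ and those of $G_{\Gamma}$, via $N \mapsto N \times C_{2}^{I}$ in one direction and intersection with $G_{\Gamma}$ in the other. The bijection preserves inclusions, so $\Gamma(H) \cong \Gamma(G_{\Gamma})$. (For the vertex part this is just Lemma \ref{lem: no unexpected quotients}(1) in the finite case, but the direct argument also covers infinite $V$ and $I$.)

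\emph{Conclusion.} Composing the two isomorphisms with Fact \ref{fact:graphrecovery} gives $\Gamma \cong \Gamma(\widetilde{G_{\Gamma}\times C_{2}^{I}})$. Explicitly, $v$ is sent to $\varphi^{-1}(\ker\pi_{v})$. The main point needing care is the Frattini-freeness of $W$ in Step 1, which is what ensures that edges are not created in the cover. I handled it by combining the maximal complement $D_{p}\times D_{p}$ with the Frattini-freeness of $D_{p}\times D_{p}$.
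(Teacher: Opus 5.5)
Your proof is correct and has the same skeleton as the paper's: reduce from $\widetilde{H}$ to $H=G_{\Gamma}\times C_{2}^{I}$, then from $H$ to $G_{\Gamma}$, then apply Fact~\ref{fact:graphrecovery}. The difference is that you prove both reductions directly where the paper cites.

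For the first reduction, the paper invokes \cite[Lemma 28.6.1]{fried2008field}. Your argument from $\Phi(D_{p})=\Phi(W)=1$ is essentially the content of that lemma, restricted to the two quotient types that matter.

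For the second reduction, the paper appeals to Lemma~\ref{lem: no unexpected quotients}. You instead use a different and more direct observation: $D_{p}$ and $W$ have no nontrivial normal $2$-subgroups, so $C_{2}^{I}$ lies in every $D_{p}$- or $W$-kernel. This makes your version more self-contained and more complete. As stated, Lemma~\ref{lem: no unexpected quotients} is for finite $V$ and $I$, is about $D_{p}^{V}\times C_{2}^{I}$ rather than $G_{\Gamma}\times C_{2}^{I}$, and treats only $D_{p}$-quotients. Using it here therefore tacitly requires three extra steps:
\begin{itemize}
\item killing $C_{q}^{R}$;
\item reducing to a finite subproduct via openness of the kernel;
\item separately checking that $W$-kernels contain $C_{2}^{I}$, which is needed for the edge relation.
\end{itemize}
Your normal-$2$-subgroup argument supplies all of this uniformly. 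The paper's version gains only brevity, by leaning on results already in place.
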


\begin{proof}
    It follows from Lemma \ref{lem: no unexpected quotients} and Fact \ref{fact:graphrecovery} that $\Gamma \cong \Gamma(G_{\Gamma} \times C_{2}^{I})$ via the map $v \mapsto \ker(\pi_{v})$. Then the conclusion follows from \cite[Lemma 28.6.1]{fried2008field}, which shows that $\Gamma(H) \cong \Gamma(\tilde{H})$ for any profinite group $H$, where $\tilde{H}$ denotes the universal Frattini cover of $H$. 
\end{proof}

\begin{lem}
   Suppose $\Gamma = (V,R)$ is a graph. Then $\Phi(G_{\Gamma}) = 1$.
\end{lem}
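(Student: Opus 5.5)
The plan is to show that $\Phi(G_{\Gamma})$ is killed by every coordinate projection $\pi_{v}$ ($v \in V$) and $\pi_{r}$ ($r \in R$). Since these projections jointly separate the points of $D_{p}^{V} \times W^{R} \supseteq G_{\Gamma}$, this forces $\Phi(G_{\Gamma}) = 1$.

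The tool is the standard fact that if $\varphi : G \to H$ is an epimorphism of profinite groups, then $\varphi(\Phi(G)) \leq \Phi(H)$. The reason is that the preimage under $\varphi$ of a maximal open subgroup of $H$ is a maximal open subgroup of $G$. So $\Phi(G)$ lies in every such preimage, and $\varphi(\Phi(G))$ therefore lies in every maximal open subgroup of $H$. It is then enough to check two things: that each $\pi_{v}$ and $\pi_{r}$ is surjective onto $D_{p}$ or $W$ respectively, and that $\Phi(D_{p}) = \Phi(W) = 1$.

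\textbf{Surjectivity.} For $\pi_{v}$, given $x \in D_{p}$, set $a_{v} = x$ and $a_{w} = 1$ for $w \neq v$. For each edge $s = (w,w')$, choose $b_{s} = (1, a_{w}, a_{w'}) \in W$. This satisfies the defining condition of $G_{\Gamma}$, since $\lambda$ is just the projection to the last two coordinates.

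For $\pi_{r}$ with $r = (v,v')$, given $(c,x,y) \in W$, set $a_{v} = x$, $a_{v'} = y$, $b_{r} = (c,x,y)$, and make the remaining choices as above. This step uses that $v \neq v'$, i.e.\ that edges join distinct vertices, as in the graph $\Gamma(G)$, where edges require $N \neq N'$. This is the one place where the graph convention matters. If $R = \emptyset$, only the $\pi_{v}$ occur and nothing more is needed.

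\textbf{Frattini-freeness of $D_{p}$.} The subgroup $C_{p} = \langle \gamma \rangle$ is maximal in $D_{p}$, since it has index $2$. The subgroup $\langle \beta \rangle$ is maximal because it has prime index $p$. Their intersection is trivial, so $\Phi(D_{p}) = 1$.

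\textbf{Frattini-freeness of $W$.} Consider the complement $H = \{1\} \rtimes (D_{p} \times D_{p})$. Any subgroup $K$ with $H \subsetneq K$ meets $C_{q}$ nontrivially, by order or coset considerations. Since $C_{q}$ has prime order, $K$ then contains $C_{q}$, and so $K = W$. Hence $H$ is maximal and $\Phi(W) \leq H$.

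Next, the preimages under $\lambda$ of the maximal subgroups of $D_{p} \times D_{p}$ are maximal in $W$. Their intersection is $\lambda^{-1}(\Phi(D_{p} \times D_{p}))$. Now $\Phi(D_{p}\times D_{p}) \leq \Phi(D_{p}) \times \Phi(D_{p}) = 1$, using the epimorphism fact applied to the two projections. So this intersection is $C_{q}$, and $\Phi(W) \leq H \cap C_{q} = 1$.

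\textbf{Conclusion and main obstacle.} Putting these together, every element $g \in \Phi(G_{\Gamma})$ has all coordinates trivial, so $g = 1$. The only step that is not immediate is verifying that $\Phi(W) = 1$, specifically the maximality of the complement $D_{p} \times D_{p}$. That is where the argument above is concentrated.
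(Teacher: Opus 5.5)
Your proof is correct and is essentially the paper's argument. The paper's index-$p$ subgroups $A_{v}, A'_{v}$ and index-$q$ subgroups $B_{r}$ are exactly $\pi_{v}^{-1}(\langle \beta \rangle)$, $\pi_{v}^{-1}(\langle \gamma\beta \rangle)$ and $\pi_{r}^{-1}(\{1\} \rtimes (D_{p} \times D_{p}))$. The paper simply writes these pulled-back maximal subgroups down directly, so it never needs $\varphi(\Phi(G)) \leq \Phi(H)$ or the surjectivity of $\pi_{r}$, and hence needs no loop-freeness assumption. It also gets maximality of the complement immediately from its prime index $q$.
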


\begin{proof}
    We use the description of $G_{\Gamma} = C_{q}^{R} \rtimes D_{p}^{V}$. For each vertex $v \in V$, consider the pair of open subgroups
    \begin{eqnarray*}
        A_{v} &=& C_{q}^{R} \rtimes (D_{p}^{V \setminus\{v\}}\times \langle \beta \rangle) \\
        A'_{v} &=& C_{q}^{R} \rtimes (D_{p}^{V \setminus \{v\}}\times \langle \gamma \beta \rangle),
    \end{eqnarray*}
    which are $\langle \beta \rangle$ and $\langle \gamma \beta \rangle$ in the $v$ coordinate, respectively. Now $\beta$ and $\gamma\beta$ are both reflections in $D_{p}$ so $A_{v}$ and $A'_{v}$ both have index $p$ in $G_{\Gamma}$ and are therefore maximal subgroups.  For each $r \in R$, we can define 
    $$
    B_{r} = (C_{q}^{R \setminus \{r\}} \times 1) \rtimes D_{p}^{V},
    $$
    which is an open subgroup of $G_{\Gamma}$ of index $q$ and therefore also maximal. We have 
    $$
    \Phi(G_{\Gamma}) \subseteq \bigcap_{v \in V} (A_{v} \cap A'_{v}) \cap \bigcap_{r \in R} B_{r} = 1,
    $$
    so the Frattini subgroup of $G_{\Gamma}$ is trivial. 
\end{proof}

\subsection{A bounding lemma} 

For a vertex $v \in V$, there is a function $D_{p} \to G_{\Gamma} \times C_{2}^{I}$ denoted $g \mapsto (g)_{v}$, which is defined so that $(g)_{v}$ is the identity in all coordinates except in the $v$ coordinate, where it is $g$, and in the coordinates $r = (v,v')$, where it is equal to $(1,g,1)$ in $W$. Similarly, for an edge $r = (v,v')$, there is a map $W \to G_{\Gamma} \times C_{2}^{I}$ denoted $g \mapsto (g)_{r}$ defined so that if $g = (a,b,c) \in W$, then $(g)_{r}$ is the identity in all coordinates except the $v$ coordinate, where it is $b$, the $v'$ coordinate, where it is $c$, and the $r$ coordinate, where it is $g$. We will write $(C_{p})_{v}$ and $(D_{p})_{v}$ for the images of $C_{p}$ and $D_{p}$ under the map $g \mapsto (g)_{v}$; we similarly write $(C_{q})_{r}$ and $(W)_{r}$ for the corresponding images, for an edge $r \in R$. 

The group $G_{\Gamma}$ can be alternatively described as $C_{q}^{R} \rtimes D_{p}^{V}$ where $(D_{p})_{v}$ acts trivially on $(C_{q})_{r}$ for all $r$ not incident to $v$, and $(D_{p})_{v} \times (D_{p})_{v'}$ acts on $(C_{q})_{r}$ for $r = (v,v')$ as in $W$ \cite[Lemma 28.8.2]{fried2008field}. 

\begin{lem} \label{lem:easy}
    Suppose $\Gamma = (V,R)$ is a graph and $I$ is a set. 
    \begin{enumerate}
        \item Suppose $N \unlhd D_{p}^{V}$ is an open normal subgroup. If $\pi_{v}(N) \supseteq C_{p}$ for some $v \in V$, then $(C_{p})_{v} \subseteq N$.
        \item Suppose $N \unlhd G_{\Gamma} \times C_{2}^{I}$ is an open normal subgroup.  If $\pi_{v}(N) \supseteq C_{p}$ for some $v \in V$, then $(C_{p})_{v} \subseteq N$. 
        \item Suppose $N \unlhd G_{\Gamma} \times C_{2}^{I}$ is an open normal subgroup. If $C_{q} \subseteq \pi_{r}(N)$ for some $r \in R$, then $(C_{q})_{r} \subseteq N$. 
        \item Suppose $N \unlhd G_{\Gamma} \times C_{2}^{I}$ and $N \cap C_{q}^{R} = 1$. Then we have 
        $$
        N \subseteq \{(\overline{a},\overline{b},\overline{c}) \in G_{\Gamma} \times C_{2}^{I} : r = (v,v') \in R \implies \tau(a_{v}) = \tau(a_{v'}) \text{ and } b_{r} = (1,a_{v},a_{v'})\}.
        $$
    \end{enumerate}
\end{lem}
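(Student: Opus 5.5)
The plan is to prove each of the four items directly, using only the semidirect product description $G_{\Gamma} = C_{q}^{R} \rtimes D_{p}^{V}$, the embeddings $g \mapsto (g)_{v}$ and $g \mapsto (g)_{r}$, and the fact that $N$ is normal, so it is closed under commutators with arbitrary elements. The orders $2$, $p$, $q$ are pairwise coprime, so we can always clean up an element by raising it to a suitable power.

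For (1), choose $n \in N$ with $\pi_{v}(n) = \gamma$. Then $[n,(\beta)_{v}] \in N$. Its coordinates away from $v$ are trivial, and its $v$-coordinate is $\gamma \beta \gamma^{-1}\beta^{-1} = \gamma^{2}$. Since $p$ is odd, $\gamma^{2}$ generates $C_{p}$, so $(C_{p})_{v} \subseteq N$.

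For (2), run the same commutator $m = [n,(\beta)_{v}] \in N$. Now $(\beta)_{v}$ also has nontrivial entries $(1,\beta,1)$ at edges incident to $v$, so $m$ may pick up a $C_{q}^{R}$-component. Still, $m$ is trivial at all vertices other than $v$ and in $C_{2}^{I}$. So $m = c\cdot(\gamma^{2})_{v}$ with $c \in C_{q}^{R}$. Since $\tau(\gamma)=1$, the element $(\gamma^{2})_{v}$ acts trivially on $C_{q}^{R}$ and hence commutes with $c$. Therefore $m^{q} = (\gamma^{2q})_{v} \in N$, which generates $(C_{p})_{v}$.

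For (3), pick $n \in N$ with $\pi_{r}(n) = (x,1,1)$, $x \neq 1$, where $r=(v,v')$. Then $[n,(\beta)_{v}]$ has trivial vertex and $C_{2}^{I}$ coordinates, so it lies in $N \cap C_{q}^{R}$. Its $r$-coordinate is $x^{2} \neq 1$, because $(\beta)_{v}$ inverts $(C_{q})_{r}$. This gives $m \in N \cap C_{q}^{R}$ with $m_{r}$ a generator.

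Since $N$ is open, it contains the kernel of the projection to finitely many coordinates. So it suffices to kill the finitely many remaining nontrivial edge coordinates $s \neq r$ of $m$, one at a time. For an edge $s$ with an endpoint $u \notin \{v,v'\}$, the element $m\cdot (\beta)_{u} m (\beta)_{u}^{-1} \in N$ squares the $s$-coordinate's inverse away: it kills $s$ and keeps the $r$-coordinate $m_{r}^{2}$. Coordinates at edges not incident to $u$ also double, so no new coordinates appear, and we iterate.

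The main obstacle is an edge $s$ with the same endpoints as $r$ (e.g.\ the reverse edge $(v',v)$), which no vertex action separates from $r$. I would handle this by checking the convention on $R$ (irreflexive and symmetric, with one coordinate per unordered pair, or else noting that the two coordinates are then tied by $\lambda$ in a way that still gives the claim). If needed, I would also use elements $(g)_{s}$ of the edge subgroups to separate them.

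For (4), both $N$ and $C_{q}^{R}$ are normal, so $[N, C_{q}^{R}] \subseteq N \cap C_{q}^{R} = 1$. Hence every $n \in N$ centralizes $C_{q}^{R}$. Reading off the action on $(C_{q})_{r}$ forces $\tau(a_{v})\tau(a_{v'}) = 1$, i.e.\ $\tau(a_{v})=\tau(a_{v'})$, for every edge $r=(v,v')$.

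Now write $n = c\, d\, e$ with $c \in C_{q}^{R}$, $d \in D_{p}^{V}$ (via the section) and $e \in C_{2}^{I}$. The parity condition just shown says $d$ acts trivially on $C_{q}^{R}$, so $c$, $d$, $e$ pairwise commute. Since $d^{2p}=1$ and $e^{2}=1$, we get $n^{2p} = c^{2p} \in N \cap C_{q}^{R} = 1$. As $\gcd(2p,q)=1$, this gives $c = 1$, i.e.\ $b_{r} = (1,a_{v},a_{v'})$ for all $r$, as claimed.
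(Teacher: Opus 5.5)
Your proof is correct. For (1)--(3) it uses the same mechanism as the paper: conjugate by reflections, then clean up with coprime powers.

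\emph{Items (1)--(3).} In (1) the paper conjugates by the element $h$ that is $\beta$ away from $v$, and needs two rounds to reach $(\gamma^{4})_{v}$. Your single commutator with $(\beta)_{v}$ gives $(\gamma^{2})_{v}$ at once. In (3) the paper passes to $g^{2p}$ where you take a commutator, and it finishes by closedness of $N$ where you use openness; either works here.

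\emph{Item (4).} This is where the routes genuinely differ. The paper projects to each $\pi_{r}(N) \unlhd W$ and asserts $\pi_{r}(N)\cap C_{q}=1$ without proof. That claim does hold, by exactly the commutator with $(\beta)_{v}$ from your (3). The paper then runs the commutator and $2p$-th power computation inside $W$. Your global argument, $[N,C_{q}^{R}]\subseteq N\cap C_{q}^{R}=1$ followed by $n^{2p}=c^{2p}$, skips that intermediate claim. It also works for an arbitrary normal $N$, which matches the statement of (4); no openness is assumed there, although the paper's proof says ``open''.

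\emph{The reverse-edge issue.} Of the resolutions you list, the first (one coordinate per unordered edge) is the right one, and it is forced. Suppose $(v,v')$ and $(v',v)$ indexed separate coordinates. Then every element of $G_{\Gamma}\times C_{2}^{I}$ acts on $(C_{q})_{(v,v')}\times(C_{q})_{(v',v)}$ by the same scalar $\pm1$ on both factors. So the diagonal is an open normal subgroup, already for the two-vertex graph with $I=\emptyset$. It projects onto $C_{q}$ at $(v,v')$ without containing $(C_{q})_{(v,v')}$, so (3) would be false. Your fallbacks therefore cannot work: you cannot tie the two coordinates through $\lambda$ to rescue the claim, and conjugating by edge elements also acts by scalars.

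The paper silently uses the one-coordinate convention too. In its proof of (3) it takes ``$w\in V\setminus\{v,v'\}$ incident to $r'$ (which exists since $r'\neq r$)''. Later, in the proof that graph closure is contained in algebraic closure, it asserts there is at most one $W$-class below $\beta_{0}\wedge\beta_{1}$. Make the convention explicit at the start and drop the hedges.
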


\begin{proof}
(1) We will use the description of $D_{p} = \langle \beta, \gamma | \beta^{2} = 1, \gamma^{p} = 1, \gamma^{\beta} = \gamma^{-1}\rangle$. Pick $g \in N$ such that $\pi_{v}(g) = \gamma$ and let $h$ be the element of $D_{p}^{V}$ which is $1$ in the $v$ coordinate and $\beta$ in all other coordinates.  Then $h = h^{-1}$, so $g' := hghg \in N$. We have $\pi_{v}(g') = \gamma^{2}$, while for every $v' \neq v$, $\pi_{v'}(g') = \beta\,\pi_{v'}(g)\,\beta\,\pi_{v'}(g) \in C_{p}$; thus $g' \in C_{p}^{V}$. Repeating the operation, $hg'hg' = (\gamma^{4})_{v} \in N$, and $(\gamma^{4})_{v}$ generates $(C_{p})_{v}$ since $p$ is odd.

(2) Let $\pi : G_{\Gamma} \times C^{I}_{2} \to D_{p}^{V}$ be the projection to the $V$ coordinates. By (1), $(C_{p})_{v} \subseteq \pi(N)$, so we can find some $g = (\overline{a},\overline{b},\overline{c}) \in N$ such that $a_{v}$ generates $(C_{p})_{v}$ and $a_{v'} = 1$ for all $v' \neq v$ (here $\overline{a} \in D_{p}^{V}$, $\overline{b} \in W^{R}$, and $\overline{c} \in C_{2}^{I}$). By the definition of $G_{\Gamma}$, for an edge $r \in R$, if $v$ is not incident to $r$, then $b_{r} = (x,1,1)$ for some $x \in C_{q}$, so $b_{r}^{q} = (1,1,1) \in W$. If, on the other hand, $r = (v,v')$, then $b_{r} = (x,a_{v},1)$ for some $x \in C_{q}$ and $b_{r}^{q} = (1,a_{v}^{q},1) \in W$. Also, every element of $C_{2}$ has order $2$. Thus $g^{2q} \in N$ and generates $(C_{p})_{v}$. 

(3) Suppose $r = (v,v')$ and let $\delta$ denote the generator of $C_{q}$. Pick $g \in N$ with $\pi_{r}(g) = \delta$. After replacing $g$ with $g^{2p}$, we may assume $g \in N \cap C_{q}^{R}$, that is, it is the identity in all the $V$ and $I$ coordinates, since $\delta^{2p}$ is also a generator of $C_{q}$. Let $r' \in R$ be any edge with $r' \neq r$ (if there is no such, we are already done) and let $w \in V \setminus \{v,v'\}$ be a vertex incident to the edge $r'$ (which exists since $r' \neq r$). Then $(\beta)_{w} = (\beta)_{w}^{-1}$ and we have $g' = (\beta)_{w}g(\beta)_{w}g \in N$.  Note that since $w$ is not incident to $r$, we have $\pi_{r}(g') = \delta^{2}$. If $\pi_{r'}(g) = (\delta^{i},1,1)$, then we have 
$$
\pi_{r'}(g') = (1,\beta,1)(\delta^{i},1,1)(1,\beta,1)(\delta^{i},1,1)  = (\delta^{-i},1,1)(\delta^{i},1,1) = (1,1,1).
$$
Hence some power of $g'$ will be in $N$ and agree with $(\delta)_{r}$ in both the $r$ and $r'$ coordinate. Iterating, we can find an element of $N$ which agrees with $(\delta)_{r}$ on any finitely many coordinates. Since $N$ is closed, get $(\delta)_{r} \in N$.

(4) First, we prove the corresponding statement for $W$: if $N \unlhd W$ and $N \cap C_{q} = 1$, then $N \subseteq \{(1,b,c) : \tau(b) = \tau(c)\}$. First, suppose towards contradiction that there is some $g = (a,b,c) \in N$ with $\tau(b) \neq \tau(c)$.  Again letting $\delta$ denote the generator of $C_{q}$, we have that the commutator $[(\delta,1,1),g] \in N$ and we calculate
$$
[(\delta,1,1),g] = (\delta^{-1},1,1)(a,b,c)(\delta,1,1)(a,b^{-1},c^{-1}) = (\delta^{-2},1,1),
$$
and since $\delta^{-2}$ also generates $C_{q}$, we have $C_{q} \subseteq N$. This shows that if $(a,b,c) \in N$, then $\tau(b) = \tau(c)$.  Next, suppose there is some $g = (a,b,c) \in N$ with $a \neq 1$.  Since $\tau(b) = \tau(c)$, we have $g^{2p} = (a^{2p}, b^{2p},c^{2p}) = (a^{2p},1,1)$ and since the order of $a$ is prime to $2p$, we get that $C_{q} \subseteq N$. This contradiction completes the proof. 

Now the statement follows. If $N \unlhd G_{\Gamma} \times C_{2}^{I}$ is an open normal subgroup and $C_{q}^{R} \cap N = 1$, then, for each $r \in R$, $\pi_{r}(N)$ is a normal subgroup of $W$ with $C_{q} \cap \pi_{r}(N) = 1$. Thus if $(\overline{a},\overline{b},\overline{c}) \in N$, it follows that, for all $r = (v,v') \in R$, $b_{r} = (1,a_{v},a_{v'})$ with $\tau(a_{v}) = \tau(a_{v'})$. 
\end{proof}

\begin{lem} \label{lem: still proper}
    Suppose $\Gamma = (V,R)$ is a graph. Let $P = C_{q}^{R}C_{p}^{V}$ be the (normal) product of the $p$- and $q$-Sylow subgroups of $G_{\Gamma}$. If $N \unlhd G_{\Gamma}$ is a proper normal subgroup, then $NP$ is also a proper normal subgroup.
\end{lem}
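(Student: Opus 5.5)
The idea is to reduce to the case where $N$ is open, and then argue by contradiction: if $NP = G_{\Gamma}$, we show that $N$ contains all of $P$, which forces $N = NP = G_{\Gamma}$. Throughout, normal subgroups are closed, as in the setting of the paper.

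\emph{Step 0 (reduction to open $N$).} Since $N$ is a proper closed normal subgroup of the profinite group $G_{\Gamma}$, there is a proper open normal subgroup $M \supseteq N$. Then $NP \subseteq MP$, so it suffices to prove that $MP$ is proper. We may therefore assume $N$ is open, which is what Lemma \ref{lem:easy} requires (we use it with $I = \emptyset$).

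\emph{Step 1 (the quotient by $P$).} Using the description $G_{\Gamma} = C_{q}^{R} \rtimes D_{p}^{V}$, we have
$$G_{\Gamma}/P \cong D_{p}^{V}/C_{p}^{V} \cong C_{2}^{V},$$
with the quotient map given coordinatewise by $\tau \circ \pi_{v}$. Suppose, towards a contradiction, that $NP = G_{\Gamma}$. Then $N$ surjects onto $C_{2}^{V}$.

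\emph{Step 2 (the $p$-part lies in $N$).} Fix $v \in V$. The image $\pi_{v}(N)$ is a normal subgroup of $D_{p}$, and it surjects onto $C_{2}$ via $\tau$. The normal subgroups of $D_{p}$ are $1$, $C_{p}$ and $D_{p}$, so $\pi_{v}(N) = D_{p} \supseteq C_{p}$. Lemma \ref{lem:easy}(2) then gives $(C_{p})_{v} \subseteq N$ for every $v \in V$.

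\emph{Step 3 (the $q$-part lies in $N$).} Fix an edge $r = (v,v')$. Since $N$ surjects onto $C_{2}^{V}$, we can choose $g \in N$ with $\tau(\pi_{v}(g)) \neq \tau(\pi_{v'}(g))$. Then $\pi_{r}(g) = (a,b,c)$ with $\tau(b) \neq \tau(c)$. The commutator computation in the proof of Lemma \ref{lem:easy}(4) gives
$$[(\delta,1,1),\pi_{r}(g)] = (\delta^{-2},1,1) \in \pi_{r}(N),$$
and $\delta^{-2}$ generates $C_{q}$, so $C_{q} \subseteq \pi_{r}(N)$. Lemma \ref{lem:easy}(3) then yields $(C_{q})_{r} \subseteq N$.

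\emph{Step 4 (conclusion).} $N$ is closed and contains every $(C_{p})_{v}$ and every $(C_{q})_{r}$. These subgroups topologically generate $P$ (their product over finitely many coordinates is dense), so $P \subseteq N$. Hence $N = NP = G_{\Gamma}$, contradicting properness.

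The main subtlety I expect is bookkeeping rather than a real obstacle. One point is that Lemma \ref{lem:easy} is stated for open $N$, which Step 0 handles. The other is checking that $P$ is the closure of the subgroup generated by the coordinate subgroups $(C_{p})_{v}$ and $(C_{q})_{r}$ when $V$ and $R$ are infinite. For this one notes that $P = C_{q}^{R}C_{p}^{V}$, and each factor is the closure of its finitely supported elements.
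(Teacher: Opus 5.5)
Your argument is correct, and it reaches the conclusion by a more direct route than the paper's.

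The paper assumes $NP = G_{\Gamma}$ and writes $(\beta)_{v} = ab$ with $a \in P$ and $b \in N$. It then builds an involution of $N$ supported at $v$ by hand. It passes to $c = b^{pq}$ to kill the $p$- and $q$-parts away from $v$. It then conjugates by an element $d \in C_{q}^{R}$ to clear the $C_{q}$-components on the edges at $v$. From $(\beta)_{v} \in N$ it gets $\pi_{v}(N) = D_{p}$, hence $(D_{p})_{v} \subseteq N$. It uses these involutions to produce, for each edge, an element of $N$ whose endpoint coordinates have different $\tau$-parity. The commutator argument and Lemma~\ref{lem:easy}(3) then give $(C_{q})_{r} \subseteq N$, exactly as in your Step~3. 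Since the $(D_{p})_{v}$ and $(C_{q})_{r}$ generate, $N = G_{\Gamma}$.

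You instead observe that both facts the paper extracts from $(\beta)_{v}$ follow directly from the surjectivity of $N$ onto $G_{\Gamma}/P \cong C_{2}^{V}$. It also suffices for you to show $P \subseteq N$, since then $N = NP = G_{\Gamma}$.

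What your version buys:
\begin{itemize}
\item It removes the explicit power-and-conjugation computation altogether. As written in the paper, that computation produces some reflection at $v$ rather than $\beta$ itself. This is harmless, since any reflection gives $\pi_{v}(N) = D_{p}$, but it is a place where care is needed.
\item Your Step~0 makes explicit the reduction to open $N$ needed to quote Lemma~\ref{lem:easy}. The paper passes over this silently, which is harmless because the lemma is only applied to finite graphs.
\end{itemize}

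The paper's route yields the slightly stronger intermediate fact that each full $(D_{p})_{v}$ lies in $N$. The statement does not need this.
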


\begin{proof}
    Suppose towards contradiction that $NP = G_{\Gamma}$. Fix some $v \in V$. Then there are $a \in P$ and $b \in N$ such that $ab = (\beta)_{v}$, where $(\beta)_{v}$ denotes the element which is an involution in the $v$ coordinate and the identity in every other coordinate. We will show that $(\beta)_{v} \in N$. 

    Let $c = b^{pq} \in N$. Then $\pi_{v}(c) = \pi_{v}(b^{p}) = \beta$ and $\pi_{v'}(c) = \pi_{v'}(b^{p}) = 1$ for all $v' \neq v$ in $V$. Thus if $r$ is an edge that is not incident to $v$, we have $\pi_{r}(b^{p}) = (x_{r},1,1) \in C_{q} \rtimes (D_{p}\times D_{p}) = W$ so $\pi_{r}(c) = (1,1,1)$. On the other hand, if $r = (v,v')$ is incident to $v$, then $\pi_{r}(c) = (x_{r},\beta,1)$. Let $d \in G_{\Gamma}$ be an element such that $\pi_{r}(d) = (x_{r}^{\frac{q+1}{2}},1,1)$ for all $r$ incident to $v$ and the identity in every other coordinate. Then since $N$ is normal, $d^{-1}cd \in N$, and for each $r = (v,v')$, we have 
    \begin{eqnarray*}
    \pi_{r}(d^{-1}cd) &=& (x_{r}^{-\left( \frac{q+1}{2}\right)},1,1) \cdot (x_{r},\beta,1) \cdot (x_{r}^{\frac{q+1}{2}},1,1)\\
    &=& (x_{r}^{1-\left(\frac{q+1}{2}\right)},\beta,1) \cdot (x_{r}^{\frac{q+1}{2}},1,1) \\
    &=& (x_{r}^{1-(q+1)},\beta,1) \\
    &=& (1,\beta,1).
    \end{eqnarray*}
    Here we use that $\beta$ acts on $C_{q}$ by inversion in passing from the second line to the third. Thus, we have $d^{-1}cd = (\beta)_{v} \in N$. 

    We know, then, that $\pi_{v}(N)$ is a normal subgroup of $D_{p}$ containing $\beta$, so $\pi_{v}(N) = D_{p}$ which entails $(C_{p})_{v} \subseteq N$ by Lemma \ref{lem:easy}(2). Since $(\beta)_{v}$ is also in $N$, we get $(D_{p})_{v} \subseteq N$. Moreover, if $r = (v',v'') \in R$, since $(\beta)_{v'} \in N$, there is an element $g \in N$ such that, writing $\pi_{r}(g) = (a,b,c)$, we have $\tau(b) \neq \tau(c)$.  It follows that $C_{q} \subseteq \pi_{r}(N)$, by (the proof of) Lemma \ref{lem:easy}(4).  Then, by Lemma \ref{lem:easy}(3), we obtain $(C_{q})_{r} \subseteq N$.  Since $r$ is arbitrary, we have shown $N = G_{\Gamma}$, a contradiction. 
\end{proof}

The following technical lemma will be a key ingredient in our analysis of $\mathrm{Th}(S(\widetilde{G_{\Gamma}}))$ when $\Gamma$ is an infinite graph:

\begin{lem} \label{lem: bounding}
    Let $\Gamma = (V,R)$ be a graph and $I$ a (possibly empty) set.  Suppose $A = (G_{\Gamma}\times C_{2}^{I})/N$ is a quotient of $G_{\Gamma} \times C_{2}^{I}$, where $N \unlhd G_{\Gamma} \times C_{2}^{I}$ is an open normal subgroup. If $\Gamma_{0} = (V_{0},R_{0}) \subseteq \Gamma$ is the induced subgraph of $\Gamma$ with vertices 
    $$
    V_{0} = \{v \in V : \pi_{v}(N) \lneq D_{p} \text{ or there exists } r \in R \text{ incident to } v \text{ with } \pi_{r}(N) \cap C_{q} = 1\},
    $$
    then $A$ is a finite quotient of $G_{\Gamma_{0}} \times C_{2}^{\ell}$ for some $\ell$. Moreover, we can bound $\ell$ and $|V_{0}|$ in terms of $|A|$. 
\end{lem}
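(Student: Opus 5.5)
The plan is to show that $N$ contains all the $p$- and $q$-parts of the group attached to vertices and edges outside $\Gamma_{0}$. Quotienting by these parts should leave exactly $G_{\Gamma_{0}}$ times an elementary abelian $2$-group. Since $A$ is finite, only a bounded-rank piece of that $2$-group survives in $A$.

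\emph{Step 1 (what $N$ contains).} Let $V_{1} = V \setminus V_{0}$ and let $R_{1}$ be the set of edges incident to some vertex of $V_{1}$.
For $v \in V_{1}$ we have $\pi_{v}(N) = D_{p} \supseteq C_{p}$, so Lemma \ref{lem:easy}(2) gives $(C_{p})_{v} \subseteq N$.
For $r \in R_{1}$, incident to $v \in V_{1}$, the definition of $V_{0}$ forces $\pi_{r}(N) \cap C_{q} \neq 1$. Since $C_{q}$ has prime order, this means $C_{q} \subseteq \pi_{r}(N)$, and Lemma \ref{lem:easy}(3) gives $(C_{q})_{r} \subseteq N$.
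Let $M$ be the closed normal subgroup generated by $C_{p}^{V_{1}}$ and $C_{q}^{R_{1}}$. Then $M \subseteq N$, because $N$ is closed.

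\emph{Step 2 (identify the quotient by $M$).} I will use the description $G_{\Gamma} = C_{q}^{R} \rtimes D_{p}^{V}$.
Killing $C_{q}^{R_{1}}$ and $C_{p}^{V_{1}}$ leaves $C_{q}^{R \setminus R_{1}} \rtimes (D_{p}^{V_{0}} \times C_{2}^{V_{1}})$.
Now $R \setminus R_{1} = R_{0}$ is exactly the set of edges of the induced subgraph on $V_{0}$. The factors $C_{2}^{V_{1}}$ act trivially on each $(C_{q})_{r}$ with $r \in R_{0}$, since such $r$ is not incident to $V_{1}$. Hence
$$(G_{\Gamma} \times C_{2}^{I})/M \cong G_{\Gamma_{0}} \times C_{2}^{J}, \qquad J = V_{1} \sqcup I.$$
This is the step I expect to need the most care. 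One must check that $M$ is really just the product $C_{q}^{R_{1}} C_{p}^{V_{1}}$, with no extra normal closure: these factors are normalized because $C_{p}$ acts trivially on $C_{q}$ through $\tau$. One must also check that the resulting semidirect product is the stated one, including in the infinite-product topology.

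\emph{Step 3 (reduce the $2$-part).} $A$ is a quotient of $G_{\Gamma_{0}} \times C_{2}^{J}$ by $N/M$. Set $K = (N/M) \cap C_{2}^{J}$. This is an open subgroup of the $\mathbb{F}_{2}$-space $C_{2}^{J}$, with index at most $|A|$.
Choose a complement $C_{2}^{J} = K \times E$ with $E \cong C_{2}^{\ell}$ and $\ell \leq \log_{2}|A|$. Since $K$ is central and contained in $N/M$, quotienting by $K$ first shows that $A$ is a quotient of $G_{\Gamma_{0}} \times C_{2}^{\ell}$.

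\emph{Step 4 (bound $|V_{0}|$).} Let $V'$ be the set of $v$ with $\pi_{v}(N) \lneq D_{p}$.
The projection onto $D_{p}^{V'}$ has a section, and $N$ maps into $\prod_{v \in V'} \pi_{v}(N)$. Hence $A$ surjects onto $\prod_{v\in V'} D_{p}/\pi_{v}(N)$, whose factors are nontrivial, so $|V'| \leq \log_{2}|A|$.
Let $R'$ be the set of edges $r$ with $\pi_{r}(N) \cap C_{q} = 1$. Then $N \cap C_{q}^{R'} = 1$: a nontrivial element would have a nontrivial $C_{q}$-coordinate at some $r \in R'$. So $C_{q}^{R'}$ embeds into $A$, giving $|R'| \leq \log_{q}|A|$.
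Since $V_{0} \subseteq V' \cup \bigcup_{r \in R'} r$, we get $|V_{0}| \leq \log_{2}|A| + 2\log_{q}|A|$.
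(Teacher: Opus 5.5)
Your proof is correct, and Steps 1--2 are exactly the paper's opening paragraph. There, Lemma \ref{lem:easy}(2),(3) give $C_p^{V\setminus V_0}C_q^{R\setminus R_0}\subseteq N$, and the quotient is identified with $G_{\Gamma_0}\times C_2^{V\setminus V_0}\times C_2^{I}$. Your bounds, however, take a shorter route than the paper's.

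For $\ell$: the paper's Claim 4 uses the very same subgroup as your $K$, namely $\overline{N}\cap\ker(\pi_{\Gamma_0})$. It estimates the index by $[H:\overline{N}]\,[H:\ker(\pi_{\Gamma_0})]$, which only gives $\ell\le\log_2(|A|\,|G_{\Gamma_0}|)$ and so forces the paper to bound $|V_0|$ first. Your observation that $C_2^{J}/K$ embeds in $A$ gives $\ell\le\log_2|A|$ directly.

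For $|V_0|$: the paper partitions $V_0$ according to whether $\pi_v(N)$ is $1$, $C_p$ or $D_p$ and proves three separate claims. The third passes to the quotient by $NC_2^{I}C_p^{V}$, reduces to a quotient of a semidirect product of $C_q^{R'}$ by an elementary abelian $2$-group, and invokes Lemma \ref{lem:easy}(4) to see that the kernel there has no $q$-part. You handle the first two cases at once with the surjection $A\twoheadrightarrow\prod_{v\in V'}D_p/\pi_v(N)$. You replace the third by the one-line observation $N\cap C_q^{R'}=1$, so $C_q^{R'}$ embeds in $A$. Reading off orders by Lagrange, this gives $|V_0|\le k+2m$ when $|A|=2^kp^nq^m$, at least as good as the paper's $\min\{k,n\}+k+2m$. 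So the case split buys nothing beyond organization.

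Nothing is lost downstream either. The kernel of your map $G_\Gamma\times C_2^{I}\to (G_{\Gamma_0}\times C_2^{J})/K$ is $N\cap\ker(\pi_{\Gamma_0})$, so your intermediate group is literally the paper's $\tilde{A}$. Hence the later uses of this lemma go through unchanged with your version: Theorem \ref{thm: thirdstep}(1), and the ``inspection of the proof'' in Lemma \ref{lem: disjoint in D}.
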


\begin{proof}
    By definition of $V_{0}$, if $v \not\in V_{0}$, then $\pi_{v}(N) = D_{p}$. By Lemma \ref{lem:easy}(2), it follows that $(C_{p})_{v} \subseteq N$ for all $v \not\in V_{0}$.  The subgroup generated by the groups $(C_p)_{v}$ for $v \in V \setminus V_{0}$ is dense in the subgroup $C_{p}^{V \setminus V_{0}}$ so, since $N$ is closed, we must have $C_{p}^{(V \setminus V_{0})} \subseteq N$. Similarly, if $r \not\in R_{0}$, then $C_{q} \subseteq \pi_{r}(N)$, so, by Lemma \ref{lem:easy}(3), $(C_{q})_{r} \subseteq N$ and since $N$ is closed, we obtain $C_{q}^{R \setminus R_{0}} \subseteq N$. Thus, $A$ is a quotient of $(G_{\Gamma} \times C_{2}^{I})/(C_{p}^{(V \setminus V_{0})}C_{q}^{(R \setminus R_{0})}) \cong G_{\Gamma_{0}} \times C_{2}^{(V \setminus V_{0})} \times C_{2}^{I}$. Since $A$ is finite, we obtain that $A$ is a quotient of $G_{\Gamma_{0}} \times C_{2}^{\ell}$ for some $\ell$. 

    Now we explain how to get the bounds.  Write $|A| = 2^{k}p^{n}q^{m}$. Since $C_{p}$ is the only nontrivial normal subgroup of $D_{p}$, we may partition $V_{0} = V_{1} \cup V_{2} \cup V_{3}$, where 
    \begin{eqnarray*}
        V_{1} &=& \{v \in V_{0} : \pi_{v}(N) = 1\} \\
        V_{2} &=& \{v \in V_{0} : \pi_{v}(N) = C_{p}\} \\
        V_{3} &=& \{v \in V_{0} : \pi_{v}(N) = D_{p}\}.
    \end{eqnarray*}
    We will bound the cardinality of each $V_{i}$ as a function of $k$, $n$, and $m$. 

    \textbf{Claim 1}: $|V_{1}| \leq \min\{k,n\}$.  
    
    \emph{Proof of Claim}: If we replace $A$ by a quotient $A' := (G_{\Gamma} \times C_{2}^{I})/(NC_{q}^{R}C_{2}^{I})$, we still have $|A'| = p^{n}c$ for some $c$ with $\mathrm{gcd}(c,p) = 1$. Additionally, by Lemma \ref{lem:easy}(2), we know $C_{p}^{V_{2} \cup V_{3}} \subseteq N$. Hence, this $A'$ may itself be viewed as a quotient of 
    $$
    (G_{\Gamma} \times C_{2}^{I})/(C_{p}^{V_{2} \cup V_{3}}C_{q}^{R}C_{2}^{I})  \cong D_{p}^{V_{1}} \times C_{2}^{V_{2}} \times C_{2}^{V_{3}}.
    $$
    Thus we can write $A' = (D_{p}^{V_{1}} \times C_{2}^{V_{2}} \times C_{2}^{V_{3}})/\overline{N}$ for some normal subgroup $\overline{N}$. Quotienting further, we can replace $A'$ by $A'' = (D_{p}^{V_{1}} \times C_{2}^{V_{2}} \times C_{2}^{V_{3}})/(NC_{2}^{V_{2}}C_{2}^{V_{3}})$ and we have that $A''$ may be viewed as a quotient of $D_{p}^{V_{1}}$ and $|A''| = p^{n}c$ for some $c$ with $\mathrm{gcd}(p,c) = 1$. Write $A'' = D_{p}^{V_{1}}/N''$ and note that $\pi_{v}(N'') = 1$ for all $v \in V_{1}$ as well, by construction and the choice of $V_{1}$. In other words, $N'' = 1$ and $A'' \cong D_{p}^{V_{1}}$ so we have $|A''| = (2p)^{|V_{1}|}$. Since $|A| \geq |A''|$, we obtain $|V_{1}| \leq \min\{k,n\}$. \qed

    \textbf{Claim 2}: $|V_{2}| \leq k$. 

    \emph{Proof of Claim}: Again by Lemma \ref{lem:easy}(2), we have $(C_{p})_{v} \subseteq N$ for all $v \in V_{2}$. We know, then, that $\overline{g} \in \bigcap_{v \in V_{2}} N \ker(\pi_{v})$ if and only if $\overline{g}$ is in $C_{p}$ in the $v$ coordinate for every $v \in V_{2}$ (and can be arbitrary otherwise).  From this it follows that 
    $$
    N \bigcap_{v \in V_{2}} \ker(\pi_{v}) = \bigcap_{v \in V_{2}}N \ker(\pi_{v}). 
    $$
    Therefore, we have 
    $$
    (G_{\Gamma} \times C_{2}^{I})/ \left( N\bigcap_{v \in V_{2}} \ker(\pi_{v}) \right) = (G_{\Gamma}\times C_{2}^{I})/\bigcap_{v \in V_{2}} N \ker(\pi_{v}) \cong (C_{2})^{V_{2}}. 
    $$
    This shows $A$ has $C_{2}^{V_{2}}$ as a quotient.  Since this group has cardinality $2^{|V_{2}|}$, we get $|V_{2}| \leq k$.\qed

    \textbf{Claim 3}: $|V_{3}| \leq 2m$. 

    \emph{Proof of Claim}:  Let $R' = \{r \in R : \pi_{r}(N) \cap C_{q} = 1\}$ and let $V'$ be the vertices that are on some edge in $R'$. Each vertex $v \in V_{3}$ is on some edge in $R'$ and each edge has only two vertices so we have $|R'| \geq \frac{|V_{3}|}{2}$. Let $N' = NC_{2}^{I}C_{p}^{V}$. Then $A$ has a quotient 
    $$
    (G_{\Gamma} \times C_{2}^{I})/N' \cong (C_{q}^{R} \rtimes C_{2}^{V})/M
    $$
    for some open normal $M \unlhd C_{q}^{R} \rtimes C_{2}^{V}$. Recall that $C_{2}^{V}$ acts on each coordinate $r = (v,v')$ of $R$ by inverting if the $v$ and $v'$ coordinates of an element of $C_{2}^{V}$ disagree and otherwise acts trivially. Thus $C_{2}^{V} = C_{2}^{V'} \times C_{2}^{(V \setminus V')}$ and the $C_{2}^{(V \setminus V')}$ factor acts trivially on all of the $R'$ coordinates. Moreover, if $r \not\in R'$, then $\pi_{r}(N) \supseteq C_{q}$ and hence $(C_{q})_{r} \subseteq N$ by Lemma \ref{lem:easy}(3) (and hence $(C_{q})_{r} \subseteq M$ for all $r \not\in R'$).  It follows, then, that  
    $$
    (C_{q}^{R} \rtimes C_{2}^{V})/M \cong ((C_{q}^{R'} \rtimes C_{2}^{V'}) \times C_{2}^{(V \setminus V')})/M''
    $$
    for some open normal $M'' \unlhd ((C_{q}^{R'} \rtimes C_{2}^{V'}) \times C_{2}^{(V \setminus V')})$. Then we have $M''C_{2}^{(V \setminus V')}$ is normal in $((C_{q}^{R'} \rtimes C_{2}^{V'}) \times C_{2}^{(V \setminus V')})$ and we can write 
    $$
    ((C_{q}^{R'} \rtimes C_{2}^{V'}) \times C_{2}^{(V \setminus V')})/M''C_{2}^{(V \setminus V')} \cong C_{q}^{R'} \rtimes C_{2}^{V'}/H
    $$
    for some $H \unlhd C_{q}^{R'} \rtimes C_{2}^{V'}$ with $H \cap C_{q}^{R'} = 1$.  Note that we have obtained this group by further  quotienting $A$ by subgroups that contain no elements of order $q$. By Lemma \ref{lem:easy}(4) (applied to the graph $(V',R')$), we have that if $((a_{r})_{r \in R'},(b_{v})_{v \in V'}) \in H$, then for all $r = (v,v') \in R'$, $a_{r} = 1$ and $b_{v} = b_{v'}$. It follows that every element of $H$ has order $2$. The quotient $(C_{q}^{R'} \rtimes C_{2}^{V'})/H$ of $A$ also has order $q^{m}c$ for some $c$ with $\mathrm{gcd}(q,c) = 1$. Therefore, we have $|R'| = m$, so $|V_{3}| \leq 2m$. \qed

    Note that Claims 1, 2, and 3 give a bound for $|V_{0}|$. Note that if we have a bound for $|V_{0}|$, we also obtain a bound for $|G_{\Gamma_{0}}|$. To finish, we bound $\ell$:

    \textbf{Claim 4}: $\ell \leq \log_{2}(|A||G_{\Gamma_{0}}|)$.

    \emph{Proof of Claim}: By Lemma \ref{lem:easy}(2) and (3), we know that $N$ contains $(C_{q})_{r}$ for every edge $r$ not in $\Gamma_{0}$ and contains $(C_{p})_{v}$ for every vertex $v$ not in $\Gamma_{0}$. Therefore, $A$ is a quotient of the group 
    $$
    G_{\Gamma_{0}} \times C_{2}^{V \setminus V_{0}} \times C_{2}^{I}. 
    $$ 
    Call this group $H$ and let $\overline{N}$ denote the image of $N$ in $H$, so $A \cong H/\overline{N}$. Let $\pi_{\Gamma_{0}} : H \to G_{\Gamma_{0}}$ denote the projection to the $G_{\Gamma_{0}}$ coordinates and let $\overline{N}' = \overline{N} \cap \ker(\pi_{\Gamma_{0}})$ and let $\tilde{A} = H/\overline{N}'$.  Clearly $A$ is a quotient of $\tilde{A}$ and if $M$ is the projection of $\overline{N}'$ to the $(C_{2}^{V \setminus V_{0}} \times C_{2}^{I})$ factor, we have
    $$
    \tilde{A} = H/\overline{N}' \cong \left((C_{2}^{V\setminus V_{0}}\times C_{2}^{I})/M\right) \times G_{\Gamma_{0}}.
    $$
    Now $(C_{2}^{V \setminus V_{0}} \times C_{2}^{I})/M \cong C_{2}^{\ell'}$ for some $\ell'$ so if we bound $\ell'$ in terms of $|A|$ we are done. We calculate 
    $$
    |\tilde{A}| = [H: \overline{N} \cap \ker(\pi_{\Gamma_{0}})] \leq [H:\overline{N}][H: \ker(\pi_{\Gamma_{0}})] = |A| |G_{\Gamma_{0}}|.
    $$
    Since $\ell' \leq  \log_{2}(|\tilde{A}|)\leq  \log_{2}(|A||G_{\Gamma_{0}}|) $ and we have already bounded $|V_{0}|$ (and thus $|G_{\Gamma_{0}}|$) in terms of $|A|$, we obtain the desired bound. 
\end{proof}

\begin{rem}
    It would have been more natural to define 
    $$
    V_{0} = \{v \in V : \pi_{v}(N) \lneq D_{p}\},
    $$
    but the statement of the lemma with this definition would be incorrect. Consider the graph $\Gamma$ with vertices $a \neq b$ and an edge between them. Then $G_{\Gamma} \cong W$ and $W$ has a normal subgroup $M = \{(1,g,h) : \tau(g) = \tau(h)\}$ with the quotient $W/M \cong C_{q} \rtimes C_{2} \cong D_{q}$. But $\pi_{a}(M)$ and $\pi_{b}(M)$ are both equal to $D_{p}$. 
\end{rem}

\subsection{Maps between the groups $G_{\Gamma}$}

If $\Phi : S \to S'$ is an embedding of complete systems, then $\Phi$ induces an injective homomorphism of graphs, which we will denote $\Gamma(\Phi) : \Gamma(S) \to \Gamma(S')$.  In general, this will not be an embedding of graphs: for example, one can consider the embedding $\Lambda : S(D_{p} \times D_{p}) \to S(W)$ dual to quotient map $\lambda : W \to D_{p} \times D_{p}$.  The graph interpreted in $S(D_{p} \times D_{p})$ consists of two vertices with no edge between them, while the graph interpreted in $S(W)$ also has two vertices but now there is an edge. When $\varphi : G' \to G$, we will sometimes write $\Gamma(\varphi)$ for the induced map $\Gamma(G) \to \Gamma(G')$ which is the same as $\Gamma(\Phi)$ for the dual map $\Phi : S(G) \to S(G')$, since the map of graphs is the same, regardless of whether we see it as coming from the map of groups or the map of inverse systems. 

\begin{lem} \label{extendingauts}
Suppose $\sigma_{1},\sigma_{2} \in \text{Aut}(D_{p})$.  Then there is $\nu \in \text{Aut}(W)$ so that the following diagram commutes:
$$
\xymatrix{ 
W \ar@{->}[r]^{\lambda} \ar@{->}[d]^{\nu} & D_{p} \times D_{p} \ar@{->}[d]^{\sigma_{1} \times \sigma_{2}} \\
W \ar@{->}[r]^{\lambda} & D_{p} \times D_{p}.
}
$$
\end{lem}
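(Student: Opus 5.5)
The most direct approach is to write $\nu$ down explicitly in the triple notation $W = C_{q} \rtimes (D_{p}\times D_{p})$. I would set
$$
\nu(a,b,c) = (a,\sigma_{1}(b),\sigma_{2}(c)),
$$
so that $\nu$ is the identity on the $C_{q}$ factor and acts as $\sigma_{1}\times\sigma_{2}$ on the complement. Commutativity of the diagram is then immediate, since $\lambda(\nu(a,b,c)) = (\sigma_{1}(b),\sigma_{2}(c)) = (\sigma_{1}\times\sigma_{2})(\lambda(a,b,c))$. Bijectivity is also clear, because the map $(a,b,c)\mapsto (a,\sigma_{1}^{-1}(b),\sigma_{2}^{-1}(c))$ is a two-sided inverse. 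So the content of the proof is checking that $\nu$ is a homomorphism.

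For this I would use the general principle for semidirect products. Let $N \rtimes_{\theta} H$ be a semidirect product, let $f \in \mathrm{Aut}(N)$ and $g \in \mathrm{Aut}(H)$, and suppose the compatibility condition $f(\theta_{h}(n)) = \theta_{g(h)}(f(n))$ holds. Then $(n,h)\mapsto (f(n),g(h))$ is an automorphism. Here I take $N = C_{q}$, $f = \mathrm{id}$, $H = D_{p}\times D_{p}$ and $g = \sigma_{1}\times\sigma_{2}$, with the action $\theta_{(x,y)}(a) = a^{\tau(x)\tau(y)}$. The compatibility condition becomes
$$
a^{\tau(x)\tau(y)} = a^{\tau(\sigma_{1}(x))\tau(\sigma_{2}(y))}.
$$
This holds if $\tau\circ\sigma = \tau$ for every $\sigma\in\mathrm{Aut}(D_{p})$.

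The key (and only nontrivial) step is therefore to show that $\tau$ is invariant under $\mathrm{Aut}(D_{p})$. The kernel of $\tau$ is $C_{p} = \langle\gamma\rangle$, which is characteristic in $D_{p}$: it is the unique Sylow $p$-subgroup, since it is normal, or equivalently it is the unique subgroup of index $2$. Hence $\sigma$ preserves $\ker\tau$, and it therefore induces an automorphism of $D_{p}/C_{p}\cong C_{2}$. The group $C_{2}$ has only the trivial automorphism, so $\tau\circ\sigma = \tau$.

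Finally, I would verify the homomorphism property by a one-line computation with the multiplication rule
$$
(a,b,c)(a',b',c') = \bigl(a\cdot a'^{\tau(b)\tau(c)},\, bb',\, cc'\bigr).
$$
Using $\tau\circ\sigma_{i} = \tau$, the exponent $\tau(b)\tau(c)$ is unchanged after applying $\sigma_{1},\sigma_{2}$, and this gives $\nu(gh) = \nu(g)\nu(h)$. I expect no real obstacle; the one point requiring care is the characteristic-subgroup observation, which is what guarantees that the twisting of $C_{q}$ is compatible with $\sigma_{1}\times\sigma_{2}$.
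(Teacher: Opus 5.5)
Your proposal is correct and follows essentially the same route as the paper. The paper also takes $\nu = \mathrm{id}_{C_{q}} \times \sigma_{1} \times \sigma_{2}$ and checks compatibility with the semidirect product structure via $\tau \circ \sigma_{i} = \tau$, which it likewise deduces from $\ker(\tau) = C_{p}$ being characteristic in $D_{p}$.
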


\begin{proof}
Note that, because it is the only nontrivial normal subgroup, $\text{ker}(\tau) = C_{p}$ is a characteristic subgroup of $D_{p}$.  Because the image of $\tau$ is $C_{2}$, it follows that $\tau(\sigma_{i}(g)) \neq \tau(g)$ if and only if exactly one of $\sigma_{i}(g)$ and $g$ are in $C_{p}$, which is impossible.  Therefore $\tau(\sigma_{i}(g)) = \tau(g)$ for $i = 1,2$ and the bijection 
$$
\xymatrix{ 
C_{q} \times D_{p} \times D_{p} \text{  } \ar@{->}[r]^{\mathrm{id}_{C_{q}} \times \sigma_{1} \times \sigma_{2}} & \text{  }C_{q} \times D_{p} \times D_{p}.
}
$$
is compatible with the semi-direct product structure on $W = C_{q} \rtimes D_{p} \times D_{p}$ and therefore may be regarded as an automorphism $\nu \in \text{Aut}(W)$.  The desired compatibilities are clear.  
\end{proof}

Note that if $f: \Gamma \to \Gamma$ is an automorphism of the graph $\Gamma$, we obtain a `coordinate-change' automorphism $\varphi_{f}$ of $G_{\Gamma}$ defined by 
$$
\varphi_{f}((a_{v})_{v \in V}, (b_{r})_{r \in R}) = ((a_{f^{-1}(v)})_{v \in V}, (b_{f^{-1}(r)})_{r \in R}). 
$$
Note that the induced map of graphs $\Gamma(\varphi_{f})$ maps $\ker(\pi_{v}) \mapsto \ker(\pi_{v} \circ \varphi_{f})$. It is easy, then, to check that $\Gamma(\varphi_{f}) = f$. 

On the other hand, if, for each $v \in V$, we have an automorphism $\sigma_{v} \in \mathrm{Aut}(D_{p})$, and then for each $r = (v,v') \in R$, we have an automorphism $\sigma_{r} \in \mathrm{Aut}(W)$ which is compatible with $\sigma_{v}$ and $\sigma_{v'}$ as in Lemma \ref{extendingauts}, then we get an automorphism $\overline{\sigma}$ of $G_{\Gamma}$ defined by 
$$
\overline{\sigma}((a_{v})_{v \in V}, (b_{r})_{r \in R}) = ((\sigma_{v}(a_{v}))_{v \in V}, (\sigma_{r}(b_{r}))_{r \in R}).
$$
Then we clearly have $\Gamma(\overline{\sigma}) = \mathrm{id}_{\Gamma}$. The following lemma shows that we can factor any automorphism of $G_{\Gamma}$ into a composition of two automorphisms, one of which is a coordinate-change automorphism and the other which is a tuple of automorphisms applied coordinate-wise. 

\begin{lem} \label{lem:factoring}
    Suppose $\varphi \in \mathrm{Aut}(G_{\Gamma})$ is an automorphism and $f = \Gamma(\varphi)$. Then there is a tuple of automorphisms $\overline{\sigma} = ((\sigma_{v})_{v \in V}, (\sigma_{r})_{r \in R}) \in \mathrm{Aut}(D_{p})^{V} \times \mathrm{Aut}(W)^{R}$ such that 
    $$
    \varphi = \overline{\sigma} \circ \varphi_{f},
    $$
    where $\varphi_{f}$ is the coordinate-change automorphism associated to $f$. 
\end{lem}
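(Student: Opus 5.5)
We reduce to the case where $\Gamma(\varphi)$ is the identity. Set $\psi = \varphi \circ \varphi_{f}^{-1}$. Since $\Gamma(\cdot)$ is functorial (contravariantly, but both maps here are automorphisms) and $\Gamma(\varphi_{f}) = f$, the induced map $\Gamma(\psi)$ is the identity, up to fixing the convention for composition order. If that convention turns out reversed, we use $\varphi_{f}^{-1} \circ \varphi$ or $\varphi_{f^{-1}}$ instead. It then suffices to show that an automorphism $\psi$ with $\Gamma(\psi) = \mathrm{id}$ has the form $\overline{\sigma}$ for a coordinatewise tuple of automorphisms. Such a tuple is automatically compatible, as in Lemma \ref{extendingauts}, because it preserves $G_{\Gamma}$.

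\textbf{Vertex coordinates.} Suppose $\Gamma(\psi)=\mathrm{id}$. By Fact \ref{fact:graphrecovery}, the vertices of $\Gamma(G_\Gamma)$ are exactly the kernels $\ker\pi_v$, and $\Gamma(\psi)$ sends $\ker \pi_{v}$ to $\ker(\pi_{v}\circ\psi)$. Hence $\ker(\pi_v\circ\psi)=\ker\pi_v$ for every $v$. Both $\pi_v\circ\psi$ and $\pi_v$ are surjections onto $D_p$ with the same kernel, so there is a unique $\sigma_v\in\mathrm{Aut}(D_p)$ with $\pi_v\circ\psi=\sigma_v\circ\pi_v$.

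\textbf{Edge coordinates.} Fix $r=(v,v')$. We need $\ker(\pi_r\circ\psi)=\ker\pi_r$, which yields $\sigma_r\in\mathrm{Aut}(W)$ with $\pi_r\circ\psi=\sigma_r\circ\pi_r$. Composing with $\lambda$ then gives $\lambda\circ\sigma_r=(\sigma_v\times\sigma_{v'})\circ\lambda$, so the tuple is compatible. Since the coordinate projections jointly separate points of $G_\Gamma\subseteq D_p^V\times W^R$, we get $\psi=\overline\sigma$.

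To get the kernel equality, let $M=\ker(\pi_r\circ\psi)=\psi^{-1}(\ker\pi_r)$. This is an open normal subgroup with quotient $W$, and it is contained in $\ker\pi_v\cap\ker\pi_{v'}$, because $\psi$ preserves those kernels. It therefore suffices to prove the following rigidity claim: $\ker\pi_r$ is the \emph{only} normal subgroup $M$ of $G_\Gamma$ with $G_\Gamma/M\cong W$ and $M\subseteq\ker\pi_v\cap\ker\pi_{v'}$.

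\textbf{The rigidity claim (main obstacle).} First, $M \supseteq (C_p)_w$ for every vertex $w\ne v,v'$. Indeed, $\pi_w(M)$ is normal in $D_p$. If it were not all of $D_p$, then $D_p/\pi_w(M)$ would be a nontrivial quotient of $W$ independent of the $v,v'$ coordinates. Counting with the bounding Lemma \ref{lem: bounding}, together with the fact that $W$ has only two $D_p$-quotients, should rule this out. So $\pi_w(M)=D_p$, and Lemma \ref{lem:easy}(2) gives $(C_p)_w\subseteq M$. Next, for edges $r'\ne r$, the $q$-Sylow subgroup of $W$ is $C_q$, so $M$ must contain the $q$-part from all edges but one. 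Combining Lemma \ref{lem:easy}(3) and (4), we would show $(C_q)_{r'}\subseteq M$ for $r'\ne r$ and $\pi_r(M)\cap C_q=1$. It follows that $M$ contains $\ker\pi_r\cap\ker(\text{involution data})$. Hence $G_\Gamma/M$ is a quotient of $W\times C_2^{J}$, and we then need $M\supseteq\ker\pi_r$ exactly. Here I would argue that $W$ has no $C_2$ direct factor and that $M$ contains $\ker\pi_v\cap\ker\pi_{v'}$ restricted to the $D_p$ part. The leftover $C_2$-data from other vertices must then die in a quotient isomorphic to $W$, since $W$ has trivial center and the $C_2$'s from other vertices would be central. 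A comparison of orders then forces $M=\ker\pi_r$. This bookkeeping of the stray $C_2$ factors is the step I expect to be most delicate.
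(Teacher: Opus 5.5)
Your reduction to $\psi=\varphi\circ\varphi_f^{-1}$ with $\Gamma(\psi)=\mathrm{id}$, the construction of the $\sigma_v$, the compatibility via $\lambda$, and the point-separation argument are exactly the paper's proof. You are also right that the edge step is not automatic. $\Gamma(\psi)=\mathrm{id}$ only says that $\psi$ fixes each $\ker\pi_v$. Edges of $\Gamma(G_\Gamma)$ record only the \emph{existence} of a $W$-quotient below $\ker\pi_v\cap\ker\pi_{v'}$, so your rigidity claim is needed to get $\psi^{-1}(\ker\pi_r)=\ker\pi_r$. The paper passes over this with ``Likewise''.

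However, your proof of the rigidity claim does not go through as written.
\begin{itemize}
\item In the first step, the facts you cite do not exclude $\pi_w(M)=C_p$. In that case the relevant quotient is $C_2$, not $D_p$, and $W$ does have $C_2$-quotients. Lemma \ref{lem: bounding} with $|A|=4p^2q$ even allows up to two vertices with $\pi_w(N)=C_p$.
\item In the second step, ``the $q$-Sylow of $W$ is $C_q$, so $M$ contains the $q$-part of all edges but one'' does not follow. It only says that $M\cap C_q^R$ has index $q$ in $C_q^R$, which a diagonal hyperplane also satisfies. To apply Lemma \ref{lem:easy}(3) you need an element of $M$ that inverts the $r'$-coordinate. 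An $m\in M$ with $\pi_w(m)$ a reflection need not do this when the other endpoint of $r'$ is also outside $\{v,v'\}$.
\item The final step is only sketched.
\end{itemize}

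The missing idea is a simple order argument that makes all of this unnecessary. Put $K=\ker\pi_v\cap\ker\pi_{v'}$.
\begin{itemize}
\item Since $M\subseteq K$ and $G_\Gamma/K\cong D_p\times D_p$, the group $K/M$ has order $4p^2q/4p^2=q$. Hence every element of $K$ of order prime to $q$ lies in $M$.
\item For $w\notin\{v,v'\}$, $(D_p)_w\subseteq K$ has order $2p$, so $(D_p)_w\subseteq M$.
\item Take an edge $r'\neq r$. Since $r$ is the only edge joining $v$ and $v'$, some endpoint $w$ of $r'$ lies outside $\{v,v'\}$. Conjugation by $(\beta)_w\in M$ sends $(\delta)_{r'}$ to $(\delta)_{r'}^{-1}$. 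Thus $(\delta)_{r'}^{2}\in M$, and $(\delta)_{r'}\in M$ because $q$ is odd.
\item Since $M$ is closed, it contains $C_q^{R\setminus\{r\}}\rtimes D_p^{V\setminus\{v,v'\}}=\ker\pi_r$. Both subgroups have index $|W|$ in $G_\Gamma$, so $M=\ker\pi_r$.
\end{itemize}
With this lemma inserted, your proof is complete, and it fills a step the paper leaves implicit.
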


\begin{proof}
    Since $\Gamma(\varphi \circ \varphi_{f}^{-1}) = \mathrm{id}_{\Gamma}$, $\varphi \circ \varphi_{f}^{-1}$ preserves $\ker(\pi_{v})$ for each vertex $v$ and hence induces an automorphism $\sigma_{v}$ on the quotient $G_{\Gamma}/\ker(\pi_{v}) \cong D_{p}$.  Likewise, for each $r = (v,v') \in R$, we obtain an automorphism $\sigma_{r}$ on $G_{\Gamma}/\ker(\pi_{r})$ which is compatible with $\sigma_{v}, \sigma_{v'}$ as in Lemma \ref{extendingauts}. Unravelling definitions, we have 
    $$
(\varphi \circ \varphi_{f}^{-1})((a_{v})_{v \in V}, (b_{r})_{r \in R}) = ((\sigma_{v}(a_{v}))_{v \in V}, (\sigma_{r}(b_{r}))_{r \in R}) = \overline{\sigma}((a_{v})_{v \in V},(b_{r})_{r \in R}), 
    $$
    which shows $\varphi = \overline{\sigma} \circ \varphi_{f}$. 
\end{proof}

%
%
%

\begin{lem} \label{lem: lifting}
    Suppose $\Gamma_{0} \subseteq \Gamma$ and $\Gamma'_{0} \subseteq \Gamma'$ are graphs and we are given an isomorphism of groups $\varphi_{0} : G_{\Gamma_{0}} \to G_{\Gamma'_{0}}$ and an isomorphism of graphs $f: \Gamma' \to \Gamma$ such that $f|_{\Gamma'_{0}} = \Gamma(\varphi_{0})$. Then there is an isomorphism $\varphi : G_{\Gamma} \to G_{\Gamma'}$ such that $\Gamma(\varphi) = f$ and the following diagram commutes:
    $$
\xymatrix{ 
G_{\Gamma} \ar@{->}[r]^{\varphi} \ar@{->}[d]^{\pi_{\Gamma_{0}}} & G_{\Gamma'} \ar@{->}[d]^{\pi_{\Gamma'_{0}}} \\
G_{\Gamma_{0}} \ar@{->}[r]^{\varphi_{0}} & G_{\Gamma'_{0}}.
}
$$
\end{lem}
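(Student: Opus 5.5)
The plan is to build $\varphi$ coordinatewise, in the spirit of Lemma \ref{lem:factoring}. First apply Lemma \ref{lem:factoring} to the finite-graph isomorphism $\varphi_{0}$. Strictly, that lemma concerns automorphisms of a single $G_{\Gamma}$. To use it, compose $\varphi_{0}$ with the coordinate-change isomorphism $G_{\Gamma'_{0}} \to G_{\Gamma_{0}}$ induced by the graph isomorphism $f|_{\Gamma'_{0}} : \Gamma'_{0} \to \Gamma_{0}$. This gives an automorphism of $G_{\Gamma_{0}}$ inducing the identity on $\Gamma(G_{\Gamma_{0}}) \cong \Gamma_{0}$ (via Fact \ref{fact:graphrecovery}).

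The proof of Lemma \ref{lem:factoring} then yields automorphisms $\sigma_{v} \in \mathrm{Aut}(D_{p})$ for $v \in V_{0}$ and $\sigma_{r} \in \mathrm{Aut}(W)$ for $r \in R_{0}$. Each $\sigma_{r}$ is compatible with the $\sigma_{v}$ at its endpoints as in Lemma \ref{extendingauts}. With these, $\varphi_{0}$ is the composite of the coordinate change $\varphi_{f|_{\Gamma'_{0}}}^{-1}$ followed by coordinatewise application of $\overline{\sigma}$. Concretely, $\varphi_{0}$ sends the $v$-coordinate of $G_{\Gamma_{0}}$ to the $f^{-1}(v)$-coordinate of $G_{\Gamma'_{0}}$ after applying $\sigma_{v}$, and likewise on edges.

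Next extend the data to all of $\Gamma$. For $v \in V \setminus V_{0}$ put $\sigma_{v} = \mathrm{id}$. For an edge $r = (v,v') \in R \setminus R_{0}$, use Lemma \ref{extendingauts} to choose $\sigma_{r} \in \mathrm{Aut}(W)$ lying over $\sigma_{v} \times \sigma_{v'}$. Here one of $v, v'$ may lie in $V_{0}$ with a nontrivial $\sigma_{v}$, and this is exactly why Lemma \ref{extendingauts} is needed. Since $\Gamma_{0}$ is (implicitly) an induced subgraph, edges between vertices of $V_{0}$ already lie in $R_{0}$. If $\Gamma_{0}$ is not induced, the same choice via Lemma \ref{extendingauts} still handles these edges.

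Define $\varphi : G_{\Gamma} \to G_{\Gamma'}$ by sending $((a_{v}),(b_{r}))$ to the element whose $f^{-1}(v)$-coordinate is $\sigma_{v}(a_{v})$ and whose $f^{-1}(r)$-coordinate is $\sigma_{r}(b_{r})$. Several checks are then needed:
\begin{itemize}
\item The image lies in $G_{\Gamma'}$. This follows since $\lambda \circ \sigma_{r} = (\sigma_{v} \times \sigma_{v'}) \circ \lambda$ and $f$ is a graph isomorphism, so $f^{-1}(r) = (f^{-1}(v), f^{-1}(v'))$.
\item $\varphi$ is a continuous isomorphism. It is coordinatewise on a product, with inverse built from the $\sigma^{-1}$'s.
\item $\Gamma(\varphi) = f$. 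The map $\Gamma(\varphi)$ sends $\ker(\pi_{w})$ for $w \in V'$ to $\ker(\pi_{w} \circ \varphi) = \ker(\pi_{f(w)})$, and under Fact \ref{fact:graphrecovery} this is exactly $f$.
\item The square commutes. On coordinates in $\Gamma_{0}$, $\pi_{\Gamma'_{0}} \circ \varphi$ is given by the same formula as $\varphi_{0} \circ \pi_{\Gamma_{0}}$, because the $\sigma$'s and the index map agree with those of $\varphi_{0}$ there.
\end{itemize}

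The main obstacle is the first step: showing that $\varphi_{0}$ really is coordinatewise of the above shape, with its index map equal to $f^{-1}|$ rather than something else. This is where the hypothesis $f|_{\Gamma'_{0}} = \Gamma(\varphi_{0})$ and the direction conventions must be tracked carefully; a contravariance slip would swap $f$ and $f^{-1}$. Edge coordinates of $\Gamma_{0}$ also require care. The quotient $G_{\Gamma_{0}}/\ker(\pi_{r}) \cong W$ must be carried by $\varphi_{0}$ to $G_{\Gamma'_{0}}/\ker(\pi_{f^{-1}(r)})$. I would justify this as follows. The kernels $\ker\pi_{r}$ are the unique normal subgroups with quotient $W$ contained in $\ker\pi_{v} \cap \ker\pi_{v'}$, a uniqueness argued via Lemma \ref{lem:easy}(4) applied to $W$-quotients. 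Since $\varphi_{0}$ maps vertex kernels according to $f$, it then maps edge kernels accordingly.
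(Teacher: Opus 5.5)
Your proposal is correct and matches the paper's proof. Both factor $\varphi_{0}$ via Lemma \ref{lem:factoring} into a coordinate change and coordinatewise automorphisms, extend by the identity on new vertices and by Lemma \ref{extendingauts} on new edges, and compose with the full coordinate change; the paper indexes $\overline{\sigma}$ on the $\Gamma'$ side and writes $\varphi = \overline{\sigma}\circ\varphi_{f}$, which is immaterial. Your extra justification that edge kernels are preserved goes beyond the paper, which leaves it inside Lemma \ref{lem:factoring}, but the uniqueness really comes from the inversion trick in the proof of Lemma \ref{lem:easy}(3) rather than from Lemma \ref{lem:easy}(4): for $u \notin \{v,v'\}$, $(\beta)_{u}$ lies in every $W$-kernel contained in $\ker\pi_{v}\cap\ker\pi_{v'}$, and it inverts $(C_{q})_{r'}$ for every edge $r'$ at $u$.
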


\begin{proof}
    Let $\Gamma_{0} = (V_{0},R_{0})$, $\Gamma = (V,R)$ and, similarly, $\Gamma'_{0} = (V'_{0},R'_{0})$ and $\Gamma' = (V',R')$. Denote by $f_{0}$ the restriction $f|_{\Gamma'_{0}}$. Let $\varphi_{f} : G_{\Gamma} \to G_{\Gamma'}$ be the coordinate-change isomorphism associated to $f$.  That is, $\varphi_{f}$ is defined by 
    $$
    \varphi_{f}((a_{v})_{v \in V}, (b_{r})_{r \in R}) = ((a_{f^{-1}(v)})_{v \in V'}, (b_{f^{-1}(r)})_{r \in R'}),
    $$
    and, likewise, let $\varphi_{f_{0}} : G_{\Gamma_{0}} \to G_{\Gamma'_{0}}$ be defined as the coordinate-change isomorphism associated to $f_{0}$. Since the automorphism $(\varphi_{0} \circ \varphi_{f_{0}}^{-1}) \in \mathrm{Aut}(G_{\Gamma'_{0}})$ satisfies $\Gamma(\varphi_{0} \circ \varphi_{f_{0}}^{-1}) = \mathrm{id}_{\Gamma'_{0}}$, it follows by Lemma \ref{lem:factoring} that $\varphi_{0} \circ \varphi_{f_{0}}^{-1} = \overline{\sigma}_{0}$ for some tuple of automorphisms $\overline{\sigma}_{0} = ((\sigma_{v})_{v \in V'_{0}},(\sigma_{r})_{r \in R'_{0}}) \in \mathrm{Aut}(D_{p})^{V'_{0}} \times \mathrm{Aut}(W)^{R'_{0}}$.  Hence $\varphi_{0} = \overline{\sigma}_{0} \circ \varphi_{f_{0}}$. 
    
    Define $\sigma_{v} = \mathrm{id}_{D_{p}}$ for each $v \in V' \setminus V'_{0}$, and then, for each $r = (v,v') \in R' \setminus R'_{0}$, define $\sigma_{r} \in \mathrm{Aut}(W)$ to be an automorphism compatible with $\sigma_{v}$ and $\sigma_{v'}$, using Lemma \ref{extendingauts}.  This defines a map $\overline{\sigma} = ((\sigma_{v})_{v \in V'}, (\sigma_{r})_{r \in R'})$ and $\overline{\sigma}_{0}$ is a subtuple which can be obtained from $\overline{\sigma}$ by restriction to the $\Gamma'_{0}$ coordinates.  

    Define $\varphi = \overline{\sigma} \circ \varphi_{f} : G_{\Gamma} \to G_{\Gamma'}$. Then we have $\Gamma(\varphi) = \Gamma(\varphi_{f}) = f$ and, by construction, we have $\pi_{\Gamma'_{0}} \circ \varphi = \varphi_{0} \circ \pi_{\Gamma_{0}}$, which completes the proof.  
\end{proof}

\begin{lem} \label{lem: unique iso}
    Suppose $\Gamma = (V,R)$ is a graph and $G \cong G_{\Gamma}$. Suppose, moreover, that there is a bijection $V \to \{N \unlhd G : G/N \cong D_{p}\}$, denoted by $v \mapsto N_{v}$, and a bijection $R \to \{M \unlhd G : G/M \cong W\}$, denoted $r \mapsto M_{r}$ satisfying the following conditions:
    \begin{enumerate}
        \item For each $v \in V$, there is an isomorphism $\varphi_{v} : G_{\Gamma}/\ker(\pi_{v}) \to G/N_{v}$.
        \item For each $r = (v,v') \in R$, we have $M_{r} \subseteq N_{v} \cap N_{v'}$ and there is an isomorphism $\varphi_{r} : G_{\Gamma}/\ker(\pi_{r}) \to G/M_{r}$ such that the following diagram commutes:
        $$
    \xymatrix{ 
G_{\Gamma}/\ker(\pi_{r}) \ar@{->}[r]^{\varphi_{r}} \ar@{->}[d] & G/M_{r}  \ar@{->}[d]^{\lambda'_{r}} \\
G_{\Gamma}/\ker(\pi_{v}) \times G_{\Gamma} /\ker(\pi_{v'}) \ar@{->}[r]^{\varphi_{v} \times \varphi_{v'}} & G/N_{v} \times G/N_{v'},
}
    $$
    where the vertical arrows denote the canonical projections. 
\end{enumerate}
Then there is a unique isomorphism $\varphi : G_{\Gamma} \to G$ which induces $\varphi_{v}$ for each $v \in V$ and $\varphi_{r}$ for each $r \in R$. 
\end{lem}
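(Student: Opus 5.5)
We build $\varphi$ directly: the natural candidate is the map $G_{\Gamma} \to G$ assembled from the coordinate maps. Concretely, consider the map
$$
\Psi : G \to \prod_{v \in V} G/N_{v} \times \prod_{r \in R} G/M_{r},
$$
and the map $\theta : G_{\Gamma} \to \prod_{v} G/N_{v} \times \prod_{r} G/M_{r}$ given by $x \mapsto ((\varphi_{v}(x\ker\pi_{v}))_{v}, (\varphi_{r}(x\ker\pi_{r}))_{r})$. We will show that $\Psi$ is an isomorphism onto a closed subgroup containing $\theta(G_{\Gamma})$, and in fact equal to it, and then set $\varphi = \Psi^{-1}\circ\theta$. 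Uniqueness is immediate once this is in place: any isomorphism inducing all $\varphi_{v}$ and $\varphi_{r}$ is determined on each quotient $G_{\Gamma}/\ker\pi_{v}$ and $G_{\Gamma}/\ker\pi_{r}$. Since $\bigcap_{v}\ker\pi_{v} \cap \bigcap_{r}\ker\pi_{r} = 1$ in $G_{\Gamma}$ (it embeds in $D_{p}^{V}\times W^{R}$), two such isomorphisms agree.

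For injectivity of $\Psi$, transport along the given abstract isomorphism $\psi : G \cong G_{\Gamma}$. Because $\psi$ carries the set of $D_{p}$-kernels and the set of $W$-kernels of $G$ bijectively onto those of $G_{\Gamma}$, it suffices to show that in $G_{\Gamma}$ the intersection of all $N$ with $G_{\Gamma}/N\cong D_{p}$ and all $M$ with $G_{\Gamma}/M \cong W$ is trivial. By Fact \ref{fact:graphrecovery}, these $D_{p}$-kernels include (in fact are) the $\ker\pi_{v}$. The $W$-kernels include the $\ker\pi_{r}$. So the intersection is contained in $\bigcap\ker\pi_{v}\cap\bigcap\ker\pi_{r}=1$. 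Thus $\Psi$ is an injective continuous homomorphism from a compact group, hence a topological isomorphism onto its closed image.

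Next we identify the image. By the commutative square in (2), $\theta(G_{\Gamma})$ lies in the closed subgroup
$$
H = \{(x_{v},y_{r}) : \lambda'_{r}(y_{r}) = (x_{v},x_{v'}) \text{ for } r=(v,v')\}.
$$
Moreover, $\theta$ is an isomorphism $G_{\Gamma}\to H$. Indeed, the isomorphisms $\varphi_{v},\varphi_{r}$ identify $H$ coordinatewise with the defining subgroup of $D_{p}^{V}\times W^{R}$, compatibly by (2), and under this identification $\theta$ is the tautological embedding $G_{\Gamma}\hookrightarrow D_{p}^{V}\times W^{R}$, whose image is exactly $G_{\Gamma}$. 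Since $M_{r}\subseteq N_{v}\cap N_{v'}$, we also have $\Psi(G)\subseteq H$.

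The main obstacle is proving $\Psi(G) = H$, that is, surjectivity. The first idea is to transport via $\psi$ again: $\Psi(G)\cong G\cong G_{\Gamma}\cong H$. But an abstract isomorphism to a subgroup does not by itself force equality when $V$ or $R$ is infinite, so a finite-level argument is needed. Since $\Psi(G)$ is closed, it suffices to show that for every finite $V_{1}\subseteq V$ and finite $R_{1}\subseteq R$ (closed under the endpoints of edges), the projection of $\Psi(G)$ to those coordinates equals the projection of $H$, namely $G_{\Gamma_{1}}$ for the corresponding finite subgraph with extra edges deleted.

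To do this we use $\psi$. The projection of $\Psi(G)$ is $G/K$, where $K=\bigcap_{V_{1}}N_{v}\cap\bigcap_{R_{1}}M_{r}$. Via $\psi$, this is $G_{\Gamma}/K'$ for $K'$ an intersection of the same number of $D_{p}$-kernels and $W$-kernels of $G_{\Gamma}$. Each $D_{p}$-kernel is some $\ker\pi_{w}$ by Fact \ref{fact:graphrecovery}. Each $W$-kernel $M$ projects onto two $D_{p}$-quotients whose kernels lie in the graph $\Gamma(G_{\Gamma})$ and are adjacent, say $\ker\pi_{w},\ker\pi_{w'}$; we then argue, in the style of Lemma \ref{lem:easy}(3),(4), that $M\supseteq \ker\pi_{r'}$ for the edge $r'=(w,w')$, and by order count $M=\ker\pi_{r'}$. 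This is the delicate step: the natural attempt is to show $C_{q}^{R\setminus\{r'\}}C_{p}^{V\setminus\{w,w'\}}\subseteq M$ using Lemma \ref{lem:easy}.

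Given that, $G_{\Gamma}/K'$ is $G_{\Gamma''}$ for a finite subgraph with $|V_{1}|$ vertices and $|R_{1}|$ edges, so it has the same order as $G_{\Gamma_{1}}$. Since it injects into $G_{\Gamma_{1}}$, the projections are equal, which gives $\Psi(G)=H$ and completes the construction of $\varphi$.
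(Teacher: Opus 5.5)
\textbf{There is a genuine gap: the step that carries the weight is asserted, not proved.} Your construction of $\varphi=\Psi^{-1}\circ\theta$ is sound, as are the uniqueness argument, the injectivity of $\Psi$, the identification $\theta\colon G_\Gamma\cong H$, and the reduction of $\Psi(G)=H$ to finitely many coordinates. This is the paper's architecture; the paper simply asserts that $\Psi$ is an isomorphism onto $H$ ``since $G\cong G_\Gamma$''. You are right that this needs justification.

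The missing step is the claim that every $M\unlhd G_\Gamma$ with $G_\Gamma/M\cong W$ equals some $\ker\pi_{r'}$. The route you indicate does not reach it.
\begin{itemize}
\item Lemma \ref{lem:easy} gives at best $M\supseteq C_q^{R\setminus\{r'\}}C_p^{V\setminus\{w,w'\}}$.
\item Even that requires checking its hypotheses. First, $\pi_v(M)=D_p$ for $v\notin\{w,w'\}$: $\pi_v(M)=1$ would give $W$ a third $D_p$-quotient, and $\pi_v(M)=C_p$ would make $M\ker\pi_v=\ker(\tau\circ\pi_v)$ a $C_2$-quotient of $W$ other than the three coming from $\tau\circ\pi_w$, $\tau\circ\pi_{w'}$ and their product. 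Second, $(C_q)_r\subseteq M$ for $r\neq r'$: some $(\beta)_u$ inverts exactly one of $(C_q)_r$ and the Sylow $q$-subgroup of $G_\Gamma/M$, which forces the image of $(C_q)_r$ to be trivial.
\item More seriously, $\ker\pi_{r'}$ also contains the involutions $(\beta)_v$ for $v\notin\{w,w'\}$. The quotient $G_\Gamma/C_q^{R\setminus\{r'\}}C_p^{V\setminus\{w,w'\}}$ is $W\times C_2^{V\setminus\{w,w'\}}$, and an order count cannot show that the only normal subgroup of it with quotient $W$ is $1\times C_2^{V\setminus\{w,w'\}}$. For a group with nontrivial center, diagonal ones exist.
\end{itemize}
The missing input is that $W$ has no nontrivial normal $2$-subgroup and $Z(W)=1$. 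The first forces such a subgroup to meet $W\times 1$ trivially, hence to be the graph of a homomorphism $C_2^{V\setminus\{w,w'\}}\to Z(W)$. The second makes that homomorphism trivial.

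Alternatively, you can avoid classifying $W$-kernels altogether. On a finite $\Gamma_1=(V_1,R_1)$, let $X\le G_{\Gamma_1}$ be the image of $G$.
\begin{itemize}
\item Your counting argument with $R_1=\emptyset$ (which only needs Fact \ref{fact:graphrecovery}) shows $X\to D_p^{V_1}$ is onto, and each $\pi_r|_X$ is onto $W$.
\item Conjugation on $C_q^{R_1}$ factors through $C_2^{V_1}$. So $X\cap C_q^{R_1}$ is a submodule of $\bigoplus_{r\in R_1}(C_q)_r$, whose summands carry the pairwise distinct characters $a\mapsto\tau(a_v)\tau(a_{v'})$. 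Hence it is a sum of some of the $(C_q)_r$.
\item If $x\in X$ has $\pi_r(x)=(\delta,1,1)$, then $x^{2p}\in X\cap C_q^{R_1}$ has $r$-coordinate $\delta^{2p}\neq 1$.
\end{itemize}
Therefore $X\supseteq C_q^{R_1}$, and so $X=G_{\Gamma_1}$.

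Conversely, once the $W$-kernels are identified, no finite-level count is needed. Transporting along $\psi$ and reindexing by the induced bijections of $V$ and $R$ turns $\Psi$ into the tautological embedding $G_\Gamma\hookrightarrow D_p^V\times W^R$. Its image is exactly the defining subgroup, which is what the paper's one-line argument tacitly uses.
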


\begin{proof}
    Since $G \cong G_{\Gamma}$, we know that the canonical map 
    $$
    G \to \{((a_{v})_{v \in V}, (b_{r})_{r \in R}) \in \prod_{v \in V} G/N_{v} \times \prod_{r \in R} G/M_{r} : r = (v,v') \in R \implies \lambda'_{r}(b_{r}) = (a_{v},a_{v'})\}
    $$
    is an isomorphism.  Identifying $G$ with the range of this isomorphism, we see that the conditions (1) and (2) imply that the map $G_{\Gamma} \to G$ defined by $g \mapsto ((\varphi_{v}(g))_{v \in V}, (\varphi_{r}(g))_{r \in R})$ is an isomorphism.  Moreover, any two isomorphisms that induce the same $\varphi_{v}$ and $\varphi_{r}$ will clearly induce the same isomorphism so it is unique.  
\end{proof}

\begin{cor} \label{cor: unique aut}
    Suppose $\Gamma = (V,R)$ is a graph. Working in $S(G_{\Gamma})$, assume we are given, for each $v \in V$, some automorphism $\sigma_{v} : [\ker(\pi_{v})] \to [\ker(\pi_{v})]$, and, for each $r = (v,v') \in R$, some automorphism $\sigma_{r} : [\ker(\pi_{r})] \to [\ker(\pi_{r})]$ such that the diagram 
    $$
        \xymatrix{ 
[\mathrm{\ker}(\pi_{r})] \ar@{->}[r]^{\sigma_{r}} \ar@{->}[d] & [\ker(\pi_{r})] \ar@{->}[d] \\
[\ker(\pi_{v})] \times [\ker(\pi_{v'})] \ar@{->}[r]^{(\sigma_{v},\sigma_{v'})} & [\ker(\pi_{v})] \times [\ker(\pi_{v'})] ,
}
    $$
   commutes, where the vertical arrows are the natural quotient maps (whose graph is given by the relation $C$). Then there is a unique $\sigma_{*} \in \mathrm{Aut}(S(G_{\Gamma}))$ such that $\sigma_{*}|_{[\ker(\pi_{v})]} = \sigma_{v}$ and $\sigma_{*}|_{[\ker(\pi_{r})]} = \sigma_{r}$ for all $v \in V$ and $r \in R$. 
\end{cor}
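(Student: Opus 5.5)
The plan is to apply Lemma \ref{lem: unique iso} with $G = G_{\Gamma}$, $N_{v} = \ker(\pi_{v})$ and $M_{r} = \ker(\pi_{r})$, and then use the duality between profinite groups and complete systems.

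First, check that the bijections required by Lemma \ref{lem: unique iso} exist. By Fact \ref{fact:graphrecovery}, the map $v \mapsto \ker(\pi_{v})$ is a bijection of $V$ onto $\{N \unlhd G_{\Gamma} : G_{\Gamma}/N \cong D_{p}\}$. For edges, I will take $r \mapsto \ker(\pi_{r})$. The paper has not stated that these are all the normal subgroups with quotient $W$, so I will sidestep this by working with the kernels actually indexed by $V$ and $R$. Looking at the proof of Lemma \ref{lem: unique iso}, the only thing it really uses is that the canonical map from $G$ into the fibred product over the chosen $N_{v}$ and $M_{r}$ is an isomorphism. For $G = G_{\Gamma}$ with these kernels, this holds by the very definition of $G_{\Gamma}$. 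So the lemma applies in the form I need.

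Next, translate the data. Since $[\ker(\pi_{v})]$ is $G_{\Gamma}/\ker(\pi_{v})$ as a group (via $P$), each $\sigma_{v}$ is an automorphism $\varphi_{v}$ of $G_{\Gamma}/\ker(\pi_{v})$, and likewise each $\sigma_{r}$ gives $\varphi_{r}$. Because the relation $C$ encodes the quotient maps, the commuting square in the hypothesis is exactly condition (2) of Lemma \ref{lem: unique iso}; condition (1) is immediate. Lemma \ref{lem: unique iso} then produces a unique group automorphism $\varphi$ of $G_{\Gamma}$ inducing every $\sigma_{v}$ and $\sigma_{r}$.

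Then pass to systems. An automorphism $\varphi$ of $G_{\Gamma}$ induces $\sigma_{*} \in \mathrm{Aut}(S(G_{\Gamma}))$ by $gN \mapsto \varphi(g)\varphi(N)$. This preserves $\leq$, $C$ and $P$, and it maps $[\ker(\pi_{v})]$ to itself, since $\varphi(\ker\pi_{v}) = \ker\pi_{v}$ because $\varphi$ induces a map on that quotient. On that class it agrees with $\sigma_{v}$, and similarly on $[\ker(\pi_{r})]$. This gives existence.

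The main point to be careful about is uniqueness at the level of systems. Any $\tau \in \mathrm{Aut}(S(G_{\Gamma}))$ corresponds, by duality ($G \cong G(S(G))$), to an automorphism $\psi$ of $G_{\Gamma} = \varprojlim [\alpha]$ with $\tau(gN) = \psi(g)\psi(N)$. If $\tau$ restricts to $\sigma_{v}$ and $\sigma_{r}$ on the classes above, then $\psi$ fixes each $\ker(\pi_{v})$ and $\ker(\pi_{r})$ and induces $\varphi_{v}$ and $\varphi_{r}$. Uniqueness in Lemma \ref{lem: unique iso} then forces $\psi = \varphi$, and hence $\tau = \sigma_{*}$, since an automorphism of a complete system is determined by the group automorphism it induces.
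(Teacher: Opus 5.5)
Your proposal is correct and is the same argument as the paper's, which simply applies Lemma \ref{lem: unique iso} with $G = G_{\Gamma}$, $N_{v} = \ker(\pi_{v})$ and $M_{r} = \ker(\pi_{r})$. You also make explicit two things the paper leaves tacit: the correspondence between automorphisms of $G_{\Gamma}$ and of $S(G_{\Gamma})$, and the observation that the edge-bijection hypothesis can be bypassed, since here the canonical map into the fibred product is literally the identity onto $G_{\Gamma}$.
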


\begin{proof}
    Take $G = G_{\Gamma}$ and $v \mapsto \ker(\pi_{v})$ in Lemma \ref{lem: unique iso}. 
\end{proof}

\subsection{The universal $\ell$-Frattini cover} In general, universal Frattini covers of a profinite group $G$ can be analyzed as fiber products of Frattini covers of $G$ whose kernels are pro-$\ell$ for different primes $\ell$.  Taking this point of view, we are able to explicitly describe the universal Frattini cover $\widetilde{G_{\Gamma}}$ of $G_{\Gamma}$ as a certain semi-direct product of free pro-$\ell$ groups for $\ell \in \{2,p,q\}$. This digression will take us a little far afield from the model-theoretic issues of interest for us, but this level of granularity will turn out to be important for the description of algebraic closure and independence in the theories $\mathrm{Th}(S(\widetilde{G_{\Gamma}}))$.

We use the notation $F_{n}$ to refer to the free profinite group of rank $n$ and $F_{n}(p)$ to denote the free pro-$p$ group of rank $n$. This makes sense even when $n$ is not a natural number but an infinite cardinal. 

\begin{defn}
    Let $\ell$ be a fixed prime.
    \begin{enumerate}
        \item We say that an epimorphism of profinite groups $\pi : G \to H$ is an $\ell$\emph{-cover} if $\ker(\pi)$ is a pro-$\ell$ group.
        \item We say that an $\ell$-cover $\pi: G \to H$ is $\ell$\emph{-Frattini} if $\ker(\pi) \subseteq \Phi(G)$, i.e. $\pi$ is both an $\ell$- and Frattini cover.
    \end{enumerate}
\end{defn}

\begin{fact} \cite[Construction 22.11.5]{fried2008field}
    Let $\ell$ be a prime number. If $G$ is a profinite group, then it has a universal $\ell$-Frattini cover $\pi_{\ell}: {_{\ell}\tilde{G}} \to G$.  That is, $\pi_{\ell}$ is an $\ell$-Frattini cover and, if $\rho: H \to G$ is $\ell$-Frattini, then there is an epimorphism $\chi : {_{\ell}\tilde{G}} \to H$ so that the following diagram commutes:
       $$
\xymatrix{ & {_{\ell}\tilde{G}}\ar@{-->>}[dl]^{\chi}  \ar@{->>}[d]^{\pi_{\ell}} \\
H\ar@{->>}[r]^{\rho} & G }
$$
Moreover, the universal $\ell$-Frattini cover may be constructed as follows.  Suppose $\pi : \tilde{G} \to G$ is the universal Frattini cover. Then since $\ker(\pi)$ is a closed subgroup of $\Phi(\tilde{G})$ and $\Phi(\tilde{G})$ is pro-nilpotent \cite[Lemma 22.1.2]{fried2008field}, it follows that $\ker(\pi) = \prod_{p' \text{ prime}} K_{p'}$ where $K_{p'}$ is pro-$p'$ \cite[Proposition 2.3.8]{ribes2010profinite}.  Then, setting $K'_{\ell} = \prod_{p' \neq \ell} K_{p'}$, we let ${_{\ell}\tilde{G}} = \tilde{G}/K'_{\ell}$ and let $\pi_{\ell} : {_{\ell}\tilde{G}} \to G$ be the map induced by $\pi$.  Then $\pi_{\ell}: {_{\ell}\tilde{G}} \to G$ is a universal $\ell$-Frattini cover.
\end{fact}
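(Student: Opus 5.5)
The plan is to check the two defining properties of a universal $\ell$-Frattini cover directly for $\pi_{\ell} : \tilde{G}/K'_{\ell} \to G$, using only the universal property of $\pi : \tilde{G} \to G$ from Fact \ref{fact2} and characterization (b) in Definition \ref{Frattini}.

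\textbf{Step 1: $\pi_{\ell}$ is well defined.} Since $\ker(\pi) \leq \Phi(\tilde{G})$ and $\Phi(\tilde{G})$ is pronilpotent, $\ker(\pi)$ is pronilpotent. It is therefore the direct product of its unique $p'$-Sylow subgroups $K_{p'}$. Being unique, each $K_{p'}$ is characteristic in $\ker(\pi)$. As $\ker(\pi) \unlhd \tilde{G}$, it follows that each $K_{p'}$, and hence the closed subgroup $K'_{\ell} = \prod_{p' \neq \ell} K_{p'}$, is normal in $\tilde{G}$. So ${_{\ell}\tilde{G}} = \tilde{G}/K'_{\ell}$ is a profinite group. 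Because $K'_{\ell} \subseteq \ker(\pi)$, the map $\pi$ factors through an epimorphism $\pi_{\ell} : {_{\ell}\tilde{G}} \to G$.

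\textbf{Step 2: $\pi_{\ell}$ is an $\ell$-Frattini cover.} Its kernel is $\ker(\pi)/K'_{\ell} \cong K_{\ell}$, which is pro-$\ell$. For the Frattini property I use (b). Suppose $H_{0} \leq {_{\ell}\tilde{G}}$ is closed and $\pi_{\ell}(H_{0}) = G$. Let $\hat{H}_{0}$ be the preimage of $H_{0}$ in $\tilde{G}$. Then $\hat{H}_{0}$ is closed and $\pi(\hat{H}_{0}) = G$, so $\hat{H}_{0} = \tilde{G}$ because $\pi$ is Frattini. Hence $H_{0} = {_{\ell}\tilde{G}}$.

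\textbf{Step 3: universality.} Let $\rho : H \to G$ be an $\ell$-Frattini cover. In particular $\rho$ is a Frattini cover, so the universality of $\tilde{G}$ gives an epimorphism $\gamma : \tilde{G} \to H$ with $\rho \circ \gamma = \pi$. Then $\gamma(\ker(\pi)) \subseteq \ker(\rho)$, and $\ker(\rho)$ is pro-$\ell$. For each prime $p' \neq \ell$, the image $\gamma(K_{p'})$ is a continuous image of a pro-$p'$ group, hence pro-$p'$. It is also a closed subgroup of a pro-$\ell$ group, hence pro-$\ell$, and so it is trivial. Since $\gamma$ is continuous and kills each $K_{p'}$ with $p' \neq \ell$, it kills the closed subgroup they generate, namely $K'_{\ell}$. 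Therefore $\gamma$ factors as $\chi \circ q$, where $q : \tilde{G} \to {_{\ell}\tilde{G}}$ is the quotient map and $\chi : {_{\ell}\tilde{G}} \to H$ is an epimorphism. From $\rho \circ \chi \circ q = \pi = \pi_{\ell} \circ q$ and the surjectivity of $q$ we get $\rho \circ \chi = \pi_{\ell}$, which is the required commuting triangle.

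\textbf{Main obstacle.} The only delicate point is Step 1: the decomposition of $\ker(\pi)$ into Sylow factors and the normality of $K'_{\ell}$ in $\tilde{G}$, not merely in $\ker(\pi)$. I would handle it through uniqueness of Sylow subgroups in pronilpotent groups, which makes them characteristic in $\ker(\pi)$ and hence normal in $\tilde{G}$, citing \cite[Proposition 2.3.8]{ribes2010profinite}. Steps 2 and 3 are then formal.
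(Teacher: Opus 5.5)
Your proof is correct and establishes exactly the mapping property that the statement defines as ``universal.'' The paper gives no argument of its own beyond the construction and the citation. However, its remark right afterwards on the $\Pi$-version (``since $\pi_{\Pi}$ is a Frattini cover and its $\ell$-Sylows are free pro-$\ell$ \ldots'') shows the justification it has in mind, which is an analogue of Fact~\ref{fact2}(2). It has two ingredients:
\begin{itemize}
\item ${_{\ell}\tilde{G}}$ is $\ell$-projective. A Sylow $\ell$-subgroup of the projective group $\tilde{G}$ meets $K'_{\ell}$ trivially, so it maps isomorphically onto a Sylow $\ell$-subgroup of ${_{\ell}\tilde{G}}$, and that subgroup is therefore free pro-$\ell$.
\item An $\ell$-Frattini cover with $\ell$-projective source is universal. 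One lifts through $\rho$, then uses the Frattini property of $\rho$ to get surjectivity.
\end{itemize}

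You bypass this entirely. You take the map $\gamma : \tilde{G} \to H$ supplied by the universality of $\tilde{G}$ and push it down to ${_{\ell}\tilde{G}}$, because each $\gamma(K_{p'})$ with $p' \neq \ell$ is both pro-$p'$ and pro-$\ell$, hence trivial. This is more elementary, and it works verbatim for any set of primes $\Pi$. So it would also justify the paper's $\Pi$-version without any appeal to free Sylow subgroups.

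What the projectivity route buys in addition is a recognition principle and uniqueness up to isomorphism over $G$. Uniqueness uses $\ell$-projectivity to split one universal cover off another, as in the uniqueness of $\tilde{G}$. The paper relies on these later, for example in Lemma~\ref{lem:unique pq cover}, where a map between universal $pq$-Frattini covers is concluded to be an isomorphism. Your argument does not supply that, but the Fact as stated does not ask for it.
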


The previous construction applies equally well to a set of $\Pi$ of primes. Extend the definition of $\ell$-Frattini to $\Pi$\emph{-Frattini} in the obvious way: a cover is $\Pi$\emph{-Frattini} if the kernel is a pro-$\Pi$-group (i.e. the only primes that divide the order of any of its finite quotients lie in $\Pi$). One can define $K'_{\Pi} = \prod_{\ell \not\in \Pi} K_{\ell}$ and define ${_{\Pi}\tilde{G}} = \tilde{G}/K'_{\Pi}$ and let $\pi_{\Pi}: {_{\Pi}\tilde{G}} \to G$ denote the induced map. Since $\pi_{\Pi}$ is a Frattini cover and its $\ell$-Sylows are free pro-$\ell$ for all $\ell \in \Pi$, an identical argument to \cite[Construction 22.11.5]{fried2008field} establishes that this map is a universal $\Pi$-Frattini cover.

Specializing to the case of $G = G_{\Gamma}$, the relevant primes are $2$, $p$, and $q$. We will denote by ${_{pq}\widetilde{G_{\Gamma}}}$ the universal $\{p,q\}$-Frattini cover (and refer to this as the universal $pq$-Frattini cover of $G_{\Gamma}$); similarly ${_{pq}\widetilde{H}}$ will denote the universal $pq$-Frattini cover of any profinite group $H$. 

The following fact follows by an identical argument to \cite[Lemma 22.6.3]{fried2008field}:

\begin{fact} \label{fact: pq quotient fact}
Let $\tilde{\varphi}: {_{pq}\tilde{G}} \to G$ be the universal $pq$-Frattini cover of a profinite group $G$. Then a profinite group $H$ is a quotient of ${_{pq}\tilde{G}}$ if and only if $H$ is a $pq$-Frattini cover of a quotient of $G$. 
\end{fact}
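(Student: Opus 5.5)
The plan is to follow the proof of \cite[Lemma 22.6.3]{fried2008field}, replacing the universal Frattini cover by ${_{pq}\tilde{G}}$ and keeping track of the primes dividing the kernels. Throughout, write $E = {_{pq}\tilde{G}}$ and $\pi = \tilde{\varphi} : E \to G$. Recall that $E = \tilde{G}/K'_{pq}$, and that $\ker(\pi)$ is a pro-$\{p,q\}$ group contained in $\Phi(E)$.

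\emph{Only if.} Suppose $H = E/M$ for a closed normal subgroup $M \unlhd E$.
\begin{enumerate}
\item Set $N = \pi(M)$. Since $\pi$ is surjective, $N$ is a closed normal subgroup of $G$.
\item The quotient map $E/M \to E/(M\ker(\pi))$ identifies the target with $G/N$, so we get an epimorphism $H \to G/N$.
\item Its kernel is $M\ker(\pi)/M$, the image of $\ker(\pi)$ in $H$. This is pro-$\{p,q\}$, because continuous images of pro-$\{p,q\}$ groups are pro-$\{p,q\}$.
\item The kernel is Frattini, because an epimorphism of profinite groups maps the Frattini subgroup into the Frattini subgroup: preimages of maximal open subgroups are maximal open. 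Hence $\ker(\pi) \subseteq \Phi(E)$ maps into $\Phi(H)$.
\end{enumerate}
So $H \to G/N$ is a $pq$-Frattini cover of a quotient of $G$.

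\emph{If.} Suppose $\rho : H \to G/N$ is a $pq$-Frattini cover. The idea is to first show that the universal $pq$-Frattini cover ${_{pq}\widetilde{G/N}}$ is a quotient of $E$ compatibly with the maps to $G/N$. The universal property of ${_{pq}\widetilde{G/N}}$ then yields an epimorphism ${_{pq}\widetilde{G/N}} \to H$ over $G/N$. Composing the two gives $E \twoheadrightarrow H$.

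The first step is carried out as follows.
\begin{enumerate}
\item By the remark after Fact \ref{fact:quotientbyfrattini}, lift the epimorphism $G \to G/N$ to an epimorphism $\tilde{\rho} : \tilde{G} \to \widetilde{G/N}$ commuting with the maps to $G/N$.
\item The key point is that $\tilde{\rho}(K'_{pq}(G)) \subseteq K'_{pq}(G/N)$. The subgroup $K'_{pq}(G)$ lies in $\ker(\tilde{G} \to G)$, so its image lies in $\ker(\widetilde{G/N} \to G/N)$. That kernel is pronilpotent, hence the direct product $\prod_{\ell} K_{\ell}$ of its $\ell$-Sylow subgroups.
\item A continuous image of a pro-$\Pi'$ group, where $\Pi'$ is the set of primes other than $p,q$, is pro-$\Pi'$. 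Inside $\prod_{\ell} K_{\ell}$ such a group must lie in $\prod_{\ell \neq p,q} K_{\ell} = K'_{pq}(G/N)$.
\item Therefore $\tilde{\rho}$ descends to an epimorphism $E = \tilde{G}/K'_{pq}(G) \to \widetilde{G/N}/K'_{pq}(G/N) = {_{pq}\widetilde{G/N}}$ over $G/N$.
\end{enumerate}

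The step needing most care is the one just described: making sure the lift $\tilde{\rho}$ respects the prime-to-$pq$ parts of the kernels. The Sylow-decomposition argument handles this, since the decomposition of a pronilpotent group into its Sylow subgroups is canonical and every closed normal pro-$\Pi'$ subgroup lands in the $\Pi'$-part. Everything else is formal bookkeeping with universal properties.
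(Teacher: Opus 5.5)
Your proof is correct. The ``only if'' half matches the argument of \cite[Lemma 22.6.3]{fried2008field} that the paper cites. Your ``if'' half takes a different route.

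Transposed to the $pq$ setting, the cited argument lifts the composite ${_{pq}\tilde{G}} \to G \to G/N$ directly through the given cover $H \to G/N$. This uses that ${_{pq}\tilde{G}}$ is $pq$-projective: its $p$- and $q$-Sylow subgroups are free, so a cover with pro-$\{p,q\}$ kernel admits a weak solution. Surjectivity of the lift then comes for free from the Frattini condition (b) in Definition~\ref{Frattini}.

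You instead first show that $G \mapsto {_{pq}\tilde{G}}$ is functorial for epimorphisms. You lift $G \to G/N$ to $\tilde{G} \to \widetilde{G/N}$, and use the canonical Sylow decomposition of the pronilpotent Frattini kernels to see that $K'_{pq}(G)$ lands in $K'_{pq}(G/N)$. You then apply the universal property of ${_{pq}\widetilde{G/N}}$.

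What each route buys:
\begin{itemize}
\item The direct route is shorter and shows that every lift is automatically onto. However, it needs a $pq$-projectivity lifting property that the paper only mentions in passing.
\item Your route uses only ingredients the paper states explicitly: the lifting remark after Fact~\ref{fact:quotientbyfrattini}, the construction ${_{pq}\tilde{G}} = \tilde{G}/K'_{pq}$, and the universal property of universal $\Pi$-Frattini covers. It also yields the by-product that ${_{pq}\widetilde{G/N}}$ is a quotient of ${_{pq}\tilde{G}}$ over $G/N$.
\item The gain of your route is modularity rather than elementarity. The universal property you invoke is itself proved by the same kind of lifting argument.
\end{itemize}

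A minor point: rename your lift $\tilde{\rho}$. It is a lift of $G \to G/N$, not of the cover $\rho : H \to G/N$.
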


\begin{fact} \label{fact:Ribes} \cite[Theorem 3.4]{MR792361}
    Let $p_{1}, \ldots, p_{n}$ be distinct prime numbers and let $t_{1}, \ldots, t_{n}$ be natural numbers. Suppose $G$ is a profinite group that admits a subnormal series 
    $$
    G = G_{0} \vartriangleright G_{1} \vartriangleright \ldots \vartriangleright G_{n} = 1
    $$
    such that $G_{i-1}/G_{i}$ is a pro-$p_{i}$ group of rank $t_{i}$. Assume $\tilde{G}$ is a profinite group with subnormal series 
    $$
    \tilde{G} = \tilde{G}_{0} \vartriangleright \tilde{G}_{1} \vartriangleright \ldots \vartriangleright \tilde{G}_{n} = 1,
    $$
    such that $\tilde{G}_{i-1}/\tilde{G}_{i}$ is the free pro-$p_{i}$ group of rank $t_{i}$.  Then any epimorphism $\tilde{\gamma}: \tilde{G} \to G$ is a universal Frattini cover of $G$. 
\end{fact}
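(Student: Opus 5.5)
The plan is to show that $\tilde{G}$ is projective, compare it with the universal Frattini cover of $G$ via Fact \ref{fact2}(2), and use a Sylow-by-Sylow Hopfian argument to see that the comparison map is an isomorphism. Throughout I use that the primes $p_{1}, \ldots, p_{n}$ are distinct and the ranks $t_{i}$ are finite.

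\textbf{Step 1: Sylow subgroups.} Fix $i$. The quotient $\tilde{G}/\tilde{G}_{i-1}$ has a series whose factors are pro-$p_{j}$ with $j<i$, so its order is prime to $p_{i}$. Hence a $p_{i}$-Sylow subgroup $P_{i}$ of $\tilde{G}_{i-1}$ is a $p_{i}$-Sylow subgroup of $\tilde{G}$. Likewise $\tilde{G}_{i}$ only involves the primes $p_{j}$ with $j>i$. Therefore $P_{i}\cap\tilde{G}_{i}=1$, and $P_{i}$ maps isomorphically onto the pro-$p_{i}$ group $\tilde{G}_{i-1}/\tilde{G}_{i}$. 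So every $p_{i}$-Sylow of $\tilde{G}$ is free pro-$p_{i}$ of rank $t_{i}$, and $\tilde{G}$ has no other primes. The same argument applied to $G$ shows that its $p_{i}$-Sylow $Q_{i}$ is isomorphic to $G_{i-1}/G_{i}$, so $\operatorname{rank}(Q_{i}) = t_{i}$.

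\textbf{Step 2: projectivity and the comparison map.} A profinite group all of whose Sylow subgroups are free pro-$p$ has cohomological dimension $\le 1$, so $\tilde{G}$ is projective. Let $\hat{\varphi}:\hat{G}\to G$ be the universal Frattini cover. Fact \ref{fact2}(2) gives an epimorphism $\beta:\tilde{G}\to\hat{G}$ with $\hat{\varphi}\circ\beta=\tilde{\gamma}$. It then suffices to show that $\beta$ is an isomorphism, since then $\tilde{\gamma}$ is $\hat{\varphi}$ up to isomorphism and so is a universal Frattini cover.

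\textbf{Step 3: $\beta$ is injective.} Since $\hat{G}$ is projective, its $p_{i}$-Sylow $\hat{P}_{i}$ is free pro-$p_{i}$. We may choose Sylows compatibly so that $\beta(P_{i})=\hat{P}_{i}$ and $\hat{\varphi}(\hat{P}_{i})=Q_{i}$; images of Sylows under epimorphisms are Sylows. This gives a chain of surjections $P_{i}\to\hat{P}_{i}\to Q_{i}$, with ranks $t_{i}\ge\operatorname{rank}\hat{P}_{i}\ge t_{i}$. Thus $\beta|_{P_{i}}$ is a surjection between free pro-$p_{i}$ groups of the same finite rank $t_{i}$. Finitely generated profinite groups are Hopfian, so this surjection is an isomorphism. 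Now let $K=\ker\beta$. Any Sylow subgroup of $K$ lies in some Sylow $P_{i}^{g}$ of $\tilde{G}$, on which $\beta$ is injective, so every Sylow subgroup of $K$ is trivial and hence $K=1$.

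The step I expect to need the most care is Step 1. There one must check that a $p_{i}$-Sylow of $\tilde{G}$ (and of $G$) is really isomorphic to the $i$th factor, which uses the ordering of the series and the coprimality of the other factors. One must also check that Sylows can be chosen compatibly along $\tilde{G}\to\hat{G}\to G$ in Step 3. Finiteness of the ranks is essential for the Hopfian argument.
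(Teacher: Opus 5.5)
Your argument is correct, but there is no in-paper proof to match it against: the paper imports this statement from Ribes without proof. The closest argument in the paper is the lemma showing that $\pi_{pq}\circ\pi_{2}$ is the universal Frattini cover of $G_{\Gamma}\times C_{2}^{I}$. For infinite $\Gamma$, that lemma establishes the Frattini property from the finite case via Fact~\ref{fact: Frattini char}, gets projectivity from the free Sylow subgroups, and then applies Fact~\ref{fact2}(2).

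Your route shares the projectivity step but never checks the Frattini property. Instead you factor $\tilde{\gamma}$ through $\hat{G}$ and use a rank count, together with Hopficity of finitely generated free pro-$p_{i}$ groups, to see that $\beta$ is injective on every Sylow subgroup and hence injective. This has two consequences:
\begin{enumerate}
\item The subnormal series is used only to identify the Sylow subgroups. So you actually prove a more general statement: any epimorphism onto $G$ from a group whose $p$-Sylow subgroups are free pro-$p$ of the same finite rank as those of $G$ is a universal Frattini cover.
\item The Hopfian step genuinely needs the $t_{i}$ to be finite. That is why, in the infinite-rank case, the paper proves the Frattini property through Fact~\ref{fact: Frattini char} rather than by counting ranks.
\end{enumerate}

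One small correction to Step 1. Since $\tilde{G}_{i-1}$ is only subnormal, $\tilde{G}/\tilde{G}_{i-1}$ is not a group. State the coprimality in terms of the supernatural index instead: $[\tilde{G}:\tilde{G}_{i-1}]=\prod_{j<i}[\tilde{G}_{j-1}:\tilde{G}_{j}]$ is prime to $p_{i}$. The rest of the step then goes through unchanged.
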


We want to characterize the universal Frattini cover of the group $G_{\Gamma}$ for a graph $\Gamma = (V,R)$, or more generally, of groups of the form $G_{\Gamma} \times C_{2}^{I}$ for an index set $I$ (which we may suppose is disjoint from $V$). We use the description $G_{\Gamma} = C_{q}^{R} \rtimes D_{p}^{V} = (C_{q}^{R}) \rtimes (C_{p}^{V} \rtimes C_{2}^{V})$, from which we also get a description $G_{\Gamma} \times C_{2}^{I} = (C_{q}^{R}) \rtimes (C_{p}^{V} \rtimes C_{2}^{V \cup I})$, where the $C_{2}^{I}$ factor acts trivially. Let $(z_{r})_{r \in R}$ be a set of generators converging to $1$ for $F_{|R|}(q)$, and likewise generators $(y_{v})_{v \in V}$ and $(x_{v})_{v \in V \cup I}$ for $F_{|V|}(p)$ and $F_{|V \cup I|}(2)$ respectively.  

Throughout this section, $\langle X \rangle$ denotes the closed subgroup \emph{topologically} generated by $X$ in a profinite group, not merely the abstractly generated subgroup.
If $\Gamma_{0} = (V_{0},R_{0})$ is a finite induced subgraph of $\Gamma$ with $|V_{0}| = n$ and $I_{0} \subseteq I$ is a subset with $|I_{0}| = m$, then there is a quotient $\langle y_{v} : v \in V_{0} \rangle \cong F_{n}(p)$ of $F_{|V|}(p)$ obtained by mapping $y_{v} \mapsto y_{v}$ for $v \in V_{0}$ and $y_{v} \mapsto 1$ for all $v \in V \setminus V_{0}$. We can define an action of $C_{2}^{V_{0} \cup I_{0}}$ on $\langle y_{v} : v \in V_{0} \rangle$ by setting, for all $\overline{a} = (a_{v})_{v \in V_{0} \cup I_{0}} \in C_{2}^{V_{0} \cup I_{0}}$, $\overline{a} \cdot y_{v} = y_{v}^{a_{v}}$ for all $v \in V_{0}$. Notice that these actions are compatible in the sense that if $\Gamma_{0} = (V_{0},R_{0}) \subseteq (V_{1},R_{1})$ are both finite induced subgraphs of $\Gamma$ and $I_{0} \subseteq I_{1} \subseteq I$, then we have a diagram 
 $$
    \xymatrix{ 
C_{2}^{V_{1} \cup I_{1}} \times \langle y_{v} : v \in V_{1}  \rangle \ar@{->}[r] \ar@{->}[d] & \langle y_{v} : v \in V_{1}  \rangle \ar@{->}[d] \\
C_{2}^{V_{0} \cup I_{0}} \times \langle y_{v} : v \in V_{0}  \rangle \ar@{->}[r] & \langle y_{v} : v \in V_{0}  \rangle
}
    $$
   where the horizontal maps are the respective actions, the map $C_{2}^{V_{1} \cup I_{1}} \to C_{2}^{V_{0} \cup I_{0}}$ is projection to the $V_{0} \cup I_{0}$ coordinates, and the map $\langle y_{v} : v \in V_{1} \rangle \to \langle y_{v} : v \in V_{0} \rangle$ maps $y_{v} \mapsto y_{v}$ if $v \in V_{0}$ and maps $y_{v} \mapsto 1$ otherwise. By passing to the inverse limit, we obtain a continuous action of $C_{2}^{V \cup I}$ on $F_{|V|}(p)$. This action has the property that if $\overline{a} = (a_{v})_{v \in V \cup I} \in C_{2}^{V \cup I}$, then $\overline{a} \cdot y_{v} = y_{v}^{a_{v}}$. Let $F_{|V|}(p) \rtimes C_{2}^{V \cup I}$ be the associated semidirect product and let $\rho : F_{|V|}(p) \rtimes C_{2}^{V \cup I} \to C_{2}^{V \cup I}$ denote the quotient map. 

Now we define an action of $C_{2}^{V \cup I}$ on $F_{|R|}(q)$. If $\Gamma_{0} = (V_{0},R_{0})$ is a finite induced subgraph with $|R_{0}| = n$ and $I_{0} \subseteq I$ is a finite subset, then there is an action $C_{2}^{V_{0} \cup I_{0}}$ on $\langle z_{r} : r \in R_{0} \rangle \cong F_{n}(q)$ defined so that, if $r = (v,v')$ and $\overline{a} = (a_{v})_{v \in V_{0} \cup I_{0}}$, then $\overline{a} \cdot z_{r} = z_{r}^{a_{v}a_{v'}}$. As above, these actions cohere and we obtain a continuous action of $C_{2}^{V \cup I}$ on $F_{|R|}(q)$ by passage to the inverse limit, with the property that $\overline{a} = (a_{v})_{v \in V \cup I} \in C_{2}^{V \cup I}$ and $r = (v,v')$, then $\overline{a} \cdot z_{r} = z_{r}^{a_{v}a_{v'}}$. We obtain, then, an action of $F_{|V|}(p) \rtimes C_{2}^{V \cup I}$ on $F_{|R|}(q)$ by $g \cdot h = \rho(g) \cdot h$ for all $g \in F_{|V|}(p) \rtimes C_{2}^{V \cup I}$ and $h \in F_{|R|}(q)$, where the action on the right-hand side of the equation is the action of $C_{2}^{V \cup I}$ we just defined. Let ${_{pq}(\widetilde{G_{\Gamma} \times C_{2}^{I}})}$ be defined to be the semi-direct product:
$$
{_{pq}(\widetilde{G_{\Gamma} \times C_{2}^{I}})} = F_{|R|}(q) \rtimes (F_{|V|}(p) \rtimes C_{2}^{V \cup I}).
$$
There is an epimorphism $\pi_{pq} : {_{pq}(\widetilde{G_{\Gamma} \times C_{2}^{I}})} \to (G_{\Gamma} \times C_{2}^{I})$ defined by mapping $y_{v} \mapsto (\gamma)_{v}$ for all $v \in V$, where $\gamma$ denotes a generator of $C_{p}$, and mapping $z_{r} \mapsto (\delta)_{r}$ for all $r \in R$, where $\delta$ denotes a generator of $C_{q}$. Our construction ensures that the resulting maps $F_{|V|}(p) \to C_{p}^{V}$ and $F_{|R|}(q) \to C_{q}^{R}$ are compatible with the semi-direct product structure on ${_{pq}(\widetilde{G_{\Gamma}\times C_{2}^{I}})}$ so we obtain a well-defined epimorphism. 

Note that we have explicitly constructed ${_{pq}(\widetilde{G_{\Gamma} \times C_{2}^{I}})}$ as the inverse limit of $\langle z_{r} : r \in R_{0} \rangle \rtimes (\langle y_{v} : v \in V_{0} \rangle \rtimes C_{2}^{V_{0} \cup I_{0}})$ for finite induced subgraphs $\Gamma_{0} = (V_{0},R_{0}) \subseteq \Gamma$ and finite subsets $I_{0} \subseteq I$. We may view each such group as a quotient of 
$$
\langle z_{r} : r \in R_{0} \rangle \rtimes (\langle y_{v} : v \in V_{0} \rangle \rtimes \langle x_{v}:  v \in V_{0} \cup I_{0} \rangle)
$$
where $\langle x_{v} : v \in V_{0} \cup I_{0} \rangle$ maps to $C_{2}^{V_{0} \cup I_{0}}$ by sending $x_{v} \mapsto (-1)_{v}$ for all $v \in V_{0} \cup I_{0}$. By taking the inverse limit over all finite induced subgraphs, we obtain a group
$$
\reallywidetilde{G_{\Gamma} \times C_{2}^{I}} = F_{|R|}(q) \rtimes (F_{|V|}(p) \rtimes F_{|V\cup I|}(2)). 
$$
We will denote by $\pi_{2} : \widetilde{G_{\Gamma} \times C_{2}^{I}} \to {_{pq}\widetilde{(G_{\Gamma} \times C_{2}^{I})}}$ the quotient map obtained by mapping each $x_{v} \mapsto (-1)_{v}$ for $v \in V \cup I$. 

\begin{lem}
    The map $\pi_{pq} \circ \pi_{2} : \widetilde{G_{\Gamma} \times C_{2}^{I}} \to G_{\Gamma} \times C_{2}^{I}$ is a universal Frattini cover of $G_{\Gamma} \times C_{2}^{I}$.
\end{lem}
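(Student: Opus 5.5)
The natural tool is Fact \ref{fact:Ribes}, the criterion via subnormal series with free pro-$\ell$ factors. The only wrinkle is that it is stated for finite ranks $t_{i}$, while $V$, $R$, $I$ may be infinite. So I plan to work at the finite level first and then pass to the inverse limit using Fact \ref{fact: Frattini char}.

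\textbf{Step 1: the finite level.} Fix a finite induced subgraph $\Gamma_{0}=(V_{0},R_{0})$ and a finite $I_{0}\subseteq I$. Write
$$
H_{0}=\langle z_{r}:r\in R_{0}\rangle\rtimes(\langle y_{v}:v\in V_{0}\rangle\rtimes\langle x_{v}:v\in V_{0}\cup I_{0}\rangle)
$$
for the corresponding quotient of $\widetilde{G_{\Gamma}\times C_{2}^{I}}$. It has the subnormal series
$$
H_{0}\vartriangleright F_{|R_{0}|}(q)\rtimes F_{|V_{0}|}(p)\vartriangleright F_{|R_{0}|}(q)\vartriangleright 1,
$$
whose factors are free pro-$2$, pro-$p$ and pro-$q$ of ranks $|V_{0}\cup I_{0}|$, $|V_{0}|$ and $|R_{0}|$. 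On the other side, $G_{\Gamma_{0}}\times C_{2}^{I_{0}}=C_{q}^{R_{0}}\rtimes(C_{p}^{V_{0}}\rtimes C_{2}^{V_{0}\cup I_{0}})$ has the series
$$
G_{\Gamma_{0}}\times C_{2}^{I_{0}}\vartriangleright C_{q}^{R_{0}}\rtimes C_{p}^{V_{0}}\vartriangleright C_{q}^{R_{0}}\vartriangleright 1,
$$
whose factors are pro-$2$, pro-$p$ and pro-$q$ of exactly the same ranks. The composite map sends $x_{v}\mapsto(-1)_{v}$, $y_{v}\mapsto(\gamma)_{v}$ and $z_{r}\mapsto(\delta)_{r}$, so it is surjective. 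Fact \ref{fact:Ribes} therefore says that $H_{0}\to G_{\Gamma_{0}}\times C_{2}^{I_{0}}$ is a universal Frattini cover; in particular it is Frattini.

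\textbf{Step 2: passing to the limit.} Set $\tilde G=\widetilde{G_{\Gamma}\times C_{2}^{I}}$, let $K=\ker(\pi_{pq}\circ\pi_{2})$, and let $K_{0}$ denote the kernel of $\tilde G\to G_{\Gamma_{0}}\times C_{2}^{I_{0}}$. I will check that $\tilde G\to\tilde G/K$ is Frattini using Fact \ref{fact: Frattini char}, so let $M\unlhd\tilde G$ be open.

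\begin{itemize}
\item The kernels $L_{0}$ of the projections $\tilde G\to H_{0}$ intersect to $1$ as $(\Gamma_{0},I_{0})$ grows. Since $M$ is open, compactness gives some $(\Gamma_{0},I_{0})$ with $L_{0}\subseteq M$.
\item For that choice, $\tilde G/M$ is a quotient of $H_{0}$, and the image of $K$ in $H_{0}$ is $\ker(H_{0}\to G_{\Gamma_{0}}\times C_{2}^{I_{0}})=K_{0}/L_{0}$.
\item By Step 1 this kernel lies in $\Phi(H_{0})$.
\item The image of the Frattini subgroup under an epimorphism lies in the Frattini subgroup of the target, so $KM/M\subseteq\Phi(\tilde G/M)$. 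Equivalently, $\tilde G/M\to\tilde G/KM$ is Frattini.
\end{itemize}

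Fact \ref{fact: Frattini char} then gives that $\pi_{pq}\circ\pi_{2}$ is a Frattini cover.

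\textbf{Step 3: universality.} The group $\tilde G$ is an inverse limit of the groups $H_{0}$. Each $H_{0}$ is projective: it is a universal Frattini cover by Step 1, hence projective by Fact \ref{fact2}(2). An inverse limit of projective groups is projective by a standard compactness argument on lifting finite embedding problems. Now apply Fact \ref{fact2}(2) to $\tilde G$: there is an epimorphism $\beta$ onto the universal Frattini cover $U$ of $G_{\Gamma}\times C_{2}^{I}$ that is compatible with the covering maps. Since $\tilde G\to G_{\Gamma}\times C_{2}^{I}$ is Frattini and $\beta(\ker\beta)$ is trivial, characterization (b) in Definition \ref{Frattini} forces $\ker\beta=1$. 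Hence $\tilde G\cong U$ over $G_{\Gamma}\times C_{2}^{I}$.

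\textbf{Main obstacle.} I expect the delicate part to be Step 2: making sure the image of $K$ in $H_{0}$ really is the finite-level kernel, and that the finite-level Frattini property transfers correctly to arbitrary open $M$. The compactness argument behind the projectivity of the inverse limit in Step 3 is routine but has to be stated with some care.
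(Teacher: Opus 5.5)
Your architecture is the paper's: the finite case by Fact~\ref{fact:Ribes}, the Frattini property of $\pi_{pq}\circ\pi_{2}$ by Fact~\ref{fact: Frattini char}, and then projectivity plus Fact~\ref{fact2}(2). Your subnormal series, with normal terms $C_{q}^{R_{0}}\rtimes C_{p}^{V_{0}}$ and $C_{q}^{R_{0}}$ and the pro-$2$ factor on top, is the correctly oriented one.

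Step~2, which you flagged as the delicate point, is fine. The equality $KL_{0}=K_{0}$ you assert does hold, though you only use $KL_{0}\subseteq K_{0}$.

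You genuinely diverge from the paper in proving projectivity. The paper notes that the $\ell$-Sylow subgroups of $\widetilde{G_{\Gamma}\times C_{2}^{I}}$ are free pro-$\ell$ and cites \cite[Proposition 22.10.4]{fried2008field}. You instead observe that each $H_{0}$ is projective, and that any finite embedding problem for $\tilde G$ factors through some $H_{0}$: the kernel of $\tilde G\to A$ is open, so it contains some $L_{0}$. This is valid and needs no further compactness. It also reuses Step~1 instead of identifying the Sylow subgroups of the infinite semidirect product.

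The gap is the last sentence of Step~3. Write $\tilde\varphi\colon U\to G_{\Gamma}\times C_{2}^{I}$ for the universal Frattini cover, so $\tilde\varphi\circ\beta=\pi_{pq}\circ\pi_{2}$. Knowing this, that $\pi_{pq}\circ\pi_{2}$ is Frattini, and that $\beta(\ker\beta)=1$, condition (b) of Definition~\ref{Frattini} says nothing about $\ker\beta$. Condition (b) concerns closed subgroups that map \emph{onto} $G_{\Gamma}\times C_{2}^{I}$, whereas $\ker\beta$ maps to the identity.

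The missing ingredient is a complement to $\ker\beta$. Since $U$ is projective (Fact~\ref{fact2}(2)), the epimorphism $\beta$ splits: there is a continuous homomorphism $s\colon U\to\tilde G$ with $\beta\circ s=\mathrm{id}_{U}$ (the standard fact that epimorphisms onto projective profinite groups split). Then $(\pi_{pq}\circ\pi_{2})(s(U))=\tilde\varphi(U)=G_{\Gamma}\times C_{2}^{I}$, so (b) forces $s(U)=\tilde G$. Hence $s$ is surjective, $\beta$ is injective, and $\beta$ is an isomorphism over $G_{\Gamma}\times C_{2}^{I}$.

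Equivalently, $\beta$ is itself a Frattini cover because $\pi_{pq}\circ\pi_{2}$ is, and a Frattini cover of a projective group splits and is therefore an isomorphism. The paper compresses this into the standard fact that a Frattini cover with projective source is universal. With the splitting step supplied, your proof is complete.
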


\begin{proof}
    Note that if $\Gamma = (V,R)$ is finite, with $|V| = n$, $|R| = m$, and $|I| = k$, then we have that $G_{\Gamma} \times C_{2}^{I}$ has a subnormal series 
    $$
    G_{\Gamma} \times C_{2}^{I} = C_{q}^{R} \rtimes (C_{p}^{V} \rtimes C_{2}^{V \cup I}) \vartriangleright C_{p}^{V} \rtimes C_{2}^{V \cup I} \vartriangleright C_{2}^{V \cup I} \vartriangleright 1
    $$
    with quotients being $q-$, $p-$, and $2$-groups of rank $m$, $n$ and $n+k$, respectively.  Then, by construction, $\widetilde{G_{\Gamma} \times C_{2}^{I}}$ can be described as 
    $$
    \widetilde{G_{\Gamma} \times C_{2}^{I}} = F_{m}(q) \rtimes (F_{n}(p) \rtimes F_{n+k}(2)) \vartriangleright F_{n}(p) \rtimes F_{n+k}(2) \vartriangleright F_{n+k}(2) \vartriangleright 1,
    $$
    so by Fact \ref{fact:Ribes}, the epimorphism $\pi_{pq} \circ \pi_{2} : \widetilde{G_{\Gamma} \times C_{2}^{I}} \to G_{\Gamma} \times C_{2}^{I}$ is a universal Frattini cover. 

    For the general case, we note that the map $\pi_{pq} \circ \pi_{2}: \widetilde{G_{\Gamma} \times C_{2}^{I}} \to G_{\Gamma} \times C_{2}^{I}$ is the inverse limit (in the obvious sense) of the maps $\widetilde{G_{\Gamma_{0}} \times C_{2}^{I_{0}}} \to G_{\Gamma_{0}} \times C_{2}^{I_{0}}$ for $\Gamma_{0} \subseteq \Gamma$ ranging over the finite induced subgraphs of $\Gamma$ and $I_{0} \subseteq I$ ranging over the finite subsets of $I$. By the finite case and Fact \ref{fact: Frattini char}, the map $\pi_{pq} \circ \pi_{2}$ is a Frattini cover. Moreover, $\widetilde{G_{\Gamma} \times C_{2}^{I}}$ is projective: by construction, its $q$-, $p$-, and $2$-Sylow subgroups are isomorphic to the free pro-$\ell$ groups $F_{|R|}(q)$, $F_{|V|}(p)$, and $F_{|V \cup I|}(2)$, respectively, and a profinite group all of whose Sylow subgroups are free pro-$\ell$ is projective \cite[Proposition 22.10.4]{fried2008field}. Since a Frattini cover with projective source is the universal Frattini cover by Fact \ref{fact2}(2), the map $\pi_{pq} \circ \pi_{2}$ is the universal Frattini cover of $G_{\Gamma} \times C_{2}^{I}$.
\end{proof}

\begin{lem}
    Suppose $\Gamma = (V,R)$ is a graph, $I$ is a set, and $\pi = \pi_{pq} \circ \pi_{2} : \widetilde{G_{\Gamma} \times C_{2}^{I}} \to G_{\Gamma} \times C_{2}^{I}$ is the universal Frattini cover, with $\pi_{2}$ and $\pi_{pq}$ defined as above. Then $\ker(\pi_{2})$ is the unique closed normal subgroup $M$ of $\widetilde{G_{\Gamma} \times C_{2}^{I}}$ contained in $\ker(\pi)$ such that the induced map $\widetilde{(G_{\Gamma} \times C_{2}^{I})}/M \to \widetilde{(G_{\Gamma} \times C_{2}^{I})}/\ker(\pi)$ is a universal $pq$-Frattini cover.
\end{lem}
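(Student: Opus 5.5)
Write $\tilde{G} = \widetilde{G_{\Gamma} \times C_{2}^{I}}$ and $K = \ker(\pi)$. The plan has two parts: existence, that $M = \ker(\pi_{2})$ works, and uniqueness, that $M$ is the largest pro-$2$ part of $K$. For existence, note that $\ker(\pi_{2})$ is contained in $K$. The induced map $\tilde{G}/\ker(\pi_{2}) \to \tilde{G}/K$ is exactly $\pi_{pq}$ under the identification $\tilde{G}/K \cong G_{\Gamma}\times C_2^I$. So we must check that $\pi_{pq} : {_{pq}(\widetilde{G_{\Gamma}\times C_{2}^{I}})} \to G_{\Gamma}\times C_{2}^{I}$ is a universal $pq$-Frattini cover.

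\textbf{Step 1: $\pi_{pq}$ is a $pq$-cover.} Its kernel is the kernel of $F_{|R|}(q)\rtimes(F_{|V|}(p)\rtimes C_2^{V\cup I}) \to C_q^R\rtimes(C_p^V\rtimes C_2^{V\cup I})$. This is an extension of a closed subgroup of $F_{|V|}(p)$ by a closed subgroup of $F_{|R|}(q)$, since the $C_2$ factor maps isomorphically. Hence it is pro-$\{p,q\}$.

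\textbf{Step 2: $\pi_{pq}$ is Frattini.} Since $\pi = \pi_{pq}\circ\pi_2$ is Frattini, characterization (b) of Definition \ref{Frattini} applies. If $H_0 \le {_{pq}\tilde{G}}$ is closed with $\pi_{pq}(H_0)$ everything, then $\pi(\pi_2^{-1}(H_0))$ is everything. So $\pi_2^{-1}(H_0) = \tilde{G}$, and therefore $H_0$ is the whole group.

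\textbf{Step 3: universality.} I would use the description in the preceding Fact: the universal $pq$-Frattini cover is $\tilde{G}/K'_{\{p,q\}}$, where $K = \prod_{\ell} K_\ell$ is the decomposition of the pronilpotent group $K \le \Phi(\tilde{G})$ into Sylow subgroups. Then $K'_{\{p,q\}} = K_2$, which is the unique $2$-Sylow of $K$ and hence characteristic in $K$ and normal in $\tilde{G}$. The claim thus reduces to showing $\ker(\pi_2) = K_2$. This identification also gives uniqueness, and I expect it to be the main obstacle.

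For $\ker(\pi_2) \subseteq K_2$: $\ker(\pi_2)$ is the kernel of $F_{|V\cup I|}(2) \to C_2^{V\cup I}$, lifted into the semidirect product. It is a closed subgroup of the free pro-$2$ factor (modulo identifying the complement), so it is pro-$2$. A pro-$2$ closed normal subgroup of the pronilpotent $K$ lies in $K_2$.

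For $K_2 \subseteq \ker(\pi_2)$: the image $\pi_2(K_2)$ is a pro-$2$ normal subgroup of $\ker(\pi_{pq})$, which is pro-$\{p,q\}$ by Step 1. Hence $\pi_2(K_2)$ is trivial.

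\textbf{Uniqueness.} Let $M$ be any closed normal subgroup with $M\subseteq K$ such that $\tilde{G}/M \to \tilde{G}/K$ is a universal $pq$-Frattini cover. Then $K/M$ is pro-$\{p,q\}$, so $K_2 \subseteq M$ because $K_2$ maps to a pro-$2$ subgroup of a pro-$\{p,q\}$ group.

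Conversely, $\tilde{G}/M$ and $\tilde{G}/K_2$ are both universal $pq$-Frattini covers of $G_{\Gamma}\times C_2^I$. By universality there is an epimorphism $\tilde{G}/M \to \tilde{G}/K_2$ over the base, and one in the other direction. Composing with the quotient $\tilde{G}/K_2 \to \tilde{G}/M$ (which exists since $K_2\subseteq M$) gives a surjective endomorphism of $\tilde{G}/K_2$ over the base. The most delicate point is converting this into equality of kernels. I would argue instead with ranks of Sylow subgroups, or more directly as follows. $M/K_2 \subseteq K/K_2$ is a normal subgroup of the kernel of the universal $pq$-Frattini cover, and that kernel is pro-$\{p,q\}$. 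The quotient $\tilde{G}/M$ maps onto $\tilde{G}/K_2$ over the base, so there is a surjection $\tilde{G}/M \twoheadrightarrow \tilde{G}/K_2$ compatible with the projections. Counting orders of finite quotients (the Hopfian-type argument for the finitely generated pieces via the inverse limit over finite $\Gamma_0, I_0$) would then force $M = K_2$. If that is awkward, I would fall back on the uniqueness of universal covers up to isomorphism over the base together with the Fact on quotients (Fact \ref{fact: pq quotient fact}).
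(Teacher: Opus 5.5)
Your existence argument is correct. It agrees with the paper's proof, which identifies the required subgroup with the $2$-Sylow of $\Phi(\tilde G)$; this equals $\ker(\pi)$ because $\Phi(G_{\Gamma}\times C_{2}^{I})=1$. Your inclusion $K_{2}\subseteq M$ is also fine.

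The gap is the reverse inclusion $M\subseteq K_{2}$. It would suffice to show that the surjective endomorphism $\theta$ of $\tilde G/K_{2}$ over $G_{\Gamma}\times C_{2}^{I}$ that you build is injective. The paper asserts this step in one sentence, but you only list fallbacks, and none of them works as stated:
\begin{itemize}
\item Uniqueness of universal $pq$-Frattini covers up to isomorphism over the base gives only an abstract isomorphism $\tilde G/M\cong\tilde G/K_{2}$, not the equality $M=K_{2}$. Fact~\ref{fact: pq quotient fact} only says which groups occur as quotients.
\item Comparing ranks cannot detect $M/K_{2}$, since it lies in a Frattini kernel.
\item For infinite $\Gamma$ the group is not finitely generated. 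So there is no Hopfian property, and counting finite quotients gives nothing.
\item Passing to the finite pieces ${_{pq}\widetilde{(G_{\Gamma_{0}}\times C_{2}^{I_{0}})}}$ would first require showing that $\theta$ preserves the kernels of the projections onto them, which you do not address.
\end{itemize}

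The missing idea is to combine projectivity of the free Sylow subgroups with the Frattini property.
\begin{itemize}
\item Let $S_{p}\cong F_{|V|}(p)$ and $S_{q}\cong F_{|R|}(q)$ be the normal $p$- and $q$-Sylows of $\tilde G$. Since $M$ is a closed subgroup of the pronilpotent pro-$\{2,p,q\}$ group $K$ and $K_{2}\subseteq M$, you get $M=K_{2}\times(M\cap S_{p})\times(M\cap S_{q})$. So it suffices to show $M\cap S_{p}=1$; the case of $q$ is identical.
\item The image $\bar S_{p}\cong S_{p}$ of $S_{p}$ in $\tilde G/K_{2}$ is the unique $p$-Sylow there, so $\theta(\bar S_{p})=\bar S_{p}$.
\item Let $\eta\colon\bar S_{p}\to C_{p}^{V}$ be the map induced by $\pi$. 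Then $\eta\circ\theta=\eta$. Moreover $\eta$ is the Frattini quotient of the free pro-$p$ group $\bar S_{p}$, since its kernel corresponds to $S_{p}\cap K=\Phi(S_{p})$.
\item Projectivity of $\bar S_{p}$ gives $s\colon\bar S_{p}\to\bar S_{p}$ with $\theta\circ s=\mathrm{id}$. Then $\eta\circ s=\eta\circ\theta\circ s=\eta$, so $s(\bar S_{p})$ maps onto $C_{p}^{V}$ and hence equals $\bar S_{p}$ by the Frattini property.
\item Thus $s$ is an isomorphism, and so is $\theta|_{\bar S_{p}}=s^{-1}$. Consequently the quotient map $\tilde G/K_{2}\to\tilde G/M$ is injective on $\bar S_{p}$.
\item Its kernel there is $(M\cap S_{p})K_{2}/K_{2}\cong M\cap S_{p}$. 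Hence $M\cap S_{p}=1$, and so $M=K_{2}$.
\end{itemize}
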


\begin{proof}
    It is clear that $\ker(\pi_{2})$ is at least one closed normal subgroup of $\widetilde{G_{\Gamma} \times C_{2}^{I}}$ with the desired property. Moreover, any such normal subgroup must be equal to the $2$-Sylow subgroup of $\Phi(\widetilde{G_{\Gamma}\times C_{2}^{I}})$ since $\Phi(G_{\Gamma} \times C_{2}^{I}) = 1$ and is therefore uniquely determined. 
\end{proof}

\begin{lem} \label{lem:unique pq cover}
    Suppose $\Gamma = (V,R)$ is a graph, $I$ a set, $\Gamma_{0} = (V_{0},R_{0}) \subseteq \Gamma$ is an induced subgraph and $I_{0} \subseteq I$ a finite subset. Let $\pi_{0} : G_{\Gamma} \times C_{2}^{I} \to G_{\Gamma_{0}} \times C_{2}^{I_{0}}$ be the projection map. Let $N$ be the kernel of the map $\pi_{0} \circ \pi_{pq} : {_{pq}\widetilde{(G_{\Gamma} \times C_{2}^{I})}} \to G_{\Gamma_{0}} \times C_{2}^{I_{0}}$. Then there is a unique closed normal subgroup $M \unlhd {_{pq}\widetilde{(G_{\Gamma} \times C_{2}^{I})}}$ with $M \subseteq N$ and the natural map ${_{pq}\widetilde{(G_{\Gamma}\times C_{2}^{I})}}/M \to {_{pq}\widetilde{(G_{\Gamma} \times C_{2}^{I})}}/N$ is a universal $pq$-Frattini cover.
\end{lem}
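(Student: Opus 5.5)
Write $H = {_{pq}\widetilde{(G_{\Gamma}\times C_{2}^{I})}} = F_{|R|}(q) \rtimes (F_{|V|}(p) \rtimes C_{2}^{V \cup I})$, and let $Q = H/N \cong G_{\Gamma_{0}} \times C_{2}^{I_{0}}$. The plan is to construct one $M$ explicitly and then show uniqueness by identifying $M$ with a characteristic piece of $N$, as in the proof of the previous lemma. For existence, take $M$ to be the closed normal subgroup generated by $y_{v}$ for $v \in V \setminus V_{0}$, by $z_{r}$ for $r \in R \setminus R_{0}$, and by the $C_{2}$-coordinates $(-1)_{v}$ for $v \in (V \cup I) \setminus (V_{0} \cup I_{0})$. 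Then, by the explicit semidirect product description,
$$
H/M \cong F_{|R_{0}|}(q) \rtimes (F_{|V_{0}|}(p) \rtimes C_{2}^{V_{0} \cup I_{0}}) = {_{pq}\widetilde{(G_{\Gamma_{0}} \times C_{2}^{I_{0}})}},
$$
and the induced map to $H/N$ is $\pi_{pq}$ for $\Gamma_{0}, I_{0}$. This identification uses that the actions cohere under the projections (the compatibility diagram in the construction) and that $\Gamma_{0}$ is induced, so the edges among $V_{0}$ are exactly $R_{0}$. This map is a universal $pq$-Frattini cover by the construction of ${_{\Pi}\tilde{G}}$ together with the preceding lemma, since the universal $pq$-Frattini cover is the quotient of the universal Frattini cover by the $2$-part of its kernel. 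One also checks $M \subseteq N$, which is immediate since every generator of $M$ maps to the identity in $G_{\Gamma_{0}} \times C_{2}^{I_{0}}$.

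For uniqueness, suppose $M' \subseteq N$ is closed normal with $H/M' \to H/N$ a universal $pq$-Frattini cover. The key observation is that such a cover is determined, as a quotient, by the kernel $N/M'$. That kernel is contained in $\Phi(H/M')$ and is pro-$\{p,q\}$. The universal $pq$-Frattini cover of $Q$ has $p$- and $q$-Sylows free pro-$p$ and pro-$q$. So I would characterize $M'$ intrinsically as follows: $M'$ is the smallest closed normal subgroup $L \subseteq N$ such that $N/L$ is pro-$\{p,q\}$, and the images in $H/L$ of the $p$-Sylow $F_{|V|}(p)$ and $q$-Sylow $F_{|R|}(q)$ of $H$ have the ranks $|V_{0}|$ and $|R_{0}|$ respectively.

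An alternative, cleaner route is comparison. Given $M'$, universality of $H/M \to Q$ yields an epimorphism $\chi: H/M \to H/M'$ over $Q$, and symmetrically there is one back. Both groups are finitely generated when $\Gamma_{0}$ is finite (their rank is bounded by Fact \ref{fact2}(4)), hence Hopfian, so $\chi$ is an isomorphism. This alone only gives $H/M \cong H/M'$ over $Q$, not $M = M'$. To upgrade it, I would argue that $M' \supseteq M$. Consider the images of $y_{v}$ for $v \notin V_{0}$: they lie in the pro-$p$ Sylow of $N/M'$. The group $N/M' \subseteq \Phi(H/M')$, and $H/M' \cong {_{pq}\widetilde{Q}}$ has $C_{2}^{V_{0}\cup I_{0}}$ as its $2$-part, acting on the $p$-part through the character $y \mapsto y^{a_{v}}$ with $v \in V_{0}$. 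Meanwhile $y_{v}$ with $v \notin V_{0}$ is inverted by $(-1)_{v}$, which lies in $N$.

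The conjugation argument runs in the style of Lemma \ref{lem:easy}. The commutator of $y_{v}$ with $(-1)_{v}$ is $y_{v}^{-2}$, so the image of $y_{v}^{2}$ lies in $[N,H]M'/M'$. One then shows that elements of the $p$-part killed by this action-twisting must vanish, by comparing with the free pro-$p$ quotient on the generators indexed by $V_{0}$. The analogous arguments handle $z_{r}$ and the $2$-coordinates (the latter because $N/M'$ is pro-$\{p,q\}$, so $2$-elements of $N$ lie in $M'$). This gives $M \subseteq M'$, and then the Hopfian comparison forces equality.

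The main obstacle is this last step, namely showing that $M'$ must contain the generators outside $\Gamma_{0}$ rather than merely yielding an abstract isomorphism. If $\Gamma_{0}$ is infinite, the Hopfian argument fails, and I would instead reduce to finite induced subgraphs via inverse limits.
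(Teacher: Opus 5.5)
Your existence argument is fine and coincides with the paper's: your $M$ is exactly the kernel $M_{*}$ of the natural map $H \to {_{pq}\widetilde{(G_{\Gamma_{0}}\times C_{2}^{I_{0}})}}$. The uniqueness argument, however, has two gaps.

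\textbf{First gap: $y_{v} \in M'$ is never shown.} For $v \notin V_{0}$, knowing that the image of $y_{v}^{2}$ lies in $[N,H]M'/M'$ does not by itself force anything. The phrase ``comparing with the free pro-$p$ quotient on the generators indexed by $V_{0}$'' is not an argument.

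The missing idea is to treat the $2$-coordinates \emph{first}, as the paper does.
\begin{itemize}
\item For $v \in (V \cup I) \setminus (V_{0} \cup I_{0})$ we have $(-1)_{v} \in N$. Since $N/M'$ is pro-$\{p,q\}$ with $p,q$ odd, it follows that $(-1)_{v} \in M'$.
\item Conjugation by $(-1)_{v}$ is then trivial modulo $M'$, so $y_{v}M' = y_{v}^{-1}M'$, i.e.\ $y_{v}^{2} \in M'$. Since $y_{v}M'$ lies in the pro-$p$ image of $F_{|V|}(p)$ and $p$ is odd, $y_{v} \in M'$.
\item For $r \in R \setminus R_{0}$, inducedness of $\Gamma_{0}$ gives an endpoint $v$ of $r$ outside $V_{0}$. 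The element $(-1)_{v} \in M'$ inverts $z_{r}$, and the same parity argument (now with $q$ odd) gives $z_{r} \in M'$.
\end{itemize}
This yields $M \subseteq M'$ directly, with no appeal to your comparison maps or the intrinsic rank characterization.

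\textbf{Second gap: Hopficity needs $\Gamma_{0}$ finite.} Your final step requires $H/M$ to be finitely generated, i.e.\ $\Gamma_{0}$ finite. The lemma only assumes $I_{0}$ finite, and it is later applied (via Corollary \ref{cor: unique subgroup} and Lemma \ref{lem: pq cover system}) with infinite $\Gamma_{0}$. Your proposed reduction to finite subgraphs by inverse limits is unspecified and not obviously routine. Controlling how $M'$ sits over the finite-level covers is essentially the uniqueness you are trying to prove.

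No finiteness is needed; once $M \subseteq M'$, argue as follows.
\begin{itemize}
\item The quotient map $\rho : H/M \to H/M'$ has pro-$\{p,q\}$ kernel $M'/M \subseteq N/M$.
\item $H/M'$ is a universal $pq$-Frattini cover of $H/N$, hence isomorphic over $H/N$ to $H/M$. So it has free pro-$p$ and pro-$q$ Sylow subgroups, and therefore $\rho$ admits a section $s$.
\item $s(H/M')$ is a closed subgroup of $H/M$ mapping onto $H/N$. Since $H/M \to H/N$ is Frattini, $s(H/M')$ is all of $H/M$.
\item Thus $s$, and hence $\rho$, is an isomorphism, and $M = M'$.
\end{itemize}
This is what the paper's brief ``which must therefore be an isomorphism'' amounts to.
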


\begin{proof}
    There is clearly at least one such $M$, since $_{pq}\widetilde{(G_{\Gamma} \times C_{2}^{I})}/N \cong G_{\Gamma_{0}} \times C_{2}^{I_{0}}$ and ${_{pq}\widetilde{(G_{\Gamma_{0}}\times C_{2}^{I_{0}})}}$ is a quotient of ${_{pq}\widetilde{(G_{\Gamma}\times C_{2}^{I})}}$.  More specifically, we recall that we may identify ${_{pq}\widetilde{(G_{\Gamma_{0}}\times C_{2}^{I_{0}})}} = \langle z_{r} :r \in R_{0} \rangle \rtimes (\langle y_{v} : v \in V_{0} \rangle \rtimes C_{2}^{V_{0} \cup I_{0}})$ as a quotient of ${_{pq}\widetilde{(G_{\Gamma}\times C_{2}^{I})}}$ via the map that sends $z_{r} \mapsto z_{r}$ if $r \in R_{0}$ and $z_{r} \mapsto 1$ otherwise, $y_{v} \mapsto y_{v}$ if $v \in V_{0}$ and $y_{v} \mapsto 1$ otherwise, and $C_{2}^{V \cup I}$ mapping to $C_{2}^{V_{0} \cup I_{0}}$ by projection to the $V_{0} \cup I_{0}$ coordinates. Let $M_{*}$ denote the kernel of this mapping, so we have ${_{pq}\widetilde{(G_{\Gamma}\times C_{2}^{I})}}/M_{*} \cong {_{pq}\widetilde{(G_{\Gamma_{0}}\times C_{2}^{I_{0}})}}$ and the natural map ${_{pq}\widetilde{(G_{\Gamma} \times C_{2}^{I}})}/M_{*} \to {_{pq}\widetilde{(G_{\Gamma}\times C_{2}^{I})}}/N$ is a universal $pq$-Frattini cover.

    Now suppose $M$ is an arbitrary closed normal subgroup of ${_{pq}\widetilde{(G_{\Gamma}\times C_{2}^{I})}}$ contained in $N$ such that the natural map $\pi_{MN}: {_{pq}\widetilde{(G_{\Gamma}\times C_{2}^{I})}}/M \to {_{pq}\widetilde{(G_{\Gamma}\times C_{2}^{I})}}/N$ is a universal $pq$-Frattini cover. We will show $M = M_{*}$. Let $\pi: {_{pq}\widetilde{(G_{\Gamma} \times C_{2}^{I})}} \to {_{pq}\widetilde{(G_{\Gamma} \times C_{2}^{I})}}/M$ denote the quotient map. First note that if $v \in V \cup I$ and $(\pi_{MN} \circ \pi)((-1)_{v}) = 1$ then $(-1)_{v} \in \ker(\pi)$ since $\ker(\pi_{MN})$ has no nontrivial elements of order $2$, and thus $(-1)_{v} \in M$. It follows, then, that $(-1)_{v} \in M$ for all $v \in (V \cup I) \setminus (V_{0} \cup I_{0})$. Then, by definition of the action, since $(-1)_{v} \in M$, we have $y_{v}M = y^{-1}_{v}M$ for all $v \in V\setminus V_{0}$.  Since $y_{v}M$ cannot have order $2$, we get $y_{v} \in M$ for all $v \in V \setminus V_{0}$. Arguing similarly, if $r = (v,v') \in R \setminus R_{0}$, then, without loss of generality, $v \not\in V_{0}$ so, since $(-1)_{v} \in M$, we have $z_{r}M = z^{-1}_{r}M$ and since $z_{r}M$ cannot have order two, we again get $z_{r} \in M$. By the choice of $M_{*}$, this yields that $M_{*} \subseteq M$ so we have natural quotient map ${_{pq}\widetilde{(G_{\Gamma}\times C_{2}^{I})}}/M_{*} \to {_{pq}\widetilde{(G_{\Gamma} \times C_{2}^{I})}}/M$ and the composition with the quotient map ${_{pq}\widetilde{(G_{\Gamma}\times C_{2}^{I})}}/M \to {_{pq}\widetilde{(G_{\Gamma}\times C_{2}^{I})}}/N$ is a universal $pq$-Frattini cover, so ${_{pq}\widetilde{(G_{\Gamma} \times C_{2}^{I}})}/M_{*} \to {_{pq}\widetilde{(G_{\Gamma}\times C_{2}^{I})}}/M$ is also a universal $pq$-Frattini cover, which must therefore be an isomorphism. This shows $M = M_{*}$. 
\end{proof}

\begin{cor} \label{cor: unique subgroup}
    Suppose $\Gamma = (V,R)$ is a graph, $I$ is a set, $\Gamma_{0} = (V_{0},R_{0}) \subseteq \Gamma$ is an induced subgraph, and $I_{0} \subseteq I$ is a finite set. Let $\pi : \widetilde{(G_{\Gamma}\times C_{2}^{I})} \to (G_{\Gamma}\times C_{2}^{I})$ denote the universal Frattini cover. There is a unique closed normal subgroup $M \unlhd \widetilde{(G_{\Gamma}\times C_{2}^{I})}$ with $M \subseteq \ker(\pi_{\Gamma_{0}} \circ \pi)$ such that the induced map $\overline{\pi_{\Gamma_{0}} \circ \pi} : \widetilde{(G_{\Gamma}\times C_{2}^{I})}/M \to (G_{\Gamma_{0}} \times C_{2}^{I_{0}})$ is the universal $pq$-Frattini cover. 
\end{cor}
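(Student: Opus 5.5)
The plan is to deduce the corollary from the two preceding lemmas by pulling back along $\pi_{2}$. Write $\tilde{G} = \widetilde{(G_{\Gamma}\times C_{2}^{I})}$, $\pi = \pi_{pq}\circ\pi_{2}$, and $N = \ker(\pi_{0}\circ\pi_{pq}) \unlhd {_{pq}\widetilde{(G_{\Gamma}\times C_{2}^{I})}}$ as in Lemma \ref{lem:unique pq cover}, so that $\ker(\pi_{\Gamma_{0}}\circ \pi) = \pi_{2}^{-1}(N)$.

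\emph{Existence.} Let $M_{*}$ be the subgroup produced by Lemma \ref{lem:unique pq cover}, and set $M = \pi_{2}^{-1}(M_{*})$. Then $M$ is closed and normal, and $M \subseteq \ker(\pi_{\Gamma_{0}}\circ\pi)$. Moreover, $\tilde{G}/M \cong {_{pq}\widetilde{(G_{\Gamma}\times C_{2}^{I})}}/M_{*}$ compatibly with the maps to $G_{\Gamma_{0}}\times C_{2}^{I_{0}}$. Hence the induced map is the universal $pq$-Frattini cover.

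\emph{Uniqueness.} Let $M$ be any such subgroup. The key step is to show that $\ker(\pi_{2}) \subseteq M$. Once this holds, $M = \pi_{2}^{-1}(\pi_{2}(M))$, and $\pi_{2}(M)$ is a closed normal subgroup of ${_{pq}\widetilde{(G_{\Gamma}\times C_{2}^{I})}}$ contained in $N$ with the same quotient map. Lemma \ref{lem:unique pq cover} then forces $\pi_{2}(M) = M_{*}$, so $M = \pi_{2}^{-1}(M_{*})$.

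To prove $\ker(\pi_{2}) \subseteq M$, I would argue as follows.
\begin{enumerate}
\item The kernel of $\tilde{G}/M \to G_{\Gamma_{0}}\times C_{2}^{I_{0}}$ is pro-$\{p,q\}$, because the map is a $pq$-Frattini cover. On the other hand, $\ker(\pi_{2})$ is pro-$2$: it is the $2$-part of $\ker(\pi)$, which by the description of $\tilde{G}$ is the kernel of $F_{|V\cup I|}(2)\to C_{2}^{V\cup I}$ normally generated.
\item Consequently the image of $\ker(\pi_{2})$ in $\tilde{G}/M$ is a pro-$2$ group contained in a pro-$\{p,q\}$ group, since $\ker(\pi_{2}) \subseteq \ker(\pi_{\Gamma_{0}}\circ\pi)$. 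This image is therefore trivial, and $\ker(\pi_{2})\subseteq M$.
\end{enumerate}

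The main obstacle is verifying that $\ker(\pi_{2})$ is genuinely pro-$2$ as a subgroup of $\tilde{G}$. From the construction, $\tilde{G} = F_{|R|}(q)\rtimes(F_{|V|}(p)\rtimes F_{|V\cup I|}(2))$, and $\pi_{2}$ kills exactly the kernel of $F_{|V\cup I|}(2)\to C_{2}^{V\cup I}$ together with its normal closure. I would check this either directly from the explicit description, or by noting that $\ker(\pi_{2})$ is the $2$-Sylow part $K_{2}$ of the pronilpotent group $\ker(\pi)\subseteq\Phi(\tilde{G})$, as in the preceding lemma. The latter route gives pro-$2$ immediately.
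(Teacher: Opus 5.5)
Your proof is correct and follows the same route as the paper: quotient by the $2$-part of $\ker(\pi)$ (the paper's $2$-Sylow subgroup $S$ of $\ker(\pi)$, which is your $\ker(\pi_{2})$), identify the quotient with ${_{pq}\widetilde{(G_{\Gamma}\times C_{2}^{I})}}$, and pull back the unique subgroup from Lemma \ref{lem:unique pq cover}. Your explicit check that every admissible $M$ contains $\ker(\pi_{2})$ is worth keeping: a pro-$2$ group mapping into the pro-$\{p,q\}$ kernel must be killed. The paper leaves this step implicit, since its correspondence only gives uniqueness among subgroups containing $S$.
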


\begin{proof}
Let $S$ denote the $2$-Sylow subgroup of $\ker(\pi)$. Then $S$ is the unique closed normal subgroup of $\widetilde{G_{\Gamma}\times C_{2}^{I}}$ contained in $\ker(\pi)$ with $\widetilde{G_{\Gamma}\times C_{2}^{I}}/S \cong {_{pq}\widetilde{(G_{\Gamma}\times C_{2}^{I})}}$ and such that the induced map from $\widetilde{(G_{\Gamma}\times C_{2}^{I})}/S$ to $G_{\Gamma} \times C_{2}^{I}$ is the universal $pq$-Frattini cover. Let $\rho : \widetilde{(G_{\Gamma} \times C_{2}^{I})}/S \to (G_{\Gamma} \times C_{2}^{I})$ be the map induced by $\pi$. Let $\pi_{0}: G_{\Gamma} \times C_{2}^{I} \to G_{\Gamma_{0}} \times C_{2}^{I_{0}}$ denote the projection (as in Lemma \ref{lem:unique pq cover}). Since $\widetilde{(G_{\Gamma} \times C_{2}^{I})}/S \cong {_{pq}\widetilde{(G_{\Gamma}\times C_{2}^{I})}}$, we may apply Lemma \ref{lem:unique pq cover} to $(\pi_{0} \circ \rho) : \widetilde{(G_{\Gamma}\times C_{2}^{I})}/S \to (G_{\Gamma_{0}}\times C_{2}^{I_{0}})$ and pulling back to $\widetilde{(G_{\Gamma} \times C_{2}^{I})}$, to obtain a unique closed normal $\overline{M} \unlhd \widetilde{(G_{\Gamma}\times C_{2}^{I})}/S$ with $\overline{M} \subseteq \ker(\pi_{0} \circ \rho)$ such that the induced map $(\widetilde{(G_{\Gamma} \times C_{2}^{I})}/S)/\overline{M} \to (G_{\Gamma_{0}} \times C_{2}^{I_{0}})$ is the universal $pq$-Frattini cover. Such an $\overline{M}$ corresponds uniquely to a closed normal $M$ with $S \subseteq M \unlhd \widetilde{(G_{\Gamma}\times C_{2}^{I})}$ and with $M \subseteq \ker(\pi_{0} \circ \pi)$, as desired. 
\end{proof}

\begin{rem}
    Although Corollary \ref{cor: unique subgroup} is stated for finite $I_{0}$, it holds equally well when $I_{0}$ is an arbitrary (possibly infinite) subset of $I$. Indeed, write $I_{0} = \varinjlim_{F} F$ as a directed union of its finite subsets $F \subseteq I_{0}$. For each finite $F$, Corollary \ref{cor: unique subgroup} gives a unique $M_{F}$ such that $\widetilde{(G_{\Gamma} \times C_{2}^{I})}/M_{F} \to G_{\Gamma_{0}} \times C_{2}^{F}$ is the universal $pq$-Frattini cover. By uniqueness, the $M_{F}$ are compatible as $F$ varies, and $M = \bigcap_{F} M_{F}$ is the unique closed normal subgroup of $\widetilde{(G_{\Gamma} \times C_{2}^{I})}$ such that $\widetilde{(G_{\Gamma} \times C_{2}^{I})}/M \to G_{\Gamma_{0}} \times C_{2}^{I_{0}}$ is the universal $pq$-Frattini cover, using the fact that the universal $pq$-Frattini cover commutes with inverse limits of finite groups.
\end{rem}

\begin{lem} \label{lem:nfoldfiber}
    Let $\Gamma = (V,R)$ be a graph and $I$ be a set disjoint from $V$, and fix a natural number $n \geq 1$. 
    Let $N_{1} \subseteq N_{0}$ be closed normal subgroups of $\widetilde{G_{\Gamma} \times C_{2}^{I}}$ such that $\widetilde{(G_{\Gamma} \times C_{2}^{I})}/N_{0} \cong G_{\Gamma} \times C_{2}^{I}$ and the quotient map $\widetilde{(G_{\Gamma} \times C_{2}^{I})}/N_{1} \to \widetilde{(G_{\Gamma} \times C_{2}^{I})}/N_{0}$ is a universal $pq$-Frattini cover. 
    Let $S_{pq} \unlhd \widetilde{G_{\Gamma} \times C_{2}^{I}}$ denote the (normal) product of the $p-$ and $q-$Sylow subgroups of $\widetilde{G_{\Gamma} \times C_{2}^{I}}$. 

    Let $M$ be an arbitrary closed normal subgroup of $\widetilde{(G_{\Gamma} \times C_{2}^{I})}$ with $M \subseteq N_{1}$. Set $A = \widetilde{(G_{\Gamma} \times C_{2}^{I})}/M$ and $B = \widetilde{(G_{\Gamma} \times C_{2}^{I})}/MS_{pq}$. 
    
    Identifying $F_{|V \cup I|}(2)$ with $\widetilde{(G_{\Gamma} \times C_{2}^{I})}/S_{pq}$ so that the maps $\widetilde{(G_{\Gamma} \times C_{2}^{I})} \to A$, $A \to B$, and $F_{|V \cup I|}(2) \to B$ are the quotient maps, we have the following:
    \begin{enumerate}
        \item $\widetilde{(G_{\Gamma} \times C_{2}^{I})} \cong A \times_{B} F_{|V \cup I|}(2)$.
        \item Let $P_{n}$ denote the group 
        $$
        \widetilde{(G_{\Gamma} \times C_{2}^{I})} \times_{A} \widetilde{(G_{\Gamma} \times C_{2}^{I})} \times_{A} \ldots \times_{A} \widetilde{(G_{\Gamma} \times C_{2}^{I})}
        $$
        which is the $n$ fold fiber product of $\widetilde{(G_{\Gamma} \times C_{2}^{I})}$ over $A$ and let $H_{n}$ denote the group 
        $$
        F_{|V \cup I|}(2) \times_{B} F_{|V \cup I|}(2) \times_{B} \ldots \times_{B} F_{|V \cup I|}(2)
        $$
        the $n$-fold fiber product of $F_{|V \cup I|}(2)$ over $B$.  Then we have an isomorphism 
        $$
        P_{n} \cong A \times_{B} H_{n}.
        $$
    \end{enumerate}
\end{lem}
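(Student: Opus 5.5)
The plan is to show that the two canonical maps out of $\widetilde{G} := \widetilde{G_{\Gamma}\times C_{2}^{I}}$ jointly embed it. For (1), these are the quotient maps $\widetilde{G} \to A$ and $\widetilde{G} \to \widetilde{G}/S_{pq} = F_{|V\cup I|}(2)$. They agree after composing to $B = \widetilde{G}/MS_{pq}$, so they induce a homomorphism $\widetilde{G} \to A \times_{B} F_{|V\cup I|}(2)$. The standard criterion for a fiber product of quotients says that $\widetilde{G} \to \widetilde{G}/X \times_{\widetilde{G}/XY} \widetilde{G}/Y$ is an isomorphism exactly when $X \cap Y = 1$; surjectivity is automatic, because any pair with equal image in $\widetilde{G}/XY$ lifts. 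So (1) reduces to showing
$$
M \cap S_{pq} = 1 .
$$

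I expect this to be the only real content. Since $M \subseteq N_{1}$, it suffices to show $N_{1} \cap S_{pq} = 1$. The map $\widetilde{G}/N_{1} \to \widetilde{G}/N_{0} \cong G_{\Gamma}\times C_{2}^{I}$ is a universal $pq$-Frattini cover. By the explicit description, $\widetilde{G}/N_{1} \cong {_{pq}\widetilde{(G_{\Gamma}\times C_{2}^{I})}} = F_{|R|}(q) \rtimes (F_{|V|}(p) \rtimes C_{2}^{V\cup I})$, whose normal $\{p,q\}$-Hall subgroup is $F_{|R|}(q)\rtimes F_{|V|}(p)$. This is isomorphic to the normal $\{p,q\}$-Hall subgroup $S_{pq} = F_{|R|}(q)\rtimes F_{|V|}(p)$ of $\widetilde{G}$.

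The main obstacle is identifying $N_{1}$ without assuming it equals $\ker(\pi_{2})$ for the given presentation. I would argue intrinsically. First, $N_{1}$ is contained in the kernel of $\widetilde{G} \to G_{\Gamma}\times C_{2}^{I}$, and that kernel is pronilpotent since it lies in $\Phi(\widetilde{G})$. Second, $N_{1}$ is a pro-$2$ group: the quotient $\widetilde{G}/N_{1}$ already realizes the full $\{p,q\}$-part of the universal Frattini cover. Indeed, the $p$- and $q$-parts of $\ker(\widetilde{G}\to G_{\Gamma}\times C_{2}^{I})$ must map isomorphically, since otherwise $\widetilde{G}/N_{1}$ would be a proper quotient of ${_{pq}\widetilde{(\,\cdot\,)}}$ over the base, contradicting universality. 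This is the same uniqueness argument as the lemma identifying $\ker(\pi_{2})$ as the $2$-Sylow of the Frattini kernel. Hence $N_{1}$ equals the $2$-Sylow subgroup of $\ker(\pi)$, which is $\ker(\pi_{2})$. A pro-$2$ subgroup meets the pro-$\{p,q\}$ group $S_{pq}$ trivially, so $N_{1}\cap S_{pq}=1$.

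For (2), $P_{n}$ embeds in $\widetilde{G}^{n}$ as the tuples with equal images in $A$. I would define a map $P_{n} \to A \times_{B} H_{n}$ by sending $(g_{1},\dots,g_{n})$ to the common image of the $g_{i}$ in $A$, together with the images of the $g_{i}$ in $F_{|V\cup I|}(2)$. This lands in the fiber product because everything agrees in $B$. Using (1) to write each $g_{i}=(a_{i},f_{i})$ with $a_{i}\in A$, $f_{i}\in F_{|V\cup I|}(2)$ and $a_{i}, f_{i}$ having the same image in $B$, the condition defining $P_{n}$ forces $a_{1}=\dots=a_{n}=:a$. So $P_{n}$ is exactly $\{(a,f_{1},\dots,f_{n}) : \text{all } f_{i} \text{ and } a \text{ agree in } B\}$, which is literally $A\times_{B} H_{n}$. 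The map is a continuous bijective homomorphism of profinite groups, hence a topological isomorphism.
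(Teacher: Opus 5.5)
Your proposal is correct and follows the paper's proof. Part (1) reduces, via the fiber-product criterion, to $M \cap S_{pq} = 1$, which comes from $N_{1}$ being pro-$2$; part (2) is the same bijection between $P_{n}$ and $A \times_{B} H_{n}$, obtained by applying the isomorphism of (1) coordinatewise. Your justification that $N_{1}$ is pro-$2$ identifies $N_{1}$ with $\ker(\pi_{2})$ via the uniqueness lemma for universal $pq$-Frattini covers, which is more explicit than the paper's one-line assertion. Like the paper, it relies on reading $N_{0}$ as the kernel of the universal Frattini cover, so that $N_{0} \subseteq \Phi(\widetilde{G_{\Gamma} \times C_{2}^{I}})$ is pronilpotent.
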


\begin{proof}
    (1)  Since $\widetilde{(G_{\Gamma} \times C_{2}^{I})}/N_{1} \cong {_{pq}\widetilde{(G_{\Gamma}\times C_{2}^{I})}}$, the subgroup $N_{1}$ is necessarily a $2$-group. Thus, since $M \subseteq N_{1}$ and the subgroup $S_{pq}$ has only quotients whose order is divisible by $p$ or $q$, we have $M \cap S_{pq} = 1$ so 
    \begin{eqnarray*}
        \widetilde{(G_{\Gamma} \times C_{2}^{I})} &=& \widetilde{(G_{\Gamma} \times C_{2}^{I})}/(M \cap S_{pq}) \\
        &\cong& (\widetilde{(G_{\Gamma} \times C_{2}^{I})}/M) \times_{\widetilde{(G_{\Gamma} \times C_{2}^{I})}/MS_{pq}} (\widetilde{(G_{\Gamma}\times C_{2}^{I}})/S_{pq}) \\
        &\cong& A \times_{B} F_{|V \cup I|}(2),
    \end{eqnarray*}
    as desired. 

    (2) We will represent elements of $P_{n}$ and $H_{n}$ as $n$-tuples from $\widetilde{(G_{\Gamma} \times C_{2}^{I})}^{n}$ or $F_{|V \cup I|}(2)^{n}$ with the property that they have the same image in $A$ or in $B$, respectively. By (1), there is an isomorphism $\psi : A \times_{B} F_{|V \cup I|}(2) \to \widetilde{(G_{\Gamma} \times C_{2}^{I})}$. Then we define a map $\overline{\psi} : A \times_{B} H_{n} \to P_{n}$ by 
    $$
    \overline{\psi}(g,\overline{h}) = (\psi(g,h_{1}), \psi(g,h_{2}), \ldots, \psi(g,h_{n})). 
    $$
    It is easy to see that this defines a map to $P_{n}$ and, since $\psi$ is an isomorphism, $\overline{\psi}$ is injective. To see that it is surjective, let $\overline{a} = (a_{1},\ldots, a_{n}) \in P_{n}$. It follows from (1) that we have a commutative diagram 
     $$
    \xymatrix{ 
\widetilde{(G_{\Gamma} \times C_{2}^{I})} \ar@{->}[r] \ar@{->}[d] & A \ar@{->}[d] \\
F_{|V \cup I|}(2) \ar@{->}[r] & B,
}
    $$
    where all the maps are quotient maps (using the identification of $F_{|V \cup I|}(2)$ with $\widetilde{(G_{\Gamma} \times C_{2}^{I})}/S_{pq}$). Thus, if we let $g$ be the (common) image of each $a_{i}$ in $A$ and let $h_{i}$ be the image of $a_{i}$ in $F_{|V \cup I|}(2)$ then $g$ and $h_{i}$ have the same image in $B$.  It follows that, letting $\overline{h} = (h_{1},\ldots, h_{n})$, we have $(g,\overline{h}) \in A \times_{B} H_{n}$ and $\overline{\psi}(g,\overline{h}) = \overline{a}$. 
\end{proof}

\begin{rem} 
To see what the above lemma is saying, it can be helpful to consider the special case that $M = N_{1}$.  Then we obtain
    \begin{enumerate}
        \item $\widetilde{(G_{\Gamma} \times C_{2}^{I})} \cong {_{pq}\widetilde{(G_{\Gamma} \times C_{2}^{I})}} \times_{C_{2}^{V \cup I}} F_{|V \cup I|}(2)$.
        \item If given 
        $$
        P_{n} := \widetilde{(G_{\Gamma} \times C_{2}^{I})} \times_{{_{pq}\widetilde{(G_{\Gamma}\times C_{2}^{I})}}} \widetilde{(G_{\Gamma} \times C_{2}^{I})} \times_{{_{pq}\widetilde{(G_{\Gamma}\times C_{2}^{I})}}} \ldots \times_{{_{pq}\widetilde{(G_{\Gamma}\times C_{2}^{I})}}} \widetilde{(G_{\Gamma} \times C_{2}^{I})}
        $$
        which is the $n$-fold fiber product of $\widetilde{(G_{\Gamma} \times C_{2}^{I})}$ over ${_{pq}\widetilde{(G_{\Gamma}\times C_{2}^{I})}}$ and 
        $$
        H_{n} := F_{|V \cup I|}(2) \times_{C_{2}^{V \cup I}} F_{|V \cup I|}(2) \times_{C_{2}^{V \cup I}} \ldots \times_{C_{2}^{V \cup I}} F_{|V \cup I|}(2),
        $$
        the $n$-fold fiber product of $F_{|V \cup I|}(2)$ over $C_{2}^{V \cup I}$, then we have an isomorphism 
        $$
        P_{n} \cong {_{pq}\widetilde{(G_{\Gamma}\times C_{2}^{I})}} \times_{C_{2}^{V \cup I}} H_{n}.
        $$
    \end{enumerate}
\end{rem}

\begin{lem} \label{lem: pq fibermaxxing}
    Suppose $\Gamma = (V,R)$ is a graph and $I$ is a set (disjoint from $V$). Let $I_{0} \subseteq I$ be a subset and assume $|I| > |I_{0}| + |V| + \aleph_{0}$. Suppose we are given a profinite group $A$ and epimorphisms $\tilde{\pi_{0}}: \widetilde{(G_{\Gamma} \times C_{2}^{I})} \to \widetilde{(G_{\Gamma} \times C_{2}^{I_{0}})}$, $\beta : \widetilde{(G_{\Gamma} \times C_{2}^{I_{0}})} \to A$, and $\alpha : A \to {_{pq}\widetilde{(G_{\Gamma} \times C_{2}^{I_{0}})}} $ such that $\tilde{\pi_{0}}$ is a lift of the projection $G_{\Gamma} \times C_{2}^{I} \to G_{\Gamma} \times C_{2}^{I_{0}}$ and $\alpha \circ \beta$ is a universal Frattini cover. Fix $n \geq 1$ and let $P_{n}$ be the group defined by 
    $$
    P_{n} := \widetilde{(G_{\Gamma} \times C_{2}^{I_{0}})} \times_{A}\widetilde{(G_{\Gamma} \times C_{2}^{I_{0}})} \times_{A} \ldots \times_{A} \widetilde{(G_{\Gamma} \times C_{2}^{I_{0}})},
    $$
    the $n$-fold fiber product of $\widetilde{G_{\Gamma} \times C_{2}^{I_{0}}}$ over $A$, where the map from $\widetilde{G_{\Gamma} \times C_{2}^{I_{0}}}$ to $A$ is given by $\beta$. Then there is an epimorphism $\gamma : \widetilde{(G_{\Gamma} \times C_{2}^{I})} \to P_{n}$ such that the following diagram commutes: 
    $$
    \xymatrix{ 
\widetilde{(G_{\Gamma} \times C_{2}^{I})} \ar@{->}[r]^{\gamma} \ar@{->}[d]^{\tilde{\pi_{0}}} & P_{n} \ar@{->}[d]^{\overline{\beta}} \\
\widetilde{(G_{\Gamma} \times C_{2}^{I_{0}})} \ar@{->}[r]^{\beta} & A.
}
    $$
    where $\overline{\beta} : P_{n} \to A$ is the map obtained by applying $\beta$ to any one of the $\widetilde{(G_{\Gamma} \times C_{2}^{I_{0}})}$ factors. 
\end{lem}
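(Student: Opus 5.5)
The plan is to reduce everything to an embedding problem for free pro-$2$ groups, using Lemma \ref{lem:nfoldfiber} on both the source and the target.

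\textbf{Step 1: split $P_{n}$.} Put $M_{0} = \ker(\beta) \unlhd \widetilde{(G_{\Gamma} \times C_{2}^{I_{0}})}$ and $N_{1} = \ker(\alpha \circ \beta)$. Since $\alpha \circ \beta$ factors the universal Frattini cover through ${_{pq}\widetilde{(G_{\Gamma} \times C_{2}^{I_{0}})}}$, the quotient map to ${_{pq}\widetilde{(G_{\Gamma} \times C_{2}^{I_{0}})}}$ is the universal $pq$-Frattini cover, and $M_{0} \subseteq N_{1}$. Lemma \ref{lem:nfoldfiber}(2), applied with $I_{0}$ in place of $I$ and $M = M_{0}$, then gives
$$
P_{n} \cong A \times_{B} H_{n},
$$
where $B = \widetilde{(G_{\Gamma} \times C_{2}^{I_{0}})}/M_{0}S_{pq}$ and $H_{n}$ is the $n$-fold fiber product of $F_{|V \cup I_{0}|}(2)$ over $B$. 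Under this identification, $\overline{\beta}$ is the projection to $A$.

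\textbf{Step 2: split the source the same way.} Put $M' = \ker(\beta \circ \tilde{\pi_{0}})$. Since $\tilde{\pi_{0}}$ lifts the projection, $M'$ lies in the $2$-part kernel of $\widetilde{(G_{\Gamma} \times C_{2}^{I})} \to {_{pq}\widetilde{(G_{\Gamma}\times C_{2}^{I})}}$. This has to be checked: the kernel of $\tilde{\pi_{0}}$ composed with the map to the $pq$-cover of $G_{\Gamma}\times C_2^{I_0}$ should be generated by the $2$-Sylow part together with the $x_v$, $v \in I\setminus I_0$, which are again $2$-elements. Moreover, $\tilde{\pi_{0}}$ carries the $pq$-Sylow product onto the $pq$-Sylow product, so $\widetilde{(G_{\Gamma} \times C_{2}^{I})}/M'S_{pq} \cong B$. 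Lemma \ref{lem:nfoldfiber}(1) then gives
$$
\widetilde{(G_{\Gamma} \times C_{2}^{I})} \cong A \times_{B} F_{|V \cup I|}(2),
$$
with the first projection equal to $\beta \circ \tilde{\pi_{0}}$.

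\textbf{Step 3: reduce to free pro-$2$ groups.} It now suffices to find an epimorphism $\gamma_{2} : F_{|V \cup I|}(2) \to H_{n}$ over $B$, meaning that composing with the maps to $B$ commutes. Given such a $\gamma_{2}$, set $\gamma = \mathrm{id}_{A} \times_{B} \gamma_{2}$. This commutes with the diagram by construction. It is surjective: for $(a,h) \in A \times_{B} H_{n}$, choose $f$ with $\gamma_{2}(f) = h$; then $f$ and $a$ have the same image in $B$, so $(a,f)$ is a preimage of $(a,h)$.

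\textbf{Step 4: solve the embedding problem (the main obstacle).} The group $H_{n}$ is pro-$2$. Its rank is at most $|V \cup I_{0}| + \aleph_{0}$, since it is generated by $n$ copies of a free pro-$2$ group of that rank. By hypothesis this is strictly less than $\kappa = |V \cup I| = |I|$. I would first try the following construction.
\begin{enumerate}
\item The map $F_{\kappa}(2) \to B$ has image $B$, a quotient of $F_{|V\cup I_{0}|}(2)$, so $B$ has rank $\mu < \kappa$. Using a Gaschütz-type basis change for free pro-$p$ groups of infinite rank, choose a basis $\{e_{j}\}_{j<\mu} \cup \{d_{i}\}_{i<\kappa}$ of $F_{\kappa}(2)$ with every $d_{i} \mapsto 1$ in $B$. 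Concretely: pick $\mu$ basis elements whose images generate $B$, then correct the remaining ones by words in those elements so that they die in $B$.
\item Send each $e_{j}$ to a preimage in $H_{n}$ of its image in $B$; this is possible since $H_{n} \to B$ is onto.
\item Send the $d_{i}$ onto a generating set of the kernel $K = \ker(H_{n} \to B)$ converging to $1$. This is possible since $\mathrm{rank}(K) \leq \mathrm{rank}(H_{n}) < \kappa$; the surplus $d_{i}$ are sent to $1$.
\end{enumerate}
The resulting map is over $B$ and its image contains $K$ and surjects onto $B$, so it is an epimorphism. The delicate points are the basis change in (1), which I expect to follow from the infinite-rank version of Gaschütz's lemma, and keeping the chosen generating families convergent to $1$. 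As a fallback, one could instead invoke the characterization of free pro-$p$ groups of infinite rank $\kappa$ as those for which every embedding problem with pro-$p$ kernel of rank less than $\kappa$ is properly solvable.
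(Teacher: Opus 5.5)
Your route is the paper's: identify $P_n\cong A\times_B H_n$ via Lemma \ref{lem:nfoldfiber}, find an epimorphism from the free pro-$2$ quotient $\widetilde{(G_\Gamma\times C_2^I)}/S_{pq}\cong F_{|V\cup I|}(2)$ onto $H_n$ over $B$, and pair it with $\beta\circ\tilde{\pi_0}$.

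\textbf{Step 2 is wrong as written.} Because $\tilde{\pi_0}$ lifts the projection, $\ker(\tilde{\pi_0})$ maps onto the nontrivial factor $C_2^{I\setminus I_0}$ of $G_\Gamma\times C_2^I$. So $M'=\ker(\beta\circ\tilde{\pi_0})$ is not even contained in the kernel of the universal Frattini cover of $G_\Gamma\times C_2^I$, let alone in $N_1$. Hence Lemma \ref{lem:nfoldfiber}(1) cannot be invoked with its stated hypothesis. Also, being generated by $2$-elements does not make a normal subgroup pro-$2$. What the proof of Lemma \ref{lem:nfoldfiber}(1) actually uses is $M'\cap S_{pq}=1$; this is true here but needs a separate argument.

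You do not need the isomorphism at all. For any closed normal subgroups, the natural map $g\mapsto((\beta\circ\tilde{\pi_0})(g),\,gS_{pq})$ from $\widetilde{(G_\Gamma\times C_2^I)}$ to $A\times_B F_{|V\cup I|}(2)$ is surjective. Composing it with $\mathrm{id}_A\times_B\gamma_2$ gives exactly the paper's $\gamma(g)=((\beta\circ\tilde{\pi_0})(g),\tilde\chi(g))$, whose surjectivity the paper checks via $MM'=MS_{pq}$.

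\textbf{Step 4 is sound and more explicit than the paper.} The paper simply cites superprojectivity of a free pro-$2$ group of large rank. Its embedding property is stated only for finite groups, so what is really used is your fallback fact. The correct hypothesis there is that $H_n$ itself, not merely the kernel, has rank less than $\kappa$; with only the kernel small it fails, e.g.\ for $B=F$ and $H=F\times C_2$.

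Your direct construction proves this fact and is easier than you fear:
\begin{enumerate}
\item For any basis $X$ of $F_{\kappa}(2)$ converging to $1$, the images in $B$ converge to $1$. So at most $w(B)+\aleph_0<\kappa$ elements of $X$ have nontrivial image, and no Gasch\"utz-type change of basis is needed.
\item Lifts converging to $1$ are obtained from a continuous section $B\to H_n$ normalized so that $1\mapsto 1$.
\item The cardinality bounds should go through weight: $w(K)\le w(H_n)\le |V\cup I_0|+\aleph_0<\kappa$. Note that $H_n$ is a closed subgroup of $F_{|V\cup I_0|}(2)^n$, not a group generated by $n$ copies of it, and rank is not monotone under passing to closed subgroups in general.
\end{enumerate}
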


\begin{proof}
    As in the proof of Lemma \ref{lem:nfoldfiber}, we let $S_{pq} \unlhd \widetilde{(G_{\Gamma} \times C_{2}^{I})}$ denote the product of the $p-$ and $q-$Sylow subgroups of $\widetilde{(G_{\Gamma} \times C_{2}^{I})}$.  Let $D = \widetilde{(G_{\Gamma} \times C_{2}^{I})}/S_{pq}$ and let $C = \widetilde{(G_{\Gamma} \times C_{2}^{I_{0}})}/\tilde{\pi_{0}}(S_{pq})$. Set $\overline{\pi} : D \to C$ be the map induced by $\tilde{\pi_{0}}$. Note that we have $D \cong F_{|V \cup I|}(2)$ and $C \cong F_{|V \cup I_{0}|}(2)$. 

    Let $B = A/(\beta \circ \tilde{\pi_{0}})(S_{pq})$ be the maximal pro$-2$ quotient of $A$ and let $\delta : C \to B$ be the epimorphism induced by $\beta$. We define the group $H_{n}$ by setting 
    $$
    H_{n} = C \times_{B} C \times_{B} \ldots \times_{B} C,
    $$
    the $n$-fold fiber product of $C$ with itself over $B$, defined using the epimorphism $\delta$. By Lemma \ref{lem:nfoldfiber}, we may identify $P_{n}$ with $A \times_{B} H_{n}$, where the map $A \to B$ is the quotient map. 

    Then by superprojectivity (using that $D$ is a free pro-$2$ group of sufficiently high rank), there is some epimorphism $\chi : D \to H_{n}$ so that the following diagram commutes:
    $$
    \xymatrix{ & D \ar@{-->>}[dl]^{\chi}  \ar@{->>}[d] \\
    H_{n}\ar@{->>}[r] & B},
    $$
    where the horizontal map is obtained by applying $\delta$ to any of the $C$ factors, and the vertical map is the composition $\beta \circ \tilde{\pi}_{0}$ followed by the quotient map $A \to B$. Let $\tilde{\chi} : \widetilde{(G_{\Gamma} \times C_{2}^{I})} \to H_{n}$ be the map obtained by precomposing $\chi$ with the quotient map $\widetilde{(G_{\Gamma} \times C_{2}^{I})} \to D$. 

    We define a map $\gamma : \widetilde{(G_{\Gamma} \times C_{2}^{I})} \to A \times_{B} H_{n}$ by setting $\gamma(g) = ((\beta \circ \tilde{\pi_{0}})(g),\tilde{\chi}(g))$. By the choice of $\chi$, this defines a map to $A \times_{B} H_{n}$.  Let $M = \ker(\beta \circ \tilde{\pi_{0}})$ and let $M' = \ker(\tilde{\chi})$. Since $B \cong \widetilde{(G_{\Gamma} \times C_{2}^{I})}/MS_{pq}$, we have $MS_{pq} \supseteq M'$, and since $S_{pq} \subseteq M'$, and since $\widetilde{(G_{\Gamma} \times C_{2}^{I})}/M' \cong H_{n}$ is a $2$-group, it follows that $MM' = MS_{pq}$. And, by construction, $(\beta \circ \tilde{\pi_{0}})$ and $\tilde{\chi}$ induce the same isomorphism $\widetilde{G_{\Gamma} \times C_{2}^{I}}/MS_{pq} \to B$. Thus, if $(a,c) \in A \times_{B} H_{n}$ and we choose $g,h$ such that $(\beta \circ \tilde{\pi_{0}})(g) = a$ and $\tilde{\chi}(h) = c$, then we have $gMM' = hMM'$, so there are $m \in M$ and $m' \in M'$ such that $mm' = g^{-1}h$, so if we set $k = gm = h(m')^{-1}$, then we have $\gamma(k) = ((\beta \circ \tilde{\pi_{0}})(g), \tilde{\chi}(h)) = (a,c)$. This shows $\gamma$ is an epimorphism, as desired. 
\end{proof}

Finally, we will record a fact about the way normal subgroups intersect in the maximal pro-$2$ quotient of $\widetilde{G_{\Gamma} \times C_{2}^{I}}$, which will be useful in later applications.  First, we need the following fact, which seems to be folklore:

\begin{fact} \label{fact: normal sylow}
    Suppose $G$ is a profinite group with closed normal $\ell$-Sylow subgroup $S_{\ell}$. Then for all closed normal $A,B \unlhd G$, 
    $$
    S_{\ell}A \cap S_{\ell}B = S_{\ell}(A \cap B). 
    $$
\end{fact}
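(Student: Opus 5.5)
The plan is to show the two inclusions separately; only one of them is non-trivial. The inclusion $S_{\ell}(A \cap B) \subseteq S_{\ell}A \cap S_{\ell}B$ is immediate, since $A \cap B$ is contained in both $A$ and $B$. So everything comes down to showing $S_{\ell}A \cap S_{\ell}B \subseteq S_{\ell}(A\cap B)$.

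First I reduce to finite groups. All the subgroups involved are closed: products of closed normal subgroups in a profinite group are closed, being images of compact sets. Hence it suffices to prove the inclusion modulo every open normal subgroup $U \unlhd G$. More precisely, I will show that the images in $G/U$ satisfy the identity, and then pass to the limit using $\bigcap_U XU = X$ for closed $X$. Images behave well under the quotient map: $S_{\ell}U/U$ is the normal $\ell$-Sylow subgroup of $G/U$, and $(S_\ell A)U/U = \bar S_\ell \bar A$. The one subtle point is that $(A\cap B)U/U \subseteq \bar A \cap \bar B$ can be a proper inclusion. I handle this by proving the finite statement in the stronger form $\bar S \bar A \cap \bar S \bar B = \bar S(\overline{A\cap B})$ via a different route, described next.

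The cleaner route avoids that subtlety altogether. Pass to $\bar G = G/S_{\ell}$. The correspondence theorem converts the claim into the statement $\bar A \cap \bar B = \overline{A \cap B}$ in $\bar G$, where the bars denote images mod $S_\ell$. Here $\bar G$ is a pro-$\ell'$ group. The inclusion $\supseteq$ is clear. For $\subseteq$, take $x \in A$ and $y \in B$ with $xS_\ell = yS_\ell$. I will use coprimality: the orders of elements of $\bar G$ are prime to $\ell$, so $\ell$-parts and $\ell'$-parts can be separated. Concretely, I work in the finite quotient $G/U$ and decompose $x = x_\ell x_{\ell'}$ into commuting $\ell$-part and $\ell'$-part, both of which are powers of $x$ and hence lie in $A$. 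Similarly $y = y_\ell y_{\ell'}$ with both parts in $B$. Then $x_\ell, y_\ell \in S_\ell$, since $S_\ell$ is the unique (normal) Sylow subgroup. It remains to see that the $\ell'$-parts $x_{\ell'} \in A$ and $y_{\ell'} \in B$ agree modulo $S_\ell$.

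This last point is the crux and the main obstacle. Equality of $x_{\ell'}$ and $y_{\ell'}$ mod $S_\ell$ does not imply they are equal. To fix this I would apply Schur–Zassenhaus inside $S_\ell\langle x_{\ell'}\rangle$, where $\langle x_{\ell'}\rangle$ and $\langle y_{\ell'}\rangle$ are complements to $S_\ell$, to conjugate one onto the other by some $s\in S_\ell$. Conjugation preserves $B$ by normality, so $s y_{\ell'} s^{-1}$ is an element of $B$ lying in $\langle x_{\ell'}\rangle\subseteq A$. This yields an element of $A\cap B$ with the right image mod $S_\ell$. The issue is that the conjugate generator need not map exactly to $x_{\ell'}$; one has to check that the generator $y_{\ell'}$ goes to the corresponding generator mod $S_\ell$, which holds since conjugation by $s \in S_\ell$ is trivial mod $S_\ell$.

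Finally, a compactness argument (nonempty inverse limits of finite sets of witnesses over $U$) lifts the finite statement back to $G$.
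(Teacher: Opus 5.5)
Your proof is correct, but it takes a different route from the paper's.

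\textbf{The paper's route.} The paper quotients by $A\cap B$ rather than by $S_\ell$. Once $A\cap B=1$, normality gives $[A,B]\le A\cap B=1$, so $AB\cong A\times B$. Its unique Sylow pro-$\ell$ subgroup is then $(S_\ell\cap A)(S_\ell\cap B)$. Writing $x=s_1a=s_2b$ puts $ab^{-1}=s_1^{-1}s_2$ in that subgroup, and uniqueness of the decomposition in $A\times B$ forces $a\in S_\ell$, so $x\in S_\ell$. That argument takes a few lines and stays inside the profinite group. It needs no finite approximation, no Schur--Zassenhaus, and no compactness.

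\textbf{Your route.} You reduce modulo $S_\ell$, split $x$ and $y$ into $\ell$- and $\ell'$-parts, and conjugate the complement $\langle y_{\ell'}\rangle$ onto $\langle x_{\ell'}\rangle$ inside $S_\ell\langle x_{\ell'}\rangle$. This uses heavier tools, but it is elementwise and proves more. Since the conjugator can be taken in the Sylow subgroup, you only use that $A$ is a closed subgroup and that $B$ is a closed subgroup normalized by $S_\ell$. The paper's commutator trick needs both $A$ and $B$ normal. In the application, Corollary~\ref{cor: nice intersections}, both are normal, so the extra generality is not needed there.

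\textbf{Two points to clean up.}

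First, the ``stronger form'' $\bar S\bar A\cap\bar S\bar B=\bar S\,\overline{A\cap B}$ in $G/U$ that you announce is false in general. For example, take $\ell\neq p$, $G=C_p\times C_p$, $A=C_p\times 1$, $B$ the diagonal and $U=1\times C_p$. Then $\bar A=\bar B=G/U$, but $A\cap B=1$.

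Your finite argument only produces the following: for $\bar x\in AU/U$ and $\bar y\in BU/U$ in the same coset of $\bar S$, some $\bar z\in (AU/U)\cap (BU/U)$ lies in that coset. It is the compactness step that closes the gap. Consider the sets
$W_U=\{\bar z\in G/U : \bar z\in AU/U\cap BU/U,\ \bar z\in xS_\ell U/U\}$.
These are finite, nonempty, and compatible under the projections. Any point of $\varprojlim_U W_U$ is then an element of $\bigcap_U AU\cap\bigcap_U BU\cap\bigcap_U xS_\ell U=A\cap B\cap xS_\ell$.

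Second, the worry at the end of your Schur--Zassenhaus paragraph is not an issue. Conjugation by an element of $\bar S$ is trivial modulo $\bar S$. So $s\bar y_{\ell'}s^{-1}$ has the same image as $\bar y_{\ell'}$, hence the same image as $\bar x$.
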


\begin{proof}
    Replacing $G$ with the quotient $G/(A \cap B)$, it is enough to show that if $A, B \unlhd G$ are closed normal subgroups with $A \cap B = 1$, then $S_{\ell}A \cap S_{\ell}B = S_{\ell}$. Since $A$ and $B$ are both normal, we must have $[A,B] \leq A \cap B = 1$ so $AB \cong A \times B$. Since $S_{\ell}$ is the unique pro-$\ell$ Sylow subgroup of $G$, every pro-$\ell$ subgroup of $G$ (and thus of $A$, $B$, and $AB$) must lie in $S_{\ell}$. This implies 
    $$
    S_{\ell} \cap AB = (S_{\ell} \cap A)(S_{\ell} \cap B),
    $$
    since, in the direct product $A \times B$, the pro-$\ell$ Sylow is the product of the $\ell$-Sylows of $A$ and of $B$. 

    Fix $x \in S_{\ell}A \cap S_{\ell}B$. We can write $x = s_{1}a = s_{2}b$ for $s_{1},s_{2} \in S_{\ell}$, $a \in A$, $b \in B$. Then we have
    $$
    ab^{-1} = s_{1}^{-1}s_{2} \in S_{\ell} \cap AB = (S_{\ell} \cap A)(S_{\ell} \cap B).
    $$
    Write $ab^{-1} = uv$ with $u \in (S_{\ell} \cap A)$ and $v \in (S_{\ell} \cap B)$. Since $A$ and $B$ commute and intersect in the trivial group, we must have $a = u  \in S_{\ell}$ which entails that $x = s_{1}a \in S_{\ell}$. This shows $S_{\ell}A \cap S_{\ell}B = S_{\ell}$ as desired. 
\end{proof}

\begin{cor} \label{cor: nice intersections}
    Suppose $\Gamma$ is a graph and $I$ is a set, and let $S_{pq}$ denote the product of the $p$- and $q$-Sylow subgroups of $\widetilde{G_{\Gamma} \times C_{2}^{I}}$. Then for any closed normal subgroups $M,N \unlhd \widetilde{G_{\Gamma}\times C_{2}^{I}}$, we have 
    $$
    S_{pq}M \cap S_{pq}N = S_{pq}(M \cap N). 
    $$
\end{cor}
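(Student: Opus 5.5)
The plan is to reduce to Fact \ref{fact: normal sylow} by peeling off one prime at a time. We cannot apply it directly, because $S_{pq}$ is not the Sylow subgroup for a single prime. Write $G = \widetilde{G_{\Gamma}\times C_{2}^{I}} = F_{|R|}(q) \rtimes (F_{|V|}(p) \rtimes F_{|V\cup I|}(2))$ as constructed above.

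\textbf{Step 1: normal Sylow subgroups.} The $q$-Sylow subgroup $S_{q} = F_{|R|}(q)$ is a closed normal subgroup of $G$: it is the kernel of the projection onto $F_{|V|}(p) \rtimes F_{|V \cup I|}(2)$. The quotient $G' := G/S_{q} \cong F_{|V|}(p) \rtimes F_{|V\cup I|}(2)$ has a closed normal $p$-Sylow subgroup $S'_{p}$, namely the image of $F_{|V|}(p)$. Its preimage in $G$ is $S_{q}F_{|V|}(p) = S_{pq}$, which confirms that $S_{pq}$ is indeed a closed normal subgroup.

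\textbf{Step 2: first application of the Fact, in $G$ with $\ell = q$.} Fact \ref{fact: normal sylow} gives
$$
S_{q}M \cap S_{q}N = S_{q}(M\cap N).
$$

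\textbf{Step 3: second application, in $G'$ with $\ell = p$.} Let $\overline{M} = S_{q}M/S_{q}$ and $\overline{N} = S_{q}N/S_{q}$; these are closed normal subgroups of $G'$. The Fact gives $S'_{p}\overline{M} \cap S'_{p}\overline{N} = S'_{p}(\overline{M}\cap\overline{N})$.

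\textbf{Step 4: pull back to $G$.} Preimage under $G \to G'$ commutes with intersections. Also, the preimage of a product of subgroups containing $S_{q}$ is the product of the preimages. This gives three identifications.
\begin{itemize}
\item The preimage of $S'_{p}\overline{M}$ is $S_{pq}M$, and likewise the preimage of $S'_{p}\overline{N}$ is $S_{pq}N$.
\item The preimage of $\overline{M}\cap\overline{N}$ is $S_{q}M\cap S_{q}N$, which equals $S_{q}(M\cap N)$ by Step 2.
\item Hence the preimage of $S'_{p}(\overline{M}\cap\overline{N})$ is $S_{pq}S_{q}(M\cap N) = S_{pq}(M\cap N)$.
\end{itemize}
Together these yield $S_{pq}M\cap S_{pq}N = S_{pq}(M\cap N)$.

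The only point needing care is Step 1, and specifically the order in which the primes are removed. The $p$-Sylow $F_{|V|}(p)$ need not be normal in $G$ itself, since it is acted on by, and does not obviously normalize, the $q$-part. So we must remove $q$ first, where normality is clear from the semidirect product description. After that, the $p$-Sylow is visibly normal in $F_{|V|}(p)\rtimes F_{|V\cup I|}(2)$. Everything else is routine bookkeeping with preimages of closed normal subgroups.
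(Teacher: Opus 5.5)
Your proof is correct and is essentially the paper's argument. The paper also applies Fact \ref{fact: normal sylow} twice, first with $\ell = q$ and then with $\ell = p$ to the closed normal subgroups $S_{q}M$ and $S_{q}N$; the only difference is that it makes the second application directly in $G$ rather than in $G/S_{q}$. That is legitimate because, contrary to your closing remark, $S_{p} = F_{|V|}(p)$ is normal in $G$: in the construction it acts trivially on $F_{|R|}(q)$, so it commutes with $S_{q}$, and it is normalized by $F_{|V \cup I|}(2)$. Your detour through the quotient is therefore harmless but unnecessary. It does show that the identity only requires $S_{q}$ to be normal in $G$ and the $p$-Sylow to be normal in $G/S_{q}$.
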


\begin{proof}
    Let $G = \widetilde{G_{\Gamma} \times C_{2}^{I}}$. We know that, in $G$, if we let $S_{p}$ and $S_{q}$ denote the $p$- and $q$-Sylow subgroups of $G$, respectively, we have that $S_{p}$ and $S_{q}$ are normal and $S_{pq} = S_{p}S_{q}$. Then we may apply Fact \ref{fact: normal sylow} twice to obtain 
    $$
    S_{pq}(M \cap N) = S_{p}(S_{q}(M \cap N)) = S_{p}(S_{q}M \cap S_{q}N) = S_{p}S_{q}M \cap S_{p}S_{q} N,
    $$
    which gives the desired equality. 
\end{proof}

\section{An identification theorem} \label{sec: identification}

In this section, we are interested in describing arbitrary models $S$ of the theory $\mathrm{Th}(S(\widetilde{G_{\Gamma}\times C_{2}^{\mu}}))$, for a given infinite graph $\Gamma$ and a cardinal $\mu$.  In fact, we give a description of a broader class of structures which we call the \emph{graph closed} complete systems, showing that they are closely related to groups produced by the CDM graph coding. Our ultimate aim is to prove that every model of this theory is isomorphic to one of the form $S(\widetilde{G_{\Gamma'} \times C_{2}^{\mu'}})$ for some $\Gamma' \equiv \Gamma$ and some cardinal $\mu'$. Given an arbitrary complete system $S$, we introduce two distinguished subsystems $S_{-} \subseteq S_{+} \subseteq S$ and build up to the main identification theorem by proving identification lemmas for these subsystems.  Each identification lemma has two parts. First, we show that the subsystem is more or less isomorphic to an inverse system of a group produced by CDM graph coding.  Then, we show that given $S \preceq S' \models \mathrm{Th}(S(\widetilde{G_{\Gamma}\times C_{2}^{\mu}}))$, a representation for one of the distinguished subsystems of $S$ lifts to a compatible representation for the corresponding subsystem of $S'$.

\subsection{Vertex width and graph closure}

First, we fix notation for the theories that we are studying:

\begin{defn} \label{defn: distinguished subsystems}
    Suppose $\Gamma$ is an infinite graph and $\mu$ is a cardinal (possibly finite, including $0$).  
    \begin{enumerate}
        \item We define $T_{\Gamma, \mu}$ to be the $L_{\mathrm{IS}}$-theory of $S(\widetilde{G_{\Gamma} \times C_{2}^{\mu}})$. 
        \item Given $S \models T_{\mathrm{IS}}$, we define $S_{-}$ to be the subsystem of $S$ generated by the following:
        $$
        \{ \alpha \in S : [\alpha] \cong D_{p}\} \cup \{\alpha \in S : [\alpha] \cong W\}.
        $$
        \item Given $S \models T_{\mathrm{IS}}$, we define $S_{+}$ to be the subsystem of $S$ generated by
        $$
        \{\alpha \in S: [\alpha] \cong C_{2}\} \cup \{ \alpha \in S : [\alpha] \cong D_{p}\} \cup \{\alpha \in S : [\alpha] \cong W\},
        $$
        or, equivalently, the subsystem of $S$ generated by $S_{-}$ and $\{\alpha \in S : [\alpha] \cong C_{2} \}$. 
    \end{enumerate}
\end{defn}

\begin{defn}
    Suppose $\Gamma$ is an infinite graph, $\mu$ is a cardinal, and $\alpha \in S \models T_{\Gamma,\mu}$ is an element with $[\alpha] \cong C_{2}$.  We say $\alpha$ has \emph{vertex width} $n$ if $n$ is the least integer $\geq 1$ such that there are (pairwise $\sim$-inequivalent) $\beta_{0}, \ldots, \beta_{n-1} \in S$ with $[\beta_{i}] \cong D_{p}$ and $\alpha \geq \gamma$, where $\gamma = \bigwedge_{i < n} \beta_{i}$.  We say that the $\beta_{i}$ (or the classes $[\beta_{i}]$) \emph{witness} that $\alpha$ has vertex width $n$. If there are no such $\beta_{i}$, we say that $\alpha$ has vertex width $\infty$.  
\end{defn}

Note that, in the above definition, the $\beta_{i}$ correspond to a size $n$ set of vertices $V_{0}$, and $\bigwedge_{i < n} \beta_{i}$ corresponds to the $D_{p}^{V_{0}}$ quotient of $\widetilde{G_{\Gamma}}$. Saying that $\alpha$ with $[\alpha] \cong C_{2}$ has vertex width $n$, then, means that $\alpha$ corresponds to a $C_{2}$ quotient of $D_{p}^{V_{0}}$ for a set $V_{0}$ of vertices with $|V_{0}| = n$ which is not a quotient of any $D_{p}^{V'}$ for any $V'$ of size strictly smaller than $n$. 

Here is the suggested way to think about vertex width: the group $\widetilde{G_{\Gamma} \times C_{2}^{I}}$ has as a quotient the group $C_{2}^{V} \times C_{2}^{I}$ which is an elementary abelian $2$-group and thus an $\mathbb{F}_{2}$-vector space. We can write it in additive notation as $\mathbb{F}_{2}^{V} \times \mathbb{F}_{2}^{I}$. The open normal subgroups of index $2$ are in one-to-one correspondence with continuous linear functionals $(\mathbb{F}_{2}^{V} \times \mathbb{F}_{2}^{I}) \to \mathbb{F}_{2}$, thus are elements of the continuous dual of $\mathbb{F}_{2}^{V} \times \mathbb{F}_{2}^{I}$, which is a direct sum of the continuous duals of $\mathbb{F}_{2}^{V}$ and $\mathbb{F}_{2}^{I}$. In the context of the structure in which we are working, however, these two factors behave differently. The continuous dual of $\mathbb{F}_{2}^{V}$ comes with a distinguished basis, namely the coordinate projections for each vertex $v \in V$. We will see in Lemma \ref{lem: definability of vertex width} below that the elements of this distinguished basis are definable in $S(\widetilde{G_{\Gamma} \times C_{2}^{I}})$. The kernels of these maps correspond exactly to $\sim$-classes of elements $\alpha$ with $[\alpha] \cong C_{2}$ and which have vertex width $1$. Elements of vertex width $n$, then, correspond precisely to elements of the continuous dual of $\mathbb{F}_{2}^{V}$ which can be expressed as a linear combination of $n$ elements from this distinguished basis, but no fewer. Elements in the continuous dual of $\mathbb{F}_{2}^{V} \times \mathbb{F}_{2}^{I}$ which are not contained in the subspace $(\mathbb{F}_{2}^{V})^{*}$, then, are the elements of infinite vertex width. This dictionary is spelled out in the following lemma:

\begin{lem} \label{lem: dictionary}
    Let $\Gamma = (V,R)$ be a graph and $I$ a set, disjoint from $V$. Let $Q = \{[\alpha] : \alpha \in S(\widetilde{G_{\Gamma} \times C_{2}^{I}}), [\alpha] \cong C_{2}\}$.  Then there is a bijection $d: Q \to ((\mathbb{F}_{2}^{V} \times \mathbb{F}^{I}_{2})^{*} - \{0\})$ satisfying the following:
    \begin{enumerate}
        \item For $[\alpha] \in Q$, $\alpha$ has vertex width $n$ if and only if $d([\alpha])$ is a linear combination of $n$ elements of $(\mathbb{F}^{V}_{2})^{*}$ of the form $\pi_{v}$, where $\pi_{v}$ denotes the coordinate projection to the $v$ coordinate (and $n$ is minimal with this property). 
        \item If $(\gamma_{i})_{i < k}$ is a sequence of elements of $S(\widetilde{G_{\Gamma} \times C_{2}^{I}})$ with $[\gamma_{i}] \cong C_{2}$ for all $i$, then we have 
        $$
        \gamma_{i} \not\geq \bigwedge_{j < i} \gamma_{j}
        $$
        for all $i < k$ if and only if $\{d([\gamma_{i}]) : i < k\}$ are linearly independent in $(\mathbb{F}_{2}^{V} \times \mathbb{F}_{2}^{I})^{*}$. 
    \end{enumerate}
\end{lem}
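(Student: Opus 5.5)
The plan is to route everything through the maximal elementary abelian $2$-quotient. First I would show that every $C_{2}$-quotient of $\widetilde{G_{\Gamma} \times C_{2}^{I}}$ factors through $E := C_{2}^{V} \times C_{2}^{I} = \mathbb{F}_{2}^{V} \times \mathbb{F}_{2}^{I}$. Concretely, let $K$ be the kernel of the composite $\widetilde{G_{\Gamma}\times C_{2}^{I}} \to G_{\Gamma} \times C_{2}^{I} \to C_{2}^{V\cup I}$, where the second map kills $C_{q}^{R}C_{p}^{V}$.

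If $N$ has index $2$, then $N$ contains every element of odd order, and hence the Sylow product $S_{pq}$. Next I would check that $N$ contains the Frattini kernel. One route is to note that the maximal elementary abelian $2$-quotient of $F_{|V\cup I|}(2)$ is $\mathbb{F}_{2}^{V\cup I}$ via $x_{v}\mapsto(-1)_{v}$. Alternatively, $N \supseteq \Phi$ because an index $2$ subgroup is maximal and the Frattini kernel lies in $\Phi$. Either way, $N \supseteq K$, and $N \mapsto N/K$ is a bijection between index-$2$ open normal subgroups and index-$2$ open subgroups of $E$. Those correspond to nonzero continuous functionals $E \to \mathbb{F}_{2}$, i.e.\ to nonzero elements of $(\mathbb{F}_{2}^{V})^{*}\oplus(\mathbb{F}_{2}^{I})^{*}$. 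This defines $d$, sending $[\alpha]$ to the functional whose kernel is the image of $\alpha$'s subgroup.

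For (2), I would work entirely in $E$ via this identification. The inequality $\gamma_{i} \geq \bigwedge_{j<i}\gamma_{j}$ says $N_{i} \supseteq \bigcap_{j<i} N_{j}$, which is equivalent to $\ker d_{i} \supseteq \bigcap_{j<i}\ker d_{j}$. By standard linear algebra (finitely many functionals on a vector space, continuity being automatic for finite intersections of open kernels), this holds iff $d_{i} \in \mathrm{span}\{d_{j} : j<i\}$. The condition "for all $i$, $d_{i}$ is not in the span of its predecessors" is exactly linear independence.

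For (1), I need to identify the $D_{p}$-elements. By Corollary \ref{cor: no unexpected frattini quotients} together with Lemma \ref{lem: no unexpected quotients}(1), the $D_{p}$-quotients of $\widetilde{G_{\Gamma}\times C_{2}^{I}}$ are exactly $\ker(\pi_{v}\circ\pi)$ for $v \in V$, and they are pairwise distinct. For vertices $v_{0},\dots,v_{n-1}$, the meet $\bigwedge\beta_{i}$ corresponds to the quotient $D_{p}^{\{v_{0},\dots,v_{n-1}\}}$. Its index-$2$ quotients are exactly those of its abelianization $C_{2}^{\{v_{i}\}}$, i.e.\ functionals in $\mathrm{span}\{\pi_{v_{i}}\}$ pulled back to $E$. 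So $\alpha$ lies above such a meet iff $d([\alpha])$ is a combination of $\pi_{v_{0}},\dots,\pi_{v_{n-1}}$, and minimizing $n$ on both sides gives (1). This also shows that functionals with nonzero $I$-component have vertex width $\infty$.

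The step needing most care is the bookkeeping that the quotient through $D_{p}^{V_{0}}$ of the cover is compatible with the projection to $E$. I would verify this by noting that both $\bigwedge\beta_{i}$ and $E$ are quotients of $G_{\Gamma}\times C_{2}^{I}$. The map $D_{p}^{V_{0}} \to C_{2}^{V_{0}}$ agrees with the restriction of $E \to C_{2}^{V_{0}}$, since both are induced by $\tau$ coordinatewise on the $V_{0}$ coordinates.
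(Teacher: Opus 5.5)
Your proposal is correct and follows essentially the same route as the paper. Both push every $C_{2}$-quotient down to $\mathbb{F}_{2}^{V}\times\mathbb{F}_{2}^{I}$, using that an index-$2$ subgroup contains the Frattini kernel and the product of the $p$- and $q$-Sylow subgroups; both prove (2) via the duality ``$\ker f \supseteq \bigcap_{j}\ker f_{j}$ iff $f \in \mathrm{span}\{f_{j}\}$'', which the paper checks by hand with a minimal subfamily $Y$; and both get (1) from identifying the $D_{p}$-elements with the vertex projections. The only cosmetic difference is in (1): you read off the index-$2$ quotients of $D_{p}^{V_{0}}$ directly from its abelianization $C_{2}^{V_{0}}$, whereas the paper first replaces the $D_{p}$-witnesses by vertex-width-$1$ $C_{2}$-elements lying above them and then invokes (2).
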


\begin{proof}
    Let $\pi : \widetilde{G_{\Gamma} \times C_{2}^{I}} \to G_{\Gamma} \times C_{2}^{I}$ denote the universal Frattini cover, dual to the inclusion $S_{+} \subseteq S = S(\widetilde{G_{\Gamma} \times C_{2}^{I}})$.  We know that, since $\Phi(C_{2}) = 1$, every quotient of $\widetilde{G_{\Gamma} \times C_{2}^{I}}$ is a quotient of $G_{\Gamma} \times C_{2}^{I}$, that is, if an open normal subgroup of $\widetilde{G_{\Gamma} \times C_{2}^{I}}$ has index $2$, then it contains $\ker(\pi)$.  The quotient of $G_{\Gamma} \times C_{2}^{I}$ by the normal subgroup $C_{q}^{R} \times C_{p}^{V}$ is naturally isomorphic to $C_{2}^{V} \times C_{2}^{I}$, in a way that respects the coordinate projections. 
    
    Since $C_{2}$ has order $2$, any open normal subgroup of $\widetilde{G_{\Gamma} \times C_{2}^{I}}$ of index $2$ must contain $C_{q}^{R} \times C_{p}^{V}$. Thus, to establish the existence of the bijection $d$, it suffices to define a bijection from $\{[\alpha] : \alpha \in S(C_{2}^{V} \times C_{2}^{I}), [\alpha] \cong C_{2}\}$ to the continuous dual $(\mathbb{F}_{2}^{V} \times \mathbb{F}_{2}^{I})^{*}$.  We will switch to additive notation and write $C_{2}^{V} \times C_{2}^{I}$ as $\mathbb{F}_{2}^{V} \times \mathbb{F}_{2}^{I}$.  Then if $\alpha \in S(\mathbb{F}^{V}_{2} \times \mathbb{F}^{I}_{2})$ and $[\alpha] \cong C_{2}$, then $[\alpha] = [U]$ for some open subgroup $U \unlhd {\mathbb{F}_{2}}^{V} \times \mathbb{F}_{2}^{I}$ of index $2$. There is a unique non-zero continuous linear functional $f_{U} : \mathbb{F}_{2}^{V} \times \mathbb{F}_{2}^{I} \to \mathbb{F}_{2}$ with kernel $U$. Conversely, if $f \in (\mathbb{F}_{2}^{V} \times \mathbb{F}_{2}^{I})^{*}$ is non-zero, then $\ker(f)$ is an index $2$ subgroup of $(\mathbb{F}_{2}^{V} \times \mathbb{F}^{I}_{2})$ so $[\ker(f)] \in Q$. It follows that the map $[U] \mapsto f_{U}$ is the desired bijection. 

    Now we prove (1). Recall from Lemma \ref{lem: no unexpected quotients}(1) that each $D_{p}$ quotient of $G_{\Gamma} \times C_{2}^{I}$ is obtained by quotienting by the kernel of a coordinate projection to some coordinate $v \in V$. It follows that $\alpha \in S(\mathbb{F}^{V}_{2} \times \mathbb{F}^{I}_{2}) \subseteq S(G_{\Gamma} \times C_{2}^{I})$ has vertex width 1 if and only if $[\alpha] = [\ker(\pi_{v})]$ for the coordinate projection $\pi_{v} : \mathbb{F}^{V}_{2} \times \mathbb{F}^{I}_{2} \to \mathbb{F}_{2}$ for some $v \in V$. More generally, if $\alpha \in S(G_{\Gamma} \times C_{2}^{I})$ has vertex width $n$ and $\{\gamma_{i} : i < n\}$ is a minimal set of witnesses with $[\gamma_{i}] \cong D_{p}$ for all $i < n$, then for each $i < n$, we can choose some $\delta_{i} \geq \gamma_{i}$ with $[\delta_{i}] \cong C_{2}$. Then since $[\alpha]$ is a $2$-group and $\alpha \geq \bigwedge_{i < n} \gamma_{i}$, we must have $\alpha \geq \bigwedge_{i < n} \delta_{i}$. From this it follows that an arbitrary $\alpha$ has vertex width $n$ if and only if $n$ is minimal such that there is a set $\{\delta_{i} : i < n\}$ where $[\delta_{i}] \cong C_{2}$ and $\delta_{i}$ has vertex width $1$ for each $i$ and where $\alpha \geq \bigwedge_{i < n} \delta_{i}$. Then the conclusion follows from (2).

    So now we show (2).  Fix a sequence $(\gamma_{i})_{i < k}$ in $S(\widetilde{G_{\Gamma} \times C_{2}^{I}})$ with $[\gamma_{i}] \cong C_{2}$ and let $f_{i} = d([\gamma_{i}])$ for all $i < k$. We know from the above that, necessarily, $\gamma_{i} \in S(\mathbb{F}^{V}_{2} \times \mathbb{F}^{I}_{2})$ (viewed as a subsystem of $S(\widetilde{G_{\Gamma} \times C_{2}^{I}})$) so $[\gamma_{i}] = [U_{i}]$ for some index $2$ open subgroup $U_{i} \unlhd (\mathbb{F}^{V}_{2} \times \mathbb{F}^{I}_{2})$ and $f_{i}$ is the unique continuous linear functional with kernel $U_{i}$. 

    Suppose first that for some $i < k$, $f_{i} = \sum_{j < i} c_{j} f_{j}$.  Then set $X = \{j < i : c_{j} = 1\}$, so $f_{i} = \sum_{j \in X} f_{j}$.  This clearly gives $U_{i} = \ker(f_{i}) \supseteq \bigcap_{j \in X} U_{j}$ so we get 
    $$
    \gamma_{i} \geq \bigwedge_{j \in X} \gamma_{j} \geq \bigwedge_{j < i} \gamma_{j}.
    $$
    For the other direction, suppose now that $\gamma_{i} \geq \bigwedge_{j < i} \gamma_{j}$. Then choose $Y \subseteq \{0, \ldots, i-1\}$ minimal such that $\gamma_{i} \geq \bigwedge_{j \in Y} \gamma_{j}$. We claim that $f_{i} = \sum_{j \in Y} f_{j}$. By the minimality of $Y$, for each $\ell \in Y$, we have $U_{i} \not\supseteq \bigcap_{j \in Y - \{\ell\}} U_{j}$, so we can pick some $v_{\ell} \in \bigcap_{j \in Y - \{\ell\}} U_{j}$ with $f_{i}(v_{\ell}) = 1$. It follows, also, that $f_{\ell}(v_{\ell}) = 1$ since otherwise $v_{\ell} \in \bigcap_{j \in Y} U_{j} \subseteq U_{i}$. The images of the vectors $(v_{\ell})_{\ell \in Y}$ form a basis for $(\mathbb{F}_{2}^{V} \times \mathbb{F}_{2}^{I})/\bigcap_{\ell \in Y} U_{\ell}$, so we obtain $f_{i} = \sum_{j \in Y} f_{j}$, as desired. 
\end{proof}

\begin{lem} \label{lem: definability of vertex width}
    For all $n$, there is some $L_{\mathrm{IS}}$-formula $\varphi_{n}(x)$ that defines, for any $\Gamma, \mu$ and $S \models T_{\Gamma, \mu}$, the $\alpha$ with $[\alpha] \cong C_{2}$ of vertex width $n$. 
\end{lem}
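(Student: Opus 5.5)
The plan is to write down the obvious first-order transcription of the definition of vertex width and then check that it means the intended thing in every model of $T_{\Gamma,\mu}$, not just in the standard model. For $n \geq 1$, let $\psi_{n}(x)$ be the formula saying: $x$ is a $C_{2}$-element, and there exist $y_{0},\ldots,y_{n-1}$ such that each $y_{i}$ is a $D_{p}$-element, the $y_{i}$ are pairwise $\sim$-inequivalent, and $x \geq y_{0} \wedge \cdots \wedge y_{n-1}$.

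This is expressible in $L_{\mathrm{IS}}$. Being an $A$-element is definable for each finite group $A$, as observed in Section \ref{sec: profinite}; $\sim$ is defined from $\leq$; and $\wedge$ is a definable operation. The one subtlety is sorts: the meet of $n$ elements of sort $X_{2p}$ has index at most $(2p)^{n}$, so it lives in sort $X_{(2p)^{n}}$, and we quantify there. We then set
$$
\varphi_{n}(x) := \psi_{n}(x) \wedge \neg \psi_{n-1}(x) \wedge \cdots \wedge \neg\psi_{1}(x).
$$
This defines the $C_{2}$-elements of vertex width exactly $n$ in any complete system whatsoever, because the definition of vertex width is literally ``$\psi_{n}$ holds and no $\psi_{m}$ with $m<n$ holds.''

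What has to be checked is that the definition of vertex width itself does not drift in nonstandard models. One worry is that $\psi_{m}$ might fail to be monotone in $m$, which would only matter for the intended reading of ``least $n$.'' To settle this, I will use that $T_{\Gamma,\mu}$ is complete, so it is enough to verify the relevant first-order properties in $S(\widetilde{G_{\Gamma}\times C_{2}^{\mu}})$. There, by Lemma \ref{lem: dictionary}, $\psi_{m}(\alpha)$ holds if and only if $d([\alpha])$ is a sum of at most $m$ distinct coordinate functionals $\pi_{v}$. The phrase ``at most'' requires care: since witnesses must be pairwise inequivalent, I have to be able to pad with extra vertices. Adding extra $D_{p}$-elements only lowers the meet, so $x \geq$ meet is preserved. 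Since $\Gamma$ is infinite, extra distinct vertices always exist. Hence $\psi_{m} \to \psi_{m+1}$ holds in the standard model, and therefore in every model of $T_{\Gamma,\mu}$, by completeness (it is a single sentence for each $m$).

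The main point to get right is this bookkeeping about sorts and inequivalence. There is no real obstacle: the formula is uniform in $\Gamma$ and $\mu$, because it mentions only the fixed finite groups $C_{2}$ and $D_{p}$ and the fixed sorts $X_{2}$, $X_{2p}$ and $X_{(2p)^{n}}$.
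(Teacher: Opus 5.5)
Your proposal is correct and matches the paper's proof: the paper uses the same formula $\psi_n(x)$ (writing $x \in X_2 \wedge x \notin X_1$ for ``$x$ is a $C_2$-element'', with the $v_i$ ranging over $X_{2p}$) and sets $\varphi_n = \psi_n \wedge \neg\psi_{n-1}$. Your conjunction of $\neg\psi_m$ for all $m<n$ matches the ``least $n$'' definition verbatim, so your formula does not need the monotonicity check; your padding argument (available because there are infinitely many $D_p$-classes) is what tacitly justifies the paper's shorter conjunction.
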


\begin{proof}
    For each $n$, there is a formula $\psi_{n}(x)$ that asserts that $x$ has vertex width at most $n$, defined as follows:
    $$
    x \in X_{2} \wedge x \not\in X_{1} \wedge (\exists v_{0}, \ldots, v_{n-1} \in X_{2p})\left(\bigwedge_{i < j < n} v_{i} \not\sim v_{j} \wedge \bigwedge_{i < n} [v_{i}] \cong D_{p} \wedge \left(x \geq \bigwedge_{i < n} v_{i}\right)\right),
    $$
    where $\bigwedge_{i < n} v_{i}$ in the rightmost conjunct denotes the meet of the $n$ elements $v_{0}, \ldots, v_{n-1}$. Then the desired formula $\varphi_{n}(x)$ can be defined by $\psi_{1}(x)$ when $n= 1$ and by $\psi_{n}(x) \wedge \neg \psi_{n-1}(x)$ for $n >1$.
\end{proof}

\begin{lem} \label{lem: witness algebraic}
    Fix a graph $\Gamma = (V,R)$ and a cardinal $\mu$. Suppose $\alpha \in S \models T_{\Gamma,\mu}$, $[\alpha]_{\sim} \cong C_{2}$, and $\alpha$ has vertex width $n$. Suppose further that $\beta_{i}, \beta'_{i} \in S$ satisfy $[\beta_{i}] \cong [\beta'_{i}] \cong D_{p}$ for all $i < n$. Then if $\gamma = \bigwedge_{i < n} \beta_{i}$,  $\gamma' = \bigwedge_{i < n} \beta'_{i}$, and $\alpha \geq \gamma$ and $\alpha \geq \gamma'$, then $\{[\beta_{i}]_{\sim} : i < n\} = \{[\beta'_{i}]_{\sim} : i < n\}$.  In other words, if $\alpha$ has vertex width $n$, then the set of witnesses is well-defined. 
\end{lem}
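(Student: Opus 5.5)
The plan is to transfer the statement to the standard model, check it there using the linear-algebra dictionary of Lemma \ref{lem: dictionary}, and then push it back to arbitrary models by elementarity.

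\textbf{Reduction to the standard model.} Fix $n$. The statement is a first-order sentence $\theta_n$ of $L_{\mathrm{IS}}$. It says: for all $x$ satisfying $\varphi_n(x)$ from Lemma \ref{lem: definability of vertex width}, and for all $y_0,\dots,y_{n-1},y'_0,\dots,y'_{n-1}$ that are $D_p$-elements with $x \geq \bigwedge_{i<n} y_i$ and $x \geq \bigwedge_{i<n} y'_i$, each $y_i$ is $\sim$-equivalent to some $y'_j$, and conversely. Since $S \models T_{\Gamma,\mu}$, it suffices to verify $\theta_n$ in $S(\widetilde{G_\Gamma \times C_2^\mu})$ itself.

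\textbf{Translating the witnesses.} Work in $S(\widetilde{G_{\Gamma}\times C_{2}^{\mu}})$. By Corollary \ref{cor: no unexpected frattini quotients} and Lemma \ref{lem: no unexpected quotients}(1), each $D_p$-class $[\beta]$ is $[\ker \pi_v]$ for a unique vertex $v \in V$. The witnesses $\beta_i$ are pairwise inequivalent, because $n$ is the vertex width and the definition requires it. So they correspond to a set $V_0 \subseteq V$ of exactly $n$ vertices, and the $\beta'_i$ correspond to a set $V_0'$ of $n$ vertices.

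Next I use $[\alpha] \cong C_2$, so $[\alpha]$ is a $2$-group. The condition $\alpha \geq \bigwedge_{v \in V_0}\ker\pi_v$ then passes to the maximal $2$-quotient of $D_p^{V_0}$, which is $C_2^{V_0}$. This is the same argument as in the proof of Lemma \ref{lem: dictionary}(1): choose $\delta_v \geq \beta_v$ with $[\delta_v] \cong C_2$, namely the vertex-width-one class $d^{-1}(\pi_v)$. We get $\alpha \geq \bigwedge_{v \in V_0}\delta_v$, hence $\ker d([\alpha]) \supseteq \bigcap_{v\in V_0}\ker \pi_v$ inside $\mathbb{F}_2^V \times \mathbb{F}_2^\mu$. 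Therefore $d([\alpha])$ lies in the span of $\{\pi_v : v \in V_0\}$, and similarly it lies in the span of $\{\pi_v : v \in V_0'\}$. The first fact is the key step: a functional vanishing on the common kernel of finitely many functionals is in their span.

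\textbf{Linear algebra and the main obstacle.} The functionals $\pi_v$ for $v \in V$ are linearly independent in $(\mathbb{F}_2^V\times\mathbb{F}_2^\mu)^*$. Hence $d([\alpha])$ has a unique expression $\sum_{v\in X}\pi_v$ with $X$ finite, and $X$ is its support. Both $X \subseteq V_0$ and $X \subseteq V_0'$ hold. By Lemma \ref{lem: dictionary}(1) the vertex width is minimal, so $|X| \ge n$. Since $|V_0| = |V_0'| = n$, this forces $V_0 = X = V_0'$, which is the claim.

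I expect the main obstacle to be the passage from $\alpha \ge \bigwedge \beta_i$ to the span statement. This requires knowing that $\alpha$ factors through $C_2^{V_0}$ rather than through some larger Frattini quotient. It is handled by $\Phi(C_2)=1$: every index-$2$ open normal subgroup of $\widetilde{G_\Gamma\times C_2^\mu}$ contains $\ker\pi$ and $C_q^R C_p^V$, as noted in the proof of Lemma \ref{lem: dictionary}.
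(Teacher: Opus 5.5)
Your proposal is correct and follows the paper's main argument. You reduce to $S(\widetilde{G_{\Gamma}\times C_{2}^{\mu}})$ by elementarity, then use the dictionary of Lemma \ref{lem: dictionary} and the linear independence of the coordinate functionals $\pi_{v}$ to force both witness sets to equal the support of $d([\alpha])$. You give more detail than the paper on the passage from $D_{p}$-witnesses to vertex-width-one $C_{2}$-classes; the paper also sketches a shortcut via $\gamma \vee \gamma' = \bigcap_{v \in U \cap U'} \ker(\pi_{v})$ and minimality of $n$.
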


\begin{proof}
    As the statement is elementary (for a fixed $n$), it suffices to prove this when $S = S(\reallywidetilde{G_{\Gamma} \times C_{2}^{I}})$ for some $I$ with $|I| = \mu$. This follows from Lemma \ref{lem: dictionary}(1) since having vertex width $n$ is equivalent to corresponding to a linear combination of $n$ elements which themselves correspond to elements of vertex width $1$, under the bijection $d$ defined there. As the elements of vertex width $1$ are linearly independent, they are completely determined. We can also give a direct argument: by Corollary \ref{cor: no unexpected frattini quotients}, each $\beta_{i}$ and $\beta'_{i}$ is an element of the subsystem $S(G_{\Gamma}) \subseteq S(\widetilde{G_{\Gamma} \times C_{2}^{\mu}})$. We know that, after possibly replacing each $\beta_{i}$ with something in the same $\sim$-class, we may assume $\beta_{i} = \ker(\pi_{v_{i}})$ and, likewise, $\beta'_{i} = \ker(\pi_{v'_{i}})$ for sets of vertices $U = \{v_{i} : i < n\}$ and $U' = \{v'_{i} : i < n\}$ from $\Gamma$. Then we have $\bigwedge_{i < n} \beta_{i} = \bigcap_{v \in U} \ker(\pi_{v})$ and $\bigwedge_{i < n} \beta'_{i} =  \bigcap_{v \in U'} \ker(\pi_{v})$. It follows, then, that
    $$
    \gamma \vee \gamma' =\left(\bigcap_{v \in U} \ker(\pi_{v})\right)\left(\bigcap_{v \in U'} \ker(\pi_{v})\right)  = \bigcap_{v \in (U \cap U')} \ker(\pi_{v}).
    $$

    As $\alpha \geq \gamma$ and $\alpha \geq \gamma'$, we must, therefore, have $\alpha \geq \bigcap_{v \in (U \cap U')} \ker(\pi_{v})$.  By the minimality of $n$, we must have $U = U \cap U' = U'$. This shows $\{[\beta_{i}] : i < n\} = \{[\beta'_{i}] : i < n\}$. 
\end{proof}

Recall that if $\Gamma = (V,R)$ is a graph, then $G_{\Gamma}$ can be described as a semi-direct product $C_{q}^{R} \rtimes D_{p}^{V}$ with the action described in Section \ref{sec: CDM}. Then since $D_{p} = C_{p} \rtimes C_{2}$ and the action of $D_{p} \times D_{p}$ factors through the quotient $C_{2} \times C_{2}$ in the definition of $W$, $G_{\Gamma}$ may be further described as 
$$
(C_{q}^{R} \times C_{p}^{V}) \rtimes C_{2}^{V},
$$
where $C_{2}^{V}$ acts on $C_{q}^{R}$ in the induced action and $C_{2}^{V}$ acts on $C_{p}^{V}$ coordinate-wise. This gives a short exact sequence 
$$
1 \to P \to G_{\Gamma} \to C_{2}^{V} \to 1
$$
where $P = C_{q}^{R} \times C_{p}^{V}$ is the product of the $p$- and $q$-Sylow subgroups of $G_{\Gamma}$. We will let $\xi_{\Gamma} : G_{\Gamma} \to C_{2}^{V}$ denote the quotient map (which, in coordinates, is just the projection map $((C_{q}^{R} \times C_{p}^{V}) \rtimes C_{2}^{V}) \to C_{2}^{V}$). 

\begin{lem} \label{lem: parity}
    Suppose $\Gamma$ is an infinite graph and $\alpha \in S(G_{\Gamma})$. Then $\alpha$ has vertex width $n$ if and only if there exists some induced subgraph $\Gamma_{0} = (V_{0},R_{0}) \subseteq \Gamma$ with $|V_{0}| = n$ and $[\alpha] = [N]$ for some open normal subgroup $N \unlhd G_{\Gamma_{0}}$ with $N \supseteq P$, where $P$ denotes the product of the $p$- and $q$-Sylow subgroups of $G_{\Gamma_{0}}$, and such that $\xi_{\Gamma_{0}}(N) \unlhd C_{2}^{V_{0}}$ is the normal subgroup consisting of those $(g_{v})_{v \in V_{0}} \in C_{2}^{V_{0}}$ such that the number of $v$ with $g_{v} = - 1$ is even. 
\end{lem}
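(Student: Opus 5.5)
We will reduce everything to the linear-algebra dictionary of Lemma \ref{lem: dictionary}, specialized to $I = \emptyset$. For $\alpha \in S(G_{\Gamma})$ with $[\alpha]\cong C_{2}$, write $[\alpha]=[U]$ with $U \unlhd G_{\Gamma}$ of index $2$. Since $U$ has index $2$, it contains every element of odd order, hence contains $P = C_{q}^{R}\times C_{p}^{V}$. So $U$ is the preimage under $\xi_{\Gamma}$ of an index-$2$ subgroup of $C_{2}^{V}=\mathbb{F}_{2}^{V}$, i.e. the kernel of a nonzero continuous functional $f = d([\alpha])$. By Lemma \ref{lem: dictionary}(1), $\alpha$ has vertex width $n$ if and only if $f=\sum_{v\in V_{0}}\pi_{v}$ for some $V_{0}\subseteq V$ with $|V_{0}|=n$. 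Here $n$ is minimal, and since the $\pi_{v}$ are linearly independent, the representation is unique, so $V_{0}$ is exactly the support of $f$. Note that $\ker(\sum_{v\in V_{0}}\pi_{v})$ is precisely the set of $(g_{v})$ whose number of coordinates in $V_{0}$ equal to $-1$ is even.

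For the forward direction, assume $\alpha$ has vertex width $n$ and take $V_{0}$ as above, with $\Gamma_{0}$ the induced subgraph on $V_{0}$. Let $\pi_{\Gamma_{0}}:G_{\Gamma}\to G_{\Gamma_{0}}$ be the projection, which is surjective. Then $\xi_{\Gamma_{0}}\circ\pi_{\Gamma_{0}}$ is the projection $\xi_{\Gamma}$ followed by restriction of coordinates to $V_{0}$. Let $N\unlhd G_{\Gamma_{0}}$ be the preimage of the even-parity subgroup $E\le C_{2}^{V_{0}}$. Then $N\supseteq P_{\Gamma_{0}}$ and $\xi_{\Gamma_{0}}(N)=E$. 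Moreover $\pi_{\Gamma_{0}}^{-1}(N)=\ker(f)=U$. Identifying $S(G_{\Gamma_{0}})$ with a subsystem of $S(G_{\Gamma})$ via $\pi_{\Gamma_{0}}$, this gives $[\alpha]=[N]$.

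For the converse, suppose $\Gamma_{0}$, $N$ are given with $|V_{0}|=n$, $N\supseteq P$, and $\xi_{\Gamma_{0}}(N)=E$. Since $N\supseteq P=\ker\xi_{\Gamma_{0}}$, we get $N=\xi_{\Gamma_{0}}^{-1}(E)$. Pulling back to $G_{\Gamma}$ shows that $[\alpha]$ corresponds to the functional $\sum_{v\in V_{0}}\pi_{v}$. Its support has size exactly $n$, so by linear independence of the $\pi_{v}$ it cannot be written with fewer basis vectors. Lemma \ref{lem: dictionary}(1) then gives vertex width exactly $n$.

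The main obstacle is bookkeeping rather than substance. First, Lemma \ref{lem: dictionary} is stated for $S(\widetilde{G_{\Gamma}\times C_{2}^{I}})$. We apply it with $I=\emptyset$ and use the fact, noted in its proof, that index-$2$ classes already live in $S(C_{2}^{V})\subseteq S(G_{\Gamma})$. The vertex-width witnesses (the $D_{p}$-classes) are the same in $S(G_{\Gamma})$ by Corollary \ref{cor: no unexpected frattini quotients} and Lemma \ref{lem: no unexpected quotients}, so vertex width computed in $S(G_{\Gamma})$ agrees with the dictionary. Second, we must check that the identification of $S(G_{\Gamma_{0}})$ inside $S(G_{\Gamma})$ sends the coordinate $D_{p}$-quotients of $G_{\Gamma_{0}}$ to those of $G_{\Gamma}$ indexed by $V_{0}$. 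This is immediate from the definition of $\pi_{\Gamma_{0}}$ as coordinate projection.
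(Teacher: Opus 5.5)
Your proof is correct, but it takes a different route from the paper's.

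\textbf{Your route.} You push everything through Lemma \ref{lem: dictionary}(1). A $C_2$-class is the kernel of a continuous functional $f$ on $\mathbb{F}_2^V$. Vertex width is the size of the support of $f$ with respect to the linearly independent family $(\pi_v)_{v\in V}$. The even-parity subgroup is just $\ker(\sum_{v\in V_0}\pi_v)$, so both directions become a short translation between quotients and functionals.

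\textbf{The paper's route.} The paper argues directly with quotients of the finite group $G_{\Gamma_0}$. In the forward direction it uses Lemma \ref{lem: no unexpected quotients} to identify the witnesses with coordinate kernels $\ker(\pi_{v_i})$, and writes $[\alpha]$ as $C_2^{V_0}/\overline{N}$. Minimality of $V_0$ then gives $(-1)_v\notin\overline{N}$ for every $v\in V_0$. Since the quotient has order $2$, all the $(-1)_v$ lie in the single nontrivial coset, which forces $\overline{N}$ to be the parity subgroup. For the converse it shows that $N\ker(\pi_{\Gamma_0-\{v\}})$ is all of $G_{\Gamma_0}$ for each $v\in V_0$. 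Hence $[\alpha]$ does not factor through any proper induced subgraph of $\Gamma_0$.

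\textbf{Comparison.} Your version is more uniform and gives a cleaner converse. Linear independence of all the $\pi_v$, $v\in V$, immediately rules out every witness set of size $<n$, including ones not contained in $V_0$. The paper's converse only excludes subsets of $V_0$ explicitly. It tacitly needs the join computation from Lemma \ref{lem: witness algebraic} to reduce an arbitrary competing witness set $V'$ to $V_0\cap V'$.

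The cost of your approach is dependence on Lemma \ref{lem: dictionary}, which is formulated in $S(\widetilde{G_\Gamma\times C_2^I})$ rather than $S(G_\Gamma)$. You therefore need the transfer you describe: $D_p$-quotients and $C_2$-quotients of the universal Frattini cover already factor through $G_\Gamma$. The paper's argument avoids this bookkeeping by working entirely inside $G_{\Gamma_0}$.
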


\begin{proof}
    First, suppose $\alpha$ has vertex width $n$, and let $\beta_{0}, \ldots, \beta_{n-1}$ list the witnesses, so we have $[\beta_{i}] \cong D_{p}$ and $[\beta_{i}] \neq [\beta_{j}]$ for all $i \neq j$. By Lemma \ref{lem: no unexpected quotients}, we have $[\beta_{i}] = [\ker(\pi_{v_{i}})]$ for some $v_{i} \in V$, for all $i < n$. Let $V_{0} = \{v_{i} : i < n\}$ and let $\Gamma_{0} = (V_{0},R_{0}) \subseteq \Gamma$ be the induced subgraph with vertices $V_{0}$. Then since $\alpha \geq \bigwedge_{i < n} \beta_{i}$, we may identify $[\alpha] = [N]$ for some normal subgroup $N \unlhd G_{\Gamma_{0}}$. Note that we must have $N \supseteq P$, the product of the $p$- and $q$-Sylows, since $|[\alpha]| = 2$. So, setting $\overline{N} = \xi_{\Gamma_{0}}(N)$, we see $\xi_{\Gamma_{0}}$ induces an isomorphism $G_{\Gamma_{0}}/N \to C_{2}^{V_{0}}/\overline{N}$. Moreover, we see that $(-1)_{v} \not\in \overline{N}$ for all $v \in V_{0}$ since, if $(-1)_{v} \in \overline{N}$, then $\xi_{\Gamma_{0}}$ would factor through the projection $C_{2}^{V_{0}} \to C_{2}^{V_{0} - \{v\}}$, making $[\alpha]$ a quotient of $G_{\Gamma_{0} - \{v\}}$, contradicting minimality of $V_{0}$. Then, since $C_{2}^{V_{0}}/\overline{N} \cong C_{2}$, we have that, for all $v \neq v'$, the cosets $(-1)_{v}\overline{N}$ and $(-1)_{v'}\overline{N}$ are equal and their product is $\overline{N}$. It follows that $\overline{N}$ consists precisely of those $(g_{v})_{v \in V_{0}} \in C_{2}^{V_{0}}$ such that the number of $v$ with $g_{v} = -1$ is even.

    For the other direction, assume we are given an induced subgraph $\Gamma_{0} = (V_{0},R_{0}) \subseteq \Gamma$ with $n$ vertices such that $[\alpha] = [N] \cong C_{2}$ and $\xi_{\Gamma_{0}}(N) := \overline{N} \unlhd C_{2}^{V_{0}}$ is the normal subgroup consisting of the $(g_{v})_{v \in V_{0}}$ with the number of $v$ with $g_{v} = -1$ even. Then, for any proper induced subgraph $\Gamma' \subsetneq \Gamma_{0}$, there is some $v \in V_{0}$ which is not in $\Gamma'$ so $(-1)_{v} \in \xi_{\Gamma_{0}}(N \ker(\pi_{\Gamma_{0}-\{v\}}))$ so $\xi_{\Gamma_{0}}(N \ker(\pi_{\Gamma_{0}-\{v\}})) = C_{2}^{V_{0}}$. This shows the minimality of $\Gamma_{0}$, so $\{\ker(\pi_{v}) : v \in V_{0}\}$ is the set of witnesses establishing that $\alpha$ has vertex width $n$. 
\end{proof}

\begin{defn}
Fix a graph $\Gamma$ and set $I$. Suppose $A \subseteq S(\widetilde{G_{\Gamma} \times C_{2}^{I}})$ is a small subset.  We define the \emph{graph closure of }$A$, denoted $\mathrm{gcl}(A)$, to be the smallest subsystem containing $A$ satisfying the following conditions:
    \begin{enumerate}
        \item If $\alpha \in \mathrm{gcl}(A)$ satisfies $[\alpha]_{\sim} \cong C_{2}$ and $\alpha$ has finite vertex width, then the set of witnesses is contained in $\mathrm{gcl}(A)$ as well.
        \item If $[\beta_{0}]_{\sim}$ and $[\beta_{1}]_{\sim}$ are distinct $\sim$-classes with $\beta_{i} \in \mathrm{gcl}(A)$ and $[\beta_{i}]_{\sim} \cong D_{p}$ for $i = 0,1$, then if $\gamma \leq \beta_{0}$, $\gamma \leq \beta_{1}$ and $[\gamma]_{\sim} \cong W$, then $\gamma \in \mathrm{gcl}(A)$.
    \end{enumerate} 
     If $A = \mathrm{gcl}(A)$, we say that $A$ is \emph{graph closed}.
\end{defn}

\begin{lem} \label{lem: C2 generation}
    Fix a graph $\Gamma$ and suppose $\delta \in S(G_{\Gamma})$ with $1 \neq \delta$. Then we have 
    $$
    \delta \in \mathrm{gcl}\left( \{ \alpha : \alpha \geq \delta \text{ and } [\alpha] \cong C_{2}     \}\right).
    $$
\end{lem}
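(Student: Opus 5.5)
Write $A_{\delta}=\{\alpha : \alpha \geq \delta,\ [\alpha]\cong C_{2}\}$ and $\mathcal{C}=\mathrm{gcl}(A_{\delta})$. The plan is to show that every coordinate quotient through which $\delta$ ``sees'' the graph lies in $\mathcal{C}$, and then to recover $\delta$ as an element above a meet of such coordinates.

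\textbf{Step 1: a finite subgraph carries $\delta$.} Since $\delta \in S(G_{\Gamma})$ and $\delta \neq 1$, we have $[\delta] = [N]$ for an open normal $N \unlhd G_{\Gamma}$ with $G_{\Gamma}/N$ nontrivial. By Lemma \ref{lem: bounding} (with $I=\emptyset$) there is a finite induced subgraph $\Gamma_{0}=(V_{0},R_{0})$ such that $N$ contains $C_{p}^{V\setminus V_{0}}C_{q}^{R\setminus R_{0}}$. The remaining $C_{2}^{V\setminus V_{0}}$ part is handled by enlarging $V_{0}$ to the finite set $V_{1}\supseteq V_{0}$ of those $v$ for which $(-1)_{v}$ (equivalently $(\beta)_{v}$) is not in $N$ modulo $P$. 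Then $N \supseteq \ker(\pi_{\Gamma_{1}})$ for the induced subgraph $\Gamma_{1}$ on $V_{1}$, so $\delta \geq \bigwedge_{v\in V_{1}}\ker(\pi_{v}) \wedge \bigwedge_{r\in R_{1}} \ker(\pi_{r})$. Now shrink $V_{1}$ to the minimal set of vertices $U$ such that $\delta \geq \pi_{\Gamma_{U}}$-kernel, where $\Gamma_{U}$ is the induced subgraph on $U$. Such a minimal set exists and is unique because intersections behave well, by the argument of Lemma \ref{lem: witness algebraic}: $\ker\pi_{\Gamma_{U}}\ker\pi_{\Gamma_{U'}}=\ker\pi_{\Gamma_{U\cap U'}}$.

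\textbf{Step 2: each vertex $v \in U$ lies in $\mathcal{C}$.} Fix $v\in U$ and consider $\delta \vee \ker(\pi_{\Gamma_{U\setminus\{v\}}})$, the quotient of $G_{\Gamma_{U}}/N$ obtained by killing the other coordinates. By minimality, $N\ker(\pi_{\Gamma_{U\setminus\{v\}}})\neq G_{\Gamma}$; otherwise $\delta$ would already be determined on $U\setminus\{v\}$, by the modular law. By Lemma \ref{lem: still proper}, $NP$ is proper, so $G_{\Gamma_{U}}/NP$ is a nontrivial elementary abelian $2$-quotient of $C_{2}^{U}$. Hence there exist $\alpha\geq\delta$ with $[\alpha]\cong C_{2}$, namely linear functionals on $C_{2}^{U}$ vanishing on $\xi(N)$. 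I then want to show that the finite-vertex-width witnesses of these functionals cover all of $U$. If some $v\in U$ were in the support of no functional vanishing on $\xi(N)$, then $(-1)_{v}\in \xi(N)$. The main obstacle is to show that this forces $N\supseteq(D_{p})_{v}$ together with all $(C_{q})_{r}$ for edges $r$ at $v$, contradicting minimality of $U$. I would argue this as in the proof of Lemma \ref{lem: still proper}: lift $(-1)_{v}$ to an element of $N$, raise it to the power $pq$, and conjugate by a suitable element of $C_{q}^{R}$ to obtain $(\beta)_{v}\in N$. Then Lemma \ref{lem:easy}(2),(3),(4) give $(C_{p})_{v}\subseteq N$ and $(C_{q})_{r}\subseteq N$ for edges at $v$. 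Since every $\alpha \in A_{\delta}$ has finite vertex width (it lives in $S(G_{\Gamma_{U}})$, so Lemma \ref{lem: dictionary} applies), clause (1) of graph closure puts $\ker(\pi_{v})$ into $\mathcal{C}$ for every $v\in U$.

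\textbf{Step 3: edges, then conclude.} For each edge $r=(v,v')\in R_{U}$, both $\ker\pi_{v}$ and $\ker\pi_{v'}$ lie in $\mathcal{C}$, so clause (2) gives $\ker(\pi_{r})\in\mathcal{C}$. Since $\mathcal{C}$ is a subsystem, it is closed under $\wedge$ and upward closed. Thus $\ker(\pi_{\Gamma_{U}})=\bigwedge_{v\in U}\ker\pi_{v}\wedge\bigwedge_{r\in R_{U}}\ker\pi_{r}$ lies in $\mathcal{C}$, and $\delta\geq$ this element, so $\delta\in\mathcal{C}$. The delicate point is Step 2, namely extracting $(\beta)_{v}\in N$ from the parity information and verifying that this kills the entire $v$-part including the adjacent $C_{q}$ factors.
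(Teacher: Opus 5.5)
This is essentially the paper's proof. You take a minimal finite induced subgraph carrying $\delta$ and use Lemma \ref{lem: still proper} to make $C_{2}^{U}/\xi(NP)$ nontrivial. The parity functionals must then have supports covering $U$: otherwise $(\beta)_{v}\in NP$, which the argument of Lemma \ref{lem: still proper} upgrades to $\ker(\pi_{\Gamma_{U\setminus\{v\}}})\subseteq N$, contradicting minimality. You spell out this passage from $NP$ back to $N$ more carefully than the paper, which states it tersely. Finally you use clause (2) of graph closure for the edges.

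Two small corrections. First, Step 1 does not need Lemma \ref{lem: bounding}: by the product topology, every open normal subgroup of $G_{\Gamma}$ already contains $\ker(\pi_{\Gamma_{1}})$ for some finite induced $\Gamma_{1}$. Second, the sentence asserting $N\ker(\pi_{\Gamma_{U\setminus\{v\}}})\neq G_{\Gamma}$ ``by minimality'' is false (take $U=\{v\}$ and $\delta=\ker(\pi_{v})$). Nothing later depends on it, so it should simply be dropped.
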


\begin{proof}
    Fix $1 \neq \delta \in S(G_{\Gamma})$ (where $1$ denotes the trivial quotient class) and let $D = \mathrm{gcl}\left( \{ \alpha : \alpha \geq \delta \text{ and } [\alpha] \cong C_{2}     \}\right)$. Since $S(G_{\Gamma})$ is the union of $S(G_{\Gamma_{0}})$ as $\Gamma_{0}$ ranges over finite induced subgraphs of $\Gamma$, we can choose some minimal finite $\Gamma_{0} = (V_{0},R_{0})$ with $\delta \in S(G_{\Gamma_{0}})$. Then we have $[\delta] = G_{\Gamma_{0}}/N$ for some proper normal subgroup $N \unlhd G_{\Gamma_{0}}$. By Lemma \ref{lem: still proper}, if $P \unlhd G_{\Gamma_{0}}$ denotes the product of the $p$- and $q$-Sylow subgroups of $G_{\Gamma_{0}}$, then $G_{\Gamma_{0}}/NP$ is also a nontrivial quotient of $G_{\Gamma_{0}}$.  Note that if for some $v \in V_{0}$, $(\beta)_{v} \in NP$ (where, recall, $(\beta)_{v}$ denotes the involution of $G_{\Gamma_{0}}$ which is the involution in the $v$ coordinate and $1$ in all other vertex coordinates), then $\ker(\pi_{\Gamma_{0}-\{v\}}) \subseteq NP$ so the natural projection
    $$
    G_{\Gamma_{0}}/NP \to G_{\Gamma_{0}}/NP\ker(\pi_{\Gamma_{0} - \{v\}})
    $$
    is an isomorphism, which entails $\delta \in S(G_{\Gamma_{0} - \{v\}})$, contradicting the minimality of $\Gamma_{0}$. 

    Write $G_{\Gamma_{0}}/NP \cong C_{2}^{V_{0}}/\overline{N}$ via the isomorphism induced by the quotient map $\xi_{\Gamma_{0}} : G_{\Gamma_{0}} \to C_{2}^{V_{0}}$ defined before Lemma \ref{lem: parity}, where we are writing $\overline{N}$ for $\xi_{\Gamma_{0}}(NP)$. Then for each $v \in V_{0}$, we know $(-1)_{v} \not\in \overline{N}$, so we may choose some normal $N_{v} \unlhd C_{2}^{V_{0}}$ with $\overline{N} \subseteq N_{v}$, $[C_{2}^{V_{0}} : N_{v}] = 2$, and $(-1)_{v} \not\in N_{v}$. Then we set $X_{v} = \{w \in V_{0} : (-1)_{w} \not\in N_{v}\}$. Then the natural projection map $C_{2}^{V_{0}} \to C_{2}^{X_{v}}$ induces an isomorphism $C_{2}^{V_{0}}/N_{v} \to C_{2}^{X_{v}}/H$, where $H$ is the normal subgroup consisting of those $(g_{w})_{w \in X_{v}}$ such that the number of $w$ with $g_{w} = -1$ is even. Then $C_{2}^{V_{0}}/N_{v}$ corresponds to some $\alpha$ with $[\alpha] \cong C_{2}$ and $\alpha \geq \delta$.  Moreover, by construction, $\alpha$ has vertex width $|X_{v}|$ with witnesses $\{\ker(\pi_{w}) : w \in X_{v}\}$, which contains $\ker(\pi_{v})$.  This shows $\ker(\pi_{v}) \in D$ for all $v \in V_{0}$.  By definition of graph closure, any $\gamma \in S(G_{\Gamma})$ with $[\gamma] \cong W$ corresponding to an edge between $v,v' \in V_{0}$, then, must also be in $D$. We therefore obtain $\delta \in S(G_{\Gamma_{0}}) \subseteq D$, as desired. 
\end{proof}

Suppose $\gamma_{0}, \ldots, \gamma_{k-1} \in S \models T_{\Gamma,\mu}$ satisfy $[\gamma_{i}] \cong C_{2}$ for all $i < k$. Say $\gamma_{0}, \ldots, \gamma_{k-1}$ are \emph{independent} if $\gamma_{i} \not\geq \bigwedge_{j < i} \gamma_{j}$ for all $i < k$ (by Lemma~\ref{lem: dictionary}(2), this is equivalent to $\{d([\gamma_{i}])\}_{i}$ being linearly independent, and is therefore independent of the ordering).

\begin{lem} \label{lem: exchange} 
    Suppose $\gamma_{0}, \ldots, \gamma_{k-1},\alpha \in S \models T_{\Gamma,\mu}$ with $[\alpha] \cong [\gamma_{i}] \cong C_{2}$ for all $i < k$. If $\gamma_{0},\ldots, \gamma_{k-1}$ are independent and $\alpha \geq \bigwedge_{i < k} \gamma_{i}$, then we have
    $$
    \gamma_{i} \geq \alpha \wedge \bigwedge_{j \neq i} \gamma_{j}
    $$
    for all $i$. 
\end{lem}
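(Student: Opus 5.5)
The plan is to reduce to linear algebra over $\mathbb{F}_{2}$ via the dictionary of Lemma \ref{lem: dictionary}. For fixed $k$ the statement is a single first-order sentence, universally quantified over $\gamma_{0},\ldots,\gamma_{k-1},\alpha$. This uses the definable set of $C_{2}$-elements, the definable meet, and the fact that ``independent'' is expressed by finitely many conditions $\gamma_{i}\not\geq\bigwedge_{j<i}\gamma_{j}$. It therefore suffices to verify it in one model of $T_{\Gamma,\mu}$, namely $S=S(\widetilde{G_{\Gamma}\times C_{2}^{I}})$ with $|I|=\mu$.

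In that model, every $C_{2}$-element lies in the subsystem $S(\mathbb{F}_{2}^{V}\times\mathbb{F}_{2}^{I})$, as shown in the proof of Lemma \ref{lem: dictionary}. So I would write $[\gamma_{j}]=[U_{j}]$ and $[\alpha]=[U]$ with $U_{j}=\ker f_{j}$ and $U=\ker f$. Here $f_{j}=d([\gamma_{j}])$ and $f=d([\alpha])$ are nonzero continuous functionals. By Lemma \ref{lem: dictionary}(2), the $f_{j}$ are linearly independent. The hypothesis $\alpha\geq\bigwedge_{j<k}\gamma_{j}$, fed into the argument of Lemma \ref{lem: dictionary}(2) (minimal $Y$ with $U\supseteq\bigcap_{j\in Y}U_{j}$), yields $f=\sum_{j\in Y}f_{j}$ for some nonempty $Y\subseteq k$.

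The conclusion $\gamma_{i}\geq\alpha\wedge\bigwedge_{j\neq i}\gamma_{j}$ says $U_{i}\supseteq U\cap\bigcap_{j\neq i}U_{j}$. By the same kernel-containment criterion (a functional's kernel contains an intersection of kernels iff it lies in their span), this is equivalent to $f_{i}\in\operatorname{span}\{f,f_{j}:j\neq i\}$. When $i\in Y$, it is immediate from $f_{i}=f+\sum_{j\in Y\setminus\{i\}}f_{j}$. Pulling back along the quotient map to $\widetilde{G_{\Gamma}\times C_{2}^{I}}$ then gives the inequality in $S$.

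The main obstacle is the case $i\notin Y$, i.e.\ the coefficient of $f_{i}$ in $f$ is $0$. There $f_{i}\in\operatorname{span}\{f,f_{j}:j\neq i\}=\operatorname{span}\{f_{j}:j\neq i\}$ would contradict independence. So the argument for all $i$ needs every $\gamma_{i}$ to occur in the expansion of $\alpha$. For instance, $k=2$ and $\alpha=\gamma_{0}$ would require $\gamma_{1}\geq\gamma_{0}$, which is impossible. I would therefore first check which reading the paper intends: that $\bigwedge_{i<k}\gamma_{i}$ is meant to be minimal under $\alpha$ (so $Y=k$), or that the claim concerns the exchange step $\gamma_{i}\leftrightarrow\alpha$ for indices $i$ that $\alpha$ depends on. Under either reading, the computation above proves it.
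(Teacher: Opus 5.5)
Your argument is the paper's own: reduce by elementarity to $S(\widetilde{G_{\Gamma}\times C_{2}^{\mu}})$, pass to $\mathbb{F}_{2}$-linear algebra via Lemma~\ref{lem: dictionary}, and apply Steinitz exchange, which is all the paper's one-line proof says. Your caveat is also right: as literally stated, the lemma fails for every $i$ outside the support $Y$ of $d([\alpha])$. Your example $k=2$, $\alpha=\gamma_{0}$, $i=1$ is a genuine counterexample, since it would force $\gamma_{1}\geq\gamma_{0}$. So the exchange argument (yours and the paper's) proves the conclusion precisely for $i\in Y$, and hence for all $i$ once one adds the hypothesis $\alpha\not\geq\bigwedge_{j\neq i}\gamma_{j}$ for every $i$. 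Since the lemma is not cited anywhere else in the paper, nothing downstream is affected.
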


\begin{proof}
   In $S(\reallywidetilde{G_{\Gamma} \times C_{2}^{\mu}})$, this follows from Lemma \ref{lem: dictionary}(2), by the usual Steinitz exchange for vector spaces. For $S$, it follows by elementarity.  
\end{proof}

\begin{lem} \label{lem: graph closure is closure}
    Fix a graph $\Gamma$ and cardinal $\mu$ and suppose $A \subseteq S \models T_{\Gamma,\mu}$ is a small subset. 
    \begin{enumerate}
        \item $\mathrm{gcl}(\mathrm{gcl}(A)) = \mathrm{gcl}(A)$. 
        \item $\Gamma(\mathrm{gcl}(A))$ is an induced subgraph of $\Gamma$. 
        \item $\mathrm{gcl}(A) \subseteq \mathrm{acl}(A)$. 
    \end{enumerate}
\end{lem}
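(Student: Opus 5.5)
Part (1) is immediate from the definition: $\mathrm{gcl}(A)$ is the smallest subsystem containing $A$ that is closed under conditions (1) and (2). Since $\mathrm{gcl}(A)$ itself contains $\mathrm{gcl}(A)$ and satisfies both conditions, minimality gives $\mathrm{gcl}(\mathrm{gcl}(A)) \subseteq \mathrm{gcl}(A)$. The reverse inclusion is trivial. One should also check that the intersection of all subsystems containing $A$ and satisfying (1), (2) is again such a subsystem, so the smallest one exists. Upward closure and (1), (2) are clearly preserved under intersections. Downward directedness is the only point needing care. I would handle it by describing $\mathrm{gcl}(A)$ constructively: iterate $\omega$ times the operation that adds the witnesses of finite-width $C_{2}$-elements and the $W$-elements below pairs of distinct $D_{p}$-classes, and then takes the generated subsystem $\langle\cdot\rangle$ (closing under $\wedge$ and upwards). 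The union of this increasing chain of subsystems is a subsystem satisfying (1), (2), and it is contained in every such subsystem.

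For (2), recall that $\Gamma(\mathrm{gcl}(A))$ has as vertices the $D_{p}$-classes in $\mathrm{gcl}(A)$. There is an edge between two such vertices iff some $W$-element lies below both of them, and in a subsystem that element must lie in the subsystem itself. The map $\Gamma(\mathrm{gcl}(A)) \to \Gamma(S)$ induced by inclusion is an injective graph homomorphism. To show it is an induced embedding, take $\beta_{0},\beta_{1} \in \mathrm{gcl}(A)$ representing distinct $D_{p}$-classes that are adjacent in $\Gamma(S)$, so some $\gamma \leq \beta_{0} \wedge \beta_{1}$ has $[\gamma]\cong W$. Condition (2) puts $\gamma$ in $\mathrm{gcl}(A)$, so the edge is already present in $\Gamma(\mathrm{gcl}(A))$. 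Here I read ``induced subgraph of $\Gamma$'' as ``induced subgraph of $\Gamma(S)$'', which is elementarily equivalent to $\Gamma$ and is $\Gamma$ itself when $S = S(\widetilde{G_{\Gamma}\times C_{2}^{\mu}})$ by Corollary \ref{cor: no unexpected frattini quotients}.

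Part (3) is the substantive one. Since $\mathrm{acl}(A)$ is closed under definable functions and is upward closed in the relevant sense, it suffices to show three things.
\begin{enumerate}
\item[(a)] $\langle B\rangle \subseteq \mathrm{acl}(B)$ for any $B$.
\item[(b)] If $\alpha$ has finite vertex width $n$, then each witness class $[\beta_{i}]$, and every element of it, is algebraic over $\alpha$.
\item[(c)] For distinct $D_{p}$-classes $\beta_{0},\beta_{1}$, the set of $\gamma \leq \beta_{0}\wedge\beta_{1}$ with $[\gamma]\cong W$ is finite and definable over $\beta_{0},\beta_{1}$.
\end{enumerate}
Then an induction along the $\omega$-step construction from (1) gives $\mathrm{gcl}(A)\subseteq\mathrm{acl}(A)$.

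For (a), the meet $\wedge$ is a definable function. Every $\beta \geq \alpha$ of sort $X_{k}$ lies in a finite set definable from $\alpha$: there are finitely many normal subgroups of the finite group $[\alpha]$ and finitely many cosets of each. So upward closure stays inside $\mathrm{acl}$.

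For (b), Lemma \ref{lem: witness algebraic} shows the set of witness classes is uniquely determined by $\alpha$. Hence the union of the witness classes is defined over $\alpha$ by the formula ``$[y]\cong D_{p}$ and there exist $n$ pairwise inequivalent $D_{p}$-elements including $y$ whose meet lies below $\alpha$''. This set is finite, of size at most $2pn$, so each of its elements is algebraic.

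Step (c) is where I expect the main obstacle. The set of such $\gamma$ is a union of finitely many $\sim$-classes of size $4p^{2}q$ each, but one must show it is finite in every model of $T_{\Gamma,\mu}$. I would prove this first in $S(\widetilde{G_{\Gamma}\times C_{2}^{\mu}})$ and then transfer by elementarity. The transfer works once we have a uniform finite bound, which is expressible as a first-order sentence. The bound comes from three facts.
\begin{itemize}
\item $\Phi(W) \neq W$ and $W$ is centerless-ish with trivial Frattini subgroup, so by Fact \ref{fact:quotientbyfrattini} (as in Corollary \ref{cor: no unexpected frattini quotients}) every $W$-quotient of $\widetilde{G_{\Gamma}\times C_{2}^{\mu}}$ factors through $G_{\Gamma}\times C_{2}^{\mu}$.
\item Any $W$-quotient lying below the two vertex kernels $\ker\pi_{v},\ker\pi_{v'}$ is a quotient of $G_{\Gamma}\times C_{2}^{\mu}$ of order $4p^{2}q$. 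Lemma \ref{lem: bounding}, applied with $|A| = 4p^{2}q$, shows it factors through $G_{\Gamma_{0}}\times C_{2}^{\ell}$ for a bounded $\ell$ and a bounded $\Gamma_{0}$.
\item Arguing with Lemma \ref{lem:easy}(4) and the fact that the $C_{q}$ part must come from a single edge coordinate, the normal subgroup is forced to be $\ker(\pi_{r})$ for $r=(v,v')$. Hence there is at most one such $\sim$-class.
\end{itemize}
This yields a bound of $4p^{2}q$ elements, which is elementary, completing (c) and hence (3).
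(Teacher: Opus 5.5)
Your proposal is correct and follows essentially the same route as the paper. Parts (1) and (2) come directly from the definition. Part (3) builds $\mathrm{gcl}(A)$ in $\omega$ stages, using Lemma~\ref{lem: witness algebraic} to place the witnesses in a finite $\alpha$-definable set, and using the fact that at most one $W$-class lies below $\beta_{0}\wedge\beta_{1}$. The only difference is that you spell out details the paper leaves implicit: that $\langle\cdot\rangle$ stays inside $\mathrm{acl}$, and the uniqueness of that $W$-class, which the paper asserts ``by the definition of $W$'' and you justify (in sketch) via $\Phi(W)=1$, an edge-coordinate analysis in the standard model, and transfer by elementarity.
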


\begin{proof}
    (1) It is immediate from the definitions that $\mathrm{gcl}(A)$ is itself graph-closed.  

    (2) follows directly from the second graph closure condition: if $\beta_{0}, \beta_{1} \in \mathrm{gcl}(A)$ with $[\beta_{i}] \cong D_{p}$ and $[\beta_{0}] \neq [\beta_{1}]$, then any $\gamma \in S$ with $[\gamma] \cong W$ and $\gamma \leq \beta_{0} \wedge \beta_{1}$ lies in $\mathrm{gcl}(A)$.  This is precisely the condition that $\Gamma(\mathrm{gcl}(A))$ is an induced subgraph of $\Gamma$.

    (3) We build $\mathrm{gcl}(A)$ in $\omega$ stages.  Set $A_{0} = \langle A \rangle$ and, for each $n \in \omega$, define $A_{n+1}$ to be the subsystem generated by $A_{n}$ together with:
    \begin{itemize}
        \item all $\beta \in S$ with $[\beta] \cong D_{p}$ that witness the finite vertex width of some $\alpha \in A_{n}$ with $[\alpha] \cong C_{2}$ of finite vertex width; and

        \item all $\gamma \in S$ with $[\gamma] \cong W$ and $\gamma \leq \beta_{0} \wedge \beta_{1}$ for some $\beta_{0}, \beta_{1} \in A_{n}$ with $[\beta_{i}] \cong D_{p}$ and $[\beta_{0}]_{\sim} \neq [\beta_{1}]_{\sim}$.
    \end{itemize}
    Since any $C_{2}$-element of $\bigcup_{n} A_{n}$ appears at some finite stage, $\mathrm{gcl}(A) = \bigcup_{n < \omega} A_{n}$.  We show $A_{n} \subseteq \mathrm{acl}(A)$ for all $n \in \omega$ by induction; the base case is clear.

    Assume $A_{n} \subseteq \mathrm{acl}(A)$.  For witnesses, suppose $\alpha \in A_{n}$ has $[\alpha] \cong C_{2}$ and vertex width $k$, and let $\beta$ be a witness for $\alpha$.  By Lemma \ref{lem: witness algebraic}, the set of witness $\sim$-classes $\{[\beta_{i}] : i < k\}$ is uniquely determined by $\alpha$.  It is easy to see there is a formula with parameter $\alpha$ defining the union of these $k$ witness $\sim$-classes, which is a finite set of $k \cdot |D_{p}|$ elements.  In particular, $\beta$ lies in this finite $\alpha$-definable set, so $\beta \in \mathrm{acl}(\alpha) \subseteq \mathrm{acl}(A_{n}) \subseteq \mathrm{acl}(A)$.

    For $W$-elements, suppose $\beta_{0}, \beta_{1} \in A_{n}$ have $[\beta_{i}] \cong D_{p}$ and $[\beta_{0}] \neq [\beta_{1}]$, and $\gamma \leq \beta_{0} \wedge \beta_{1}$ with $[\gamma] \cong W$.  By the definition of $W$, there is at most one $\sim$-class $[\gamma'] \cong W$ with any representative $\gamma'$ satisfying $\gamma' \leq \beta_{0} \wedge \beta_{1}$.  Hence $\gamma$ lies in a $\{\beta_{0},\beta_{1}\}$-definable set of size at most $|W|$, so $\gamma \in \mathrm{acl}(\beta_{0},\beta_{1}) \subseteq \mathrm{acl}(A_{n}) \subseteq \mathrm{acl}(A)$.
\end{proof}

\subsection{First identification}

Our first step will be to identify $S_{-}$ for an arbitrary graph closed $S \subseteq S' \models T_{\Gamma,\mu}$:

\begin{lem} \label{lem: firststep}
    Suppose $\Gamma$ is an infinite graph and $\mu$ is a cardinal. 
    \begin{enumerate}
        \item Suppose $S_{*} \models T_{\Gamma,\mu}$.  Then there is an isomorphism  $\Psi :(S_{*})_{-} \to S(G_{\Gamma_{*}})$ where $\Gamma_{*} = (V_{*}, R_{*}) \models \mathrm{Th}(\Gamma)$ is the graph interpreted in $S_{*}$, so 
          \begin{eqnarray*}
        V_{*} &=& \{ [\alpha] : \alpha \in S_{*}, [\alpha] \cong D_{p}\} \\
        R_{*} &=& \{([\alpha],[\alpha']) \in (V_{*})^{2} : [\alpha] \neq [\alpha'], (\exists \beta \in S_{*})([\beta] \cong W \text{ and } \beta \leq \alpha \wedge \alpha')\}.
    \end{eqnarray*}
        The isomorphism can be chosen so that if $v = [\alpha] \in V_{*}$, then $\Psi([\alpha]) = [\ker(\pi_{v})]$ and if $r = ([\alpha],[\alpha']) \in R_{*}$ and $\beta \leq \alpha \wedge \alpha'$ satisfies $[\beta] \cong W$, then $\Psi([\beta]) = [\ker(\pi_{r})]$. 
        \item If $S = \mathrm{gcl}(S) \subseteq S_{*} \models T_{\Gamma,\mu}$ and $\Psi : (S_{*})_{-} \to S(G_{\Gamma_{*}})$ is an isomorphism as in (1), then $\Psi|_{S_{-}}$ yields an isomorphism $S_{-} \to S(G_{\Gamma(S)})$, the diagram   
        $$
\xymatrix{ 
S_{-} \ar@{->}[r]^{\Psi|_{S_{-}}} \ar@{->}[d] & S(G_{\Gamma(S)}) \ar@{->}[d]^{S(\pi_{\Gamma(S)})} \\
(S_{*})_{-} \ar@{->}[r]^{\Psi} & S(G_{\Gamma_{*}}).
}
$$
commutes, where the map $S_{-} \to (S_{*})_{-}$ is the inclusion and $S(\pi_{\Gamma(S)})$ is dual to the projection $\pi_{\Gamma(S)} : G_{\Gamma_{*}} \to G_{\Gamma(S)}$. 
\item If $S = \mathrm{gcl}(S) \subseteq S_{*} \models T_{\Gamma,\mu}$ and $\Psi : S_{-} \to S(G_{\Gamma(S)})$ is an isomorphism as in (2), then $\Psi$ extends to an isomorphism $\Psi_{*}: (S_{*})_{-} \to S(G_{\Gamma_{*}})$ such that the diagram   
        $$
\xymatrix{ 
S_{-} \ar@{->}[r]^{\Psi} \ar@{->}[d] & S(G_{\Gamma(S)}) \ar@{->}[d]^{S(\pi_{\Gamma(S)})} \\
(S_{*})_{-} \ar@{->}[r]^{\Psi_{*}} & S(G_{\Gamma_{*}}).
}
$$
commutes, where the map $S_{-} \to (S_{*})_{-}$ is the inclusion and $S(\pi_{\Gamma(S)})$ is dual to the projection $\pi_{\Gamma(S)} : G_{\Gamma_{*}} \to G_{\Gamma(S)}$. 
    \end{enumerate}
\end{lem}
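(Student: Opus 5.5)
The plan is to reduce everything to Lemma \ref{lem: unique iso} and Corollary \ref{cor: unique aut}, using elementarity to transfer the needed first-order facts from the standard model $S(\widetilde{G_{\Gamma}\times C_{2}^{\mu}})$ to $S_{*}$.

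For (1), write $G_{-} = G((S_{*})_{-})$. We first want $G_{-}$ to look like a CDM group for the interpreted graph $\Gamma_{*}$, i.e.\ the canonical map
$$
G_{-} \to \Bigl\{((a_{v}),(b_{r})) \in \prod_{v \in V_{*}} [v] \times \prod_{r \in R_{*}} [\beta_{r}] : \lambda'_{r}(b_{r}) = (a_{v},a_{v'})\Bigr\}
$$
should be an isomorphism. Here $\beta_{r}$ denotes the $W$-element below $v \wedge v'$; it is unique up to $\sim$ by the remark in Lemma \ref{lem: graph closure is closure}(3).

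Injectivity of this map is automatic, since $(S_{*})_{-}$ is generated by the $D_{p}$- and $W$-elements. Surjectivity reduces, by compactness of the inverse limit, to finitely many coordinates. So we need the following: for finitely many distinct vertex classes $v_{1},\dots,v_{n}$, with $E$ the edges among them, the meet $\bigwedge_{i} v_{i} \wedge \bigwedge_{r\in E}\beta_{r}$ has quotient group exactly $G_{\Gamma_{0}}$, where $\Gamma_{0}$ is the induced subgraph on $\{v_{1},\dots,v_{n}\}$. Equivalently, the fibre-product map to the restricted product is onto. For fixed $n$ and a fixed finite graph $\Gamma_{0}$, this is a first-order statement about $n$-tuples of $D_{p}$-elements. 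It holds in $S(\widetilde{G_{\Gamma}\times C_{2}^{\mu}})$ by Corollary \ref{cor: no unexpected frattini quotients} together with Lemma \ref{lem: no unexpected quotients}: there every $D_{p}$-class is $\ker(\pi_{v})$, and every $W$-class is $\ker(\pi_{r})$, since the $W$-quotients of the universal Frattini cover factor through $G_{\Gamma}$ by Fact \ref{fact:quotientbyfrattini} because $\Phi(W)=1$. Hence it transfers to $S_{*}$.

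Having this, $G_{-} \cong G_{\Gamma_{*}}$. To get the normalized isomorphism $\Psi$, we choose isomorphisms $\varphi_{v}: D_{p} \to [v]$ for each vertex, and then $\varphi_{r}$ compatible with $\varphi_{v}\times\varphi_{v'}$ using Lemma \ref{extendingauts}. Lemma \ref{lem: unique iso} then gives the isomorphism, and dualizing yields $\Psi$ with the stated behaviour on $D_{p}$- and $W$-classes. We also check that $\Gamma_{*}\equiv\Gamma$, because $\Gamma(S)$ is interpreted uniformly in $S$.

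For (2), the key point is that graph closure, via condition (2) of its definition, makes $\Gamma(S)$ an induced subgraph of $\Gamma_{*}$ (Lemma \ref{lem: graph closure is closure}(2)). Then $S_{-}$ is the subsystem generated by the $D_{p}$- and $W$-classes indexed by $\Gamma(S)$. Under $\Psi$ these go to $\ker(\pi_{v})$ and $\ker(\pi_{r})$ for $v,r\in\Gamma(S)$, which generate exactly the image of $S(G_{\Gamma(S)})$ under the dual of the projection. Commutativity of the square is then immediate.

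For (3), we take any $\Psi'$ from (1) and compare it with $\Psi$ on $S_{-}$. The map $\Psi'|_{S_{-}}\circ\Psi^{-1}$ is an automorphism of $S(G_{\Gamma(S)})$. By Lemma \ref{lem:factoring} it corresponds to a graph automorphism $f_{0}$ composed with coordinatewise automorphisms. We extend $f_{0}$ by the identity on $\Gamma_{*}\setminus\Gamma(S)$; this is legitimate because $\Gamma(S)$ is an induced subgraph and the automorphism is graph-preserving. Lemma \ref{lem: lifting} then extends the group automorphism to $G_{\Gamma_{*}}$, compatibly with the projection, and we correct $\Psi'$ by this extension.

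The main obstacle I expect is the first-order transfer in (1): in the standard model, $(S)_{-}$ genuinely sits inside $S(G_{\Gamma})$ with no extra $D_{p}$- or $W$-classes and no extra relations among them. Verifying this requires Fact \ref{fact:quotientbyfrattini} together with the identification of $W$-quotients of $G_{\Gamma}\times C_{2}^{I}$, which needs a short additional argument in the style of Lemma \ref{lem:easy}(4). Once that holds in the standard model, it transfers to $S_{*}$ by elementarity.
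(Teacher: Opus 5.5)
Your treatment of (1) and (2) is sound and is essentially the paper's argument. The paper also transfers, for each finite labelled graph $\Gamma_0$, the sentence saying that pairwise inequivalent $D_p$-elements with edge pattern $\Gamma_0$, together with the $W$-elements below adjacent pairs, generate a copy of $S(G_{\Gamma_0})$ with the obvious labelling. It then assembles $\Psi$ as a union of finite normalized isomorphisms, corrected via Corollary \ref{cor: unique aut}. You instead identify $G((S_*)_-)$ with the fibre product over $\Gamma_*$ and invoke Lemma \ref{lem: unique iso}. These are dual packagings of the same idea.

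One small imprecision: Fact \ref{fact:quotientbyfrattini} only concerns which groups occur as quotients. What you actually need is that every epimorphism $\widetilde{G}\to W$ or $\widetilde{G}\to D_p$ kills $\ker(\widetilde{G}\to G)\subseteq\Phi(\widetilde{G})$, which holds because $\Phi(W)=\Phi(D_p)=1$.

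The real problem is in (3). You allow $\Psi'|_{S_-}\circ\Psi^{-1}$ to induce a possibly nontrivial automorphism $f_0$ of $\Gamma(S)$. You then assert that $f_0$ extends by the identity on $\Gamma_*\setminus\Gamma(S)$ because $\Gamma(S)$ is induced. That is false. If $u\notin\Gamma(S)$ is adjacent to $v\in\Gamma(S)$ but not to $f_0(v)$, the extension sends an edge to a non-edge. Inducedness of $\Gamma(S)$ says nothing about edges leaving $\Gamma(S)$.

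In fact, for a non-normalized $\Psi$, statement (3) itself fails. Take $\Gamma(S)$ to be two non-adjacent vertices $v,w$ such that $v$ has a neighbour in $\Gamma_*$ and $w$ does not, and let $\Psi$ swap their $D_p$-classes. Any $\Psi_*$ making the square commute would induce an automorphism of $\Gamma_*$ exchanging $v$ and $w$, which is impossible.

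The missing observation is that "as in (2)" carries the normalization from (1). Both $\Psi$ and $\Psi'$ send each $D_p$-class $[\alpha]$ of $S$ to $[\ker(\pi_{[\alpha]})]$, so $f_0=\mathrm{id}_{\Gamma(S)}$. By Lemma \ref{lem:factoring}, $\Psi'|_{S_-}\circ\Psi^{-1}$ is then just a tuple of coordinatewise automorphisms $\sigma_v,\sigma_r$ on $\Gamma(S)$. Extend these by identities on the remaining vertices and by compatible automorphisms on the remaining edges, using Corollary \ref{cor: unique aut} or Lemma \ref{lem: lifting} with $f=\mathrm{id}$. Correcting $\Psi'$ by the resulting automorphism of $S(G_{\Gamma_*})$ gives $\Psi_*$. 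This is exactly how the paper proves (3).
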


\begin{proof}
(1)  First, suppose $\Gamma_{0} = (V_{0},R_{0})$ is a finite labeled graph with $V_{0} = \{v_{0}, \ldots, v_{n-1}\}$, then $S(\widetilde{G_{\Gamma} \times C_{2}^{I}})$ satisfies the first-order sentence $\varphi_{\Gamma_{0}}$ which asserts that if $[\alpha_{0}], \ldots, [\alpha_{n-1}] \cong D_{p}$ and, for $i < j$, there is some $\beta \leq \alpha_{i} \wedge \alpha_{j}$ with $[\beta] \cong W$ if and only if $(v_{i},v_{j}) \in R_{0}$, then the subsystem generated by 
$$
\langle [\alpha_{i}] : i < n \rangle \cup \langle [\beta_{i,j}] : [\beta_{i,j}] \cong W, (v_{i},v_{j}) \in R_{0} \rangle
$$
is isomorphic to $S(G_{\Gamma_{0}})$ via an isomorphism that maps $[\alpha_{i}] \mapsto [\ker(\pi_{v_{i}})]$ and, for $r = (v_{i},v_{j}) \in R_{0}$, maps $[\beta_{i,j}] \mapsto [\ker(\pi_{r})]$. Such a sentence exists since $S(G_{\Gamma_{0}})$ is a finite structure. Since $S_{*} \models T_{\Gamma,\mu}$, $\varphi_{\Gamma_{0}}$ is true in $S_{*}$ for each finite graph $\Gamma_{0}$. 

Now we construct the isomorphism $\Psi$, as in the statement. For each $v = [\alpha] \in V_{*}$, choose an isomorphism $\sigma_{v} : [\alpha] \to [\ker(\pi_{v})]$, where $\pi_{v}$ refers to the projection $\pi_{v}: G_{\Gamma_{*}} \to D_{p}$ to the $v$ coordinate.  For each $r = (v,v') = ([\alpha],[\alpha']) \in R_{*}$ and $\beta \leq \alpha \wedge \alpha'$ with $[\beta] \cong W$, choose some isomorphism $\sigma_{r} : [\beta] \to [\ker(\pi_{r})]$ (where $\pi_{r}: G_{\Gamma_{*}} \to W$ is the projection to the $r$ coordinate) such that the following diagram commutes:
$$
\xymatrix@C=6em{
[\beta] \ar@{->}[r]^{\sigma_{r}} \ar@{->}[d]^{C} & [\ker(\pi_{r})] \ar@{->}[d]^{C} \\
[\alpha]\times[\alpha'] \ar@{->}[r]^{(\sigma_{v},\sigma_{v'})} & [\ker(\pi_{v})]\times [\ker(\pi_{v'})].
}
$$
We will show that these choices determine the desired isomorphism $\Psi: (S_{*})_{-} \to S(G_{\Gamma_{*}})$.

Let $X$ be a finite subset of $ \{ \alpha \in S_{*} : [\alpha] \cong D_{p}\} \cup \{\alpha \in S_{*} : [\alpha] \cong W\}$ such that if $\alpha \not\sim \alpha' \in X$ and there is some $\beta \in S_{*}$ with $[\beta]\cong W$ and $\beta \leq \alpha \wedge \alpha'$, then $\beta \in X$. Let $\langle X \rangle$ be the subsystem of $S_{*}$ generated by $X$ and let $\gamma \in \langle X \rangle$ be a $\leq$-minimal element (which exist because $X$ is finite). Let $\{v_{0}, \ldots, v_{n-1}\} = \{[\alpha_{0}], \ldots, [\alpha_{n-1}]\}$ list the vertices of $\Gamma_{X}$. Then because $S_{*} \models \varphi_{\Gamma_{X}}$, defined above, there is an isomorphism $\chi_{X} : \langle X \rangle \to S(G_{\Gamma_{X}})$ with $\chi_{X}([\alpha_{i}]) = [\ker(\pi_{v_{i}})]$. For each $i < n$, define $\tau_{v_{i}} : [\ker(\pi_{v_{i}})] \to [\ker(\pi_{v_{i}})]$ to be the automorphism $\sigma_{v_{i}} \circ (\chi_{X}|_{[\alpha]})^{-1}$.  Likewise for each edge $r = (v_{i},v_{j})$ of $\Gamma_{X}$, define $\tau_{r} : [\ker(\pi_{r})] \to [\ker(\pi_{r})]$ to be $\sigma_{r} \circ (\chi_{X}|_{[\beta]})^{-1}$ for the unique class $[\beta]$ with $[\beta] \cong W$ and $\beta \leq \alpha_{i} \wedge \alpha_{j}$. This choice of $\tau_{r}$ is compatible with $\tau_{v_{i}}$ and $\tau_{v_{j}}$, as in Corollary \ref{cor: unique aut}. This corollary gives us, then, a unique $\tau_{X} \in \mathrm{Aut}(S(G_{\Gamma_{X}}))$ such that $\tau_{X}$ restricts to give $\tau_{v_{i}}$ or $\tau_{r}$ on the $D_{p}$ and $W$ classes corresponding to $v_{i}$ and $r$, respectively. Define $\Psi_{X} = \tau_{X} \circ \chi_{X}$. Then we have $\Psi_{X}|_{[\alpha_{i}]} = \sigma_{v_{i}}$ and if $r = (v_{i},v_{j})$ and $\beta \leq \alpha_{i} \wedge \alpha_{j}$, then $\Psi_{X}|_{[\beta]} = \sigma_{r}$. 

If we choose a larger such $X'$ with $X \subseteq X'$, we get a commutative diagram:
    $$
\xymatrix{ 
\langle X \rangle \ar@{->}[r]^{\Psi_{X}} \ar@{->}[d] & S(G_{\Gamma_{X}}) \ar@{->}[d]^{S(\pi_{\Gamma_{X}})} \\
\langle X' \rangle \ar@{->}[r]^{\Psi_{X'}} & S(G_{\Gamma_{X'}}),
}
$$
since, by construction, $\Psi_{X}$ and $\Psi_{X'}$ agree on the generators of $\langle X \rangle$ (here the map $\langle X \rangle \to \langle X' \rangle$ is the inclusion). It follows that we obtain an isomorphism on the union $\Psi : (S_{*})_{-} \to S(G_{\Gamma_{*}})$, as desired. 

(2) follows from the proof of (1).  Restricting just to the finite $X$ contained in 
$$
\{ \alpha \in S : [\alpha] \cong D_{p}\} \cup \{\alpha \in S : [\alpha] \cong W\},
$$
the resulting map on the union gives an isomorphism $\Psi_{S_{-}}: (S)_{-} \to S(G_{\Gamma(S)})$. Since, by Lemma \ref{lem: graph closure is closure}(2), $\Gamma(S)$ is an induced subgraph of $\Gamma_{*}$, the image of $\Psi_{S_{-}}$ is just the subsystem of $S(G_{\Gamma_{*}})$ dual to the quotient via $\pi_{\Gamma(S)}$, yielding the commutative diagram in (2). 

(3) By (1) and (2), there is an isomorphism $\Xi: (S_{*})_{-} \to S(G_{\Gamma_{*}})$ which restricts to $S_{-}$ to yield an isomorphism $\Xi|_{S_{-}} : S_{-} \to S(G_{\Gamma(S)})$. Write $\Gamma(S) = (V_{S},R_{S})$.  By construction, for each vertex $v \in V(S)$, $\Psi \circ \Xi^{-1}$ restricts to give an automorphism $\sigma_{v}$ of $[\ker(\pi_{v})]$ and, likewise, to give an automorphism $\sigma_{r}$ of $[\ker(\pi_{r})]$ for each edge $r \in R_{S}$. Choose an automorphism $\sigma_{v} : [\ker(\pi_{v})] \to [\ker(\pi_{v})]$ arbitrarily for each $v \in V_{*} \setminus V_{S}$. Then, for each $r \in R_{*} \setminus R_{S}$, choose an automorphism $\sigma_{r} : [\ker(\pi_{r})] \to [\ker(\pi_{r})]$ satisfying the compatibility conditions of Corollary \ref{cor: unique aut}. That corollary then gives us $\Sigma \in \mathrm{Aut}(S(G_{\Gamma_{*}}))$ which restricts to $\sigma_{v}$ and $\sigma_{r}$ on each quotient $[\ker(\pi_{v})]$ and $[\ker(\pi_{r})]$ for all $v \in V_{S}$ and $r \in R_{S}$. Unwinding definitions, we get $(\Sigma \circ \Xi)|_{S_{-}} = \Psi$, so, setting $\Psi_{*} = \Sigma \circ \Xi$, we obtain the desired isomorphism. 
\end{proof}

\subsection{Second identification}

\begin{lem} \label{lem: transfer}
    Suppose $\Gamma$ is an infinite graph, $\mu$ is a cardinal, and $S_{0} = \mathrm{gcl}(S_{0}) \subseteq S_{1} = \mathrm{gcl}(S_{1}) \subseteq S_{*} \models T_{\Gamma,\mu}$.  
    \begin{enumerate}
        \item A sequence $(\alpha_{i})_{i < \lambda}$ of elements of $(S_{0})_{+}$ with $[\alpha_{i}] \cong C_{2}$ is a maximal such sequence satisfying $\alpha_{i} \not\in \langle (S_{0})_{-}, \alpha_{<i} \rangle$ for all $i < \lambda$ if and only if it is a maximal such sequence satisfying $\alpha_{i} \not\in \langle (S_{1})_{-}, \alpha_{<i} \rangle$. 
        \item We have the equality of subsystems $(S_{1})_{-} \vee (S_{0})_{+} = (S_{0})_{-}$.
    \end{enumerate}
\end{lem}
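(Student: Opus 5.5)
The plan is to prove (2) first and then deduce (1) from it using the modular law.

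For (2), I first reformulate the join of two subsystems. If $\beta \in (S_{1})_{-}$ and $\gamma \in (S_{0})_{+}$, then $\beta \vee \gamma \geq \beta$ and $\beta \vee \gamma \geq \gamma$. Both subsystems are upward closed, so $\beta \vee \gamma$ lies in both. Conversely, any $\delta$ in both equals $\delta \vee \delta$. Hence $(S_{1})_{-} \vee (S_{0})_{+} = (S_{1})_{-} \cap (S_{0})_{+}$ as sets, and (2) says this intersection equals $(S_{0})_{-}$.

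The inclusion $\supseteq$ is clear: $(S_{0})_{-} \subseteq (S_{0})_{+}$, and $(S_{0})_{-} \subseteq (S_{1})_{-}$ because $S_{0} \subseteq S_{1}$.

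For $\subseteq$, fix a nontrivial $\delta \in (S_{1})_{-} \cap (S_{0})_{+}$.
\begin{enumerate}
\item By Lemma \ref{lem: firststep}, identify $(S_{*})_{-}$ with $S(G_{\Gamma_{*}})$, so that $\delta$ becomes an element of $S(G_{\Gamma_{*}})$.
\item By Lemma \ref{lem: C2 generation}, applied in $S(G_{\Gamma_{*}})$, $\delta$ lies in the graph closure of the set $Q_{\delta}$ of elements $\alpha \geq \delta$ with $[\alpha] \cong C_{2}$.
\item Every $\alpha \in Q_{\delta}$ lies in $(S_{0})_{+} \subseteq S_{0}$ and in $(S_{*})_{-}$, both by upward closure. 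Since $\alpha$ is a $C_{2}$-quotient of $G_{\Gamma_{*}}$, Lemma \ref{lem: parity} shows that $\alpha$ has finite vertex width.
\item Because $S_{0}$ is graph closed, the $D_{p}$-witnesses of each such $\alpha$ lie in $S_{0}$, hence in $(S_{0})_{-}$. So does every $W$-element sitting below two distinct witness $D_{p}$-classes.
\item Each $\alpha$ is above the meet of its witnesses, so $\alpha \in (S_{0})_{-}$.
\item The proof of Lemma \ref{lem: C2 generation} actually shows more: $\delta \in S(G_{\Gamma_{0}})$, where $\Gamma_{0}$ is a finite induced subgraph whose vertices are all such witnesses. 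This subsystem is generated by the $D_{p}$- and $W$-classes of $\Gamma_{0}$, all of which lie in $(S_{0})_{-}$. Hence $\delta \in (S_{0})_{-}$.
\end{enumerate}
The step I expect to need the most care is this last one: checking that the graph closure computed inside $S(G_{\Gamma_{*}})$ really is generated by elements already in $S_{0}$. This rests on Lemma \ref{lem: graph closure is closure}(2), that $\Gamma(S_{0})$ is an induced subgraph of $\Gamma_{*}$, together with the isomorphism of Lemma \ref{lem: firststep}(2).

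For (1), it suffices to prove the following claim. If $\alpha$ and $\alpha_{j}$ ($j < i$) are $C_{2}$-elements of $(S_{0})_{+}$, then $\alpha \in \langle (S_{1})_{-}, \alpha_{<i}\rangle$ if and only if $\alpha \in \langle (S_{0})_{-}, \alpha_{<i}\rangle$. With the claim in hand, the two independence conditions agree term by term. Maximality also transfers, since maximality means every $C_{2}$-element of $(S_{0})_{+}$ lies in the corresponding generated subsystem.

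The direction from $(S_{0})_{-}$ to $(S_{1})_{-}$ is trivial. For the other direction, suppose $\alpha \geq \xi \wedge \gamma$ with $\xi \in (S_{1})_{-}$ and $\gamma = \bigwedge_{j<i}\alpha_{j} \in (S_{0})_{+}$. Put $\xi' = \xi \vee (\alpha \wedge \gamma)$.
\begin{enumerate}
\item $\xi' \in (S_{1})_{-}$, since $\xi' \geq \xi$ and $(S_{1})_{-}$ is upward closed.
\item $\xi' \in (S_{0})_{+}$, since $\xi' \geq \alpha \wedge \gamma$ and $\alpha \wedge \gamma \in (S_{0})_{+}$.
\item By (2), therefore, $\xi' \in (S_{0})_{-}$.
\end{enumerate}
Now apply the modular law with $a = \alpha \wedge \gamma \leq b = \gamma$ and $e = \xi$:
$$
\gamma \wedge \xi' = \gamma \wedge ((\alpha\wedge\gamma) \vee \xi) = (\alpha \wedge \gamma) \vee (\gamma \wedge \xi).
$$
Both joinands are $\leq \alpha$: the first obviously, the second by hypothesis. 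Hence $\alpha \geq \xi' \wedge \gamma$, so $\alpha \in \langle (S_{0})_{-}, \alpha_{<i}\rangle$. This proves the claim and completes (1).
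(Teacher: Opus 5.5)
Your proposal is correct. Part (2) follows the paper's argument: $C_{2}$-elements of $(S_{1})_{-} \cap (S_{0})_{+}$ have finite vertex width, so graph-closedness of $S_{0}$ puts them in $(S_{0})_{-}$, and Lemma \ref{lem: C2 generation} handles the rest. Unpacking that lemma's proof, as you do in your last step, is a tidy way to avoid worrying about where the graph closure is computed.

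Part (1) is where you genuinely diverge.

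\emph{The paper's route.} The paper proves (1) without using (2). It writes $\alpha_{i} \geq \bigwedge_{j<k}\beta_{j} \wedge \bigwedge_{j<\ell}\gamma_{j}$, where the $\beta_{j}$ are vertex-width-$1$ $C_{2}$-elements of $(S_{1})_{-}$ and $k,\ell$ are minimal. It then computes in $\mathbb{F}_{2}^{k+\ell}$ a $C_{2}$-element $\delta \in S_{0}$, the kernel of $\sum_{j<k}x_{j}$, lying above $\bigwedge_{j<k}\beta_{j}$. Since $\delta$ has finite vertex width, graph closure pulls the $\beta_{j}$ into $(S_{0})_{-}$.

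\emph{The common core.} That $\delta$ is, up to $\sim$, exactly your $\xi' = \xi \vee (\alpha \wedge \gamma)$ with $\xi = \bigwedge_{j<k}\beta_{j}$. So both proofs turn on the same element. You observe that $\xi' \in (S_{1})_{-} \cap (S_{0})_{+}$ is already covered by (2), and the modular law replaces the coordinate computation and the minimality bookkeeping.

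\emph{What each buys.} Your version is more general and more economical. It shows $\langle (S_{1})_{-}, T\rangle \cap (S_{0})_{+} = \langle (S_{0})_{-}, T\rangle$ for every $T \subseteq (S_{0})_{+}$, not just for $C_{2}$-elements. Thus (1) becomes a purely lattice-theoretic consequence of (2), and graph closure is invoked only once. The paper's version keeps (1) self-contained and makes the vertex-width mechanism explicit.

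Two cosmetic points. First, when $i$ is infinite (in particular $i = \lambda$ in the maximality transfer), $\gamma$ should be a finite sub-meet of the $\alpha_{j}$; this is what membership in a generated subsystem gives you anyway. Second, when you rewrite the join as an intersection, $\delta$ equals $\delta \vee \delta$ only up to $\sim$, but upward closure absorbs this.
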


\begin{proof}
  (1)  Let $\Gamma_{i}$ be the graph interpreted in $S_{i}$.  By Lemma \ref{lem: firststep}, we may identify $(S_{i})_{-}$ with $S(G_{\Gamma_{i}})$ for $i = 0,1$. 

    First, assume $(\alpha_{i})_{i < \lambda}$ is maximal such that $\alpha_{i} \not\in \langle (S_{0})_{-}, \alpha_{<i} \rangle$ for all $i < \lambda$. If $\alpha_{i} \not\in \langle (S_{1})_{-}, \alpha_{<i} \rangle$ for all $i$, then $(\alpha_{i})_{i < \lambda}$ is a maximal such sequence, since this is an even stronger constraint.

    Fix $i < \lambda$ and suppose towards contradiction that $\alpha_{i} \in \langle (S_{1})_{-}, \alpha_{<i} \rangle$.  Since $[\alpha_{i}] \cong C_{2}$, we can find some $\beta_{0}, \ldots, \beta_{k-1} \in (S_{1})_{-}$ with $[\beta_{j}] \cong C_{2}$ and with each $\beta_{j}$ of vertex width $1$ for all $j$, as well as $\gamma_{0}, \ldots, \gamma_{\ell-1}$, a subsequence of $\alpha_{<i}$ with
    $$
    \alpha_{i} \geq \bigwedge_{j <k} \beta_{j} \wedge \bigwedge_{j < \ell} \gamma_{j},
    $$
    and we may suppose $k,\ell$ are minimal with this property (so, by assumption, $k \geq 1$). We may also suppose that $\alpha_{i}$, as well as the $\beta_{j}$ and $\gamma_{j}$ are the identity coset of the normal subgroup in their respective equivalence classes.

    \textbf{Claim:}  There is some $\delta \in S_{1}$ with $[\delta] \cong C_{2}$ and
    $$
    \delta \geq \left( \bigwedge_{j < k} \beta_{j} \right) \vee \left( \alpha_{i} \wedge \bigwedge_{j < \ell} \gamma_{j} \right).
    $$

    \emph{Proof of Claim}:  We have
    $$
    \left[\bigwedge_{j < k} \beta_{j} \wedge \bigwedge_{j < \ell} \gamma_{j} \right] \cong C_{2}^{k+\ell}.
    $$
    If $\epsilon \geq \bigwedge_{j < k} \beta_{j} \wedge \bigwedge_{j < \ell} \gamma_{j}$, then $\epsilon$ corresponds to a normal subgroup of $C_{2}^{k+\ell}$ so we will prove the claim by studying the normal subgroups of this group. For the purposes of this claim only, we will switch to additive notation and identify this group with $\mathbb{F}_{2}^{k+\ell}$.  Under this identification, we can identify each $\beta_{j}$ with $\ker(\pi_{j})$ where $\pi_{j}$ is the projection onto the $j$th coordinate and, similarly, each $\gamma_{j}$ can be identified with $\ker(\pi_{k+j})$.

    It follows, then, that $\alpha_{i}$ can be identified with the kernel of some linear form $f : \mathbb{F}^{k+\ell}_{2} \to \mathbb{F}_{2}$.  Our assumption that $k$ and $\ell$ are minimal entails that $f(e_{j}) = 1$ where $e_{j}$ denotes the $j$th standard basis vector, for $j < k+\ell$. It follows that $\alpha_{i}$ corresponds to the normal subgroup $\{(x_{0}, \ldots, x_{k+\ell - 1}) \in \mathbb{F}^{k+\ell}_{2} : \sum_{j} x_{j} = 0\}$. Then we have that $\alpha_{i} \wedge \bigwedge_{j < \ell} \gamma_{j}$ corresponds to the normal subgroup
    $$
    \{(x_{0}, \ldots, x_{k+\ell - 1}) \in \mathbb{F}^{k+\ell}_{2} : \sum_{j} x_{j} = 0 \text{ and } x_{k} = \ldots = x_{k+\ell - 1} = 0\}.
    $$
    Let $\delta$ be the element of $S$ corresponding to the normal subgroup
    $$
    \{(x_{0}, \ldots, x_{k+\ell-1}) \in \mathbb{F}^{k+\ell}_{2} : \sum_{j < k} x_{j} = 0\}.
    $$
    So we have $\delta \geq \alpha_{i} \wedge \bigwedge_{j < \ell} \gamma_{j}$. The kernel corresponding to $\delta$ contains the kernel corresponding to $\bigwedge_{j < k} \beta_{j}$, namely
    $$
    \{(x_{0}, \ldots, x_{k+\ell - 1}) \in \mathbb{F}^{k+\ell}_{2} : x_{0} = \ldots = x_{k-1} = 0\}.
    $$
    This shows $\delta \geq \bigwedge_{j < k} \beta_{j}$, which proves the claim. \qed

    Consider the $\delta$ of the claim. Then because $\delta \geq \alpha_{i} \wedge \bigwedge_{j < \ell} \gamma_{j}$, we have $\delta \in S_{0}$. Then since $\delta \geq \bigwedge_{j < k} \beta_{j}$ and each $\beta_{j}$ has vertex width $1$, we know that $\delta$ has finite vertex width. Since $S_{0} = \mathrm{gcl}(S_{0})$, we deduce that $\beta_{j} \in (S_{0})_{-}$ for all $j < k$. This entails $\alpha_{i} \in \langle (S_{0})_{-},\alpha_{<i}\rangle$, a contradiction.

    The other direction is similar.  Assume now that $(\alpha_{i})_{i < \lambda}$ is a maximal sequence of elements of $(S_{0})_{+}$ with $[\alpha_{i}] \cong C_{2}$ such that $\alpha_{i} \not\in \langle (S_{1})_{-}, \alpha_{<i} \rangle$ for all $i < \lambda$.  If $(\alpha_{i})_{i < \lambda}$ is not maximal such that $\alpha_{i} \not\in \langle (S_{0})_{-},\alpha_{<i} \rangle$ for all $i < \lambda$, then there must be some $\alpha_{\lambda} \in (S_{0})_{+}$ with $[\alpha_{\lambda}] \cong C_{2}$ and $\alpha_{\lambda} \not\in \langle (S_{0})_{-}, \alpha_{<\lambda} \rangle$.  By , it must be the case that $\alpha_{\lambda} \in \langle (S_{1})_{-}, \alpha_{<\lambda} \rangle$.  So there are some $\beta_{0}, \ldots, \beta_{k-1} \in (S_{1})_{-}$ with $[\beta_{j}] \cong C_{2}$ and with each $\beta_{j}$ of vertex width $1$ for all $j$, as well as $\gamma_{0}, \ldots, \gamma_{\ell-1}$, a subsequence of $\alpha_{<\lambda}$ with
    $$
    \alpha_{\lambda} \geq \bigwedge_{j <k} \beta_{j} \wedge \bigwedge_{j < \ell} \gamma_{j},
    $$
    and we may suppose $k,\ell$ are minimal with this property. Then the claim above produces a $\delta \in S_{0}$ with $[\delta] \cong C_{2}$ and satisfying both $\delta \geq \alpha_{\lambda} \wedge \bigwedge_{j < \ell} \gamma_{j}$ and $\delta \geq \bigwedge_{j < k} \beta_{j}$. So $S_{0} =\mathrm{gcl}(S_{0})$ entails $\{\beta_{j} : j < k\} \subseteq S_{0}$ so $\alpha_{\lambda} \in S_{0}$, a contradiction.

    (2)  By Lemma \ref{lem: firststep}, we may identify $(S_{1})_{-}$ with $S(G_{\Gamma_{1}})$ for some induced subgraph $\Gamma_{1} \subseteq \Gamma(S_{*}) \models \mathrm{Th}(\Gamma)$. The containment $(S_{0})_{-} \subseteq ((S_{0})_{+} \vee (S_{1})_{-})$ is clear, so we will prove the reverse containment. First, consider an $\alpha \in ( (S_{0})_{+} \vee (S_{1})_{-})$ with $[\alpha] \cong C_{2}$. The fact that $\alpha \in (S_{1})_{-}$ entails there are finitely many $\beta_{0}, \ldots, \beta_{k-1} \in (S_{1})_{-}$ with $[\beta_{i}] \cong D_{p}$ for $i < k$ and $\alpha \geq \bigwedge_{i < k} \beta_{i}$. So $\alpha$ has finite vertex width and thus, from $S_{0} = \mathrm{gcl}(S_{0})$, we have each $\beta_{i} \in S_{0}$ (and thus in $(S_{0})_{-}$) for all $i < k$. This shows $\alpha \in (S_{0})_{-}$, since $(S_{0})_{-}$ is upwards closed. 

    Next, by Lemma \ref{lem: C2 generation}, if $\delta \in ((S_{0})_{+} \vee (S_{1})_{-})$ is arbitrary, we have $\delta \in \mathrm{gcl}(\{\alpha : \alpha \geq \delta, [\alpha] \cong C_{2}\}) \subseteq \mathrm{gcl}((S_{0})_{-}) = (S_{0})_{-}$, since $S_{0} = \mathrm{gcl}(S_{0})$. 
\end{proof}

\begin{lem} \label{lem: secondstep}
Suppose $\Gamma$ is an infinite graph and $\mu$ is a cardinal.
\begin{enumerate}
    \item   Suppose $S = \mathrm{gcl}(S) \subseteq S_{*} \models T_{\Gamma, \mu}$. Then 
$$
S_{+} \cong S(G_{\Gamma(S)} \times C_{2}^{I}).
$$
    \item Suppose $S_{0} = \mathrm{gcl}(S_{0}) \subseteq S_{1} = \mathrm{gcl}(S_{1}) \subseteq S_{*} \models T_{\Gamma,\mu}$. Suppose further we are given an isomorphism $\Psi_{0} : (S_{0})_{+} \to S(G_{\Gamma_{0}} \times C_{2}^{I_{0}})$, where $\Gamma_{0}$ is the graph interpreted in $S_{0}$ and $I_{0}$ is some set.  Then there exist a set $I_{1} \supseteq I_{0}$ and an isomorphism $\Psi_{1} : (S_{1})_{+}\to S(G_{\Gamma_{1}} \times C_{2}^{I_{1}})$, where $\Gamma_{1} \succeq \Gamma_{0}$ is the graph interpreted in $S_{1}$, such that the diagram 
    $$
    \xymatrix{ 
(S_{0})_{+} \ar@{->}[r]^{\Psi_{0}} \ar@{->}[d] & S(G_{\Gamma_{0}} \times C_{2}^{I_{0}}) \ar@{->}[d]^{S(\pi)} \\
(S_{1})_{+} \ar@{->}[r]^{\Psi_{1}} & S(G_{\Gamma_{1}} \times C_{2}^{I_{1}}).
}
    $$
    commutes, where the map $(S_{0})_{+} \to (S_{1})_{+}$ is the inclusion and the map $\pi$ is given by the projection $\pi_{\Gamma_{0}}: G_{\Gamma_{1}} \to G_{\Gamma_{0}}$ and the projection $\pi_{I_{0}} : C_{2}^{I_{1}} \to C_{2}^{I_{0}}$.  
\end{enumerate}
\end{lem}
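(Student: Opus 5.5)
The idea is to split $S_{+}$ as a fiber product of $S_{-}$ with an elementary abelian $2$-part. By Lemma \ref{lem: firststep}(2) we already have $S_{-} \cong S(G_{\Gamma(S)})$. The extra $C_{2}$-elements of $S_{+}$ should supply the $C_{2}^{I}$ factor, with $I$ indexing a maximal independent family of $C_{2}$-elements over $S_{-}$.

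For (1), choose by Zorn a maximal sequence $(\alpha_{i})_{i < \lambda}$ of $C_{2}$-elements of $S_{+}$ with $\alpha_{i} \notin \langle S_{-}, \alpha_{<i} \rangle$. Set $I = \lambda$.

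\begin{itemize}
\item \textbf{The structure of $G(\langle \alpha_{i} : i < \lambda\rangle)$.} Working inside $S_{*}$, and using Lemma \ref{lem: dictionary}(2) transported by elementarity, the subsystem $\langle \alpha_{i} : i<\lambda\rangle$ corresponds to an elementary abelian $2$-group. Its Pontryagin dual is the $\mathbb{F}_{2}$-span of the $d([\alpha_{i}])$, so the group is $C_{2}^{I}$, with $\alpha_{i}$ sent to the kernel of the $i$-th coordinate projection.

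\item \textbf{$S_{+}$ as a join.} Every $C_{2}$-element of $S_{+}$ lies in $\langle S_{-}, \alpha_{<\lambda}\rangle$ by maximality. I then argue that $S_{+} = S_{-} \vee$-generated together with the $\alpha_{i}$, i.e. $S_{+} = \langle S_{-} \cup \{\alpha_{i}\}\rangle$. This holds because $S_{+}$ is generated by $S_{-}$ and its $C_{2}$-elements.

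\item \textbf{The key disjointness claim.} I need
$$\langle S_{-}\rangle \vee \langle \alpha_{i} : i<\lambda \rangle = \{1\},$$
the trivial subsystem. Suppose $\epsilon$ is nontrivial in this join. Going up, we may take $[\epsilon]$ to be a $C_{2}$ quotient, since quotients of elementary abelian $2$-groups have $C_{2}$ quotients. Then $\epsilon$ lies in $S_{-}$ and also above some finite meet of the $\alpha_{j}$.
  \begin{itemize}
  \item Being in $S_{-}$, $\epsilon$ has finite vertex width.
  \item Applying the exchange Lemma \ref{lem: exchange} to the minimal set of $\alpha_{j}$ below $\epsilon$ gives some $\alpha_{j} \in \langle S_{-}, \alpha_{<j}\rangle$. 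This contradicts independence.
  \end{itemize}
\end{itemize}

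With disjointness in hand, the fiber-product formula $G(S_{0}\wedge S_{1}) \cong G(S_{0})\times_{G(S_{0}\vee S_{1})} G(S_{1})$ gives
$$G(S_{+}) \cong G_{\Gamma(S)} \times C_{2}^{I},$$
so $S_{+} \cong S(G_{\Gamma(S)}\times C_{2}^{I})$.

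I expect disjointness to be the main obstacle. The danger is a $C_{2}$-quotient common to $S_{-}$ and the span of the $\alpha_{i}$ that is not detected by the ordering. I would handle it by reducing to the vector-space picture of Lemma \ref{lem: dictionary} in $S(\widetilde{G_{\Gamma}\times C_{2}^{\mu}})$, which is elementary for finitely many elements.

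For (2), I would combine Lemma \ref{lem: firststep}(3) with Lemma \ref{lem: transfer}.

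\begin{enumerate}
\item Let $\Psi_{0}$ be given. Let $(\alpha_{i})_{i\in I_{0}}$ be the $C_{2}$-elements of $(S_{0})_{+}$ pulled back by $\Psi_{0}$ from the kernels of the $C_{2}^{I_{0}}$ coordinate projections. This is a maximal sequence independent over $(S_{0})_{-}$.
\item By Lemma \ref{lem: transfer}(1), the same sequence is also independent over $(S_{1})_{-}$.
\item Extend it to a maximal sequence $(\alpha_{i})_{i\in I_{1}}$ in $(S_{1})_{+}$ with $I_{1}\supseteq I_{0}$.
\item Extend $\Psi_{0}|_{(S_{0})_{-}}$ to $(S_{1})_{-}\cong S(G_{\Gamma_{1}})$ using Lemma \ref{lem: firststep}(3).
\item Send each $\alpha_{i}$ to the $i$-th coordinate kernel.
\end{enumerate}

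By the disjointness argument of (1), these data glue along the fiber product to an isomorphism $\Psi_{1}$. To check that $\Psi_{1}$ extends $\Psi_{0}$, I would use Lemma \ref{lem: transfer}(2), $(S_{1})_{-}\vee(S_{0})_{+}=(S_{0})_{-}$. It shows that $(S_{0})_{+}$ sits inside $(S_{1})_{+}$ as the fiber product of the pieces on which the two maps already agree. Commutativity of the diagram then follows generator by generator.
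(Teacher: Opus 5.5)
Your proposal is correct and follows the paper's proof. Part (2) is the same argument: pull back the coordinate kernels, transfer maximality with Lemma~\ref{lem: transfer}(1), extend maximally, extend the identification of $(S_{1})_{-}$ via Lemma~\ref{lem: firststep}(3), and glue. The only difference is in (1), where the paper gets disjointness for free by adjoining the $\alpha_{i}$ one at a time: since $\alpha_{i}\notin S_{i}=\langle S_{-},\alpha_{<i}\rangle$ and $C_{2}$ has no nontrivial proper quotient, $[\alpha_{i}]\vee S_{i}=1$, so $G(S_{i+1})\cong G(S_{i})\times C_{2}$, with inverse limits at limit stages. Your global exchange argument is therefore not needed, though it does work provided you apply it at the largest index of a minimal support.
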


\begin{proof}
    (1) By Lemma \ref{lem: firststep}(1), we know $(S)_{-} \cong S(G_{\Gamma(S)})$. By Zorn's Lemma, we can pick a maximal sequence $(\alpha_{i})_{i < \lambda}$ (for an ordinal $\lambda$) in $S$ such that $[\alpha_{i}] \cong C_{2}$ and $\alpha_{i} \not\in \langle S_{-}, \alpha_{<i} \rangle$ for all $i < \lambda$ (so, necessarily, each $\alpha_{i} \in S_{+}$).  For each $i < \lambda$, let $S_{i} = \langle S_{-}, \alpha_{<i}\rangle$.  We will prove by induction on $i < \lambda$ that there is an isomorphism
$$
\varphi_{i}: G_{\Gamma(S)} \times C_{2}^{i} \to G(S_{i}).
$$
We will also assume that for $j < i$, these isomorphisms cohere in the sense that the diagram commutes
$$
\xymatrix{ 
G_{\Gamma(S)} \times C_{2}^{i} \ar@{->}[r]^{\varphi_{i}} \ar@{->}[d]^{\pi'_{i,j}} & G(S_{i})   \ar@{->}[d]^{\pi_{i,j}} \\
G_{\Gamma(S)} \times C_{2}^{j} \ar@{->}[r]^{\varphi_{j}} & G(S_{j}).
}
$$
where $\pi_{i,j}: G(S_{i}) \to G(S_{j})$ is the epimorphism dual to the inclusion $S_{j} \subseteq S_{i}$ and $\pi'_{i,j} : G_{\Gamma(S)} \times C_{2}^{i} \to G_{\Gamma(S)} \times C_{2}^{j}$ is the projection obtained by deleting all of the $i \setminus j$ coordinates.

For $i = 0$, the existence of an isomorphism $\varphi_{0}: G_{\Gamma(S)} \to G(S_{-})$ follows from Lemma \ref{lem: firststep}(1) and duality, as noted above. Note, for a given $i < \lambda$, since $\alpha_{i} \not\in S_{i}$ and $[\alpha_{i}] \cong C_{2}$ has only $1$ as a proper quotient, we have
$$
[\alpha_{i}] \vee S_{i} = 1
$$
(meaning the coarsest common quotient is trivial) and hence
$$
G(S_{i+1}) = G(S_{i} \wedge [\alpha_{i}]) = G(S_{i}) \times [\alpha_{i}] \cong G_{\Gamma(S)} \times C_{2}^{i+1},
$$
where the isomorphism between the latter two terms follows by induction. Finally, for limit $i$, we know that $G(S_{i})$ is the inverse limit of $S_{j}$ and $G_{\Gamma(S)} \times C_{2}^{i}$ is the inverse limit of $G_{\Gamma(S)} \times C_{2}^{j}$ for $j < i$ and these are isomorphic by induction and the coherence of the isomorphisms $\varphi_{j}$ for $j < i$. Maximality of the sequence $(\alpha_{i})_{i < \lambda}$ entails that $S_{<\lambda} = (S)_{+}$ and, taking $I = \lambda$, we have $G(S_{+}) \cong G_{\Gamma(S)} \times C_{2}^{I}$, as desired. 

(2) Let $\Gamma_{0}$ and $\Gamma_{1}$ be the graphs obtained by the graph interpretation in $S_{0}$ and $S_{1}$ respectively, so $\Gamma_{0}$ is an induced subgraph of $\Gamma_{1}$ which is an induced subgraph of $\Gamma(S_{*}) \models \mathrm{Th}(\Gamma)$. Replacing $I_{0}$ with $\lambda_{0} = |I_{0}|$, we may assume $I_{0} = \lambda_{0}$.  We assume we are given an isomorphism $\Psi_{0} : (S_{0})_{+} \to S(G_{\Gamma_{0}} \times C_{2}^{\lambda_{0}})$ and we will produce the isomorphism $\Psi_{1}$ as in the statement. 

Define $\alpha_{i} \in (S_{0})_{+}$ by $\alpha_{i} = \Psi_{0}^{-1}(\ker(\pi_{i}))$ where $\pi_{i} : G_{\Gamma_{0}} \times C_{2}^{\lambda_{0}} \to C_{2}$ is the projection to the $i$th coordinate, for each $i < \lambda_{0}$. Then we have $[\alpha_{i}] \cong C_{2}$, $\alpha_{i} \not\in \langle (S_{0})_{-}, \alpha_{<i} \rangle$ for all $i < \lambda_{0}$, and $\langle (S_{0})_{-}, \alpha_{<\lambda_{0}} \rangle = (S_{0})_{+}$. 

By Lemma \ref{lem: transfer}, $(\alpha_{i})_{i < \lambda_{0}}$ is a maximal sequence of elements of $(S_{0})_{+}$ with $[\alpha_{i}] \cong C_{2}$ and $\alpha_{i} \not\in \langle (S_{1})_{-}, \alpha_{<i} \rangle$ for all $i < \lambda_{0}$.  Extend this sequence to $(\alpha_{i})_{i < \lambda_{1}}$, a maximal sequence elements of $(S_{1})_{+}$ such that $[\alpha_{i}] \cong C_{2}$ and $\alpha_{i} \not\in \langle (S_{1})_{-}, \alpha_{< i} \rangle$ for all $i < \lambda_{1}$.   Then, as in (1), we have $(S_{1})_{+} = \langle (S_{1})_{-}, \alpha_{<\lambda_{1}} \rangle$ and thus, by (1), $(S_{1})_{+} \cong S(G_{\Gamma_{1}} \times C_{2}^{\lambda_{1}})$. 

Note that, by Lemma \ref{lem: transfer}(2), we have $(S_{1})_{-} \vee (S_{0})_{+} = (S_{0})_{-}$.  It follows, then, that 
$$
G((S_{1})_{-} \wedge (S_{0})_{+}) \cong G((S_{1})_{-}) \times_{G((S_{0})_{-})} G((S_{0})_{+}).
$$
Note that the given isomorphism $\psi_{0} : G_{\Gamma_{0}} \times C_{2}^{I_{0}} \to G((S_{0})_{+})$ dual to $\Psi_{0}$ induces an isomorphism $\overline{\psi_{0}}: G_{\Gamma_{0}} \to G((S_{0})_{-})$, dual to the restriction $\Psi_{0}|_{(S_{0})_{-}}$.  By Lemma \ref{lem: firststep}(3) and duality, there is some isomorphism $\chi : G_{\Gamma_{1}} \to G((S_{1})_{-})$ such that the diagram commutes:
$$
    \xymatrix{ 
G_{\Gamma_{1}}  \ar@{->}[r]^{\chi} \ar@{->}[d]^{\pi_{\Gamma_{0}}} &  G((S_{1})_{-}) \ar@{->}[d] \\
G_{\Gamma_{0}}   \ar@{->}[r]^{\overline{\psi_{0}}} & G((S_{0})_{-}).
}
    $$
It follows, then, that $\chi$ and $\psi_{0}$ define a map $(\chi,\psi_{0})$ from $G_{\Gamma_{1}} \times_{G_{\Gamma_{0}}} (G_{\Gamma_{0}} \times C_{2}^{\lambda_{0}}) \cong G_{\Gamma_{1}} \times C_{2}^{\lambda_{0}}$ to $G((S_{1})_{-}) \times_{G((S_{0})_{-})} G((S_{0})_{+})$. Repeating the induction from (1), then, shows that we may find some $\psi_{1} : G_{\Gamma_{1}} \times C_{2}^{\lambda_{1}} \to  G((S_{1})_{+})$ such that the diagram 
$$
    \xymatrix{ 
G_{\Gamma_{1}} \times C_{2}^{\lambda_{1}} \ar@{->}[r]^{\psi_{1}} \ar@{->}[d]^{\pi_{\lambda_{0}}} & G((S_{1})_{+}) \ar@{->}[d] \\
G_{\Gamma_{1}} \times C_{2}^{\lambda_{0}} \ar@{->}[r]^{(\chi,\psi_{0})} &   G((S_{1})_{-} \wedge (S_{0})_{+}).
}
    $$
    commutes, where $\pi_{\lambda_{0}}$ denotes the projection obtained by removing the $C_{2}$-coordinates indexed by $\lambda_{1} \setminus \lambda_{0}$. This gives the desired isomorphism.
\end{proof}

\subsection{Third identification}

\begin{lem} \label{lem: plus intersection}
    Let $\Gamma$ be an infinite graph and $\mu$ a cardinal.  Suppose $S_{0} = \mathrm{gcl}(S_{0}) \subseteq S_{1} = \mathrm{gcl}(S_{1}) \subseteq S_{*} \models T_{\Gamma,\mu}$. Then $S_{0} \vee (S_{1})_{+} = (S_{0})_{+}$.
\end{lem}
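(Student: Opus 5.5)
The inclusion $(S_{0})_{+} \subseteq S_{0} \vee (S_{1})_{+}$ is immediate: $(S_{0})_{+}$ lies in $S_{0}$, and it lies in $(S_{1})_{+}$ because every generator of $(S_{0})_{+}$ is a $C_{2}$-, $D_{p}$- or $W$-element of $S_{1}$. So the content is the reverse inclusion. Since both sides are upward closed, it suffices to take $\delta \in S_{0} \vee (S_{1})_{+}$ and show $\delta \in (S_{0})_{+}$. We may assume $\delta$ is an identity coset with $\delta \geq \epsilon \vee \eta$ for some $\epsilon \in S_{0}$ and $\eta \in (S_{1})_{+}$, and it is enough to treat $\delta = \epsilon \vee \eta$ itself.

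The plan is to reduce to $C_{2}$-elements, as in the proof of Lemma \ref{lem: transfer}(2). By Lemma \ref{lem: secondstep}(1), $(S_{1})_{+} \cong S(G_{\Gamma_{1}} \times C_{2}^{I_{1}})$, so $[\delta]$ is a quotient of $G_{\Gamma_{1}} \times C_{2}^{I_{1}}$.

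\textbf{Step 1: the $C_{2}$ case.} Let $\alpha$ be a $C_{2}$-element lying in both $S_{0}$ and $(S_{1})_{+}$. By definition of $S_{+}$, such an $\alpha$ generates part of $(S_{0})_{+}$, so $\alpha \in (S_{0})_{+}$ outright. The same holds for any $D_{p}$- or $W$-element.

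\textbf{Step 2: the general case.} I would prove an analogue of Lemma \ref{lem: C2 generation} for $G_{\Gamma} \times C_{2}^{I}$: every nontrivial $\delta \in S(G_{\Gamma_{1}} \times C_{2}^{I_{1}})$ lies in
$$
\mathrm{gcl}\bigl(\{\alpha \geq \delta : [\alpha] \cong C_{2}\}\bigr) \vee \langle \{\alpha \geq \delta : [\alpha] \cong C_{2}\}\rangle .
$$
More precisely, $\delta$ is generated by $C_{2}$-elements above it together with the vertex and edge classes that the graph closure forces. To set this up, choose finite $\Gamma' \subseteq \Gamma_{1}$ and finite $J \subseteq I_{1}$ so that $\delta$ corresponds to $N \unlhd G_{\Gamma'} \times C_{2}^{J}$, with $\Gamma'$ minimal. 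Then run the argument of Lemma \ref{lem: C2 generation} on $NP$, where $P$ is the $pq$-Sylow product; Lemma \ref{lem: still proper} adapts to the extra $C_{2}^{J}$ factor. This shows that each vertex $v$ of $\Gamma'$ is a witness for some $C_{2}$-element $\alpha \geq \delta$ of finite vertex width.

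\textbf{Step 3: assembling.} All those $C_{2}$-elements $\alpha \geq \delta$ lie in $S_{0}$ (since $S_{0}$ is upward closed and $\delta \in S_{0}$) and in $(S_{1})_{+}$, so by Step 1 they lie in $(S_{0})_{+}$. Graph-closedness of $S_{0}$ then puts the vertex witnesses, and hence the corresponding $W$-edges, into $S_{0}$. It follows that the minimal $\delta' := \bigwedge$ of the $D_{p}$-, $W$- and $C_{2}$-elements generating $S(G_{\Gamma'} \times C_{2}^{J})$ over those coordinates lies in $(S_{0})_{+}$.

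\textbf{Main obstacle.} The step I expect to be hardest is showing that $\delta$ is actually above such a meet of elements of $(S_{0})_{+}$, rather than merely above $C_{2}$-elements. The difficulty is that the $C_{2}^{J}$ coordinates from $I_{1}$ need not come from $I_{0}$. To handle this I would use Lemma \ref{lem: transfer}(1): choose a basis of $C_{2}$-elements of $(S_{1})_{+}$ over $(S_{1})_{-}$ that extends a basis for $(S_{0})_{+}$. Then the $C_{2}$-quotients of $[\delta]$ (all of which are in $S_{0}$) are spanned by $(S_{0})_{-}$ and the $(S_{0})_{+}$-part of the basis. Since $[\delta]$ is a quotient of $G_{\Gamma'} \times C_{2}^{J}$ and its non-$2$ part is already controlled by $S_{-}$ via Lemma \ref{lem: transfer}(2), this would show that $\delta$ factors through $(S_{0})_{-} \wedge \langle \text{those } C_{2}\text{'s}\rangle \subseteq (S_{0})_{+}$.
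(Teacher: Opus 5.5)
There is a genuine gap: Step~2 asserts something false, and Steps~3 and the closing sketch depend on it.

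\textbf{Where it fails.} With a $C_2^{J}$ factor present, minimality of $\Gamma'$ does not make every vertex of $\Gamma'$ a witness of a finite-vertex-width $C_2$-element above $\delta$. The argument of Lemma~\ref{lem: C2 generation} only gives $(-1)_v\notin\overline{N}$. But a $C_2$-quotient of finite vertex width must kill $C_2^{J}$, so to get one through $v$ you need $(-1)_v\notin\overline{N}\cdot C_2^{J}$.

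A concrete counterexample: work in $S(\widetilde{G_\Gamma\times C_2^{\mu}})$ with $\mu\geq 1$ and $S_1$ the whole model. Take distinct vertices $v,w$ and an index $j$. Let $\delta$ be the meet of the $D_p$-element of $w$ with the $C_2$-element whose kernel is that of $x\mapsto\tau(\pi_v(x))\pi_j(x)$.
\begin{itemize}
\item Then $[\delta]\cong D_p\times C_2$, and the minimal $\Gamma'$ is $\{v,w\}$.
\item Yet the only $C_2$-quotient of $[\delta]$ of finite vertex width is $\tau\circ\pi_w$.
\item Worse, $S_0=\langle\delta\rangle$ is already graph closed with $\Gamma(S_0)=\{w\}$.
\end{itemize}
So your claim (via Steps~2--3) that the $D_p$- and $W$-data of all of $\Gamma'$ lie in $S_0$ is false, not merely unproved. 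Consequently ``its non-$2$ part is already controlled by $S_-$'' has no support. Lemma~\ref{lem: transfer}(2) cannot supply it as stated either: it concerns $(S_1)_-\vee(S_0)_+$, and applying it to a quotient of $\delta$ presupposes membership in $(S_0)_+$, which is what you are proving. Conversely, the $C_2^J$ issue you flag as the main obstacle is not the real difficulty. Every $C_2$-element of $S_0$ lies in $(S_0)_+$ by definition, so no basis extension via Lemma~\ref{lem: transfer}(1) is needed.

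\textbf{The missing idea.} Only the vertices that carry $pq$-information should be forced into $\Gamma(S_0)$. These form the set $V_*$ of Lemma~\ref{lem: bounding}: $v$ with $\pi_v(N)\lneq D_p$, or $v$ on an edge $r$ with $\pi_r(N)\cap C_q=1$.
\begin{itemize}
\item For $v\in V_*$, $[\delta]$ has a $D_p$-quotient or a vertex-width-$1$ $C_2$-quotient through $v$, or (by Lemma~\ref{lem:easy}(4)) a vertex-width-$2$ $C_2$-quotient through $r$. Graph closedness then gives $V_*\subseteq V_0$.
\item For all other vertices and edges, Lemma~\ref{lem:easy}(2),(3) give $(C_p)_v\subseteq N$ and $(C_q)_r\subseteq N$.
\end{itemize}
This is the route the paper takes.

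To finish, let $K$ be the kernel of the projection $G_{\Gamma_1}\times C_2^{I_1}\to G_{\Gamma_0}\times C_2^{I_0}$, and $P$ the $pq$-Hall subgroup.
\begin{itemize}
\item By the previous step, the image of $K$ in $[\delta]$ is an elementary abelian $2$-group.
\item Every $C_2$-quotient of $[\delta]$ lies in $(S_0)_+$, so its kernel contains $K$. The intersection of these kernels is $NP$, so $K\subseteq NP$, and the image of $K$ is also a $pq$-group.
\item Hence $K\subseteq N$, i.e.\ $\delta\in(S_0)_+$.
\end{itemize}

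Your own idea can also deliver the $pq$-control. Pass to the image $\delta_-$ of $\delta$ modulo the $C_2^{I_1}$ factor. Then $\delta_-\in S_0\cap(S_1)_-$, and the proof (not the statement) of Lemma~\ref{lem: transfer}(2) shows $S_0\cap(S_1)_-=(S_0)_-$. This gives $N\supseteq(C_p)_v,(C_q)_r$ outside $\Gamma_0$ by a parity argument, since $C_2^{I_1}$ is central of exponent $2$.
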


\begin{proof}
    The inclusion $(S_{0})_{+} \subseteq S_{0} \vee (S_{1})_{+}$ is clear, so we prove the reverse inclusion. 
    
    By Lemma \ref{lem: secondstep}, we may identify $(S_{1})_{+}$ with $S(G_{\Gamma_{1}} \times C_{2}^{I_{1}})$ and $(S_{0})_{+}$ with $S(G_{\Gamma_{0}} \times C_{2}^{I_{0}})$ in such a way that inclusion $(S_{0})_{+} \to (S_{1})_{+}$ is dual to the projection $\pi : G_{\Gamma_{1}} \times C_{2}^{I_{1}} \to G_{\Gamma_{0}} \times C_{2}^{I_{0}}$. If $\alpha \in S_{0} \vee (S_{1})_{+}$, then, we may identify the class $[\alpha]$ with a quotient $A :=(G_{\Gamma_{1}} \times C_{2}^{I_{1}})/N$ for some open normal subgroup $N \unlhd G_{\Gamma_{1}} \times C_{2}^{I_{1}}$. As in the proof of Lemma \ref{lem: bounding}, we define 
    $$
    V_{*} = \{v \in V_{1} : \pi_{v}(N) \lneq D_{p} \text{ or }v \text{ is incident to }r\text{ and }\pi_{r}(N) \cap C_{q} = 1\}.
    $$
    We first claim that $V_{*} \subseteq V_{0}$.  To see this note that if $\pi_{v}(N) \lneq D_{p}$, then $\pi_{v}$ induces an epimorphism from $(G_{\Gamma_{1}} \times C_{2}^{I_{1}})/N \to D_{p}/\pi_{v}(N)$. In the case that $\pi_{v}(N) = 1$, this quotient corresponds to a class $[\beta] \cong D_{p}$ in $S_{0}$, which entails $v \in V_{0}$. The other possibility is that $\pi_{v}(N) = C_{p}$, in which case the quotient $D_{p}/\pi_{v}(N)$ corresponds to some class $[\beta] \cong C_{2}$ with $\beta \in S_{0}$ of vertex width $1$ so $v \in V_{0}$ since $S_{0} = \mathrm{gcl}(S_{0})$. Finally, if $\pi_{r}(N) \cap C_{q} = 1$, then, by Lemma \ref{lem:easy}(4), the quotient $W/\pi_{r}(N)$ necessarily has a $C_{2}$ quotient of vertex width $2$, so both vertices of $r$ would be contained in $V_{0}$ since $S_{0} = \mathrm{gcl}(S_{0})$. 

    For every vertex $v \in V_{1} \setminus V_{0}$ and $r \in R_{1} \setminus R_{0}$, we have $\pi_{v}(N) = D_{p}$ and $\pi_{r}(N) \supseteq C_{q}$ so by Lemma \ref{lem:easy}, $(C_{p})_{v} \subseteq N$ and $(C_{q})_{r} \subseteq N$.  Thus, the quotient $(G_{\Gamma_{1}} \times C_{2}^{I_{1}})/N$ can be identified with the quotient 
    $$
    (G_{\Gamma_{0}} \times C_{2}^{V_{1} \setminus V_{0}} \times C_{2}^{I_{1}})/N'.
    $$
    Let $\pi_{1} : G_{\Gamma_{0}} \times C_{2}^{V_{1} \setminus V_{0}} \times C_{2}^{I_{1}} \to C_{2}^{V_{1} \setminus V_{0}} \times C_{2}^{I_{1} \setminus I_{0}}$ be the projection. So $\ker(\pi_{1}) = G_{\Gamma_{0}} \times C_{2}^{I_{0}}$. The quotient
    $$
    (G_{\Gamma_{0}} \times C_{2}^{V_{1} \setminus V_{0}} \times C_{2}^{I_{1} })/N' \ker(\pi_{1})
    $$
    corresponds to a quotient $(C_{2}^{V_{1} \setminus V_{0}} \times C_{2}^{I_{1} \setminus I_{0}})/N''$. If this were nontrivial, it would have a $C_{2}$ quotient not contained in $S(G_{\Gamma_{0}} \times C_{2}^{I_{0}}) = (S_{0})_{+}$ and, therefore, not in $S_{0}$ which is impossible. This shows that this quotient is trivial, which entails that the factor $C_{2}^{V_{1} \setminus V_{0}} \times C_{2}^{I_{1} \setminus I_{0}}$ is contained in $N'$.  Therefore, $A$ is a quotient of $G_{\Gamma_{0}} \times C_{2}^{I_{0}}$ as desired. This shows $\alpha \in (S_{0})_{+}$.

\end{proof}

\begin{thm} \label{thm: thirdstep}
Let $\Gamma$ be an infinite graph and $\mu$ a cardinal. 
\begin{enumerate}
\item Suppose $S_{0} = \mathrm{gcl}(S_{0}) \subseteq S_{1} \models T_{\Gamma,\mu}$. Then the epimorphism $G(S_{0}) \to G((S_{0})_{+})$ dual to the inclusion $(S_{0})_{+} \subseteq S_{0}$ is a Frattini cover (though not necessarily the universal Frattini cover). 
    \item If $S_{*} \models T_{\Gamma,\mu}$, then 
$$
S_{*} \cong S\left( \reallywidetilde{G_{\Gamma_{*}} \times C_{2}^{I}}\right)
$$
for some $\Gamma_{*} \equiv \Gamma$ and some set $I$ and the epimorphism $G(S_{*}) \to G((S_{*})_{+})$ dual to the inclusion $(S_{*})_{+} \subseteq S_{*}$ is the universal Frattini cover. 
\item Suppose $S_{0} \preceq S_{1} \models T_{\Gamma,\mu}$ and $\Gamma_{0} \preceq \Gamma_{1}$ are the graphs interpreted in $S_{0}$ and $S_{1}$ respectively. Suppose further we are given an isomorphism $\Psi_{0} : S_{0} \to S(\reallywidetilde{G_{\Gamma_{0}} \times C_{2}^{I_{0}}})$ for some set $I_{0}$.  Then there are a set $I_{1}$ and an isomorphism $\Psi_{1}: S_{1} \to S(\reallywidetilde{G_{\Gamma_{1}} \times C_{2}^{I_{1}}})$ such that the diagram
    $$
    \xymatrix{ 
S_{0} \ar@{->}[r]^{\Psi_{0}} \ar@{->}[d] & S(\reallywidetilde{G_{\Gamma_{0}} \times C_{2}^{I_{0}}}) \ar@{->}[d]^{S(\tilde{\pi})} \\
S_{1} \ar@{->}[r]^{\Psi_{1}} & S(\reallywidetilde{G_{\Gamma_{1}} \times C_{2}^{I_{1}}}).
}
    $$
    commutes, where the map $S_{0} \to S_{1}$ is the inclusion and the map $\tilde{\pi}$ is a lift of the projection $\pi: G_{\Gamma_{1}} \times C_{2}^{I_{1}} \to G_{\Gamma_{0}} \times C_{2}^{I_{0}}$.
\end{enumerate}
\end{thm}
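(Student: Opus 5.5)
For (1), I will use Fact \ref{fact: Frattini char}: it suffices to show that for every $\delta \in S_{0}$ the map $[\delta] \to [\delta \vee \epsilon]$ is a Frattini cover of finite groups, where $\epsilon$ ranges over $(S_{0})_{+}$ so that $\delta \vee (S_{0})_{+}$ is the image of $[\delta]$ in $G((S_{0})_{+})$. So fix $\delta \in S_{0}$ and let $\delta^{+}$ be the element of $(S_{0})_{+}$ with $\langle \delta \rangle \vee (S_{0})_{+} = \langle \delta^{+} \rangle$. This is the finest quotient of $[\delta]$ lying in $(S_{0})_{+}$. The claim that $[\delta] \to [\delta^{+}]$ is Frattini is elementary, given as a first-order condition on $\delta$: every maximal normal quotient of $[\delta]$ factors through $[\delta^{+}]$, equivalently $\Phi([\delta])$ contains the kernel. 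So I will verify it in $S(\widetilde{G_{\Gamma} \times C_{2}^{\mu}})$ and transfer it to $S_{1}$.

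The problem is that $\delta^{+}$ is defined via $(S_{0})_{+}$, which is not definable. I will instead use the characterization of the kernel as $\Phi$-related. By Fact \ref{fact:quotientbyfrattini}, $[\delta]/\Phi([\delta])$ is a quotient of $G_{\Gamma} \times C_{2}^{\mu}$. Hence every quotient $\epsilon \geq \delta$ with $\Phi([\epsilon]) = 1$ lies in $S(G_{\Gamma}\times C_{2}^{\mu})$, and in particular $\delta \wedge$-Frattini quotients are generated by $D_{p}$-, $W$-, and $C_{2}$-elements. This is also first order: for each size bound, the statement ``the Frattini quotient of $[\delta]$ lies in the subsystem generated by the $D_{p}$-, $W$-, and $C_{2}$-elements above $\delta$'' is expressible because Lemma \ref{lem: bounding} bounds how many such generators are needed in terms of $|[\delta]|$. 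It therefore holds in $S_{1}$, and those generators above $\delta \in S_{0}$ lie in $S_{0}$ by upward closure, so they lie in $(S_{0})_{+}$. Thus $\delta \vee \Phi\text{-quotient} \in (S_{0})_{+}$, giving the kernel of $[\delta]\to[\delta^{+}]$ inside $\Phi([\delta])$.

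For (2), I will take $S_{0} = S_{*}$, which is graph closed. Lemma \ref{lem: secondstep}(1) gives $(S_{*})_{+} \cong S(G_{\Gamma_{*}} \times C_{2}^{I})$, with $\Gamma_{*} \equiv \Gamma$ by Lemma \ref{lem: firststep}. By (1), $G(S_{*}) \to G_{\Gamma_{*}}\times C_{2}^{I}$ is Frattini. Since projectivity is axiomatizable (Fact \ref{fact:axiomatizable}), $G(S_{*})$ is projective, so by Fact \ref{fact2}(2) this Frattini cover is universal. That yields the isomorphism.

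For (3), which I expect to be the main obstacle, I will first extend $\Psi_{0}|_{(S_{0})_{+}}$ to $(S_{1})_{+}$ via Lemma \ref{lem: secondstep}(2), using that elementary substructures are graph closed because gcl lies in acl (Lemma \ref{lem: graph closure is closure}). This produces $I_{1}$ and $\psi_{1}$ compatible with the projection $\pi$. Next, (2) identifies $G(S_{1})$ as the universal Frattini cover of $G_{\Gamma_{1}}\times C_{2}^{I_{1}}$. The difficulty is to choose this identification so that it restricts to $\Psi_{0}$ on $S_{0}$; in group terms, the quotient $G(S_{1}) \to G(S_{0})$ must equal a lift $\tilde\pi$ composed with the given isomorphisms. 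Both $G(S_{1})\to G(S_{0}) \cong \widetilde{G_{\Gamma_{0}}\times C_{2}^{I_{0}}}$ and any lift $\tilde{\pi}$ are epimorphisms from a universal Frattini cover over the same map to $G_{\Gamma_{0}}\times C_{2}^{I_{0}}$. I plan to use the fact that $S_{0}\vee (S_{1})_{+} = (S_{0})_{+}$ (Lemma \ref{lem: plus intersection}), which gives $G(S_{0}\wedge (S_{1})_{+}) \cong G(S_{0})\times_{G((S_{0})_{+})} G((S_{1})_{+})$. Then I will build the isomorphism $\widetilde{G_{\Gamma_{1}}\times C_{2}^{I_{1}}} \to G(S_{1})$ by lifting through this fiber product, using projectivity of $\widetilde{G_{\Gamma_{1}}\times C_{2}^{I_{1}}}$ to get a homomorphism to the fiber product over the given maps. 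This homomorphism is surjective onto the fiber product because its composite to $G((S_{1})_{+})$ is Frattini. I would then extend it to $G(S_{1})$, invoking minimality of the universal Frattini cover to conclude it is an isomorphism. Finally, $\Psi_{1}$ is the dual of this isomorphism, and the required square commutes by construction of the fiber-product map.
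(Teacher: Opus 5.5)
Part~(1) of your proposal has a genuine gap. The reduction is fine: via Fact~\ref{fact: Frattini char} it suffices to show that, for each $\delta\in S_0$, the element $\gamma\geq\delta$ with $[\gamma]\cong[\delta]/\Phi([\delta])$ lies in $(S_0)_+$. The idea of a bounded first-order sentence coming from Lemma~\ref{lem: bounding} is also right.

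The problem is the sentence you actually write down. The claim ``$\gamma$ lies in the subsystem generated by the $D_p$-, $W$- and $C_2$-elements \emph{above} $\delta$'' is false already in $S(\widetilde{G_{\Gamma}\times C_{2}^{\mu}})$. The $D_p$-, $W$- and $C_2$-elements whose meet lies below $\gamma$ are in general not quotients of $[\delta]$ at all. For a concrete case, let $[\delta]\cong D_q$ be the quotient of a $W$-element described in the remark following Lemma~\ref{lem: bounding}. Then:
\begin{itemize}
\item $\Phi(D_q)=1$, so $\gamma=\delta$.
\item Apart from $[\delta]$ and the trivial class, the only class above $\delta$ is a single $C_2$-class (of vertex width $2$), and it does not generate $\delta$.
\end{itemize}
So ``upward closure'' cannot put the needed generators into $S_0$. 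A telling symptom is that your argument never uses the hypothesis $S_0=\mathrm{gcl}(S_0)$, yet (1) is false without it. For $S_0=\langle\delta\rangle$ with this $\delta$, $(S_0)_+$ consists only of that $C_2$-class and the trivial class, and $D_q\to C_2$ is not a Frattini cover.

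The fix is to weaken the sentence, as the paper's $(*)_B$ does. It should assert only that $\gamma$ lies above the meet of boundedly many $D_p$-elements, $C_2$-elements, and the $W$-elements below pairs of those $D_p$-elements, without requiring any of them to lie above $\delta$. Transferring this to $S_1$ gives $\gamma\in(S_1)_+$, and $\gamma\geq\delta$ gives $\gamma\in S_0$. The missing ingredient is Lemma~\ref{lem: plus intersection}: $S_0\vee(S_1)_+=(S_0)_+$ for graph closed $S_0\subseteq S_1$. This is exactly where graph closure enters, and it yields $\gamma\in(S_0)_+$.

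Part~(2) is as in the paper.

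Part~(3) can be made to work, but it is more elaborate than necessary, because the statement only asks for \emph{some} lift $\tilde\pi$. The paper's route is:
\begin{itemize}
\item take $\psi_0$ to be the dual of $\Psi_0$;
\item take $\theta_1:\widetilde{G_{\Gamma_1}\times C_2^{I_1}}\to G(S_1)$ to be any isomorphism compatible with the covering maps and the identification from Lemma~\ref{lem: secondstep}(2); this exists by (2) and uniqueness of universal Frattini covers;
\item set $\tilde\pi=\psi_0^{-1}\circ\rho\circ\theta_1$, where $\rho:G(S_1)\to G(S_0)$ is dual to the inclusion.
\end{itemize}
The compatibility in Lemma~\ref{lem: secondstep}(2) then shows $\tilde\pi$ is a lift, and the square commutes by construction.

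If you keep your fiber-product route, fix a lift $\tilde\pi$ first. Mapping into $G(S_0)\times_{G((S_0)_+)}G((S_1)_+)$ needs no projectivity; projectivity is what lets you lift from there to $G(S_1)$.
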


\begin{proof}
(1) We first make an observation about the theory $T_{\Gamma,\mu}$.  Suppose $B$ is a finite quotient of $\widetilde{G_{\Gamma} \times C_{2}^{\mu}}$.  Then by Fact \ref{fact:quotientbyfrattini}, $A := B/\Phi(B)$ is a (finite) quotient of $G_{\Gamma} \times C_{2}^{\mu}$. By Lemma \ref{lem: bounding}, there is some natural number $N = N(|A|)$ such that $A$ is a quotient of $G_{\Gamma_{0}} \times C_{2}^{\ell}$, itself a quotient of $G_{\Gamma} \times C_{2}^{\mu}$ where $\Gamma_{0} = (V_{0},R_{0}) \subseteq \Gamma$ is an induced subgraph with $|V_{0}| \leq N$ and also $\ell \leq N$.  It follows, then, that $S(\widetilde{G_{\Gamma} \times C_{2}^{\mu}})$ satisfies an $L_{\mathrm{IS}}$-sentence which naturally asserts the following:
\begin{center}
$(*)_{B}$ $\begin{cases}
\begin{minipage}{0.75\textwidth}
For every $\beta \in X_{|B|}$ with $[\beta] \cong B$, there is some $\alpha \geq \beta$ with $[\alpha] \cong A$ and there are, for some $k,k' \leq N$, $\gamma_{0}, \ldots, \gamma_{k-1} \in X_{2p}$ with each $[\gamma_{i}] \cong D_{p}$ and $\delta_{0}, \ldots, \delta_{k'-1} \in X_{2}$ with each $[\delta_{i}] \cong C_{2}$ such that if, for some $k'' \leq \binom{N}{2}$, $\epsilon_{0}, \ldots, \epsilon_{k''-1} \in X_{4p^{2}q}$ enumerate representatives of all classes $[\epsilon] \cong W$ such that there are some $i < j < k$ with $\epsilon \leq \gamma_{i} \wedge \gamma_{j}$, then
$$
\alpha \geq \left(\bigwedge_{i < k} \gamma_{i} \wedge \bigwedge_{i' < k'} \delta_{i'} \wedge \bigwedge_{i'' < k''} \epsilon_{i''}\right).
$$
\end{minipage}
\end{cases}$
\end{center}
To see this, just note that the $\gamma_{i}$ correspond to the kernels $\ker(\pi_{v})$ for each $v \in V_{0}$, the $\epsilon_{i''}$ correspond to the kernels $\ker(\pi_{r})$ for each $r \in R_{0}$, and the $\delta_{i'}$ correspond to the $C_{2}$ quotient obtained by projecting $G_{\Gamma_{0}} \times C_{2}^{\ell}$ to each $C_{2}$ coordinate (so, here, $k' = \ell$).  The equivalence class of their meet is the smallest quotient of $\widetilde{G_{\Gamma} \times C_{2}^{\mu}}$ that has them all as a quotient, which must be $G_{\Gamma_{0}} \times C_{2}^{\ell}$.  Therefore, the sentence $(*)_{B}$ simply restates that every $B$ quotient of $\widetilde{G_{\Gamma} \times C_{2}^{\mu}}$ has the property that its further quotient $B/\Phi(B)$ is itself a quotient of $G_{\Gamma_{0}} \times C_{2}^{\ell}$ for an induced subgraph $\Gamma_{0} \subseteq \Gamma$ with at most $N$ vertices and with $\ell \leq N$. 

Now to conclude, suppose $\beta \in S_{0}$ and $[\beta] \cong B$. Since $S_{0} \subseteq S_{1}$ and $S_{1} \equiv S(\widetilde{G_{\Gamma} \times C_{2}^{\mu}})$, $S(\widetilde{G_{\Gamma} \times C_{2}^{\mu}})$ must also contain an element whose equivalence class is isomorphic to $B$ and $S_{1}$ satisfies the sentence $(*)_{B}$.  For the associated bound $N$, we know there is some induced subgraph $\Gamma_{*} \subseteq \Gamma_{1}$ with at most $N$ vertices and some $\ell \leq N$ such that $G((S_{1})_{+})$ has $G_{\Gamma_{*}} \times C_{2}^{\ell}$ as a quotient and $[\beta]/\Phi([\beta])$ is a quotient of this group. Let $\gamma \in S_{0}$ be an element with $[\gamma] = [\beta]/\Phi([\beta])$. Then, since $[\beta]/\Phi([\beta])$ is a quotient of $G_{\Gamma_{1}} \times C_{2}^{I_{1}}$, we have $\gamma \in (S_{1})_{+}$. By Lemma \ref{lem: plus intersection}, then, we get $\gamma \in (S_{0})_{+}$. We have thus shown that every finite quotient of $G(S_{0})$ is a Frattini cover of a finite quotient of $G((S_{0})_{+})$. Thus, we know, by Fact \ref{fact: Frattini char}, that the map $G(S_{0}) \to G((S_{0})_{+})$ is a Frattini cover, completing the proof. 

(2) By (1), the epimorphism $G(S_{*}) \to G((S_{*})_{+})$ induced by the inclusion $(S_{*})_{+} \to S_{*}$ is a Frattini cover. Since $S(\reallywidetilde{G_{\Gamma} \times C_{2}^{\mu}})$ is projective and $S_{*} \models T_{\Gamma,\mu}$, we also have $S_{*}$ is projective by Fact \ref{fact:axiomatizable}. This shows $G(S_{*}) \to G((S_{*})_{+})$ is the universal Frattini cover.    

(3) Let $\psi_{0} : \reallywidetilde{G_{\Gamma_{0}} \times C_{2}^{I_{0}}} \to G(S_{0})$ be the isomorphism dual to $\Psi_{0}$. We have an epimorphism $G(S_{0}) \to G((S_{0})_{+})$ dual to the inclusion and an epimorphism $\reallywidetilde{G_{\Gamma_{0}} \times C_{2}^{I_{0}}} \to G_{\Gamma_{0}}\times C_{2}^{I_{0}}$, by the definition of the universal Frattini cover, so there is a unique $\chi_{0}$ such that the following diagram commutes. 
    $$
    \xymatrix{ 
\widetilde{G_{\Gamma_{0}} \times C_{2}^{I_{0}}}\ar@{->}[r]^{\psi_{0}} \ar@{->}[d] & G(S_{0})  \ar@{->}[d] \\
G_{\Gamma_{0}} \times C_{2}^{I_{0}} \ar@{->}[r]^{\chi_{0}} & G((S_{0})_{+}) .
}
    $$

By Lemma \ref{lem: secondstep}(2), there are $\Gamma_{1} \supseteq \Gamma_{0}$ and $I_{1} \supseteq I_{0}$ and an isomorphism $\chi_{1} : G_{\Gamma_{1}} \times C_{2}^{I_{1}} \to G((S_{1})_{+})$ such that there is a commutative diagram 
    $$
    \xymatrix{ 
G(S_{1}) \ar@{->}[r] \ar@{->}[d] & G((S_{1})_{+}) \ar@{->}[r]^{\chi^{-1}_{1}} \ar@{->}[d] & G_{\Gamma_{1}} \times C_{2}^{I_{1}} \ar@{->}[d]^{\pi} \\
G(S_{0}) \ar@{->}[r]& G((S_{0})_{+}) \ar@{->}[r]^{\chi^{-1}_{0}} &  G_{\Gamma_{0}} \times C_{2}^{I_{0}}.
}
    $$
where $\pi$ is the projection. By (1), we have isomorphisms $\theta_{i} :  \reallywidetilde{G_{\Gamma_{i}} \times C_{2}^{I_{i}}} \to G(S_{i})$ for $i = 0,1$.  Let $\rho : G(S_{1}) \to G(S_{0})$ denote the epimorphism dual to the inclusion $S_{0} \subseteq S_{1}$ and define $\tilde{\pi} = \theta^{-1}_{0} \circ \rho \circ \theta_{1}$. Then the diagram
    $$
    \xymatrix{ 
\widetilde{G_{\Gamma_{1}} \times C_{2}^{I_{1}}} \ar@{->}[r]^{\theta_{1}} \ar@{->}[d]^{\tilde{\pi}} & G(S_{1}) \ar@{->}[r] \ar@{->}[d]^{\rho} & G((S_{1})_{+}) \ar@{->}[r]^{\psi_{1}} \ar@{->}[d] & G_{\Gamma_{1}} \times C_{2}^{I_{1}} \ar@{->}[d]^{\pi} \\
\widetilde{G_{\Gamma_{0}} \times C_{2}^{I_{0}}} \ar@{->}[r]^{\theta_{0}} & G(S_{0}) \ar@{->}[r] & G((S_{0})_{+}) \ar@{->}[r]^{\psi_{0}} &  G_{\Gamma_{0}} \times C_{2}^{I_{0}}.
}
    $$
    commutes, which entails that $\tilde{\pi}$ is a lift of $\pi$. Then we are done by duality. 
\end{proof}

\section{Model-theoretic analysis} \label{sec: model theory}

\subsection{Characterizing the theories}

Let $p_{\infty}(x)$ be the partial type in the $X_{2}$ sort axiomatized by $\{[x] \cong C_{2}\} \cup \{\neg \varphi_{n}(x) : n < \omega\}$, where $\varphi_{n}(x)$ is the formula from Lemma \ref{lem: definability of vertex width} defining the elements of vertex width $n$. A $C_{2}$-element is a realization of $p_{\infty}$ if and only if it is an element of infinite vertex width.  Given a model $S \models T_{\Gamma,\mu}$, define $\mathrm{dim}_{\infty}(S)$ to be the maximal cardinality $\kappa$ of a sequence $(\alpha_{i})_{i < \kappa}$ in $p_{\infty}(S)$ (or, equivalently, in $p_{\infty}(S_{+})$) with $\alpha_{i} \not\in \langle S_{-},\alpha_{<i} \rangle$.  By Lemma \ref{lem: dictionary}, any two such sequences have the same cardinality, as the length of such a sequence is equal to the dimension of the continuous dual of $\ker(G(S_{+})\to G(S_{-}))$, viewed as an $\mathbb{F}_{2}$-vector space.  The following lemma, together with compactness, entails that $p_{\infty}$ is a consistent partial type, whenever $\Gamma$ is an infinite graph:

\begin{lem} \label{lem: arbitrary vertex width}
Suppose $\Gamma$ is an infinite graph.  Then, for all natural numbers $n$, there is some $\alpha \in S(\widetilde{G_{\Gamma}})$ such that $[\alpha]_{\sim} \cong C_{2}$ and the vertex width of $\alpha$ is $n$. 
\end{lem}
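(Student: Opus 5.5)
Since $\Gamma$ is infinite, I fix $n$ distinct vertices $v_{0}, \ldots, v_{n-1} \in V$ and let $\Gamma_{0} = (V_{0},R_{0})$ be the induced subgraph on $V_{0} = \{v_{0},\ldots,v_{n-1}\}$. The $C_{2}$-element should be the ``parity'' quotient singled out in Lemma \ref{lem: parity}. Concretely, I would take the composite epimorphism
$$
G_{\Gamma} \xrightarrow{\pi_{\Gamma_{0}}} G_{\Gamma_{0}} \xrightarrow{\xi_{\Gamma_{0}}} C_{2}^{V_{0}} \xrightarrow{\ \Sigma\ } C_{2},
$$
where $\Sigma((g_{v})_{v}) = \prod_{v} g_{v}$. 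Its kernel $N$ is an open normal subgroup of index $2$ in $G_{\Gamma}$, and I let $\alpha$ be the class $[N]$, regarded as an element of $S(G_{\Gamma}) \subseteq S(\widetilde{G_{\Gamma}})$ via the universal Frattini cover. Then $[\alpha] \cong C_{2}$ is immediate. Viewed inside $G_{\Gamma_{0}}$, the kernel of $\Sigma \circ \xi_{\Gamma_{0}}$ contains the product $P$ of the $p$- and $q$-Sylow subgroups, and its image under $\xi_{\Gamma_{0}}$ is exactly the even-parity subgroup of $C_{2}^{V_{0}}$. The reverse direction of Lemma \ref{lem: parity} therefore shows that $\alpha$ has vertex width $n$ in $S(G_{\Gamma})$.

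The remaining issue is to transfer this from $S(G_{\Gamma})$ to $S(\widetilde{G_{\Gamma}})$. Vertex width is defined by reference to $D_{p}$-elements, and by Corollary \ref{cor: no unexpected frattini quotients} (with $I = \emptyset$) every $D_{p}$-quotient of $\widetilde{G_{\Gamma}}$ already comes from $G_{\Gamma}$; indeed it is of the form $\ker(\pi_{v})$. Likewise, any meet of such elements lies in $S(G_{\Gamma})$, since $S(G_{\Gamma})$ is a subsystem closed under $\wedge$. So a witness set in $S(\widetilde{G_{\Gamma}})$ is a witness set in $S(G_{\Gamma})$, and conversely, and the minimal $n$ is the same in both structures.

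As an alternative check, I could use Lemma \ref{lem: dictionary}(1) directly. Under the bijection $d$, the element $\alpha$ corresponds to the functional $\sum_{i<n}\pi_{v_{i}}$. Since the coordinate functionals $\pi_{v}$ are linearly independent in $(\mathbb{F}_{2}^{V})^{*}$, this functional cannot be written as a sum of fewer than $n$ of them, so the vertex width is exactly $n$. This route avoids Lemma \ref{lem: parity} entirely and is probably the cleanest.

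The only mildly delicate step is minimality, namely that no set of fewer than $n$ vertices witnesses $\alpha$. Both routes reduce this to linear independence of the coordinate functionals in $(\mathbb{F}_{2}^{V})^{*}$, which is clear. Infiniteness of $\Gamma$ is used only to guarantee $n$ distinct vertices for every $n$.
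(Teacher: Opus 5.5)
Your proposal is correct and is essentially the paper's proof. The paper also picks $n$ vertices, passes to the quotient $C_{2}^{V_{n}}$ of $\widetilde{G_{\Gamma}}$, and takes the even-parity subgroup as the kernel, noting that the resulting map to $C_{2}$ does not factor through any projection onto a proper set of coordinates $V' \subsetneq V_{n}$. Your minimality check via Lemma \ref{lem: dictionary}(1) and linear independence of the $\pi_{v}$ is slightly more complete: it also rules out candidate witness sets not contained in $V_{0}$, a step the paper's remark leaves implicit.
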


\begin{proof}
    The $n=1$ case is clear so we may assume $n \geq 2$. Let $V_{n}$ be an arbitrary set of $n$ vertices from $\Gamma$. We know that $D_{p}^{V_{n}}$ is a quotient of $\widetilde{G_{\Gamma}}$ and, quotienting further by the normal subgroup $C_{p}^{V_{n}}$, we obtain $C_{2}^{V_{n}}$ as a quotient.  Let $N$ denote the normal subgroup of $C_{2}^{V_{n}}$ consisting of those $\overline{g} = (g_{v})_{v \in V_{n}}$ such that the number of $v$ with $g_{v} = -1$ is even.  It is easy to check that this is a normal subgroup, $C_{2}^{V_{n}}/N \cong C_{2}$, and the natural map $C_{2}^{V_{n}} \to C_{2}^{V_{n}}/N$ does not factor through a projection $C_{2}^{V_{n}} \to C_{2}^{V'}$ for any proper $V' \subseteq V_{n}$.  
\end{proof}

\begin{rem}
    By the above Lemma, when $\Gamma$ is an infinite graph, the theory $T_{\Gamma,\mu}$ is not $\aleph_{0}$-categorical for any $\mu$.  This is because there are elements of any model of $T_{\Gamma,\mu}$ of arbitrary finite vertex width and by Lemma \ref{lem: definability of vertex width} these satisfy different formulas in the $X_{2}$-sort.  This implies by the Ryll-Nardzewski theorem that $T_{\Gamma,\mu}$ is not $\aleph_{0}$-categorical.
\end{rem}

\begin{lem} \label{lem: complete invariants}
    Suppose $\Gamma$ is an infinite graph and $\mu$ is a cardinal.  A model $S \models T_{\Gamma, \mu}$ is determined up to isomorphism by the isomorphism type of the graph $\Gamma(S)$ and $\mathrm{dim}_{\infty}(S)$. 
\end{lem}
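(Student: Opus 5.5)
By Theorem \ref{thm: thirdstep}(2), every $S \models T_{\Gamma,\mu}$ is isomorphic to $S(\reallywidetilde{G_{\Gamma_{*}} \times C_{2}^{I}})$ for some graph $\Gamma_{*} \equiv \Gamma$ and some set $I$. Moreover, the map $G(S) \to G(S_{+})$ is the universal Frattini cover. Since the universal Frattini cover is unique up to isomorphism over its base (Fact \ref{fact2}(1)), the plan is to reduce the problem to showing that $S_{+}$, and hence $G(S_{+}) \cong G_{\Gamma(S)} \times C_{2}^{I}$, is determined by $\Gamma(S)$ together with $\dim_{\infty}(S)$. Concretely, take $S, S' \models T_{\Gamma,\mu}$ with $\Gamma(S) \cong \Gamma(S')$ and $\dim_{\infty}(S) = \dim_{\infty}(S')$. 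We aim to build an isomorphism $G(S_{+}) \cong G(S'_{+})$ and then lift it.

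\textbf{Step 1: identify $I$ with $\dim_{\infty}$.} By Lemma \ref{lem: secondstep}(1) (with $S$ itself graph closed), $S_{+} \cong S(G_{\Gamma(S)} \times C_{2}^{I})$, where $I$ indexes a maximal sequence $(\alpha_i)$ of $C_2$-elements with $\alpha_i \notin \langle S_-, \alpha_{<i}\rangle$. We need $|I| = \dim_{\infty}(S)$. Here $\dim_\infty$ counts such sequences drawn only from $p_\infty(S)$. So we must check that a maximal sequence can be chosen inside $p_\infty$, and that every maximal sequence has the same length.

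For this we use the dictionary of Lemma \ref{lem: dictionary}. In $S(\reallywidetilde{G_{\Gamma_*} \times C_2^I})$, the $C_2$-classes correspond to nonzero functionals on $\mathbb{F}_2^{V_*} \times \mathbb{F}_2^I$. The classes of finite vertex width are exactly the nonzero elements of $(\mathbb{F}_2^{V_*})^*$, and these are precisely the $C_2$-classes lying in $S_-$. Being "independent over $S_-$" then means linear independence modulo $(\mathbb{F}_2^{V_*})^*$. Any element of the dual outside $(\mathbb{F}_2^{V_*})^*$ has infinite vertex width. Therefore a basis of $(\mathbb{F}_2^{I})^*$ gives a maximal sequence in $p_\infty$. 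Every maximal sequence has length $\dim\bigl((\mathbb{F}_2^{V_*}\times\mathbb{F}_2^I)^*/(\mathbb{F}_2^{V_*})^*\bigr) = \dim (\mathbb{F}_2^I)^*$, which equals $|I|$ since the continuous dual of $\mathbb{F}_2^I$ is $\bigoplus_I \mathbb{F}_2$. This yields $|I| = \dim_\infty(S)$.

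\textbf{Step 2: conclude.} From Step 1, $G(S_{+}) \cong G_{\Gamma(S)} \times C_2^{\dim_\infty(S)} \cong G(S'_{+})$, using the hypothesis $\Gamma(S) \cong \Gamma(S')$ and the fact that $G_\Gamma$ depends only on the isomorphism type of $\Gamma$. Taking universal Frattini covers, Theorem \ref{thm: thirdstep}(2) gives $G(S) \cong \reallywidetilde{G(S_+)} \cong \reallywidetilde{G(S'_+)} \cong G(S')$. Duality then gives $S \cong S'$.

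The main obstacle is Step 1. The sequences in the definition of $\dim_\infty$ are restricted to infinite vertex width, while Lemma \ref{lem: secondstep} uses arbitrary $C_2$-elements, so we must verify that the two notions of independence over $S_-$ agree. I would handle this entirely inside the concrete model $S(\reallywidetilde{G_{\Gamma_*}\times C_2^I})$ supplied by Theorem \ref{thm: thirdstep}(2), where Lemma \ref{lem: dictionary} reduces everything to linear algebra. The one point needing care is the claim that $\alpha \in \langle S_-, \alpha_{<i}\rangle$ is equivalent to $d([\alpha])$ lying in the span of $(\mathbb{F}_2^{V_*})^*$ and the $d([\alpha_j])$. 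This should follow from Lemma \ref{lem: dictionary}(2), together with the observation that the $C_2$-elements of $\langle S_-,\alpha_{<i}\rangle$ are exactly those lying above some finite meet of vertex-width-one elements and of $\alpha_j$'s.
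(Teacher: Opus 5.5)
Your proposal is correct and follows essentially the same route as the paper: identify $S_{+} \cong S(G_{\Gamma(S)} \times C_{2}^{I})$ with $|I| = \mathrm{dim}_{\infty}(S)$, then use that $G(S) \to G(S_{+})$ is the universal Frattini cover (Theorem \ref{thm: thirdstep}(2)), which is unique up to isomorphism. The only difference is that you spell out, via Lemma \ref{lem: dictionary}, why $|I| = \mathrm{dim}_{\infty}(S)$ (a maximal $p_{\infty}$-sequence spans the continuous dual modulo $(\mathbb{F}_{2}^{V})^{*}$), whereas the paper only sketches this in the paragraph defining $\mathrm{dim}_{\infty}$.
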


\begin{proof}
    Using the notation for the subsystems defined in Definition \ref{defn: distinguished subsystems}, we know that if $S \models T_{\Gamma,\mu}$, then $S_{-} \cong S(G_{\Gamma(S)})$ and $S_{+} \cong S(G_{\Gamma(S)} \times C_{2}^{I})$ for some set $I$ with $|I| = \mathrm{dim}_{\infty}(S)$.  Hence if $S,S' \models T_{\Gamma,\mu}$ satisfy that $\Gamma(S) \cong \Gamma(S')$ and $\mathrm{dim}_{\infty}(S) = \mathrm{dim}_{\infty}(S')$ then for some $\Gamma_{*}$ and set $I_{*}$ we have $S_{+} \cong S(G_{\Gamma_{*}} \times C_{2}^{I_{*}}) \cong S'_{+}$ and thus both $S$ and $S'$ are isomorphic to $S(\widetilde{G_{\Gamma_{*}} \times C_{2}^{I_{*}}})$ and therefore isomorphic to one another. 
\end{proof}

Now we come to the main point of this subsection, which is to prove that the theory $T_{\Gamma,\mu}$ is independent of $\mu$:

\begin{prop} \label{prop: only one theory}
    If $\Gamma$ and $\Gamma'$ are infinite elementarily equivalent graphs and $\mu$ and $\mu'$ are cardinals (possibly finite or even $0$), then $T_{\Gamma,\mu} = T_{\Gamma',\mu'}$. 
\end{prop}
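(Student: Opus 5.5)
Since $T_{\Gamma,\mu}$ and $T_{\Gamma',\mu'}$ are complete theories, it suffices to find models of each that are elementarily equivalent. I will reduce to two independent moves: changing $\Gamma$ within its elementary class with $\mu$ fixed, and changing $\mu$ with the graph fixed. The key tools are Lemma \ref{lem: complete invariants} (a model is determined by $\Gamma(S)$ and $\mathrm{dim}_{\infty}(S)$) and Theorem \ref{thm: thirdstep}(2) (every model of $T_{\Gamma,\mu}$ has the form $S(\widetilde{G_{\Gamma_*}\times C_2^{I}})$ with $\Gamma_*\equiv\Gamma$).

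\textbf{Step 1: every pair $(\Gamma_*,I)$ with $\Gamma_*\equiv\Gamma$ and $|I|$ large is realized by a model of $T_{\Gamma,\mu}$.} Fix a model $S\models T_{\Gamma,\mu}$. By Theorem \ref{thm: thirdstep}(2) it is $S(\widetilde{G_{\Gamma_1}\times C_2^{I_1}})$ with $\Gamma_1\equiv\Gamma$. The graph $\Gamma(S)$ is interpreted in $S$ uniformly. Hence for any $\Gamma_*\equiv\Gamma$ we can pass to an elementary extension or elementary substructure of $S$ whose interpreted graph is an elementary extension of $\Gamma_*$, using amalgamation of elementary extensions and a compactness argument over the interpretation.

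Independently, $p_\infty$ is a consistent partial type by Lemma \ref{lem: arbitrary vertex width}. Since the $C_2$-elements of vertex width $<n$ lying above any small subsystem form an ``algebraic'' family, compactness lets us realize arbitrarily long sequences $\alpha_i\models p_\infty$ with $\alpha_i\notin\langle S_-,\alpha_{<i}\rangle$. This is a type over $S_-\cup\{\alpha_{<i}\}$, and its consistency should follow from compactness plus Lemma \ref{lem: dictionary}(2), since infinite vertex width means $d([\alpha])\notin(\mathbb{F}_2^V)^*$. Thus $\mathrm{dim}_\infty$ can be made arbitrarily large.

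So for $\kappa$ large, $T_{\Gamma,\mu}$ has a model $S$ with $\Gamma(S)\cong\Gamma_*$ for some suitably saturated $\Gamma_*\equiv\Gamma$ and with $\mathrm{dim}_\infty(S)=\kappa$. Choosing $\Gamma_*$ to be the monster-type model of $\mathrm{Th}(\Gamma)$ of size $\kappa$ (say, saturated, or special if needed), the same construction applied to $T_{\Gamma',\mu'}$ produces $S'$ with $\Gamma(S')\cong\Gamma_*$ and $\mathrm{dim}_\infty(S')=\kappa$.

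\textbf{Step 2: conclude.} Lemma \ref{lem: complete invariants} gives $S\cong S'$, and therefore $T_{\Gamma,\mu}=T_{\Gamma',\mu'}$. The cardinal bookkeeping requires arranging both invariants simultaneously: first take a large saturated model of each theory, which has the saturated interpreted graph of the same size by saturation of the interpreted structure, and $\mathrm{dim}_\infty$ equal to the model's cardinality.

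\textbf{Main obstacle.} The main obstacle is the claim that in a $\kappa$-saturated model of size $\kappa$, $\mathrm{dim}_\infty$ equals $\kappa$. This amounts to showing that for any small set $X$ of $C_2$-elements, the type ``$p_\infty(x)$ and $x\notin\langle S_-,X\rangle$'' is finitely satisfiable. I would prove it in $S(\widetilde{G_\Gamma\times C_2^\mu})$ itself, using Lemma \ref{lem: dictionary}. Given finitely many vertex-width-$1$ elements and finitely many others, a parity functional on $n$ fresh vertices outside their supports has vertex width exactly $n$ and is linearly independent from them. This element has large vertex width rather than infinite vertex width, but that suffices for finite satisfiability, because $\neg\varphi_m$ only needs to hold for finitely many $m$. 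Independence from $S_-$ itself is the delicate part: a finite approximation only demands $x\not\geq$ a given meet, which fresh vertices ensure. This argument also shows that the choice of $\mu$, including $\mu=0$, is invisible to first-order logic.

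If saturated models are unavailable for cardinality reasons, I would replace them with special models, or instead use a Löwenheim–Skolem/union-of-chains argument with Theorem \ref{thm: thirdstep}(3).
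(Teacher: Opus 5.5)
Your Step 2 is the paper's argument. Take saturated models $S\models T_{\Gamma,\mu}$ and $S'\models T_{\Gamma',\mu'}$ of the same size $\kappa$. The interpreted graphs are then saturated of size $\kappa$ and elementarily equivalent, hence isomorphic. Show $\mathrm{dim}_{\infty}(S)=\mathrm{dim}_{\infty}(S')=\kappa$ and apply Lemma \ref{lem: complete invariants}.

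Step 1 is not needed and can be dropped. Its passage from ``the interpreted graph is an elementary extension of $\Gamma_*$'' to ``$\Gamma(S)\cong\Gamma_*$'' is unjustified anyway.

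For the existence of saturated models, the paper appeals to forcing and absoluteness. Your fallback to special models in a strong limit $\kappa$ also works in ZFC. The interpreted graph of a special model is special of the same cardinality. Running the saturation argument along a specializing chain still gives $\mathrm{dim}_{\infty}=\kappa$.

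The one step that fails as written is realizing the next independent element. You treat ``$x\notin\langle S_-,X\rangle$'' as a type over $S_-\cup X$. You say so explicitly in Step 1, and your finite-satisfiability argument about ``a given meet'' and ``fresh vertices outside their supports'' assumes it. But in a saturated model of size $\kappa$ the set $S_-$ also has size $\kappa$, so $\kappa$-saturation says nothing about types over it.

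The paper writes the condition over $X$ alone. For each $n$ and finite $w$, the type includes the formula saying there are no $n$ $C_2$-elements $\beta_0,\dots,\beta_{n-1}$ of vertex width $1$ with $x\geq\bigwedge_{j<n}\beta_j\wedge\bigwedge_{i\in w}\alpha_i$.

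Once the vertices are quantified away, freshness is irrelevant; large vertex width is what makes each finite part satisfiable. First, since the parameters $\alpha_i$ live in $S$, identify $S$ with $S(\widetilde{G_{\Gamma_*}\times C_2^{I}})$ via Theorem \ref{thm: thirdstep}(2). Do not work in $S(\widetilde{G_\Gamma\times C_2^{\mu}})$: for $\mu=0$ it has no elements of infinite vertex width at all.

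Take $x$ of vertex width $m$ exceeding every $n$ and every $k$ in the finitely many conditions $\neg\varphi_k$. Suppose $d([x])=a+b$, with $a$ in the span of $n$ coordinate projections and $b$ in the span of the $d([\alpha_i])$. Then $b\in(\mathbb{F}_2^{V_*})^*$, where $V_*$ is the vertex set of $\Gamma_*$. The $\alpha_i$ are independent over $S_-$, so by Lemma \ref{lem: dictionary}(2) the $d([\alpha_i])$ are linearly independent modulo $(\mathbb{F}_2^{V_*})^*$. Hence $b=0$, and $x$ would have vertex width at most $n<m$, a contradiction. Your parity element on many vertices is a fine witness, but this is why it works.
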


\begin{proof}
    By a standard forcing and absoluteness argument (see, e.g., \cite{halevi2023saturated}), we may assume that for some regular uncountable cardinal $\kappa$, all complete theories in a countable language have a saturated model of cardinality $\kappa$. We will show that saturated models of size $\kappa$ of $T_{\Gamma,\mu}$ and $T_{\Gamma,\mu'}$ are isomorphic.   

    First, we note that if $S \models T_{\Gamma,\mu}$ is a $\kappa$-saturated model, then $\mathrm{dim}_{\infty}(S) \geq \kappa$. By Lemma \ref{lem: arbitrary vertex width}, $p_{\infty}$ is a consistent partial type so, by saturation, $\mathrm{dim}_{\infty}(S) \geq 1$. Moreover, if $\lambda < \kappa$ and we are given a sequence $J = (\alpha_{i})_{i < \lambda}$ of elements of $p_{\infty}(S)$ with $\alpha_{i} \not\in \langle S_{-}, \alpha_{<i} \rangle$ for all $i < \lambda$, then consider the partial type over $J$ which asserts that for each $n < \omega$ and each finite $w \subseteq \lambda$, there are not $n$ $C_{2}$-elements of vertex width $1$ such that $x$ is greater than or equal to the common meet of these $n$ $C_{2}$-elements and $(\alpha_{i})_{i \in w}$ in the partial order $\geq$ on $S$. Because there are infinitely many $C_{2}$-elements in $S$, this is a consistent partial type over a set of parameters of size $<\kappa$, which must be realized in $S$.  Additionally, if $\alpha_{*}$ is a realization, then $\alpha_{*} \not\in \langle S_{-}, \alpha_{<\lambda} \rangle$. It follows that $\mathrm{dim}_{\infty}(S) \geq \kappa$. 

    To conclude, suppose $S \models T_{\Gamma,\mu}$ and $S' \models T_{\Gamma',\mu'}$ are both saturated models of size $\kappa$. Then, the interpreted graphs $\Gamma(S)$ and $\Gamma(S')$ are elementarily equivalent and saturated of size $\kappa$ so $\Gamma(S) \cong \Gamma(S')$.  Moreover, by the above, we have $\mathrm{dim}_{\infty}(S) = \kappa = \mathrm{dim}_{\infty}(S')$.  Thus, by Lemma \ref{lem: complete invariants}, $S \cong S'$. In particular, $S$ and $S'$ have the same theory, so $T_{\Gamma,\mu} = T_{\Gamma',\mu'}$. 
\end{proof}

From here on out, we will drop the subscript $\mu$ and refer to the theory of $S(\widetilde{G_{\Gamma}})$ as $T_{\Gamma}$. Let $\overline{\kappa}$ denote a sufficiently large cardinal. Given an infinite graph $\Gamma$, we will choose some $\Gamma_{*}$ which is $\overline{\kappa}$-saturated and strongly $\overline{\kappa}$-homogeneous, and we will denote by $\mathbb{S}_{\Gamma} \models T_{\Gamma}$ the model 
$$
\mathbb{S}_{\Gamma} = S(\reallywidetilde{G_{\Gamma_{*}}\times C_{2}^{\overline{\kappa}}}).
$$
This will turn out (see Lemma \ref{lem: monster} below) to be a $\overline{\kappa}$-saturated and strongly $\overline{\kappa}$-homogeneous model of $T_{\Gamma}$, and thus can serve as a monster model of $T_{\Gamma}$. We will use the word \emph{small} to mean of cardinality $<\overline{\kappa}$. If $\Gamma_{0} \subseteq \Gamma_{*}$ is an induced subgraph, then the subsystem generated by the $D_{p}$-elements corresponding to the vertices of $\Gamma_{0}$ and the $W$-elements corresponding to the edges of $\Gamma_{0}$ may be canonically identified with $S(G_{\Gamma_{0}})$.  Moreover, by Corollary \ref{cor: unique subgroup}, the subsystem generated by the $\alpha$ such that $[\alpha]$ is a Frattini $pq$-cover of some $[\beta]$ for $\beta \in S(G_{\Gamma_{0}})$ can be canonically identified with $S({_{pq}\widetilde{G_{\Gamma_{0}}}})$. Thus, we will refer to these subsystems of $\mathbb{S}_{\Gamma}$ by $S(G_{\Gamma_{0}})$ and $S({_{pq}\widetilde{G_{\Gamma_{0}}}})$.  Note, there are, in general, many subsystems $S \supseteq S(G_{\Gamma_{0}})$ of $\mathbb{S}_{\Gamma}$ such that the induced epimorphism $G(S) \to G(S(G_{\Gamma_{0}}))$ is a universal Frattini cover so we cannot write $S(\widetilde{G_{\Gamma_{0}}})$ to unambiguously refer to a subsystem of $\mathbb{S}_{\Gamma}$. 

\subsection{Types and algebraic closure}

For the rest of the section, we will work with respect to a fixed infinite graph $\Gamma$ with $\mathbb{S}_{\Gamma} = S(\reallywidetilde{G_{\Gamma_{*}} \times C_{2}^{\overline{\kappa}}})$.

\begin{lem} \label{lem: further gcl props}
Suppose $S \subseteq \mathbb{S}_{\Gamma}$ is a small subsystem.
\begin{enumerate}
\item $S$ is graph closed if and only if $S_{+}$ is graph closed if and only if $S_{-}$ is graph closed. 
\item If $\Gamma_{0} \subseteq \Gamma_{*}$ is an induced subgraph and $(\gamma_{i})_{i < \lambda}$ is an independent sequence of $C_{2}$-elements of $\mathbb{S}_{\Gamma}$ of infinite vertex width, then $\langle S(G_{\Gamma_{0}}), (\gamma_{i})_{i < \lambda} \rangle$ is a graph closed subsystem of $\mathbb{S}_{\Gamma}$. 
\item If $S$ is graph closed and $\Gamma(S) \subseteq \Gamma' \subseteq \Gamma_{*}$ where $\Gamma'$ is an induced subgraph, then $\langle S,S(G_{\Gamma'}) \rangle$ is graph closed. 
\end{enumerate}
\end{lem}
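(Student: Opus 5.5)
The plan is to reduce everything to the $\mathbb{F}_{2}$-linear algebra of Lemma \ref{lem: dictionary}. The graph closure conditions only concern three kinds of elements: $D_{p}$-elements, $W$-elements, and $C_{2}$-elements of finite vertex width. Every $D_{p}$- or $W$-element of $S$ already lies in $S_{-}$.

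\textbf{Part (1).} For $S \Rightarrow S_{+}$ and $S_{+} \Rightarrow S_{-}$, argue directly.
\begin{itemize}
\item If $\alpha \in S_{+}$ (resp. $S_{-}$) is a $C_{2}$-element of finite vertex width, then $\alpha \in S$. The witnesses lie in $S$ by graph closure of $S$ (resp. $S_{+}$). Being $D_{p}$-elements, they lie in $S_{-} \subseteq S_{+}$.
\item The $W$-condition is handled the same way: the relevant $W$-elements lie in $S$ and hence in $S_{-}$.
\end{itemize}
For $S_{+} \Rightarrow S$, note that any $C_{2}$-element of $S$ lies in $S_{+}$ by definition, since $S_{+}$ is generated by the $C_{2}$-, $D_{p}$- and $W$-elements of $S$. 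Also, the $D_{p}$- and $W$-elements of $S$ and $S_{+}$ coincide, so the two closure conditions for $S$ and $S_{+}$ are literally the same.

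The delicate direction is $S_{-} \Rightarrow S$, and I expect this to be the main obstacle. A $C_{2}$-element $\alpha \in S \setminus S_{-}$ of finite vertex width must have its witnesses in $S$. The first attempt will be to show $\alpha \geq \bigwedge$ of the $D_{p}$-elements of $S$, using Lemma \ref{lem: dictionary}, so that $\alpha \in S_{-}$ and closure of $S_{-}$ applies. I am not sure this works in general. For instance, the subsystem generated by a single $C_{2}$-element of vertex width $2$ appears to have $S_{-}$ trivial but is not graph closed. So I will check whether a standing hypothesis is needed here, such as that the $C_{2}$-elements of finite width in $S$ lie in $S_{-}$. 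If so, I will state that direction with that hypothesis.

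\textbf{Part (2).} Set $T = \langle S(G_{\Gamma_{0}}), (\gamma_{i})_{i<\lambda}\rangle$. Independence of the $\gamma_{i}$ over $S(G_{\Gamma_{0}})$ lets me repeat the induction in Lemma \ref{lem: secondstep}(1) to get $T \cong S(G_{\Gamma_{0}} \times C_{2}^{\lambda})$.

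For the $C_{2}$-condition, let $\alpha \in T$ be a $C_{2}$-element. By Lemma \ref{lem: dictionary}, $d([\alpha])$ lies in the span of the coordinate functionals $\pi_{v}$ for $v \in V_{0}$ together with the $d([\gamma_{i}])$. If $\alpha$ has finite vertex width, then $d([\alpha])$ also lies in $(\mathbb{F}_{2}^{V})^{*}$. The $d([\gamma_{i}])$ are linearly independent modulo $(\mathbb{F}_{2}^{V})^{*}$, because each has infinite width and the sequence is independent over $S_{-}$. Hence the $\gamma$-coefficients of $d([\alpha])$ vanish. So $d([\alpha])$ is a sum of $\pi_{v}$ with $v \in V_{0}$, and the witnesses $\ker(\pi_{v})$ lie in $S(G_{\Gamma_{0}}) \subseteq T$.

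For the $W$-condition, Lemma \ref{lem: no unexpected quotients} shows that the $D_{p}$-elements of $T$ are exactly the vertex classes of $\Gamma_{0}$. Since $\Gamma_{0}$ is induced, any $W$-element below two of them is an edge class of $\Gamma_{0}$, which lies in $S(G_{\Gamma_{0}})$.

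\textbf{Part (3).} Let $\alpha$ be a $C_{2}$-element of $\langle S, S(G_{\Gamma'})\rangle$. Then $\alpha \geq \beta \wedge \delta$ with $\beta \in S$ and $\delta \in S(G_{\Gamma'})$.
\begin{itemize}
\item Since $C_{2}$ has trivial Frattini subgroup, and $G(S) \to G(S_{+})$ is a Frattini cover by Theorem \ref{thm: thirdstep}(1), I can replace $\beta$ by an element of $S_{+}$.
\item By Lemma \ref{lem: secondstep}(1), $S_{+} \cong S(G_{\Gamma(S)} \times C_{2}^{I})$, where the $C_{2}^{I}$ coordinates come from an independent sequence of elements of infinite width.
\item Thus $\langle S, S(G_{\Gamma'})\rangle_{+} = \langle S(G_{\Gamma'}), (\alpha_{i})_{i \in I}\rangle$, because $\Gamma(S) \subseteq \Gamma'$. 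Part (2) applies to this subsystem.
\end{itemize}
So both closure conditions hold for the $C_{2}$-, $D_{p}$- and $W$-elements of the join. The same reduction as in part (1), now legitimate because every finite-width $C_{2}$-element was shown to come from $S(G_{\Gamma'})$, gives graph closure of $\langle S, S(G_{\Gamma'})\rangle$.
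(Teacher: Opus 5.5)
Your plan follows the paper's route in all three parts: definition-chasing in (1), the $\mathbb{F}_{2}$-linear algebra of Lemma \ref{lem: dictionary} in (2), and in (3) pushing the $C_{2}$-elements of the join into $\langle S_{+}, S(G_{\Gamma'})\rangle$ and quoting (2). However, the key step of (3) is not justified by what you cite. You say that since $\Phi(C_{2})=1$ and $G(S)\to G(S_{+})$ is Frattini (Theorem \ref{thm: thirdstep}(1)), you may replace $\beta$ by an element of $S_{+}$. That Frattini property controls the $C_{2}$-quotients of $G(S)$ itself. Your $\alpha$, though, is a $C_{2}$-quotient of $[\beta\wedge\delta]\cong[\beta]\times_{[\beta\vee\delta]}[\delta]$, and the property does not pass to fiber products. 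For example, take $H=C_{4}\to H'=C_{2}$ (a Frattini cover) and $P=C_{4}\to Q=C_{2}$. The group $H\times_{Q}P=\{(a,b)\in(\mathbb{Z}/4)^{2} : a\equiv b \bmod 2\}$ has the homomorphism $(a,b)\mapsto (a-b)/2\in\mathbb{Z}/2$. It is nontrivial on $\ker(H\to H')\times 1=\{(0,0),(2,0)\}$, so it does not factor through $H'\times_{Q}P$.

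So you need the specific shape of $G(\langle S,S(G_{\Gamma'})\rangle)$. The paper gets it by adding one vertex $v\notin\Gamma(S)$ at a time. Graph closedness of $S$ forces $\alpha_{v}\notin S$, so $G(\langle S,\alpha_{v}\rangle)\cong G(S)\times C_{2}$. This is a direct product whose Frattini subgroup is $\Phi(G(S))\times 1$, hence its $C_{2}$-elements lie in $\langle S_{+},\alpha_{v}\rangle$. Moreover, the kernel of $G(\langle S,S(G_{\Gamma'})\rangle)\to G(\langle S,\alpha_{v}\rangle)$ is a $\{p,q\}$-group, which $C_{2}$-quotients cannot see. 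Alternatively, $G(\langle S,S(G_{\Gamma'})\rangle)\cong G(S)\times_{G_{\Gamma(S)}}G_{\Gamma'}$, and the projection $G_{\Gamma'}\to G_{\Gamma(S)}$ splits. Then $G(S)$ embeds as a complement, and any map to $C_{2}$ kills $\Phi(G(S))$ there. With this step supplied, the rest of your (3) goes through.

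Your treatment of (1) is correct and more careful than the paper's, whose proof only checks $S\Leftrightarrow S_{+}$. Your example $\langle\alpha\rangle$, with $\alpha$ of vertex width $2$, shows the $S_{-}$ clause is false as stated, so prove $S\Leftrightarrow S_{+}\Rightarrow S_{-}$ and stop there.

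In (2), the condition you actually use is that the $d([\gamma_{i}])$ are linearly independent modulo $(\mathbb{F}_{2}^{V})^{*}$, i.e.\ $\gamma_{i}\notin\langle(\mathbb{S}_{\Gamma})_{-},\gamma_{<i}\rangle$. That is the right condition, and the paper tacitly uses it when it asserts that a nonzero sum of the $d([\gamma_{y}])$ has infinite vertex width. It does not follow from ``independent'' in the sense defined before Lemma \ref{lem: exchange} together with infinite width. Take $d$-values $\pi_{v}+e$ and $e$, with $e\neq 0$ vanishing on the $\mathbb{F}_{2}^{V}$ factor and $v\notin\Gamma_{0}$: these are independent of infinite width, but the subsystem they generate with $S(G_{\Gamma_{0}})$ contains the width-$1$ element for $v$ without its witness. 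So take the stronger independence as the hypothesis. For the sequence used in (3) it holds by Lemma \ref{lem: transfer}(1).
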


\begin{proof}
(1) This is immediate from the definitions, since any $C_{2}$-element of finite vertex width in $S$ necessarily is an element of $S_{+}$, and additionally any $D_{p}$- or $W$-elements of $S$ also live in $S_{+}$. Thus $S$ is graph closed if and only if $S_{+}$ is graph closed.

(2) Let $S' = \langle S(G_{\Gamma_{0}}), (\gamma_{i})_{i < \lambda} \rangle$. Write $\Gamma_{0} = (V_{0},R_{0})$. Because each epimorphism $G(S') \to C_{2}$ must factor through the quotient of $G(S')$ by the product of the $p$- and $q$-Sylow subgroups, we have that if $\alpha \in S'$ is a $C_{2}$-element, $\alpha \in \langle S(C_{2}^{V_{0}}), (\gamma_{i})_{i < \lambda} \rangle = \langle (\alpha_{v})_{v \in V_{0}}, (\gamma_{i})_{i < \lambda} \rangle$, where $\alpha_{v}$ is a $C_{2}$-element of vertex width $1$ corresponding to the vertex $v$ for each $v \in V_{0}$. 

Suppose now that $\alpha \in S'$ is a $C_{2}$-element of finite vertex width $n$ and let $(\beta_{i})_{i < n}$ be a sequence of $C_{2}$-elements of vertex width $1$ corresponding to the witnesses. Then using Lemma \ref{lem: dictionary}, we have, for some finite sets $X \subseteq V_{0}$, $Y \subseteq \lambda$
$$
d([\alpha]) = \sum_{x \in X} d([\alpha_{x}]) + \sum_{y \in Y} d([\gamma_{y}]) = \sum_{i < n} d([\beta_{i}]),
$$
which entails 
$$
\sum_{y \in Y} d([\gamma_{y}]) = \sum_{i < n} d([\beta_{i}]) - \sum_{x \in X} d([\alpha_{x}]),
$$
but if non-zero, the sum on the left-hand side corresponds to a $C_{2}$-element of infinite vertex width, while the one on the right corresponds to a $C_{2}$-element of finite vertex width, so both sums must be zero. This shows $d([\alpha]) = \sum_{x \in X} d([\alpha_{x}])$, or, in other words, $\alpha \geq \bigwedge_{x \in X} \alpha_{x}$, so $\alpha \in S(G_{\Gamma_{0}}) \subseteq S'$. Since $\Gamma_{0}$ is an induced subgraph of $\Gamma_{*}$, we get, then, that $S'$ is graph closed.  

(3) It suffices to consider the case that the vertex set of $\Gamma'$ contains exactly one vertex $v$ not contained in $\Gamma(S)$, since then we may obtain the general case by iteration (possibly transfinitely). Let $\alpha_{v}$ be a $C_{2}$-element of $\mathbb{S}_{\Gamma}$ of vertex width $1$ corresponding to $v$. Then since $\alpha_{v} \not\in S$, we have $G(\langle S, \alpha_{v} \rangle) \cong G(S) \times C_{2}$. Thus the Frattini subgroup $\Phi(G(\langle S,\alpha_{v} \rangle))$ is just $\Phi(G(S))$. Since $\Phi(C_{2})= 1$, any epimorphism $G(\langle S,\alpha_{v} \rangle) \to C_{2}$ must factor through the quotient $G(\langle S,\alpha_{v} \rangle)/\Phi(G(S))$ which, by duality, entails that if $\alpha \in \langle S, \alpha_{v} \rangle$ is a $C_{2}$-element, then $\alpha \in \langle S_{+},\alpha_{v} \rangle$. By Lemma \ref{lem: secondstep}, we have $S_{+} = \langle S(G_{\Gamma(S)}), (\gamma_{i})_{i < \lambda} \rangle$ where $(\gamma_{i})_{i < \lambda}$ is an independent sequence of elements of infinite vertex width. Then by the proof of (2), we see that if $\alpha \in \langle S_{+},\alpha_{v} \rangle$ is a $C_{2}$-element of finite vertex width, then the witnesses must correspond to vertices in $\Gamma'$. Since any $C_{2}$-element of $\langle S, S(G_{\Gamma'}) \rangle$ must be contained in $\langle S_{+}, \alpha_{v} \rangle$ and $\Gamma'$ was chosen to be an induced subgraph of $\Gamma_{*}$, we see $\langle S, S(G_{\Gamma'}) \rangle$ is graph closed. 
\end{proof}

\begin{lem} \label{lem: type char}
    \begin{enumerate}
    \item Suppose $S_{0}$ and $S_{1}$ are small subsystems of $\mathbb{S}_{\Gamma}$ with $\mathrm{gcl}(S_{i}) = S_{i}$ for $i = 0,1$. If $\Xi :S_{0} \to S_{1}$ is an isomorphism such that $\Gamma(\Xi) : \Gamma(S_{0}) \to \Gamma(S_{1})$ is partial elementary with respect to $\mathrm{Th}(\Gamma)$, then $\Xi$ is partial elementary, with respect to $T_{\Gamma}$. 
    \item Suppose $S \models T_{\Gamma}$ is a small model of $T_{\Gamma}$. If $\Psi : S \to \mathbb{S}_{\Gamma}$ is an embedding of $L_{\mathrm{IS}}$-structures such that $\Gamma(\Psi)$ is elementary, with respect to $\mathrm{Th}(\Gamma)$, then $\Psi$ is an elementary embedding. 
    \end{enumerate}
\end{lem}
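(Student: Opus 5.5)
We prove (1) by a back-and-forth, and then derive (2) from it. We may assume $\Gamma_{*}$ is saturated enough that the partial elementary map $f_{0} = \Gamma(\Xi)$ extends to an automorphism $f$ of $\Gamma_{*}$. It suffices to construct an automorphism $\Sigma$ of $\mathbb{S}_{\Gamma} = S(\widetilde{G_{\Gamma_{*}} \times C_{2}^{\overline{\kappa}}})$ that extends $\Xi$. We build $\Sigma$ in three layers, following the three identification steps of Section \ref{sec: identification}.

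\emph{Layer one: $S_{-}$.} Since $S_{i}$ is graph closed, Lemma \ref{lem: graph closure is closure}(2) shows that $\Gamma(S_{i})$ is an induced subgraph of $\Gamma_{*}$. Lemma \ref{lem: firststep}(2) identifies $(S_{i})_{-}$ with $S(G_{\Gamma(S_{i})})$, and $\Xi$ restricts to an isomorphism $G_{\Gamma(S_{1})} \to G_{\Gamma(S_{0})}$ that induces $f_{0}$. Lemma \ref{lem: lifting}, applied with $f$ extending $f_{0}$, extends this to an automorphism of $G_{\Gamma_{*}}$ compatible with the projections. Equivalently, we obtain an automorphism $\Sigma_{-}$ of $(\mathbb{S}_{\Gamma})_{-}$ extending $\Xi|_{(S_{0})_{-}}$.

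\emph{Layer two: $S_{+}$.} By Lemma \ref{lem: secondstep}, $(S_{i})_{+} = \langle (S_{i})_{-}, (\alpha^{i}_{j})_{j<\lambda_{i}}\rangle$ for an independent family of $C_{2}$-elements of infinite vertex width. The map $\Xi$ sends such a family for $S_{0}$ to one for $S_{1}$. By Lemma \ref{lem: transfer}(1), applied to $S_{i} \subseteq \mathbb{S}_{\Gamma}$ (noting that $\mathbb{S}_{\Gamma}$ is itself graph closed), each family stays independent over $(\mathbb{S}_{\Gamma})_{-}$. Extend both families to bases of the complement of $(\mathbb{S}_{\Gamma})_{-}$ in $(\mathbb{S}_{\Gamma})_{+}$. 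Both complements have dimension $\overline{\kappa}$, while the given families are small, so the extensions can be chosen of equal length. Sending one extended basis to the other, and using the fiber product description $G((\mathbb{S}_{\Gamma})_{+}) \cong G_{\Gamma_{*}} \times C_{2}^{\overline{\kappa}}$ from the proof of Lemma \ref{lem: secondstep}, we extend $\Sigma_{-}$ to an automorphism $\Sigma_{+}$ of $(\mathbb{S}_{\Gamma})_{+}$. Lemma \ref{lem: transfer}(2) and Lemma \ref{lem: plus intersection} guarantee that $\Sigma_{+}$ agrees with $\Xi$ on $(S_{0})_{+}$, because $S_{0} \vee (\mathbb{S}_{\Gamma})_{+} = (S_{0})_{+}$.

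\emph{Layer three: the whole system, which is the main obstacle.} We must lift $\Sigma_{+}$ to an automorphism of $G(\mathbb{S}_{\Gamma})$ that is compatible with $\Xi$ on $S_{0}$. Dually, consider
$$
G(S_{0} \wedge (\mathbb{S}_{\Gamma})_{+}) \cong G(S_{0}) \times_{G((S_{0})_{+})} G((\mathbb{S}_{\Gamma})_{+}),
$$
which uses Lemma \ref{lem: plus intersection} to compute the join. The pair $(\Xi, \Sigma_{+})$ gives an isomorphism $S_{0} \wedge (\mathbb{S}_{\Gamma})_{+} \to S_{1} \wedge (\mathbb{S}_{\Gamma})_{+}$. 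By Theorem \ref{thm: thirdstep}(1),(2), each $G(S_{i}) \to G((S_{i})_{+})$ is a Frattini cover, and $G(\mathbb{S}_{\Gamma}) \to G((\mathbb{S}_{\Gamma})_{+})$ is the universal Frattini cover. So $G(\mathbb{S}_{\Gamma})$ is projective and covers the fiber product by a Frattini map. We want to show that the universal Frattini cover of $G((\mathbb{S}_{\Gamma})_{+})$ is "universal over" the fiber product, so that any isomorphism of fiber products lifts.

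First we would use Lemma \ref{lem: pq fibermaxxing}, together with Lemma \ref{lem:nfoldfiber} and the Sylow decomposition $\widetilde{G} \cong A \times_{B} F_{|V\cup I|}(2)$. The $pq$-part is canonical by Corollary \ref{cor: unique subgroup}, so it is carried along automatically. The $2$-part reduces to an embedding problem for the free pro-$2$ group $F_{\overline{\kappa}}(2)$ over a small quotient. Superprojectivity, i.e. the Iwasawa-type criterion for free pro-$2$ groups of large rank, solves this embedding problem. We then do a back-and-forth over finite pieces, or a single saturation argument, to get the desired automorphism. This layer is where the difficulty lies, and the rank hypothesis $|I| > |I_{0}|+|V|+\aleph_{0}$ of Lemma \ref{lem: pq fibermaxxing} is exactly what makes it work.

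\emph{Deriving (2).} A model $S$ is graph closed, and $\Psi(S)$ is a graph closed subsystem. If $\Gamma(\Psi)$ is elementary, then it is in particular partial elementary. Applying (1) to the isomorphism $\Psi^{-1}: \Psi(S) \to S$, where $S$ is viewed inside $\mathbb{S}_{\Gamma}$ via some elementary embedding (possible by saturation), shows that $\Psi$ is elementary.
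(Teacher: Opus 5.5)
Your layers one and two are the paper's argument in a different order. The paper first glues $\Theta$ and $\Xi$ over $(S_{0})_{-}$ using Lemma~\ref{lem: transfer}(2) and then adjoins the remaining $\alpha_{i}$. You instead build $\Sigma_{+}$ on all of $(\mathbb{S}_{\Gamma})_{+}$ and then glue with $\Xi$ over $(S_{0})_{+}$ via Lemma~\ref{lem: plus intersection}. Both work.

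\textbf{Layer three.} The gap is here. You leave it as an unexecuted plan aimed at the wrong tools, although your own preceding sentence already finishes the proof. Since $G(\mathbb{S}_{\Gamma}) \to G((\mathbb{S}_{\Gamma})_{+})$ is Frattini and factors through $G(S_{i} \wedge (\mathbb{S}_{\Gamma})_{+})$, the map $G(\mathbb{S}_{\Gamma}) \to G(S_{i} \wedge (\mathbb{S}_{\Gamma})_{+})$ is also Frattini, because its kernel is smaller. As $G(\mathbb{S}_{\Gamma})$ is projective, it is the universal Frattini cover of each of the two fiber products by Fact~\ref{fact2}(2). The isomorphism of fiber products induced by $(\Xi,\Sigma_{+})$ therefore lifts to an automorphism of $G(\mathbb{S}_{\Gamma})$ by uniqueness of the universal Frattini cover over its base (Fact~\ref{fact2}(1)). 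Dualizing gives the automorphism of $\mathbb{S}_{\Gamma}$, and this one line is exactly how the paper ends. Lemma~\ref{lem: pq fibermaxxing} plays no role here: it produces epimorphisms onto $n$-fold fiber products and is used for non-algebraicity in Proposition~\ref{prop: acl description}, not to build automorphisms. Likewise, ``superprojectivity plus a back-and-forth over finite pieces'' of a free pro-$2$ group of rank $\overline{\kappa}$ is not an argument as it stands.

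\textbf{Part (2).} Your derivation has two further problems.

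First, you get an elementary embedding of $S$ into $\mathbb{S}_{\Gamma}$ ``by saturation''. In the paper, saturation of $\mathbb{S}_{\Gamma}$ (Lemma~\ref{lem: monster}) is deduced from (2), so this is circular unless you prove saturation independently. One way is the device in the proof of Proposition~\ref{prop: only one theory}: choose $\overline{\kappa}$ so that saturated models of size $\overline{\kappa}$ exist, take $\Gamma_{*}$ saturated of size $\overline{\kappa}$, and identify a saturated elementary extension of $S$ of that size with $\mathbb{S}_{\Gamma}$ via Theorem~\ref{thm: thirdstep}(2) and Lemma~\ref{lem: complete invariants}. The paper instead writes the comparison embedding down explicitly, as the dual of a lift of the coordinate projection $G_{\Gamma'} \times C_{2}^{\overline{\kappa}} \to G_{\Gamma_{0}} \times C_{2}^{I_{0}}$ with $\Gamma_{0} \preceq \Gamma' \cong \Gamma_{*}$. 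That route still has to show that this particular embedding is elementary.

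Second, you assert that $\Psi(S)$ is graph closed. This does not follow from the hypotheses, and without it (2) fails:
\begin{itemize}
\item Let $S = S(\widetilde{G_{\Gamma_{0}} \times C_{2}})$ with $\Gamma_{0} \preceq \Gamma_{*}$ small; this is a model of $T_{\Gamma}$ by Proposition~\ref{prop: only one theory}.
\item Pick a vertex $v \in \Gamma_{*} \setminus \Gamma_{0}$, and let $\Psi$ be dual to a lift of the epimorphism $(g,c) \mapsto (\pi_{\Gamma_{0}}(g), \tau(\pi_{v}(g)))$ from $G_{\Gamma_{*}} \times C_{2}^{\overline{\kappa}}$ onto $G_{\Gamma_{0}} \times C_{2}$.
\item Then $\Gamma(\Psi)$ is the inclusion $\Gamma_{0} \preceq \Gamma_{*}$, so it is elementary.
\item The $C_{2}$-element of $S$ coming from the extra $C_{2}$ factor has infinite vertex width in $S$, but its image has vertex width $1$, witnessed by $v$. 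So $\Psi$ is not elementary by Lemma~\ref{lem: definability of vertex width}.
\end{itemize}
Thus (2) needs the extra hypothesis that $\Psi(S)$ be graph closed. This does hold for the coordinatewise embedding used in Lemma~\ref{lem: monster}, by Lemma~\ref{lem: further gcl props}(1),(2). The paper's own proof uses it tacitly when it applies (1) to $\Psi(S)$.
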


\begin{proof}
    (1) Write $\Gamma_{i}$ for $\Gamma(S_{i})$ for $i = 0,1$. Note that we have $(S_{i})_{-} = S(G_{\Gamma_{i}})$ for $i = 0,1$. Because $\Gamma(\Xi): \Gamma_{0} \to \Gamma_{1}$ is partial elementary and $\Gamma_{*}$ is strongly $\overline{\kappa}$-homogeneous, there is some $\sigma \in \mathrm{Aut}(\Gamma_{*})$ extending $\Gamma(\Xi)$. We know that $\Xi$ restricts to $S(G_{\Gamma_{0}}) \subseteq S(\widetilde{G_{\Gamma_{0}} \times C_{2}^{I_{0}}})$ to give an isomorphism between $S(G_{\Gamma_{0}})$ and $S(G_{\Gamma_{1}})$. Thus, by Lemma \ref{lem: lifting}, there is some $\Theta : S(G_{\Gamma_{*}}) \to S(G_{\Gamma_{*}})$ such that $\Gamma(\Theta) = \sigma$ and $\Theta$ extends $\Xi|_{S(G_{\Gamma_{0}})}$. By Lemma \ref{lem: transfer}(2), $(\mathbb{S}_{\Gamma})_{-} \vee S_{i} = (S_{i})_{-}$ for $i = 0,1$ entails that 
    $$
    \langle (\mathbb{S}_{\Gamma})_{-}, S_{i} \rangle = (\mathbb{S}_{\Gamma})_{-} \wedge S_{i} \cong S(G_{\Gamma_{*}} \times_{G_{\Gamma_{i}}} G(S_{i}))
    $$
    for $i = 0,1$.  Therefore, there is a unique map
    $$
    \Psi_{0} : \langle S(G_{\Gamma_{*}}), S_{0} \rangle \to \langle S(G_{\Gamma_{*}}), S_{1} \rangle
    $$
    extending $\Xi$ and $\Theta$. Choose a maximal sequence $(\alpha_{i})_{i < \gamma}$ of elements in $S_{0}$ with $[\alpha_{i}] \cong C_{2}$ and $\alpha_{i} \not\in \langle (S_{0})_{-}, \alpha_{<i} \rangle$. Then, again using the fact that $(\mathbb{S}_{\Gamma})_{-} \vee S_{0} = (S_{0})_{-}$, we have $\alpha_{i} \not\in \langle (\mathbb{S}_{\Gamma})_{-}, \alpha_{<i} \rangle$ for all $i < \gamma$, so we may extend this sequence to $(\alpha_{i})_{i < \overline{\kappa}}$ which is a maximal sequence of elements in $(\mathbb{S}_{\Gamma})_{+}$ such that $[\alpha_{i}] \cong C_{2}$ and $\alpha_{i} \not\in \langle (\mathbb{S}_{\Gamma})_{-}, \alpha_{<i} \rangle$ for all $i < \overline{\kappa}$. Define $\alpha'_{i} = \Xi(\alpha_{i})$ for $i < \gamma$. Then $\alpha'_{i} \not\in \langle (S_{1})_{-}, \alpha'_{<i} \rangle$ for all $i < \gamma$, and we may likewise extend this sequence to a maximal sequence $(\alpha'_{i})_{i < \overline{\kappa}}$ in $(\mathbb{S}_{\Gamma})_{+}$ such that $\alpha'_{i} \not\in \langle (\mathbb{S}_{\Gamma})_{-}, \alpha'_{<i} \rangle$. By , we have 
    $$
    \langle (\mathbb{S}_{\Gamma})_{-}, \alpha_{<\overline{\kappa}}\rangle = \langle (\mathbb{S}_{\Gamma})_{-}, \alpha'_{<\overline{\kappa}} \rangle = (\mathbb{S}_{\Gamma})_{+}. 
    $$
    Moreover, using Fact \ref{fact2}(3) and the fact that $\Phi(C_{2})=1$, we have
    $$
    \alpha_{i} \not\in \tilde{S}
    $$
    for any subsystem $\tilde{S} \supseteq \langle (\mathbb{S}_{\Gamma})_{-}, \alpha_{<i}\rangle$ such that the induced map $G(\tilde{S}) \to G(\langle (\mathbb{S}_{\Gamma})_{-},\alpha_{<i} \rangle)$ is Frattini, for all $i < \overline{\kappa}$. In particular, since $G(S_{0}) \to G((S_{0})_{+})$ is necessarily Frattini, we have $\alpha_{i} \not\in \langle (\mathbb{S}_{\Gamma})_{-}, S_{0}, \alpha_{<i} \rangle$ for all $\gamma \leq i < \overline{\kappa}$. By an identical argument, we have $\alpha'_{i} \not\in \langle (\mathbb{S}_{\Gamma})_{-}, S_{1}, \alpha'_{<i} \rangle$ for all $\gamma \leq i < \overline{\kappa}$.  Thus, the map $\Psi_{0}$ extends to a map $\Psi_{1}: \langle (\mathbb{S}_{\Gamma})_{+}, S_{0} \rangle \to \langle (\mathbb{S}_{\Gamma})_{+}, S_{1} \rangle$, defined by mapping $\alpha_{i} \mapsto \alpha'_{i}$ for all $i < \overline{\kappa}$, since $\langle (\mathbb{S}_{\Gamma})_{+}, S_{0} \rangle = \langle (\mathbb{S}_{\Gamma})_{-}, S_{0}, (\alpha_{i})_{\gamma \leq i < \overline{\kappa}} \rangle$ and $\langle (\mathbb{S}_{\Gamma})_{+}, S_{1} \rangle = \langle (\mathbb{S}_{\Gamma})_{-}, S_{1}, (\alpha'_{i})_{\gamma \leq i < \overline{\kappa}} \rangle $. Then $\Psi_{1}$ lifts to an isomorphism $\Psi_{2}$ between universal Frattini covers of the domain and codomain, which is an automorphism $\Psi_{2} : \mathbb{S}_{\Gamma} \to \mathbb{S}_{\Gamma}$. 

    (2) Given a small $S \models T_{\Gamma}$, by Theorem \ref{thm: thirdstep}, we can, without loss of generality, assume $S = S(\reallywidetilde{G_{\Gamma_{0}}\times C_{2}^{I_{0}}})$ for some small $\Gamma_{0} \models \mathrm{Th}(\Gamma)$ and small set $I_{0} \subseteq \overline{\kappa}$. Since $\Gamma_{*}$ is $\overline{\kappa}$-saturated, we can find some $\Gamma' \cong \Gamma_{*}$ such that $\Gamma_{0} \preceq \Gamma'$.  The projection $\pi : G_{\Gamma'} \times C_{2}^{\overline{\kappa}} \to G_{\Gamma_{0}} \times C_{2}^{I_{0}}$ lifts to an epimorphism $\tilde{\pi} : \reallywidetilde{G_{\Gamma'}\times C_{2}^{\overline{\kappa}}} \to \reallywidetilde{G_{\Gamma_{0}} \times C_{2}^{I_{0}}}$. Then $S(\tilde{\pi}): S(\reallywidetilde{G_{\Gamma_{0}}\times C_{2}^{I_{0}}}) \to S(\reallywidetilde{G_{\Gamma'}\times C_{2}^{\overline{\kappa}}})$ is an $L_{\mathrm{IS}}$-embedding, so there is some $S' \supseteq S$ and an isomorphism $\Theta : S' \to S(\reallywidetilde{G_{\Gamma'}\times C_{2}^{\overline{\kappa}}})$ such that $\Theta|_{S} = \Psi$. Moreover, since $\Gamma' \cong \Gamma_{*}$, it is clear that $S(\reallywidetilde{G_{\Gamma'}\times C_{2}^{\overline{\kappa}}}) \cong \mathbb{S}_{\Gamma}$, so there is an isomorphism $\Theta' : S' \to \mathbb{S}_{\Gamma}$. Thus, we have $S \equiv \Theta'(S)$. And, by (1), $\Psi(S) \equiv S$. This shows $S \equiv \Psi(S)$, so $\Psi$ is elementary. 
\end{proof}

\begin{lem} \label{lem: monster}
The structure $\mathbb{S}_{\Gamma}$ is a monster model of $T_{\Gamma}$\textemdash that is, $\mathbb{S}_{\Gamma}$ is $\overline{\kappa}$-saturated and strongly $\overline{\kappa}$-homogeneous.
\end{lem}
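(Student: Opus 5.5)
Both properties will be derived from Lemma \ref{lem: type char}(1), which says that an isomorphism between small graph closed subsystems inducing a partial elementary map of graphs is partial elementary. In fact its proof produces an automorphism $\Psi_{2}$ of $\mathbb{S}_{\Gamma}$ extending the isomorphism. We begin with strong $\overline{\kappa}$-homogeneity. Let $f : A \to B$ be a partial elementary map between small subsets of $\mathbb{S}_{\Gamma}$, where elementarity is with respect to $T_{\Gamma}$, taken in some ambient monster, or equivalently with respect to $\mathrm{Th}(\mathbb{S}_{\Gamma})$. First extend $f$ to $\mathrm{gcl}(A)$. By Lemma \ref{lem: graph closure is closure}(3), $\mathrm{gcl}(A) \subseteq \mathrm{acl}(A)$, and the construction of $\mathrm{gcl}$ in $\omega$ stages uses only $A$-definable data: witnesses of vertex width and $W$-classes below pairs of $D_{p}$-classes. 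So an elementary map on $A$ extends to an elementary map $\mathrm{gcl}(A) \to \mathrm{gcl}(B)$ that is onto. To see this, extend $f$ to $\mathrm{acl}(A)$ in the usual way, realizing the type over $\mathrm{acl}$ inside $\mathrm{acl}(B)$, which is possible since algebraic types have finitely many realizations and these all lie in $\mathbb{S}_{\Gamma}$. Then note that the image of $\mathrm{gcl}(A)$ is $\mathrm{gcl}(B)$, because the defining conditions of graph closure are first-order in the parameters.

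Next, the extended map $\Xi : \mathrm{gcl}(A) \to \mathrm{gcl}(B)$ is an $L_{\mathrm{IS}}$-isomorphism of subsystems. The induced map $\Gamma(\Xi)$ is partial elementary in $\mathrm{Th}(\Gamma)$, because $\Gamma(S)$ is interpreted uniformly in $S$: a graph formula over vertices translates into an $L_{\mathrm{IS}}$-formula over the corresponding $D_{p}$-elements. The proof of Lemma \ref{lem: type char}(1) then builds $\Psi_{2} \in \mathrm{Aut}(\mathbb{S}_{\Gamma})$ extending $\Xi$. It does so using $\sigma \in \mathrm{Aut}(\Gamma_{*})$ (available by strong homogeneity of $\Gamma_{*}$), Lemma \ref{lem: lifting}, the extension of the independent $C_{2}$-sequences of infinite vertex width to length $\overline{\kappa}$, and a lift to universal Frattini covers. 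This automorphism extends $f$, which gives strong homogeneity.

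For $\overline{\kappa}$-saturation, let $p(x)$ be a type over a small $A \subseteq \mathbb{S}_{\Gamma}$ that is consistent with $\mathrm{Th}(\mathbb{S}_{\Gamma}, a)_{a \in A}$. By Theorem \ref{thm: thirdstep}(2) and Proposition \ref{prop: only one theory}, $\mathbb{S}_{\Gamma}\models T_{\Gamma}$. Take a small model $S \preceq \mathbb{S}_{\Gamma}$ containing $A$ by downward Löwenheim--Skolem, and a small elementary extension $S \preceq S'$ realizing $p$ by some $c$. Identify $S'$ with $S(\widetilde{G_{\Gamma'_{0}} \times C_{2}^{I'}})$ via Theorem \ref{thm: thirdstep}(3), compatibly with a representation of $S$. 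We then need an embedding $\Psi : S' \to \mathbb{S}_{\Gamma}$ over $S$ such that $\Gamma(\Psi)$ is elementary. This comes in three steps. Saturation of $\Gamma_{*}$ gives an elementary embedding $\Gamma(S') \to \Gamma_{*}$ over $\Gamma(S)$. Lemma \ref{lem: lifting} extends the group isomorphisms to the $G_{\Gamma}$-parts. For the $C_{2}^{I'}$ part, choose new independent infinite-vertex-width $C_{2}$-elements in $\mathbb{S}_{\Gamma}$ outside $\langle(\mathbb{S}_{\Gamma})_{-}, S\rangle$, which is possible because there are $\overline{\kappa}$ many, and then lift to Frattini covers as in the proof of Lemma \ref{lem: type char}(1). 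Lemma \ref{lem: type char}(2) makes $\Psi$ elementary, so $\Psi(c)$ realizes $p$.

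The main obstacle is building the embedding $\Psi$ over $S$ while fixing $S$ exactly. The lift to universal Frattini covers is only unique up to automorphism over the base, so fixing $S$ pointwise needs care. If that fails, the fallback is to apply the universality of $\mathbb{S}_{\Gamma}$ given by Lemma \ref{lem: type char}(2): first embed $S'$ elementarily into $\mathbb{S}_{\Gamma}$ by any map, then compose with the automorphism from the homogeneity part, which moves the image of $S$ back onto $S$.
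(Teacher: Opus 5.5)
Your proposal is correct, and once you adopt your fallback for saturation it is essentially the paper's proof. Strong homogeneity comes from extending a partial elementary map to $\mathrm{acl}$ (hence to $\mathrm{gcl}$, Lemma~\ref{lem: graph closure is closure}(3)) and applying Lemma~\ref{lem: type char}(1); as you rightly note, what is needed is the automorphism built in its proof, not just its statement. $\overline{\kappa}$-universality comes from Theorem~\ref{thm: thirdstep}, an elementary embedding of the graph into $\Gamma_{*}$, sending the $C_{2}$-coordinates to independent infinite-vertex-width $C_{2}$-elements, a lift to universal Frattini covers, and Lemma~\ref{lem: type char}(2). Saturation then follows from universality plus homogeneity exactly as in your last paragraph.

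Your direct embedding over $S$ is therefore unnecessary. If you want it anyway, your worry disappears once you lift to universal Frattini covers over the subsystem generated by $S$ and the image of $S'_{+}$, rather than over the image of $S'_{+}$ alone. By Lemma~\ref{lem: plus intersection}, the group of that subsystem is a fiber product over $G(S_{+})$, and uniqueness of the universal Frattini cover over that base fixes $S$ pointwise.
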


\begin{proof}
By standard model theory, to show that $\mathbb{S}_{\Gamma}$ is $\overline{\kappa}$-saturated, we must show that it is $\overline{\kappa}$-universal and $\overline{\kappa}$-homogeneous (which is, of course, implied by being $\overline{\kappa}$-strongly homogeneous). So, towards showing universality, fix some $S \models T_{\Gamma}$ with $|S| \leq \overline{\kappa}$. By Theorem \ref{thm: thirdstep}, we may assume $S = S(\reallywidetilde{G_{\Gamma_{0}}\times C_{2}^{I_{0}}})$ for some graph $\Gamma_{0} = (V_{0},R_{0}) \models \mathrm{Th}(\Gamma)$ and set $I_{0}$ with $|\Gamma_{0}| \leq \overline{\kappa}$ and $|I_{0}| \leq \overline{\kappa}$. Since $\Gamma$ was chosen to be $\overline{\kappa}$-saturated, there is an elementary embedding $\iota_{0} : \Gamma_{0} \to \Gamma$ and, additionally, there is some injection $\iota_{1}: I_{0} \to \overline{\kappa}$. There is naturally an embedding $\Psi : S(G_{\Gamma_{0}}\times C_{2}^{I_{0}}) \to (\mathbb{S}_{\Gamma})_{-}$ with $\Psi([\ker(\pi_{v})]) = [\ker(\pi_{\iota_{0}(v)})]$ for all $v \in V_{0}$ and $\Psi([\ker(\pi_{i})]) = [\ker(\pi_{\iota_{1}(i)})]$ for all $i \in I_{0}$. If $\psi : G_{\Gamma} \times C_{2}^{\overline{\kappa}} \to G_{\Gamma_{0}} \times C_{2}^{I_{0}}$ is dual to $\Psi$, then we know there is a lift $\psi' : \reallywidetilde{G_{\Gamma} \times C_{2}^{\overline{\kappa}}} \to \reallywidetilde{G_{\Gamma_{0}}\times C_{2}^{I_{0}}}$ so the dual epimorphism $\Psi' : S \to \mathbb{S}_{\Gamma}$ gives an $L_{\mathrm{IS}}$-embedding which induces a $\mathrm{Th}(\Gamma)$-elementary embedding on interpreted graphs. By Lemma \ref{lem: type char}(2), we get $\Psi'$ is elementary. This shows $\mathbb{S}_{\Gamma}$ is $\overline{\kappa}$-universal. 

If $A,B \subseteq \mathbb{S}_{\Gamma}$ and $\Theta : A \to B$ is a partial elementary isomorphism, then $\Theta$ extends to an isomorphism between $\mathrm{acl}(A)$ and $\mathrm{acl}(B)$ so strong $\overline{\kappa}$-homogeneity follows from Lemma \ref{lem: type char}(1), using that the graph closure is contained in the algebraic closure, by Lemma \ref{lem: graph closure is closure}(3). 
\end{proof}

\begin{lem} \label{lem: pq cover system}
    Suppose $S = \mathrm{gcl}(S)$ is a small subsystem of $\mathbb{S}_{\Gamma}$. Then there is a unique subsystem $S'$ with $S \subseteq S' \subseteq \mathbb{S}_{\Gamma}$ and such that the epimorphism $G(S') \to G(S)$ induced by the inclusion $S \subseteq S'$ is a universal $pq$-Frattini cover of $G(S)$. 
\end{lem}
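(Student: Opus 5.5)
We plan to dualize everything and work with closed normal subgroups of $\tilde G := G(\mathbb{S}_{\Gamma}) = \reallywidetilde{G_{\Gamma_{*}} \times C_{2}^{\overline{\kappa}}}$. A subsystem $S' \supseteq S$ corresponds to a closed normal subgroup $M \subseteq N_S := \ker(\tilde G \to G(S))$, and the requirement is that $\tilde G/M \to \tilde G/N_S$ be a universal $pq$-Frattini cover. Uniqueness of $S'$ is then uniqueness of $M$, which is what Corollary \ref{cor: unique subgroup} and Lemma \ref{lem:unique pq cover} provide for quotients of the form $G_{\Gamma_0}\times C_2^{I_0}$.

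The first step is to identify $G(S)$ and its position in $\tilde G$. By Lemma \ref{lem: secondstep}(1) and Lemma \ref{lem: further gcl props}(1), $S_+ \cong S(G_{\Gamma(S)}\times C_2^{I_0})$. Using Lemma \ref{lem: secondstep}(2) together with Theorem \ref{thm: thirdstep}(3), applied to $S \subseteq \mathbb{S}_\Gamma$, we can choose the identification $(\mathbb{S}_\Gamma)_+ \cong S(G_{\Gamma_*}\times C_2^{\overline{\kappa}})$ so that $S_+$ is dual to a coordinate projection onto $G_{\Gamma(S)}\times C_2^{I_0}$, with $I_0 \subseteq \overline{\kappa}$. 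Here $\Gamma(S)$ is an induced subgraph by Lemma \ref{lem: graph closure is closure}(2).

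By Theorem \ref{thm: thirdstep}(1), $G(S) \to G(S_+)$ is Frattini. A universal $pq$-Frattini cover of $G(S)$ can therefore be described as the fiber product
$$G(S)\times_{G(S_+)} {_{pq}\widetilde{G(S_+)}}.$$
This works because $\ker(G(S)\to G(S_+))$ is contained in $\Phi(G(S))$, so it is pronilpotent. Write it as $K_2 \times K_{pq}$, where $K_2$ is its pro-$2$ part and $K_{pq}$ is the product of its pro-$p$ and pro-$q$ parts. Then ${_{pq}\widetilde{G(S)}}$ is the fiber product of $G(S)$ with the universal $pq$-cover of $G(S)/K_{pq}$, and one checks that this reduces to the $S_+$ level.

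Concretely, we define $S' := \langle S, S''\rangle$, where $S''$ is the subsystem dual to the unique $M_+$ given by Corollary \ref{cor: unique subgroup} (together with the subsequent Remark, which handles infinite $I_0$) for the quotient $G_{\Gamma(S)}\times C_2^{I_0}$. Dually, $M = N_S \cap M_+$.

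Existence then comes down to checking that $\tilde G/(N_S\cap M_+) \cong G(S)\times_{G(S_+)} \tilde G/M_+$ is a universal $pq$-Frattini cover of $G(S)$. The fiber product isomorphism needs
$$N_S M_+ = N_{S_+},$$
dually $S \vee S'' = S_+$. We would argue this via Corollary \ref{cor: nice intersections}, or directly: any element of $S\vee S''$ has order dividing both $|G(S)$-quotients$|$ and a quotient that is a $pq$-Frattini cover of something in $S_+$, so it lies in $S_+$ by Lemma \ref{lem: plus intersection}-style reasoning. It is Frattini because the fiber product of Frattini covers over a common base is Frattini, by Fact \ref{fact: Frattini char} checked on finite quotients. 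It is universal because its $p$- and $q$-Sylow subgroups are free, since they coincide with those of ${_{pq}\widetilde{G(S_+)}}$ and the $p$- and $q$-parts of $G(S)\to G(S_+)$ are trivial in the pro-$pq$ direction. We expect this last point to be the main obstacle: showing that $K_{pq}$ vanishes, i.e.\ that $G(S)\to G(S_+)$ is a $2$-Frattini cover. We would try to prove this by realizing $S$ inside $\mathbb{S}_\Gamma$ and noting that $\ker(\tilde G \to \tilde G/M_+\cdot(\text{2-part}))$ already accounts for all $p,q$-torsion above $S_+$.

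For uniqueness, let $M' \subseteq N_S$ be another subgroup giving a universal $pq$-Frattini cover. Then $M'N_{S_+}$ gives a $pq$-Frattini cover of $G(S_+)$ through which $G(S)$ is mediated. By universality and minimality, $\tilde G/M'$ maps onto $\tilde G/M$ and conversely, and these maps are isomorphisms over $G(S)$. Since both are quotients of $\tilde G$, equality of kernels then follows as in the proof of Lemma \ref{lem:unique pq cover}. The key ingredients there are that the kernels are pro-$\{p,q\}$, so they contain no $2$-torsion, and that the elements $(-1)_v$ force $y_v, z_r \in M'$ outside $\Gamma(S)$ and $I_0$. This reduces uniqueness to Corollary \ref{cor: unique subgroup} for $M' N_{S_+}$, after which $M' = N_S \cap M' N_{S_+} = M$ by the modular law.
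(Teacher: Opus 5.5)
Your final subsystem, $S'=\langle S,S''\rangle=S\wedge S''$, is the right one; it is exactly the paper's choice. The verification you give for it, however, rests on a false claim. The fiber product $G(S)\times_{G(S_{+})}{_{pq}\widetilde{G(S_{+})}}$ is a universal $pq$-Frattini cover of $G(S)$ only when $K_{pq}=1$. The step you flag as the main obstacle, that $G(S)\to G(S_{+})$ is a $2$-Frattini cover (equivalently $S\vee S''=S_{+}$), is false in general.

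Here is a counterexample. By Fact \ref{fact:quotientbyfrattini}, since $D_{p^{2}}/\Phi(D_{p^{2}})\cong D_{p}$, there is a $D_{p^{2}}$-element $\alpha\in\mathbb{S}_{\Gamma}$. Then $S=\langle\alpha\rangle$ is graph closed, $S_{+}=S(D_{p})$, and $K_{pq}=C_{p}$. Moreover $S\subseteq S''$, so $S\vee S''=S\neq S_{+}$. The group $D_{p^{2}}\times_{D_{p}}(F_{1}(p)\rtimes C_{2})$ has $p$-Sylow $C_{p^{2}}\times_{C_{p}}F_{1}(p)\cong F_{1}(p)\times C_{p}$. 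This has torsion, so the fiber product is not even $pq$-projective, and no argument in the style of Lemma \ref{lem: plus intersection} can give $S\vee S''=S_{+}$. The general principle you invoke, that a fiber product of Frattini covers of a common base is Frattini, is also false: the diagonal of $F_{1}(p)\times_{C_{p}}F_{1}(p)$ maps onto $F_{1}(p)$ but is a proper subgroup. That principle holds only when the two kernels have coprime orders.

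The missing idea is to split $S$ itself, not just the kernel. Let $S_{2},S_{pq}\subseteq S$ be dual to $G(S)/K_{pq}$ and $G(S)/K_{2}$. Then $S=S_{2}\wedge S_{pq}$ and $S_{+}=S_{2}\vee S_{pq}$, with $G(S_{2})\to G(S_{+})$ a $2$-Frattini cover and $G(S_{pq})\to G(S_{+})$ a $pq$-Frattini cover.
\begin{itemize}
\item Universality of ${_{pq}\widetilde{G(S_{+})}}$, projectivity of $G(\mathbb{S}_{\Gamma})$, and the uniqueness in Corollary \ref{cor: unique subgroup} together give $S_{pq}\subseteq S''$. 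Hence $S'=S\wedge S''=S_{2}\wedge S''$.
\item Now $S_{2}\vee S''=S_{+}$ does hold, by coprimality. So $G(S')\cong G(S_{2})\times_{G(S_{+})}G(S'')$.
\item This maps onto $G(S)\cong G(S_{2})\times_{G(S_{+})}G(S_{pq})$ via $\mathrm{id}\times\rho$, which is Frattini with pro-$pq$ kernel. Also $G(S')$ has the same free $p$- and $q$-Sylows as $G(S'')$.
\end{itemize}
In short, the $pq$-part of $G(S)$ must be absorbed as a quotient of ${_{pq}\widetilde{G(S_{+})}}$, not fibered against it.

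Your uniqueness argument also fails as written. Since $M'\subseteq N_{S}\subseteq N_{S_{+}}$, you have $M'N_{S_{+}}=N_{S_{+}}$, so that step carries no information. Moreover, two quotients of $\tilde G$ that are isomorphic over $G(S)$ need not have equal kernels. The same splitting repairs it. For a competitor $S_{*}$, the kernel of the Frattini map $G(S_{*})\to G(S_{+})$ splits into a $2$-part and a $pq$-part.
\begin{itemize}
\item The $pq$-part subsystem is a $pq$-projective $pq$-Frattini cover of $G(S_{+})$, so it equals $S''$ by Corollary \ref{cor: unique subgroup}.
\item The $2$-part subsystem is $S_{2}$, because $G(S_{*})\to G(S)$ has pro-$pq$ kernel.
\end{itemize}
Hence $S_{*}=S''\wedge S_{2}=S'$.
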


\begin{proof}
    By Lemma \ref{lem: secondstep}, we may represent $S_{+}$ as $S(G_{\Gamma_{0}} \times C_{2}^{I_{0}})$ and $(\mathbb{S}_{\Gamma})_{+}$ as $S(G_{\Gamma} \times C_{2}^{\overline{\kappa}})$ so that $\Gamma_{0} \subseteq \Gamma$ is an induced subgraph, $I_{0} \subseteq \overline{\kappa}$ is a subset, and the epimorphism $G_{\Gamma} \times C_{2}^{\overline{\kappa}} \to G_{\Gamma_{0}} \times C_{2}^{I_{0}}$ is the projection to the $\Gamma_{0}$ and $I_{0}$ coordinates. By Corollary \ref{cor: unique subgroup} and duality, there is a unique subsystem $S''$ with $S_{+} \subseteq S'' \subseteq \mathbb{S}_{\Gamma}$ such that the epimorphism $G(S'') \to G(S_{+})$ dual to the inclusion $S_{+} \subseteq S''$ is a $pq$-universal Frattini cover. 
    
Moreover, by Theorem \ref{thm: thirdstep}(1), the map $G(S) \to G(S_{+})$ induced by the inclusion $S_{+} \subseteq S$ is a Frattini cover (although not necessarily the universal one). If $N$ is the kernel of this epimorphism, then, since $N \subseteq \Phi(G(S))$, we know $N$ is a direct product of two closed normal subgroups $N_{2}$ and $N_{pq}$ where $N_{2}$ is a $2$-group and $N_{pq}$ has only finite quotients whose order is divisible by $p$ or $q$ \cite[Proposition 2.3.8]{ribes2010profinite}. Let $S_{2}$ and $S_{pq}$ be the subsystems of $S$ corresponding to the quotients $G(S)/N_{2}$ and $G(S)/N_{pq}$ . Then we have $S = S_{2} \wedge S_{pq}$ and $S_{+} = S_{2} \vee S_{pq}$. By uniqueness of the subsystem corresponding to the universal $pq$-Frattini cover of $S_{+}$, we must have $S_{pq} \subseteq S''$. Note that $S'' \vee S_{2} = S_{+}$. 

Let $S' = S \wedge S''$.  We clearly have $S \subseteq S'$ and we claim that the epimorphism $G(S') \to G(S)$ is a universal $pq$-Frattini cover. The $p$- and $q$-Sylows of $G(S')$ are isomorphic to the $p$-and $q$-Sylows of $G(S'')$ since the kernel of the epimorphism $G(S') \to G(S'')$ induced by the inclusion $S'' \subseteq S'$ is a $2$-group. Since $S''$ is a universal $pq$-Frattini cover, the $p$- and $q$-Sylows of $G(S'')$ and thus of $G(S')$ are free, which entails that $G(S')$ is $pq$-projective. Moreover, from $S' = S_{2} \wedge S''$ and $S_{+} = S_{2} \vee S''$, we have  
$$
G(S') \cong G(S'') \times_{G(S_{+})} G(S_{2})
$$
and, recall, we also have 
$$
G(S'') \cong G(S_{pq}) \times_{G(S_{+})} G(S_{2}).
$$
Thus if $\rho : G(S'') \to G(S_{pq})$ denotes the epimorphism dual to the inclusion $S_{pq} \subseteq S''$, we have that the inclusion $S'' \subseteq S'$ induces $(\rho,\mathrm{id}_{G(S_{2})}) : G(S'') \times_{G(S_{+})} G(S_{2}) \to G(S_{pq}) \times_{G(S_{+})} G(S_{2})$, which is Frattini, since $\rho$ is. Thus $G(S') \to G(S)$ is a universal $pq$-Frattini cover. 

Uniqueness follows because we did not make any choices in the decomposition of $S'$. More precisely, suppose $S \subseteq S_{*} \subseteq \mathbb{S}_{\Gamma}$ and the epimorphism $G(S_{*}) \to G(S)$ dual to the inclusion $S \subseteq S_{*}$ is a universal $pq$-Frattini cover, then $S_{*} = S'$. Since this epimorphism $G(S_{*}) \to G(S)$ is Frattini and the epimorphism $G(S) \to G(S_{+})$ dual to the inclusion $S_{+} \subseteq S$ is also Frattini, the composition $G(S_{*}) \to G(S_{+})$ is also Frattini. Then the kernel of this map splits as a direct product of closed normal subgroups $N_{2,*}$ and $N_{pq,*}$ as above, corresponding to subsystems $S_{*,2}$ and $S_{*,pq}$ with $S_{*,2} \vee S_{*,pq} = S_{+}$ and $S_{*,2} \wedge S_{*,pq} = S_{*}$. Then, as above, $G(S_{*,pq})$ is $pq$-projective and the induced map $G(S_{*,pq}) \to G(S_{+})$ is $pq$-Frattini so this map is the universal $pq$-Frattini cover of $S_{+}$ which is unique, so $S_{*,pq} = S''$ and, likewise, the fact that $G(S_{*}) \to G(S)$ is a $pq$-Frattini cover entails $S_{*,2} = S_{2}$, so we obtain $S_{*} = S_{*,pq} \wedge S_{*,2} = S'' \wedge S_{2} = S'$. 
\end{proof}

\begin{defn}
Given a set $A \subseteq \mathbb{S}_{\Gamma}$, define $\mathrm{gcl}_{pq}(A)$ to be the unique subsystem $S$ with $\mathrm{gcl}(A) \subseteq S \subseteq \mathbb{S}_{\Gamma}$ such that the epimorphism $G(S) \to G(\mathrm{gcl}(A))$, dual to the inclusion $\mathrm{gcl}(A) \subseteq S$, is a universal $pq$-Frattini cover (the existence and uniqueness of which is guaranteed by Lemma \ref{lem: pq cover system}). 
\end{defn}

\begin{lem} \label{lem: pq acl containment}
    For any small set $A \subseteq \mathbb{S}_{\Gamma}$, $\mathrm{gcl}_{pq}(A) \subseteq \mathrm{acl}(A)$. 
\end{lem}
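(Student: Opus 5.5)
Since $\mathrm{gcl}(A) \subseteq \mathrm{acl}(A)$ by Lemma \ref{lem: graph closure is closure}(3), and the subsystem generated by a subset of $\mathrm{acl}(A)$ lies in $\mathrm{acl}(A)$ (meets and upward closure are definable, and each element has only finitely many elements above it), it suffices to show $\mathrm{gcl}_{pq}(A) \subseteq \mathrm{acl}(\mathrm{gcl}(A))$. Put $S = \mathrm{gcl}(A)$. The plan is an automorphism argument. By Lemma \ref{lem: pq cover system}, $S' := \mathrm{gcl}_{pq}(A)$ is the \emph{unique} subsystem $S \subseteq S' \subseteq \mathbb{S}_{\Gamma}$ for which $G(S') \to G(S)$ is a universal $pq$-Frattini cover. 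Any $\sigma \in \mathrm{Aut}(\mathbb{S}_{\Gamma}/S)$ sends $S'$ to a subsystem with the same property over $S$ (the defining condition is preserved by isomorphisms fixing $S$ pointwise). Hence $\sigma(S') = S'$, so $S'$ is setwise invariant under $\mathrm{Aut}(\mathbb{S}_{\Gamma}/S)$.

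Invariance alone does not give algebraicity, so the main step is a finiteness argument. Fix $\alpha \in S'$ with $[\alpha] = B$ finite. Let $\beta \in S$ be the image of $\alpha$ under the projection, i.e. $\beta$ is the identity-coset representative of $[\alpha] \vee S$, a finitely generated subsystem quotient. I claim $[\alpha]$ is a $pq$-Frattini cover of a quotient of $G(S)$ which can be taken to be $[\beta']$ for some $\beta' \in S$. Then $[\alpha] \to [\beta']$ is a Frattini cover, so $|B|$ is bounded: Frattini covers of a finite group of a given order have exponent bounded by the universal $pq$-Frattini cover of rank $\leq \log_2|[\beta']|$. We then consider the set
\[
Y_{\beta',B} = \{\alpha' \in \mathbb{S}_{\Gamma} : \alpha' \leq \beta',\ [\alpha'] \cong B,\ [\alpha'] \to [\beta'] \text{ is Frattini}\}.
\]
This set is $\beta'$-definable, since being Frattini is expressible on the finite group $[\alpha']$. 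Because $S$ is $\overline{\kappa}$-small, the orbit of $\alpha$ under $\mathrm{Aut}(\mathbb{S}_{\Gamma}/S)$ lies inside $S' \cap Y_{\beta',B}$.

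Finiteness of $S' \cap Y_{\beta',B}$ comes from the concrete model. Using Lemma \ref{lem: secondstep} and Theorem \ref{thm: thirdstep}, identify $S_+$ with $S(G_{\Gamma_0}\times C_2^{I_0})$. By Lemma \ref{lem:unique pq cover} and Corollary \ref{cor: unique subgroup}, $S'$ then corresponds to $G(S)\times_{G(S_+)} {_{pq}\widetilde{(G_{\Gamma_0}\times C_2^{I_0})}}$. In $G(S')$ there are only finitely many open normal subgroups of index $|B|$ whose quotient maps onto $[\beta']$. This holds because $G(S')$ has only finitely many open normal subgroups of a given index contained in a fixed open subgroup — true since $\ker(G(S') \to [\beta'])$ is an open subgroup of a profinite group, hence finitely generated modulo the relevant Frattini kernel, as the latter is pro-nilpotent with free $p,q$-Sylows of finite rank over a finite quotient.

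With finiteness in hand, $\alpha$ has finitely many conjugates over $S$, so $\alpha \in \mathrm{acl}(S)$ by saturation and homogeneity (Lemma \ref{lem: monster}). I expect the main obstacle to be exactly this finiteness: turning invariance into a bounded orbit. The cleanest route is to show that each finite quotient of $S'$ factors through $[\gamma]$ for some $\gamma$ in $\langle S, S'_{\beta'} \rangle$, where $S'_{\beta'}$ is the universal $pq$-Frattini cover of the finite group $[\beta']$. Since that cover is a finitely generated profinite group, it has only finitely many quotients of each order, which bounds the number of candidates for $\alpha$.
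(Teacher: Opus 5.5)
\textbf{The finiteness step fails.} Your reduction to $S=\mathrm{gcl}(A)$ is fine, and so is the observation that $S'=\mathrm{gcl}_{pq}(A)$ is setwise invariant under $\mathrm{Aut}(\mathbb{S}_{\Gamma}/S)$. But the step that carries the proof, finiteness of $S'\cap Y_{\beta',B}$, is false as stated.

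Take $A$ to be a small elementary submodel, so $S=\mathrm{gcl}(A)=A$. Then $G(S)$ is projective, so it has no proper $pq$-Frattini covers and $S'=S$. Its maximal pro-$2$ quotient is free pro-$2$ of infinite rank. So if $\beta'\in S$ is a $C_2$-element of vertex width $1$ and $B=C_4$, there are infinitely many $C_4$-elements of $S$ below $\beta'$, each mapping onto $[\beta']$ by a Frattini map.

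Your justification does not hold either. $G(S')$ is in general not finitely generated, so neither it nor its open subgroups need have only finitely many open normal subgroups of a given index. Requiring the quotient to be Frattini over a fixed finite quotient does not rescue this, because $2$-Frattini covers are genuinely non-algebraic (compare Lemma \ref{lem: acl description for free}). Your description of $G(S')$ as $G(S)\times_{G(S_+)}{_{pq}\widetilde{(G_{\Gamma_0}\times C_2^{I_0})}}$ is also wrong when $\ker(G(S)\to G(S_+))$ has a nontrivial pro-$pq$ part. In the model example above, that fiber product is a non-Frattini cover of $G(S)=G(S')$.

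\textbf{What is needed.} Two changes.

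First, the candidate set must use $pq$-Frattini covers, not Frattini covers. Take $\beta\in S$ to be the image of $\alpha$ in $G(S)$, as you first define it; then $[\alpha]\to[\beta]$ is $pq$-Frattini. Consider all $\alpha'\in\mathbb{S}_{\Gamma}$ with $\alpha'\le\beta$, $[\alpha']\cong[\alpha]$, and $[\alpha']\to[\beta]$ a $pq$-Frattini cover. This set is $\beta$-definable, so if it is finite you get $\alpha\in\mathrm{acl}(\beta)\subseteq\mathrm{acl}(S)$ directly, and the invariance of $S'$ is not needed.

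Second, finiteness of this set has to come from a \emph{finite graph-closed} base, which is how the paper argues:
\begin{enumerate}
\item Theorem \ref{thm: thirdstep}(1) gives $\gamma\in S_+$ with $\gamma\ge\beta$ and $[\gamma]\cong[\beta]/\Phi([\beta])$. Hence there are a finite induced subgraph $\Gamma_0\subseteq\Gamma(S)$ and a finite $I_0$ with $\gamma\in S(G_{\Gamma_0}\times C_2^{I_0})$.
\item The subsystem $S_0=\mathrm{gcl}(S(G_{\Gamma_0}\times C_2^{I_0}),\beta)$ is graph closed with $\Gamma(S_0)=\Gamma_0$. So Lemma \ref{lem: pq cover system} yields a \emph{unique} subsystem $S_0'\subseteq\mathbb{S}_{\Gamma}$ realizing the universal $pq$-Frattini cover of $G(S_0)$.
\item By Fact \ref{fact: pq quotient fact}, every candidate $\alpha'$ lies in $S_0'$.
\item $G(S_0')$ is finitely generated, so it has only finitely many quotients isomorphic to $[\alpha]$.
\end{enumerate}

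Your closing remark about the universal $pq$-Frattini cover of $[\beta']$ points in this direction. However, a finitely generated abstract cover only bounds isomorphism types. What puts all candidates inside a single bounded subsystem is the uniqueness of the cover \emph{as a subsystem of} $\mathbb{S}_{\Gamma}$, and that needs the graph-closed base.
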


\begin{proof}
    By Lemma \ref{lem: graph closure is closure}(3), it suffices to prove that $\mathrm{gcl}_{pq}(S) \subseteq \mathrm{acl}(S)$ in the case that $S = \mathrm{gcl}(S)$. Fix $\alpha \in \mathrm{gcl}_{pq}(S)$. Identify $S_{+}$ with $S(G_{\Gamma(S)} \times C_{2}^{I})$ for some set $I$. By Fact \ref{fact: pq quotient fact}, the quotient $[\alpha]$ is a $pq$-Frattini cover of a quotient of $G(S)$, so we know that there is some $\beta \in S$ with $\beta \geq \alpha$ and $[\beta] \cong [\alpha]/\Phi([\alpha])$. Then by Theorem \ref{thm: thirdstep}(1), there is some $\gamma \in S_{+}$ with $[\gamma] \cong [\beta]/\Phi([\beta])$ and $\gamma \geq \beta$. Then there is a finite induced subgraph $\Gamma_{0} \subseteq \Gamma(S)$ and a finite subset $I_{0} \subseteq I$ such that $\gamma \in S(G_{\Gamma_{0}} \times C_{2}^{I_{0}}) \subseteq S_{+}$. Set $S_{0} = \mathrm{gcl}(S(G_{\Gamma_{0}} \times C_{2}^{I_{0}}), \beta)$ and observe that $\Gamma(S_{0}) = \Gamma_{0}$ since $[\beta]/\Phi([\beta]) = [\gamma] \subseteq S(G_{\Gamma_{0}}\times C_{2}^{I_{0}})$. Since $G_{\Gamma_{0}} \times C_{2}^{I_{0}}$ is a finite group, we know that ${_{pq}(G_{\Gamma_{0}} \times C_{2}^{I_{0}})}$ is a bounded profinite group and thus has only finitely many quotients isomorphic to $[\alpha]$. By Lemma \ref{lem: pq cover system}, there is a unique subsystem $S_{0} \subseteq S'_{0} \subseteq \mathbb{S}_{\Gamma}$ such that the epimorphism $G(S_{0}') \to G(S_{0})$ is the universal $pq$-Frattini cover of $G(S_{0})$. By Fact \ref{fact: pq quotient fact}, if $\alpha' \in \mathbb{S}_{\Gamma}$ is any element with $[\alpha'] \cong [\alpha]$ and $\alpha' \leq \beta$ such that the canonical map $[\alpha'] \to [\beta]$ is a $pq$-Frattini cover, then $[\alpha']$ must be a quotient of $G(S_{0}')$ or, in other words, $\alpha' \in S_{0}'$.  This means there are only finitely many such $\alpha'$ and since $\beta \in S$, this establishes $\alpha \in \mathrm{acl}(S)$.     
\end{proof}

We write $a \ind^{a}_{C} b$ to mean $\mathrm{acl}(aC) \cap \mathrm{acl}(bC) = \mathrm{acl}(C)$.  We say a sequence $I = (a_{i})_{i < \omega}$ is $\ind^{a}$-Morley over $C$ if $I$ is $C$-indiscernible and $a_{i} \ind^{a}_{C} a_{<i}$ for all $i$. 

\begin{prop} \label{prop: acl description}
Suppose $S \subseteq \mathbb{S}_{\Gamma}$ is a subsystem. Then $S = \mathrm{acl}(S)$ if and only if $S = \mathrm{gcl}_{pq}(S)$ and $\Gamma(S) = \mathrm{acl}_{\Gamma}(\Gamma(S))$, where $\mathrm{acl}_{\Gamma}$ denotes algebraic closure in $\mathrm{Th}(\Gamma)$. 
\end{prop}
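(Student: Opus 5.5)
For the forward direction, assume $S = \mathrm{acl}(S)$. Lemma \ref{lem: pq acl containment} gives $\mathrm{gcl}_{pq}(S) \subseteq \mathrm{acl}(S) = S$. Since $S \subseteq \mathrm{gcl}_{pq}(S)$ always holds, we get $S = \mathrm{gcl}_{pq}(S)$.

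For the graph, suppose $v \in \mathrm{acl}_{\Gamma}(\Gamma(S))$, witnessed by some finite tuple of vertices of $\Gamma(S)$. The graph $\Gamma(\mathbb{S}_{\Gamma})$ is interpreted in $\mathbb{S}_{\Gamma}$: vertices are $\sim$-classes of $D_{p}$-elements, which are imaginaries living in a definable finite set, so the interpretation is uniform. The $D_{p}$-elements representing the finite tuple lie in $S$. Hence a formula witnessing algebraicity in $\Gamma$ pulls back to show that the $D_{p}$-class of $v$ is algebraic over $S$, and so are its $2p$ representatives. Therefore $v \in \Gamma(S)$, and likewise for edges.

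For the converse, assume $S = \mathrm{gcl}_{pq}(S)$ and $\Gamma(S) = \mathrm{acl}_{\Gamma}(\Gamma(S))$. Take $\alpha \notin S$; the goal is to show $\alpha \notin \mathrm{acl}(S)$ by producing infinitely many (indeed unboundedly many) automorphic images of $\alpha$ over $S$. Note that $\mathrm{gcl}(S) \subseteq \mathrm{gcl}_{pq}(S) = S$, so $S$ is graph closed. Let $S^{*} = \mathrm{gcl}(S,\alpha)$. The plan is to build many copies $S^{*}_{j}$ of $S^{*}$ over $S$ and apply Lemma \ref{lem: type char}(1) to the isomorphisms $S^{*} \to S^{*}_{j}$ fixing $S$. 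It splits into cases according to where $\alpha$ lives.

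\emph{Case 1:} $\Gamma(S^{*}) \supsetneq \Gamma(S)$. Since $\Gamma(S)$ is algebraically closed in $\mathrm{Th}(\Gamma)$, the new vertices have infinitely many conjugates over $\Gamma(S)$ in the saturated $\Gamma_{*}$, by a graph automorphism $\sigma$ fixing $\Gamma(S)$. Lift $\sigma$ using Lemma \ref{lem: lifting} and the lifting machinery in the proof of Lemma \ref{lem: type char}(1). This yields automorphisms of $\mathbb{S}_{\Gamma}$ fixing $S$ that move $\alpha$ to infinitely many distinct places, since $\alpha$ determines its witnesses via $\mathrm{gcl} \subseteq \mathrm{acl}$.

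\emph{Case 2:} $\Gamma(S^{*}) = \Gamma(S)$ but $(S^{*})_{+} \neq S_{+}$. Then $S^{*}$ contains a new $C_{2}$-element of infinite vertex width, independent over $\langle (\mathbb{S}_{\Gamma})_{-}, S_{+}\rangle$. Such elements can be replaced by any other independent family of infinite vertex width, of which there are $\overline{\kappa}$ many. Lemma \ref{lem: further gcl props}(2) guarantees the copies are graph closed, and Lemma \ref{lem: type char}(1) makes the resulting isomorphisms elementary.

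\emph{Case 3:} $(S^{*})_{+} = S_{+}$, so that $G(S^{*}) \to G(S)$ is Frattini by Theorem \ref{thm: thirdstep}(1). The kernel splits into a pro-$2$ part and a pro-$pq$ part. Since $S$ is already $\mathrm{gcl}_{pq}$-closed, the Frattini extension $S^{*} \supseteq S$ must contain a nontrivial $2$-part; otherwise $S^{*}$ would lie in the unique universal $pq$-Frattini cover system over $S$, which is $S$ itself. The task is then to realize infinitely many distinct $2$-Frattini extensions of $S$ isomorphic to $S^{*}$ over $S$. For this I would use Lemma \ref{lem: pq fibermaxxing} together with Lemma \ref{lem:nfoldfiber}. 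These show that for each $n$ the $n$-fold fiber product of copies of $G(S^{*})$ over $G(S)$ is a quotient of $G(\mathbb{S}_{\Gamma})$ compatibly with $G(S)$, using superprojectivity of the free pro-$2$ part of large rank. This yields $n$ pairwise distinct copies of $\alpha$ over $S$, each conjugate to $\alpha$ by Lemma \ref{lem: type char}(1) because the graphs agree.

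I expect Case 3 to be the main obstacle: verifying that the fiber-product copies are pairwise distinct and graph closed, and that the hypothesis of Lemma \ref{lem: pq fibermaxxing} on $\alpha \circ \beta$ is met. The specific concern there is that the quotient $A = G(S^{*})$ sits over ${_{pq}\widetilde{G(S)}}$ in the required way.
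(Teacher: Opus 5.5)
Your plan is correct, and its skeleton matches the paper's. The forward direction is the same. Your Case 3 corresponds to the paper's case in which the Frattini quotient $\beta\geq\alpha$ of $[\alpha]$ lies in $S$; your hypothesis forces this, since $\beta\in S^{*}\cap(\mathbb{S}_{\Gamma})_{+}=(S^{*})_{+}=S_{+}$ by Lemma \ref{lem: plus intersection}. Your Cases 1 and 2 do the work of the paper's case $\beta\notin S$.

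There the two routes genuinely differ. The paper argues uniformly: it places $\beta$ in a small block $S(G_{\Gamma_{0}}\times C_{2}^{I_{0}})$ and conjugates this block over $S$ along an $\ind^{a}$-Morley sequence $(\Gamma_{i})$ over $\Gamma(S)$, with index sets $I_{i}$ pairwise meeting in $I$. It then computes $S_{i}\cap S_{j}\subseteq S_{+}$ from the kernels and extends to automorphisms via Lemma \ref{lem: plus intersection} and Lemma \ref{lem: further gcl props}(3). You instead use $\mathrm{gcl}(S\alpha)\subseteq\mathrm{acl}(S\alpha)$ (Lemma \ref{lem: graph closure is closure}(3)) to reduce to showing that a single new $D_{p}$-element, or a single new $C_{2}$-element $\gamma$ of infinite vertex width, is non-algebraic over $S$. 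This is more elementary and avoids the intersection computation. In Case 2 you only need to move $\langle S,\gamma\rangle$, not all of $S^{*}$: both $\langle S,\gamma\rangle$ and $\langle S,\gamma'_{j}\rangle$ have group $G(S)\times C_{2}$, and choosing the $\gamma'_{j}$ outside $\langle(\mathbb{S}_{\Gamma})_{-},S_{+}\rangle$ makes the targets graph closed.

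Your split also buys some robustness. In its case $\beta\in S$, the paper takes a universal Frattini cover $S'\supseteq S$ containing $\alpha$. That requires $G(\langle S,\alpha\rangle)\to G(S)$ to be Frattini, which can fail. For example, suppose $\alpha\notin S$, $[\alpha]\cong C_{4}$ lies over a $C_{2}$-element $\beta\in S$, and $S$ contains some $C_{4}$-element $\epsilon\leq\beta$. Then $[\epsilon\wedge\alpha]\cong C_{4}\times_{C_{2}}C_{4}$ has a $C_{2}$-quotient outside $S$. The conclusion still holds there, by running the other case on that new $C_{2}$-element. Your condition $(S^{*})_{+}=S_{+}$, however, is exactly what makes $G(S^{*})\to G(S)$ Frattini via Theorem \ref{thm: thirdstep}(1), so such $\alpha$ land in your Cases 1--2 automatically.

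On the obstacle you flag in Case 3: the base of the fiber product in Lemma \ref{lem: pq fibermaxxing} must be $A=G(S)$, not $G(S^{*})$. A fiber product over $G(S^{*})$ produces copies that all contain $\alpha$.
\begin{itemize}
\item Since $G(S^{*})\to G(S)$ is Frattini, projectivity of $G(\mathbb{S}_{\Gamma})$ gives a subsystem $S'\supseteq S^{*}$ with $G(S')\to G(S)$ a universal Frattini cover. The factors of $P_{n}$ are copies of $G(S')$, and you then project each factor to $G(S^{*})$.
\item The required map $G(S)\to{_{pq}\widetilde{G(S_{+})}}$ over $G(S_{+})$ exists because $S=\mathrm{gcl}_{pq}(S)$. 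The proof of Lemma \ref{lem: pq cover system} shows $\mathrm{gcl}_{pq}(S)=S\wedge S''$, where $S''\supseteq S_{+}$ is the unique subsystem with $G(S'')\to G(S_{+})$ the universal $pq$-Frattini cover. Hence $S''\subseteq S$.
\item This is where the $\mathrm{gcl}_{pq}$ hypothesis enters. Your observation that $\ker(G(S^{*})\to G(S))$ is pro-$2$ is true but not needed.
\item The copies $S'_{i}$ satisfy $S'_{i}\cap S'_{j}=S$, so the images of $\alpha$ are pairwise distinct.
\item Also $(S'_{i})_{-}=S_{-}$, so each $S'_{i}$ is graph closed by Lemma \ref{lem: further gcl props}(1). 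Lemma \ref{lem: type char}(1) then conjugates $S'$ to $S'_{i}$ over $S$.
\end{itemize}
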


\begin{proof}
    By Lemma \ref{lem: pq acl containment}, $S = \mathrm{acl}(S)$ entails $S = \mathrm{gcl}_{pq}(S)$.  Moreover, it is clear that $\mathrm{acl}(S) = S$ entails $\Gamma(S) = \mathrm{acl}_{\Gamma}(\Gamma(S))$. So we assume $\mathrm{gcl}_{pq}(S) = S$ and $\Gamma(S) = \mathrm{acl}_{\Gamma}(\Gamma(S))$ and we prove $\mathrm{acl}(S) = S$. By Lemma \ref{lem: secondstep}, we may, without loss of generality, assume $S_{+} = S(G_{\Gamma(S)} \times C_{2}^{I})$ for some subset $I \subseteq \overline{\kappa}$ and the projection $\pi : G_{\Gamma} \times C_{2}^{\overline{\kappa}}\to G_{\Gamma(S)} \times C_{2}^{I}$ is dual to the inclusion $S_{+} \subseteq (\mathbb{S}_{\Gamma})_{+}$. 

    Pick any $\alpha \in \mathbb{S}_{\Gamma} \setminus S$. Let $\beta \in \mathbb{S}_{\Gamma}$ satisfy $\beta \geq \alpha$ and $[\beta] \cong [\alpha]/\Phi([\alpha])$. Then, necessarily, $\beta \in (\mathbb{S}_{\Gamma})_{+} = S(G_{\Gamma} \times C_{2}^{\overline{\kappa}})$. Choose some small induced subgraph $\Gamma_{0}$ with $\Gamma(S) \subseteq \Gamma_{0} \subseteq \Gamma$ and small set $I_{0}$ with $I \subseteq I_{0} \subseteq \overline{\kappa}$ such that $\beta \in S(G_{\Gamma_{0}} \times C_{2}^{I_{0}})$. In $\Gamma$, choose an $\ind^{a}$-Morley sequence $(\Gamma_{i})_{i < \omega}$ over $\Gamma(S)$ starting with $\Gamma_{0}$. Also choose some sequence $(I_{i})_{i < \omega}$ of subsets of $\overline{\kappa}$ such that $|I_{i}| = |I_{0}|$ and $I_{i} \cap I_{j} = I$ for all $i < j$. 
    
    We claim $S(G_{\Gamma_{0}}\times C_{2}^{I_{0}}) \cap S(G_{\Gamma_{1}}\times C_{2}^{I_{1}}) \subseteq S_{+}$. For an induced subgraph $\Gamma_{*} \subseteq \Gamma$ and subset $I_{*} \subseteq \overline{\kappa}$, let $\pi_{\Gamma_{*},I_{*}} : G_{\Gamma} \times C_{2}^{\overline{\kappa}} \to G_{\Gamma_{*}} \times C_{2}^{I_{*}}$ denote the projection. Then, for each $i < \omega$, we have 
    $$
    S(G_{\Gamma_{i}} \times C_{2}^{I_{i}}) = S\left( G_{\Gamma} \times C_{2}^{\overline{\kappa}}/\ker(\pi_{\Gamma_{i},I_{i}})\right),
    $$
    and we can calculate, for $i < j$, 
    \begin{eqnarray*}
        S_{i} \cap S_{j} &=& S\left( G_{\Gamma} \times C_{2}^{\overline{\kappa}}/\ker(\pi_{\Gamma_{i},I_{i}})\right) \vee S\left( G_{\Gamma} \times C_{2}^{\overline{\kappa}}/\ker(\pi_{\Gamma_{j},I_{j}})\right) \\
        &=& S\left(G_{\Gamma} \times C_{2}^{\overline{\kappa}}/\ker(\pi_{\Gamma_{i},I_{i}})\ker(\pi_{\Gamma_{j},I_{j}}) \right) \\
        &=& S\left( G_{\Gamma} \times C_{2}^{\overline{\kappa}}/\ker(\pi_{(\Gamma_{i} \cap \Gamma_{j}),(I_{i} \cap I_{j})}) \right) \\
        &\subseteq& S\left( G_{\Gamma} \times C_{2}^{\overline{\kappa}}/\ker(\pi_{\Gamma(S),I}) \right) \\
        &=& S_{+}. 
    \end{eqnarray*}
    Moreover, by Lemma \ref{lem: plus intersection}, $S_{i} \cap S = S_{+}$, and so 
    $$
    \langle S_{i},S \rangle = S_{i} \wedge S = S\left((G_{\Gamma_{i}}\times C_{2}^{I_{i}}) \times_{G((S)_{+})} G(S) \right).
    $$
    Therefore, if we pick an isomorphism $\Psi_{i} : S(G_{\Gamma_{0}} \times C_{2}^{I_{0}}) \to S(G_{\Gamma_{i}}\times C_{2}^{I_{i}})$ which is the identity on $S(G_{\Gamma(S)} \times C_{2}^{I}) = S_{+}$, it extends to an isomorphism $\Psi'_{i} : \langle S,S_{0} \rangle \to \langle S,S_{i} \rangle$ which is the identity on $S$. By Lemma \ref{lem: further gcl props}(3), $\langle S,S_{i} \rangle$ is graph closed and, therefore, by Lemma \ref{lem: type char}(1), each $\Psi'_{i}$ extends to some $\Theta_{i} \in \mathrm{Aut}(\mathbb{S}_{\Gamma}/S)$. Let $\beta_{i} = \Theta_{i}(\beta)$ and $\alpha_{i} = \Theta_{i}(\alpha)$ for $i < \omega$. Since $S_{i} \cap S_{j} \subseteq S_{+}$ for $i \neq j$ and $S_{i} \cap S = S_{+}$, we have either $\beta_{i} = \beta \in S$ for all $i$, or $\beta_{i} \in S_{i} \setminus S$ for all $i$ and are therefore pairwise distinct. 
    
    In the second case, in which the $\beta_{i}$ are pairwise distinct, we can easily conclude: since $[\alpha]$ is a finite group, it can only have finitely many quotients and therefore $\{\alpha_{i} : i < \omega \}$ is an infinite set. It follows that $\alpha \not\in \mathrm{acl}(S)$. 

    In the first case, in which $\beta_{i} = \beta \in S$ for all $i$, we have that $[\alpha]$ is a Frattini cover of a quotient of $G(S)$, so $[\alpha]$ is a quotient of the universal Frattini cover of $S$. Let $S'$ be any subsystem of $\mathbb{S}_{\Gamma}$ with $S \subseteq S'$, $\alpha \in S'$, and the epimorphism $G(S') \to G(S)$ is a universal Frattini cover of $S$. For any $n$, by Lemma \ref{lem: pq fibermaxxing} and duality, we can find $S' = S'_{0}, \ldots, S'_{n-1}$ which are pairwise isomorphic over $S$ with $S'_{i} \cap S'_{j} = S$ for $i \neq j$ (in the lemma $A = G(S)$). Now $(S'_{i})_{-} = S_{-}$ for all $i$ so $S'_{i}$ is graph closed by Lemma \ref{lem: further gcl props}(1). Then, for each $i$, we can define some $\Xi_{i} \in \mathrm{Aut}(\mathbb{S}_{\Gamma}/S)$ with $\Xi_{i}(S') = S'_{i}$ for each $i < n$. Setting $\alpha_{i} = \Xi_{i}(\alpha)$, we find $n$ distinct conjugates of $\alpha$ over $S$. Since $n$ is arbitrary, we conclude $\alpha \not\in \mathrm{acl}(S)$. Thus $S = \mathrm{acl}(S)$. 
\end{proof}

\begin{rem}
    The description of algebraic closure is a little unwieldy but the general idea for how to produce the algebraic closure of a set $X$ goes as follows: first take the substructure generated by $X$ and consider all of the $C_{2}$-elements.  Some of these will have finite vertex width, so you throw in the $D_{p}$ witnesses, and then you throw in the $W$ quotients corresponding to edges so you get an induced subgraph $\Gamma(\mathrm{gcl}(X))$. That graph may not be algebraically closed in the theory of $\Gamma$, so you replace it with a larger graph $\mathrm{acl}_{\Gamma}(\Gamma(\mathrm{gcl}(X)))$.  At this stage, you have that $\langle S(G_{\mathrm{acl}_{\Gamma}(\mathrm{gcl}(X))}),X_{2}(\langle X \rangle) \rangle$ corresponds to $S(G_{\mathrm{acl}_{\Gamma}(\mathrm{gcl}(X))} \times C_{2}^{\gamma})$, where the $C_{2}^{\gamma}$ factor is generated by the $C_{2}$-elements of infinite vertex width, which have not been swallowed up by the group associated to the graph. Then to get the full algebraic closure, one only needs to throw in the finite quotients of the universal Frattini cover of this group. 
\end{rem}

\subsection{The maximal pro-$2$ quotient}

In $\mathbb{S}_{\Gamma}$, let $D$ denote the subsystem of $\mathbb{S}_{\Gamma}$ corresponding to the quotient of $\reallywidetilde{G_{\Gamma_{*}} \times C_{2}^{\overline{\kappa}}}$ by the (normal) product of the $p$- and $q$-Sylow subgroups, which we will denote $S_{pq}$. The group $G(D)$ is a free pro-$2$ group of infinite rank. The subsystem $D$ can be viewed as a many-sorted definable subset of $\mathbb{S}_{\Gamma}$. When $S$ is a subsystem of $\mathbb{S}_{\Gamma}$, we write $D(S)$ for the elements of $D$ which are in $S$, which may be alternatively described as the inverse system corresponding to $S_{pq}N$, where $N$ is the closed normal subgroup corresponding to $S$. 

It will turn out that the analysis of the model-theoretic properties of $T_{\Gamma}$ will essentially reduce to the analysis of the model-theoretic properties of the induced structure on $D(\mathbb{S}_{\Gamma})$, which may be explicitly described relative to $\Gamma$.  Before we start the analysis, we will point out the useful fact that passing to $D$ commutes with $\wedge$ and $\vee$ for subsystems of $\mathbb{S}_{\Gamma}$:

\begin{lem} \label{lem: commutes}
Suppose $S_{1}$ and $S_{2}$ are subsystems of $\mathbb{S}_{\Gamma}$. Then we have 
$$
D(S_{1} \vee S_{2}) = D(S_{1}) \vee D(S_{2}) \text{ and } D(S_{1} \wedge S_{2}) = D(S_{1}) \wedge D(S_{2}). 
$$
\end{lem}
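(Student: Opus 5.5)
The plan is to translate everything into closed normal subgroups of $G = \reallywidetilde{G_{\Gamma_{*}} \times C_{2}^{\overline{\kappa}}}$ via the duality between subsystems of $\mathbb{S}_{\Gamma}$ and closed normal subgroups. Let $N_{1}, N_{2}$ be the closed normal subgroups corresponding to $S_{1}, S_{2}$, so that $S_{i} = S(G/N_{i})$ viewed inside $\mathbb{S}_{\Gamma}$. Under this dictionary, $S_{1} \vee S_{2}$ corresponds to $N_{1}N_{2}$ (the coarsest common quotient), and $S_{1} \wedge S_{2}$ corresponds to $N_{1} \cap N_{2}$. This is the subsystem-level version of $[N]\vee[M] = [NM]$ and $[N]\wedge[M] = [N\cap M]$, obtained by passing to limits over open normal subgroups. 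As noted in the text, $D(S)$ for the subsystem $S$ dual to $N$ is the subsystem dual to $S_{pq}N$, i.e.\ $D(S) = S \cap D$ as sets.

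For the join, we compute $D(S_{1} \vee S_{2})$. It corresponds to $S_{pq}(N_{1}N_{2})$. On the other hand, $D(S_{1}) \vee D(S_{2})$ corresponds to $(S_{pq}N_{1})(S_{pq}N_{2}) = S_{pq}N_{1}N_{2}$, since $S_{pq}$ is normal and products of normal subgroups commute. These agree, with no input beyond normality.

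For the meet, $D(S_{1} \wedge S_{2})$ corresponds to $S_{pq}(N_{1} \cap N_{2})$, while $D(S_{1}) \wedge D(S_{2})$ corresponds to $S_{pq}N_{1} \cap S_{pq}N_{2}$. Their equality is exactly Corollary \ref{cor: nice intersections}, applied with $\Gamma_{*}$ and $I = \overline{\kappa}$. That corollary rests on Fact \ref{fact: normal sylow} and the normality of the $p$- and $q$-Sylow subgroups of $\widetilde{G_{\Gamma_*}\times C_2^{\overline{\kappa}}}$.

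The main point needing care is the bookkeeping in the dictionary. One must check that the join and meet of subsystems, as defined (generated by pairwise joins and meets of elements and closed upward), really correspond to $N_{1}N_{2}$ and $N_{1}\cap N_{2}$ for arbitrary closed, not necessarily open, normal subgroups. For the meet, the elements of $S_{1}\wedge S_{2}$ are the open $M$ containing $M_{1}\cap M_{2}$ with $M_{i}\supseteq N_{i}$ open. By compactness, an open normal subgroup contains $N_{1}\cap N_{2}$ if and only if it contains some $M_{1}\cap M_{2}$ of this form. For the join, an open $M$ contains $N_{1}N_{2}$ if and only if $M$ lies in both $S_{1}$ and $S_{2}$, and then $M = M\vee M$. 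With this identification in hand, both equalities reduce to the group-theoretic identities above.
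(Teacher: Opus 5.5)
Your proposal is correct and follows the paper's proof: dualize to the closed normal subgroups $N_{1},N_{2}$, observe that the join case reduces to $(S_{pq}N_{1})(S_{pq}N_{2}) = S_{pq}N_{1}N_{2}$ by normality, and get the meet case from Corollary \ref{cor: nice intersections} applied to $\widetilde{G_{\Gamma_{*}}\times C_{2}^{\overline{\kappa}}}$. Your compactness check that $S_{1}\vee S_{2}$ and $S_{1}\wedge S_{2}$ correspond to $N_{1}N_{2}$ and $N_{1}\cap N_{2}$ correctly fills in what the paper leaves under ``by duality.''
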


\begin{proof}
    Let $N_{1},N_{2} \unlhd G(\mathbb{S}_{\Gamma})$ be the closed normal subgroups of $G(\mathbb{S}_{\Gamma})$ corresponding to the subsystems $S_{1}$ and $S_{2}$, respectively. By duality, we have to show $(S_{pq}N_{1})(S_{pq}N_{2}) = S_{pq}N_{1}N_{2}$, which is clear, and we have to show $S_{pq}N_{1} \cap S_{pq}N_{2} = S_{pq}(N_{1} \cap N_{2})$, which was established in Corollary \ref{cor: nice intersections}.
\end{proof}

Let $L_{\mathrm{H}}$ be $L_{\mathrm{IS}}$ together with a new unary predicate $H$. We will view $D$ as an $L_{\mathrm{H}}$-structure by giving it the $L_{\mathrm{IS}}$-structure it inherits as a subsystem of $\mathbb{S}_{\Gamma}$ and interpreting $H(D)$ as the collection of elements $\alpha \in X_{2}(D)$ with vertex width $1$ such that $\alpha$ corresponds to the identity element of the two element group $[\alpha]$ (we are just making a canonical choice of element from each $\sim$ class of elements of vertex width $1$). By Lemma \ref{lem: definability of vertex width}, this is a reduct of the induced structure on $D$. 

We will show that, viewed as an $L_{\mathrm{H}}$-structure, $D$ is an $H$-structure in the sense of Berenstein and Vassiliev \cite{berenstein2016geometric}:

\begin{defn} \label{defn: H-structure}
 Suppose $T$ is a supersimple theory and $\mathcal{Q}(x)$ is a partial $1$-type over $\emptyset$. A structure $(M,H(M))$ consisting of a model $M \models T$ together with the interpretation $H(M)$ of a new unary predicate $H$ is called an $H$\emph{-structure} if it satisfies the following conditions:
 \begin{enumerate}
     \item For all $a \in H(M)$, $a \models \mathcal{Q}$. 
     \item For all $n$, if $a_{0}, \ldots, a_{n-1} \in H(M)$ is a sequence of distinct elements, then the $a_{i}$ are non-forking independent over $\emptyset$ in $T$. 
     \item (Density/coheir property) If $A \subseteq M$ is finite and $q  \in S_{1}(A)$ is type containing $\mathcal{Q}$ which does not fork over $\emptyset$, then $q$ has a realization in $H(M)$. 
     \item (Co-density/extension property) If $A \subseteq M$ is finite and $q \in S_{1}(A)$, then there is some $a \in M$ with $a \models q$ and $a \ind^{f}_{A} H(M)$.
 \end{enumerate}
 We write $T_{\mathrm{H}}$ for the $L_{\mathrm{H}}$-theory of $(M,H(M))$ when $(M,H(M))$ is an $H$-structure. The theory $T_{H}$ does not depend on the choice of $(M,H(M))$ (though it does depend on the choice of $\mathcal{Q}$) \cite[Corollary 2.6]{berenstein2017supersimple}. 
\end{defn}

We will show that the subsystem $D$, viewed as an $L_{\mathrm{H}}$-structure, satisfies the conditions of Definition \ref{defn: H-structure}.  But first, we will need a description of forking independence in $\mathrm{Th}_{L_{\mathrm{IS}}}(D)$.  Fortunately, this has been essentially characterized already in Chatzidakis's detailed analysis of the model theory of inverse systems of profinite groups with the embedding property:

\begin{fact} \label{fact: zoe forking char} \cite[Proposition 4.1]{chatzidakis1998model}
    Suppose $G$ is a superprojective profinite group and $T = \mathrm{Th}_{L_{\mathrm{IS}}}(S(G))$. Then in any model of $T$, we have, for subsystems $A \subseteq B$, 
    $$
    \alpha \ind^{f}_{A} B \iff \alpha \vee B \in \mathrm{acl}(A) \iff  \alpha \vee B \in \mathrm{acl}(\alpha \vee A). 
    $$
\end{fact}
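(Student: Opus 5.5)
The plan is to follow Chatzidakis's strategy. Define a ternary relation $\alpha \ind^{*}_{A} B$ by $\alpha \vee B \in \mathrm{acl}(A)$, and show it satisfies the Kim--Pillay axioms. The theory $T$ of the inverse system of a superprojective group is simple, so by the Kim--Pillay characterization $\ind^{*}$ coincides with $\ind^{f}$. The second equivalence will be checked separately, by a lattice computation.

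\textbf{Step 1: the two descriptions agree.} Put $\delta = \alpha \vee B$ and $\delta_{0} = \alpha \vee A$. Since $A \subseteq B$, we have $\delta \leq \delta_{0}$. Also $\delta_{0}$ is definable from $\alpha$ and $A$, so $\mathrm{acl}(\alpha \vee A) \subseteq \mathrm{acl}(\alpha A)$.
\begin{itemize}
\item For the direction from $\mathrm{acl}(\alpha \vee A)$ to $\mathrm{acl}(A)$: $\delta_{0}$ is a quotient both of $\alpha$ and of some element of $A$. I would then argue that $\mathrm{acl}(\delta_{0}) \subseteq \mathrm{acl}(A)$. The reason is that in these systems the algebraic closure of a subsystem is obtained by adding the quotient classes lying above finitely many elements together with finitely many covers. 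Concretely, an element algebraic over $\delta_{0}$ is determined up to finitely many choices by bounded data sitting below $\delta_{0}$, and $\delta_{0}$ lies above an element of $A$.
\item For the converse: if $\delta \in \mathrm{acl}(A)$, then $\delta \geq \delta_{0} \wedge \epsilon$ for some $\epsilon \in \mathrm{acl}(A)$. Applying the modular law to $\delta$, $\delta_{0}$ and $\alpha$ puts $\delta$ inside $\mathrm{acl}(\alpha \vee A)$.
\end{itemize}
Before doing either direction, I would first record the description of $\mathrm{acl}$ in $T$ from \cite{chatzidakis1998model}. For superprojective $G$, $\mathrm{acl}(A)$ consists of the elements lying above $\mathrm{acl}$-closed finitely generated pieces of $A$, namely the finitely many quotients of bounded covers.

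\textbf{Step 2: the easy axioms.} Invariance, monotonicity, finite character and transitivity follow from the modular law together with the fact that $\vee$ commutes with directed unions of subsystems. For local character: given $\alpha$ and $B$, the element $\alpha \vee B$ lies in $\langle \beta \rangle$ for a single $\beta \in B$, so we may take $A = \mathrm{acl}(\beta)$, which is countable. Symmetry becomes clear once the condition is reformulated dually. Writing $N_{\alpha}$, $N_{A}$, $N_{B}$ for the corresponding normal subgroups, the condition says that $G(\langle \alpha, B\rangle)$ is the fibre product
$$
G(\langle \alpha, \mathrm{acl}(A)\rangle) \times_{G(\mathrm{acl}(A))} G(B)
$$
up to the part coming from $\mathrm{acl}(A)$, and this fibre-product statement is symmetric in $\alpha$ and $B$.

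\textbf{Step 3: extension.} Given $\alpha$ and $A \subseteq B$, I will form the abstract fibre product
$$
G(\langle \alpha, \mathrm{acl}(A)\rangle) \times_{G(\mathrm{acl}(A))} G(\mathrm{acl}(B)).
$$
By the embedding property, together with saturation and compactness, an epimorphism from $G$ onto this fibre product exists extending the given map to $G(\mathrm{acl}(B))$. Its dual yields an $\alpha'$ with $\alpha' \equiv_{A} \alpha$ and $\alpha' \vee B \in \mathrm{acl}(A)$. To see that $\alpha' \equiv_{A} \alpha$, I would use the fact that types over algebraically closed sets are determined by the isomorphism type of the generated system plus embedding-property data.

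\textbf{Step 4: the independence theorem (expected main obstacle).} Over $E = \mathrm{acl}(E)$, suppose we are given independent $a \equiv_{E} a'$ with $a \ind^{*} B$ and $a' \ind^{*} C$, where $B \ind^{*}_{E} C$. I would build the group $G(\langle \mathrm{acl}(EB), \mathrm{acl}(EC)\rangle)$ as a fibre product over $G(E)$. The aim is then to construct an amalgam $H$ that surjects onto the three systems $\langle a B\rangle$, $\langle a' C\rangle$ and $\langle \mathrm{acl}(BC)\rangle$, compatibly over their pairwise intersections. The difficulty is that the iterated fibre product of these three groups need not be compatible with the required quotients over $\mathrm{acl}(EB)$, $\mathrm{acl}(EC)$ and $\mathrm{acl}(BC)$ simultaneously. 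I would first try to handle this with projectivity of $G$: lift along finite-level Frattini pieces, using the type-determination from Step 3 to check that the resulting element realizes both types. Only after that would I apply the embedding property to embed $H$ as a quotient of $G$ over $\mathrm{acl}(BC)$.
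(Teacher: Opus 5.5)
The paper does not prove this Fact; it quotes it from Chatzidakis. Judged on its own, your plan has a genuine gap. Every step that needs real information about algebraic closure is handled by lattice manipulations, and those cannot supply it.

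\textbf{The hidden statement.} What you need repeatedly is: if $E$ is a subsystem and $\epsilon\in\mathrm{acl}(E)$, then $\epsilon\in\mathrm{acl}(\epsilon\vee E)$.
\begin{itemize}
\item The converse in Step 1 is exactly this, with $E:=A$ and $\epsilon:=\alpha\vee B$; note that $\epsilon\vee A=\alpha\vee A$.
\item Transitivity in Step 2 is also this, with $E:=B$ and $\epsilon:=\alpha\vee C\in\mathrm{acl}(B)$; here $\epsilon\vee B=\alpha\vee B$.
\end{itemize}
Whether $\epsilon$ has finitely many conjugates over $\epsilon\vee E$ depends on which fibre products over $[\epsilon\vee E]$ lie among the finite quotients of $G$. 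No instance of the modular law decides this. Your intermediate assertion that $\delta\geq\delta_0\wedge\epsilon$ for some $\epsilon\in\mathrm{acl}(A)$ is vacuous: take $\epsilon=\delta$. The other half of Step 1 needs no description of $\mathrm{acl}$ at all, since $\alpha\vee A\in A$ because $A$ is upward closed.

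\textbf{The Kim--Pillay verification is not carried out.}
\begin{itemize}
\item Your relation has a single element on the left, so symmetry cannot be stated until the relation is extended to sets.
\item Any relation satisfying the Kim--Pillay axioms is unchanged when $\alpha$ is replaced by $\mathrm{acl}(\alpha A)$. For your condition, proving that is again the hidden statement.
\item In Step 3, the embedding property only gives epimorphisms onto finite groups already known to be quotients of $G$. You never check that the fibre products $[\alpha]\times_{[\gamma]}[\beta]$, with $\gamma=\alpha\vee\mathrm{acl}(A)$ and $\beta\in\mathrm{acl}(B)$, are such quotients.
\item Step 4 is left open.
\end{itemize}

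\textbf{The missing idea.} Use the embedding property through the description of types, not through amalgamation.
\begin{itemize}
\item Isomorphisms between subsystems are elementary (\cite[Theorem 2.4]{chatzidakis1998model}, as used in Lemma~\ref{lem: acl description for free}).
\item Also $G(\langle\alpha,B\rangle)\cong[\alpha]\times_{[\gamma]}G(B)$ with $\gamma=\alpha\vee B\in B$. So $\mathrm{tp}(\alpha/B)$ is determined by $\gamma$ together with $\mathrm{tp}(\alpha/\langle\gamma\rangle)$.
\item Counting types then gives $\omega$-stability. Extension comes for free and no independence theorem is needed.
\end{itemize}

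With this in hand:
\begin{itemize}
\item ($\Rightarrow$) holds in any theory. We have $\alpha\vee B\in\langle\alpha\rangle\cap B\subseteq\mathrm{acl}(\alpha A)\cap\mathrm{acl}(B)$, and this intersection equals $\mathrm{acl}(A)$ when $\alpha\ind^{f}_{A}B$.
\item ($\Leftarrow$): Take $\alpha^{*}\equiv_{\mathrm{acl}(\gamma)}\alpha$ with $\alpha^{*}\ind^{f}_{\mathrm{acl}(\gamma)}B$. By the same argument, $\alpha^{*}\vee B\in\mathrm{acl}(\gamma)\cap B$. It lies above $\alpha$ (move $\alpha^{*}$ to $\alpha$ over $\mathrm{acl}(\gamma)$), hence above $\gamma$. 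It lies below $\gamma$ because $\alpha^{*}\leq\gamma\in B$. So $\alpha^{*}\vee B=\gamma$, and the type description gives $\alpha^{*}\equiv_{B}\alpha$. Hence $\alpha\ind^{f}_{\gamma}B$, and base monotonicity together with $\gamma\in\mathrm{acl}(A)$ yields $\alpha\ind^{f}_{A}B$.
\item The third equivalence, and with it the hidden statement, then follows by forking calculus. With $\delta=\alpha\vee B$ we have $\delta\vee A=\alpha\vee A$, so the first equivalence gives $\delta\ind^{f}_{\alpha\vee A}A$. If also $\delta\in\mathrm{acl}(A)$, this forces $\delta\in\mathrm{acl}(\alpha\vee A)$.
\end{itemize}
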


\begin{lem} \label{lem: acl description for free}
    Suppose $G$ is an infinite rank free pro-$2$ group and $A \subseteq S(G)$ is a finite subset. Then $\mathrm{acl}(A) = \langle A\rangle$. 
\end{lem}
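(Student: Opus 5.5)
We need $\mathrm{acl}(A)=\langle A\rangle$ for finite $A$ in $S(G)$, where $G=F_{\omega}(2)$ or any infinite rank free pro-$2$ group. The inclusion $\langle A\rangle\subseteq\mathrm{acl}(A)$ is clear: $\langle A\rangle$ is obtained from $A$ by taking finitely many meets and closing upwards, and each $\beta\geq\gamma$ with $\gamma$ a meet of elements of $A$ lies in the finite definable set $\{\beta:\beta\geq\gamma\}$. So all the work is in showing that every $\alpha\notin\langle A\rangle$ has infinitely many conjugates over $A$.

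Since $A$ is finite, $\langle A\rangle$ has a least element $\delta=\bigwedge A$, and $\langle A\rangle=\{\beta:\beta\geq\delta\}$. Thus $B:=[\delta]$ is a finite $2$-group quotient $G\to B$, and $\mathrm{dcl}$ of $A$ contains $\langle A\rangle$ pointwise. It suffices to work over $\delta$ (together with the finitely many elements of $[\delta]$), and to pass to a saturated model. Fix $\alpha\notin\langle A\rangle$, so $\alpha\not\geq\delta$. Then $\epsilon:=\alpha\vee\delta$ is strictly above $\alpha\wedge\delta$ in the relevant sense, and $[\alpha\wedge\delta]\cong[\alpha]\times_{[\epsilon]}[\delta]$ is a $2$-group $C$ with an epimorphism $C\to B$ whose kernel is nontrivial.

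The plan is to exploit superprojectivity of free pro-$2$ groups of infinite rank, which have the embedding property for finite $2$-groups: every finite $2$-group epimorphism problem $C'\to B$ with $C'$ a $2$-group of rank at most the rank of $G$ has a solution. For each $n$, form the $n$-fold fiber product $C_n=C\times_B\cdots\times_B C$. This is a finite $2$-group surjecting onto $B$, and by the embedding property there is an epimorphism $G\to C_n$ lifting the given $G\to B$. The $n$ coordinate projections $C_n\to C$ give elements $\eta_0,\dots,\eta_{n-1}\leq\delta$ with $[\eta_i]\cong C$, all compatible with $\delta$. The associated copies $\alpha_i$ of $\alpha$, namely the images of $\alpha$ under the composite quotients, are pairwise distinct because the kernels differ in $C_n$. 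In particular, since $[\alpha]\to$ is not factored through $B$, the kernels of the compositions $C_n\to C\to[\alpha]$ are pairwise distinct.

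It remains to see that each $\alpha_i$ has the same type as $\alpha$ over $A$. By Chatzidakis's analysis, in the theory of $S(G)$ for superprojective $G$, quantifier-free-type realizations of finitely generated subsystems over $\delta$ with the same isomorphism type over $[\delta]$ are conjugate. Equivalently, $\mathrm{Th}(S(G))$ is $\omega$-homogeneous for finitely generated subsystems via the embedding property, giving a back-and-forth argument. Hence there is an automorphism fixing $\langle A\rangle$ and sending $\eta_0\mapsto\eta_i$, carrying $\alpha$ to $\alpha_i$. So $\alpha$ has at least $n$ conjugates over $A$ for every $n$, and therefore $\alpha\notin\mathrm{acl}(A)$.

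The main obstacle is this homogeneity step. I would derive it directly by a back-and-forth using the embedding property, as in Chatzidakis: partial isomorphisms between finite subsystems extend. The key point is that infinite rank is needed to realize arbitrary finite $2$-group extensions, rather than bounded rank.
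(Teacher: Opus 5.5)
Your proposal is correct and is essentially the paper's proof: the paper likewise forms the $n$-fold fiber product of $G(\langle A,\beta\rangle)$ over $G(A)$ (your $C_n$), uses superprojectivity to realize it as a quotient of $G$ compatible with $G\to G(A)$, gets $\beta_i\equiv_A\beta$ from Chatzidakis's Theorem 2.4 (isomorphisms of finite subsystems are elementary), and gets distinctness from $B_i\vee B_j=A$ for $i\neq j$, which is your non-factoring-through-$B$ argument. One harmless slip: elements of $\langle A\rangle$ are algebraic but not in general definable over $A$. This does not affect the argument, since $A\subseteq\langle A\rangle$ means it suffices to show $\alpha\notin\mathrm{acl}(\langle A\rangle)$.
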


\begin{proof}
    We will assume $A = \langle A \rangle$, i.e. $A$ is a finite subsystem of $S(G)$, and then we will show that $\mathrm{acl}(A) = A$. Choose any $\beta \in S(G) \setminus A$ and let $B = \langle A, \beta \rangle$, which is also a finite subsystem of $S(G)$.  Fix any $n \geq 2$, and define $H_{n} = G(B) \times_{G(A)}G(B) \times_{G(A)} \ldots \times_{G(A)} G(B)$ to be the $n$-fold fiber product of $G(B)$ with itself, over $G(A)$, taken with respect to the epimorphism $G(B) \to G(A)$ dual to the inclusion $A \subseteq B$. Let $\pi : H_{n} \to G(A)$ be the obvious map to $A$. Note that $H_{n}$ is also a finite pro-$2$ group and thus an image of $G$. Then, by the superprojectivity of $G$, we obtain an epimorphism $\varphi: G \to H_{n}$ making the diagram commute,
     $$
\xymatrix{ & G\ar@{-->>}[dl]^{\varphi}  \ar@{->>}[d] \\
H_{n} \ar@{->>}[r]^{\pi} & G(A) }
$$
where the vertical arrow is the epimorphism $G \to G(A)$ dual to the inclusion $A \subseteq S(G)$. Let $N$ be the kernel of $\varphi$. Then the quotient map $G \to G(A)$ factors into $G \to G/N \to G(A)$. If we let $C$ the subsystem of $S(G)$ corresponding to the quotient $G/N$, then we see $A \subseteq C$ and $G(C) \cong H_{n}$. Fix an isomorphism $G(C) \cong H_{n}$ and, for each $i = 1, \ldots, n$, let $B_{i}$ be the subsystem of $C$ containing $A$ dual to the quotient $G(C) \to G(B)$ obtained by projecting $G(C)$ to the $i$th coordinate. Then we have $B_{i} \cong_{A} B$ for all $i$, and we may write $B_{i} = \langle A, \beta_{i} \rangle$ for some $\beta_{i} \in B_{i}$. By \cite[Theorem 2.4]{chatzidakis1998model}, we obtain $\beta_{i} \equiv_{A} \beta$ for all $i$ and, since $B_{i} \vee B_{j} = A$ for all $i \neq j$, we have $\beta_{i} \neq \beta_{j}$. Since $n$ was taken to be arbitrary, we get $\beta \not\in \mathrm{acl}(A)$. 
\end{proof}

\begin{cor} \label{cor: free forking char}
    If $G$ is a free pro-$2$ group of infinite rank, then for subsystems $A \subseteq B \subseteq S(G)$, 
    $$
    \alpha \ind^{f}_{A} B \iff \alpha \vee B \in A \iff  (\alpha \vee B) \geq (\alpha \vee A). 
    $$
\end{cor}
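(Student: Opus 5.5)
The plan is to derive this directly from Fact \ref{fact: zoe forking char} together with Lemma \ref{lem: acl description for free}. A free pro-$2$ group $G$ of infinite rank is superprojective. It is projective because it is free pro-$p$. It has the embedding property by the standard Iwasawa-type criterion for free pro-$p$ groups of infinite rank; this criterion is exactly what was used in the proof of Lemma \ref{lem: acl description for free}. So Fact \ref{fact: zoe forking char} applies to $T = \mathrm{Th}(S(G))$ and gives, for subsystems $A \subseteq B$,
$$
\alpha \ind^{f}_{A} B \iff \alpha \vee B \in \mathrm{acl}(A) \iff \alpha \vee B \in \mathrm{acl}(\alpha \vee A).
$$
The task is then to replace $\mathrm{acl}(A)$ by $A$, and to replace membership in $\mathrm{acl}(\alpha \vee A)$ by the inequality $(\alpha \vee B) \geq (\alpha \vee A)$.

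For the first replacement, I will upgrade Lemma \ref{lem: acl description for free} from finite $A$ to arbitrary subsystems $A$. Algebraic closure is finitary, so $\mathrm{acl}(A) = \bigcup_{A_0} \mathrm{acl}(A_0)$ over finite $A_0 \subseteq A$. The lemma gives $\mathrm{acl}(A_0) = \langle A_0 \rangle \subseteq A$, hence $\mathrm{acl}(A) = A$ whenever $A$ is a subsystem. One caveat is that the lemma is stated in $S(G)$ itself rather than in a monster model. Its proof only uses superprojectivity and \cite[Theorem 2.4]{chatzidakis1998model}, so I will either observe that it goes through verbatim in an elementary extension (itself the inverse system of a free pro-$2$ group of large rank, by Chatzidakis's analysis), or note that the statement is first-order for each finite configuration. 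This gives the first equivalence, $\alpha \ind^{f}_{A} B \iff \alpha \vee B \in A$.

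For the second equivalence I will unwind the join notation. Here $\alpha \vee A$ denotes the subsystem $\langle\alpha\rangle \vee A$, which is a subsystem, so by the same argument it is algebraically closed. Thus $\alpha \vee B \in \mathrm{acl}(\alpha \vee A)$ iff $\alpha \vee B \in \alpha \vee A$. Membership of an element in the subsystem $\alpha \vee A$ is upward closure of $\{\alpha \vee a : a \in A\}$. So it means $\alpha \vee B \geq \alpha \vee a$ for some $a \in A$, and this is how I will read the notation $(\alpha\vee B) \geq (\alpha \vee A)$. Alternatively, I can prove the second equivalence directly. If $\alpha \vee B \geq \alpha \vee a$ with $a \in A$, then $\alpha \vee B \geq a$, so $\alpha\vee B \in A$ by upward closure. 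Conversely, if $\alpha \vee B \in A$, take $a = \alpha \vee B$; since $\alpha \vee B \geq \alpha$, we have $\alpha \vee a = \alpha\vee B$.

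The only step with any content is extending the finite-set acl computation to the monster model and to infinite subsystems. I expect this to be routine, because the only ingredient is that finitely generated subsystems are algebraically closed. Everything else is bookkeeping with $\vee$ and upward closure.
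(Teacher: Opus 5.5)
Your proposal is correct and follows the paper's proof. The paper's proof is the one-line remark that the corollary follows from Fact~\ref{fact: zoe forking char} and Lemma~\ref{lem: acl description for free}, since free pro-$2$ groups are superprojective. Your finite-character extension of $\mathrm{acl}(A_0)=\langle A_0\rangle$ to arbitrary subsystems, and your unwinding of the $\vee$-notation in the last equivalence, are the routine details that remark leaves implicit.
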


\begin{proof}
    Immediate by Fact \ref{fact: zoe forking char} and Lemma \ref{lem: acl description for free}, using that free pro-$2$ groups are superprojective. 
\end{proof}

From here on out, we will write $\ind^{D}$ to denote non-forking independence in $D$, viewed as an $L_{\mathrm{IS}}$-structure. 

\begin{fact} \label{fact: weak EI} \cite[Theorem 4.6]{chatzidakis2019amalgamation}
If $G$ is a superprojective profinite group (so, in particular, if $G$ is a free or free pro-$p$ profinite group), then $\mathrm{Th}(S(G))$ weakly eliminates imaginaries. 
\end{fact}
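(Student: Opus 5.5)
Since this is quoted from Chatzidakis, I would reprove it by the standard route to weak elimination of imaginaries. The route has two ingredients: a stationarity principle for ``independent'' amalgams over algebraically closed subsystems, and an extension property producing such independent realizations. Fix a monster model $\mathbb{M}\models \mathrm{Th}(S(G))$ and an imaginary $e = f(a)$, where $a$ is a finite real tuple and $f$ is a $\emptyset$-definable map to a sort of $\mathbb{M}^{eq}$. Put $A = \mathrm{acl}^{eq}(e)\cap \mathbb{M}$, which is an algebraically closed subsystem. Since $A\subseteq \mathrm{acl}^{eq}(e)$ automatically, it suffices to show $e\in \mathrm{dcl}^{eq}(A)$. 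For that it is enough to show $f(a') = e$ for every $a'\equiv_A a$; a finite subtuple of $A$ can then be extracted by compactness.

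For tuples $b,c$, call them \emph{independent over} $A$ if $\langle Ab\rangle \vee \langle Ac\rangle = \langle A\rangle$.

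\textbf{Stationarity.} In the independent case, $G(\langle Abc\rangle)\cong G(\langle Ab\rangle)\times_{G(A)} G(\langle Ac\rangle)$ by the fiber product description of $\wedge$ recalled in Section~\ref{sec: profinite}. So the isomorphism type of $\langle Abc\rangle$ over $A$ is determined by the isomorphism types of $\langle Ab\rangle$ and $\langle Ac\rangle$ over $A$. For superprojective $G$, Chatzidakis's criterion (\cite[Theorem 2.4]{chatzidakis1998model}, already used in Lemma~\ref{lem: acl description for free}) says that an isomorphism between subsystems is elementary. I would check that it applies in the relative form needed here, namely to isomorphisms fixing an algebraically closed $A$. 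It then follows that if $b\equiv_A b'$ and both are independent from $c$ over $A$, we get $bc\equiv_A b'c$.

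\textbf{Extension.} Given $a'\equiv_A a$, I want $a''$ with $f(a'') = e$, $a''\equiv_{\mathrm{acl}^{eq}(e)} a$, and $a''$ independent from $aa'$ over $A$. I would produce $a''$ by the fiber-product trick of Lemma~\ref{lem: pq fibermaxxing} and Lemma~\ref{lem: acl description for free}. Consider the finite group $G(\langle Aaa'\rangle)\times_{G(A)} G(\langle Aa\rangle)$, with the appropriate bounded part; by the embedding property it is a quotient of $G$ compatible with the given map to $G(\langle Aaa'\rangle)$. This realizes a copy of $\langle Aa\rangle$ that is independent from $aa'$ over $A$. The remaining task is to arrange $f(a'') = e$. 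For this I would combine the copies with Neumann's lemma (or compactness over $\mathrm{acl}^{eq}(e)$). The realizations of $\mathrm{tp}(a/\mathrm{acl}^{eq}(e))$ cannot all meet $\langle Aaa'\rangle$ outside $A$ in their generated subsystems. Otherwise some element of $\mathrm{acl}(Aaa')\setminus A$ would lie in every such subsystem, would then have only boundedly many $e$-conjugates, and so would lie in $\mathrm{acl}^{eq}(e)\cap\mathbb{M}=A$, a contradiction. Hence some realization with $f(a'')=e$ is independent over $A$.

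\textbf{Conclusion.} With $a''$ in hand, both $a$ and $a'$ are independent from $a''$ over $A$, and $a\equiv_A a'$. Stationarity gives $aa''\equiv_A a'a''$. Since $f(a) = e = f(a'')$, the formula $f(x)=f(a'')$ holds of $a$, hence of $a'$, so $f(a') = e$ as required.

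I expect the extension step to be the main obstacle. Realizing an independent copy is easy via superprojectivity, but keeping the value of $f$ fixed needs the Neumann-type argument. That argument in turn needs a clean description of $\mathrm{acl}$ in $\mathrm{Th}(S(G))$, along the lines of the bounded-quotient analysis in Lemma~\ref{lem: acl description for free}, in order to show that ``forking'' here really means the join is strictly larger than $A$.
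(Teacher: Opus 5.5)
The paper gives no proof of this fact; it is quoted from Chatzidakis. So your argument has to stand on its own. Its architecture is the standard one, and the reduction to $e\in\mathrm{dcl}^{eq}(A)$, the stationarity step and the concluding step are correct.

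Stationarity needs no ``relative form'' check. Every model of $\mathrm{Th}(S(G))$ satisfies the embedding property for finite quotients, so the isomorphisms between finite subsystems of two models form a back-and-forth system. Hence every isomorphism between subsystems is elementary; this is how \cite[Theorem 2.4]{chatzidakis1998model} is used in Lemma~\ref{lem: acl description for free}. Your fiber-product isomorphism $\langle Abc\rangle\to\langle Ab'c\rangle$ therefore gives $bc\equiv_A b'c$ over an arbitrary subsystem $A$. Algebraic closedness of $A$ matters only in the extension step.

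The extension step, however, has a genuine gap as written, for two reasons.

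First, the fiber-product trick is not available in general. $G(\langle Aaa'\rangle)$ is infinite as soon as $A$ is, so the group you call finite is not. The embedding property only lifts along $B\to G(A)$ when $B$ is already an image of $G$, and the images of a superprojective group need not be closed under fiber products. For example, $C_2\times C_2$ is not a quotient of $\hat{\mathbb{Z}}$, and the same failure occurs for free pro-$p$ groups of finite rank; in both cases every sort is finite and everything is algebraic over $\emptyset$. Lemmas~\ref{lem: acl description for free} and~\ref{lem: pq fibermaxxing} work only because the ranks there are large. Moreover, even when such a copy exists it realizes only $\mathrm{tp}(a/A)$, not $\mathrm{tp}(a/\mathrm{acl}^{eq}(e))$, so it gives no control over $f(a'')$.

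Second, your counting argument is a non sequitur. If each realization $a''$ of $\mathrm{tp}(a/\mathrm{acl}^{eq}(e))$ meets $\langle Aaa'\rangle$ outside $A$, the meeting element may depend on $a''$. Nothing produces a single element common to all of them.

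The missing ingredient is P.~M.~Neumann's lemma: a group acting with all orbits infinite can move any finite set off any given finite set. Its standard consequence is that algebraic independence has full existence in every theory.
\begin{itemize}
\item Apply it in $\mathbb{M}^{eq}$ over $B=\mathrm{acl}^{eq}(e)$ to get $a''\equiv_B a$ with $\mathrm{acl}^{eq}(Ba'')\cap\mathrm{acl}^{eq}(Baa')=B$.
\item Then $f(a'')=e$.
\item Meets are definable and only finitely many elements lie above a given one, so $\langle Aa''\rangle\subseteq\mathrm{acl}(Aa'')$, and likewise for $aa'$.
\item Hence $\langle Aa''\rangle\cap\langle Aaa'\rangle\subseteq B\cap\mathbb{M}=A$.
\end{itemize}
With this in place of your extension step, the conclusion goes through verbatim. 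No description of $\mathrm{acl}$ in $\mathrm{Th}(S(G))$ is needed, contrary to your closing remark. Incidentally, the repaired argument uses only the embedding property of $G$, not projectivity.
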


We use $\ind^{D}$ to denote non-forking independence in $\mathrm{Th}_{L_{\mathrm{IS}}}(D)$ rather than $\mathrm{Th}_{L_{H}}(D)$, so $\ind^{D}$ has the description given in Corollary \ref{cor: free forking char}. The point of referring to this relation as non-forking independence is just to remind the reader that it has the familiar properties, such as symmetry and transitivity, which we will use freely. 

\begin{fact} \label{fact: totally categorical} \cite[Theorem 7.1]{chatzidakis1998model}
    If $G$ is a free pro-$\ell$ group (of any rank), then $\mathrm{Th}_{L_{\mathrm{IS}}}(S(G))$ is totally categorical, and in particular $\omega$-stable.
\end{fact}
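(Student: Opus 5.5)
The plan is to show directly that, for $G = F_{\kappa}(\ell)$, every model of $T = \mathrm{Th}_{L_{\mathrm{IS}}}(S(G))$ is of the form $S(F_{\lambda}(\ell))$, and that $\lambda$ is determined by the cardinality of the model. Categoricity in every infinite power then follows, and $\omega$-stability follows from the Baldwin--Lachlan/Morley theory. Since $T_{\mathrm{IS}}$ is many-sorted with the sorts $X_{n}$ playing the role of "finitely many elements of bounded index", I read "totally categorical" as categoricity in all infinite cardinalities of the (total) model.

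\textbf{Finite rank.} Suppose first that $\kappa = e$ is finite. Then $F_{e}(\ell)$ has only finitely many open normal subgroups of each index, so every sort $X_{n}(S(G))$ is finite. The theory $T$ records the exact size of each sort and its complete diagram, so any model $S \models T$ is isomorphic to $S(G)$ sort-by-sort. These isomorphisms are compatible, so $S \cong S(G)$ and $T$ has a unique model. Hence $T$ is trivially categorical, and it is $\omega$-stable because each sort is finite.

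\textbf{Infinite rank: identifying models.} For infinite $\kappa$, let $S \models T$ and put $H = G(S)$. I would check that $H$ is a free pro-$\ell$ group in three steps.
\begin{itemize}
\item[(i)] Every finite quotient of $H$ is an $\ell$-group. This is expressible sort-by-sort, since for each $n$ we can list the isomorphism types of the $[\alpha]$ with $\alpha \in X_{n}$.
\item[(ii)] Every finite $\ell$-group occurs as a quotient of $H$, since each is a quotient of $G$.
\item[(iii)] $H$ is projective, by Fact \ref{fact:axiomatizable}.
\end{itemize}
A projective pro-$\ell$ group is free pro-$\ell$, because its $\ell$-Sylow subgroup (itself) is projective, hence of cohomological dimension $\leq 1$. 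So $H \cong F_{\lambda}(\ell)$ with $\lambda = \dim_{\mathbb{F}_{\ell}}$ of the continuous dual of $H/\Phi(H)$. By (ii), and in fact by the embedding property, $\lambda$ is infinite. Moreover $\lambda$ equals the number of $\sim$-classes $[\alpha] \cong C_{\ell}$, up to the usual counting: for infinite $\lambda$, the set of $C_{\ell}$-classes has cardinality $\lambda$. The whole structure $S(F_{\lambda}(\ell))$ also has cardinality $\lambda$, because an open normal subgroup is determined by a finite amount of data from a topological generating set converging to $1$.

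\textbf{Categoricity and stability.} It follows that a model of $T$ of infinite cardinality $\mu$ must be $S(F_{\mu}(\ell))$, so $T$ is categorical in every infinite cardinality, including $\aleph_{0}$. An $\aleph_{1}$-categorical countable theory is $\omega$-stable by Morley, so $T$ is $\omega$-stable.

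\textbf{Main obstacle.} The delicate step is (iii) together with the claim that "pro-$\ell$ and projective" implies "free pro-$\ell$" in arbitrary rank. I would rely on the standard result (Fried--Jarden, Ribes--Zalesskii) that a pro-$\ell$ group $P$ with $H^{2}(P,\mathbb{F}_{\ell}) = 0$ is free pro-$\ell$ on a basis lifted from $P/\Phi(P)$. A second, smaller point is justifying $|S(F_{\lambda}(\ell))| = \lambda$ for infinite $\lambda$, for which a counting argument on finite quotients through a generating set converging to $1$ suffices.
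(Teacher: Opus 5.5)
Your argument is correct. The paper gives no proof of this statement; it simply imports Chatzidakis's Theorem 7.1, so there is no internal route to compare against. Your reconstruction is the natural self-contained proof, and it uses only tools the paper already has on hand:
\begin{itemize}
\item the duality $S \leftrightarrow G(S)$;
\item definability of ``$[\alpha]\cong A$'';
\item Fact~\ref{fact:axiomatizable}.
\end{itemize}
The logic is: every model $S$ of $T$ has $G(S)$ pro-$\ell$ and projective, hence free pro-$\ell$ of some rank $\lambda$, and $\lambda$ is read off from the cardinality of any infinite sort.

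Two steps deserve one more line each.

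\textbf{Finite rank.} Isomorphisms chosen independently on the finite pieces $X_{1}\cup\cdots\cup X_{n}$ need not cohere, so ``these isomorphisms are compatible'' is not automatic. What is true is the following. At each level $n$ the set of isomorphisms $X_{\le n}(S)\to X_{\le n}(S(G))$ is finite, and it is nonempty because $T$ contains a sentence pinning down the finite structure $X_{\le n}(S(G))$. Restriction sends level $n+1$ isomorphisms to level $n$ isomorphisms. K\"onig's lemma then yields a compatible family. Alternatively, on the group side, a finitely generated profinite group is determined by its finite images.

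\textbf{$\omega$-stability.} You are applying Morley's theorem to a many-sorted theory with infinitely many sorts. This is harmless here precisely because you have shown that in every model all sorts $X_{n}$ with $n\ge\ell$ have the same cardinality $\lambda$. So ``$\aleph_{1}$-categorical'' is unambiguous, and the Ehrenfeucht--Mostowski argument goes through verbatim: take the indiscernible sequence in the sort $X_{\ell}$. One could instead count types directly using Chatzidakis's result that, for superprojective $G$, isomorphic subsystems over $A$ realize the same type \cite[Theorem 2.4]{chatzidakis1998model}.

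Finally, appealing to the embedding property to get $\lambda$ infinite is unnecessary. Your item (ii) already forces $C_{\ell}^{m}$ to be a quotient of $G(S)$ for every $m$.
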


\begin{lem}
    The subsystem $D$, viewed as an $L_{\mathrm{H}}$-structure is a $H$-structure, with respect to the (complete) type $\mathcal{Q}(x) = \{x \in X_{2} \wedge x \not\in X_{1} \wedge P(x,x,x)\}$, asserting that $x$ is a $C_{2}$-element and the identity element of its $\sim$-class. 
\end{lem}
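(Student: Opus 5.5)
We verify the four clauses of Definition \ref{defn: H-structure} for $(D,H(D))$. Throughout we use that $G(D)\cong F_{|V_{*}\cup\overline{\kappa}|}(2)$ is a free pro-$2$ group of infinite rank, so by Fact \ref{fact: totally categorical} its theory is $\omega$-stable (hence supersimple), and nonforking is given by Corollary \ref{cor: free forking char}. The $C_{2}$-quotients of $G(D)$ are exactly the $C_{2}$-quotients of $\mathbb{S}_{\Gamma}$. By Lemma \ref{lem: dictionary}, their $\sim$-classes correspond to nonzero elements of the dual $\mathbb{F}_{2}$-space $(\mathbb{F}_{2}^{V_{*}}\times\mathbb{F}_{2}^{\overline{\kappa}})^{*}$, and $H(D)$ picks out the identity representatives of the distinguished basis vectors $\pi_{v}$, $v\in V_{*}$. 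First we check that $\mathcal{Q}$ is a complete type. In $S(F(2))$ any two $C_{2}$-quotients are conjugate by an automorphism: by superprojectivity, or by \cite[Theorem 2.4]{chatzidakis1998model}, types are determined by the isomorphism type of the generated finite subsystem. This gives (1) immediately.

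For (2), take distinct $a_{0},\dots,a_{n-1}\in H(D)$, corresponding to vertices $v_{0},\dots,v_{n-1}$. The functionals $\pi_{v_{i}}$ are linearly independent, so by Lemma \ref{lem: dictionary}(2) we have $a_{i}\not\geq\bigwedge_{j<i}a_{j}$. Since $[a_{i}]\cong C_{2}$ has only the trivial proper quotient, $a_{i}\vee\langle a_{<i}\rangle$ is the top element $1$. Then Corollary \ref{cor: free forking char} gives $a_{i}\ind^{D}_{\emptyset}a_{<i}$ for every $i$, and the sequence is independent.

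For (3), let $A\subseteq D$ be finite, which we may close to a finite subsystem, and let $q\in S_{1}(A)$ extend $\mathcal{Q}$ and not fork over $\emptyset$. By Corollary \ref{cor: free forking char}, a realization $x$ satisfies $x\vee A=x\vee\emptyset=1$, that is, $x\notin A$, so $d([x])$ lies outside the finite-dimensional span of the functionals coming from $A$. Since $\mathcal{Q}$ is complete and types over $A$ are determined by the isomorphism type of $\langle A,x\rangle$ over $A$, which is $G(A)\times C_{2}$ in every such case, $q$ is the unique nonforking extension. It therefore suffices to find $v\in V_{*}$ with $\pi_{v}$ not in the span of $A$'s $C_{2}$-functionals. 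Each such functional involves only finitely many vertex coordinates together with finitely many $\overline{\kappa}$-coordinates, and $V_{*}$ is infinite, so this is possible.

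For (4), the main work, let $A$ be finite and $q\in S_{1}(A)$. By saturation of $\mathbb{S}_{\Gamma}$, or really of $D$, which is a definable part, we need a realization $\alpha$ with $\alpha\vee\langle A,H(D)\rangle\in\mathrm{acl}(A)=A$, using Lemma \ref{lem: acl description for free}. Take any realization $\alpha_{0}$ and set $B=\langle A,\alpha_{0}\rangle$, a finite quotient of $G(D)$ whose kernel over $G(A)$ we want to avoid the vertex coordinates. The plan is to realize $B$ over $A$ by an epimorphism $G(D)\to G(B)$ that factors through the quotient by the closed normal subgroup generated by the $x_{v}$ with $v$ not among finitely many vertices used by $A$. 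In other words, we use the free generators indexed by $\overline{\kappa}$ (infinitely many, and disjoint from $A$'s support) to map onto the kernel of $G(B)\to G(A)$, by the embedding property of free pro-$2$ groups of infinite rank. This gives $\alpha\equiv_{A}\alpha_{0}$, again by \cite[Theorem 2.4]{chatzidakis1998model}. We then verify, using modularity (Lemma \ref{lem: commutes}) and the fact that $\langle H(D)\rangle$ corresponds to the quotient $G(D)\to C_{2}^{V_{*}}$, that $\alpha\vee\langle A,H(D)\rangle=\alpha\vee A$. This verification is the expected obstacle: one has to compute the join inside the pro-$2$ group and check that the coordinates in $\overline{\kappa}$ meet the vertex quotient only within $A$. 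If the direct computation is messy, I would fall back on a saturation and compactness argument, building the realization one finite subset of $H(D)$ at a time using the same embedding-property construction.
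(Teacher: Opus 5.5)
Parts (1)--(3) are fine and match the paper's arguments. For (2) this is the linear independence of the $\pi_{v}$ together with Corollary~\ref{cor: free forking char}. For (3) it is the observation that the nonforking extension of $\mathcal{Q}$ to a finite $A$ only says $x \notin A$, which some $h \in H(D)$ satisfies.

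The gap is in (4), which is the whole content of the lemma. You never verify that your realization is independent from $H(D)$ over $A$, and the tool you actually invoke cannot guarantee it. Let $M$ be the normal closure of the vertex generators outside the support of $A$. Solving the embedding problem $G(D)/M \to G(A) \leftarrow G(B)$ by the embedding property gives no control over which generators are sent onto $K = \ker(G(B) \to G(A))$.

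Here is a concrete failure. Fix $v \in V_{*}$ and $i \in \overline{\kappa}$, and write $\epsilon_{i}$ for the coordinate functional at $i$. Let $A$ be generated by the $C_{2}$-element $\gamma$ with $d([\gamma]) = \pi_{v} + \epsilon_{i}$, and let $q$ be the type over $A$ of a $C_{2}$-identity element outside $A$. Identify $G(B)$ with $C_{2} \times C_{2}$ so that $G(B) \to G(A)$ is the first projection. The assignment $x_{v} \mapsto (-1,1)$, $x_{i} \mapsto (-1,-1)$, all other generators $\mapsto 1$ is a compatible epimorphism factoring through $G(D)/M$. However, the two $\sim$-classes of $C_{2}$-elements of the resulting copy $B'$ of $B$ that are not in $A$ have functionals $\epsilon_{i}$ and $\pi_{v}$. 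Both lie in the span of $\pi_{v}+\epsilon_{i}$ and $\pi_{v}$, so whichever one you call $\alpha$ lies in $\langle A, H(D)\rangle \setminus A$ and forks with $H(D)$ over $A$.

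Your fallback does not help either. Independence from $H(D)$ is a partial type over a set of size $|V_{*}| \geq \overline{\kappa}$, and saturation only realizes types over small sets. Co-density has to come from the presentation of $G(D)$, namely from its $\overline{\kappa}$ many non-vertex generators.

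Your parenthetical idea (fresh $\overline{\kappa}$-generators map onto the kernel) is the right one, and once made precise it needs no embedding property.
\begin{itemize}
\item Let $N_{A}$ and $N_{H}$ be the kernels of $G(D) \to G(A)$ and of $G(D) \to C_{2}^{V_{*}}$; the latter is dual to $\langle H(D)\rangle$.
\item Send the finitely many basis elements outside $N_{A}$ to arbitrary lifts in $G(B)$ of their images in $G(A)$.
\item Send finitely many fresh $x_{j_{1}}, \ldots, x_{j_{m}}$, with $j_{l} \in \overline{\kappa}$ and $x_{j_{l}} \in N_{A}$, onto a generating set of $K$.
\item Send every other basis element to $1$.
\end{itemize}
This defines a compatible epimorphism $\psi$. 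Let $B'$ be the dual subsystem, so $N_{B'} = \ker(\psi) \subseteq N_{A}$.

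Now suppose $\lambda : G(B) \to \mathbb{F}_{2}$ is such that $\lambda \circ \psi$ lies in the span of the $\pi_{w}$. Then $\lambda\circ\psi$ vanishes on the $x_{j_{l}}$, so $\lambda$ kills $K$, and hence $\lambda \circ \psi$ factors through $G(A)$. Since $G(D)/N_{B'}N_{H}$ is elementary abelian, this gives $N_{A} \subseteq N_{B'}N_{H}$. Dedekind's law then gives $N_{B'}(N_{A} \cap N_{H}) = N_{A} \cap N_{B'}N_{H} = N_{A}$, i.e.\ $B' \vee \langle A, H(D)\rangle = A$. By Corollary~\ref{cor: free forking char}, the copy $\alpha \in B'$ of $\alpha_{0}$ satisfies $\alpha \ind^{D}_{A} H(D)$.

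The paper proceeds differently. It builds, once and for all, a subsystem $D' \supseteq A$ with $G(D')$ free pro-$2$ of infinite rank and $D' \vee \langle H(D)\rangle = A \vee \langle H(D)\rangle$. It does this by lifting through Frattini covers a basis adapted to $A$ and $H(D)$, then killing the lifted generators for the vertex directions outside $A \vee \langle H(D)\rangle$. It then realizes $q$ inside $D'$ using total categoricity (Fact~\ref{fact: totally categorical}), and the same modular-law computation gives independence.

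The repaired version of your route is more explicit and works type by type without using the saturation of $D'$. It still needs Chatzidakis's result that $\mathrm{tp}(\alpha/A)$ is determined by $\langle A,\alpha\rangle$ over $A$ in order to get $\alpha \equiv_{A} \alpha_{0}$.
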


\begin{proof}
    We have to check that $(D,H(D))$ satisfies conditions (1)-(4) of Definition \ref{defn: H-structure}. Condition (1) is satisfied by the choice of $H(D)$.  It is also clear from Lemma \ref{lem: dictionary} that (2) is satisfied, since, when viewed as elements of the continuous dual $(\mathbb{F}_{2}^{V})^{*}$, any $n$ $\sim$-inequivalent $C_{2}$-elements of vertex width $1$ are linearly independent. Thus if $\{\alpha_{i} : i < n\}$ is a set of distinct elements from $H(D)$, then we have $\alpha_{i} \not\in \langle \alpha_{<i} \rangle$ for all $i < n$ and thus, by Corollary \ref{cor: free forking char}, $\alpha_{i} \ind^{D} \alpha_{<i}$, showing (2). For (3), note that if $A$ is a finite subsystem of $D$ and $q \in S_{1}(A)$ is a non-forking extension of $\mathcal{Q}$, then $q$ can be axiomatized by 
    $$
    q(x) = \{x \in X_{2} \wedge x \not\in X_{1} \wedge P(x,x,x) \wedge x \neq \beta : \beta \in A \},
    $$
    and this will be realized by any element of $H(D)$ outside of $H(D) \cap A$. 

    The condition (4) is more involved. Let $A$ be a finite subset of $D$ and $q \in S_{1}(A)$.  We may assume $A = \langle A \rangle$. Let $B = A \vee \langle H(D) \rangle$.
    
    \textbf{Claim:}  There is a subsystem $D'$ with $A \subseteq D' \subseteq D$ such that $G(D')$ is a free pro-$2$ group of infinite rank and $D' \vee \langle H(D) \rangle = B$. 
    
    \emph{Proof of Claim:} Let $F$ be the subsystem of $D$ corresponding to $G(D)/\Phi(G(D))$. Then $\langle H(D) \rangle \subseteq F$ and $G(F)$ is an elementary abelian $2$-group (isomorphic to $C_{2}^{V} \times C_{2}^{\overline{\kappa}}$).  Let $C = A \vee F$. So $B \subseteq C$ and $G(B)$ and $G(C)$ are both finite elementary abelian $2$-groups. Moreover, $G(A) \to G(C)$ is a Frattini cover. 

    Let $(\alpha_{i})_{i \in I_{0}}$ be a maximal sequence of independent $C_{2}$-elements in $B$ from $H(D)$. Then $B = \langle (\alpha_{i})_{i \in I_{0}} \rangle$ and we may extend this sequence to a maximal sequence of independent $C_{2}$-elements $(\alpha_{i})_{i \in I_{0} \cup I_{1}}$ in $H(D)$ (which is, necessarily, an enumeration of all of $H(D)$) and a maximal sequence of independent $C_{2}$-elements $(\alpha_{i})_{i \in I_{0} \cup I_{2}}$ in $C$. Then by our assumption that $A \ind^{D}_{B} H(D)$, we have that $(\alpha_{i})_{i \in I_{0} \cup I_{1} \cup I_{2}}$ is an independent sequence of $C_{2}$-elements in $\langle C, H(D) \rangle$. Finally, extend the sequence to $(\alpha_{i})_{i \in I_{0} \cup I_{1} \cup I_{2} \cup I_{3}}$, a maximal independent sequence of $C_{2}$-elements in $F$. Thus $F$ is generated by this sequence and $G(F) \cong C_{2}^{I_{0} \cup I_{1} \cup I_{2} \cup I_{3}}$ via the isomorphism dual to the map $\ker(\pi_{i}) \mapsto \alpha_{i}$ for all $i$. Note that we have 
    $$
    (\alpha_{i})_{i \in I_{0} \cup I_{2}} \ind^{D}_{\emptyset} (\alpha_{i})_{i \in I_{1} \cup I_{3}},
    $$
    and hence
    $$
    C \ind^{D}_{\emptyset} (\alpha_{i})_{i \in I_{1} \cup I_{3}}.
    $$
    Moreover, by the choice of $C = A \vee F$, we have 
    $$
    A \ind^{D}_{C} F
    $$
    which entails $A \ind^{D} \langle (\alpha_{i})_{i \in I_{1} \cup I_{3}} \rangle$, by transitivity. This shows $G(\langle A, (\alpha_{i})_{i \in I_{1} \cup I_{3}}\rangle) \cong G(A) \times C_{2}^{I_{1} \cup I_{3}}$. Thus we may identify $G(\langle A, (\alpha_{i})_{i \in I_{1} \cup I_{3}}\rangle)$ with $G(A) \times C_{2}^{I_{1} \cup I_{3}}$, $G(F)$ with $C_{2}^{I_{0} \cup I_{1} \cup I_{2} \cup I_{3}}$, $G(C)$ with $C_{2}^{I_{0} \cup I_{2}}$ and the induced map $G(\langle A, (\alpha_{i})_{i \in I_{1} \cup I_{3}} \rangle) \to G(F)$ with the product of the induced map $G(A) \to G(C)$ and the identity map $C_{2}^{I_{1}} \times C_{2}^{I_{3}} \to C_{2}^{I_{1}} \times C_{2}^{I_{3}}$. 

    Let $x_{i} \in G(F)$ be the element which is $-1$ in the $i$ coordinate and $1$ in every other coordinate. Then the sequence $\langle x_{i} : i \in I_{0} \cup I_{2} \rangle$ is a minimal set of generators for $G(C)$. Since the map $G(A) \to G(C)$ is Frattini, the sequence $\langle x_{i} : i \in I_{0} \cup I_{2} \rangle$ lifts to a set $\langle x'_{i} : i \in I_{0} \cup I_{2} \rangle$ of elements of $G(A)$ which form a minimal set of generators. Then since the map $G(D) \to G(\langle A,F \rangle)$ is Frattini, the set $\{x_{i} : i \in I_{1} \cup I_{3} \} \cup \{x'_{i} : i \in I_{0} \cup I_{2}\}$ lifts to $\{y_{i} : i \in I_{0} \cup I_{1} \cup I_{2} \cup I_{3}\}$, a set of minimal generators in $G(D)$. Then any subset of the $y_{i}$ generate a free pro-$2$ group.  Define a map $G(D) \to \langle y_{i} : i \in I_{0} \cup I_{2} \cup I_{3} \rangle$ by sending each $y_{i} \mapsto y_{i}$ for $i \in I_{0} \cup I_{2} \cup I_{3}$ and mapping $y_{i} \mapsto 1$ for all $i \in I_{1}$. Let $D'$ be the subsystem of $D$ dual to this quotient. 

    Since the $y_{i}$ lift the $x_{i}$, we have that $D' \vee F = \langle \alpha_{i} : i \in I_{0} \cup I_{2} \cup I_{3} \rangle$, and thus $D' \vee \langle H(D) \rangle = \langle \alpha_{i} : i \in I_{0} \cup I_{2} \rangle = B$. This proves the claim. $\square$

    Now the given $q$ must be realized since $\mathrm{Th}(D')$ is totally categorical, by Fact \ref{fact: totally categorical} (as $G(D')$ is a free pro-$2$ group of infinite rank), so every model of its theory is $\aleph_{0}$-saturated. This proves (4). 
\end{proof}

\begin{defn}
    Suppose $(M,H(M))$ is an $H$-structure. Then a subset $A \subseteq M$ is called \emph{$H$-independent} if $A \ind^{D}_{H(A)} H(M)$. 
\end{defn}

We need some general facts about $H$-structures:

\begin{fact} \label{fact: H structure facts}
    Suppose $(M,H(M))$ is an $H$-structure where $M$ is an $L$-structure and $(M,H(M))$ is an expansion of $M$ to the language $L_{H} = L \cup \{H\}$. 
    \begin{enumerate}
        \item \cite[Proposition 4.5]{berenstein2017supersimple} If $C$ is an $H$-independent subset of $M$, and $a$ is a finite tuple from $M$, then there is a unique smallest finite subset $H_{0} \subseteq H(M)$ such that $a \ind^{f}_{CH_{0}} H(M)$. This set will be referred to as the $H$\emph{-basis} of $a$ over $C$, denoted $\mathrm{HB}(a/C)$. When $A$ is a (possibly infinite) set, we write $\mathrm{HB}(A/C)$ for the union of the $H$-bases of all finite tuples from $A$. 
        \item \cite[Proposition 2.5]{berenstein2017supersimple} If $a$ and $a'$ are both $H$-independent tuples, then 
        $$
        \mathrm{tp}_{L_{H}}(a) = \mathrm{tp}_{L_{H}}(a') \iff \mathrm{tp}_{L}(a,H(a)) = \mathrm{tp}_{L}(a',H(a')). 
        $$
        \item \cite[Corollary 4.15]{berenstein2017supersimple} If $A \subseteq M$, then the algebraic closure of $A$ in the $H$-structure $(M,H(M))$, denoted $\mathrm{acl}_{H}(A)$, satisfies 
        $$
        \mathrm{acl}_{H}(A) = \mathrm{acl}(A,\mathrm{HB}(A))
        $$
        where $\mathrm{acl}$ refers to algebraic closure in the underlying theory $T$, without the $H$ predicate. 
        \item \cite[Theorem 5.3, Proposition 5.9]{berenstein2017supersimple} If the underlying theory $T$ is superstable, then $T_{\mathrm{H}}$ is also superstable and non-forking independence in $T_{\mathrm{H}}$, denoted $\ind^{\mathrm{H}}$, is characterized by 
        $$
        a \ind^{\mathrm{H}}_{C} B \iff \mathrm{HB}(a/B) = \mathrm{HB}(a/C) \text{ and } a \ind_{CH} B,
        $$
        when $C = \mathrm{acl}_{H}(C) \subseteq B = \mathrm{acl}_{H}(B)$, where $\ind$ denotes non-forking independence in $T$. 
    \end{enumerate}
\end{fact}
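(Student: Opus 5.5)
This statement collects results from \cite{berenstein2017supersimple}, so the plan is not to reprove the $H$-structure theory from scratch. Instead, we check that our setting satisfies the hypotheses under which the cited results are proved, and then indicate the mechanism behind each clause. The standing hypothesis in \cite{berenstein2017supersimple} is that $T$ is supersimple and $\mathcal{Q}$ is a partial type over $\emptyset$; for (4) we additionally need $T$ superstable. In our application $T = \mathrm{Th}_{L_{\mathrm{IS}}}(D)$ is totally categorical by Fact \ref{fact: totally categorical}, so it is superstable, hence supersimple. It also has weak elimination of imaginaries by Fact \ref{fact: weak EI}, which lets forking be computed on real tuples. All hypotheses are therefore met, and for each clause we simply cite the indicated result.

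To make the facts usable, I would recall the mechanism in the same order. For (1), since $T$ is supersimple, $a$ has finite $SU$-rank over $C$. Any $H_{0} \subseteq H(M)$ with $a \ind^{f}_{CH_{0}} H(M)$ can be shrunk using the independence of $H(M)$ (condition (2) of Definition \ref{defn: H-structure}) together with $H$-independence of $C$. Two such sets can then be intersected: if $a \ind_{CH_{0}} H$ and $a \ind_{CH_{1}} H$, the independence of $H$ over $C$ and the modular-type behaviour of independent sets in $H$ give $a \ind_{C(H_{0}\cap H_{1})} H$. This yields a unique minimal basis. For (2), the density and extension properties (3)--(4) of Definition \ref{defn: H-structure} give a back-and-forth system between $H$-independent tuples with the same $L$-type together with their $H$-parts. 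For (3), one shows that $\mathrm{acl}(A,\mathrm{HB}(A))$ is $H$-independent and that any element outside it has infinitely many conjugates, using the extension property. For (4), superstability of $T_{\mathrm{H}}$ follows by counting types via (2). The forking characterization is then checked by verifying that the displayed relation satisfies the axioms of an independence relation (invariance, symmetry, transitivity, local character, extension) and applying Kim--Pillay-style uniqueness.

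The main obstacle, if one wanted to do more than cite, is (1): the existence and uniqueness of a smallest $H$-basis. This is exactly where supersimplicity (finite rank of $a$ over $C$) and the $H$-independence of $C$ are essential. In our concrete setting I would instead compute it directly via Corollary \ref{cor: free forking char}. There, $a \ind^{D}_{CH_{0}} H(D)$ amounts to $a \vee \langle C,H(D)\rangle \in \langle C, H_{0}\rangle$. Hence the basis is read off from the finitely many vertex-width-one $C_{2}$-elements needed to generate the relevant finite elementary abelian quotient, and uniqueness follows from linear independence in Lemma \ref{lem: dictionary}.
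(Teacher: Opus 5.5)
Your operative argument (observe that $\mathrm{Th}_{L_{\mathrm{IS}}}(D)$ is totally categorical, hence superstable, then cite Berenstein--Carmona--Vassiliev) is what the paper does. The paper records the statement as a Fact and gives no argument beyond the citations. The problem is that several of your ``mechanism'' remarks are false as written and should be corrected or dropped.

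In (1), supersimplicity does not give finite $\mathrm{SU}$-rank; supersimple theories can have types of rank $\omega$ and beyond. A finite $H_{0}$ exists by local character over finite sets. Indeed, $\mathrm{tp}(a/CH(M))$ does not fork over some finite $C_{0}H_{0}$, and base monotonicity then gives $a \ind^{f}_{CH_{0}} H(M)$. Your intersection step is fine once made precise. $H$-independence of $C$ together with independence of $H(M)$ gives $H_{0} \ind^{f}_{C(H_{0}\cap H_{1})} H_{1}$, and two applications of transitivity yield $a \ind^{f}_{C(H_{0}\cap H_{1})} H(M)$.

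In (4), the axioms you list (invariance, symmetry, transitivity, local character, extension) do not pin down forking. The relation declaring every triple independent satisfies all of them. A Kim--Pillay-type uniqueness theorem also needs finite character and, crucially, the independence theorem over models (or, in the stable setting, stationarity over models). Without that axiom there is no uniqueness to appeal to.

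Your concrete computation in $D$ has a gap of its own. The element $a \vee \langle C, H(D)\rangle$ corresponds to a finite quotient of $G(\langle C, H(D)\rangle)$. That group is a fibre product of $G(C)$ with an elementary abelian $2$-group, so the quotient need not be elementary abelian. Moreover, linear independence of $H(D)$ alone does not give a unique minimal basis. For example, with $d$ as in Lemma \ref{lem: dictionary}, let $c \in D$ be a $C_{2}$-element with $d([c]) = d([h_{1}]) + d([h_{2}])$ for distinct $h_{1},h_{2} \in H(D)$, and put $C = \langle c\rangle$. Then $h_{1} \in \langle C,h_{1}\rangle \cap \langle C,h_{2}\rangle$ but $h_{1} \notin \langle C\rangle$. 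By Corollary \ref{cor: free forking char}, both $H_{0} = \{h_{1}\}$ and $H_{0} = \{h_{2}\}$ satisfy $h_{1} \ind^{D}_{CH_{0}} H(D)$, while $H_{0} = \emptyset$ does not. The missing input is the $H$-independence of $C$, i.e.\ $C \vee \langle H(D)\rangle = \langle H(C)\rangle$. It gives $G(\langle C,H_{1}\rangle) \cong G(C) \times C_{2}^{H_{1} \setminus H(C)}$ for finite $H_{1} \subseteq H(D)$. Choose $H_{1}$ with $a \vee \langle C,H(D)\rangle \in \langle C,H_{1}\rangle$. Then the $H$-basis is the set of $h \in H_{1} \setminus H(C)$ whose coordinate copy of $C_{2}$ is not contained in the kernel of the quotient map onto $[a \vee \langle C,H(D)\rangle]$.
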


We now introduce a third language $L_{\mathrm{R}}$ which extends $L_{\mathrm{H}}$ with the addition of a new binary relation symbol $R$. Then $D$ will be viewed as an $L_{\mathrm{R}}$-structure by interpreting $R$ so that $R^{D}$ consists of the pairs $(h,h') \in H(D)^{2}$ such that $h$ and $h'$ are $C_{2}$-elements coming from vertices in $\Gamma$ which are connected by an edge. This makes sense since $H(D)$ is, by definition, the set of $C_{2}$-elements of $\mathbb{S}_{\Gamma}$ which have vertex width $1$ and thus each lie above a unique $D_{p}$-element associated to a vertex of the graph. Note that the graph $(H(D), R^{D})$ may be naturally identified with $\Gamma$. We will eventually show that the $L_{R}$-structure on $D$ gives the full induced structure on $D$ coming from $\mathbb{S}_{\Gamma}$ and that certain model-theoretic properties of $\Gamma$ are transferred to the $L_{\mathrm{R}}$-structure $D$. This will be a key step towards concluding that model-theoretic properties are transferred from $\Gamma$ to $\mathbb{S}_{\Gamma}$. 

\begin{lem} \label{lem: type char in D}
If $a$ and $a'$ are $H$-independent tuples from $D$, then the following are equivalent:
    \begin{enumerate}
            \item $a \equiv^{L_{R}} a'$.
            \item $a \equiv^{L_{H}} a'$ and  $H(a) \equiv^{\Gamma} H(a')$.
            \item $(a,H(a)) \equiv^{L_{\mathrm{IS}}} (a',H(a'))$ and $H(a) \equiv^{\Gamma} H(a')$.
    \end{enumerate}
\end{lem}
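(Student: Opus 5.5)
The plan is to prove the cycle (1)$\Rightarrow$(2)$\Rightarrow$(3)$\Rightarrow$(1).

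\textbf{(1)$\Rightarrow$(2).} This is the easy direction. Since $L_{\mathrm{H}} \subseteq L_{\mathrm{R}}$, we get $a \equiv^{L_{\mathrm{H}}} a'$ immediately. The structure $(H(D),R^{D})$ is $\emptyset$-definable in the $L_{\mathrm{R}}$-structure and is identified with $\Gamma$. So any formula of $\mathrm{Th}(\Gamma)$ satisfied by the tuple $H(a)$ translates into an $L_{\mathrm{R}}$-formula about $a$, with $H$ and $R$ relativized. Hence $H(a) \equiv^{\Gamma} H(a')$.

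\textbf{(2)$\Leftrightarrow$(3).} This follows from Fact \ref{fact: H structure facts}(2) applied to the $H$-structure $D$. For $H$-independent tuples, $\mathrm{tp}_{L_{\mathrm{H}}}(a) = \mathrm{tp}_{L_{\mathrm{H}}}(a')$ if and only if $\mathrm{tp}_{L_{\mathrm{IS}}}(a,H(a)) = \mathrm{tp}_{L_{\mathrm{IS}}}(a',H(a'))$. The graph condition is carried along unchanged. One point needs care: $H(a)$ must be read as a tuple enumerated compatibly with $a$, so that the $L_{\mathrm{IS}}$-equivalence of $(a,H(a))$ and $(a',H(a'))$ pairs up the $H$-parts coordinatewise.

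\textbf{(3)$\Rightarrow$(1).} This is the main obstacle, and I would prove it by a back-and-forth.

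First I would choose $L_{\mathrm{R}}$-saturation (or use the monster $\mathbb{S}_{\Gamma}$). The aim is to show that the family of partial maps $f: a \mapsto a'$ with $a,a'$ $H$-independent and satisfying (3) has the back-and-forth property in a sufficiently saturated model. Every such map preserves $L_{\mathrm{R}}$ atomic formulas: $L_{\mathrm{IS}}$ atomic formulas by (3), $H$ because $H(a)$ is carried to $H(a')$, and $R$ because $R$ holds only on $H$ and $H(a) \equiv^{\Gamma} H(a')$.

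For the extension step, take $b$ and set $c = ab\,\mathrm{HB}(ab/\emptyset)$. By Fact \ref{fact: H structure facts}(1), $c$ is $H$-independent and $H(c) \supseteq H(a)$. I then need to find $c'$ extending $a'$ with three properties.
\begin{enumerate}
\item The new $H$-part of $c'$ realizes, over $H(a')$, the $\Gamma$-type that the new $H$-part of $c$ has over $H(a)$. Graph saturation supplies this, and since $H(D) \cong \Gamma$, it can be realized inside $H(D)$.
\item $(c',H(c')) \equiv^{L_{\mathrm{IS}}} (c,H(c))$.
\item $c' \ind^{D}_{H(c')} H(D)$.
\end{enumerate}

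Property (ii) uses the fact that the new $H$-elements are $\ind^{D}$-independent over $H(a)$ and realize the unique nonforking type of $\mathcal{Q}$. By (2) of Definition \ref{defn: H-structure}, the $L_{\mathrm{IS}}$-type of $H(c)$ over $H(a)$, and likewise of $H(c')$ over $H(a')$, is determined: it is a sequence of independent generic $C_{2}$-elements. So the $L_{\mathrm{IS}}$ elementary map $aH(a) \mapsto a'H(a')$ extends to $H(c) \mapsto H(c')$. Here I would use stationarity in the totally categorical theory (Fact \ref{fact: totally categorical}) together with the description of forking in Corollary \ref{cor: free forking char}, which identifies the $L_{\mathrm{IS}}$-type of the union.

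The remaining elements of $c$ are then placed by the co-density property (4) of Definition \ref{defn: H-structure}: realize the $L_{\mathrm{IS}}$-type over $a'H(c')$ so that it is $\ind^{D}$-independent from $H(D)$ over $H(c')$. This gives (iii).

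The subtle point I expect is verifying that $H(c')$ is exactly the prescribed $H$-part. Density might insert extra $H$-elements into $c'$, and independence over $H(c')$ is what rules this out.
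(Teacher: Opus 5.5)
Your proposal is correct and follows essentially the paper's route: a back-and-forth in the style of Berenstein--Carmona--Vassiliev, Proposition 2.5, in which new $H$-elements are matched using the graph type and the remaining elements are placed by co-density, with independence over the $H$-part ruling out stray $H$-elements. The differences are minor. The paper adds one element $b$ at a time with a three-case split rather than adjoining all of $\mathrm{HB}(ab)$ at once. It also extends the $L_{\mathrm{IS}}$-isomorphism through $G(\langle a,b\rangle)\cong G(\langle a\rangle)\times C_{2}$ rather than through stationarity; if you use stationarity over $\langle H(a)\rangle$, the justification is weak elimination of imaginaries (Fact \ref{fact: weak EI}), since total categoricity alone does not give it.
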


\begin{proof}
    (1)$\implies$(2)$\implies$(3) is immediate, so we prove (3)$\implies$(1). The proof will follow \cite[Proposition 2.5]{berenstein2017supersimple}. Write $a = (a_{0},a_{1},h)$ where $a_{0}$ is independent over $H$, $h = H(a)$, and $a_{1} \in \langle a_{0},h \rangle$. Similarly, write $a' = (a_{0}',a_{1}',h')$ in a corresponding manner. It suffices to show that, given any $b \in D$, there are $h_{1},h_{1}'$, and $b'$ such that $ah_{1}b$ and $a'h_{1}'b'$ are each $H$-independent, $a_{0}a_{1}hh_{1}b \equiv^{L_{\mathrm{IS}}} a_{0}'a_{1}'h'h_{1}'b'$, $b \in H$ if and only if $b' \in H$, and $H(ah_{1}b) \equiv^{\Gamma} H(a'h_{1}'b')$.

    \textbf{Case 1}: $b \in H$ and $b \in \langle a \rangle$. 

    In this case, since $a$ is $H$-independent, we must have $b \in H(a)$ so there is, by assumption, a corresponding $b' \in H(a')$. We can, in this case, take $h_{1}$ and $h'_{1}$ to be empty. It is clear that all the conditions are satisfied. 

    \textbf{Case 2}:  $b \in H$, $b \not\in \langle a \rangle$. 

    Pick $b' \in H$ with $H(a)b \equiv^{\Gamma} H(a')b'$. Note that we cannot have $b' \in \langle a' \rangle$ since, as in Case 1, this would entail that $b' \in h'$ and thus $b \in h$, contrary to our assumption. Under our assumptions, we have $G(\langle a,b \rangle) \cong G(\langle a \rangle) \times C_{2} \cong G(\langle a' \rangle) \times C_{2} \cong G(\langle a',b' \rangle)$ so the map $a \mapsto a'$ extends to an $L_{\mathrm{IS}}$-isomorphism $\langle a,b \rangle \to \langle a',b' \rangle$ sending $b$ to $b'$ and inducing a partial $\Gamma$-elementary map on the $H$-elements. In this case too we can take $h_{1} = h_{1}' = \emptyset$ and all the conditions are satisfied. 

    \textbf{Case 3}:  $b \not\in H$. 

    We can find a finite tuple $h_{1}$ such that $b \ind^{D}_{ah_{1}} H$. We may assume $h_{1}$ is disjoint from $a$, which entails that $ah_{1}$ is also $H$-independent. Then, by Case 2, we can find some $h_{1}'$ from $H$ with $a_{0}a_{1}hh_{1} \equiv^{L_{\mathrm{IS}}} a_{0}'a_{1}'h'h_{1}'$ and $H(ah_{1}) \equiv^{\Gamma} H(a'h_{1}')$. Now, by the co-density/extension property for $H$-structures, we can find some $b'$ with $a_{0}a_{1}hh_{1}b \equiv^{L_{\mathrm{IS}}} a_{0}'a_{1}'h'h_{1}'b'$ and $b' \ind^{D}_{a'h_{1}'} H$. Then $a'h_{1}'b'$ is $H$-independent, so we are done. 
\end{proof}

\begin{cor} \label{cor : from D to S}
    Suppose $S_{0} = \mathrm{acl}(S_{0})$ is a small subsystem of $\mathbb{S}_{\Gamma}$. 
    \begin{enumerate}
        \item Suppose $D_{1}$ and $D_{2}$ are subsystems of $D$ containing $D(S_{0})$ with $\mathrm{acl}_{H}(D_{i}) = D_{i}$ for $i = 1,2$. If $D_{1} \equiv_{D(S_{0})}^{L_{R}} D_{2}$ in $D$, then $D_{1} \equiv_{S_{0}} D_{2}$ in $\mathbb{S}_{\Gamma}$. 
        \item If $S_{0} \subseteq S_{1} = \mathrm{gcl}(S_{1}) \subseteq \mathbb{S}_{\Gamma}$ and $D'$ is a subsystem of $D$ with $D' \equiv^{L_{R}}_{D(S_{0})} D(S_{1})$, then there is some $S' \subseteq \mathbb{S}_{\Gamma}$ such that $S_{1} \equiv_{S_{0}} S'$ in $\mathbb{S}_{\Gamma}$ and $D(S') = D'$. 
    \end{enumerate}
\end{cor}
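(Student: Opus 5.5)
The plan for (1) is to lift an $L_{R}$-isomorphism on the pro-$2$ part to an isomorphism of graph closed subsystems of $\mathbb{S}_{\Gamma}$, and then apply Lemma \ref{lem: type char}(1). Let $\sigma$ be a partial $L_{R}$-elementary map $D_{1} \to D_{2}$ over $D(S_{0})$. Since $D_{i} = \mathrm{acl}_{H}(D_{i})$, it maps $H(D_{1})$ onto $H(D_{2})$ preserving $R$. Moreover $H(D_{1}) \equiv^{\Gamma}_{H(D(S_{0}))} H(D_{2})$ by Lemma \ref{lem: type char in D}, so the induced bijection of vertex sets is a partial $\Gamma$-elementary map $f$ over $\Gamma(S_{0})$. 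Let $\Gamma_{i}$ be the induced subgraph on these vertices.

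Next I would form $T_{i} = \langle S_{0}, D_{i}, S(G_{\Gamma_{i}}) \rangle$. I expect these to be graph closed: their $C_{2}$-elements all lie in $D_{i} \vee$ (the $C_{2}$-part), and the finite-width ones have witnesses in $H(D_{i})$ because $D_{i}$ is $\mathrm{acl}_{H}$-closed, so Lemma \ref{lem: further gcl props}(2),(3) should apply. Then I would build an isomorphism $T_{1} \to T_{2}$ by gluing three pieces.
\begin{itemize}
\item On $S_{0}$ I take the identity.
\item On $D_{1}$ I take $\sigma$, which is an $L_{\mathrm{IS}}$-isomorphism.
\item On $S(G_{\Gamma_{1}})$ I take an isomorphism to $S(G_{\Gamma_{2}})$ extending the identity on $S(G_{\Gamma(S_{0})})$ and agreeing with $\sigma$ on $C_{2}$-elements of width one; Lemma \ref{lem: lifting} supplies this.
\end{itemize}

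Compatibility of the pieces comes from computing joins via Lemma \ref{lem: commutes} and the fiber product description $G(X \wedge Y) \cong G(X) \times_{G(X \vee Y)} G(Y)$. The key identities are
\[
D_{i} \vee S_{0} = D(S_{0}), \qquad D_{i} \vee S(G_{\Gamma_{i}}) = \langle H(D_{i}) \rangle .
\]
The first holds because $D(S_{0}) \subseteq D_{i}$ and $S_{0}$ is acl-closed, using Corollary \ref{cor: nice intersections}. The second holds because a pro-$2$ quotient of $G_{\Gamma_{i}}$ is a quotient of $C_{2}^{\Gamma_{i}}$.

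To combine $S_{0}$ with $S(G_{\Gamma_{i}})$, I would use the $pq$-part, namely that $\mathrm{gcl}_{pq}$ is unique (Lemma \ref{lem: pq cover system}). The subsystem $\langle S_{0}, S(G_{\Gamma_{i}})\rangle$ is determined as a fiber product over $S_{0} \vee S(G_{\Gamma_{i}}) = S(G_{\Gamma(S_{0})})$ by Lemma \ref{lem: transfer}(2) and Lemma \ref{lem: plus intersection}. Adjoining $D_{i}$ then becomes a fiber product over its join with this subsystem. That join lies in $D$, so by Lemma \ref{lem: commutes} it equals $D(\ldots)$, and $\sigma$ controls it. Once the isomorphism $T_{1} \cong T_{2}$ is in hand, with $\Gamma$-part $f$ partial elementary, Lemma \ref{lem: type char}(1) gives $D_{1} \equiv_{S_{0}} D_{2}$.

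For (2), I would realize the $L_{R}$-type of $D(S_{1})$ over $D(S_{0})$ by $D'$ and replace $D(S_{1})$ by $\mathrm{acl}_{H}$-closures if needed. Then I reverse the construction. Choose $\Gamma' \equiv_{\Gamma(S_{0})} \Gamma(S_{1})$ matching $H(D')$, using homogeneity of $\Gamma_{*}$. Build $S'$ as $\langle D', S(G_{\Gamma'}), S_{0}\rangle$ together with the needed $pq$-parts, by transporting $S_{1} = \langle D(S_{1}), (S_{1})_{pq\text{-part}}, \ldots\rangle$. Concretely, I apply an automorphism of $\mathbb{S}_{\Gamma}$ over $S_{0}$ carrying $\Gamma(S_{1})$ to $\Gamma'$, which exists by Lemma \ref{lem: type char}(1) applied to graph closed subsystems. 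I then correct the $D$-part using (1)-style gluing, via the decomposition of Lemma \ref{lem:nfoldfiber}(1): $G \cong A \times_{B} F(2)$. Equality $D(S') = D'$ must be checked via Lemma \ref{lem: commutes}.

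The main obstacle is verifying that the pieces amalgamate. One must show that the joins computed above are exactly as claimed and that no unexpected quotients arise in $\langle S_{0}, D_{i}, S(G_{\Gamma_{i}})\rangle$. I would attack this by working at the level of normal subgroups, with $S_{pq}$ the $pq$-Sylow product, using Corollary \ref{cor: nice intersections} repeatedly.
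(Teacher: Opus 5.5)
Your proof of (1) is sound and is essentially the paper's. The paper glues the identity on $S_{0}$ with $\sigma$ along $G(\langle D_{i},S_{0}\rangle)\cong G(D_{i})\times_{G(D(S_{0}))}G(S_{0})$. The identity $D_{i}\vee S_{0}=D_{i}\cap S_{0}=D(S_{0})$ used there is trivial, so neither algebraic closure nor Corollary \ref{cor: nice intersections} is needed. It then cites Lemma \ref{lem: extends}, which extends the isomorphism to $\mathrm{gcl}(\langle S_{0},D_{i}\rangle)$ and applies Lemma \ref{lem: type char}(1).

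Your $T_{i}$ is exactly $\mathrm{gcl}(\langle S_{0},D_{i}\rangle)$. Adjoining $S(G_{\Gamma_{i}})$ in one step via Lemma \ref{lem: lifting} is a legitimate way of inlining that lemma. You glue first over $S_{0}\vee S(G_{\Gamma_{i}})=S(G_{\Gamma(S_{0})})$, then over $\langle S_{0},S(G_{\Gamma_{i}})\rangle\vee D_{i}=\langle D(S_{0}),H(D_{i})\rangle$, where the two maps agree.

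Two small corrections. The cited parts of Lemma \ref{lem: further gcl props} do not cover adjoining $D_{i}$, and the uniqueness of $\mathrm{gcl}_{pq}$ plays no role. Your direct argument for graph closedness does work: $D(T_{i})=D_{i}$ by Lemma \ref{lem: commutes}, and $H$-independence of $D_{i}$ puts the witnesses of finite-width $C_{2}$-elements in $H(D_{i})$, as in Lemma \ref{lem: just adding the graph}(3).

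The gap is in (2). Building $S'$ by ``transporting the $pq$-parts'' of $S_{1}$ and ``correcting the $D$-part'' via Lemma \ref{lem:nfoldfiber}(1) is not a well-defined construction. An arbitrary graph closed $S_{1}\supseteq S_{0}$ is not canonically presented as $\langle D(S_{1}),S(G_{\Gamma(S_{1})}),S_{0},\ldots\rangle$. Lemma \ref{lem:nfoldfiber} is about quotients of the whole group $\widetilde{G_{\Gamma}\times C_{2}^{I}}$ by normal subgroups contained in $N_{1}$, so it says nothing about $S_{1}$. Even if you produced some $S'\cong S_{1}$ over $S_{0}$ with $D(S')=D'$, you would still need elementarity, i.e.\ (1) again.

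The missing observation is that (2) is immediate from (1):
\begin{itemize}
\item Since $S_{1}$ is graph closed, $D(S_{1})=\mathrm{acl}_{H}(D(S_{1}))$ by Lemma \ref{lem: just adding the graph}(2), and the same holds for its $L_{R}$-conjugate $D'$.
\item By (1) and strong homogeneity of $\mathbb{S}_{\Gamma}$, the $L_{R}$-elementary map $\sigma:D(S_{1})\to D'$ over $D(S_{0})$ extends to some $\tilde{\sigma}\in\mathrm{Aut}(\mathbb{S}_{\Gamma}/S_{0})$.
\item Then $S'=\tilde{\sigma}(S_{1})$ works: $D$ is $\emptyset$-invariant, so $D(S')=\tilde{\sigma}(D(S_{1}))=D'$.
\end{itemize}
Your preliminary automorphism moving $\Gamma(S_{1})$ to $\Gamma'$ is then unnecessary.
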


\begin{proof}
    (1) First, note that, for $i = 1,2$, $D(S_{0}) = D_{i} \vee S_{0}$, since the containments $D(S_{0}) \subseteq D_{i}$ and $D(S_{0}) \subseteq S_{0}$ are obvious and $D_{i} \vee S_{0}$ is a subsystem corresponding to a pro-$2$ quotient of $G(S_{0})$ so is, by definition of $D$, contained in $D(S_{0})$. Then, for $i = 1,2$, we have 
    $$
    G(\langle D_{i},S_{0} \rangle) \cong G(D_{i}) \times_{G(D(S_{0}))} G(S_{0}).
    $$
    It follows, by duality, that the identity map on $S_{0}$ and the isomorphism $D_{1} \to D_{2}$ over $D(S_{0})$ extend uniquely to an isomorphism $\langle D_{1},S_{0} \rangle \to \langle D_{2}, S_{0} \rangle$ fixing $S_{0}$. By Lemma \ref{lem: extends}, this map is partial elementary. 

    (2) Let $\sigma : D(S_{1}) \to D'$ be an isomorphism of subsystems which is partial elementary in $D$ (viewed as an $L_{R}$-structure).  By (1) and the strong homogeneity of $\mathbb{S}_{\Gamma}$, $\sigma$ extends to some $\tilde{\sigma} \in \mathrm{Aut}(\mathbb{S}_{\Gamma}/S_{0})$. We put $S' = \tilde{\sigma}(S_{1}).$  
\end{proof}

\begin{lem} \label{lem: H control}
Suppose $D_{*}  = \mathrm{acl}_{H}(D_{*}) \subseteq D$ is a small subsystem and $D_{0},D_{1}$ are subsystems of $D$ containing $D_{*}$ with $D_{0} \ind^{D}_{D_{*}H(D_{0})} H$ and $D_{1} \ind^{D}_{D_{*}H(D_{1})} H D_{0}$. Then $H(\langle D_{0},D_{1} \rangle) \subseteq H(D_{0})H(D_{1})$. 
\end{lem}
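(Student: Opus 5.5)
We reduce to linear algebra over $\mathbb{F}_{2}$ via the dictionary of Lemma \ref{lem: dictionary}. An element $h \in H(\langle D_{0},D_{1}\rangle)$ is a $C_{2}$-element of vertex width $1$ lying in $\langle D_{0},D_{1}\rangle = D_{0} \wedge D_{1}$. Every $C_{2}$-quotient factors through the Frattini quotient, so $h$ lies in the subsystem generated by the $C_{2}$-elements of $D_{0}$ and of $D_{1}$. Writing $E_{i}$ for the $\mathbb{F}_{2}$-span of $\{d([\alpha]) : \alpha \in D_{i}, [\alpha]\cong C_{2}\}$, this says $d([h]) \in E_{0} + E_{1}$. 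The spaces $E_{0}$ and $E_{1}$ are subspaces of the continuous dual $(\mathbb{F}_{2}^{V}\times \mathbb{F}_{2}^{\overline{\kappa}})^{*}$. Let $B$ be the span of the distinguished basis vectors $d([h'])$ for $h' \in H$. The goal is that $d([h])$ equal $d([h'])$ for some $h' \in H(D_{0}) \cup H(D_{1})$.

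\textbf{Step 1: translate the independence hypotheses.} By Corollary \ref{cor: free forking char}, $D_{0} \ind^{D}_{D_{*}H(D_{0})} H$ means $D_{0} \vee \langle D_{*},H\rangle \subseteq \langle D_{*},H(D_{0})\rangle$. At the level of $C_{2}$-elements, using Lemma \ref{lem: commutes}-style compatibility of joins with the Frattini quotient (here everything is inside $D$, so the Frattini quotient is elementary abelian), this gives
$$E_{0} \cap (E_{*} + B) \subseteq E_{*} + B_{0},$$
where $B_{0}$ is the span of $H(D_{0})$ and $E_{*}$ is the span coming from $D_{*}$. Similarly, the second hypothesis gives
$$E_{1} \cap (E_{*} + B + E_{0}) \subseteq E_{*} + B_{1} + E_{0}.$$
Since $D_{*} = \mathrm{acl}_{H}(D_{*})$, Fact \ref{fact: H structure facts}(3) shows $D_{*}$ is $H$-independent. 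So we also have $E_{*} \cap B = B_{*}$, the span of $H(D_{*}) \subseteq H(D_{0}) \cap H(D_{1})$.

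\textbf{Step 2: the computation.} Write $d([h]) = e_{0} + e_{1}$ with $e_{i} \in E_{i}$. Then $e_{1} = d([h]) + e_{0} \in E_{1} \cap (B + E_{0})$, so by the second inclusion $e_{1} = x + b_{1} + e_{0}'$ with $x \in E_{*}$, $b_{1} \in B_{1}$, $e_{0}' \in E_{0}$. Since $E_{*} \subseteq E_{0}$, we get
$$d([h]) + b_{1} = e_{0} + x + e_{0}' \in E_{0} \cap B.$$
The first inclusion then puts this in $(E_{*}+B_{0}) \cap B$. Using $E_{*} \cap B = B_{*}$, that intersection is $B_{0}$. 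Hence $d([h]) \in B_{0} + B_{1}$.

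\textbf{Step 3: finish.} The elements of $H$ correspond to linearly independent basis vectors, and $d([h])$ is itself such a basis vector. Its membership in the span of the basis vectors from $H(D_{0}) \cup H(D_{1})$ therefore forces $h \in H(D_{0})H(D_{1})$, after taking canonical identity representatives, which is how $H$ is defined.

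The main obstacle is Step 1: justifying that the subsystem-level forking conditions descend exactly to the stated subspace inclusions on Frattini quotients. In particular, one must check that the $C_{2}$-elements of a join $X \vee Y$ correspond to $E_{X} \cap E_{Y}$ and those of a meet to $E_{X}+E_{Y}$. I would prove the meet case by the factoring-through-$\Phi$ argument and the join case by duality, with $C_{2}$-elements of $X \vee Y$ being simply common elements. One must also confirm that the equality $E_{*} \cap B = B_{*}$ really follows from $H$-independence of $D_{*}$.
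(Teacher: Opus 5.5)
\textbf{The gap is the first line of Step 2.} You write $d([h]) = e_0 + e_1$ with $e_i \in E_i$, which is exactly the ``meet case'' you flagged as the main obstacle. That case is false, and it stays false under the hypotheses of the lemma.

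Let $N_0, N_1$ be the closed normal subgroups for $D_0, D_1$ and $\Phi = \Phi(G(D))$. The $C_2$-elements of $D_0 \wedge D_1$ are the index-$2$ subgroups containing $(N_0 \cap N_1)\Phi$. By contrast, $E_0 + E_1$ corresponds to those containing $N_0\Phi \cap N_1\Phi$, which can be strictly larger. In other words, the Frattini quotient of $G(D_0) \times_{G(D_0 \vee D_1)} G(D_1)$ is not the fiber product of the Frattini quotients.

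Here is a concrete instance. Take $G(D) = F_{|V \cup I|}(2)$ on generators $x_v, x_i$ and fix $i_0 \neq i_1$ in $I$.
\begin{itemize}
\item Let $D_*$ be the subsystem of the $C_2$-element $\pi_{i_0}$. It has infinite vertex width, so $D_*$ is $\mathrm{acl}_H$-closed with $H(D_*) = \emptyset$.
\item Let $D_0, D_1$ come from epimorphisms onto $C_4 = \langle g \rangle$. Both send $x_{i_0} \mapsto g$ and all generators other than $x_{i_0}, x_{i_1}$ to $1$. For $D_0$ we send $x_{i_1} \mapsto 1$; for $D_1$ we send $x_{i_1} \mapsto g^2$.
\end{itemize}
Then $H(D_0) = H(D_1) = \emptyset$, and both hypotheses hold:
\begin{itemize}
\item $\langle D_*, H \rangle$ has elementary abelian group, so it does not contain the $C_4$-class of $D_0$.
\item $x_{i_1}$ lies in the kernel for $\langle D_0, H \rangle$ but not in that of $D_1$, so the $C_4$-class of $D_1$ is not in $\langle D_0, H \rangle$.
\end{itemize}
Yet $E_0 = E_1 = \mathbb{F}_2\pi_{i_0}$, while $G(\langle D_0, D_1 \rangle) \cong C_4 \times_{C_2} C_4 \cong C_4 \times C_2$ also has $\pi_{i_1}$ and $\pi_{i_0} + \pi_{i_1}$ as $C_2$-quotients. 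So for $h \in H$, the claim $d([h]) \in E_0 + E_1$ is only available as a consequence of the lemma itself.

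Your Step~1 translations are fine. In each meet there, one factor is $\langle H \rangle$ or $\langle H(D_i) \rangle$, whose kernel $M$ contains $\Phi$, and Dedekind's law gives $(N \cap M)\Phi = N\Phi \cap M$. The problem is only that $\langle D_0, D_1 \rangle$ has no such factor.

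\textbf{The missing idea} is to combine the two hypotheses at the level of whole subsystems, using forking calculus in $\mathrm{Th}(D)$, before passing to Frattini quotients. This is what the paper does:
\begin{itemize}
\item $H$-independence of $D_*$ plus base monotonicity gives $D_* \ind^{D}_{H(D_0)} H$.
\item Transitivity with the first hypothesis gives $D_0 \ind^{D}_{H(D_0)} H$, hence $D_0 \ind^{D}_{H(D_0)H(D_1)} H$ by base monotonicity.
\item The second hypothesis with base monotonicity gives $D_1 \ind^{D}_{D_0 H(D_1)} H$.
\item Transitivity then yields $D_0 D_1 \ind^{D}_{H(D_0)H(D_1)} H$.
\end{itemize}
By Corollary~\ref{cor: free forking char}, this says $\langle D_0, D_1 \rangle \cap \langle H \rangle \subseteq \langle H(D_0), H(D_1) \rangle$. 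That statement is a join with $\langle H \rangle$, so reading it at the $C_2$-level is harmless. Your Step~3, using the linear independence of the elements of $H$, then finishes the proof, and the subspace computation in Step~2 is no longer needed.
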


\begin{proof}
Note that, since $D_{*} = \mathrm{acl}_{H}(D_{*})$, it is, in particular $H$-independent, i.e. $D_{*} \ind^{D}_{H(D_{*})} H$, and thus, by base monotonicity, $D_{*} \ind^{D}_{H(D_{0})} H$. Then, the assumption that $D_{0} \ind^{D}_{D_{*}H(D_{0})} H$, together with transitivity, entails $D_{0} \ind^{D}_{H(D_{0})} H$. By base monotonicity again, we get $D_{0} \ind^{D}_{H(D_{0})H(D_{1})} H$. 

Next, since $D_{1} \ind^{D}_{D_{*}D_{0}H(D_{1})} H$, we have $D_{1} \ind^{D}_{D_{0}H(D_{1})} H$ so, by transitivity, we get $D_{0}D_{1} \ind^{D}_{H(D_{0})H(D_{1})} H$. This entails that 
$$
H \cap \langle D_{0}, D_{1} \rangle = H(\langle D_{0},D_{1}\rangle) \subseteq \langle H(D_{0}),H(D_{1}) \rangle.
$$
Since $H$ consists of independent elements, $H \cap \langle H(D_{0}),H(D_{1}) \rangle = H(D_{0})H(D_{1})$, so we get $H(\langle D_{0},D_{1}\rangle) \subseteq H(D_{0})H(D_{1})$ as desired. 
\end{proof}

\begin{lem} \label{lem: just adding the graph}
    Let $S \subseteq \mathbb{S}_{\Gamma}$ be a small subsystem. 
\begin{enumerate}
\item Suppose $S = \mathrm{gcl}(S)$. Then, identifying $\Gamma$ and $H$, we have $\Gamma(S) = \mathrm{HB}(D(S))$. More generally, if $C \subseteq D$ is a small $H$-independent subset, then
$$
\mathrm{HB}(D(S)/C) = \Gamma(S) - \Gamma(C).
$$
\item The subsystem $S$ is graph closed if and only if $S$ satisfies the following:
\begin{enumerate}
\item If $\alpha \geq \beta$, $\alpha \in D(S)$ is a $C_{2}$-element and $\beta$ is a $D_{p}$-element, then $\beta \in S$. 
\item $\Gamma(S) \subseteq \Gamma$ is an induced subgraph
\item $D(S)$ is $\mathrm{acl}_{H}$-closed in $D$.
\end{enumerate} 
\item Suppose $\mathrm{acl}_{H}(D(S)) = D(S)$. Then $D(S) = D(\mathrm{gcl}(S))$ and thus $H(S) = H(\mathrm{gcl}(S))$. 
\end{enumerate}
\end{lem}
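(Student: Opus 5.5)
The plan is to use the vector-space dictionary of Lemma \ref{lem: dictionary} together with the description of forking in $D$ from Corollary \ref{cor: free forking char}. Throughout, identify $H=H(D)$ with the vertex set of $\Gamma_{*}$: $h_{v}\in H$ is the vertex-width-$1$ $C_{2}$-element lying above the $D_{p}$-element for $v$.

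For (1), assume $S=\mathrm{gcl}(S)$. By Lemma \ref{lem: secondstep}, $S_{+}\cong S(G_{\Gamma(S)}\times C_{2}^{I})$, where the $C_{2}^{I}$ factor comes from an independent family of infinite-vertex-width $C_{2}$-elements. By Theorem \ref{thm: thirdstep}(1), $G(S)\to G(S_{+})$ is Frattini, so every $C_{2}$-element of $S$ already lies in $S_{+}$. Hence the Frattini quotient of $G(D(S))$ is $C_{2}^{\Gamma(S)}\times C_{2}^{I}$. This gives $H(D(S))=\{h_{v}:v\in\Gamma(S)\}$, and $D(S)\vee\langle H\rangle$ is generated by exactly these $h_{v}$.

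The key step is to show $D(S)\ind^{D}_{H(D(S))}H$. By Corollary \ref{cor: free forking char}, it is enough to check, on finite tuples, that the join with $\langle H\rangle$ adds nothing beyond $\langle H(D(S))\rangle$. Joins with $\langle H\rangle$ only see $C_{2}$-elements. A $C_{2}$-element of $D(S)$ lying in $\langle H\rangle$ has finite vertex width, so by graph closure its witnesses lie in $\Gamma(S)$. This gives $\mathrm{HB}(D(S))\subseteq\Gamma(S)$. For the reverse inclusion, each $h_{v}$ with $v\in\Gamma(S)$ lies in $D(S)$. Since $h_{v}\in\langle H\rangle$, it is forced into any set $H_{0}\subseteq H$ with $D(S)\ind^{D}_{H_{0}}H$: otherwise $h_{v}\ind^{D}_{H_{0}}h_{v}$ would put $h_{v}$ in $\mathrm{acl}(H_{0})=\langle H_{0}\rangle$ by Lemma \ref{lem: acl description for free}, contradicting independence of the $h_{v}$. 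For the relative version over an $H$-independent $C$, we have $C\ind^{D}_{H(C)}H$. Transitivity then gives $D(S)\ind_{C,\,H(C)\cup(\Gamma(S)\setminus\Gamma(C))}H$, and minimality follows by the same argument, using that $h_{v}\notin\langle C,H\setminus\{h_{v}\}\rangle$ when $v\notin\Gamma(C)$.

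For (2), the forward direction comes as follows. Condition (a) is graph closure condition (1) applied to vertex-width-$1$ elements, together with Lemma \ref{lem: witness algebraic}. Condition (b) is Lemma \ref{lem: graph closure is closure}(2). For (c), I will use Fact \ref{fact: H structure facts}(3): $\mathrm{acl}_{H}(D(S))=\mathrm{acl}(D(S),\mathrm{HB}(D(S)))$. By (1), $\mathrm{HB}(D(S))\subseteq D(S)$, and Lemma \ref{lem: acl description for free} (applied to finite pieces) shows that $\mathrm{acl}$ in $D$ of a subsystem is itself. For the converse, given (a)--(c), I must check the two graph closure conditions.

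\begin{itemize}
\item Condition (2) follows from (b).
\item For condition (1), take a $C_{2}$-element $\alpha\in S$ of finite vertex width. It lies in $D(S)$. Since $D(S)$ is $\mathrm{acl}_{H}$-closed, it contains $\mathrm{HB}(\alpha)$.
\end{itemize}

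The main obstacle is showing that $\mathrm{HB}(\alpha)$ equals the set of $h_{v}$ for the witness vertices $v$. The first approach is to use minimality in Lemma \ref{lem: dictionary}: $d([\alpha])$ is the sum of the $d([h_{v}])$ over the witnesses $v$, and with no fewer terms. Since $\alpha\in\langle H\rangle$, Corollary \ref{cor: free forking char} gives that the smallest $H_{0}$ with $\alpha\in\langle H_{0}\rangle$ is exactly the set of witnesses. Then (a) puts the $D_{p}$-witnesses into $S$.

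For (3), let $S'=\mathrm{gcl}(S)$. The construction in Lemma \ref{lem: graph closure is closure}(3) only adds $D_{p}$-witnesses and $W$-elements, and these lie in $S_{-}$. By Lemma \ref{lem: C2 generation}, the new pro-$2$ information consists only of vertex-width-$1$ $C_{2}$-elements $h_{v}$, together with $W$-elements whose $D$-part is generated by the $h_{v}$ of their endpoints. I will show by induction on the stages that each such $h_{v}$ lies in $\mathrm{HB}(D(S))$, using the witness computation above. Since $\mathrm{acl}_{H}(D(S))=D(S)$, this gives $D(S')=\langle D(S),\text{new }h_{v}\rangle=D(S)$, and consequently $H(S)=H(S')$.
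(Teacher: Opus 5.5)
The relative clause of (1) is where your proposal has a genuine gap. Your step ``transitivity then gives $D(S)\ind^{D}_{C,\,H(C)\cup(\Gamma(S)\setminus\Gamma(C))}H$'' does not follow. From $D(S)\ind^{D}_{H(D(S))}H$ and $C\ind^{D}_{H(C)}H$ you cannot move $C$ into the base unless $C\subseteq D(S)$, or unless you have an independence such as $D(S)\ind^{D}_{H(D(S))}CH$. In fact the inclusion $\mathrm{HB}(D(S)/C)\subseteq\Gamma(S)-\Gamma(C)$ fails in general. So this is a defect of the statement itself, and the paper's ``identical argument'' does not handle it either.

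Here is a counterexample. Take a vertex $v$ and $i<\overline{\kappa}$, and let $\delta,\gamma\in D$ be the identity elements of the $C_{2}$-classes with $d([\delta])=\pi_{i}$ and $d([\gamma])=\pi_{v}+\pi_{i}$, as in Lemma \ref{lem: dictionary}.
Then $S=\langle\delta\rangle$ is graph closed with $\Gamma(S)=\emptyset$. Also $C=\langle\gamma\rangle$ is $H$-independent with $H(C)=\emptyset$, since $d([\gamma])$ is not in the span of the vertex projections, so $\langle\gamma\rangle\cap\langle H\rangle$ is trivial. But $\delta\geq\gamma\wedge h_{v}$, so $\delta\in\langle C,h_{v}\rangle\setminus C$. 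By Corollary \ref{cor: free forking char}, $\delta\ind^{D}_{C}H$ then fails, so $\mathrm{HB}(D(S)/C)\neq\emptyset=\Gamma(S)-\Gamma(C)$; indeed it equals $\{h_{v}\}$.

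What survives is the following.
Your minimality argument correctly gives $\Gamma(S)-\Gamma(C)\subseteq\mathrm{HB}(D(S)/C)$ for every $H$-independent $C$. Equality holds when $C\subseteq D(S)$: move $C$ from the left side of $D(S)\ind^{D}_{H(D(S))}H$ into the base. More generally, equality holds when $D(S)\ind^{D}_{C_{0}H}C$ for some $C_{0}\subseteq C\cap D(S)$, by transitivity and then base monotonicity. This is exactly the situation in which Corollary \ref{cor: easy H} uses the clause.

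The rest of your proposal is correct and close to the paper. For the main clause of (1), you compute $D(S)\vee\langle H\rangle=\langle H(D(S))\rangle$ directly from graph closure. The paper instead reads this off from $S\vee(\mathbb{S}_{\Gamma})_{-}=S(G_{\Gamma(S)})$ (Lemmas \ref{lem: transfer} and \ref{lem: plus intersection}) via Lemma \ref{lem: commutes}. The reverse inclusion and part (2) are the paper's arguments; your use of $\mathrm{HB}(\alpha)\subseteq\mathrm{acl}_{H}(D(S))=D(S)$ is a repackaging of $D(S)\ind^{D}_{H(D(S))}H$.

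Your stagewise induction for (3) works, with two corrections. The real input is Lemma \ref{lem: commutes} together with $D(S(G_{\Gamma_{0}}))=\langle h_{v}:v\in\Gamma_{0}\rangle$, not Lemma \ref{lem: C2 generation}. The added elements lie in $(\mathbb{S}_{\Gamma})_{-}$, not $S_{-}$.
The paper avoids the induction. With $\Gamma'$ the induced subgraph on $H(D(S))$, the subsystem $S\wedge S(G_{\Gamma'})$ is graph closed, contains $S$, and has $D$-part $D(S)\wedge\langle H(D(S))\rangle=D(S)$. This sandwiches $D(\mathrm{gcl}(S))$ between $D(S)$ and $D(S)$ in one step.
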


\begin{proof}
(1) By Lemma \ref{lem: transfer} and Lemma \ref{lem: plus intersection}, we have $S \vee (\mathbb{S}_{\Gamma})_{-} = S(G_{\Gamma(S)})$ and, thus, $D(S) \ind^{D}_{H(D(S))} H$, since unraveling definitions yields $H(D(S(G_{\Gamma(S)}))) = H(D(S))$. This shows $\mathrm{HB}(D(S)) \subseteq H(D(S))$. Conversely, if $\mathrm{HB}(D(S)) \subsetneq H(D(S))$, then we can pick some $\alpha \in H(D(S)) - \mathrm{HB}(D(S))$. Since the elements of $H$ are independent, we must have $\alpha \not\in \langle \mathrm{HB}(D(S)) \rangle$ but $\alpha \in D(S) \cap H$, which contradicts $D(S) \ind^{D}_{\mathrm{HB}(D(S))} H$. This shows the desired equality. The argument for the `more generally' clause in the statement is identical. 

(2) If $S = \mathrm{gcl}(S)$, then conditions (a) and (b) are clear from the definition of graph closure and condition (c) stating $\mathrm{acl}_{H}(D(S)) = D(S)$ follows by (1) and the description of algebraic closure in $H$-structures (Fact \ref{fact: H structure facts}). For the other direction, assume $\mathrm{acl}_{H}(D(S)) = D(S)$. We first need to show that if $\alpha$ is a $C_{2}$-element of $S$ of vertex width $n$ and $\beta_{1},\ldots, \beta_{n}$ are the $n$ distinct $C_{2}$-elements of vertex width $1$ associated to the witnesses, then $\beta_{1},\ldots, \beta_{n} \in S$. We may assume $\alpha$ and each $\beta_{i}$ are the identity element of its equivalence class, hence $\alpha \in D(S)$ and $\beta_{i} \in H$ for $i = 1, \ldots, n$. Since $\alpha \in \langle \beta_{1},\ldots, \beta_{n} \rangle$ and $\alpha \ind^{D}_{H(D(S))} H$, we must have $\beta_{1}, \ldots, \beta_{n} \in H(D(S))$. Then, by condition (a), it follows that if $\alpha \in D(S)$ is a $C_{2}$-element of finite vertex width, then the $D_{p}$-elements that witness this are also in $S$. Finally, since $\Gamma(S)$ is an induced subgraph, we have that all conditions of being graph closed are satisfied so $S = \mathrm{gcl}(S)$. 

(3) The inclusion $H(S) \subseteq H(\mathrm{gcl}(S))$ is immediate from $S \subseteq \mathrm{gcl}(S)$, so we prove the reverse inclusion. Under the identification of $\Gamma(\mathbb{S}_{\Gamma})$ and $H$, let $\Gamma' = (H(D(S)),R')$ denote the induced subgraph of $\Gamma(\mathbb{S}_{\Gamma})$ with vertex set $H(D(S))$. 

It is immediate from the definition of graph closure that $\langle S,S(G_{\Gamma'}) \rangle = S \wedge S(G_{\Gamma'})$ is graph closed. Moreover, $D(S(G_{\Gamma'})) = \langle H(D(S)) \rangle$.  Thus, Lemma \ref{lem: commutes} gives 
$$
D(S \wedge S(G_{\Gamma'})) = D(S) \wedge \langle H(D(S)) \rangle = D(S),
$$
since $H(D(S)) \subseteq D(S)$. Since $S \wedge S(G_{\Gamma'})$ is graph closed and contains $S$, we have $\mathrm{gcl}(S) \subseteq S \wedge S(G_{\Gamma'})$, whence $D(S) \subseteq D(\mathrm{gcl}(S)) \subseteq D(S \wedge S(G_{\Gamma'})) = D(S)$. Thus $D(\mathrm{gcl}(S)) = D(S)$ and $H(\mathrm{gcl}(S)) = H(D(\mathrm{gcl}(S))) = H(D(S)) = H(S)$. \end{proof}

\begin{cor} \label{cor: easy H}
    Suppose $\mathrm{acl}(S_{i}) = S_{i} \subseteq \mathbb{S}_{\Gamma}$ are small subsystems for $i = 0,1,2$ with $S_{1} \cap S_{2} = S_{0}$. Then $D(S_{1}) \ind^{H}_{D(S_{0})} D(S_{2})$ if and only if $D(S_{1}) \ind^{D}_{D(S_{0})H} D(S_{2})$.
\end{cor}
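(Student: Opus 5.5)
The plan is to unwind the characterization of $H$-forking in Fact \ref{fact: H structure facts}(4), applied with base $C = D(S_{0})$ and $B = \langle D(S_{0}), D(S_{2}) \rangle$ (or simply $D(S_{2})$). Since $\mathrm{Th}(D)$ is $\omega$-stable (Fact \ref{fact: totally categorical}), $T_{\mathrm{H}}$ is superstable and that fact applies, so
$$
D(S_{1}) \ind^{\mathrm{H}}_{D(S_{0})} D(S_{2}) \iff \mathrm{HB}(D(S_{1})/D(S_{2})) = \mathrm{HB}(D(S_{1})/D(S_{0})) \text{ and } D(S_{1}) \ind^{D}_{D(S_{0})H} D(S_{2}).
$$
This requires $D(S_{i}) = \mathrm{acl}_{H}(D(S_{i}))$ for $i = 0,2$. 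Each $S_{i}$ is algebraically closed, hence graph closed by Lemma \ref{lem: graph closure is closure}(3), and Lemma \ref{lem: just adding the graph}(2)(c) then gives $\mathrm{acl}_{H}$-closure of $D(S_{i})$. In particular each $D(S_{i})$ is $H$-independent.

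It therefore suffices to show that the $H$-basis condition holds automatically under the hypothesis $S_{1} \cap S_{2} = S_{0}$. I would compute both $H$-bases with Lemma \ref{lem: just adding the graph}(1), applied to the graph closed $S_{1}$ with the $H$-independent sets $C = D(S_{0})$ and $C = D(S_{2})$. This gives
$$
\mathrm{HB}(D(S_{1})/D(S_{0})) = \Gamma(S_{1}) - \Gamma(S_{0}), \qquad \mathrm{HB}(D(S_{1})/D(S_{2})) = \Gamma(S_{1}) - \Gamma(S_{2}),
$$
identifying vertices with elements of $H$. Now $\Gamma(S_{1}) \cap \Gamma(S_{2})$ consists of the vertices whose $D_{p}$-classes lie in both $S_{1}$ and $S_{2}$, hence in $S_{1} \cap S_{2} = S_{0}$; so $\Gamma(S_{1}) \cap \Gamma(S_{2}) = \Gamma(S_{0})$. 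Since $\Gamma(S_{0}) \subseteq \Gamma(S_{1})$, it follows that $\Gamma(S_{1}) - \Gamma(S_{2}) = \Gamma(S_{1}) - \Gamma(S_{0})$. The two $H$-bases coincide, and the equivalence reduces to the second conjunct. Both directions follow at once: the forward direction drops the basis condition, and the reverse direction supplies it for free.

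The main obstacle is checking that Lemma \ref{lem: just adding the graph}(1) gives exactly these formulas when the base $C$ is $D(S_{2})$ rather than a subset of $D(S_{1})$. The `more generally' clause is stated for arbitrary small $H$-independent $C$, so I would take it at face value. If that needs verification, I would rerun its proof: use $S_{1} \vee (\mathbb{S}_{\Gamma})_{-} = S(G_{\Gamma(S_{1})})$ to get $D(S_{1}) \ind^{D}_{H(D(S_{1}))} H$. Base monotonicity over $C \cup H(D(S_{1}))$ then shows that the basis is contained in $H(D(S_{1})) - H(C)$, and minimality follows from the linear independence of $H$. A second, lesser point is whether Fact \ref{fact: H structure facts}(4) needs $B \supseteq C$ to be $\mathrm{acl}_{H}$-closed as a union rather than just $D(S_{2})$. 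Here $D(S_{0}) \subseteq D(S_{2})$ already, so $B = D(S_{2})$ works directly.
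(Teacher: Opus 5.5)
Your route is the paper's route: the forward direction is read off Fact \ref{fact: H structure facts}(4), and the reverse direction reduces to $\mathrm{HB}(D(S_{1})/D(S_{0})) = \mathrm{HB}(D(S_{1})/D(S_{2}))$, computed from the `more generally' clause of Lemma \ref{lem: just adding the graph}(1). The step you flagged as the main obstacle is a genuine gap, and the verification you sketch for it fails.

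\textbf{Why the verification fails.} From $D(S_{1}) \ind^{D}_{H(D(S_{1}))} H$, base monotonicity (with symmetry) only lets you move into the base parts of $H$ or of $D(S_{1})$. It cannot add a set like $D(S_{2})$, which lies in neither.

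\textbf{The clause is false for general $C$.} Let $\gamma$ be a $C_{2}$-element of infinite vertex width, let $h \in H$, and let $\epsilon$ be the $C_{2}$-element with $d([\epsilon]) = d([\gamma]) + d([h])$. Then $S = \langle \epsilon \rangle$ is graph closed with $\Gamma(S) = \emptyset$. Also $C = \langle \gamma \rangle$ is $H$-independent with $H(C) = \emptyset$. Since $\epsilon \in \langle \gamma, h \rangle \setminus \langle \gamma \rangle$, Corollary \ref{cor: free forking char} shows that $\epsilon$ is independent from $H$ over $\langle \gamma, h \rangle$ but not over $C$. Hence $\mathrm{HB}(D(S)/C) = \{h\}$, not $\Gamma(S) - \Gamma(C) = \emptyset$. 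The paper's proof cites the same clause, so the same caveat applies to it.

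\textbf{The repair.} The statement still holds, because the basis equality is needed only in the reverse direction, where the hypothesis $D(S_{1}) \ind^{D}_{D(S_{0})H} D(S_{2})$ supplies what is missing. Put $H_{1} = H(D(S_{1})) \setminus H(D(S_{0}))$. Your computation $\mathrm{HB}(D(S_{1})/D(S_{0})) = H_{1}$ is fine, since there the base lies inside $D(S_{1})$.
\begin{enumerate}
\item Since $S_{1}$ is graph closed, $D(S_{1}) \ind^{D}_{H(D(S_{1}))} H$. As $D(S_{0}) \subseteq D(S_{1})$, this gives $D(S_{1}) \ind^{D}_{D(S_{0})H_{1}} H$.
\item Transitivity with the hypothesis yields $D(S_{1}) \ind^{D}_{D(S_{0})H_{1}} H D(S_{2})$.
\item Base monotonicity then gives $D(S_{1}) \ind^{D}_{D(S_{2})H_{1}} H$, so $\mathrm{HB}(D(S_{1})/D(S_{2})) \subseteq H_{1}$.
\item Conversely, suppose $D(S_{1}) \ind^{D}_{D(S_{2})H_{0}} H$ and $h \in H_{1}$. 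Then $h \in \mathrm{acl}(D(S_{2})H_{0}) \cap H$.
\item Since $D(S_{2})$ is $H$-independent, $h \in \langle H(D(S_{2})), H_{0} \rangle$. Linear independence of $H$ (Lemma \ref{lem: dictionary}) gives $h \in H(D(S_{2})) \cup H_{0}$.
\item Finally, $H(D(S_{1})) \cap H(D(S_{2})) = H(D(S_{0}))$ forces $h \in H_{0}$.
\end{enumerate}
So both bases equal $H_{1} = \Gamma(S_{1}) - \Gamma(S_{0})$, and Fact \ref{fact: H structure facts}(4) completes the reverse direction as you intended.
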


\begin{proof}
    If $D(S_{1}) \ind^{H}_{D(S_{0})} D(S_{2})$, then $D(S_{1}) \ind^{D}_{D(S_{0})H} D(S_{2})$ by Fact \ref{fact: H structure facts}(4). On the other hand, if $D(S_{1}) \ind^{D}_{D(S_{0})H} D(S_{2})$ then, again by Fact \ref{fact: H structure facts}(4), in order to show $D(S_{1}) \ind^{H}_{D(S_{0})} D(S_{2})$, we just need to show that $\mathrm{HB}(D(S_{1})/D(S_{0})) = \mathrm{HB}(D(S_{1})/D(S_{2}))$. By our hypothesis, $S_{1} \cap S_{2} = S_{0}$ and hence $\Gamma(S_{1}) \cap \Gamma(S_{2}) = \Gamma(S_{0})$. Thus, by Lemma \ref{lem: just adding the graph}(1), we have
    $$
    \mathrm{HB}(D(S_{1})/D(S_{0})) = \Gamma(S_{1}) - \Gamma(S_{0}) = \Gamma(S_{1}) - \Gamma(S_{2}) = \mathrm{HB}(D(S_{1})/D(S_{2})),
    $$
    which completes the proof. 
\end{proof}

\begin{lem} \label{lem: extends}
    Suppose $S_{0},S_{1},$ and $S_{2}$ are small subsystems of $\mathbb{S}_{\Gamma}$ such that $S_{0} = \mathrm{gcl}(S_{0}) \subseteq S_{1},S_{2}$ and $D(S_{i}) = \mathrm{acl}_{H}(D(S_{i}))$ for $i = 0,1,2$. Suppose we are given an isomorphism $\Psi: S_{1} \to S_{2}$ over $S_{0}$ which restricts to an $L_{R}$-partial elementary map $D(S_{1}) \to D(S_{2})$ over $D(S_{0})$. Then $\Psi$ extends to an isomorphism $\Xi: \mathrm{gcl}(S_{1}) \to \mathrm{gcl}(S_{2})$.  In particular, $\Psi$ is partial elementary. 
\end{lem}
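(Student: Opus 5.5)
Since $\mathrm{gcl}(S_{i})$ is obtained from $S_{i}$ by adjoining $D_{p}$-witnesses of finite-vertex-width $C_{2}$-elements and the $W$-elements over pairs of these, the plan is to extend $\Psi$ by adjoining the graph part. Then we invoke Lemma \ref{lem: type char}(1) to conclude that the extension, and hence $\Psi$, is partial elementary.

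\textbf{Step 1: the graph map.} By Lemma \ref{lem: just adding the graph}(3), $D(S_{i}) = D(\mathrm{gcl}(S_{i}))$ and $H(S_{i}) = H(\mathrm{gcl}(S_{i}))$. By Lemma \ref{lem: just adding the graph}(1), $\Gamma(\mathrm{gcl}(S_{i}))$ is identified with $H(D(S_{i}))$. Because $\Psi|_{D(S_{1})}$ is $L_{R}$-partial elementary, it maps $H(D(S_{1}))$ onto $H(D(S_{2}))$, preserves the relation $R$, and preserves types in $\Gamma$. It therefore induces a partial elementary isomorphism of graphs
\[
f\colon \Gamma(\mathrm{gcl}(S_{1})) \to \Gamma(\mathrm{gcl}(S_{2})),
\]
which is the identity on $\Gamma(S_{0})$.

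\textbf{Step 2: a compatible isomorphism of the graph systems.} Let $\Gamma_{i}' = \Gamma(\mathrm{gcl}(S_{i}))$. As in the proof of Lemma \ref{lem: just adding the graph}(3), we have $\mathrm{gcl}(S_{i}) \subseteq S_{i} \wedge S(G_{\Gamma_{i}'})$, and this meet is graph closed, so $\mathrm{gcl}(S_{i}) = S_{i} \wedge S(G_{\Gamma_{i}'})$. We compute the join
\[
S_{i} \vee S(G_{\Gamma_{i}'}).
\]
Every $C_{2}$-element of the join lies in $D(S_{i})$ and has finite vertex width with witnesses in $\Gamma_{i}'$. Using Lemma \ref{lem: C2 generation}, the join is then generated, inside $S(G_{\Gamma_{i}'})$, by the vertex and edge elements lying over such witnesses. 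Let $J_{i}$ denote this join. Then
\[
G(\mathrm{gcl}(S_{i})) \cong G(S_{i}) \times_{G(J_{i})} G_{\Gamma_{i}'}.
\]
We want an isomorphism $\Theta\colon S(G_{\Gamma_{1}'}) \to S(G_{\Gamma_{2}'})$ with $\Gamma(\Theta) = f$ that agrees with $\Psi$ on $J_{1}$. We build it coordinatewise with Corollary \ref{cor: unique aut} and Lemma \ref{extendingauts}, as in Lemma \ref{lem: firststep}(3):
\begin{itemize}
\item on each vertex class $[\ker\pi_{v}]$ lying in $S_{1}$, take the map induced by $\Psi$;
\item on the remaining vertex classes, choose any isomorphism lifting the $C_{2}$-quotient map dictated by $\Psi$ on $D(S_{1})$;
\item on each edge class, choose a compatible isomorphism via Lemma \ref{extendingauts}.
\end{itemize}
Once the two maps agree on $J_{1}$, the fiber product description gives $\Xi = \Psi \wedge \Theta$ on $\mathrm{gcl}(S_{1})$.

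\textbf{Main obstacle.} The delicate point is showing that the coordinatewise $\Theta$ agrees with $\Psi$ on all of $J_{1}$, and not merely on its $C_{2}$-elements. Elements of $J_{1}$ with $D_{p}$ or $W$ quotients inside $S_{1}$ are already fixed by $\Psi$. The issue is whether $\Psi$ restricted to $J_{1}$ decomposes into vertex and edge automorphisms. The plan for this is to apply Lemma \ref{lem: unique iso} to $J_{1}$, viewed as the system of a quotient of $G_{\Gamma_{1}'}$ determined by its $D_{p}$-, $W$-, $C_{2}$- and $D_{q}$-type quotients, using the factorization of Lemma \ref{lem:factoring}. If this decomposition holds, Lemma \ref{lem: type char}(1) applies to $\Xi$, whose graph map $f$ is partial elementary, and shows that $\Xi$, and therefore $\Psi$, is partial elementary.
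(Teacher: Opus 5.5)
Your Step 1, the identification $\mathrm{gcl}(S_i)=S_i\wedge S(G_{\Gamma_i'})$ with its fibre-product description, and the closing appeal to Lemma~\ref{lem: type char}(1) are all fine and agree with the paper. The gap is the one you flag yourself, and it is the whole content of the lemma. You never construct $\Theta\colon S(G_{\Gamma_1'})\to S(G_{\Gamma_2'})$ agreeing with $\Psi$ on $J_1=S_1\vee S(G_{\Gamma_1'})=S_1\cap S(G_{\Gamma_1'})$, and the tools you name cannot supply it. Lemma~\ref{lem: unique iso}, Lemma~\ref{lem:factoring} and Corollary~\ref{cor: unique aut} describe maps of a group isomorphic to some $G_{\Gamma''}$ through \emph{all} of its $D_p$- and $W$-quotients. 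By contrast, $J_1$ is an arbitrary subsystem of $S(G_{\Gamma_1'})$ and may contain no $D_p$- or $W$-elements at all. Lemma~\ref{lem: C2 generation} only puts $J_1$ inside the graph closure of its $C_2$-elements; it does not make $J_1$ a system of the form $S(G_{\Gamma''})$.

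Your coordinatewise recipe can in fact fail. For an edge $r=(v,v')$, let $\delta$ be the $D_q$-element given by $W/\{(1,b,c):\tau(b)=\tau(c)\}$. Take $S_0$ trivial and $S_1=S_2=\langle\delta,\alpha_v\rangle$, so $G(S_1)\cong D_q\times C_2$ and $D(S_1)=\langle\alpha_v,\alpha_{v'}\rangle$ is $\mathrm{acl}_H$-closed. Let $\Psi$ be dual to $\phi\times\mathrm{id}$, where $\phi$ fixes a reflection and acts on $C_q$ by $x\mapsto x^k$ with $k\not\equiv 1 \bmod q$. Then $\Psi$ is the identity on $D(S_1)$, so the hypotheses hold, and neither vertex class lies in $S_1$. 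The automorphism of $W$ supplied by Lemma~\ref{extendingauts} is the identity on $C_q$, hence on $[\delta]\subseteq J_1$, so your $\Theta$ disagrees with $\Psi$ there. Repairing this directly would require classifying the quotients of $G_{\Gamma'}$ and their automorphisms.

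The paper never looks at $J_1$. It adjoins the new generators one at a time and computes each join against the current, growing system $S_1'$.
\begin{itemize}
\item \emph{Vertex elements.} Let $\beta\notin S_1'$ be a $D_p$-element over $v\in H(D(S_1))$. Then $[\beta]\vee S_1'=[\alpha_v]$, because the only quotients of $D_p$ are $D_p$, $C_2$ and $1$, and $\alpha_v\in S_1$. Hence $G(\langle S_1',\beta\rangle)\cong G(S_1')\times_{C_2}D_p$, and \emph{any} isomorphism of $[\beta]$ onto a $D_p$-element over $f(v)$ glues with the current map.
\item \emph{Edge elements.} Once all vertex elements are present, let $\gamma\notin S_1'$ be a $W$-element over an edge $(v,v')$. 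Then $[\gamma]\vee S_1'=[\beta_v\wedge\beta_{v'}]\cong D_p\times D_p$, since $\ker\lambda=C_q$ has no nontrivial proper subgroups. On this class the current map is a product $\sigma_v\times\sigma_{v'}$, so Lemma~\ref{extendingauts} lifts it, and $G(\langle S_1',\gamma\rangle)\cong G(S_1')\times_{D_p\times D_p}W$ gives the extension.
\item \emph{Elements already present.} If $\gamma$ is already in $S_1'$, there is nothing to choose. This is what happens in the example: once $\beta_v,\beta_{v'}$ are adjoined, $\gamma=\delta\wedge\beta_v\wedge\beta_{v'}$.
\end{itemize}
At no point does one need to decompose $\Psi$ on $S_1\cap S(G_{\Gamma_1'})$, which is exactly the step your proposal leaves open.
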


\begin{proof}
    Under the identification of $H$ and $\Gamma(\mathbb{S}_{\Gamma})$, let $\Gamma_{i}$ be the induced subgraph of $\Gamma(\mathbb{S}_{\Gamma})$ with vertices $H(D(S_{i}))$ for $i = 0,1,2$. Then the map $\Psi$ induces an isomorphism $f: \Gamma_{1} \to \Gamma_{2}$ over $\Gamma_{0}$ (which is partial elementary with respect to $\mathrm{Th}(\Gamma)$). 

    We first argue that if $\beta$ is a $D_{p}$-element of $\mathbb{S}_{\Gamma}$ corresponding to a vertex of $\Gamma_{1}$, then we may extend $\Psi$ to some $\Psi': \langle S_{1},\beta \rangle \to \langle S_{2},\beta'\rangle$ for some $D_{p}$-element $\beta' \in \mathbb{S}_{\Gamma}$ corresponding to the vertex $f(v)$. There is nothing to show if $\beta \in S_{1}$, so we may assume $\beta \not\in S_{1}$, so $[\beta] \vee S_{1} = [\alpha_{v}]$ where $\alpha_{v}$ is a $C_{2}$-element in $H(D(S_{1}))$. Then let $\alpha_{f(v)} = \Psi(\alpha_{v}) \in H(D(S_{2}))$ and let $\beta'$ be a $D_{p}$-element corresponding to the vertex $f(v)$ (so with $\beta' \leq \alpha_{f(v)}$. Now we must have $\beta' \not\in S_{2}$ so we have $[\beta'] \vee S_{2} = [\alpha_{f(v)}]$. Since we have 
    \begin{eqnarray*}
        G([\beta'] \vee S_{2}) \cong G([\beta] \vee S_{1}) \cong D_{p} \times_{C_{2}} G(S_{1})
    \end{eqnarray*}
    we may extend $\Psi$ to an isomorphism $\Psi' : \langle S_{1},\beta \rangle \to \langle S_{2},\beta' \rangle$, an isomorphism over $S_{0}$. 
    
    Applying this inductively to all vertices of $\Gamma_{1}$, we may assume that we have extended $\Psi$ to some $\Psi' : S_{1}' \to S_{2}'$ over $S_{0}$ such that if $\beta \in \mathbb{S}_{\Gamma}$ is a $D_{p}$-element corresponding to a vertex of $\Gamma_{1}$, then $\beta \in S_{1}'$ (which entails the corresponding property for $S_{2}'$). Now we need to extend to handle the $W$-elements corresponding to edges, so fix some $\gamma \in \mathbb{S}_{\Gamma}$, a $W$-element corresponding to an edge $r = (v,v')$ with $v,v'$ vertices in $\Gamma_{1}$. We want to extend $\Psi'$ to some $\Psi'' : \langle S_{1}',\gamma \rangle \to \langle S_{2}',\gamma' \rangle$ where $\gamma'$ is a $W$-element associated to the edge $r' = (f(v),f(v'))$. As above, we may assume $\gamma\not\in S_{1}'$. Let $\beta_{v},\beta_{v'}$, $\beta_{f(v)}$, and $\beta_{f(v')}$ be the $D_{p}$-elements associated to the vertices $v,v', f(v)$, and $f(v')$, respectively, which we may choose so that $\Psi'(\beta_{v}) = \beta_{f(v)}$ and $\Psi'(\beta_{v'}) = \beta_{f(v')}$. By the construction of $S_{1}'$ and $S_{2}'$, we have 
    \begin{eqnarray*}
    G(S'_{1} \vee \gamma) &\cong& [ \beta_{v} \wedge \beta_{v'}] \cong D_{p} \times D_{p} \\
    G(S'_{2} \vee \gamma') &\cong& [\beta_{f(v)} \wedge \beta_{f(v')}] \cong D_{p} \times D_{p}
    \end{eqnarray*}
    so we get 
    $$
    G(\langle S_{1}', \gamma \rangle) \cong G(S_{1}') \times_{(D_{p} \times D_{p})} W \cong G(S'_{2}) \times_{(D_{p}\times D_{p})} W \cong G(\langle S_{2}',\gamma' \rangle).
    $$
    By Lemma \ref{extendingauts}, the isomorphism $[\beta_{f(v)} \wedge \beta_{f(v')}] \to [\beta_{v} \wedge \beta_{v'}]$ induced by duality by $\Psi'$ is induced by an isomorphism $[\gamma'] \to [\gamma]$. This, together with the isomorphism $\psi': G(S_{2}') \to G(S_{1}')$ dual to $\Psi'$ combine to yield an isomorphism $G(\langle S_{2}',\gamma'\rangle) \to G(\langle S_{1}',\gamma \rangle)$ which, dualizing again, yield the desired extension $\Psi'' : \langle S_{1}',\gamma \rangle \to \langle S_{2}',\gamma' \rangle$, which is still over $S_{0}$. 

    Applying this inductively to all edges of $\Gamma_{1}$, using Lemma \ref{extendingauts}, we obtain the desired extension $\Xi : \mathrm{gcl}(S_{1}) \to \mathrm{gcl}(S_{2})$ over $S_{0}$.  The `in particular' clause of the statement follows by Lemma \ref{lem: type char}, since isomorphisms of graph closed subsystems which induce elementary maps on graphs are partial elementary. 
\end{proof}

\begin{lem} \label{lem: disjoint in D}
    Suppose $S_{i} = \mathrm{acl}(S_{i})$ for $i = 0,1,2$ and $S_{0} \subseteq S_{1},S_{2}$. If $D(S_{1}) \vee D(S_{2}) = D(S_{0})$, then $S_{1} \vee S_{2} = S_{0}$. 
\end{lem}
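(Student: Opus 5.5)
Since $S_{0} \subseteq S_{1}, S_{2}$, we always have $S_{0} \subseteq S_{1} \vee S_{2}$. So the content of the statement is the reverse inclusion $S_{1} \vee S_{2} \subseteq S_{0}$. The plan is to pass to groups and show that the quotient $G(S_{1} \vee S_{2})$ of $G(S_{0})$ is in fact trivial over $G(S_{0})$.

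Write $E = S_{1} \vee S_{2}$, a subsystem with $E \subseteq S_{0}$ in the dual sense. Then $G(E)$ is a common quotient of $G(S_{1})$ and $G(S_{2})$, and $G(S_{0}) \to G(E)$ is the relevant epimorphism. We want $E = S_{0}$.

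\textbf{Step 1.} Apply Lemma \ref{lem: commutes}: $D(E) = D(S_{1}) \vee D(S_{2}) = D(S_{0})$. So $E$ and $S_{0}$ have the same maximal pro-$2$ quotient part.

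\textbf{Step 2.} Show that $E$ is graph closed, with $\Gamma(E) = \Gamma(S_{0})$ and $E_{+} = (S_{0})_{+}$. Every $C_{2}$-element lies in $D$, so the $C_{2}$-elements of $E$ and of $S_{0}$ coincide. The $D_{p}$-elements of $S_{0}$ are exactly those lying below vertex-width-$1$ $C_{2}$-elements of $D(S_{0})$, since $S_{0}$ is graph closed. The goal is to verify the same for $E$. If a $D_{p}$-element $\beta$ is in $E$, it is in $S_{0}$. Conversely, let $v \in \Gamma(S_{0})$. Since $S_{i}$ is graph closed, $v \in \Gamma(S_{i})$ for $i = 1, 2$, but the $D_{p}$-class of $v$ need not a priori be in the join. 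Here I would use Lemma \ref{lem: transfer}(2) and Lemma \ref{lem: plus intersection}, applied with $S_{0} \subseteq S_{i}$: $(S_{i})_{-} \vee (S_{0})_{+} = (S_{0})_{-}$ and $S_{0} \vee (S_{i})_{+} = (S_{0})_{+}$. Together with the lattice identity
\[
E \supseteq (S_{1})_{+} \vee (S_{2})_{+} \supseteq (S_{0})_{+},
\]
these reduce the question to the $(\cdot)_{+}$ parts. Via Lemma \ref{lem: secondstep}, $(S_{i})_{+} \cong S(G_{\Gamma(S_{i})} \times C_{2}^{I_{i}})$ and these sit inside $(\mathbb{S}_{\Gamma})_{+} = S(G_{\Gamma_{*}} \times C_{2}^{\overline{\kappa}})$ as coordinate projections. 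The join $(S_{1})_{+} \vee (S_{2})_{+}$ is then computed on kernels exactly as in the calculation in the proof of Proposition \ref{prop: acl description}. The expected result is $S(G_{\Gamma(S_{1}) \cap \Gamma(S_{2})} \times C_{2}^{J})$, provided the two $C_{2}$-parts are compatible with $D(S_{1}) \vee D(S_{2}) = D(S_{0})$. This yields $E_{+} = (S_{0})_{+}$.

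\textbf{Step 3.} Now $E \subseteq S_{0}$, and both are subsystems whose $G$ is a Frattini cover of $G(E_{+}) = G((S_{0})_{+})$, by Theorem \ref{thm: thirdstep}(1) applied to $S_{0}$. So $G(S_{0}) \to G(E)$ is a Frattini cover, and it remains to show it is injective. Decompose the kernel into its $2$-part and its $pq$-part, as in the proof of Lemma \ref{lem: pq cover system}. The $2$-part is controlled by $D$: since $D(E) = D(S_{0})$, the $2$-parts agree. For the $pq$-part, use Proposition \ref{prop: acl description}: $S_{i} = \mathrm{gcl}_{pq}(S_{i})$, so $S_{i}$ contains the unique universal $pq$-Frattini subsystem over $\mathrm{gcl}$ of its pieces. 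By the uniqueness in Lemma \ref{lem: pq cover system} and Corollary \ref{cor: unique subgroup}, the $pq$-Frattini part of $S_{0}$ over $(S_{0})_{+}$ is canonically determined by $(S_{0})_{+}$. It is therefore contained in both $S_{1}$ and $S_{2}$ and maps isomorphically onto its image in the join. Combining the two parts via the fiber-product description $G(S_{0}) \cong G(S_{0,pq}) \times_{G((S_{0})_{+})} G(S_{0,2})$ gives $E = S_{0}$.

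\textbf{Main obstacle.} The main obstacle is Step 2: showing that the join of the $(\cdot)_{+}$ parts does not collapse graph information below $\Gamma(S_{0})$. I would first try to derive it from the equality of $D$-parts, using the dictionary of Lemma \ref{lem: dictionary} between $C_{2}$-elements and linear functionals, together with Lemma \ref{lem: just adding the graph}(1), which identifies $\Gamma(S)$ with $\mathrm{HB}(D(S))$.
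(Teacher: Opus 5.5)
Your proposal runs the key map in the wrong direction, and this derails Steps 2 and 3.

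By definition $S_1\vee S_2$ is generated by the elements $\alpha\vee\beta$, which lie in $S_1\cap S_2$ by upward closure. So $E=S_1\vee S_2=S_1\cap S_2\supseteq S_0$ as subsystems. Dually the epimorphism is $G(E)\to G(S_0)$, not $G(S_0)\to G(E)$. What has to be shown is that its kernel is trivial, i.e.\ that every element of $E$ already lies in $S_0$.

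Consequently the obstacle you isolate in Step 2 is illusory. For $v\in\Gamma(S_0)$ the $D_p$-class of $v$ lies in $S_0\subseteq E$, so nothing can ``collapse below $\Gamma(S_0)$''. Your suggested route there, putting $(S_1)_+$ and $(S_2)_+$ inside one coordinatization of $(\mathbb{S}_\Gamma)_+$ as coordinate projections, is not supplied by Lemma \ref{lem: secondstep}(2). That lemma only treats a chain $S_0\subseteq S_1$, and such a common coordinatization need not exist for two incomparable extensions. Meanwhile the inclusion that actually needs proof, $E_+\subseteq(S_0)_+$, is only asserted for $D_p$-elements and never argued.

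Likewise, in Step 3 you observe that the $pq$-Frattini part of $S_0$ is contained in $S_1$ and $S_2$. That is just $S_0\subseteq S_1\cap S_2$. What is needed is the opposite containment: that the $pq$-part of $E$ lies in $S_0$.

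Once the direction is fixed, your skeleton does give a complete proof, and a more global one than the paper's. The steps are as follows.
\begin{enumerate}
\item $E$ is graph closed, since an intersection of graph closed subsystems is.
\item Every $C_2$-element of $E$ lies in $D(E)=D(S_1)\cap D(S_2)=D(S_0)$.
\item A $D_p$-element $\beta\in E$ lies below a vertex-width-$1$ $C_2$-element of $E$ with witness $[\beta]$. That element is in $S_0$, so $\beta\in S_0$ by graph closure of $S_0$.
\item A $W$-element of $E$ lies below two inequivalent $D_p$-elements of $E$, hence of $S_0$, so it lies in $S_0$. Hence $E_+=(S_0)_+$.
\item By Theorem \ref{thm: thirdstep}(1), $G(E)\to G(E_+)$ is Frattini. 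It factors through $\rho:G(E)\to G(S_0)$, so $\rho$ is Frattini and $K=\ker\rho=K_2\times K_{pq}$.
\item The equality $D(E)=D(S_0)$ says $K$ is contained in the product of the $p$- and $q$-Sylow subgroups of $G(E)$. So $K_2=1$ and $\rho$ is a $pq$-Frattini cover.
\item Since $S_0=\mathrm{gcl}_{pq}(S_0)$ and $S_0$ is graph closed (Proposition \ref{prop: acl description}), $\mathrm{id}_{G(S_0)}$ is the universal $pq$-Frattini cover of $G(S_0)$. This forces $\rho$ to be an isomorphism.
\end{enumerate}

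The paper instead fixes $\alpha\in S_1\cap S_2$ and argues by cases on $[\alpha]$. It treats quotients $G_{\Gamma_0}$, then $G_{\Gamma_0}\times C_2^\ell$, then Frattini covers of these by splitting $\Phi([\alpha])$ into its $2$- and $pq$-parts, and finally the general case via Lemma \ref{lem: bounding}. Both arguments rest on the same two inputs: $D$ controls the $2$-part and $\mathrm{gcl}_{pq}$ controls the $pq$-part. The repaired global version avoids the case analysis and the reduction through Lemma \ref{lem: bounding}.
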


\begin{proof}
    We will prove the lemma via a series of claims. Fix $\alpha \in S_{1} \cap S_{2}$. Our goal is to show that $\alpha \in S_{0}$.
    
    \textbf{Claim 1}: Suppose $[\alpha] \cong G_{\Gamma_{0}}$ for some induced subgraph $\Gamma_{0} \subseteq \Gamma(\mathbb{S}_{\Gamma})$. Then $\alpha \in S_{0}$. 

    \emph{Proof of Claim 1}: Let $\beta \geq \alpha$ be an element with $[\beta] \cong C_{2}^{V_{0}}$ corresponding to the quotient of $[\alpha]$ by the normal product of the $p$- and $q$-Sylow subgroups.  Then $\beta \in D(S_{1} \vee S_{2}) = D(S_{1}) \vee D(S_{2}) = D(S_{0})$. Then $\alpha \in \mathrm{gcl}(\beta)$ so $\alpha \in S_{0}$ since $S_{0}$ is, in particular, graph closed. \qed

    From this, it will easily follow when $\alpha$ corresponds to a quotient that also has an additional $C_{2}^{\ell}$ factor:

    \textbf{Claim 2}: Suppose $[\alpha] \cong G_{\Gamma_{0}} \times C_{2}^{\ell}$ for some induced subgraph $\Gamma_{0} \subseteq \Gamma(\mathbb{S}_{\Gamma})$. Then $\alpha \in S_{0}$. 

    \emph{Proof of Claim 2}: Let $\beta, \gamma \geq \alpha$ be chosen so that $[\beta] \cong G_{\Gamma_{0}}$, $[\gamma] \cong C_{2}^{\ell}$, and $\alpha = \beta \wedge \gamma$. Then $\beta \in S_{1} \vee S_{2}$ so, by Claim 1, $\beta \in S_{0}$. Also, since $[\gamma]$ is a $2$-group, we have $\gamma \in D(S_{1} \vee S_{2}) = D(S_{0}) \subseteq S_{0}$ so we have $\gamma \in S_{0}$. Then $\alpha = \beta \wedge \gamma \in S_{0}$. \qed

    Now we handle the case of Frattini covers of the groups handled in Claim 2:
    \textbf{Claim 3}: Suppose $\beta \geq \alpha$ and $[\beta] \cong [\alpha]/\Phi([\alpha]) \cong G_{\Gamma_{0}} \times C_{2}^{\ell}$ for some finite induced subgraph $\Gamma_{0} \subseteq \Gamma$ and $\ell < \omega$, so the canonical map $[\alpha] \to [\beta]$ is a Frattini cover. Then $\alpha \in S_{0}$. 

    \emph{Proof of Claim}: Because $\Phi([\alpha])$ is nilpotent, we can decompose $\Phi([\alpha]) = \Phi_{2}([\alpha])\Phi_{pq}([\alpha])$ where $\Phi_{2}([\alpha])$ is a $2$-group and the order of $\Phi_{pq}([\alpha])$ only divisible by $p$ and $q$. Then we find $\delta$ and $\epsilon$ satisfying the following:
    \begin{enumerate}
        \item $\alpha \leq \delta, \epsilon \leq \beta$. 
        \item $\alpha = \delta \wedge \epsilon$, $\beta = \delta \vee \epsilon$. 
        \item $[\delta] \cong [\alpha] /\Phi_{2}([\alpha])$ so the canonical map $[\delta] \to [\beta]$ is $pq$-Frattini.
        \item $[\epsilon] \cong [\alpha]/\Phi_{pq}([\alpha])$ so the canonical map $[\epsilon] \to [\beta]$ is $2$-Frattini. 
    \end{enumerate}
    By Claim 2, we know $\beta \in S_{0}$ so we have $\delta \in S_{0}$ by the description of algebraic closure in Proposition \ref{prop: acl description}. 

    Choose $\epsilon' \geq \epsilon$ with $[\epsilon']$ corresponding to the quotient of $[\epsilon]$ by the normal product of the $p$- and $q$-Sylow subgroups. If we let $K_{1}$ be the kernel of the canonical map $[\epsilon] \to [\beta]$ and let $K_{2}$ denote the kernel of the canonical map $[\epsilon] \to [\epsilon']$, we have $K_{1} \cap K_{2} = 1$ since $K_{1}$ is a $2$-group and $K_{2}$ is only divisible by primes $p$ and $q$. Thus, $[\epsilon] \cong [\epsilon]/(K_{1} \cap K_{2})$ which entails $\epsilon = \beta \wedge \epsilon'$. Now since $[\epsilon']$ is a $2$-group, we have $\epsilon' \in D(S_{1} \vee S_{2}) = D(S_{0})$. Thus $\epsilon = \beta \wedge \epsilon' \in S_{0}$. Then, finally, we obtain $\alpha = \delta \wedge \epsilon \in S_{0}$, proving the claim. \qed
    
    Now we conclude by considering the case of arbitrary $\alpha$.  Let $\beta \in S_{1} \cap S_{2}$ be an element with $\alpha \leq \beta$ and $[\beta] \cong [\alpha]/\Phi([\alpha])$, so the canonical map $[\alpha] \to [\beta]$ is a Frattini cover and $\beta \in (\mathbb{S}_{\Gamma})_{+}$. Thus, by Lemma \ref{lem: bounding}, there exists $\beta' \leq \beta$ with $[\beta'] \cong G_{\Gamma_{0}} \times C_{2}^{\ell}$ for some finite induced subgraph $\Gamma_{0} \subseteq \Gamma(\mathbb{S}_{\Gamma})$ and $\ell < \omega$ and, by inspection of the proof, we see that $\beta' \in \mathrm{acl}(\beta)$ so we have $\beta' \in S_{1} \vee S_{2}$. By Fact \ref{fact2}(3), any Frattini cover of $[\beta]$ will be a quotient of a Frattini cover of $[\beta']$ so are reduced to considering the case $\beta = \beta'$, but this has been handled by Claim 3.  
\end{proof}

\subsection{Preservation results}

Define an independence relation $\ind^{*}$ on $\mathbb{S}_{\Gamma}$ as follows:  we declare $A \ind^{*}_{C} B$ if and only if 
\begin{itemize}
    \item $\mathrm{acl}(AC) \cap \mathrm{acl}(BC) = \mathrm{acl}(C)$, i.e. $A \ind^{a}_{C} B$. 
    \item $D(\mathrm{acl}(AC)) \ind^{D}_{HD(\mathrm{acl}(C))} D(\mathrm{acl}(BC))$. 
\end{itemize}
By Corollary \ref{cor: easy H}, the second bullet point could be equivalently written $D(\mathrm{acl}(AC)) \ind^{H}_{D(\mathrm{acl}(C))} D(\mathrm{acl}(BC))$. With this notation, we may state more simply the following key proposition:

\begin{prop}[Relative Stationarity] \label{prop: relative stationarity}
Suppose we are given small subsystems $S_{i} = \mathrm{acl}(S_{i}) \subseteq \mathbb{S}_{\Gamma}$ for $i = 0,1,2$ and $S_{0} \subseteq S_{1} \cap S_{2}$. Suppose, further, that we are given some $S_{1}'$ satisfying the following conditions:
\begin{enumerate}
    \item $S_{1} \ind^{*}_{S_{0}} S_{2}$ and $S_{1}' \ind^{*}_{S_{0}} S_{2}$. 
    \item There is an $L_{\mathrm{IS}}$-isomorphism $S_{1} \to S_{1}'$ over $S_{0}$ which induces a partial elementary map $\Gamma(S_{1}) \to \Gamma(S_{1}')$ over $\Gamma(S_{2})$. 
\end{enumerate}
Then $S_{1} \equiv_{S_{2}} S_{1}'$.
\end{prop}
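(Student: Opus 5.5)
The plan is to build an $L_{\mathrm{IS}}$-isomorphism $\langle S_{1},S_{2}\rangle \to \langle S_{1}',S_{2}\rangle$ that fixes $S_{2}$ and extends the given $\sigma : S_{1}\to S_{1}'$. We then show it is partial elementary, using the machinery of Lemma \ref{lem: extends} and Corollary \ref{cor : from D to S}. The argument has two parts: first we match the pro-$2$ parts inside $D$ using $H$-structure stationarity, and then we glue with the $p,q$-parts via fiber products.

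\textbf{Step 1 (the $D$-part).} By Lemma \ref{lem: just adding the graph}(2), each $D(S_{i})$ is $\mathrm{acl}_{H}$-closed, and $\sigma$ restricts to an $L_{\mathrm{IS}}$-isomorphism $D(S_{1})\to D(S_{1}')$ over $D(S_{0})$. By hypothesis (2), it induces a $\Gamma$-elementary map on $H$-parts over $H(D(S_{2}))$. Since $S_{1}$ is algebraically closed, $D(S_{1})$ is $H$-independent, so Lemma \ref{lem: type char in D} shows $D(S_{1})\equiv^{L_{R}}_{D(S_{0})} D(S_{1}')$. The main point is to upgrade this to equality of types over $D(S_{2})$ in $L_{R}$. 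By hypothesis (1) and Corollary \ref{cor: easy H}, $D(S_{1})\ind^{H}_{D(S_{0})}D(S_{2})$, and the same holds for $S_{1}'$. The $H$-bases are $\Gamma(S_{1})-\Gamma(S_{0})$ and $\Gamma(S_{1}')-\Gamma(S_{0})$, disjoint from $\Gamma(S_{2})$ over $\Gamma(S_{0})$, by Lemma \ref{lem: just adding the graph}(1). I would imitate the stationarity argument in \cite[Proposition 2.5]{berenstein2017supersimple} and in Lemma \ref{lem: type char in D}, working with the $H$-independent tuples $D(S_{1})D(S_{2})$ and $D(S_{1}')D(S_{2})$:
\begin{itemize}
\item By Lemma \ref{lem: H control}, their $H$-parts are $H(D(S_{1}))H(D(S_{2}))$ and the primed analogue.
\item On these $H$-parts, hypothesis (2) gives $\Gamma$-elementarity over $\Gamma(S_{2})$.
\item For the $L_{\mathrm{IS}}$-type of $(D(S_{1})D(S_{2}), H\text{-part})$, we use that $\mathrm{Th}(D)$ is $\omega$-stable with $\ind^{D}$ described in Corollary \ref{cor: free forking char}. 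Over $D(S_{0})H(D(S_{1}))H(D(S_{2}))$ the independence $D(S_{1})\ind^{D}D(S_{2})$ holds, and over an algebraically closed base (Lemma \ref{lem: acl description for free} together with weak elimination of imaginaries, Fact \ref{fact: weak EI}) types are stationary. Hence the $L_{\mathrm{IS}}$-types coincide.
\end{itemize}
Lemma \ref{lem: type char in D} then yields $D(S_{1})\equiv^{L_{R}}_{D(S_{2})} D(S_{1}')$. Let $\tau$ denote the resulting isomorphism $\langle D(S_{1}),D(S_{2})\rangle \to \langle D(S_{1}'),D(S_{2})\rangle$.

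\textbf{Step 2 (gluing).} Set $E=\langle S_{1},S_{2}\rangle$ and $E'=\langle S_{1}',S_{2}\rangle$. By hypothesis (1), $S_{1}\cap S_{2}=S_{0}$, and because the $S_{i}$ are subsystems this gives $S_{1}\vee S_{2}=S_{0}$. Hence $G(E)\cong G(S_{1})\times_{G(S_{0})}G(S_{2})$, and similarly for $E'$. So $\sigma$ and $\mathrm{id}_{S_{2}}$ glue to an $L_{\mathrm{IS}}$-isomorphism $\Psi:E\to E'$ over $S_{2}$. I must check that $\Psi$ restricted to $D(E)$ agrees with $\tau$. By Lemma \ref{lem: commutes}, $D(E)=D(S_{1})\wedge D(S_{2})$, which is generated by $D(S_{1})\cup D(S_{2})$, and both maps agree on these generators. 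So $\Psi|_{D(E)}$ is $L_{R}$-elementary over $D(S_{2})$.

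\textbf{Step 3 (closing up).} We want to apply Lemma \ref{lem: extends} with base $S_{2}$ to conclude that $\Psi$ is partial elementary. This requires $D(E)$ and $D(E')$ to be $\mathrm{acl}_{H}$-closed, and this is the step I expect to be the main obstacle. My first attempt is as follows. By Fact \ref{fact: H structure facts}(3), $\mathrm{acl}_{H}(D(E))=\mathrm{acl}(D(E),\mathrm{HB}(D(E)))$. From $D(S_{1})\ind^{H}_{D(S_{0})}D(S_{2})$ and Lemma \ref{lem: H control}, the $H$-basis of $D(E)$ is already contained in $D(E)$. By Lemma \ref{lem: acl description for free}, $\mathrm{acl}$ in $D$ of a generated subsystem is the subsystem itself, so $D(E)$ is closed. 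If this fails, the fallback is to replace $E$ by $\mathrm{gcl}(E)$ and extend $\Psi$ as in the proof of Lemma \ref{lem: extends}, then apply Lemma \ref{lem: type char}(1). Partial elementarity of $\Psi$ over $S_{2}$ then gives $S_{1}\equiv_{S_{2}}S_{1}'$.
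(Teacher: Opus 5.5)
Your proposal follows the paper's proof essentially step for step. You glue $\sigma$ and $\mathrm{id}_{S_{2}}$ via $S_{1}\vee S_{2}=S_{0}$ into $\Psi:\langle S_{1},S_{2}\rangle\to\langle S_{1}',S_{2}\rangle$, use Lemma \ref{lem: H control} and Lemma \ref{lem: commutes} to see that $D(\langle S_{1},S_{2}\rangle)=\langle D(S_{1}),D(S_{2})\rangle$ is $H$-independent and $\mathrm{acl}_{H}$-closed, get $L_{R}$-elementarity over $D(S_{2})$ from stationarity of $\ind^{D}$ together with Lemma \ref{lem: type char in D}, and conclude by Lemma \ref{lem: extends} with base $S_{2}$; your first attempt at Step 3 already works, so the fallback is unnecessary.

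One small correction: run the stationarity argument over $D(S_{0})$, which $\sigma$ fixes, not over $D(S_{0})H(D(S_{1}))H(D(S_{2}))$, because $\sigma$ moves $H(D(S_{1}))$ to $H(D(S_{1}'))$. For this you first derive $D(S_{1})\ind^{D}_{D(S_{0})}D(S_{2})$, and likewise for $S_{1}'$, from $D(S_{1})\ind^{D}_{HD(S_{0})}D(S_{2})$, the $H$-independence of the $D(S_{i})$, and the independence of $H$, using transitivity and base monotonicity.
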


\begin{proof}
    Since $S_{1} \vee S_{2} = S_{0}$, $S_{1}' \vee S_{2} = S_{0}$ and $S_{1}$ is isomorphic to $S_{1}'$ over $S_{0}$, we have that the given isomorphism $S_{1} \to S_{1}'$ over $S_{0}$ extends to an isomorphism $\Psi : \langle S_{1}, S_{2} \rangle \to \langle S_{1}',S_{2} \rangle$ over $S_{2}$. We need to check that this is partial elementary over $S_{0}$. We will check the conditions of Lemma \ref{lem: extends}. 
    
    For simplicity, write $D_{i}$ for $D(S_{i})$ for $i = 0,1,2$ and $D_{1}'$ for $D(S_{1}')$. Since $S_{i} = \mathrm{acl}(S_{i})$, we know that $\mathrm{acl}_{H}(D(S_{i})) = D(S_{i})$ for all $i$. In particular, this entails that $D_{i}$ is $H$-independent, so, by symmetry and base monotonicity, we have 
    $$
    D_{i} \ind^{D}_{D_{0}H(D_{i})} H.
    $$
    for $i = 1,2$ since $D_{0} \subseteq D_{1},D_{2}$. By our hypothesis and transitivity, then, we get 
    $$
    D_{2} \ind^{D}_{D_{0}H(D_{2})} D_{1}H. 
    $$
    This means we can apply Lemma \ref{lem: H control} to obtain $H(\langle D_{1},D_{2} \rangle) = H(D_{1})H(D_{2})$. Similarly, we have $H(\langle D_{1}',D_{2} \rangle) = H(D_{1}')H(D_{2})$. In particular, we have $\mathrm{acl}_{H}(\langle D_{1},D_{2} \rangle) = \langle D_{1},D_{2} \rangle$ and $\mathrm{acl}_{H}(\langle D_{1}',D_{2} \rangle) = \langle D_{1}',D_{2} \rangle$. Moreover, by Lemma \ref{lem: commutes}, we know $D(\langle S_{1},S_{2} \rangle) = \langle D_{1},D_{2} \rangle$ and $D(\langle S_{1}',S_{2} \rangle) = \langle D_{1}',D_{2} \rangle$. 

     By our independence assumptions and stationarity of $\ind^{D}$, we have $\langle D(S_{1})D(S_{2}) \rangle \equiv_{D(S_{0})} \langle D(S_{1}') D(S_{2}) \rangle$ in the $L_{\mathrm{IS}}$-structure $D$, and by Lemma \ref{lem: type char in D}, $\langle D(S_{1}),D(S_{2}) \rangle$ and $\langle D(S_{1}'),D(S_{2}) \rangle $ are $H$-independent.  Then, by the equivalence (2)$\Leftrightarrow$(1) in Lemma \ref{lem: type char in D}, the given $L_{\mathrm{IS}}$-isomorphism $S_{1} \to S_{1}'$ over $S_{0}$ induces an $L_{R}$-partial elementary map $D(S_{1}) \to D(S_{1}')$ over $D(S_{2})$. Lemma \ref{lem: extends}, then, entails that $\Psi$ is partial elementary, completing the proof. 
\end{proof}

\begin{rem} \label{rem: equiv form}
An equivalent formulation of relative stationarity, which is sometimes easier to use, is the following: given algebraically closed subsystems $S_{1},S_{1}',S_{2},S_{2}'$ which contain an algebraically closed $S_{0}$, if $S_{1} \equiv_{S_{0}} S_{1}'$, $S_{2} \equiv_{S_{0}} S_{2}'$, $S_{1} \ind^{*}_{S_{0}} S_{2}$, $S_{1}' \ind^{*}_{S_{0}} S_{2}'$ and $\Gamma(S_{1})\Gamma(S_{2}) \equiv_{\Gamma(S_{0})} \Gamma(S_{1}')\Gamma(S_{2}')$, then $S_{1}S_{2} \equiv_{S_{0}} S_{1}'S_{2}'$. To see that this follows from relative stationarity, one can, by an automorphism, pick $S_{1}''$ such that $S_{1}''S_{2}' \equiv_{S_{0}} S_{1}S_{2}$. Then, by relative stationarity, we get $S_{1}'' \equiv_{S_{2}'} S_{1}'$, so we have 
$$
S_{1}S_{2} \equiv_{S_{0}} S_{1}''S_{2}' \equiv_{S_{0}} S_{1}'S_{2}'.
$$
\end{rem}

\begin{defn}
    \begin{enumerate}
        \item We say a theory $T$ has SOP$_{1}$ if there is a formula $\varphi(x,y)$ and a collection of tuples $(a_{\eta})_{\eta \in 2^{<\omega}}$ satisfying the following conditions:
        \begin{enumerate}
            \item (Consistency for paths) For all $\eta \in 2^{\omega}$, $\{\varphi(x;a_{\eta|i}) : i < \omega\}$ is a consistent set of formulas.
            \item (Inconsistency for certain incomparables) If $\eta \perp \nu \in 2^{<\omega}$, $\eta \unrhd (\eta \wedge \nu)^{\frown}\langle 0 \rangle$, and $\nu = (\eta \wedge \nu)^{\frown} \langle 1 \rangle$, then $\{\varphi(x,a_{\eta}), \varphi(x,a_{\nu})\}$ is inconsistent. 
        \end{enumerate}
        \item We say a theory $T$ has SOP$_{n}$ for $n \geq 3$ if there is a formula $\varphi(x,y)$ and an indiscernible sequence $(a_{i})_{i < \omega}$ satisfying the following conditions:
        \begin{enumerate}
            \item (Order property) $\models \varphi(a_{i},a_{j}) \iff i < j$,
            \item (No loops of size $n$) The set of formulas 
            $$
            \{\varphi(x_{i},x_{i+1}) : i < n-1\} \cup \{\varphi(x_{n-1},x_{0})\}
            $$
            is inconsistent.
        \end{enumerate}
    \end{enumerate}
\end{defn}

There is also a property SOP$_{2}$, defined exactly the same as SOP$_{1}$, except condition (2) is replaced by saying $\{\varphi(x,a_{\eta}), \varphi(x,a_{\nu})\}$ is inconsistent for \emph{all} incomparable pairs $\eta \perp \nu \in 2^{<\omega}$. It is known by \cite{mutchnik2026nsop2} that a theory has SOP$_{1}$ if and only if it has SOP$_{2}$ so we will not give a separate treatment for the property SOP$_{2}$. 

\begin{defn}
The following definitions make sense in an arbitrary complete theory $T$:
\begin{enumerate}
    \item For $M \models T$, we write $a \ind^{u}_{M} b$ to mean $\mathrm{tp}(a/Mb)$ is finitely satisfiable in $M$. If $(a_{i})_{i < \omega}$ is an $M$-indiscernible sequence with $a_{i} \ind^{u}_{M} a_{<i}$, we say $(a_{i})_{i < \omega}$ is a \emph{coheir sequence} over $M$. 
    \item For $M \models T$, we say a formula $\varphi(x;a)$ \emph{Kim-divides over }$M$ if there is some coheir sequence $(a_{i})_{i < \omega}$ over $M$ with $a_{0} = a$ such that $\{\varphi(x;a_{i}) : i < \omega\}$ is inconsistent. 
    \item We write $a \ind^{K}_{M} b$ to mean that $\mathrm{tp}(a/Mb)$ contains no formula that Kim-divides over $M$. 
\end{enumerate}
\end{defn}

\begin{rem}
The above definition of Kim-independence differs from the ``official definition'' since it is defined in terms of coheir sequences instead of Morley sequences in a global invariant type. These notions are easily seen to be equivalent in NSOP$_{1}$ theories, and it will be useful for us to work with coheir sequences since coheir sequences remain coheir sequences in any reduct (or, more generally, the image of a coheir sequence in an interpreted structure is again a coheir sequence). 
\end{rem}

\begin{lem} \label{lem: one direction of Kim char}
Suppose $S_{*} \models T_{\Gamma}$ and $S_{**}$ and $S_{0}$ are small algebraically closed subsystems of $\mathbb{S}_{\Gamma}$ with $S_{**} \ind^{K}_{S_{*}} S_{0}$. Then $S_{**} \ind^{*}_{S_{*}} S_{0}$. 
\end{lem}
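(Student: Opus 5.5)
We have to show two things: $S_{**} \ind^{a}_{S_{*}} S_{0}$, and $D(\mathrm{acl}(S_{**}S_{*})) \ind^{D}_{HD(S_{*})} D(\mathrm{acl}(S_{0}S_{*}))$. Both will be obtained the same way: Kim-independence forbids any formula in $\mathrm{tp}(S_{**}/S_{*}S_{0})$ from Kim-dividing along a coheir sequence. So for each of the two conditions, I will assume it fails, extract a witnessing formula, and show that this formula Kim-divides along a suitably chosen coheir sequence in $\mathrm{tp}(S_{0}/S_{*})$. Throughout I replace $S_{**}$ and $S_{0}$ by $\mathrm{acl}(S_{**}S_{*})$ and $\mathrm{acl}(S_{0}S_{*})$. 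Kim-independence over a model is preserved under taking algebraic closures, by the standard argument: a Kim-dividing formula over $\mathrm{acl}$ can be pulled back by finitely many algebraic conjugates. So we may assume $S_{*} \subseteq S_{**}\cap S_{0}$.

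\emph{First condition ($\ind^{a}$).} Suppose $\alpha \in S_{**}\cap S_{0}\setminus S_{*}$. Let $(S_{0}^{i})_{i<\omega}$ be a coheir sequence over $S_{*}$ starting with $S_{0}$. Any coheir sequence over a model is $\ind^{u}$-independent, hence $\ind^{a}$-Morley over $S_{*}$; this is the standard fact that $a \ind^{u}_{M} b$ implies $\mathrm{acl}(Ma)\cap\mathrm{acl}(Mb) = M$. Consequently the images $\alpha_{i}$ of $\alpha$ are pairwise distinct. Take an algebraic formula $\psi(x;\bar c)$ isolating the finite orbit of the element $\alpha$ over finitely many $\bar c \in S_{0}$. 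Then $\psi(x;\bar c)\wedge$ ``$y=$ the relevant coordinate of $x$'' Kim-divides, since a common realization would contain infinitely many distinct $\alpha_{i}$ in a finite set. This contradicts $\ind^{K}$, and gives $\ind^{a}$.

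\emph{Second condition.} Pass to the $L_{R}$-structure on $D$. Coheir sequences in $\mathbb{S}_{\Gamma}$ map to coheir sequences in this interpreted structure, and $D(S_{*})$ is the trace of a model (using Theorem \ref{thm: thirdstep}, $D(S_{*})$ is $\mathrm{acl}_{H}$-closed and an elementary substructure for $L_{R}$). By Corollary \ref{cor: easy H} and Fact \ref{fact: H structure facts}(4), failure of the second bullet means $D(S_{**})\not\ind^{D}_{HD(S_{*})} D(S_{0})$. By Corollary \ref{cor: free forking char}, this is witnessed by a single element $\gamma \in D(S_{**})$ with
\[
\gamma \vee \langle H, D(S_{0})\rangle \neq \gamma \vee \langle H, D(S_{*})\rangle,
\]
i.e.\ there is some $\delta \geq \gamma$ in $\langle H, D(S_{0})\rangle\setminus\langle H,D(S_{*})\rangle$. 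Unwinding this, $\delta \geq \epsilon\wedge\bigwedge_{j} h_{j}$ for some $\epsilon \in D(S_{0})$ and finitely many $h_{j}\in H$. The corresponding formula states ``$x$ has a quotient above $\epsilon\wedge$ (meet of $k$ vertex-width-one $C_{2}$-elements) not lying in $\langle H, D(S_{*})\rangle$''. This formula is definable in $\mathbb{S}_{\Gamma}$ with parameter $\epsilon$, since $H$ is definable by Lemma \ref{lem: definability of vertex width}. Fix $x$ of bounded finite sort $[\gamma]$. Along a coheir sequence $(\epsilon_{i})$, the systems $\langle D(S_{*}),\epsilon_{i}\rangle$ are $\ind^{D}$-independent over $\langle H,D(S_{*})\rangle$. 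By the $\omega$-stability of $D$ (Fact \ref{fact: totally categorical}) and the finiteness of $[\gamma]$, the element $\gamma$ can have new joins with only finitely many of them. Hence the formula Kim-divides.

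\emph{Main obstacle.} The main difficulty is the second condition. Coheir independence in $\mathbb{S}_{\Gamma}$ must be turned into forking-independence of $D$ over $H D(S_{*})$ rather than merely over $D(S_{*})$. What has to be shown is that a coheir sequence $(D(S_{0}^{i}))$ is $\ind^{D}$-independent over $\langle H, D(S_{*})\rangle$. I would try to get this from Fact \ref{fact: H structure facts}(4): $T_{H}$ is superstable, so coheir sequences are $\ind^{H}$-Morley sequences, and $\ind^{H}$ implies $\ind^{D}$ over $HD(S_{*})$. The counting argument then uses finiteness of $[\gamma]$, which bounds the number of distinct quotients of $[\gamma]$.
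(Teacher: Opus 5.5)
Your proof is correct, but for the second bullet it takes a more hands-on route than the paper.

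The paper argues directly. From $S_{**} \ind^{K}_{S_{*}} S_{0}$ it finds $S'_{**} \equiv_{S_{0}} S_{**}$ over which a coheir sequence starting with $S_{0}$ is indiscernible. It pushes this into the interpreted $L_{H}$-structure $D$, where the sequence is still a coheir sequence over $D(S_{*})$. It then applies Kim's lemma in the simple theory $T_{H}$ to get $D(S_{**}) \ind^{H}_{D(S_{*})} D(S_{0})$, which yields the second bullet by Fact~\ref{fact: H structure facts}(4). The first bullet is treated as standard, as you do.

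The fact you isolate as the main obstacle is precisely the paper's key input: coheir sequences in $D$ are $\ind^{H}$-Morley, hence, by Fact~\ref{fact: H structure facts}(4), $\ind^{D}$-independent over $B_{*} := \langle H, D(S_{*}) \rangle$. Your justification of it is right.

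After that, your contrapositive counting argument is a valid hand-made proof of the special case of Kim's lemma that the paper simply quotes. There are only boundedly many $\delta \geq \gamma$, so some $\delta \notin B_{*}$ would lie in $\langle B_{*},\epsilon_{i}\rangle \cap \langle B_{*},\epsilon_{j}\rangle$. By Corollary~\ref{cor: free forking char} and the modular law this intersection is $B_{*}$, a contradiction. This route is more concrete: it uses only pairwise independence and the explicit forking calculus in free pro-$2$ systems, and $\omega$-stability plays no real role.

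It costs some bookkeeping, though. The clause ``$\delta \notin \langle H, D(S_{*})\rangle$'' is a partial type over $D(S_{*})$, not a formula. So you need a compactness/pigeonhole step to extract an honest Kim-dividing formula; this works because the existential over the boundedly many $\delta \geq x$ commutes with the infinite conjunction. Alternatively, show directly that $\bigcup_{i} \mathrm{tp}(\gamma/S_{0}^{i})$ is inconsistent and invoke compactness.

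Two minor points.
\begin{itemize}
\item Your opening reduction is unnecessary, since in the intended reading one has $S_{*} \subseteq S_{**} \cap S_{0}$, as in the paper's application in Theorem~\ref{thm: preserving Kim}. Calling it standard also overreaches on the right-hand side: replacing $b$ by $\mathrm{acl}(bM)$ in $a \ind^{K}_{M} b$ is only known under NSOP$_{1}$, as the Remark after Theorem~\ref{Kimchar} points out.
\item $D(S_{*}) \preceq D$ holds simply because $D$, $H$ and $R$ are $\emptyset$-definable in $\mathbb{S}_{\Gamma}$ and $S_{*} \preceq \mathbb{S}_{\Gamma}$. It does not come from Theorem~\ref{thm: thirdstep}.
\end{itemize}
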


\begin{proof}
Fix a coheir sequence $(S_{i})_{i < \omega}$ over $S_{*}$ starting with $S_{0}$. As $S_{**} \ind^{K}_{S_{*}} S_{0}$, there is some $S_{**}' \equiv_{S_{0}} S_{**}$ with $(S_{i})_{i < \omega}$ indiscernible over $S'_{**}$. Then, in particular, $D(S_{i})_{i < \omega}$ is a coheir sequence over $D(S_{*})$ which is $D(S'_{**})$-indiscernible in $D$ viewed as an $L_{H}$-structure. Thus, by Kim's Lemma in the simple theory $\mathrm{Th}_{L_{H}}(D)$, we have $D(S'_{**}) \ind^{H}_{D(S_{*})} D(S_{0})$, so $D(S_{**}) \ind^{H}_{D(S_{*})} D(S_{0})$.  It is also completely standard that Kim-independence entails $S_{**} \cap S_{0} = S_{*}$, so we get $S_{**} \ind^{*}_{S_{*}} S_{0}$ (one can see this by applying the above argument to the reduct of $\mathbb{S}_{\Gamma}$ to the language of equality instead of the interpretation of the $L_{H}$-structure $D$).   
\end{proof}

\begin{fact} \label{fact: weak IT} \cite[Theorem 5.1]{ArtemNick}
The theory $T$ is NSOP$_{1}$ if and only if, given any $M \models T$, if $b_{0}c_{0} \equiv_{M} b_{1}c_{1}$, $c_{1} \ind^{u}_{M} c_{0}$, and $c_{i} \ind^{u}_{M} b_{i}$ for $i = 0,1$, then there is $b$ such that $bc_{0} \equiv_{M} bc_{1} \equiv_{M} b_{0}c_{0}$.
\end{fact}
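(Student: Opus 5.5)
The statement is a cited criterion, and I would prove the two directions separately. The forward direction uses the Kim-independence machinery available in NSOP$_{1}$ theories. The converse builds an SOP$_{1}$ configuration out of a failure of the amalgamation property. Throughout, $M$ is a model and $\ind^{u}$ is coheir independence, as in the definitions preceding the statement.

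\emph{NSOP$_{1}$ implies the amalgamation criterion.} Assume $T$ is NSOP$_{1}$ and let $b_{0}c_{0} \equiv_{M} b_{1}c_{1}$, with $c_{1} \ind^{u}_{M} c_{0}$ and $c_{i} \ind^{u}_{M} b_{i}$ for $i=0,1$. The first step is to observe that $\ind^{u}$ implies $\ind^{K}$ over $M$. If $\mathrm{tp}(a/Mb)$ is finitely satisfiable in $M$, extend it to a global $M$-finitely satisfiable, hence $M$-invariant, type. By Kim's lemma for Kim-dividing in NSOP$_{1}$ theories, Kim-dividing can be witnessed along any Morley sequence in an invariant type, and this gives $a \ind^{K}_{M} b$. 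The remark after the definition of Kim-independence, that coheir sequences suffice in NSOP$_{1}$, is what lets me use this notion of Kim-dividing here. Next I invoke symmetry of $\ind^{K}$ over models in NSOP$_{1}$ theories. This yields $b_{i} \ind^{K}_{M} c_{i}$ for $i=0,1$, and $c_{0} \ind^{K}_{M} c_{1}$. Finally I apply the independence theorem for Kim-independence over models, with the types $\mathrm{tp}(b_{0}/Mc_{0})$ and $\mathrm{tp}(b_{1}/Mc_{1})$, which agree over $M$. It produces $b$ with $bc_{0} \equiv_{M} b_{0}c_{0}$ and $bc_{1} \equiv_{M} b_{1}c_{1} \equiv_{M} b_{0}c_{0}$, as required.

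\emph{The criterion implies NSOP$_{1}$.} I argue contrapositively, so suppose $T$ has SOP$_{1}$. I would use the standard reformulation of SOP$_{1}$ (Chernikov--Ramsey, refined by Kaplan--Ramsey). It provides a model $M$, a formula $\varphi(x;y)$, and a coheir-style array of pairs $(c_{i,0},c_{i,1})_{i<\omega}$ with the following properties: $c_{i,0} \equiv_{Mc_{<i}} c_{i,1}$; each $c_{i,1}$ is coheir independent from the earlier data; $\{\varphi(x;c_{i,0}) : i<\omega\}$ is consistent; and $\{\varphi(x;c_{i,1}) : i<\omega\}$ is $2$-inconsistent. The key consequence I want is a single pair $c_{0} \equiv_{M} c_{1}$ with $c_{1} \ind^{u}_{M} c_{0}$ such that $\varphi(x;c_{0})$ is consistent but $\varphi(x;c_{0}) \wedge \varphi(x;c_{1})$ is inconsistent, so that $\varphi(x;c)$ Kim-divides along a coheir sequence.

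From such a pair I would build a counterexample to the criterion. Choose $b_{0} \models \varphi(x;c_{0})$. Then I need to arrange $c_{0} \ind^{u}_{M} b_{0}$. To get it, I would replace $c_{0}$ by a realization $c_{0}'$ of the restriction to $Mb_{0}$ of a global coheir of $\mathrm{tp}(c_{0}/M)$, moving $b_{0}$ along by an automorphism over $M$. Next I choose $b_{1}c_{1}$ with $b_{1}c_{1} \equiv_{M} b_{0}c_{0}$, taking $c_{1}$ to be the second element of the original pair. Any $b$ with $bc_{0} \equiv_{M} bc_{1} \equiv_{M} b_{0}c_{0}$ would satisfy $\varphi(x;c_{0}) \wedge \varphi(x;c_{1})$, which is inconsistent. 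So the criterion fails.

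\emph{Expected obstacle.} The hard part is keeping all three coheir conditions simultaneously in the converse. After modifying $c_{0}$ to obtain $c_{0} \ind^{u}_{M} b_{0}$, I must still have $c_{1} \ind^{u}_{M} c_{0}$ and the inconsistency of $\varphi(x;c_{0}) \wedge \varphi(x;c_{1})$. I would handle this by choosing everything inside one global $M$-finitely satisfiable type $q \supseteq \mathrm{tp}(c_{0}/M)$. Concretely, realize $c_{1} \models q|_{Mc_{0}}$ and $c_{0}' \models q|_{Mb_{0}}$. The Kaplan--Ramsey form of SOP$_{1}$ guarantees that $\varphi$ $2$-Kim-divides along Morley sequences in some such coheir $q$, so choosing $c_{1}$ this way preserves the inconsistency. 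The remaining step is to check that $b_{1}$ can be produced by an automorphism over $M$ carrying $c_{0}'$ to $c_{1}$, which is routine.
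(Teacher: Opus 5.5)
Your forward direction is fine. $c\ind^{u}_{M}b$ implies $c\ind^{K}_{M}b$ in any theory, simply because a global $M$-invariant type extending $\mathrm{tp}(c/Mb)$ contains no formula that divides over $M$; Kim's lemma is not what is needed for this step. Symmetry and the independence theorem for $\ind^{K}$ over models then produce $b$ exactly as you say.

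The gap is in the converse. By compactness, a failure of the criterion amounts to the following data:
a formula $\varphi(x;y)$ true of $(b_{0},c_{0})$;
a global coheir $r\supseteq\mathrm{tp}(c_{0}/Mb_{0})$, so that $\varphi(b_{0};y)\in r$;
a global coheir $q\supseteq\mathrm{tp}(c_{1}/Mc_{0})$ with $q|_{M}=r|_{M}$, along whose Morley sequences $\varphi$ is $2$-inconsistent.
Your closing step, producing $b_{1}$ by an automorphism over $M$, is the easy part. Everything hinges on getting $b_{0}$ with $\varphi(b_{0};c_{0})$ and $c_{0}\ind^{u}_{M}b_{0}$, and your mechanism for this fails in each reading:
\begin{itemize}
\item If $c_{0}'$ realizes the restriction to $Mb_{0}$ of an arbitrary coheir, nothing makes $\varphi(b_{0};c_{0}')$ true.
\item Moving $b_{0}$ by an $M$-automorphism $\sigma$ with $\sigma(c_{0})=c_{0}'$ is circular, since $c_{0}'\ind^{u}_{M}\sigma(b_{0})$ holds iff $c_{0}\ind^{u}_{M}b_{0}$ did.
\item Choosing $c_{0}'\models q|_{Mb_{0}}$ for the same coheir $q$ along which $\varphi$ $2$-Kim-divides is impossible. $\varphi(b_{0};c_{0}')$ would put $\varphi(b_{0};y)$ in $q$. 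Then for $d_{0}\models q|_{Mb_{0}}$ and $d_{1}\models q|_{Mb_{0}d_{0}}$, the element $b_{0}$ realizes $\varphi(x;d_{0})\wedge\varphi(x;d_{1})$, although $(d_{0},d_{1})$ is a $q$-Morley pair over $M$.
\end{itemize}

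So $r$ and $q$ must be \emph{distinct} coheir extensions of the same type over $M$, with opposite behaviour for $\varphi$. This is exactly where SOP$_{1}$ has to enter. The ``key consequence'' you extract from the SOP$_{1}$ array does not see this. A pair $c_{0}\equiv_{M}c_{1}$ with $c_{1}\ind^{u}_{M}c_{0}$, $\varphi(x;c_{0})$ consistent and $\varphi(x;c_{0})\wedge\varphi(x;c_{1})$ inconsistent exists in any theory with a non-algebraic type over a model. For instance, take $x=y$ in an infinite set, where the criterion holds.

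What the converse actually requires is to extract two things from the SOP$_{1}$ configuration:
a coheir $q$ along which $\varphi$ is $2$-inconsistent, the natural source being the $1$-column of an SOP$_{1}$ array;
a second coheir $r$ of the same type over $M$, together with an actual $b_{0}$ satisfying $\varphi(b_{0};y)\in r$, to come from the consistent $0$-column.
The second requirement is strictly more than $\varphi$ being consistent along $r$-Morley sequences. Once such $r\neq q$ and $b_{0}$ are in hand, your bookkeeping yields the counterexample: take $c_{0}\models r|_{Mb_{0}}$, $c_{1}\models q|_{Mc_{0}}$, and $b_{1}$ by an automorphism. Constructing $r$, $q$ and $b_{0}$ is the substance of this direction, and the proposal does not supply it.
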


\begin{thm} \label{thm: preserving Kim}
    Recall that $T_{\Gamma} = \mathrm{Th}(\mathbb{S}_{\Gamma})$. 
\begin{enumerate}
\item The theory $T_{\Gamma}$ is NSOP$_{1}$ if and only if $\mathrm{Th}(\Gamma)$ is NSOP$_{1}$. Moreover, if $\mathrm{Th}(\Gamma)$ is NSOP$_{1}$, $S_{*} \models T_{\Gamma}$, and $S_{0}$ and $S_{1}$ are small algebraically closed subsystems containing $S_{*}$, we have 
$$
a \ind^{K}_{S_{*}} b\iff a \ind^{*}_{S_{*}} b \text{ and } \Gamma(\mathrm{acl}(a,S_{*})) \ind^{K}_{\Gamma(S_{*})} \Gamma(\mathrm{acl}(b,S_{*})),
$$
where, on the left-hand side $\ind^{K}$ refers to Kim-independence in $T_{\Gamma}$ and, on the right-hand side, $\ind^{K}$ refers to Kim-independence in $\mathrm{Th}(\Gamma)$. 
\item The theory $T_{\Gamma}$ is simple if and only if $\mathrm{Th}(\Gamma)$ is simple.
\item The theory $T_{\Gamma}$ is stable if and only if $\mathrm{Th}(\Gamma)$ is stable. 
\end{enumerate}
\end{thm}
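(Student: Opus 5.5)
For all three parts, the direction ``$T_{\Gamma}$ has the property $\Rightarrow$ $\mathrm{Th}(\Gamma)$ has it'' is immediate. By the Remark following the definition of $\Gamma(G)$, the graph $\Gamma(\mathbb{S}_{\Gamma})$ is interpreted in $\mathbb{S}_{\Gamma}$, and it is a model of $\mathrm{Th}(\Gamma)$ by Corollary \ref{cor: no unexpected frattini quotients}. NSOP$_{1}$, simplicity and stability all pass to interpreted structures, so an SOP$_{1}$ (resp.\ tree property, order property) witness in $\mathrm{Th}(\Gamma)$ pulls back to $T_{\Gamma}$. The real work is the converse. For that I would use the Kim--Pillay style criteria, applied to the candidate relation
$$
a \ind^{\circ}_{S_{*}} b \iff a \ind^{*}_{S_{*}} b \text{ and } \Gamma(\mathrm{acl}(aS_{*})) \ind^{K}_{\Gamma(S_{*})} \Gamma(\mathrm{acl}(bS_{*})).
$$
In the simple and stable cases one replaces $\ind^{K}$ by forking independence in $\mathrm{Th}(\Gamma)$ and works over arbitrary algebraically closed bases.

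\emph{NSOP$_{1}$.} I would verify the criterion of Fact \ref{fact: weak IT}. Take $S_{*}\models T_{\Gamma}$ and $b_{0}c_{0}\equiv_{S_{*}} b_{1}c_{1}$ with $c_{1}\ind^{u}_{S_{*}} c_{0}$ and $c_{i}\ind^{u}_{S_{*}} b_{i}$. Replace everything by algebraically closed subsystems $B_{i},C_{i}$ containing $S_{*}$.
\begin{enumerate}
\item Since coheir independence passes to interpreted structures, the images in $\Gamma$ and in the $L_{H}$-structure $D$ still satisfy the same hypotheses. By Lemma \ref{lem: one direction of Kim char} they also satisfy $\ind^{*}$, so $C_{0}\ind^{*}_{S_{*}}C_{1}$.
\item Because $\mathrm{Th}(\Gamma)$ is NSOP$_{1}$, Fact \ref{fact: weak IT} gives a graph $\Gamma_{B}$ amalgamating $\Gamma(B_{0})$ over $\Gamma(C_{0})$ and $\Gamma(B_{1})$ over $\Gamma(C_{1})$. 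Kim's independence theorem then lets me arrange $\Gamma_{B}$ to be $\ind^{K}$-independent from $\Gamma(\mathrm{acl}(C_{0}C_{1}))$ over $\Gamma(S_{*})$.
\item In the supersimple (indeed superstable, by Fact \ref{fact: H structure facts}(4)) $H$-structure $D$, the independence theorem gives $D(B)$ with the matching $D$-types, $\ind^{H}$-independent from $D(\mathrm{acl}(C_{0}C_{1}))$. I need this to be compatible with the prescribed $H$-part $\Gamma_{B}$. Here I would use that $H$-bases are exactly graph vertices (Lemma \ref{lem: just adding the graph}(1)) and that $L_{R}$-types are determined by $L_{H}$-types plus $\Gamma$-types (Lemma \ref{lem: type char in D}).
\item I then build the subsystem $B$ as a fiber product. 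Its $D$-part is the amalgamated $D(B)$, and it is glued to each $B_{i}$ over $C_{i}$ by free amalgamation of the non-pro-$2$ part. Lemma \ref{lem: pq fibermaxxing} supplies enough room in $\mathbb{S}_{\Gamma}$, and Lemma \ref{lem: disjoint in D} together with Corollary \ref{cor : from D to S}(2) realize it inside $\mathbb{S}_{\Gamma}$.
\item Relative stationarity (Proposition \ref{prop: relative stationarity}, in the form of Remark \ref{rem: equiv form}) shows $BC_{i}\equiv_{S_{*}}B_{i}C_{i}$.
\end{enumerate}

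\emph{Characterization of $\ind^{K}$.} The ``$\Rightarrow$'' direction is Lemma \ref{lem: one direction of Kim char} plus the observation that Kim-independence passes to the interpreted graph. For ``$\Leftarrow$'', I would show that $\ind^{\circ}$ satisfies the hypotheses of the Kaplan--Ramsey criterion: invariance, symmetry, monotonicity, existence over models, strong finite character, and the independence theorem. The independence theorem is the amalgamation just proved. Existence comes from building a free amalgam, using $\ind^{K}$-existence in $\Gamma$ and the $C_{2}^{\overline{\kappa}}$ freedom. Strong finite character should come from $\ind^{a}$ and $\ind^{H}$ being finitary and from Kim-independence in $\Gamma$. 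The criterion then forces $\ind^{\circ}=\ind^{K}$.

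\emph{Simple and stable cases.} For simplicity I would run the same argument with the Kim--Pillay criterion, which additionally needs base monotonicity and local character. Local character follows from local character of forking in $\Gamma$, superstability of $D$ (Fact \ref{fact: H structure facts}(4)), and the bounded nature of $\mathrm{acl}$ from Proposition \ref{prop: acl description}. For stability I would add stationarity over models. Relative stationarity reduces stationarity of $\ind^{\circ}$ to stationarity of forking in $\mathrm{Th}(\Gamma)$, since $D$ is $\omega$-stable and hence stationary too. This yields that $T_{\Gamma}$ is stable, via the Harnik--Harrington/Kim--Pillay characterization.

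I expect the main obstacle to be the amalgamation step (steps 3 and 4 above). The $D$-amalgam, the graph amalgam and the $pq$-Frattini parts have to be chosen simultaneously and compatibly: the $H$-basis of the new $D(B)$ must be exactly $\Gamma_{B}$, and the resulting subsystem must be algebraically closed so that relative stationarity applies. To get algebraic closure I would first amalgamate graph closures and then pass to $\mathrm{gcl}_{pq}$ using Lemma \ref{lem: pq cover system}.
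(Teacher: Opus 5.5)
Your outline is sound in most respects, and for the NSOP$_{1}$ amalgamation it is essentially the paper's argument. You amalgamate the graphs via Fact~\ref{fact: weak IT} in $\mathrm{Th}(\Gamma)$, produce a $D$-part whose $H$-part is the amalgamated graph, lift it with Corollary~\ref{cor : from D to S}(2), and conclude with Lemma~\ref{lem: disjoint in D} and relative stationarity. The one non-formal step you leave unargued is base monotonicity in (2), discussed at the end.

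The paper's $D$-step is leaner than yours. It does not apply the independence theorem of $T_{\mathrm{H}}$ and then re-match the $H$-part. Instead it first upgrades the graph-type equalities over $\Gamma(S_{*})$ to $L_{\mathrm{R}}$-type equalities over $D(S_{*})$. This uses $H$-independence of $D(S_{*})$, stationarity over algebraically closed sets in the stable reduct $D$ (Fact~\ref{fact: weak EI}), and Lemma~\ref{lem: type char in D}. It then takes $D''_{**}$ with $H(D''_{**})=\Gamma''_{**}$ and $D''_{**}\ind^{D}_{\Gamma''_{**}D(S_{*})}HD(S_{0})D(S_{1})$.

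Since Corollary~\ref{cor : from D to S}(2) returns an $S_{*}$-conjugate of $S_{**}$, algebraic closedness comes for free. The non-pro-$2$ parts never need to be amalgamated by hand. Once $B\vee C_{i}=S_{*}$, the subsystem $\langle B,C_{i}\rangle$ is a fiber product over $S_{*}$, and Lemma~\ref{lem: extends} reduces elementarity to the $D$-part and the graph. So the obstacle you anticipate does not really arise, and neither Lemma~\ref{lem: pq fibermaxxing} nor $\mathrm{gcl}_{pq}$ is needed. Your detour can be made to work: take a $T_{\mathrm{H}}$-amalgam, then apply an $L_{\mathrm{H}}$-automorphism over $D(\mathrm{acl}(C_{0}C_{1}))$ moving the new $H$-elements onto the new vertices of $\Gamma_{B}$.

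A small point about step~1. Before NSOP$_{1}$ is known, $\ind^{u}$ does not give $\ind^{K}$, so Lemma~\ref{lem: one direction of Kim char} cannot be cited verbatim. Its proof, however, only uses one indiscernible coheir sequence, which $\ind^{u}$ does supply.

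For the characterization of $\ind^{K}$ and for (2), your route genuinely differs from the paper's. The paper proves ``$\Leftarrow$'' directly by running the same construction along an entire coheir sequence $(S_{i})_{i<\omega}$ starting with $S_{0}$. Kim-independence in $\mathrm{Th}(\Gamma)$ gives one graph $\Gamma''$ with $\Gamma''\Gamma(S_{i})\equiv_{\Gamma(S_{*})}\Gamma(S_{**})\Gamma(S_{0})$ for all $i$, and relative stationarity is then applied for each $i$.

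Going through the criterion of \cite{MR4081726} instead requires two things. First, you need the full independence theorem for $\ind^{\circ}$; your Kim-independent choice of $\Gamma_{B}$ supplies this. Second, you need strong finite character, which is not formal here. The clauses of $\ind^{\circ}$ concern $\mathrm{acl}(aM)$, $T_{\mathrm{H}}$-forking and Kim-dividing in $\Gamma$, and the witnessing formulas have to be pulled back to $\mathrm{tp}(a/bM)$. Note also that ``existence over models'' in that criterion is just $a\ind^{\circ}_{M}M$, so no free amalgam is needed.

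For (2), the paper avoids Kim--Pillay entirely. By \cite[Proposition 8.8]{MR4081726}, an NSOP$_{1}$ theory is simple once $\ind^{K}$ satisfies base monotonicity over models. So there is no need to develop $\ind^{\circ}$ over arbitrary bases or to verify local character, extension and transitivity.

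Base monotonicity, which you list but never argue, is the one step in (2) that is not formal on either route. The conjunct $\ind^{a}$ is not base monotone in general. Moreover, $\mathrm{acl}(aB)$ can contain elements outside $\langle \mathrm{acl}(aC),B\rangle$. The paper handles this inside $D$:
\begin{itemize}
\item Base monotonicity of $\ind^{\mathrm{H}}$ gives the $D$-clause over the new base.
\item Lemma~\ref{lem: H control} controls the $H$-part of the new closure, so base monotonicity in $\mathrm{Th}(\Gamma)$ yields the graph clause.
\item Lemma~\ref{lem: disjoint in D} recovers the $\ind^{a}$ clause from disjointness of the $D$-parts.
\end{itemize}
Your treatment of stability is the paper's.
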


\begin{proof}
(1) Since the graph is interpretable in $\mathbb{S}_{\Gamma}$, we clearly have that if $\mathrm{Th}(\Gamma)$ has SOP$_{1}$, then $T_{\Gamma}$ must as well. Thus it suffices to go the other direction.

So now we assume $\mathrm{Th}(\Gamma)$ is NSOP$_{1}$, and we will use Fact \ref{fact: weak IT} to show that $T_{\Gamma}$ is NSOP$_{1}$. Note that if $a \ind^{u}_{S_{*}} b$ then also $\mathrm{acl}(a,S_{*}) \ind^{u}_{S_{*}} \mathrm{acl}(b,S_{*})$. Thus it suffices to show that, given algebraically closed subsystems $S_{**}, S_{**}'$, $S_{0}$, $S_{1}$ all containing $S_{*}$, if $S_{**}S_{0} \equiv_{S_{*}} S_{**}'S_{1}$, $S_{0} \ind^{u}_{S_{*}} S_{**}$, $S_{1} \ind^{u}_{S_{*}} S_{**}'$, and $S_{1} \ind^{u}_{S_{*}} S_{0}$, then there is some $S_{**}''$ with
$$
S_{**}''S_{0} \equiv_{S_{*}} S_{**}''S_{1} \equiv_{S_{*}} S_{**}S_{0}.
$$
In this situation, since the graph is interpreted in each subsystem, we have the following:
\begin{itemize}
\item $\Gamma(S_{*}) \models \mathrm{Th}(\Gamma)$.
\item $\Gamma(S_{**})\Gamma(S_{0}) \equiv_{\Gamma(S_{*})} \Gamma(S_{**}')\Gamma(S_{1})$.
\item $\Gamma(S_{0}) \ind^{u}_{\Gamma(S_{*})} \Gamma(S_{**})$.
\item $\Gamma(S_{1}) \ind^{u}_{\Gamma(S_{*})} \Gamma(S'_{**})$
\item $\Gamma(S_{1}) \ind^{u}_{\Gamma(S_{*})} \Gamma(S_{0})$.
\end{itemize} 
Thus, by Fact \ref{fact: weak IT} and our assumption that $\mathrm{Th}(\Gamma)$ is NSOP$_{1}$, there is some $\Gamma''_{**}$ (which we can assume is in $\Gamma(\mathbb{S}_{\Gamma})$) such that 
$$
\Gamma_{**}''\Gamma(S_{0}) \equiv_{\Gamma(S_{*})} \Gamma_{**}''\Gamma(S_{1}) \equiv_{\Gamma(S_{*})} \Gamma(S_{**})\Gamma(S_{0}). 
$$
Under the identification of $H$ and $\Gamma(\mathbb{S}_{\Gamma})$, we have, then, 
$$
\Gamma''_{**}H(D(S_{0})) \equiv^{L_{R}}_{H(D(S_{*}))}\Gamma_{**}''H(D(S_{1})) \equiv^{L_{R}}_{H(D(S_{*}))} H(D(S_{**}))H(D(S_{0}))
$$
in the $L_{R}$-structure $D$. 

\textbf{Claim}:  We can upgrade the previous equation to having the same type over $D(S_{*})$, that is:
$$
\Gamma''_{**}H(D(S_{0})) \equiv^{L_{R}}_{D(S_{*})}\Gamma_{**}''H(D(S_{1})) \equiv^{L_{R}}_{D(S_{*})} H(D(S_{**}))H(D(S_{0})).
$$
\emph{Proof of Claim}:  Because $S_{*}$ was chosen to be a model, it is in particularly algebraically closed so $D(S_{*})$ is $H$-independent. So we have $D(S_{*}) \ind^{D}_{H(D(S_{*}))} H$.  Then, in particular, we have, by base monotonicity, we have that the tuples $D(S_{*})\Gamma''_{**}H(D(S_{0}))$, $D(S_{*})\Gamma_{**}''H(D(S_{1}))$, and $D(S_{*})H(D(S_{**}))H(D(S_{0}))$ are all $H$-independent. Thus, by Lemma \ref{lem: type char in D}, it suffices to show  
$$
\Gamma''_{**}H(D(S_{0})) \equiv_{D(S_{*})}^{D} \Gamma_{**}''H(D(S_{1})) \equiv^{D}_{D(S_{*})} H(D(S_{**}))H(D(S_{0})). 
$$
Note that, by hypothesis, we have $H(D(S_{**})) \ind^{D}_{H(D(S_{*}))} H(D(S_{0}))$ so, by invariance, we also have $\Gamma''_{**} \ind^{D}_{H(D(S_{*}))} H(D(S_{0}))$. Since $D(S_{*}) \ind^{D}_{H(D(S_{*}))} H$ we also have, by base monotonicity, $\Gamma''_{**} \ind^{D}_{H(D(S_{0}))} D(S_{0})$ and $H(D(S_{**})) \ind^{D}_{H(D(S_{0}))} D(S_{0})$ so, by transitivity, we have $\Gamma''_{**} \ind^{D}_{H(D(S_{*}))} D(S_{*}) H(D(S_{0}))$ and $H(D(S_{**})) \ind^{D}_{H(D(S_{*}))} D(S_{*}) H(D(S_{0}))$.  We know $\Gamma''_{**} \equiv_{H(D(S_{*}))} H(D(S_{**}))$ and $\mathrm{acl}(H(D(S_{**}))) = \langle H(D(S_{**})) \rangle = \mathrm{dcl}(H(D(S_{**})))$ so $\Gamma''_{**}$ and $H(D(S_{**}))$ have the same type over $H(D(S_{*}))$ and, by weak elimination of imaginaries in $D$ (Fact \ref{fact: weak EI}), types over algebraically closed sets are stationary. Thus by stationarity, we obtain 
$$
\Gamma''_{**}D(S_{*}) H(D(S_{0})) \equiv^{D}_{H(D(S_{*}))} H(D(S_{**})) D(S_{*}) H(D(S_{0}))
$$
and thus $\Gamma''_{**}H(D(S_{0})) \equiv^{D}_{D(S_{*})} H(D(S_{**})) H(D(S_{0}))$. An identical argument, replacing $S_{0}$ and $S_{1}$ and $S_{**}$ with $S_{**}'$ shows that $\Gamma''_{**}H(D(S_{1})) \equiv^{D}_{D(S_{*})} H(D(S'_{**}))H(D(S_{1}))$. Since, by assumption, $H(D(S_{**}))H(D(S_{0})) \equiv_{H(D(S_{*}))} H(D(S'_{**}))H(D(S_{1}))$, this proves the claim.\qed

Since $\Gamma''_{**} \equiv^{L_{R}}_{D(S_{*})} D(S_{**})$, we can, by an automorphism, pick $D''_{**}$ in $D$ so that $H(D_{**}'') = \Gamma''_{**}$ and $D''_{**} \equiv_{D(S_{*})} D(S_{**})$. By extension, we may assume that $D''_{**}$ has been chosen so that $D''_{**} \ind^{D}_{\Gamma''_{**}D(S_{*})} HD(S_{0})D(S_{1})$. Finally, by Corollary \ref{cor : from D to S}, we have $D''_{**} \equiv_{S_{*}} D(S_{**})$ in $\mathbb{S}_{\Gamma}$, so we may choose, by an automorphism, some $S''_{**}$ in $\mathbb{S}_{\Gamma}$ with $D(S''_{**}) = D''_{**}$. 

By transitivity, we have $D''_{**} \ind^{D}_{D(S_{*})} D(S_{0})$ and $D''_{**} \ind^{D}_{D(S_{*})} D(S_{1})$ so by Lemma \ref{lem: disjoint in D}, we have $S''_{**} \ind^{*}_{S_{*}} S_{0}$ and $S''_{**} \ind^{*}_{S_{*}} S_{1}$. Then, by Proposition \ref{prop: relative stationarity}, we have $S''_{**}S_{0} \equiv_{S_{*}} S_{**} S_{0}$ and $S''_{**}S_{1} \equiv_{S_{*}} S'_{**}S_{1}$. Thus, by Fact \ref{fact: weak IT}, we conclude that $T_{\Gamma}$ is NSOP$_{1}$. 

Now that we know $T_{\Gamma}$ is NSOP$_{1}$, we know 
$$
a \ind^{K}_{S_{*}} b \iff \mathrm{acl}(a,S_{*}) \ind^{K}_{S_{*}} \mathrm{acl}(b,S_{*}),
$$
by \cite[Corollary 5.17]{MR4081726}. Thus, we have $a \ind^{K}_{S_{*}} b$ implies $a \ind^{*}_{S_{*}} b$ by Lemma \ref{lem: one direction of Kim char}. For the other direction, we set $S_{**} = \mathrm{acl}(a,S_{*})$ and $S_{0} = \mathrm{acl}(b,S_{*})$, and assume $S_{**} \ind^{*}_{S_{*}} S_{0}$ and $\Gamma(S_{**}) \ind^{K}_{\Gamma(S_{*})} \Gamma(S_{0})$. Then we take a coheir sequence $(S_{i})_{i < \omega}$ over $S_{*}$ starting with $S_{0}$. The above argument (applied to $(S_{i})_{i < \omega}$ rather than to just $S_{0}$ and $S_{1}$) produces some $S'_{**}$ with $S'_{**}S_{i} \equiv_{S_{*}} S_{**}S_{0}$ for all $i$, which entails that $S'_{**} \ind^{K}_{S_{*}} S_{0}$, completing the characterization of Kim-independence. 

(2) By \cite[Proposition 8.8]{MR4081726}, it suffices to show that if $\mathrm{Th}(\Gamma)$ is simple, then $\ind^{K}$ satisfies base monotonicity over models. So we assume $\mathrm{Th}(\Gamma)$ is simple and we fix $S_{*} \preceq S_{0} \preceq S_{1} \models T_{\Gamma}$ and $S_{2} = \mathrm{acl}(S_{2}) \supseteq S_{1}$. We suppose $S = \mathrm{acl}(S,S_{*})$ is a small subsystem of $\mathbb{S}_{\Gamma}$ with $S \ind^{K}_{S_{*}} S_{2}$. Setting $S' = \mathrm{acl}(S,S_{1})$, we must show $S' \ind^{K}_{S_{1}} S_{2}$. We will make use of the characterization of Kim-independence in (1).

By Lemma \ref{lem: commutes}, $D(S') = \langle D(S),D(S_{1}) \rangle$. By base monotonicity of $\ind^{H}$, we have $D(S) \ind^{H}_{D(S_{1})} D(S_{2})$ and thus $D(S') \ind^{H}_{D(S_{1})} D(S_{2})$. In particular, we have $D(S') \vee D(S_{2}) = D(S_{1})$ so $S' \vee S_{2} = S_{1}$ by Lemma \ref{lem: disjoint in D}, which shows $S' \ind^{*}_{S_{1}} S_{2}$. Moreover, by Lemma \ref{lem: H control}, we have $H(\langle D(S), D(S_{1}) \rangle) = H(D(S))H(D(S_{1}))$ so, using the identification of $H$ and $\Gamma(\mathbb{S}_{\Gamma})$, we have $\Gamma(S') = \Gamma(S)\Gamma(S_{1})$. Thus, by base monotonicity of Kim-independence in $\mathrm{Th}(\Gamma)$, we have $\Gamma(S') \ind^{K}_{\Gamma(S_{1})} \Gamma(S_{2})$. 

(3) In a stable or simple theory, non-forking independence agrees with Kim-independence. If $\mathrm{Th}(\Gamma)$ is stable then it follows from (2) and Proposition \ref{prop: relative stationarity} that non-forking independence in $T_{\Gamma}$ is stationary over models, which implies that the theory $T_{\Gamma}$ is stable. 
\end{proof}

\begin{thm} \label{thm: preserving nsopn}
Fix $n \geq 3$. Suppose $\Gamma$ is a graph and $\mathrm{Th}(\Gamma)$ is NSOP$_{n}$. Then $T_{\Gamma} = \mathrm{Th}(S(\widetilde{G_{\Gamma}}))$ is NSOP$_{n}$. 
\end{thm}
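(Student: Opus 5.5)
We argue by contraposition. Assume $T_{\Gamma}$ has SOP$_{n}$; we produce SOP$_{n}$ in $\mathrm{Th}(\Gamma)$. The natural tool is a free-amalgamation criterion for NSOP$_{n}$ in the style of Conant's free amalgamation theories, or the "$n$-cycle" amalgamation criterion used for generic expansions (as in \cite{kruckman2022interpolative,conant2025enriching}).

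\textbf{Reformulating SOP$_{n}$.} Fix an indiscernible sequence $(a_{i})_{i<\omega}$ and a formula $\varphi$ witnessing SOP$_{n}$ in $T_{\Gamma}$. By Ramsey and compactness, stretch it to an indiscernible sequence $(a_{i})_{i \in \mathbb{Z}}$, and replace each $a_{i}$ by $S_{i} := \mathrm{acl}(a_{i} M)$ for a small model $M \models T_{\Gamma}$ over which the sequence remains indiscernible. SOP$_{n}$ then says the following partial type is inconsistent:
\[
\{\varphi(x_{j},x_{j+1}) : j<n-1\}\cup\{\varphi(x_{n-1},x_{0})\}.
\]
Equivalently, there is no cycle of $n$ algebraically closed systems $S'_{0},\dots,S'_{n-1}$ such that each consecutive pair $(S'_{j},S'_{j+1})$, indices mod $n$, realizes $\mathrm{tp}(S_{0}S_{1}/M)$.

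\textbf{Building the cycle.} The plan is to construct such a cycle, contradicting inconsistency. We begin in the graph. Since $\mathrm{Th}(\Gamma)$ is NSOP$_{n}$, the graph sequence $(\Gamma(S_{i}))_{i}$, which is indiscernible over $\Gamma(M)$, admits an $n$-cycle $\Gamma'_{0},\dots,\Gamma'_{n-1}$ with each consecutive pair realizing $\mathrm{tp}(\Gamma(S_{0})\Gamma(S_{1})/\Gamma(M))$. This is the standard consequence of NSOP$_{n}$: the cycle type in the graph sequence is consistent.

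Next we lift to the systems. The target is a cycle in which the $S'_{j}$ are pairwise "freely amalgamated" apart from the required graph data. Concretely, we want, for all $j\neq k$:
\[
S'_{j}\vee S'_{k} = M,
\]
together with $H$-independence of the $D(S'_{j})$ over $D(M)$ modulo the prescribed $H$-parts, and with $\Gamma(S'_{j}) = \Gamma'_{j}$. The obstacle is that the original consecutive pair $S_{0}S_{1}$ need not be $\ind^{*}$-independent over $M$, so we cannot just amalgamate freely. Instead we treat consecutive pairs as blocks. We first use Remark~\ref{rem: equiv form} together with Lemma~\ref{lem: type char}, Lemma~\ref{lem: extends} and Corollary~\ref{cor : from D to S}: types of graph-closed systems are determined by the $L_{\mathrm{IS}}$-isomorphism type, the induced $L_{R}$-type on $D$, and the graph type.

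We then amalgamate $n$ copies around the cycle, in three parallel layers.
\begin{enumerate}
\item \emph{The profinite group layer.} We take the fiber product of the groups $G(\langle S'_{j},S'_{j+1}\rangle)$, glued along the $G(S'_{j})$. This is realized inside $\mathbb{S}_{\Gamma}$ using projectivity and the extension argument of Lemma~\ref{lem: pq fibermaxxing}, together with superprojectivity of the free pro-$2$ quotient.
\item \emph{The $D$-layer.} This is an $H$-structure over the $\omega$-stable, totally categorical theory from Fact~\ref{fact: totally categorical}, so it is superstable. Cycles of types along an indiscernible sequence are always consistent in stable theories, and we use this to obtain the $D$-parts.
\item \emph{The graph layer.} Here we use the $\Gamma'_{j}$ constructed above.
\end{enumerate}

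\textbf{Main obstacle.} The key difficulty is the consistency of these three layers. The $H$-part of the $D$-layer must coincide with the graph layer, which is exactly what Lemma~\ref{lem: type char in D} and Lemma~\ref{lem: H control} control. Lemma~\ref{lem: disjoint in D} then upgrades disjointness in $D$ to disjointness of the systems, and the characterization of acl in Proposition~\ref{prop: acl description} lets us glue the acl-closures. Once the three layers agree, each consecutive pair has the correct type over $M$ by the amalgamation-of-types principle, so $\bigwedge_{j}\varphi(S'_{j},S'_{j+1})$ is realized. This contradicts the inconsistency of the cycle type, so $T_{\Gamma}$ is NSOP$_{n}$.
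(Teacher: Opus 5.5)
There is a genuine gap, and it sits exactly at the obstacle you name. The only tool in the paper for gluing systems with prescribed graph data is relative stationarity (Proposition~\ref{prop: relative stationarity} and Remark~\ref{rem: equiv form}). It applies only to pairs that are $\ind^{*}$-independent over the base.

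Your reformulation starts from an arbitrary $M$-indiscernible witness, so $S_{0}S_{1}$ need not be $\ind^{*}$-independent. In that case your stated target ($S'_{j}\vee S'_{k}=M$ together with $H$-independence for all $j\neq k$) is incompatible with requiring $(S'_{j},S'_{j+1})\models\mathrm{tp}(S_{0}S_{1}/M)$. The ``blocks'' replacement is not supported by anything in the paper, for three reasons:
\begin{enumerate}
\item Gluing the groups $G(\langle S'_{j},S'_{j+1}\rangle)$ cyclically along the $G(S'_{j})$ is a limit of a cyclic diagram of epimorphisms. Its projections onto the individual $G(\langle S'_{j},S'_{j+1}\rangle)$ need not be surjective, because any constraint linking $G(S'_{j})$ to $G(S'_{j+1})$ composes around the cycle. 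Lemma~\ref{lem: pq fibermaxxing} only builds fiber products over one common quotient, not cyclic ones.
\item Stability of the $L_{\mathrm{H}}$-reduct of $D$ says nothing about $L_{R}$-types, which carry the graph. A cycle obtained there from the original sequence will not have $H$-parts equal to your $\Gamma'_{j}$.
\item The concluding ``amalgamation-of-types principle'' has no counterpart for non-independent pairs.
\end{enumerate}

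The missing idea is to take the SOP$_{n}$ witness to be a coheir sequence $(S_{i})_{i<\omega}$ of algebraically closed systems over a small model $S_{*}$; this is a standard reduction for SOP$_{n}$. Consecutive terms are then automatically $\ind^{*}$-independent: $S_{0}\cap S_{1}=S_{*}$, and $D(S_{1})\ind^{\mathrm{H}}_{D(S_{*})}D(S_{0})$ because coheirs do not fork.

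With that in place no cyclic gluing is needed, and the argument runs as follows.
\begin{enumerate}
\item Get the graph cycle $\Gamma_{0},\dots,\Gamma_{n-1}$ from NSOP$_{n}$ of $\mathrm{Th}(\Gamma)$, as you do.
\item Upgrade to $\Gamma_{i}\equiv^{L_{R}}_{D(S_{*})}H(D(S_{0}))$, using stationarity over $\langle H(D(S_{*}))\rangle$ (Fact~\ref{fact: weak EI}) and Lemma~\ref{lem: type char in D}.
\item Choose $D_{i}\equiv_{D(S_{*})}D(S_{0})$ with $H(D_{i})=\Gamma_{i}$ and $D_{i}\ind^{D}_{D(S_{*})\Gamma_{i}}HD_{<i}$.
\item Lift to $S'_{i}\equiv_{S_{*}}S_{0}$ with $D(S'_{i})=D_{i}$ via Corollary~\ref{cor : from D to S}.
\item Observe that all pairs are now $\ind^{*}$-independent (using Lemma~\ref{lem: disjoint in D}), and consecutive pairs have the graph type of $\Gamma(S_{0})\Gamma(S_{1})$.
\item Conclude by relative stationarity that $(S'_{i},S'_{i+1})\models\mathrm{tp}(S_{0}S_{1}/S_{*})$, with indices mod $n$.
\end{enumerate}
This is the paper's proof; once you make the coheir reduction, your three-layer gluing becomes unnecessary.
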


\begin{proof}
Suppose $S_{*} \preceq \mathbb{S}_{\Gamma}$ is a small submodel and $(S_{i})_{i < \omega}$ is a coheir sequence of algebraically closed subsystems of $\mathbb{S}_{\Gamma}$ over $S_{*}$, with $S_{*}$ contained in each $S_{i}$. Suppose $p(X,Y) = \mathrm{tp}(S_{0},S_{1}/S_{*})$ and $(S_{i},S_{j})$ realize $p(X,Y)$ if and only if $i < j$. Then we must show 
$$
p(X_{0},X_{1}) \cup p(X_{1},X_{2}) \cup \ldots \cup p(X_{n-2},X_{n-1}) \cup p(X_{n-1},X_{0})
$$
is consistent. 

Let $q(z,w) = \mathrm{tp}_{\Gamma}(\Gamma(S_{0}), \Gamma(S_{1})/\Gamma(S_{*}))$. By our assumption that $\mathrm{Th}(\Gamma)$ is NSOP$_{n}$, we can find $\Gamma_{0}, \Gamma_{1}, \ldots, \Gamma_{n-1}$ in $\Gamma(\mathbb{S}_{\Gamma})$ such that $(\Gamma_{i},\Gamma_{i+1}) \models q$ for all $i = 0, \ldots, n-2$ and also $(\Gamma_{n-1},\Gamma_{0}) \models q$. 

By the choice of $q$ (and the $S_{*}$-indiscernibility of $(S_{i})_{i < \omega}$), we have $\Gamma_{i} \equiv_{\Gamma(S_{*})} \Gamma(S_{0})$ for all $i < n$. Using the identification of $H$ and $\Gamma$ and Lemma \ref{lem: type char in D}, we have
$$
\Gamma_{i} \equiv^{L_{R}}_{H(D(S_{*}))} H(D(S_{0}))
$$
in $D$. Since $S_{*}$ is algebraically closed, we have $D(S_{*}) \ind^{D}_{H(D(S_{*}))} H$ so, in particular, we have 
$$
D(S_{*}) \ind^{D}_{H(D(S_{*}))} \Gamma_{i} H(D(S_{0})).
$$
Since $\mathrm{acl}(H(D(S_{*}))) = \langle H(D(S_{*})) \rangle = \mathrm{dcl}(H(D(S_{*})))$ and types over algebraically closed sets are stationary (by Fact \ref{fact: weak EI}), we obtain 
$$
D(S_{*})H(D(S_{*}))\Gamma_{i} \equiv^{D} D(S_{*})H(D(S_{*}))H(D(S_{0})).
$$
Additionally, we know both $D(S_{*})H(D(S_{*}))\Gamma_{i}$ and $D(S_{*})H(D(S_{*}))H(D(S_{0}))$ are $H$-independent so we get $\Gamma_{i} \equiv^{L_{R}}_{D(S_{*})} H(D(S_{0}))$ by Lemma \ref{lem: type char in D}. 

Pick $D_{0} \equiv_{D(S_{*})} D(S_{0})$ with $H(D(S_{0})) = \Gamma_{0}$. By extension, we can assume $D_{0} \ind^{D}_{D(S_{*})\Gamma_{0}} H$. Then, repeating this inductively, for each $i < n$, we can pick $D_{i} \equiv_{D(S_{*})} D(S_{0})$ with $H(D_{i}) = \Gamma_{i}$ and $D_{i} \ind^{D}_{D(S_{*})\Gamma_{i}} H D_{<i}$. 

By Corollary \ref{cor : from D to S}, we have $D_{i} \equiv_{S_{*}} D(S_{0})$ in $\mathbb{S}_{\Gamma}$ for all $i < n$. Then, by an automorphism, we can pick some $S'_{i} \equiv_{S_{*}} S_{0}$ with $D(S'_{i}) = D_{i}$ for all $i < n$. By construction, we have $S'_{i} \ind^{*}_{S_{*}} S'_{i+1}$ for all $i < n-1$ and $S'_{n-1} \ind^{*}_{S_{*}} S'_{0}$. Moreover, for all $i < n-1$,
$$
\Gamma(S'_{i})\Gamma(S'_{i+1}) \equiv_{\Gamma(S_{*})} \Gamma(S'_{n-1})\Gamma(S'_{0}) \equiv_{\Gamma(S_{*})} \Gamma(S_{0})\Gamma(S_{1}).
$$
Thus, by relative stationarity, Proposition \ref{prop: relative stationarity} (and Remark \ref{rem: equiv form}), we have $(S'_{i},S'_{i+1}) \models p$ for all $i < n-1$ and $(S'_{n-1},S'_{0}) \models p$. This shows $T_{\Gamma}$ is NSOP$_{n}$. 
\end{proof}

\section{PAC fields} \label{sec: fields}

\subsection{Preliminaries on PAC fields}

If $K$ is a field, we write $K^{s}$ for the separable closure of $K$, $K^{\text{alg}}$ for the algebraic closure of $K$, and we write $\mathcal{G}(K)$ for the absolute Galois group of $K$.

The following fact lets us realize the groups we have constructed in the previous sections as absolute Galois groups of PAC fields:

\begin{fact} \label{fact: projective=absolute} \cite{ax1968elementary} \cite{lubotzky1981subgroups}
    The class of projective profinite groups is equal to the class of absolute Galois groups of PAC fields.
\end{fact}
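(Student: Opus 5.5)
The statement has two directions, and I would prove them separately: first that absolute Galois groups of PAC fields are projective (Ax), then that every projective group arises this way (Lubotzky--van den Dries).

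For the first direction, let $K$ be PAC. I would use Gruenberg's criterion: a profinite group $G$ is projective if and only if $\mathrm{cd}_{\ell}(G) \leq 1$ for every prime $\ell$.

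\begin{itemize}
\item For $\ell = \mathrm{char}(K)$ this holds automatically for any field.
\item For $\ell \neq \mathrm{char}(K)$, Serre's criterion says $\mathrm{cd}_{\ell}(\mathcal{G}(K)) \leq 1$ if and only if $\mathrm{Br}(L)[\ell] = 0$ for every finite separable extension $L/K$.
\item Every algebraic extension of a PAC field is PAC, so it suffices to show $\mathrm{Br}(K) = 0$ for PAC $K$.
\item A class in $\mathrm{Br}(K)$ is represented by a Severi--Brauer variety. This variety is absolutely irreducible, so it has a $K$-rational point by the PAC property. A Severi--Brauer variety with a rational point is split, so the class is trivial.
\end{itemize}

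Hence $\mathcal{G}(K)$ is projective.

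For the converse, fix a projective $G$ of rank $m$. I would proceed in the following steps.

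\begin{enumerate}
\item Choose a field $E$ whose absolute Galois group is free profinite of rank at least $m$. For example, take $E = \mathbb{C}(t)$, or a function field over an algebraically closed field of large cardinality; freeness of the absolute Galois group there is the Douady / Harbater--Pop theorem.
\item There is an epimorphism $F := \mathcal{G}(E) \to G$. Since $G$ is projective, this epimorphism has a section. So $G$ embeds as a closed subgroup of $F$, and its fixed field $M \subseteq E^{s}$ satisfies $\mathcal{G}(M) \cong G$.
\item It remains to replace $M$ by a PAC field without changing the Galois group. I would build a tower $M = M_{0} \subseteq M_{1} \subseteq \cdots$ as follows.
\begin{itemize}
\item Given $M_{n}$ with $\mathcal{G}(M_{n}) \cong G$, let $N_{n}$ be a regular extension obtained by adjoining (transfinitely, via a compositum of generic points) the function fields of all absolutely irreducible varieties over $M_{n}$. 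Each variety then acquires an $N_{n}$-point.
\item Because $N_{n}/M_{n}$ is regular, restriction $\mathcal{G}(N_{n}) \to \mathcal{G}(M_{n}) \cong G$ is surjective. Projectivity of $G$ gives a section $s$.
\item Let $M_{n+1}$ be the fixed field of $s(G)$. It is algebraic over $N_{n}$, so the rational points acquired in $N_{n}$ persist in $M_{n+1}$, and restriction $\mathcal{G}(M_{n+1}) \to \mathcal{G}(M_{n})$ is an isomorphism.
\end{itemize}
\item Set $K = \bigcup_{n} M_{n}$. Any absolutely irreducible variety over $K$ is defined over some $M_{n}$, so it has a point in $M_{n+1} \subseteq K$. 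Thus $K$ is PAC.
\item Since the restriction maps in the tower are isomorphisms, $\mathcal{G}(K) = \varprojlim \mathcal{G}(M_{n}) \cong G$.
\end{enumerate}

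I expect step 3 to be the main obstacle. There are two points to verify. First, I need to check that $M_{n+1}$ is algebraic over $N_{n}$ in the sense needed here: it is a subfield of $N_{n}^{s}$, so it is an algebraic extension of $N_{n}$ in which the $N_{n}$-rational points survive. Second, I need to check that the composite $s(G) \to G$ really identifies $\mathcal{G}(M_{n+1})$ with $\mathcal{G}(M_{n})$ via restriction. Both should follow from Galois correspondence together with the fact that restriction composed with the section is the identity, but this is where the care is required.
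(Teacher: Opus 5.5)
Your proof is correct. The paper gives no argument for this statement; it is quoted as a Fact from Ax and from Lubotzky--van den Dries, so the comparison is with the standard proofs.

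For PAC $\Rightarrow$ projective, you go through Gruenberg's theorem ($\mathrm{cd}\le 1$), Serre's Brauer-group criterion, the fact that finite separable extensions of PAC fields are PAC, and rational points on Severi--Brauer varieties. The usual textbook route (e.g.\ Fried--Jarden) instead solves a finite embedding problem $\mathcal{G}(K)\to\mathrm{Gal}(L/K)\leftarrow B$ directly. One lets $B$ act on $L(x_b : b\in B)$ by permuting the variables and through the given map on $L$, checks that the fixed field is regular over $K$, and uses the PAC property to specialize, reading off a weak solution from a decomposition group. That route is more elementary and self-contained; yours is quicker once the cohomological machinery is available.

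For the converse, your tower is essentially the standard proof of the Lubotzky--van den Dries theorem, with two differences:
\begin{itemize}
\item In Step 1, Douady/Harbater--Pop is much heavier than needed. Every profinite group is $\mathrm{Gal}(N/F)$ for some Galois extension $N/F$ (Leptin/Waterhouse, via a permutation action on a rational function field). Projectivity of $G$ then splits $\mathcal{G}(F)\to\mathrm{Gal}(N/F)\cong G$ directly.
\item In Step 3, you interleave ``adjoin generic points'' with ``take a section'' over $\omega$ stages. This avoids the theorem that algebraic extensions of PAC fields are PAC. The usual presentation instead first builds a regular PAC extension, takes a single section, and then invokes that theorem.
\end{itemize}

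The two points you flag as obstacles are unproblematic. Since $N_n\subseteq M_{n+1}$, the acquired points persist trivially. And restriction $\mathcal{G}(M_{n+1})=s(\mathcal{G}(M_n))\to\mathcal{G}(M_n)$ is exactly $s^{-1}$, so it is an isomorphism.
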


Write $\ind^{\mathrm{SCF}}$ for non-forking independence in the (stable) theory of separably closed fields. The following theorem of Chatzidakis reduces the study of amalgamation in PAC fields to the study of amalgamation in the inverse system of the Galois group:

\begin{fact} \cite[Theorem 2.1]{chatzidakis2019amalgamation} \label{fact: zoes theorem}
    Let $F$ be a PAC field and let $A$, $B$, $C_{1}$, and $C_{2}$ be algebraically closed subsets of $F$ containing a common algebraically closed subset $E$, which contains a $p$-basis if the degree of imperfection of $F$ is finite. Assume $A \cap B = E$, $C_{1} \ind^{\mathrm{SCF}}_{E} A$, $C_{2} \ind^{\mathrm{SCF}}_{E} B$, and there is an $E^{s}$-isomorphism $\varphi : C_{1}^{s} \to C_{2}^{s}$ such that $\varphi(C_{1}) = C_{2}$. Suppose further that there is $S_{0} \subseteq S(\mathcal{G}(F))$ and elementary isomorphisms
    \begin{eqnarray*}
    S\Psi_{1} : \langle S(\mathcal{G}(C_{1})), S(\mathcal{G}(A)) \rangle &\to& \langle S_{0},S(\mathcal{G}(A)) \rangle \\
    S\Psi_{2} : \langle S(\mathcal{G}(C_{2})),S(\mathcal{G}(B)) \rangle &\to& \langle S_{0},S(\mathcal{G}(B)) \rangle
    \end{eqnarray*}
    such that
    \begin{enumerate}
        \item $S\Psi_{1}$ is the identity on $S(\mathcal{G}(A))$, $S\Psi_{2}$ is the identity on $S(\mathcal{G}(B))$, and $S\Psi_{i}(S(\mathcal{G}(C_{i}))) = S_{0}$,
        \item if $S\Phi : S(\mathcal{G}(C_{1})) \to S(\mathcal{G}(C_{2}))$ is the morphism double dual to $\varphi$, then
        $$
        S\Psi_{2} \circ S\Phi = S \Psi_{1}|_{S(\mathcal{G}(C_{1}))}.
        $$
    \end{enumerate}
    Then in some elementary extension $F'$ of $F$, there is $C \models \mathrm{tp}(C_{1}/A) \cup \mathrm{tp}(C_{2}/B)$ with $C \ind^{\mathrm{SCF}}_{E} AB$ and $S(\mathcal{G}(C)) = S_{0}$. 
\end{fact}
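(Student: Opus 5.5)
The statement to be proved is the last one displayed, Fact \ref{fact: zoes theorem}. It is cited from \cite[Theorem 2.1]{chatzidakis2019amalgamation}, so in the paper it is not reproved. Still, here is how I would prove it. The idea is to build the field $C$ first at the level of separably closed fields. Then I would use the PAC hypothesis, through the elementary equivalence theorem for PAC fields, to force the Galois group data $S_{0}$ and the two prescribed types.

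\textbf{Step 1: the field-theoretic amalgam.} Since $C_{1}\ind^{\mathrm{SCF}}_{E}A$, $C_{2}\ind^{\mathrm{SCF}}_{E}B$, and $\varphi$ identifies $C_{1}^{s}$ with $C_{2}^{s}$ over $E^{s}$, I would use stationarity of types over algebraically closed sets in $\mathrm{SCF}$. Working in $F'^{s}$ for a saturated $F'\succeq F$, this gives a field $C^{s}$ and a copy $C\subseteq C^{s}$ such that $C\ind^{\mathrm{SCF}}_{E}AB$, $CA$ is $\mathrm{SCF}$-isomorphic to $C_{1}A$, and $CB$ is $\mathrm{SCF}$-isomorphic to $C_{2}B$, compatibly with $\varphi$. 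The hypothesis $A\cap B=E$, together with the $p$-basis assumption when the degree of imperfection is finite, is what makes $A^{s}$, $B^{s}$, $C^{s}$ jointly linearly disjoint over $E^{s}$. That disjointness lets me compute the Galois group of the compositum $(ABC)^{s}$ over $ABC$ as a fibre product. Concretely, $\mathcal G(ABC\to\ldots)$ restricted to $A^{s}C^{s}$ is $\mathcal G(C)\times_{\mathcal G(E)}\mathcal G(A)$, and similarly for the $B$ side.

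\textbf{Step 2: Galois group data.} Conditions (1) and (2) on $S\Psi_{1}$ and $S\Psi_{2}$ say exactly that the restriction maps $\mathcal G(F)\to\mathcal G(A)$, $\mathcal G(F)\to\mathcal G(B)$, and a desired map $\mathcal G(F')\to\mathcal G(C)$ can be glued. The gluing should make the subsystem generated by the images equal $\langle S_{0},S(\mathcal G(A)),S(\mathcal G(B))\rangle$, with $S(\mathcal G(C))=S_{0}$. I would then use projectivity of $\mathcal G(F)$ (Fact \ref{fact: projective=absolute}) and a saturation/embedding-lemma argument, in the style of \cite{cherlin1980elementary} and Fried--Jarden's embedding lemma, to realize $C$ as a relatively algebraically closed subfield of some $F'\succeq F$. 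Its Galois restriction should be the prescribed one, so the dual of the restriction $\mathcal G(F')\to\mathcal G(C)$ gives $S_{0}$.

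\textbf{Step 3: types.} Finally, by the elementary equivalence theorem for PAC fields, $\mathrm{tp}(C/A)$ in $F'$ is determined by two things: the $\mathrm{SCF}$-isomorphism type of $CA$ over $A$, and the elementary type in $S(\mathcal G(F'))$ of the image of $S(\mathcal G(CA))$. Step 1 gives the first, and the elementarity of $S\Psi_{1}$ gives the second; hence $C\models\mathrm{tp}(C_{1}/A)$. The same argument with $S\Psi_{2}$ gives $C\models\mathrm{tp}(C_{2}/B)$. The main obstacle is Step 2: one must make the simultaneous realization of both $\Psi_{i}$ compatible with a single epimorphism from $\mathcal G(F')$. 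That needs the fibre-product description of $\mathcal G(\mathrm{acl}(ABC))$ and a careful diagram chase using condition (2).
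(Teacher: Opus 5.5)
The paper does not prove this statement; it is quoted from Chatzidakis. So your sketch has to stand on its own, and it has a genuine gap in Step 3, which is where the content of the theorem lies. In a PAC field, $\mathrm{tp}(C/A)$ over an algebraically closed $A$ is governed by $\mathrm{acl}(CA)=F'\cap(CA)^{s}$ together with the Galois data of that field. The compositum $CA$ and its $\mathrm{SCF}$-type are not enough.

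The hypotheses do not hand you this data. The subsystem $\langle S(\mathcal G(C_1)),S(\mathcal G(A))\rangle$ is dual to the restriction of $\mathcal G(F)$ to $C_1^{s}A^{s}$, so $S\Psi_1$ only sees $F\cap C_1^{s}A^{s}$, whereas $(C_1A)^{s}$ is much bigger. For instance, let $c\in C_1$ and $a\in A$ be independent transcendentals over $E$ (characteristic $\neq 2$). Then $\sqrt{c+a}\notin C_1^{s}A^{s}$, yet whether $c+a$ is a square in $F$ is part of $\mathrm{tp}(C_1/A)$. So ``Step 1 gives the first ingredient and the elementarity of $S\Psi_1$ gives the second'' is not a valid inference.

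What is actually needed:
\begin{enumerate}
\item Work in a saturated $S(\mathcal G(F))$ and use the elementarity of $S\Psi_1$, $S\Psi_2$ to extend them. Extend $S\Psi_1$ over $S(\mathcal G(A))$ to all of $S(\mathcal G(\mathrm{acl}(C_1A)))$, and $S\Psi_2$ over $S(\mathcal G(B))$ to $S(\mathcal G(\mathrm{acl}(C_2B)))$.
\item Transport $\mathrm{acl}(C_1A)$ and $\mathrm{acl}(C_2B)$ to fields $\tilde A\supseteq CA$ and $\tilde B\supseteq CB$.
\item Prove that the resulting epimorphisms $\mathcal G(F)\to\mathcal G(\tilde A)$ and $\mathcal G(F)\to\mathcal G(\tilde B)$, together with $\mathrm{id}_{\mathcal G(F)}$, are jointly compatible with the Galois group of $F^{s}\tilde A^{s}\tilde B^{s}$ over $F\tilde A\tilde B$.
\end{enumerate}
Only after step 3 can you take the fixed field of the resulting graph. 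This is a regular extension of $F$ provided $C$ is chosen $\mathrm{SCF}$-free from all of $F$ over $E$. Since $F$ is existentially closed in its regular extensions, you can then place it inside some $F'\succeq F$ with $F'\cap(CA)^{s}=\tilde A$ and $F'\cap(CB)^{s}=\tilde B$.

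By contrast, the gluing you single out in Step 2 is essentially free at the level of $C^{s}A^{s}$ and $C^{s}B^{s}$: everything already lives inside $S(\mathcal G(F))$, and condition (2) gives agreement on $\mathcal G(C)$. Projectivity of $\mathcal G(F)$ is also not the relevant tool there, since the map $\mathcal G(F)\to\mathcal G(C)$ is already given by $S_0$.

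Step 1 also misstates the role of $A\cap B=E$. It does not make $A^{s}$, $B^{s}$, $C^{s}$ jointly linearly disjoint over $E^{s}$: $A$ and $B$ are not assumed $\mathrm{SCF}$-independent over $E$ and may be algebraically dependent. What it does give is $A^{\mathrm{alg}}\cap B^{\mathrm{alg}}=E^{\mathrm{alg}}$. The reason is that $F$ is regular over $A$ and over $B$, so the minimal polynomial over $F$ of any $u\in A^{\mathrm{alg}}\cap B^{\mathrm{alg}}$ has coefficients in $A\cap B$. Combined with $C\ind^{\mathrm{SCF}}_{E}AB$, this yields $(CA)^{\mathrm{alg}}\cap(CB)^{\mathrm{alg}}=C^{\mathrm{alg}}$. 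That is what stops $\mathrm{tp}(C_1/A)$ and $\mathrm{tp}(C_2/B)$ from making conflicting demands on $F'$ outside $C^{\mathrm{alg}}$, and it is one input into the compatibility statement in step 3 above. Your fibre-product description of the restriction to $C^{s}A^{s}$ is correct, but it uses only $C\ind^{\mathrm{SCF}}_{E}A$ and is not the Galois computation the proof actually requires.
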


Finally, we have a preservation theorem for the SOP$_{n}$ hierarchy (for $n \geq 3$): 

\begin{fact} \cite[Theorem 2.9]{chatzidakis2019amalgamation} \label{fact: zoe nsopn fact}
    Suppose $n \geq 3$. If $F$ is a PAC field and $\mathrm{Th}(S(\mathcal{G}(F)))$ is NSOP$_{n}$, then $\mathrm{Th}(F)$ is NSOP$_{n}$. 
\end{fact}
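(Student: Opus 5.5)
The plan is to verify the standard amalgamation criterion for NSOP$_{n}$ directly in $\mathrm{Th}(F)$, using $\mathrm{Th}(S(\mathcal{G}(F)))$ to produce the required cycle on the Galois side and Fact \ref{fact: zoes theorem} to lift it to the field. Recall the criterion: a theory is NSOP$_{n}$ as soon as, for every small model $E$ and every $E$-indiscernible sequence $(a_{i})_{i<\omega}$ (which we may take to be a coheir sequence over $E$), setting $p(x,y)=\mathrm{tp}(a_{0},a_{1}/E)$, the set $p(x_{0},x_{1})\cup\dots\cup p(x_{n-2},x_{n-1})\cup p(x_{n-1},x_{0})$ is consistent. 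Replacing each $a_{i}$ by $A_{i}=\mathrm{acl}(Ea_{i})$, we may assume the $A_{i}$ are algebraically closed and contain $E$. Since $E$ is a model, it contains a $p$-basis when the imperfection degree is finite. Passing to an $E$-indiscernible sequence with $A_{i}\ind^{\mathrm{SCF}}_{E}A_{j}$ for $i \neq j$ costs nothing, because coheirs in $\mathrm{Th}(F)$ are coheirs in the SCF-reduct and SCF is stable. The same holds for $A_{i}\cap A_{j}=E$.

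The key input is the description of types in PAC fields (Cherlin--van den Dries--Macintyre / Chatzidakis). For algebraically closed $B,B'\supseteq E$, we have $B\equiv_{E}B'$ iff there is an $E^{s}$-isomorphism $B^{s}\to B'^{s}$ sending $B$ to $B'$ whose double dual $S(\mathcal{G}(B))\to S(\mathcal{G}(B'))$ is elementary over $S(\mathcal{G}(E))$ in $\mathrm{Th}(S(\mathcal{G}(F)))$. Apply this to pairs. The sequence $(S(\mathcal{G}(A_{i})))_{i}$ is then an indiscernible sequence in the inverse system over $S(\mathcal{G}(E))$, with type $q=\mathrm{tp}(S(\mathcal{G}(A_{0})),S(\mathcal{G}(A_{1}))/S(\mathcal{G}(E)))$. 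By NSOP$_{n}$ of $\mathrm{Th}(S(\mathcal{G}(F)))$, we get subsystems $S_{0},\dots,S_{n-1}$ of a saturated $S(\mathcal{G}(F))$ with $(S_{i},S_{i+1})\models q$ and $(S_{n-1},S_{0})\models q$.

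Next we build fields $C_{0},\dots,C_{n-1}$ inductively with $S(\mathcal{G}(C_{i}))=S_{i}$, following the Galois cycle. Take $C_{0}$ to be a conjugate of $A_{0}$ carrying $S_{0}$. Given $C_{i}$ with $i<n-2$, apply Fact \ref{fact: zoes theorem} with $A=B=C_{i}$, or more simply use the type description plus SCF-extension. This produces $C_{i+1}\models \mathrm{tp}(A_{1}/A_{0})$ transported to $C_{i}$, with $C_{i+1}\ind^{\mathrm{SCF}}_{E}C_{0}\dots C_{i}$ and $S(\mathcal{G}(C_{i+1}))=S_{i+1}$. The elementary isomorphisms required there come from $(S_{i},S_{i+1})\models q$.

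The closing step is the crux. We need $C_{n-1}$ realizing $\mathrm{tp}(A_{1}/A_{0})$ over $C_{n-2}$ and $\mathrm{tp}(A_{0}/A_{1})$ over $C_{0}$ simultaneously. Here we apply Fact \ref{fact: zoes theorem} with $A=C_{n-2}$ and $B=C_{0}$, whose intersection is $E$ by SCF-independence. We take $C_{1}',C_{2}'$ to be the corresponding conjugates, chosen SCF-independent over $E$ from $A$ and $B$ respectively, and take the target system to be $S_{n-1}$. The maps $S\Psi_{1},S\Psi_{2}$ are the elementary isomorphisms over $S_{n-2}$ and over $S_{0}$ witnessing $(S_{n-2},S_{n-1})\models q$ and $(S_{n-1},S_{0})\models q$.

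The main obstacle is compatibility condition (2): the two Galois-side identifications must agree with the double dual of a single $E^{s}$-isomorphism $\varphi:C_{1}'^{s}\to C_{2}'^{s}$. My plan for this is to choose $C_{2}'$ as the image of $C_{1}'$ under an automorphism over $E$. I would then use the stationarity of SCF over $E$, which contains a $p$-basis, together with the freedom in Fact \ref{fact: zoes theorem} to precompose $S\Psi_{2}$ by the dual of $\varphi$. I expect this compatibility bookkeeping to be the only real difficulty.

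Once it is done, the resulting $C$ gives $C_{n-1}$, and the tuple $(C_{0},\dots,C_{n-1})$ realizes the $n$-cycle of $p$. Hence $\mathrm{Th}(F)$ is NSOP$_{n}$.
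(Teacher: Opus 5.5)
The paper does not prove this statement; it is quoted from Chatzidakis. So I am judging your outline on its own terms.

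Your architecture is the natural one: a coheir sequence of algebraically closed $A_i$ over a model $E$, an $n$-cycle $S_0,\dots,S_{n-1}$ on the Galois side, a chain of fields following it, and a two-sided application of Fact~\ref{fact: zoes theorem} to close. It mirrors the paper's proof of Theorem~\ref{thm: preserving nsopn}. There, however, each new piece $D_i$ is chosen independent from \emph{all} earlier ones, and that is exactly what your chain step lacks.

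\textbf{The gap.} Closing with $A=C_{n-2}$, $B=C_0$ requires $C_{n-2}\cap C_0=E$. For $n\geq 4$ these fields are not neighbours in the chain, so this must come from $C_{i+1}\ind^{\mathrm{SCF}}_{E}C_0\cdots C_i$. Nothing you invoke gives it:
\begin{enumerate}
\item Fact~\ref{fact: zoes theorem} with $A=B=C_i$ violates its hypothesis $A\cap B=E$.
\item The legitimate instance $A=C_i$, $B=E$ (with $C_2=C_1$, $\varphi=\mathrm{id}$) yields only $C_{i+1}\ind^{\mathrm{SCF}}_{E}C_i$.
\item Taking $B=\mathrm{acl}(C_0\cdots C_{i-1})$ is circular. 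You would first need a copy of $A_1$, SCF-independent from $B$, whose Galois system sits over $S(\mathcal{G}(B))$ exactly as $S_{i+1}$ does.
\item ``Type description plus SCF-extension'' may give independence, but it cannot force $S(\mathcal{G}(C_{i+1}))=S_{i+1}$.
\end{enumerate}
The disjointness is not cosmetic. An element of $(C_{n-2}\cap C_0)\setminus E$ would have to stand to $C_{n-1}$ both as an element of $A_0$ stands to $A_1$ and as an element of $A_1$ stands to $A_0$. These requirements need not be compatible, which is why the amalgamation theorem assumes $A\cap B=E$.

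\textbf{What is missing} is a realization lemma with independence. Given $C_i$ and $D=\mathrm{acl}(C_0\cdots C_i)$, you need $C$ realizing the transported $\mathrm{tp}(A_1/A_0)$ over $C_i$, with $C\ind^{\mathrm{SCF}}_{C_i}D$ and $S(\mathcal{G}(C))=S_{i+1}$. This comes from the embedding lemma for PAC fields, not from Fact~\ref{fact: zoes theorem} as quoted. A sketch:
\begin{enumerate}
\item Take a copy $K$ of $\mathrm{acl}(C_iC^{\circ})$, for any realization $C^{\circ}$, free from $D$ over $C_i$.
\item By saturation of $S(\mathcal{G}(F))$, extend the elementary map $\langle S_i,S(\mathcal{G}(C^{\circ}))\rangle\to\langle S_i,S_{i+1}\rangle$ to an elementary embedding of $S(\mathcal{G}(K))$. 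Dually this is an epimorphism $\lambda:\mathcal{G}(F)\to\mathcal{G}(K)$ over $\mathcal{G}(C_i)$.
\item The map $g\mapsto(\lambda(g),g|_{D^{s}})$ lands in $\mathcal{G}(K)\times_{\mathcal{G}(C_i)}\mathcal{G}(D)=\mathrm{Gal}(K^{s}D^{s}/KD)$.
\item Lift this map to the absolute Galois group of $KD$ using projectivity of $\mathcal{G}(F)$, and embed the resulting field over $D$.
\end{enumerate}

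\textbf{The compatibility condition is not the crux.} It is automatic once enumerations are fixed. Choose enumerations $\tilde a_i$ of $A_i^{s}$ that are $E^{s}$-indiscernible in $(F^{\mathrm{alg}},F)$, and define $q$ through the induced enumerations of $S(\mathcal{G}(A_i))$. Pick $\tilde c_1',\tilde c_2'$ with $(\tilde c_{n-2},\tilde c_1')\equiv_{E^{s}}(\tilde a_0,\tilde a_1)\equiv_{E^{s}}(\tilde c_2',\tilde c_0)$, and let $\varphi$ send $\tilde c_1'$ to $\tilde c_2'$ termwise. Then $S\Phi$, $S\Psi_1$ and $S\Psi_2$ all act termwise on these enumerations, so $S\Psi_2\circ S\Phi=S\Psi_1$ holds on the nose. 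No appeal to SCF-stationarity is needed.

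\textbf{The bookkeeping that does matter is in the chain.} Each $C_{i+1}$ must satisfy $S(\mathcal{G}(C_{i+1}))=S_{i+1}$ as enumerated systems: the identification induced from $A_1$ must be the one coming from the cycle. Mere equality of subsystems, which is all Fact~\ref{fact: zoes theorem} literally asserts, does not make the next step's $S\Psi_1$ elementary. Nor does it make the final $S\Psi_2$ on $S(\mathcal{G}(C_0))$ elementary. The realization lemma above supplies this identification.
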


\subsection{NSOP$_{1}$ and Kim-independence}

Interpretability of $S(\mathcal{G}(F))$ in $(F^{\text{alg}},F)$ is proved in \cite[Proposition 5.5]{chatzidakis2002properties}.  The ``moreover'' clause is clear from the proof.  

\begin{fact} \label{interpretability}
Both $F$ and $S(\mathcal{G}(F))$ are interpretable in $(K,F)$ where $K$ is any algebraically closed field containing $F$.  Call the interpretation $\pi$.  Moreover, if $L \subseteq F$ is a subfield such that $F$ is a regular extension of $L$, then the restriction of $\pi$ to $(K,L)$ produces an interpretation of $S(\mathcal{G}(L))$, which embeds into $S(\mathcal{G}(F))$ via the natural restriction map $\mathcal{G}(F) \to \mathcal{G}(L)$.  
\end{fact}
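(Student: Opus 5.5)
The plan is to write down the interpretation explicitly, using the correspondence between open normal subgroups of $\mathcal{G}(F)$ and finite Galois extensions of $F$ inside $K$, and then check that every formula involved is uniform in the degree bound $n$. That $F$ itself is interpretable is trivial, since it is the named predicate. For $S(\mathcal{G}(F))$, an open normal $N$ with $[\mathcal{G}(F):N]\le n$ corresponds to a Galois extension $L/F$ of degree $\le n$. I would code $L$ by a primitive element $a\in K$ together with the coefficient tuple $\bar c\in F^{\le n}$ of its minimal polynomial $f$.

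The conditions on such a code are first-order in $(K,F)$ for bounded $n$:
\begin{itemize}
\item $f$ is irreducible over $F$: quantify over factorizations with coefficients in $F$.
\item $f$ is separable: its discriminant is nonzero. This is where imperfection is handled.
\item $F(a)$ is normal over $F$: every root of $f$ in $K$ equals $g(a)$ for some $g\in F[x]$ of degree $<n$.
\end{itemize}
A coset $\sigma N$, i.e.\ an element of $\mathrm{Gal}(L/F)$, is coded by a pair $(a,b)$ with $b$ a root of $f$. The sort $X_n$ is then the quotient of the definable set of such pairs by an equivalence relation. Two pairs $(a,b)$ and $(a',b')$ are identified iff $F(a)=F(a')$ and the $F$-automorphisms $a\mapsto b$ and $a'\mapsto b'$ agree. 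Concretely, this says $a'=g(a)$, $a=h(a')$ and $b'=g(b)$ for some $g,h\in F[x]$ of degree $<n$, which is definable because we quantify over coefficient tuples in $F$.

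Next I would define the relations on these codes.
\begin{itemize}
\item $C\big((a,b),(a',b')\big)$ holds iff $F(a')\subseteq F(a)$, witnessed by $a'=g(a)$, and $b'=g(b)$. This says the coarser quotient is the image, i.e.\ the automorphism $\sigma$ restricts correctly.
\item $\le$ is the underlying containment relation with no condition on $b$.
\item $P$ holds for $(a,b_1),(a,b_2),(a,b_3)$ iff $b_1=g_1(a)$, $b_2=g_2(a)$ and $b_3=g_1(b_2)$, matching the composition $\sigma_1\sigma_2$ (after fixing the convention). The comparison is made after transporting all three codes to a common primitive element.
\end{itemize}
Checking that this yields $S(\mathcal{G}(F))$ is then routine Galois theory: the quotient $\mathcal{G}(F)/N$ is $\mathrm{Gal}(L/F)$, and restriction maps correspond to the quotient maps $\pi$ in the definition of $C$.

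For the moreover clause, I would apply the same formulas with $F$ replaced by $L$, i.e.\ restricting the quantifiers over coefficients to $L$. This gives an interpretation of $S(\mathcal{G}(L))$ in $(K,L)$. Since $F/L$ is regular, $F$ is linearly disjoint from $L^{\mathrm{alg}}$ over $L$. Hence for each finite Galois $M/L$ the restriction $\mathrm{Gal}(MF/F)\to\mathrm{Gal}(M/L)$ is an isomorphism, and $\mathcal{G}(F)\to\mathcal{G}(L)$ is surjective. On codes, the induced map sends the code of $(M,\sigma)$ over $L$ to the same pair $(a,b)$ read over $F$. The minimal polynomial of $a$ stays irreducible over $F$, by linear disjointness, so the pair is still a valid code. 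This map is exactly the embedding dual to the restriction epimorphism, and it preserves $C$, $P$ and $\le$ by the same formulas.

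The main obstacle is uniformity. Each relation must be given by a single formula per sort, which requires bounding the degrees of the polynomials $g,h$ witnessing field containments and automorphism values. I expect the bound $<n$ from the basis $1,a,\dots,a^{n-1}$ of $F(a)$ to suffice. Some care is needed so that the equivalence relation, and hence the interpretation, is genuinely $\emptyset$-definable in $(K,F)$ rather than requiring parameters, particularly when comparing codes attached to different primitive elements.
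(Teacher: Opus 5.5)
Your proposal is correct. The paper gives no argument of its own beyond citing \cite[Proposition 5.5]{chatzidakis2002properties}, and your explicit Galois-correspondence coding is the standard argument behind that citation: primitive elements and their conjugates in $K$, with field containments and automorphisms witnessed by polynomials over $F$ of degree $<n$, and the moreover clause handled via linear disjointness of $F$ and $L^{\mathrm{alg}}$ over $L$. The only slip is in $P$: with $b_i=\sigma_i(a)=g_i(a)$ one has $g_1(b_2)=\sigma_2\sigma_1(a)$, so to code $\sigma_1\sigma_2$ you want $b_3=g_2(b_1)$.
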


\begin{lem} \label{lem : inv system implication}
Let $F$ be a large sufficiently saturated and homogeneous field (i.e. a monster model of its theory) and $M \prec F$ a small elementary substructure.  Suppose $A = \text{acl}(A)$, $B = \text{acl}(B)$ are subsets of $F$ with $M \subseteq A \cap B$.  
\begin{enumerate}
\item If $A \equiv_{M} B$ in $F$, then $S(\mathcal{G}(A)) \equiv_{S(\mathcal{G}(M))} S(\mathcal{G}(B))$.
\item If $(A_{i})_{i \in I}$ is an $I$-indexed indiscernible over $M$ in $\text{tp}(A/M)$, then $(S(\mathcal{G}(A_{i})))_{i \in I}$ is an $I$-indexed indiscernible over $S(\mathcal{G}(M))$.  
\item If $A \ind^{u}_{M} B$ in $F$, then $S(\mathcal{G}(A)) \ind^{u}_{S(\mathcal{G}(M))} S(\mathcal{G}(B))$ in $S(\mathcal{G}(M))$.  
\end{enumerate}
\end{lem}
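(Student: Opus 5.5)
The plan is to derive all three parts from Fact \ref{interpretability}: I will work in the pair $(K,F)$ with $K = F^{\mathrm{alg}}$ (or a large algebraically closed field containing $F$), and transfer model-theoretic properties from $F$ to the inverse systems through this interpretation. The first point to check is that $M \subseteq A$ with $A = \mathrm{acl}(A)$ makes $F$ a regular extension of $A$. Indeed, in a PAC (or any) field, the model-theoretic algebraic closure of a set is a relatively algebraically closed subfield, and it contains $M$ and hence is perfect if $F$ is. In the imperfect case I would note that $A$ contains a $p$-basis of $F$, since it contains $M \prec F$, so $F/A$ is separable. Then $F/A$ is regular, and Fact \ref{interpretability} gives an interpretation $\pi$ such that $\pi(K,A) = S(\mathcal{G}(A))$, embedded in $S(\mathcal{G}(F))$ via restriction. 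The same applies to $B$, to $M$, and to each $A_i$.

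For (1), I take an automorphism $\sigma$ of $F$ over $M$ sending $A$ to $B$, using the homogeneity of the monster. I extend $\sigma$ to $K = F^{\mathrm{alg}}$. The extended map is an automorphism of the pair $(K,F)$ fixing $M$ and mapping $(K,A)$ onto $(K,B)$. The interpretation $\pi$ is uniform, so $\sigma$ induces an automorphism of $S(\mathcal{G}(F))$. This induced automorphism fixes $S(\mathcal{G}(M))$ pointwise, because $\sigma$ fixes $M$ and therefore fixes the imaginaries coding the Galois extensions of $M$. It also sends $S(\mathcal{G}(A))$ onto $S(\mathcal{G}(B))$, so (1) follows. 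One subtlety is that the elements of $S(\mathcal{G}(A))$ are coded by finite Galois extensions of $A$ (via imaginaries of $K$ over $A$), and I need $\sigma$ to act on these compatibly. This holds because the interpretation is $\emptyset$-definable in the pair language. I also need "$\equiv$" to be understood in the monster of $\mathrm{Th}(S(\mathcal{G}(F)))$, which is fine since $S(\mathcal{G}(F))$ is interpreted in a saturated structure.

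For (2), I apply the argument of (1) to finite tuples. Given $i_1<\dots<i_k$ and $j_1<\dots<j_k$, indiscernibility gives $A_{i_1}\dots A_{i_k} \equiv_M A_{j_1}\dots A_{j_k}$. An $M$-automorphism realizing this sends each $S(\mathcal{G}(A_{i_l}))$ to $S(\mathcal{G}(A_{j_l}))$ over $S(\mathcal{G}(M))$, and this yields indiscernibility of the systems. For (3), I fix a formula true of a tuple from $S(\mathcal{G}(A))$, $S(\mathcal{G}(B))$ and parameters from $S(\mathcal{G}(M))$. Each such element is coded by finite tuples from $A$, $B$, $M$ through $\pi$: a coset of an open normal subgroup of $\mathcal{G}(A)$ is coded by a finite Galois extension of $A$, generated by a polynomial with coefficients in $A$, together with an automorphism. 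Translating through $\pi$, the formula becomes a formula in $F$ with parameters $a \in A$, $b \in B$, $m \in M$. Finite satisfiability of $\mathrm{tp}(A/MB)$ in $M$ then gives $a' \in M$ satisfying this translated formula. Its image under $\pi$ is an element of $S(\mathcal{G}(M))$ witnessing finite satisfiability.

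I expect the main obstacle to be verifying that the interpretation is, at the level of types, a map from tuples of $F$ (parameters in $A$) to elements of $S(\mathcal{G}(A))$ that is definable over $A$ in $F$ itself, and not only in the pair $(K,F)$. Everything in (3) rests on this. For PAC fields it should follow because the Galois extensions of $A$ inside $F^{s}$ are coded by tuples in $A$ (coefficients of minimal polynomials), and the structure relations $C$, $P$, $\le$ are expressible in $F$.
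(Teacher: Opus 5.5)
Your plan for (1) and (2) matches the paper's: extend $\sigma\in\mathrm{Aut}(F/M)$ to $F^{\mathrm{alg}}$, regard it as an automorphism of the pair, and push it through the interpretation of Fact~\ref{interpretability}. For (3) the paper only cites preservation of $\ind^{u}$ under interpretations. Your attempt to make this concrete by interpreting in $F$ itself rests on a false claim, namely that a tuple from $A$ codes an element of $S(\mathcal{G}(A))$.

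\textbf{Why the coding fails.} The coefficients of a Galois polynomial $f$ do determine the extension $L$, i.e.\ the $\sim$-class. But a polynomial $h$ determines the automorphism $\alpha\mapsto h(\alpha)$ only once a root $\alpha$ of $f$ is fixed. Replacing $\alpha$ by $\rho(\alpha)$, for $\rho\in\mathrm{Gal}(L/A)$, turns $g$ into $\rho g\rho^{-1}$.
\begin{itemize}
\item So data from $A$ recover $g$ only up to conjugacy, and this matters here because $D_{p}$ and $W$ are non-abelian.
\item They also say nothing about how elements of $S(\mathcal{G}(A))$ and $S(\mathcal{G}(B))$ sit relative to each other (e.g.\ the relation $C$ with a common refinement). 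That depends on a simultaneous choice of roots.
\end{itemize}
This is why the interpretation lives in the pair, with codes from $A^{\mathrm{alg}}$ rather than $A$. Consequently there is no $L_{\mathrm{ring}}$-formula over $a,b,m$ to which finite satisfiability applies, and no ``image of $a'$ under $\pi$''.

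The same slip occurs in (1). Fixing $M$ pointwise only makes $\tilde\sigma$ preserve each Galois extension $L$ of $M$. On $\mathrm{Gal}(L/M)$ it acts by conjugation by $\tilde\sigma|_{L}$, so an arbitrary extension need not fix $S(\mathcal{G}(M))$ pointwise.

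\textbf{How to repair both steps.} Both are fixed using the regularity you already established.
\begin{itemize}
\item In (1): $F$ and $M^{\mathrm{alg}}$ are linearly disjoint over $M$, so $\sigma\otimes\mathrm{id}_{M^{\mathrm{alg}}}$ on $FM^{\mathrm{alg}}$ extends to some $\tilde\sigma$ fixing $M^{\mathrm{alg}}$, hence $S(\mathcal{G}(M))$, pointwise.
\item In (3), set up as follows. Code the given elements by root tuples $\bar\alpha\in A^{\mathrm{alg}}$, $\bar\beta\in B^{\mathrm{alg}}$, $\bar\mu\in M^{\mathrm{alg}}$. Let $\bar a\in A$ be the coefficients of the polynomials behind $\bar\alpha$. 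Let $\chi(\bar x;\bar\beta,\bar\mu)$ say that some valid code $\bar y$, made of roots of the polynomials with coefficients $\bar x$, satisfies $\varphi^{\pi}(\bar y,\bar\beta,\bar\mu)$.
\item Each quantifier of an $L_{\mathrm{IS}}$-formula ranges over a fixed sort $X_{k}$, so $\chi$ only involves elements of bounded degree over $F$.
\item Any $F$-isomorphism between finite extensions of $F$ extends to an automorphism of the pair. Hence an induction on formulas shows that $\chi(F;\bar\beta,\bar\mu)$ is $L_{\mathrm{ring}}$-definable from a code of $(F(\bar\beta,\bar\mu),\bar\beta,\bar\mu)$. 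By linear disjointness of $F$ and $B^{\mathrm{alg}}$ over $B$, that code can be taken in $B$.
\item Now $\bar a\in\chi(F;\bar\beta,\bar\mu)$, so finite satisfiability gives some $\bar m'\in M$ in this set.
\item The witnessing $\bar y'$ consists of roots of polynomials over $M$, so it lies in $M^{\mathrm{alg}}$. By regularity of $F/M$ it codes elements of $S(\mathcal{G}(M))$, and these satisfy $\varphi$ with the given parameters.
\end{itemize}
What your argument is missing is this existential quantification over roots, together with the bounded-degree reduction.
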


\begin{proof}
(1)  If $A \equiv_{M} B$ in $F$, then there is an automorphism $\sigma \in \text{Aut}(F/M)$ with $\sigma(A) = B$.  The map $\sigma$ has an extension $\tilde{\sigma}$ to $F^{\text{alg}}$ which is, then, an automorphism of the pair $(F^{\text{alg}},F)$ taking $A$ to $B$ and fixing $M$ pointwise.  It follows $A \equiv_{M} B$ in the pair $(F^{\text{alg}},F)$.  Since $A = \text{acl}(A)$ and $B = \text{acl}(B)$, we know $F$ is a regular extension of $A$ and of $B$ (see, e.g., \cite[Section 1.17]{ZoePAC2}).  By Fact \ref{interpretability}, we have $S(\mathcal{G}(A)) \equiv_{S(\mathcal{G}(M))} S(\mathcal{G}(B))$.  

(2)  Suppose $(A_{i})_{i \in I}$ is an $I$-indexed indiscernible over $M$. Given two $k$-tuples $\overline{i} = (i_{0},\ldots, i_{k-1})$ and $\overline{j} = (j_{0},\ldots, j_{k-1})$ from $I$ with $\text{qftp}(\overline{i}) = \text{qftp}(\overline{j})$, we know $A_{i_{0}}\ldots A_{i_{k-1}} \equiv_{M} A_{j_{0}}\ldots A_{j_{k-1}}$ so $\text{acl}(A_{i_{0}}\ldots A_{i_{k-1}}) \equiv_{M} \text{acl}(A_{j_{0}}\ldots A_{j_{k-1}})$.  Then by (1) $S(\mathcal{G}(\text{acl}(A_{i_{0}}\ldots A_{i_{k-1}})))\equiv_{S(\mathcal{G}(M))} S(\mathcal{G}(\text{acl}(A_{j_{0}}\ldots A_{j_{k-1}})))$, which implies $(S(\mathcal{G}(A_{i})))_{i \in I}$ is an $I$-indexed indiscernible over $S(\mathcal{G}(M))$.

(3)  In \emph{any} theory, if $\pi$ is an interpretation of the structure $X$ in the structure $Y$, and $A \ind^{u}_{C} B$ in $Y$, then $\pi(A) \ind^{u}_{\pi(C)} \pi(B)$.  It follows that if $A \ind^{u}_{M} B$ in $F$, then $S(\mathcal{G}(A)) \ind^{u}_{S(\mathcal{G}(M))} S(\mathcal{G}(B))$ by Fact \ref{interpretability}.  
\end{proof}

Our candidate for Kim-independence in a PAC field with NSOP$_{1}$ absolute Galois group will be the conjunction of non-forking independence in the ambient separably closed field and Kim-independence in the sense of the Galois group.  

\begin{defn}
Suppose $F$ is a field and $\text{Th}(S(\mathcal{G}(F)))$ is NSOP$_{1}$.  If $a,b$ are tuples in some elementary extension of $F$, we say $a$ and $b$ are \emph{weakly independent} over $F$ if, letting $A = \text{acl}(aF)$ and $B = \text{acl}(bF)$, 
\begin{enumerate}
\item $A \ind^{\mathrm{SCF}}_{F} B$
\item $S(\mathcal{G}(A)) \ind^{K}_{S(\mathcal{G}(F))} S(\mathcal{G}(B))$.  
\end{enumerate}
\end{defn}

\begin{rem}
This differs from the definition given by Chatzidakis \cite{chatzidakis2019amalgamation} in the context where $\text{Th}(S(\mathcal{G}(F)))$ is simple, as we use Kim-independence and Chatzidakis used non-forking independence in the definition of weak independence.  If the base is a model, these definitions agree, and the definition we give here is well-behaved in the broader context of NSOP$_{1}$ Galois groups.  
\end{rem}

In order to show that the theory of a PAC field with NSOP$_{1}$ Galois group (in the inverse system language) is NSOP$_{1}$, we will use the following characterization of NSOP$_{1}$ in terms of a weak variant of the independence theorem:

\begin{prop} \label{prop: NSOP1}
    Suppose $F$ is a PAC field and $\text{Th}(S(\mathcal{G}(F)))$ is NSOP$_{1}$.  Then $\mathrm{Th}(F)$ is NSOP$_{1}$.
\end{prop}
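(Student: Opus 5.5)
We will verify the criterion of Fact \ref{fact: weak IT} for $\mathrm{Th}(F)$. Work in a monster model $F$ and a small $M \prec F$. Since $\ind^{u}$ is preserved under passing to algebraic closures, it suffices to consider algebraically closed $B_0, B_1, C_0, C_1 \supseteq M$ with $B_0C_0 \equiv_M B_1C_1$, $C_1 \ind^{u}_M C_0$ and $C_i \ind^{u}_M B_i$. The goal is to find $B$ with $BC_0 \equiv_M BC_1 \equiv_M B_0C_0$. The amalgamation tool is Fact \ref{fact: zoes theorem}, with $E = M$ (a model, so it contains a $p$-basis when the imperfection degree is finite), $A = C_0$ and the other base $C_1$. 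Its hypotheses split into a field-theoretic part and a Galois-theoretic part.

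\textbf{Field part.} From $C_1 \ind^{u}_M C_0$ we get $C_0 \cap C_1 = M$, which is standard for coheir independence over a model. Coheir independence in $F$ also implies coheir independence in the stable reduct to the separable closure, via the interpretation in the pair $(F^{\mathrm{alg}},F)$ of Fact \ref{interpretability}. Hence $B_i \ind^{\mathrm{SCF}}_M C_i$, and stationarity over $M$ in SCF then lets us pass from $\mathrm{tp}(B_0/M)$ to $\mathrm{tp}(B_1/M)$. Set $\varphi : B_0^s \to B_1^s$ to be an $M^s$-isomorphism induced by an automorphism of $F$ over $M$ taking $B_0C_0$ to $B_1C_1$.

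\textbf{Galois part.} By Lemma \ref{lem : inv system implication}, the images of our data in $S(\mathcal{G}(F))$ satisfy the same relations:
\begin{itemize}
\item $S(\mathcal{G}(B_0))S(\mathcal{G}(C_0)) \equiv_{S(\mathcal{G}(M))} S(\mathcal{G}(B_1))S(\mathcal{G}(C_1))$;
\item $S(\mathcal{G}(C_1)) \ind^{u} S(\mathcal{G}(C_0))$;
\item $S(\mathcal{G}(C_i)) \ind^{u} S(\mathcal{G}(B_i))$, all over $S(\mathcal{G}(M))$, which is a model of $\mathrm{Th}(S(\mathcal{G}(F)))$ because $M \prec F$.
\end{itemize}
Since $\mathrm{Th}(S(\mathcal{G}(F)))$ is NSOP$_1$, Fact \ref{fact: weak IT} there gives $S_0$ with $S_0 S(\mathcal{G}(C_j)) \equiv S(\mathcal{G}(B_j))S(\mathcal{G}(C_j))$ over $S(\mathcal{G}(M))$ for $j=0,1$.

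\textbf{Main obstacle.} The difficulty is upgrading these equalities of types into the elementary isomorphisms $S\Psi_1, S\Psi_2$ required by Fact \ref{fact: zoes theorem}. These maps must be the identity on $S(\mathcal{G}(C_j))$, send $S(\mathcal{G}(B_j))$ to $S_0$, and be compatible with the double dual $S\Phi$ of $\varphi$. We would first arrange, by choosing the automorphisms realizing the type equalities carefully, that the map $S(\mathcal{G}(B_0)) \to S_0 \to S(\mathcal{G}(B_1))$ coincides with $S\Phi$. Concretely, we fix a single $M$-automorphism $\sigma$ of $F$ sending $B_0C_0$ to $B_1C_1$. From $\sigma$ we take both $\varphi$ and the map $S(\mathcal{G}(B_0))S(\mathcal{G}(C_0)) \to S(\mathcal{G}(B_1))S(\mathcal{G}(C_1))$, so the two are induced by the same automorphism. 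We then compose with the automorphisms of $S(\mathcal{G}(F))$ witnessing $S_0 \equiv$ the $B_j$-systems over $S(\mathcal{G}(C_j))$. One subtlety must be checked here: we need types of subsystems over $S(\mathcal{G}(C_j))$, not merely over $S(\mathcal{G}(M))$. This is why the weak independence theorem should be applied to the full algebraically closed subsystems. We may also need an extension argument so that $S_0$ lies in a common elementary extension.

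\textbf{Conclusion.} Fact \ref{fact: zoes theorem} then produces $B \models \mathrm{tp}(B_0/C_0) \cup \mathrm{tp}(B_1/C_1)$. This means $BC_0 \equiv_M B_0C_0$ and $BC_1 \equiv_M B_1C_1 \equiv_M B_0C_0$, so the criterion of Fact \ref{fact: weak IT} holds and $\mathrm{Th}(F)$ is NSOP$_1$.
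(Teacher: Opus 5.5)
Your proposal is correct and follows the same route as the paper. Both arguments pass to algebraically closed sets, push the coheir configuration into the inverse systems via Lemma \ref{lem : inv system implication}, apply Fact \ref{fact: weak IT} in $\mathrm{Th}(S(\mathcal{G}(F)))$, and lift the resulting $S_0$ back to the field with Fact \ref{fact: zoes theorem}. You are more explicit than the paper about checking that theorem's hypotheses ($C_0\cap C_1=M$, the $\mathrm{SCF}$-independence, and compatibility with $S\Phi$ by deriving everything from a single $\sigma$). One detail to make explicit: the extension of $\sigma$ to $F^s$ should be chosen to fix $M^s$, which regularity of $F$ over $M$ allows.
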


\begin{proof}
    Let $L \models \mathrm{Th}(F)$.  Suppose $a_{0}b_{0} \equiv_{L} a_{1}b_{1}$ with $b_{i} \ind^{u}_{L} a_{i}$ for $i = 0,1$ and $b_{1} \ind^{u}_{L} b_{0}$. Let $A_{i} = \mathrm{acl}(a_{i}L)$ and $B_{i} = \mathrm{acl}(b_{i}L)$ for $i = 0,1$. Then we have $B_{i} \ind^{u}_{L} A_{i}$ for $i = 0,1$ and $B_{1} \ind^{u}_{L} B_{0}$. Then by Lemma \ref{lem : inv system implication}(3), we know $S(\mathcal{G}(B_{i})) \ind^{u}_{S(\mathcal{G}(L))} S(\mathcal{G}(A_{i}))$ for $i = 0,1$ and $S(\mathcal{G}(B_{1})) \ind^{u}_{S(\mathcal{G}(L))} S(\mathcal{G}(B_{0}))$. Then, because $\mathrm{Th}(S(\mathcal{G}(L)))$ is NSOP$_{1}$, there is some $S_{*}$ such that $S_{*}S(\mathcal{G}(B_{0})) \equiv_{S(\mathcal{G}(L))} S_{*}S(\mathcal{G}(B_{1})) \equiv_{S(\mathcal{G}(L))} S(\mathcal{G}(A_{0})) S(\mathcal{G}(B_{0}))$, by Fact \ref{fact: weak IT}.  Then by Fact \ref{fact: zoes theorem}, there is some $A_{*}$ with $A_{*}B_{0} \equiv_{L} A_{*}B_{1} \equiv_{L} A_{0}B_{0}$. In particular, letting $a_{*}$ be the subtuple of $A_{*}$ corresponding to $a_{0}$ in $A_{0}$, we have $a_{*}b_{0}\equiv_{L} a_{*}b_{1} \equiv_{L} a_{0}b_{0}$. By the other direction of Fact \ref{fact: weak IT}, we have shown $\mathrm{Th}(F)$ is NSOP$_{1}$. 
\end{proof}

\begin{thm} \label{Kimchar}
Suppose $F$ is a PAC field and $\text{Th}(S(\mathcal{G}(F)))$ is NSOP$_{1}$.  Then given $L \models \text{Th}(F)$, $a \ind^{K}_{L} b$ if and only if $a$ and $b$ are weakly independent over $L$.
\end{thm}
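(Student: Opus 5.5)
We prove both directions using the fact that, by Proposition \ref{prop: NSOP1}, $\mathrm{Th}(F)$ is NSOP$_{1}$. Hence $\ind^{K}$ over models is symmetric and satisfies the independence theorem. It is characterized by Kim's lemma for coheir sequences: $a \ind^{K}_{L} b$ iff for some (equivalently, every) coheir sequence $(b_{i})_{i<\omega}$ over $L$ starting with $b$, there is $a'\equiv_{L b} a$ with $(b_{i})$ indiscernible over $La'$. Throughout we replace $a,b$ by $A=\mathrm{acl}(aL)$ and $B=\mathrm{acl}(bL)$. This is harmless because $\ind^{K}$ over models is preserved under passing to algebraic closures (\cite[Corollary 5.17]{MR4081726}), and weak independence is defined in terms of $A,B$.

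\emph{Kim-independence implies weak independence.} Suppose $A\ind^{K}_{L}B$ and take a coheir sequence $(B_{i})_{i<\omega}$ over $L$ with $B_{0}=B$. Find $A'\equiv_{LB}A$ such that $(B_{i})$ is $LA'$-indiscernible.

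\begin{itemize}
\item \emph{SCF condition.} The sequence $(B_{i})$ is also a coheir sequence in the reduct to the language of rings inside $(F^{\mathrm{alg}},F)$, and hence in the separably closed field $F^{s}$ (coheirs are preserved by interpretations). By Kim's lemma in the stable theory $\mathrm{SCF}$, we get $A'\ind^{\mathrm{SCF}}_{L}B$, and therefore $A\ind^{\mathrm{SCF}}_{L}B$.
\item \emph{Galois condition.} Apply Lemma \ref{lem : inv system implication}(2),(3), together with (1) applied to $\mathrm{acl}(A'B_{i})$. This shows that $(S(\mathcal{G}(B_{i})))_{i}$ is a coheir sequence over $S(\mathcal{G}(L))$ which is indiscernible over $S(\mathcal{G}(A'))$. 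Since $\mathrm{Th}(S(\mathcal{G}(F)))$ is NSOP$_{1}$, Kim's lemma gives $S(\mathcal{G}(A))\ind^{K}_{S(\mathcal{G}(L))}S(\mathcal{G}(B))$.
\end{itemize}

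One point needs checking: indiscernibility of $(B_{i})$ over $LA'$ must transfer to indiscernibility of the Galois systems over $S(\mathcal{G}(A'))$. This follows from Fact \ref{interpretability} once we note that $F$ is regular over each algebraically closed set.

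\emph{Weak independence implies Kim-independence.} Assume $A,B$ are weakly independent over $L$. Take a coheir sequence $(B_{i})_{i<\omega}$ over $L$ with $B_{0}=B$; we may arrange in addition that $(B_{i})$ is $\ind^{\mathrm{SCF}}$-independent over $L$. The goal is an $A^{*}$ with $A^{*}B_{i}\equiv_{L}AB$ for all $i$, which gives $A\ind^{K}_{L}B$ by Kim's lemma.

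\begin{itemize}
\item \emph{Galois side.} The systems $S(\mathcal{G}(B_{i}))$ form a coheir sequence over $S(\mathcal{G}(L))$. Since $S(\mathcal{G}(A))\ind^{K}S(\mathcal{G}(B))$, Kim's lemma in the Galois theory gives $S_{*}$ such that $S_{*}S(\mathcal{G}(B_{i}))\equiv_{S(\mathcal{G}(L))} S(\mathcal{G}(A))S(\mathcal{G}(B))$ for all $i$.
\item \emph{Field side, by induction on $n$.} We amalgamate with Fact \ref{fact: zoes theorem}. At stage $n$ we have $C_{n}$ realizing $\mathrm{tp}(A/B_{i})$ for all $i<n$, with $C_{n}\ind^{\mathrm{SCF}}_{L}B_{<n}$ and $S(\mathcal{G}(C_{n}))=S_{*}$. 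Apply the fact with $E=L$, the first side taken to be $\mathrm{acl}(B_{<n})$, the second side $B_{n}$, $C_{1}=C_{n}$ and $C_{2}$ a copy of $A$ over $B_{n}$. To meet the hypothesis $\mathrm{acl}(B_{<n})\cap B_{n}=L$, use that a coheir sequence has this intersection property over a model.
\item \emph{Compactness.} Passing through all $n$ yields $A^{*}$ with $A^{*}B_{i}\equiv_{L}AB$ for all $i$, as required.
\end{itemize}

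The main obstacle is in the inductive step: producing the elementary isomorphisms $S\Psi_{1},S\Psi_{2}$ required by Fact \ref{fact: zoes theorem}. These must identify $\langle S(\mathcal{G}(C_{n})),S(\mathcal{G}(\mathrm{acl}(B_{<n})))\rangle$ with $\langle S_{*},S(\mathcal{G}(\mathrm{acl}(B_{<n})))\rangle$, compatibly with $\varphi$. Knowing types over each $S(\mathcal{G}(B_{i}))$ separately is not enough; we need the type over the Galois system of the algebraic closure of $B_{<n}$, which can be strictly larger than the system generated by the individual $S(\mathcal{G}(B_{i}))$. Two ways to handle this:
\begin{itemize}
\item \emph{First attempt.} Choose $S_{*}$ over the whole system $S(\mathcal{G}(\mathrm{acl}(B_{i}:i<\omega)))$, using Kim's lemma applied to the coheir sequence of the systems $S(\mathcal{G}(\mathrm{acl}(B_{\le i})))$, and maintain SCF-independence inductively.
\item \emph{Fallback.} Run the same independence-theorem amalgamation used in Proposition \ref{prop: NSOP1}, now along the coheir sequence, invoking Fact \ref{fact: weak IT} repeatedly.
\end{itemize}
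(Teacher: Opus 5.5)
Your forward direction is the paper's argument: take a coheir sequence made $A$-indiscernible, then apply Kim's lemma in $\mathrm{SCF}$ and in $\mathrm{Th}(S(\mathcal{G}(F)))$.

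\emph{The backward direction is unfinished as written.} You stop at a ``main obstacle,'' and neither remedy you offer works as stated.
\begin{itemize}
\item In your first attempt, $(S(\mathcal{G}(\mathrm{acl}(B_{\le i}))))_{i<\omega}$ is an increasing chain, not a coheir sequence, so Kim's lemma says nothing about it.
\item In your fallback, Fact~\ref{fact: weak IT} needs $c_{i} \ind^{u}_{M} b_{i}$, i.e.\ a coheir relation from the $B$-side to the $A$-side. Weak independence does not give you this; on the Galois side you only have $\ind^{K}$. The tool you would need is the independence theorem for $\ind^{K}$ in $\mathrm{Th}(S(\mathcal{G}(F)))$, together with extension.
\end{itemize}

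\emph{The obstacle itself is misdiagnosed.} Your own invariant $S(\mathcal{G}(C_{n})) = S_{*}$ lets you take $S_{0} = S_{*}$ and $S\Psi_{1} = \mathrm{id}$ on $\langle S_{*}, S(\mathcal{G}(\mathrm{acl}(B_{<n})))\rangle$. So no type over $S(\mathcal{G}(\mathrm{acl}(B_{<n})))$ is ever needed.

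What does need checking is that $S\Psi_{2}$ is elementary over $S(\mathcal{G}(B_{n}))$ and satisfies condition (2), i.e.\ that it equals $S\Phi^{-1}$ on $S(\mathcal{G}(C_{2}))$. This follows from your choice of $S_{*}$ once you carry a stronger invariant and choose $\varphi$ accordingly:
\begin{itemize}
\item $C_{n}$ comes with an $L^{s}$-isomorphism $\theta_{n} : A^{s} \to C_{n}^{s}$ whose double dual is the identification $S(\mathcal{G}(A)) \cong S_{*}$ under which Kim's lemma gave $S_{*}S(\mathcal{G}(B_{i})) \equiv_{S(\mathcal{G}(L))} S(\mathcal{G}(A))S(\mathcal{G}(B))$.
\item You take $\varphi = \tilde{\sigma}_{n} \circ \theta_{n}^{-1}$, where $\tilde{\sigma}_{n}$ moves $AB$ to $C_{2}B_{n}$ compatibly with the enumeration of $S(\mathcal{G}(B_{n}))$ along the sequence.
\end{itemize}
Propagating this invariant requires a form of Fact~\ref{fact: zoes theorem} in which the isomorphism $C_{1}^{s} \to C^{s}$ realizing the type induces $S\Psi_{1}$ on Galois systems. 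The version quoted in the paper only gives $S(\mathcal{G}(C)) = S_{0}$ setwise, and that is not enough: the induced identification could be twisted by an automorphism of $S_{*}$ about which nothing is known over later $S(\mathcal{G}(B_{i}))$. Also, starting the induction is itself an application of that fact: you must produce $C_{0} \equiv_{L} A$ with $S(\mathcal{G}(C_{0})) = S_{*}$.

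\emph{Comparison with the paper.} With these repairs your route is a genuine alternative. The paper never fixes a single $S_{*}$. At each stage it:
\begin{itemize}
\item splits an initial segment of the sequence into two $\ind^{u}$-independent blocks $C_{n+1,0}, C_{n+1,1}$;
\item amalgamates the Galois data over the two blocks using the independence theorem for $\ind^{K}$;
\item uses extension to make the new system Kim-independent from $S(\mathcal{G}(C_{n+2,0}))$, which is exactly where the algebraic closure of the union you were worried about gets absorbed;
\item realizes the field by Fact~\ref{fact: zoes theorem}.
\end{itemize}
Because the Galois data are re-read from the field at every stage, the paper only ever uses the setwise conclusion $S(\mathcal{G}(A_{n+1})) = S$. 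Your route uses Kim's lemma once on the Galois side and adds one $B_{i}$ at a time, at the price of tracking the identification of $S_{*}$ with $S(\mathcal{G}(A))$ across all stages.
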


\begin{proof}
Let $A = \text{acl}(aL)$ and $B = \text{acl}(bL)$.  By Proposition \ref{prop: NSOP1}, we already know $\mathrm{Th}(F)$ is NSOP$_{1}$ and therefore $a \ind^{K}_{L} b$ if and only if $A \ind^{K}_{L} B$ \cite[Corollary 5.17]{MR4081726}, so we show $A \ind^{K}_{L} B$. 

First, assume $A \ind^{K}_{L} B$ and we will show $A$ and $B$ are weakly independent over $L$.  Let $(B_{i})_{i < \omega}$ be a Morley sequence in a global type finitely satisfiable in $L$ extending $\text{tp}(B/L)$.  As $A \ind^{K}_{L} B$, we may assume $(B_{i})_{i < \omega}$ is $A$-indiscernible.  Then by Kim's lemma in the stable theory SCF, we know $A \ind^{\text{SCF}}_{L} B$.  Also $(S(\mathcal{G}(B_{i})))_{i < \omega}$ is a Morley sequence in a global type finitely satisfiable in $S(\mathcal{G}(L))$ which is moreover $S(\mathcal{G}(A))$-indiscernible.  As $\text{Th}(S(\mathcal{G}(L)))$ is NSOP$_{1}$, this implies $S(\mathcal{G}(A)) \ind^{K}_{S(\mathcal{G}(L))} S(\mathcal{G}(B))$ by Kim's lemma for Kim-dividing.  In other words, $A$ and $B$ are weakly independent over $L$.  

Now assume $A$ and $B$ are weakly independent over $L$ and we will show $A \ind^{K}_{L} B$.  Let $(B_{i})_{i < \omega}$ be an $L$-finitely satisfiable Morley sequence over $L$ with $B_{0} = B$.  Put $C_{n,0} = \text{acl}(B_{0},\ldots, B_{2^{n}-1})$ and $C_{n,1} = \text{acl}(B_{2^{n}},\ldots, B_{2^{n+1} - 1})$.  Note that $C_{n,1} \ind^{u}_{L} C_{n,0}$ and $C_{n,0} \equiv_{L} C_{n,1}$ for all $n < \omega$.  By induction on $n$, we will choose $A_{n}$ so that $A_{0} = A$ and for all $n \geq 0$ the following conditions are satisfied:
\begin{enumerate}
\item $A_{n+1} \equiv_{C_{n,0},C_{n,1}} A_{n}$.
\item $A_{n}$ is weakly independent from $\text{acl}(C_{n,0},C_{n,1}) = C_{n+1,0}$ over $L$.
\item $A_{n}C_{n,0} \equiv_{L} A_{n}C_{n,1}$.
\end{enumerate}
Suppose we are at stage $n \geq 0$ of the induction.  Pick $A'_{n}$ so that $A_{n}C_{n+1,0} \equiv_{L} A_{n}'C_{n+1,1}$.  Then, in particular, $S(\mathcal{G}(A_{n})) \equiv_{S(\mathcal{G}(L))} S(\mathcal{G}(A_{n}'))$, $S(\mathcal{G}(A_{n})) \ind^{K}_{S(\mathcal{G}(L))} S(\mathcal{G}(C_{n+1,0}))$, and $S(\mathcal{G}(A_{n}')) \ind^{K}_{S(\mathcal{G}(L))} S(\mathcal{G}(C_{n+1,1}))$.  Moreover, because $C_{n+1,1} \ind^{u}_{L} C_{n+1,0}$, we have $S(\mathcal{G}(C_{n+1,1})) \ind^{K}_{S(\mathcal{G}(L))} S(\mathcal{G}(C_{n+1,0}))$.  As $\text{Th}(S(\mathcal{G}(F)))$ is NSOP$_{1}$, we may apply the independence theorem for $\ind^{K}$ to obtain $S$ so that $S \equiv_{S(\mathcal{G}(C_{n+1,0}))} S(\mathcal{G}(A_{n}))$, $S \equiv_{S(\mathcal{G}(C_{n+1,1}))} S(\mathcal{G}(A_{n}'))$ and $S \ind^{K}_{S(\mathcal{G}(L))} S(\mathcal{G}(C_{n+1,0})) S(\mathcal{G}(C_{n+1,1}))$.  Note that $S(\mathcal{G}(C_{n+2,0})) \supseteq S(\mathcal{G}(C_{n+1,0})) S(\mathcal{G}(C_{n+1,1}))$ so, by extension, we may assume $S$ has been chosen so that $S \ind^{K}_{S(\mathcal{G}(L))} S(\mathcal{G}(C_{n+2,0}))$.  By Fact \ref{fact: zoes theorem}, there is $A_{n+1}$ with $A_{n+1} \equiv_{C_{n+1,0}} A_{n}$, $A_{n+1} \equiv_{C_{n+1,1}} A'_{n}$, $A_{n+1} \ind^{\text{SCF}}_{L} C_{n+1,0}C_{n+1,1}$ and $S(\mathcal{G}(A_{n+1})) = S$.  Note that this implies $A_{n+1} \ind^{\text{SCF}}_{L} C_{n+2,0}$ because $C_{n+2,0} = \text{acl}(C_{n+1,0}C_{n+1,1})$ and $S(\mathcal{G}(A_{n+1})) \ind^{K}_{S(\mathcal{G}(L))} S(\mathcal{G}(C_{n+2,0}))$ by the choice of $S$ so $A_{n+1}$ and $C_{n+2,0}$ are weakly independent over $L$.  This completes the induction.  

Let $p(X;B) = \text{tp}(A/B)$.  Observe that, by construction, $A_{n} \models \bigcup_{i < 2^{n}} p(X;B_{i})$ so $\bigcup_{i < \omega} p(X;B_{i})$ is consistent by compactness.  As the sequence $\langle B_{i} : i < \omega\rangle$ is an arbitrary $L$-finitely satisfiable Morley sequence over $L$, this shows $A \ind^{K}_{L} B$.  
\end{proof}

\begin{rem}
The above result first appeared in our dissertation \cite[Theorem 7.2.6]{ramsey2018independence} though there was an error, pointed out to us by Daniel Hoffmann and Jung-Uk Lee: we used that $a \ind^{K}_{M} b$ if and only if $\mathrm{acl}(aM) \ind^{K}_{M} \mathrm{acl}(bM)$ in our proof of NSOP$_{1}$, though we only know this equivalence \emph{assuming} that the theory is NSOP$_{1}$. The fix is to first prove NSOP$_{1}$ by invoking the equivalence of NSOP$_{1}$ with the weak independence theorem, Fact \ref{fact: weak IT}, and then, having done that, using the same proof to identify Kim-independence. 
\end{rem}

\begin{fact} \cite[Section 28.10]{fried2008field} \label{fact: interpretability in the field}
If $K$ is a field, viewed as a structure in the language of rings, with absolute Galois group $G$, then $\Gamma(S(G))$ is interpretable in $K$. 
\end{fact}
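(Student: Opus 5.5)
The plan is to interpret everything through finite Galois extensions of $K$ of bounded degree. These are uniformly coded by tuples of coefficients. The vertices of $\Gamma(S(G))$ correspond to open normal $N \unlhd G$ with $G/N \cong D_p$, that is, to Galois extensions $L/K$ with $\mathrm{Gal}(L/K) \cong D_p$. By the primitive element theorem, each such $L$ is $K[x]/(f)$ for a monic separable irreducible $f \in K[x]$ of degree $2p$. So a vertex will be represented by the coefficient tuple $c \in K^{2p}$ of such an $f$, modulo the equivalence relation ``$f_c$ and $f_{c'}$ have the same splitting field''. Since interpretations allow quotients by definable equivalence relations, it suffices to show three things: the set of good codes is definable, this equivalence relation is definable, and the edge relation is definable on codes and invariant under it.

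The core technical step is a uniform-definability lemma. For fixed $n$, work with $c \in K^n$ and $L_c := K[x]/(f_c)$, realized on $K^n$ with multiplication given by structure constants that are polynomial in $c$. Then I would show that each of the following is a first-order condition on $c$ (and on $c'$ where relevant):
\begin{itemize}
\item $f_c$ is irreducible;
\item $f_c$ is separable (its discriminant is nonzero);
\item $L_c/K$ is normal, i.e.\ $f_c$ splits into linear factors over $L_c$;
\item $\mathrm{Gal}(L_c/K)$ is isomorphic to a given finite group $A$ with $|A| = n$;
\item there is a $K$-embedding $L_c \to L_{c'}$ of fields.
\end{itemize}
Irreducibility quantifies over factorizations into monic polynomials of lower degree, which are coefficient tuples in $K$. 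Normality quantifies over $n$ roots in $L_c \cong K^n$. For the Galois group condition, $K$-automorphisms of $L_c$ are determined by the image of $\bar x$, which must be a root of $f_c$ in $L_c$. So $\mathrm{Gal}(L_c/K)$ is in definable bijection with the roots of $f_c$ in $L_c$, and composition is definable by evaluating polynomials. Hence ``$\mathrm{Gal}(L_c/K) \cong A$'' is the finite disjunction, over enumerations of the $n$ roots, of the assertion that they compose according to the multiplication table of $A$. For the embedding condition, a $K$-embedding $L_c \to L_{c'}$ is given by a root of $f_c$ in $L_{c'}$. Applying the lemma with $n = 2p$ and $A = D_p$ defines the vertex codes. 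Two vertex codes $c, c'$ are then equivalent iff there are $K$-embeddings in both directions; for Galois extensions this means equality of the fields inside $K^{s}$, hence equality of the corresponding kernels $N$.

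For edges, I would unwind the definition of $R_G$: $(N, N')$ is an edge iff $N \neq N'$ and there exists $M \subseteq N \cap N'$ with $G/M \cong W$. In field terms, this says the two $D_p$-fields are distinct and there is a Galois extension $E/K$ with $\mathrm{Gal}(E/K) \cong W$ into which both $D_p$-fields $K$-embed. This is expressed by $\neg(c \sim c') \wedge \exists e \in K^{4p^2 q}$ such that $e$ is a $W$-code (the lemma with $n = |W|$, $A = W$), $L_c$ embeds in $L_e$, and $L_{c'}$ embeds in $L_e$. Invariance under $\sim$ is clear. The correspondence with $\Gamma(S(G))$ then follows from the Galois correspondence between open normal subgroups and finite Galois extensions inside $K^{s}$.

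The main obstacle is the uniform definability lemma, specifically the Galois group clause. One has to check that the identification of automorphisms with roots, together with the composition law, is expressed by formulas independent of $c$. One also has to check that nothing changes in positive or imperfect characteristic. Separability of the coding polynomial settles the latter, since $D_p$ and $W$ are finite groups and the corresponding extensions are separable.
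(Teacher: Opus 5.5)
Your proposal is correct. It is essentially the standard argument behind this fact; the paper gives no proof of its own and only cites Fried--Jarden. That argument codes finite Galois extensions of bounded degree by the coefficient tuples of minimal polynomials of primitive elements, so that ``Galois with group $D_p$'' or ``$W$'', isomorphism, and inclusion of such extensions become first-order conditions on the coefficients. Your identification of the automorphisms of $K[x]/(f_c)$ with roots of $f_c$, and your description of edges via a common $W$-extension, match the definition of $R_G$ exactly.
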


Now we come to the main theorem:

\begin{thm} \label{thm: main thm}
Given a graph $\Gamma$, let $K_{\Gamma}$ be a PAC field with absolute Galois group $\widetilde{G_{\Gamma}}$ (which exists by Fact \ref{fact: projective=absolute}). View $K_{\Gamma}$ as a structure in the language of rings. Then, for all $n \geq 1$, the field $K_{\Gamma}$ has NSOP$_{n}$ theory if and only if $\mathrm{Th}(\Gamma)$ is NSOP$_{n}$.
\end{thm}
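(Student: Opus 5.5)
The two directions are handled separately, and the case $n=2$ is absorbed into $n=1$ because SOP$_{1}$ and SOP$_{2}$ coincide \cite{mutchnik2026nsop2}.

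The direction ``$K_{\Gamma}$ NSOP$_{n}$ $\Rightarrow$ $\mathrm{Th}(\Gamma)$ NSOP$_{n}$'' is soft. By Fact \ref{fact: interpretability in the field}, the graph $\Gamma(S(\widetilde{G_{\Gamma}}))$ is interpretable in $K_{\Gamma}$. By Corollary \ref{cor: no unexpected frattini quotients} (with $I=\emptyset$), this graph is isomorphic to $\Gamma$. SOP$_{n}$ passes from an interpreted structure to the interpreting one, for every $n\ge1$: pull back the witnessing formula and parameters through the interpretation. Hence if $\mathrm{Th}(\Gamma)$ has SOP$_{n}$, so does $\mathrm{Th}(K_{\Gamma})$.

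For the converse, assume $\mathrm{Th}(\Gamma)$ is NSOP$_{n}$. Since $\mathcal{G}(K_{\Gamma}) \cong \widetilde{G_{\Gamma}}$, we have $\mathrm{Th}(S(\mathcal{G}(K_{\Gamma}))) = T_{\Gamma}$.

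\begin{itemize}
\item Case $n\ge3$: Theorem \ref{thm: preserving nsopn} gives that $T_{\Gamma}$ is NSOP$_{n}$, and Fact \ref{fact: zoe nsopn fact} then gives that $\mathrm{Th}(K_{\Gamma})$ is NSOP$_{n}$.
\item Cases $n=1,2$: Theorem \ref{thm: preserving Kim}(1) gives that $T_{\Gamma}$ is NSOP$_{1}$, and Proposition \ref{prop: NSOP1} transfers this to $\mathrm{Th}(K_{\Gamma})$.
\end{itemize}

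The main obstacle is that the results of Section \ref{sec: model theory} are stated for an \emph{infinite} graph $\Gamma$, while the theorem allows any graph. I would treat finite $\Gamma$ separately. Then $G_{\Gamma}$ is finite and $\widetilde{G_{\Gamma}}$ is a finitely generated (hence small) profinite group, so the field $K_{\Gamma}$ is PAC with bounded Galois group. Such a theory is simple (Chatzidakis), hence NSOP$_{n}$ for all $n\ge1$. A finite graph trivially has NSOP$_{n}$ theory for all $n$, so both sides of the equivalence hold.

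A secondary point to check is that Fact \ref{fact: zoe nsopn fact} and Proposition \ref{prop: NSOP1} apply to $K_{\Gamma}$ as a structure in the ring language, possibly imperfect. The cited results are stated in that generality, possibly after naming a $p$-basis, which does not affect NSOP$_{n}$, so this should go through.
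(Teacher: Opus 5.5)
Your proposal is correct and matches the paper's proof: interpretability of $\Gamma$ in $K_{\Gamma}$ gives one direction, and for the other, Theorem \ref{thm: preserving Kim} with Proposition \ref{prop: NSOP1} handles $n=1$ (and $n=2$ via \cite{mutchnik2026nsop2}), while Theorem \ref{thm: preserving nsopn} with Fact \ref{fact: zoe nsopn fact} handles $n\ge 3$. Your separate treatment of finite $\Gamma$ (finitely generated, hence small, Galois group, so a bounded PAC field, which is simple by Chatzidakis) is a sound addition; Section \ref{sec: model theory} is developed only for infinite graphs, and the paper leaves this case implicit.
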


\begin{proof}
Note that, by Fact \ref{fact: interpretability in the field}, if $\mathrm{Th}(\Gamma)$ has SOP$_{n}$, then $\mathrm{Th}(K_{\Gamma})$ also has SOP$_{n}$.  On the other hand, for $n = 1$, if $\mathrm{Th}(\Gamma)$ is NSOP$_{1}$, then $T_{\Gamma} = \mathrm{Th}(S(\widetilde{G_{\Gamma}}))$ is NSOP$_{1}$ by Theorem \ref{thm: preserving Kim}, and thus $\mathrm{Th}(K_{\Gamma})$ is NSOP$_{1}$ by Proposition \ref{prop: NSOP1}. By \cite{mutchnik2026nsop2}, the $n = 2$ case is equivalent to the $n = 1$ case. For $n \geq 3$, we know that if $\mathrm{Th}(\Gamma)$ is NSOP$_{n}$, then $T_{\Gamma}$ is NSOP$_{n}$ by Theorem \ref{thm: preserving nsopn}, which entails $\mathrm{Th}(K_{\Gamma})$ is NSOP$_{n}$ by Fact \ref{fact: zoe nsopn fact}. 
\end{proof}

\begin{cor}
The SOP$_{n}$ hierarchy is strict in fields. That is, for all $n \geq 3$, there is a (pure) field $F$ such that $\mathrm{Th}(F)$ is SOP$_{n}$ and NSOP$_{n+1}$.
\end{cor}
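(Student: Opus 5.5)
The plan is to combine Theorem \ref{thm: main thm} with a choice of input graphs witnessing strictness of the SOP$_{n}$ hierarchy in pure graphs. Concretely, for each $n \geq 3$ I need a graph $\Gamma_{n}$, which I may take to be infinite so that the analysis of Sections \ref{sec: identification} and \ref{sec: model theory} applies, such that $\mathrm{Th}(\Gamma_{n})$ has SOP$_{n}$ and is NSOP$_{n+1}$. Then I take $F = K_{\Gamma_{n}}$, a PAC field with absolute Galois group $\widetilde{G_{\Gamma_{n}}}$, which exists by Fact \ref{fact: projective=absolute} because the universal Frattini cover is projective (Fact \ref{fact2}(2)). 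Theorem \ref{thm: main thm} applied with $n$ and with $n+1$ yields that $\mathrm{Th}(F)$ has SOP$_{n}$ and is NSOP$_{n+1}$.

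The remaining step is producing $\Gamma_{n}$. The standard example from Shelah's work on the SOP$_{n}$ hierarchy is the generic (Fraïssé limit) directed graph omitting directed cycles of length at most $n$; its theory has SOP$_{n}$ and is NSOP$_{n+1}$. That structure is a directed graph, whereas the CDM construction codes symmetric graphs. So I would either (a) note that the edge relation $R_{G}$ in the paper is defined on ordered pairs and that nothing in Sections \ref{sec: CDM}--\ref{sec: model theory} uses symmetry of $R$, since $W$ distinguishes the two coordinates of $D_{p} \times D_{p}$ and the map $\lambda$ recovers the order, so that the theorem applies verbatim to digraphs; or (b) replace the digraph by an undirected graph bi-interpretable with it, for instance by a standard gadget encoding each arc $u \to v$ as a path with distinguishable endpoints, and check that bi-interpretation preserves both SOP$_{n}$ and NSOP$_{n+1}$, which it does since both properties are preserved under interpretation in either direction.

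The main obstacle is this directed versus undirected issue. My first attempt would be option (a), rereading the definitions of $G_{\Gamma}$ and $\Gamma(G)$: $R_{G}$ as defined is symmetric, because it only asks for a $W$-quotient below $N \cap N'$. This means orientation is not recoverable on the nose, and I would fall back to option (b) with an explicit gadget. A simple choice is to subdivide each arc $u \to v$ into $u - x - y - v$ and to attach a pendant vertex at $x$, so that the orientation is definable. The new vertices must be definable from the original graph, and I have to verify that the resulting undirected graph interprets the digraph and is interpretable in it, so that the SOP$_{n}$ / NSOP$_{n+1}$ status transfers.
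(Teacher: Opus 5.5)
Your proposal is correct and follows the paper's proof: apply Theorem \ref{thm: main thm} to an infinite undirected graph whose theory is SOP$_{n}$ and NSOP$_{n+1}$, obtained from Shelah's strictness examples. The only difference is that the paper cites the general fact that every countable theory is bi-interpretable with a graph, whereas you build an explicit subdivision-plus-pendant gadget for Shelah's generic digraphs. Your gadget works, since the original vertices are definable as those of degree $\geq 4$, and mutual interpretability is enough to transfer both SOP$_{n}$ and NSOP$_{n+1}$.
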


\begin{proof}
Immediate from Theorem \ref{thm: main thm}, since the SOP$_{n}$ hierarchy is strict in countable theories \cite[Claim 2.8]{shelah1996toward} and it is well known that every countable theory is bi-interpretable with a graph (see, e.g., \cite[Chapter 5]{hodges1993model}).
\end{proof}

\subsection*{Acknowledgements}

Some of this work was done, or at least begun, as part of our dissertation research under the direction of Tom Scanlon, whom we would like to thank. We are also grateful to Daniel Hoffmann and Jung-Uk Lee for catching an error in an early version of Section 5.

\bibliographystyle{alpha}
\bibliography{ms.bib}{}

@incollection{ribes2010profinite,
  title={Profinite groups},
  author={Ribes, Luis and Zalesskii, Pavel},
  booktitle={Profinite Groups},
  pages={19--74},
  year={2010},
  publisher={Springer}
}

@article{conant2025enriching,
  title={Enriching a predicate and tame expansions of the integers},
  author={Conant, Gabriel and d’Elb{\'e}e, Christian and Halevi, Yatir and Jimenez, L{\'e}o and Rideau-Kikuchi, Silvain},
  journal={Journal of Mathematical Logic},
  volume={25},
  number={01},
  pages={2450011},
  year={2025},
  publisher={World Scientific}
}

@article{kruckman2022interpolative,
  title={Interpolative fusions II: Preservation results},
  author={Kruckman, Alex and Tran, Minh Chieu and Walsberg, Erik},
  journal={arXiv preprint arXiv:2201.03534},
  year={2022}
}

@phdthesis{ramsey2018independence,
  title={Independence, amalgamation, and trees},
  author={Ramsey, Samuel Nicholas},
  year={2018},
  school={University of California, Berkeley}
}

@article{MR4081726,
	author = {Kaplan, Itay and Ramsey, Nicholas},
	doi = {10.4171/jems/948},
	fjournal = {Journal of the European Mathematical Society (JEMS)},
	issn = {1435-9855},
	journal = {J. Eur. Math. Soc. (JEMS)},
	mrclass = {03C45 (03C55 03C60)},
	mrnumber = {4081726},
	mrreviewer = {Junguk Lee},
	number = {5},
	pages = {1423--1474},
	title = {On {K}im-independence},
	url = {https://mathscinet.ams.org/mathscinet-getitem?mr=4081726},
	volume = {22},
	year = {2020}}

@article{chatzidakis2002properties,
	author = {Chatzidakis, Zo{\'e}},
	journal = {The Journal of Symbolic Logic},
	number = {3},
	pages = {957--996},
	publisher = {Cambridge University Press},
	title = {Properties of forking in $\omega$-free pseudo-algebraically closed fields},
	volume = {67},
	year = {2002}}

@article{chatzidakis2019amalgamation,
  title={Amalgamation of types in pseudo-algebraically closed fields and applications},
  author={Chatzidakis, Zo{\'e}},
  journal={Journal of Mathematical Logic},
  volume={19},
  number={02},
  pages={1950006},
  year={2019},
  publisher={World Scientific}
}

@article{berenstein2017supersimple,
  title={Supersimple structures with a dense independent subset},
  author={Berenstein, Alexander and Carmona, Juan Felipe and Vassiliev, Evgueni},
  journal={Mathematical Logic Quarterly},
  volume={63},
  number={6},
  pages={552--573},
  year={2017},
  publisher={Wiley Online Library}
}

@article{halevi2023saturated,
  title={Saturated models for the working model theorist},
  author={Halevi, Yatir and Kaplan, Itay},
  journal={Bulletin of Symbolic Logic},
  volume={29},
  number={2},
  pages={163--169},
  year={2023},
  publisher={Cambridge University Press}
}

@phdthesis{frohn2011model,
	author = {Frohn, Nina},
	school = {Albert-Ludwigs-Universit\"at Freiburg},
	title = {Model theory of absolute Galois groups},
	year = {2011}}

@article{cherlin1980elementary,
  title={The elementary theory of regularly closed fields},
  author={Cherlin, Gregory and van den Dries, Lou and Macintyre, Angus},
  journal={preprint},
  year={1980}
}

@article{lubotzky1981subgroups,
  title={Subgroups of free profinite groups and large subfields of},
  author={Lubotzky, Alexander and van den Dries, Lou},
  journal={Israel Journal of Mathematics},
  volume={39},
  number={1},
  pages={25--45},
  year={1981},
  publisher={Springer}
}

@article {MR792361,
    AUTHOR = {Ribes, Luis},
     TITLE = {Frattini covers of profinite groups},
   JOURNAL = {Arch. Math. (Basel)},
  FJOURNAL = {Archiv der Mathematik},
    VOLUME = {44},
      YEAR = {1985},
    NUMBER = {5},
     PAGES = {390--396},
      ISSN = {0003-889X,1420-8938},
   MRCLASS = {20E18 (20D25)},
  MRNUMBER = {792361},
MRREVIEWER = {Alexander\ Lubotzky},
       DOI = {10.1007/BF01229320},
       URL = {https://doi.org/10.1007/BF01229320},
}

@article{chatzidakis1998model,
	author = {Chatzidakis, Zo{\'e}},
	journal = {Illinois Journal of Mathematics},
	number = {1},
	pages = {70--96},
	publisher = {University of Illinois at Urbana-Champaign, Department of Mathematics},
	title = {Model theory of profinite groups having the Iwasawa property},
	volume = {42},
	year = {1998}}

@article{ArtemNick,
	author = {Chernikov, Artem and Ramsey, Nicholas},
	doi = {10.1142/S0219061316500094},
	eprint = {http://www.worldscientific.com/doi/pdf/10.1142/S0219061316500094},
	journal = {Journal of Mathematical Logic},
	pages = {1650009},
	title = {On model-theoretic tree properties},
	url = {http://www.worldscientific.com/doi/abs/10.1142/S0219061316500094},
	year = {2016}}

@book{fried2008field,
	author = {Fried, Michael D. and Jarden, Moshe},
	title = {Field Arithmetic},
	edition = {3},
	series = {A Series of Modern Surveys in Mathematics},
	volume = {11},
	publisher = {Springer-Verlag},
	address = {Berlin},
	year = {2008}}

@inproceedings{ervsov1974theories,
  title={THEORIES OF NONABELIAN VARIETIES OF GROUPS},
  author={Er{\v{s}}ov, Ju. L. },
  booktitle={Proceedings},
  volume={25},
  pages={255},
  year={1974},
  organization={American Mathematical Soc.}
}

@incollection{ZoePAC2,
	author = {Chatzidakis, Zo{{\'e}}},
	booktitle = {Models and computability ({L}eeds, 1997)},
	doi = {10.1017/CBO9780511565670.004},
	mrclass = {03C60 (03C45 12L12)},
	mrnumber = {1721163 (2000i:03051)},
	mrreviewer = {M. A. Ta{\u \i}tslin},
	pages = {41--61},
	publisher = {Cambridge Univ. Press, Cambridge},
	series = {London Math. Soc. Lecture Note Ser.},
	title = {Simplicity and independence for pseudo-algebraically closed fields},
	url = {http://dx.doi.org/10.1017/CBO9780511565670.004},
	volume = {259},
	year = {1999}}

@article{mekler1981stability,
  title={Stability of nilpotent groups of class 2 and prime exponent},
  author={Mekler, Alan H},
  journal={The Journal of Symbolic Logic},
  volume={46},
  number={4},
  pages={781--788},
  year={1981},
  publisher={Cambridge University Press}
}

@article{mutchnik2026nsop2,
  title={On NSOP2 theories},
  author={Mutchnik, Scott},
  journal={Journal of the European Mathematical Society: JEMS.},
  volume={28},
  number={8},
  pages={3475},
  year={2026},
  publisher={European Mathematical Society (EMS)}
}

@article{berenstein2016geometric,
  title={Geometric structures with a dense independent subset},
  author={Berenstein, Alexander and Vassiliev, Evgueni},
  journal={Selecta Mathematica},
  volume={22},
  number={1},
  pages={191--225},
  year={2016},
  publisher={Springer}
}

@article{boissonneau2024mekler,
  title={Mekler's Construction and Murphy's Law for 2-Nilpotent Groups},
  author={Boissonneau, Blaise and Papadopoulos, Aris and Touchard, Pierre},
  journal={arXiv preprint arXiv:2403.20270},
  year={2024}
}

@article{ahn2019mekler,
  title={Mekler's construction and tree properties},
  author={Ahn, JinHoo},
  journal={arXiv preprint arXiv:1903.07087},
  year={2019}
}

@article{chernikov2019mekler,
  title={Mekler’s construction and generalized stability},
  author={Chernikov, Artem and Hempel, Nadja},
  journal={Israel Journal of Mathematics},
  volume={230},
  number={2},
  pages={745--769},
  year={2019},
  publisher={Springer}
}

@article{baudisch2002mekler,
  title={Mekler's construction preserves $CM$-triviality},
  author={Baudisch, Andreas},
  journal={Annals of Pure and Applied Logic},
  volume={115},
  number={1-3},
  pages={115--173},
  year={2002},
  publisher={Elsevier}
}

@article{ax1968elementary,
  title={The elementary theory of finite fields},
  author={Ax, James},
  journal={Annals of Mathematics},
  volume={88},
  number={2},
  pages={239--271},
  year={1968},
  publisher={JSTOR}
}

@article{tyrrell2024finite,
  title={Finite undecidability in PAC and PRC fields},
  author={Tyrrell, Brian},
  journal={Annals of Pure and Applied Logic},
  volume={175},
  number={10},
  pages={103465},
  year={2024},
  publisher={Elsevier}
}

@book{hodges1993model,
	author = {Hodges, Wilfrid},
	publisher = {Cambridge University Press Cambridge},
	title = {Model theory},
	volume = {42},
	year = {1993}}

@article{shelah1996toward,
	author = {Shelah, Saharon},
	journal = {Annals of Pure and Applied Logic},
	number = {3},
	pages = {229--255},
	publisher = {Elsevier},
	title = {Toward classifying unstable theories},
	volume = {80},
	year = {1996}}

@article{hempel2016n,
  title={On n-dependent groups and fields},
  author={Hempel, Nadja},
  journal={Mathematical Logic Quarterly},
  volume={62},
  number={3},
  pages={215--224},
  year={2016},
  publisher={Wiley Online Library}
}

\end{document}